\documentclass[fontsize=11pt,paper=A4,
	fleqn,
	bibliography=totoc 
	]{scrbook}

\usepackage[ngerman]{babel} 
\usepackage[T1]{fontenc}
\usepackage[latin1]{inputenc} 

\usepackage{chngcntr} 

\usepackage{amsmath,  
            amssymb,  
            amsthm,   
            amsfonts, 
            }
\usepackage{mathtools}
\usepackage{braket}   
\usepackage{extpfeil} 

\usepackage{multirow}
\usepackage{bigdelim}

\allowdisplaybreaks[3]

\newcommand{\IN}{\ensuremath{\mathbb{N}}}
\newcommand{\IZ}{\ensuremath{\mathbb{Z}}}
\newcommand{\IQ}{\ensuremath{\mathbb{Q}}}
\newcommand{\IR}{\ensuremath{\mathbb{R}}}
\newcommand{\IC}{\ensuremath{\mathbb{C}}}

\newcommand{\abs}[1]{{\left\lvert#1\right\rvert}}
\newcommand{\ceil}[1]{\left\lceil #1 \right\rceil}
\newcommand{\floor}[1]{\left\lfloor #1 \right\rfloor}

\newcommand{\isomorphic}{\cong}

\DeclareMathOperator{\tr}{tr}
\DeclareMathOperator{\sgn}{sgn}
\DeclareMathOperator{\id}{id}
\DeclareMathOperator{\diag}{diag}
\DeclareMathOperator{\ord}{ord}

\DeclareMathOperator{\CharFld}{char}
\DeclareMathOperator{\QuotFld}{Quot}

\DeclareMathOperator{\im}{im}
\DeclareMathOperator{\rad}{rad}
\DeclareMathOperator{\Hom}{Hom}
\DeclareMathOperator{\End}{End}
\DeclareMathOperator{\Irr}{Irr}

\DeclareSymbolFont{extraup}{U}{zavm}{m}{n}
\DeclareMathSymbol{\varheart}{\mathalpha}{extraup}{86}
\DeclareMathSymbol{\vardiamond}{\mathalpha}{extraup}{87}

\newcounter{theoremnumber}
\numberwithin{theoremnumber}{section}
\newtheoremstyle{dotless} 
			{\bigskipamount}    
			{0.0em}             
			{\nopagebreak}      
			{}                  
			{\bfseries}         
			{:}                 
			{\newline}          
			{}                  
\newtheoremstyle{dotless2} 
			{\bigskipamount}    
			{0.0em}             
			{}                  
			{}                  
			{\bfseries}         
			{:}                 
			{0.5em}             
			{}                  
\swapnumbers

\theoremstyle{dotless2}
\newtheorem{remark}[theoremnumber]{}
\theoremstyle{dotless}
\newtheorem{theorem}[theoremnumber]{Satz}
\newtheorem{theoremdef}[theoremnumber]{Satz und Definition}
\newtheorem{lemma}[theoremnumber]{Lemma}
\newtheorem{lemmadef}[theoremnumber]{Lemma und Definition}
\newtheorem{corollary}[theoremnumber]{Korollar}

\newtheorem{definition}[theoremnumber]{Definition}
\newtheorem{example}[theoremnumber]{Beispiel}
\newtheorem{conjecture}[theoremnumber]{Vermutung}

\newtheorem*{convention}{Vereinbarung}

\numberwithin{table}{section}
\numberwithin{figure}{section}

\usepackage{tikz}
\usetikzlibrary{decorations.markings}
\usetikzlibrary{matrix,arrows}
\usetikzlibrary{calc}

\tikzset
{
	halfarrow/.style=
	{
		postaction={decorate},
		decoration={markings,mark=at position 0.8 with {\arrow{to}}}
	},
	arrow/.style=
	{
		->,very thick,font=\scriptsize
	},
	desc/.style=
	{
		fill=white,inner sep=2pt,font=\scriptsize
	},
	Vertex/.style =
	{
		inner sep = 1pt,
		outer sep = 2pt,
		minimum size=15pt,
		circle,
		draw=black!70,
		thick,
		font=\scriptsize
	},
	EdgeT/.style =
	{
		ultra thick,
		draw=black,
		font=\scriptsize
	},
	EdgeTdir/.style =
	{
		->,
		draw=black,
		font=\scriptsize
	},
	EdgeI/.style =
	{
		->,
		draw=black!70,
		font=\scriptsize
	},
	EdgeI2/.style =
	{
		->,
		draw=black!50,
	}
}

\usepackage{listings}
\lstset{%
	basicstyle = \ttfamily\small,
	tabsize = 3
}
\newtheorem{algorithm}{Algorithmus}

\usepackage[numbers]{natbib}
\bibliographystyle{alpha}

\usepackage[pdfpagelabels=true,plainpages=false]{hyperref}
\hypersetup{
colorlinks=true,
linkcolor=red,
urlcolor=blue,
citecolor=blue,
linktocpage=true, 
pdfinfo=
	{  
		Title={Gyojas W-Graph-Algebra und zelluläre Struktur von Iwahori-Hecke-Algebren},
		Author={Johannes Hahn},
		Keywords={coxeter groups, hecke algebra, cellular algebra, asymptotic algebra, W-graphs, W-graph algebra},
		Subject={representation theory}
	},
pdfpagelayout={OneColumn},
pdfstartview= 
}

\usepackage{imakeidx}
\makeindex[name=terms,title=Stichwortverzeichnis,intoc,columns=2,
	program=makeindex,options=-s terms.ist]
\makeindex[name=symbols,title=Symbolverzeichnis,intoc,columns=2,
	program=makeindex,options=-s symbols.ist]
\indexsetup{level=\chapter*,toclevel=chapter,headers={\indexname}{\indexname}}
\let\imkiindex\index
\renewcommand\index[1]{\imkiindex[#1]}


\usepackage[normalem]{ulem}
\newcommand*{\udot}{\dotuline}

\setlength{\parindent}{0em}
\setlength{\parskip}{0em}

\usepackage{enumitem} 

\title{Zelluläre Struktur von Iwahori-Hecke-Algebren und Gyojas $W$-Graph-Algebra}
\author{Johannes Hahn \\ Friedrich-Schiller-Universität Jena}


\begin{document}

\frontmatter
\begin{titlepage}

\newcommand{\HRule}{\rule{\linewidth}{0.5mm}} 

\center 
 

\HRule \\[0.4cm]
\renewcommand{\baselinestretch}{1.75}\normalsize
{ \huge \bfseries Gyojas $W$-Graph-Algebra und zelluläre Struktur von Iwahori-Hecke-Algebren}\\[0.4cm] 
\renewcommand{\baselinestretch}{1.00}\normalsize
\HRule \\[1.5cm]
 

\textsc{\Large Dissertation}\\[0.5cm] 
\textsc{\large zur Erlangung des akademischen Grades \linebreak doctor rerum naturalium (Dr.\,rer.\,nat.)}\\[0.5cm] 

\vfill
vorgelegt dem Rat der Fakultät für Mathematik und Informatik der Friedrich-Schiller-Universität Jena von\linebreak
\textsc{\small Dipl.\,Math. Johannes Hahn,\linebreak geboren am 20.01.1987 in Rostock}

\pagebreak

\vspace*{\fill}
\begin{minipage}{\textwidth}
\bfseries Gutachter:
\begin{enumerate}[label=\arabic*.]
	\item Prof.\,Dr.\,Burkhard Külshammer, Universität Jena
	\item PD.\, Dr.\,Jürgen Müller, Universität Jena
	\item Prof.\,Dr.\,Meinolf Geck, Universität Stuttgart
\end{enumerate}
Tag der öffentlichen Verteidigung: Di., 27.02.2014
\end{minipage}
\end{titlepage}
\cleardoublepage
\chapter*{Danksagung}

Ich bedanke mich ganz herzlich bei Prof.\ Dr. Burkhard Külshammer und PD\ Dr. Jürgen Müller für ihre fachliche Unterstützung während der Arbeit an dieser Dissertation, Prof.\ Dr. Meinolf Geck für das Aufdecken eines kritischen Fehlers in der ersten Version des Manuskripts sowie der Deutschen Forschungsgemeinschaft für die Finanzierung des Projekts "`Computing with Hecke algebras"' im Rahmen dessen diese Dissertation entstanden ist.

Zu großem Dank für die Hilfe bei der Korrektur beider Versionen des Manuskripts verpflichtet fühle ich mich auch Martin Brandenburg, Rene Marczinzik, Dominic Michaelis sowie Sabrina Gemsa.

\cleardoublepage
\phantomsection
\addcontentsline{toc}{chapter}{\contentsname}
\tableofcontents

\mainmatter

\addchap{Einleitung} 

Seit mehr als einem Jahrhundert ist die Klassifikation der einfachen Lie"=Algebren über den komplexen Zahlen durch ihre Wurzelsysteme bekannt. Ein Wurzelsystem ist im Wesentlichen eine Menge von Geraden im $\IR^n$, deren gegenseitige Lage durch gewisse Axiome stark eingeschränkt wird und so die Klassifikation erlaubt. Man kann jedem Wurzelsystem $\Phi$ auf natürliche Weise eine Gruppe zuordnen: Die orthogonalen Komplemente (bzgl. einer symmetrischen, nichtentarteten Bilinearform) zu den durch $\Phi$ bestimmten Geraden sind Hyperebenen und die Spiegelungen an diesen Hyperebenen erzeugen eine Untergruppe der orthogonalen Gruppe. Diese Konstruktion liefert zum Beispiel die sogenannten Weyl"=Gruppen der Lie"=Algebren, in denen eine Vielzahl von geometrischen, gruppentheoretischen, darstellungstheoretischen und kombinatorischen Informationen über die Lie"=Algebra und die zugehörigen Lie"=Gruppen kodiert ist. Aus diesem Grunde waren Weyl"=Gruppen auch schon immer ein zentraler Bestandteil der Theorie der Lie"=Gruppen und in späteren Jahrzehnten auch der Theorie der reduktiven, algebraischen Gruppen.

\bigskip
Ein klassisches Resultat zeigt, dass Spiegelungsgruppen mit den Coxeter"=Gruppen identisch sind, das heißt Gruppen, die sich durch eine spezielle Präsentation mit Erzeugern und Relationen schreiben lassen: Zu jeder Spiegelungsgruppe $W$ gibt es eine (bis auf Konjugation eindeutige) Menge von Spiegelungen $S\subseteq W$ derart, dass $W$ die Präsentation
\[W=\braket{s\in S \mid s^2=1 \ \text{und}\ \underbrace{sts\ldots}_{m_{st}} = \underbrace{tst\ldots}_{m_{st}} \ \text{für alle}\ s,t\in S}\]
hat. Dabei bezeichnet $m_{st}$ die Ordnung von $st$ in $W$. Ist umgekehrt für gewisse Zahlen $m_{st}\in\IN$ eine Gruppe durch eine derartige Präsentation gegeben, dann ist $W$ auch als Spiegelungsgruppe realisierbar.

Aufgrund von engen Beziehungen zur Knotentheorie werden die hierbei auftretenden Relationen vom Typ $\smash{\underbrace{sts\ldots}_{m_{st}} = \underbrace{tst\ldots}_{m_{st}}}$ als Zopfrelationen bezeichnet.

\bigskip
Iwahori"=Hecke"=Algebren oder kurz Hecke"=Algebren sind enge Verwandte der Spiegelungsgruppen. Ist etwa $(W,S)$ eine Coxeter"=Gruppe, dann ist durch diese Daten eine $\IZ[v^{\pm1}]$"~Al\-ge\-bra $H=H(W,S)$ bestimmt, die Hecke"=Algebra von $(W,S)$, welche ebenfalls wichtige Informationen über die Lie- oder reduktive, algebraische Gruppe $G$ kodiert, beispielsweise kohomologische Informationen über Fahnenvarietäten $G/B$, wobei $B$ eine Borel"=Untergruppe von $G$ ist. 

Präziser ist die Hecke"=Algebra $H(W,S)$ definiert als die $\IZ[v^{\pm1}]$-Algebra mit den Erzeugern $(T_s)_{s\in S}$ und den quadratischen Relationen $T_s^2 = 1+(v-v^{-1})T_s$ für alle $s\in S$ und den Zopfrelationen $\underbrace{T_s T_t T_s\ldots}_{m_{st}} = \underbrace{T_t T_s T_t\ldots}_{m_{st}}$ für alle $s,t\in S$. Man erkennt sofort, dass die Präsentation der Gruppe $W$ (genauer der Gruppenalgebra $\IZ[W]$) sich darin wiederfindet, wenn man auf $v=1$ spezialisiert. Es stellt sich heraus, dass, genau wie $\IZ[W]$, auch die Hecke"=Algebra eine Basis hat, die mit den Elementen der Gruppe indiziert ist, üblicherweise mit $T_w$ für $w\in W$ bezeichnet wird und durch die Spezialisierung $v=1$ in die Standardbasis von $\IZ[W]$ übergeht. Die Hecke"=Algebra wird deshalb als "`Deformation"' des Gruppenrings $\IZ[W]$ bezeichnet. In diesem Sinne sind alle Informationen, die in $W$ und $\IZ[W]$ vorhanden sind, auch in $H$ vorhanden und durch den zusätzlichen Parameter $v$ sogar detaillierter aufgeschlüsselt. Daher sind Hecke"=Algebren seit Jahrzehnten ein ebenso wichtiger Bestandteil der Theorie wie die Coxeter-Gruppen selbst.

\bigskip
Die Hecke"=Algebren der symmetrischen Gruppen tragen eine Vielzahl interessanter Strukturen und spielen daher in vielen anderen Kontexten eine wichtige Rolle. Unter anderem war durch die Knuth"=Robinson"=Schensted"=Korrespondenz seit langem eine Eigenschaft bekannt, die durch Graham und Lehrer 1996 unter dem Namen "`zelluläre Algebra"' axiomatisiert und systematisch untersucht wurde (siehe \cite{GrahamLehrer}). Im Jahr 2007 bewies Meinolf Geck, dass nicht nur Hecke"=Algebren der symmetrischen Gruppen, sondern alle Hecke"=Algebren endlicher Coxeter"=Gruppen zelluläre Algebren sind (unter gewissen, harmlosen Zusatzvoraussetzungen, siehe \cite{geck2007hecke} und \cite{geck2009leading} für eine vereinfachte Konstruktion).

Die Zellbasen, die Geck konstruierte, sind von der Wahl bestimmter Matrixdarstellungen als Eingabedaten abhängig, sogenannter balancierter Darstellungen. Dies sind in einem gewissen Sinne Darstellungen von minimaler Komplexität. Sie haben unter anderem die Eigenschaft, dass sie Darstellungen von Lusztigs asympotischer Algebra $J$ induzieren.

\bigskip
Im Jahr 2008 wurden durch Geck und Jürgen Müller die Zerlegungszahlen für die exzeptionellen Spiegelungsgruppen bestimmt, um die Gültigkeit von James' Vermutung in diesen Fällen zu bestätigen (siehe \cite{GeckMueller}). Dabei benutzten sie spezielle Matrixdarstellungen der Hecke"=Algebren, die sogenannten $W$"~Graph"=Darstellungen, die eine verblüffende Ähnlichkeit mit denjenigen Matrixdarstellungen aufwiesen, die sich aus den Eigenschaften zellulärer Algebren ergeben. Es wurde daher vermutet (\citep[2.7.13]{geckjacon} und \citep[4.5]{GeckMueller}), dass in der Tat jede $W$"~Graph"=Darstellung gleich einer Zelldarstellung in Gecks Konstruktion sein müsse für geeignet gewählte Eingabedaten.

\bigskip
$W$"~Graphen sind besonders sparsame, kombinatorisch kodierte Matrixdarstellungen von $H$. Sie wurden zuerst von Kazhdan und Lusztig in ihrem wegweisenden Artikel \cite{KL} definiert. Es ist nicht unmittelbar klar, aber mit Hilfe der Lusztig-Vermutungen (die für viele wesentliche Fälle bereits nachgewiesen wurden) beweisbar, dass es zu jedem Isomorphietyp irreduzibler Darstellungen von $H$ auch mindestens einen $W$"~Gra\-phen gibt, der eine Matrixdarstellung dieses Isomorphietyps kodiert. Dies wurde zuerst von Gyoja in \cite{Gyoja} bewiesen. Gyoja definierte dafür eine Algebra, die in dieser Arbeit unter der Bezeichnung $W$"~Graph"=Algebra verallgemeinert betrachtet wird.

In Gyojas Arbeit taucht auch bereits die Vermutung auf, dass der Quotient der $W$"~Graph"=Algebra nach ihrem Jacobson"=Radikal wieder die Hecke"=Algebra von $W$ zurückliefern sollte. Ich beweise unter anderem, dass die Gültigkeit dieser Vermutung auch die Gültigkeit der Geck"=Müller"=Vermutung über $W$"~Graphen nach sich zieht.

\bigskip
Das Ziel dieser Arbeit ist die genauere Untersuchung und -- in Spezialfällen -- der Beweis dieser Vermutungen. Die Arbeit gliedert sich dabei in vier Kapitel.

Das erste Kapitel dient vor allem der Einführung der notwendigen Begrifflichkeiten aus der Theorie der Coxeter-Gruppen, der Iwahori"=Hecke"=Algebren und der Kazhdan-Lusztig-Theorie.

Das zweite Kapitel definiert das Konzept der balancierten Darstellung. Es wird eine Konstruktion der asymptotischen Algebra $J$ angegeben, die die Konstruktion durch Geck und Jacon in \cite{geckjacon} in einer naheliegenden Weise verallgemeinert. Das führt unter anderem zu der Einsicht, dass die asymptotische Algebra von der Wahl der speziellen Basis $(T_w)_{w\in W}$ unabhängig ist.

Das dritte Kapitel definiert zelluläre Algebren und ihre Zellmoduln. Das Vorgehen orientiert sich dabei sowohl am Originalartikel von Graham und Lehrer (\cite{GrahamLehrer}) als auch an den Arbeiten von Steffen König und Changchang Xi (\cite{KoenigXi_cellalg_1}, \cite{KoenigXi_cellalg_3}, \cite{KoenigXi_cellalg_no_cells}).

Im vierten Kapitel werden dann die neuen Ergebnisse über Gyojas $W$"~Graph"=Algebra präsentiert. Es werden zunächst $W$"~Graphen und verschiedene Versionen von $W$"~Graph"=Algebren definiert, die Gyojas ursprüngliche Definition verallgemeinern. Dann wird eine Verbindung zu Lusztigs asympotischer Algebra aus Kapitel zwei hergestellt, indem gezeigt wird, dass der Lusztig-Isomorphismus $\phi: H\to J$ durch die kanonische Einbettung der Hecke"=Algebra in die $W$"~Graph"=Algebren faktorisiert. Im darauffolgenden Abschnitt wird eine bislang anscheinend unbekannte, alternative Präsentation von Gyojas $W$"~Graph"=Algebra durch Erzeuger und Relationen im Einparameterfall hergeleitet, die sie als Quotient einer Pfadalgebra realisiert. Der letzte Abschnitt des Kapitels hat dann das Ziel, die $W$"~Graph"=Zerlegungsvermutung zu motivieren und sie mit Hilfe der zuvor erhaltenen Präsentation für die Spezialfälle $I_2(m)$, $A_3$, $A_4$ und $B_3$ zu beweisen.

\bigskip
Im Anhang dieser Dissertation werden Algorithmen für einige der im Hauptteil angeschnittenen Probleme und mögliche Optimierungen gegenüber naiven Ansätzen besprochen. Der erste Abschnitt enthält unter anderen einen Algorithmus zur Berechnung von Bruhat"=Intervallen und einen Algorithmus zur Berechnung von Kazhdan-Lusztig-Polynomen. Der zweite Abschnitt enthält einen Algorithmus zum Auffinden einer äquivalenten, balancierten Darstellung zu jeder gegebenen Matrixdarstellung der Hecke"=Algebra. Der dritte Abschnitt gibt dann einen Algorithmus zum Bestimmen der Zelldarstellungen aus Gecks Konstruktion an und liefert so eine Möglichkeit, die Geck"=Müller"=Vermutung in konkreten Fällen computergestützt zu überprüfen.

\chapter{Coxeter-Gruppen und Hecke-Algebren}
\section{Coxeter-Gruppen}

\begin{definition}[Coxeter-Gruppen, siehe {\cite{bourbaki_elements_lie_5}}]
\index{terms}{Coxeter!-Gruppe}\index{terms}{Coxeter!-Matrix}\index{terms}{Dynkin-Diagramm}\index{terms}{Coxeter!-Gruppe!Rang}\index{terms}{Coxeter-Dynkin-Graph|see{Dynkin-Diagramm}}
\index{symbols}{WS@$(W,S)$}\index{symbols}{mst@$m_{st}$}
Sei $S$ eine endliche Menge. Eine Matrix $M=(m_{st})\in(\IN\cup\set{\infty})^{S\times S}$ heißt \udot{Coxeter"=Matrix}, wenn $M$ symmetrisch ist, $m_{ss}=1$ und $m_{st}\geq 2$ für alle $s\neq t$ aus $S$ gilt.

Es sei $W$ eine Gruppe. Wir nennen das Datum $(W,S)$ eine \udot{Coxeter"=Gruppe}, wenn es eine Coxeter-Matrix gibt derart, dass
\[W=\braket{S \mid (st)^{m_{st}}=1 \text{ für alle }s,t\in S}.\]
Dabei wird $(st)^\infty=1$ als die Abwesenheit einer Relation bzw. als triviale Relation verstanden.

Wir nennen $\abs{S}$ \udot{Rang} der Coxeter"=Gruppe.

Der \udot{Coxeter"=Dynkin"=Graph} oder das \udot{Dynkin"=Diagramm} von $(W,S)$ ist der Graph mit Eckenmenge $S$, in dem $s,t\in S$ genau dann durch eine Kante verbunden sind, wenn $m_{st}>2$ ist. Wir entscheiden uns dafür, die ungerichtete Variante dieses Graphen zu benutzen. Wir vereinbaren außerdem, dass die Kante $s-t$ des Graphen mit $m_{st}$ beschriftet sein soll, falls $m_{st}>3$ ist, sodass der Graph die Coxeter"=Matrix und damit die Gruppe eindeutig bestimmt.
\end{definition}

\begin{theorem}[Spiegelungsdarstellung]
\index{terms}{Spiegelungsdarstellung!einer Coxeter-Gruppe}
Mit den Bezeichnungen aus der Definition definiere $V:=\IR^S$, wobei die Basisvektoren mit $e_s$ für $s\in S$ bezeichnet seien. Dann gilt:
\begin{enumerate}
	\item Definiere die Bilinearform $\beta:V\times V\to\IR$ durch $\beta(e_s,e_t) := -\cos\left(\frac{\pi}{m_{st}}\right)$ (wobei $\frac{\pi}{\infty}:=0$ sei). Dann ist durch
	\[\rho(s)v := v - 2\beta(e_s,v)e_s\]
	eine Darstellung $\rho: W\to\End_{\IR}(V)$ definiert, die sogenannte \udot{Spie\-ge\-lungs\-dar\-stel\-lung} von $W$.
	\item $\rho$ ist injektiv.
	\item Die zur Konstruktion verwendete Coxeter-Matrix ist durch $(W,S)$ eindeutig bestimmt. Genauer ist $m_{st}$ gleich der Ordnung von $st$ in $W$ für alle $s,t\in S$.
\end{enumerate}
\end{theorem}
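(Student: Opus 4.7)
Der Plan ist, (a)--(c) der Reihe nach zu behandeln; der Hauptaufwand liegt eindeutig in der Injektivität aus (b). Für (a) ist zu zeigen, dass $s \mapsto \rho(s)$ die Relationen $(st)^{m_{st}} = 1$ respektiert, sich also zu einem Homomorphismus $W \to \End_{\IR}(V)$ fortsetzt. Die Identität $\rho(s)^2 = \id_V$ folgt durch Einsetzen aus $\beta(e_s, e_s) = 1$. Für die Zopfrelation schränke ich auf $U := \IR e_s + \IR e_t$ ein. Die Gram-Matrix von $\beta|_U$ hat für $m_{st} < \infty$ die Determinante $\sin^2(\pi/m_{st}) > 0$, ist also positiv definit, und bezüglich des so entstehenden euklidischen Skalarprodukts sind $\rho(s)|_U$ und $\rho(t)|_U$ gewöhnliche Geradenspiegelungen, deren Fixgeraden den Winkel $\pi/m_{st}$ einschließen; folglich ist $\rho(s)\rho(t)|_U$ eine Drehung um den Winkel $2\pi/m_{st}$ und hat Ordnung genau $m_{st}$. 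Da $\beta|_U$ nicht ausgeartet ist, zerfällt $V = U \oplus U^{\perp_\beta}$, und auf $U^{\perp_\beta}$ wirken $\rho(s)$ und $\rho(t)$ als Identität (der $\beta$-orthogonale Raum zu $\IR e_s$ umfasst $U^{\perp_\beta}$). Der Fall $m_{st} = \infty$ erzeugt keine zu prüfende Relation.

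Aussage (b) bildet den eigentlichen Hauptaufwand, und hierfür würde ich dem klassischen Aufbau der Wurzeltheorie (etwa nach \cite{bourbaki_elements_lie_5}) folgen. Man definiere die Wurzelmenge $\Phi := \set{\rho(w)e_s \mid w \in W,\ s \in S}$, zerlege sie anhand des Vorzeichens der Basiskoeffizienten in $\Phi = \Phi^+ \sqcup \Phi^-$ und beweise parallel zum Aufbau der Längenfunktion $\ell$ auf $W$ und der Austauschbedingung die zentrale Äquivalenz
\[ \ell(ws) > \ell(w) \quad \Longleftrightarrow \quad \rho(w)e_s \in \Phi^+. \]
Für $w \neq 1$ mit reduzierter Zerlegung $w = s_1 \cdots s_\ell$ ist $\ell(ws_\ell) < \ell(w)$, also $\rho(w)e_{s_\ell} \in \Phi^-$ und insbesondere $\rho(w) \neq \id_V$. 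Ich erwarte diesen induktiven Aufbau der Wurzel- und Längentheorie als eigentliche Hürde des Beweises.

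Aussage (c) folgt dann unmittelbar: Die Präsentation liefert $\ord_W(st) \mid m_{st}$, und nach (b) ist $\ord_W(st) = \ord(\rho(st))$. Für $m_{st} < \infty$ hat $\rho(st)|_U$ nach der Analyse aus (a) Ordnung genau $m_{st}$; für $m_{st} = \infty$ wird $\beta|_U$ ausgeartet vom Rang $1$, und eine direkte Rechnung (etwa in der Basis $e_s, e_t$) zeigt, dass $\rho(st)|_U$ unipotent mit einem nicht-trivialen Jordan-Block und daher von unendlicher Ordnung ist. In beiden Fällen stimmt $\ord_W(st)$ mit $m_{st}$ überein.
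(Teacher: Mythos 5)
Der Aufsatz verweist für diesen Satz lediglich auf \citep[5.3+5.4]{humphreys1992coxeter}, und Ihr Beweisplan reproduziert genau den dort (und bei Bourbaki) geführten Standardargumentationsgang: die Analyse auf dem Unterraum $U=\IR e_s+\IR e_t$ für (a) und (c), und der Aufbau des Wurzelsystems $\Phi=\Phi^+\sqcup\Phi^-$ mit der Äquivalenz $\ell(ws)>\ell(w)\iff\rho(w)e_s\in\Phi^+$ für (b). Der Vorschlag ist korrekt und folgt demselben Weg wie die zitierte Quelle.
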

\begin{proof}
Siehe \citep[5.3+5.4]{humphreys1992coxeter}.
\end{proof}

\subsection{Kombinatorik I: Wortkombinatorik}

\begin{definition}[Längenfunktion, reduzierte Wörter]
\index{terms}{Coxeter!-Gruppe!Längenfunktion}\index{terms}{reduzierter Audruck}
\index{symbols}{lw@$l(w)$}
Jedes $w\in W$ lässt sich nach Definition als Produkt $w=s_1 s_2 \ldots s_n$ für geeignete $s_i\in S$ schreiben. Das minimale $n\in\IN$, für das so eine Darstellung existiert, heißt \udot{Länge} von $w$ und wird als $l(w)$ notiert. Ein Ausdruck $w=s_1 s_2 \ldots s_n$ mit $n=l(w)$ heißt \udot{reduzierter Ausdruck} für $w$.
\end{definition}

\begin{theorem}
\index{terms}{Austauscheigenschaft}\index{terms}{Austauscheigenschaft!starke}\index{terms}{Löschungseigenschaft}
Sei $(W,S)$ eine Coxeter"=Gruppe. Es gelten dann folgende Eigenschaften:
\begin{enumerate}
	\item Austauscheigenschaft:

	Sei $w=s_1\ldots s_n\in W$ und $s\in S$ derart, dass $l(sw)<l(w)$. Dann gibt es einen Index $i$ so, dass
	\[sw = s_1 \ldots \widehat{s_i} \ldots s_n.\]
	\item Starke Austauscheigenschaft:
	
	Sei $w=s_1\ldots s_n\in W$ und $t\in\Set{wsw^{-1} | w\in W, s\in S}$ derart, dass $l(tw)<l(w)$. Dann gibt es einen Index $i$ so, dass
	\[tw = s_1 \ldots \widehat{s_i} \ldots s_n.\]
	\item Löschungseigenschaft:

	Sei $w=s_1\ldots s_n\in W$ mit $l(w)<n$. Dann gibt es Indizes $i<j$ so, dass
	\[w = s_1 \ldots \widehat{s_i} \ldots \widehat{s_j} \ldots s_n.\]
\end{enumerate}
Umgekehrt ist jede dieser Eigenschaften für eine Gruppe $W'$, die von einer Menge von Involutionen $S'\subseteq W'$ erzeugt wird, äquivalent dazu, dass $(W',S')$ eine Coxeter"=Gruppe ist.
\end{theorem}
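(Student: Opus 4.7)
Mein Plan ist, die Hinrichtung mit Hilfe der bereits eingef�hrten Spiegelungsdarstellung und des zugeh�rigen Wurzelsystems zu beweisen und anschlie�end die R�ckrichtung �ber eine Matsumoto-artige Analyse reduzierter Ausdr�cke zu skizzieren.

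F�r die Hinrichtung betrachte ich das Wurzelsystem $\Phi := \Set{\rho(w)(e_s) | w\in W,\, s\in S}\subseteq V$ mit der Zerlegung $\Phi = \Phi^+\sqcup\Phi^-$ in Wurzeln mit ausschlie�lich nichtnegativen bzw. nichtpositiven Koeffizienten in der Basis $(e_s)_{s\in S}$. Die Existenz dieser Zerlegung sowie die zentrale L�ngenformel $l(w) = \abs{\Set{\alpha\in\Phi^+ | \rho(w)(\alpha)\in\Phi^-}}$ werden induktiv �ber $l(w)$ gezeigt. Mit ihrer Hilfe beweise ich die starke Austauscheigenschaft (b): Einer Reflexion $t = usu^{-1}$ entspricht die positive Wurzel $\alpha_t := \pm\rho(u)(e_s)$, und $l(tw)<l(w)$ ist �quivalent zu $\rho(w^{-1})(\alpha_t)\in\Phi^-$. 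Verfolge ich l�ngs eines reduzierten Ausdrucks $w = s_1\ldots s_n$ die Wurzeln $\rho(s_i\ldots s_1)(\alpha_t)$ f�r $i = 0,1,\ldots,n$, so tritt an einer kleinsten Stelle $i$ ein Vorzeichenwechsel von $\Phi^+$ nach $\Phi^-$ auf. Da eine einfache Reflexion nur ihre eigene einfache Wurzel negativ macht, liest man daraus $\rho(s_{i-1}\ldots s_1)(\alpha_t) = e_{s_i}$ und folglich $t = s_1\ldots s_{i-1} s_i s_{i-1}\ldots s_1$ ab; Einsetzen liefert unmittelbar $tw = s_1\ldots\widehat{s_i}\ldots s_n$. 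Die schwache Austauscheigenschaft (a) ist der Spezialfall $t = s\in S$, und die L�schungseigenschaft (c) folgt induktiv aus (a), indem man das kleinste $j$ mit $l(s_1\ldots s_j)<j$ ausw�hlt und (a) auf das reduzierte Teilwort $s_1\ldots s_{j-1}$ und den Erzeuger $s_j$ anwendet.

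F�r die R�ckrichtung sei $(W',S')$ ein von Involutionen erzeugtes Paar, das etwa die L�schungseigenschaft erf�llt; die $S'$-L�ngenfunktion ist damit wohldefiniert. Man beweist zun�chst Matsumotos Theorem: Je zwei reduzierte Ausdr�cke desselben Elements gehen durch endliches Anwenden von Zopfrelationen zu den Exponenten $m_{st} := \ord(st)$ auseinander hervor. Hieraus folgt, dass jede Relation in $W'$ aus den Zopfrelationen folgt, und somit die Coxeter"=Pr�sentation von $W'$. Die �quivalenz der drei Eigenschaften untereinander ergibt sich entweder trivial ((b) $\Rightarrow$ (a)) oder aus dem obigen Induktionsargument.

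Die Hauptschwierigkeit sehe ich in dieser R�ckrichtung, da das Matsumoto-Argument eine sorgf�ltige Fallunterscheidung beim Vergleich zweier reduzierter Ausdr�cke erfordert; die Hinrichtung dagegen ist eine direkte Konsequenz der Vorzeichenkombinatorik im Wurzelsystem. Da der gesamte Satz klassisch ist, d�rfte in der vorliegenden Arbeit ein Verweis auf \citep[Kap.~1]{humphreys1992coxeter} oder \citep[IV.~\S1]{bourbaki_elements_lie_5} ausreichen.
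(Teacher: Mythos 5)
Dein Vorschlag ist korrekt und reproduziert im Wesentlichen den Standardbeweis aus Humphreys (Wurzelsystem, Vorzeichenwechsel entlang des Ausdrucks, L�ngenformel) -- genau die Stellen, auf die die Arbeit verweist, denn der "`Beweis"' im Manuskript besteht ausschlie�lich aus dem Zitat \citep[1.9+5.8]{humphreys1992coxeter}. Du erkennst selbst richtig, dass hier ein Verweis gen�gt; inhaltlich ist lediglich zu bemerken, dass die starke Austauscheigenschaft wie im Manuskript formuliert auch f�r \emph{nicht} reduzierte Ausdr�cke $w=s_1\ldots s_n$ gilt (was dein Vorzeichenwechsel-Argument ohnehin mit abdeckt, obwohl du von "`reduziertem Ausdruck"' sprichst) und dass die R�ckrichtung in der Literatur meist direkt aus der L�schungseigenschaft gefolgert wird, ohne den Umweg �ber Matsumotos Satz.
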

\begin{proof}
Siehe \citep[1.9+5.8]{humphreys1992coxeter}.
\end{proof}

\begin{theorem}[Tits, siehe \cite{tits1969probleme}]
\index{terms}{Satz von!Tits}
Sei $(W,S)$ eine Coxeter"=Gruppe. Auf der Menge $S^\ast$ der formalen Wörter mit Buchstaben aus $S$ definiere folgende Operationen:
\begin{enumerate}
	\item[I] Lösche ein Teilwort der Form $ss$.
	\item[II] Ersetze ein Teilwort der Form $sts\ldots$ durch $tst\ldots$, wobei beide Wörter Länge ${m_{st}<\infty}$ haben.
\end{enumerate}
Mit diesen Bezeichnungen gilt:
\begin{enumerate}
	\item Für jedes $w=s_1 s_2 \ldots s_n\in W$ gibt es dann eine Folge von solchen Operationen, die das Wort $s_1 \ldots s_n\in S^\ast$ in einen reduzierten Ausdruck von $w$ überführen.
	\item Je zwei reduzierte Ausdrücke von $w\in W$ unterscheiden sich nur durch eine Folge von Typ-II-Operationen.
\end{enumerate}
\end{theorem}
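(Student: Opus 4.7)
Die Gesamtstrategie ist, zun\"achst Teil (b) per Induktion nach $l(w)$ zu beweisen und mithilfe von (b) dann Teil (a) per Induktion nach der Wortl\"ange $n$ zu zeigen.

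F\"ur (b) seien $s_1\ldots s_n$ und $t_1\ldots t_n$ zwei reduzierte Ausdr\"ucke f\"ur $w$ mit $n=l(w)$. Falls $s_1=t_1$, liefert die Induktionsvoraussetzung angewandt auf die Suffixe (reduzierte Ausdr\"ucke von $s_1w$ der L\"ange $n-1$) die Behauptung. Im interessanten Fall $s:=s_1\neq t_1=:t$ gilt $l(sw),l(tw)<l(w)$. Das zentrale Hilfslemma besagt hier: $m:=m_{st}<\infty$, und $w$ besitzt reduzierte Ausdr\"ucke, die mit $\underbrace{sts\ldots}_m$ bzw.\ $\underbrace{tst\ldots}_m$ beginnen. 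Dies zeige ich durch eine innere Induktion nach $k\leq m$: Ausgehend von den reduzierten Ausdr\"ucken mit Anfangsbuchstaben $s$ bzw.\ $t$ konstruiere ich mittels wiederholter Anwendung der Austauscheigenschaft sukzessive reduzierte Ausdr\"ucke f\"ur $w$ mit immer l\"angeren alternierenden Anfangssegmenten. W\"are $m=\infty$, so lieferte das Verfahren reduzierte Anfangssegmente beliebiger L\"ange, im Widerspruch zu $l(w)=n$. Mit dem Hilfslemma kann die \"au\ss{}ere Induktionsvoraussetzung (angewandt auf die passenden Suffixe) sowohl $s_1\ldots s_n$ als auch $t_1\ldots t_n$ in Ausdr\"ucke \"uberf\"uhren, die mit den beiden Zopfw\"ortern beginnen; eine einzige Anwendung der Zopfrelation verbindet diese, womit (b) folgt.

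F\"ur (a) sei $s_1\ldots s_n$ nicht reduziert und $j$ der kleinste Index mit $l(s_1\ldots s_j)<j$. Dann ist $s_1\ldots s_{j-1}$ reduziert, und die (Rechtsvariante der) Austauscheigenschaft liefert ein $i<j$ mit $s_1\ldots s_{j-1}s_j = s_1\ldots\widehat{s_i}\ldots s_{j-1}$. Nach K\"urzen des Pr\"afix $s_1\ldots s_{i-1}$ und Rechtsmultiplikation mit $s_j$ folgt die Gleichung $s_i s_{i+1}\ldots s_{j-1} = s_{i+1}\ldots s_{j-1}s_j$ in $W$. Beide Seiten haben $j-i$ Buchstaben, und weil $s_i\ldots s_{j-1}$ als Teilwort eines reduzierten Ausdrucks selbst reduziert ist, sind beide W\"orter reduzierte Ausdr\"ucke desselben Elements. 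Nach (b) existiert eine Folge von Typ-II-Operationen, die $s_i\ldots s_{j-1}$ in $s_{i+1}\ldots s_j$ \"uberf\"uhrt. Innerhalb von $s_1\ldots s_n$ angewandt erzeugt diese Folge an den Positionen $j,j+1$ ein Teilwort $s_js_j$, das per Typ-I-Operation gel\"oscht wird. Das resultierende Wort hat L\"ange $n-2$ und repr\"asentiert weiterhin $w$, sodass die \"au\ss{}ere Induktion nach $n$ den Beweis abschlie\ss{}t.

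Die Hauptschwierigkeit ist der Beweis des Hilfslemmas zu (b), insbesondere der induktive Schritt, in dem die Existenz l\"angerer alternierender Anfangssegmente mittels der Austauscheigenschaft nachgewiesen werden muss, sowie der separate Widerspruchsschluss f\"ur den Fall $m_{st}=\infty$. Alle \"ubrigen Schritte sind vergleichsweise formale Anwendungen der bereits etablierten Eigenschaften.
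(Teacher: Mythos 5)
Die vorliegende Arbeit gibt keinen eigenen Beweis an, sondern verweist lediglich auf die Literatur (Davis, Theorem 3.4.2); insofern ist kein direkter Vergleich m\"oglich. Dein Argument ist aber der klassische Tits'sche Ansatz, den auch die zitierte Quelle verfolgt: zuerst Teil (b) durch Induktion nach $l(w)$ auf Basis der Austauscheigenschaft und des Hilfslemmas \"uber gemeinsame Linksabstiege, dann Teil (a) durch Reduktion auf (b). Die Gesamtstruktur ist korrekt, und der skizzierte Beweis des Hilfslemmas (innere Induktion \"uber die L\"ange alternierender Pr\"afixe, Widerspruchsschluss f\"ur $m_{st}=\infty$) l\"asst sich tats\"achlich allein mit der Austauscheigenschaft durchf\"uhren.

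Eine Kleinigkeit verdient Beachtung: Am Ende des Beweises von (b) behauptest du, eine einzige Zopf-Operation verbinde die beiden aus $s_1\ldots s_n$ bzw.\ $t_1\ldots t_n$ gewonnenen Ausdr\"ucke, die mit den Zopfw\"ortern beginnen. Das stimmt nur, wenn die Suffixe hinter $\underbrace{sts\ldots}_m$ bzw.\ $\underbrace{tst\ldots}_m$ als W\"orter \"ubereinstimmen, was du nicht erzwingst. Da das Hilfslemma jedoch $w = w_{st}w'$ mit $l(w)=m+l(w')$ liefert, kannst du ein festes reduziertes Wort $r$ f\"ur $w'$ w\"ahlen; dann sind $\underbrace{sts\ldots}_m r$ und $\underbrace{tst\ldots}_m r$ beide reduzierte Ausdr\"ucke f\"ur $w$, die sich um genau eine Zopfbewegung unterscheiden, und die Induktionsvoraussetzung (angewandt auf die Suffixe, die $sw$ bzw.\ $tw$ darstellen) verbindet $s_1\ldots s_n$ mit dem ersten und $t_1\ldots t_n$ mit dem zweiten. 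Alternativ gen\"ugt nach der Zopfbewegung eine weitere Anwendung der Induktionsvoraussetzung auf die dann mit demselben Buchstaben beginnenden W\"orter. Von diesem leicht behebbaren Detail abgesehen ist dein Beweisgang tragf\"ahig.
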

\begin{proof}
Siehe \citep[3.4.2]{davis2008geometry}.
\end{proof}

\begin{corollary}[Satz von Matsumoto, siehe \cite{matsumoto1964generateurs}]
\index{terms}{Satz von!Matsumoto}
Sei $(W,S)$ eine Coxeter"=Gruppe und $M$ ein beliebiges Monoid sowie $f: S\to M$ eine Funktion derart, dass für alle $s,t\in S$ mit $m_{st}<\infty$ stets
\[\underbrace{f(s)f(t)f(s)\ldots}_{m_{st}} = \underbrace{f(t)f(s)f(t)\ldots}_{m_{st}}\]
gilt. Dann gibt es genau eine Fortsetzung $F:W\to M$ mit $F(s_1 \ldots s_n)=f(s_1)\ldots f(s_n)$ für alle reduzierten Ausdrücke $s_1\ldots s_n$.
\end{corollary}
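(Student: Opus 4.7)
Die nat�rliche Strategie besteht darin, $F$ zuerst auf reduzierten Ausdr�cken zu definieren und dann die Wohldefiniertheit mit dem Satz von Tits zu pr�fen. Konkret w�rde ich folgendes tun: F�r $w\in W$ w�hle einen reduzierten Ausdruck $w=s_1\ldots s_n$ und setze $F(w):=f(s_1)\ldots f(s_n)\in M$. Die eigentliche Aufgabe ist zu zeigen, dass dies unabh�ngig von der Wahl des reduzierten Ausdrucks ist.

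F�r die Wohldefiniertheit benutze ich Teil (b) des Satzes von Tits: Je zwei reduzierte Ausdr�cke f�r dasselbe $w$ unterscheiden sich nur durch eine Folge von Typ-II-Operationen, also Ersetzungen eines Teilwortes $sts\ldots$ der L�nge $m_{st}<\infty$ durch $tst\ldots$ derselben L�nge. Es gen�gt also zu zeigen, dass eine einzelne solche Operation den Wert $f(s_1)\ldots f(s_n)$ unver�ndert l�sst. Wenn das Teilwort an den Positionen $i+1,\ldots,i+m_{st}$ steht, dann folgt mit Hilfe der Assoziativit�t in $M$
\[
f(s_1)\ldots f(s_i)\cdot\underbrace{f(s)f(t)f(s)\ldots}_{m_{st}}\cdot f(s_{i+m_{st}+1})\ldots f(s_n)
 = f(s_1)\ldots f(s_i)\cdot\underbrace{f(t)f(s)f(t)\ldots}_{m_{st}}\cdot f(s_{i+m_{st}+1})\ldots f(s_n),
\]
da nach Voraussetzung $f(s)f(t)f(s)\ldots = f(t)f(s)f(t)\ldots$ gilt. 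Damit liefern alle reduzierten Ausdr�cke von $w$ denselben Wert in $M$, und $F$ ist wohldefiniert.

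Die behauptete Eigenschaft $F(s_1\ldots s_n)=f(s_1)\ldots f(s_n)$ f�r reduzierte Ausdr�cke gilt dann per Definition. Die Eindeutigkeit ist offensichtlich: Jede Fortsetzung $F$ mit dieser Eigenschaft ist auf allen $w\in W$ festgelegt, weil jedes Element mindestens einen reduzierten Ausdruck besitzt.

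Ein wirkliches Haupthindernis sehe ich nicht -- der gesamte kombinatorische Inhalt steckt bereits im Satz von Tits. Der einzige Punkt, an dem man aufpassen muss, ist, dass in der Voraussetzung die Zopfrelation in $M$ nur f�r die "`isolierten"' Ausdr�cke $f(s)f(t)f(s)\ldots$ gefordert wird; dank der Assoziativit�t in $M$ l�sst sich diese Identit�t aber problemlos in einem beliebigen Kontext (mit Pr�fix und Suffix) anwenden, wie oben ausgef�hrt. Bemerkenswert ist, dass die Aussage nicht nur f�r Gruppen oder Algebren, sondern f�r beliebige Monoide $M$ gilt, was gerade daran liegt, dass man keine Inversen, sondern nur Assoziativit�t ben�tigt.
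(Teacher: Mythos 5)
Your proof is correct. The paper states this result as a corollary of Tits' theorem but gives no proof of its own, citing instead Geck--Pfeiffer [1.2.2]; the remark that follows the corollary explicitly calls the implication from Tits' theorem to Matsumoto's theorem ``offenkundig,'' and your argument spells out exactly that implication: define $F$ on a chosen reduced word and reduce well-definedness, via Tits part (b), to invariance under a single type-II braid move, which follows from the hypothesis on $f$ together with associativity in $M$. So your route is the one the paper gestures at but does not write out. Note, however, that the same remark also points out that the proof it actually cites is \emph{independent} of Tits' theorem; the reference you would find there proves Matsumoto from scratch rather than deriving it from Tits. Your derivation is shorter given what the paper has already established, whereas the cited proof is self-contained --- which matters because, as the remark explains, one can just as well go the other way and recover Tits part (b) from Matsumoto by taking $M$ to be the braid monoid, so having at least one Tits-free proof keeps the equivalence non-circular.
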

\begin{proof}
Siehe z.\,B. \citep[1.2.2]{geckpfeiffer}.
\end{proof}

\begin{remark}
\index{terms}{Zopfrelationen}
Dies ist in der Tat äquivalent zur zweiten Aussage des Satzes von Tits. Die Implikation vom Satz von Tits zum Satz von Matsumoto ist offenkundig, die Rückrichtung folgt, indem man das \udot{Zopf"=Monoid}, also das Monoid mit der Monoidpräsentation
\[B^+(W,S) := \braket{S \mid \underbrace{sts\ldots}_{m_{st}}=\underbrace{tst\ldots}_{m_{st}} \,\text{für alle $s,t\in S$ mit }m_{st}<\infty},\]
für $M$ einsetzt. Aus der Eindeutigkeit der Fortsetzung $F$ folgt dann, dass jedes $w\in W$ ein eindeutiges Element von $B^+(W,S)$ definiert. Das ist genau die Aussage, dass sich je zwei reduzierte Ausdrücke desselben Elements durch Typ"~II"=Operationen ineinander überführen lassen.

Der zitierte Beweis ist jedoch unabhängig vom Satz von Tits.
\end{remark}

\begin{lemma}[Längstes Element]
\index{terms}{Coxeter!-Gruppe!längstes Element}
\index{symbols}{w0@$w_0$}
Ist $(W,S)$ eine endliche Coxeter"=Gruppe, dann existiert genau ein Element $w_0\in W$ maximaler Länge. Es gilt
\begin{enumerate}
	\item $w_0^2=1$ sowie
	\item $l(ww_0)=l(w_0)-l(w)=l(w_0 w)$ für alle $w\in W$.
\end{enumerate}
\end{lemma}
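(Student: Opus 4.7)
Mein Plan ist, die Aussagen \"uber die Spiegelungsdarstellung $\rho$ statt rein kombinatorisch zu beweisen. Dazu rufe ich zun\"achst das klassische Resultat (siehe etwa \citep[5.6]{humphreys1992coxeter}) in Erinnerung: Mit $\Phi := W\cdot\set{e_s \mid s\in S}$ und der disjunkten Zerlegung $\Phi = \Phi^+ \sqcup \Phi^-$ (positive bzw.\ negative Wurzeln, definiert \"uber nichtnegative bzw.\ nichtpositive Koeffizienten bez\"uglich der Basis $(e_s)_{s\in S}$) gilt die Inversionsformel
\[l(w) = \abs{\Set{\alpha\in\Phi^+ | w\alpha\in\Phi^-}}.\]
Insbesondere folgt $l(w) \leq \abs{\Phi^+}$, und zwar mit Gleichheit genau dann, wenn $w\Phi^+=\Phi^-$ gilt.

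Die Existenz von $w_0$ reduziert sich damit auf die Endlichkeit von $W$ (bzw.\ $\Phi$). F\"ur die Eindeutigkeit bemerke ich: Ein Element $w\in W$ mit $w\Phi^+ = \Phi^+$ hat nach der Inversionsformel L\"ange $0$, ist also das neutrale Element. Sind $w_0, w_0'$ zwei Elemente maximaler L\"ange, so folgt $w_0\Phi^+ = w_0'\Phi^+ = \Phi^-$, also $(w_0^{-1}w_0')\Phi^+ = \Phi^+$ und somit $w_0 = w_0'$. Die Identit\"at $w_0^2=1$ erh\"alt man genauso: Aus $w_0^2\Phi^+ = w_0(w_0\Phi^+) = w_0\Phi^- = \Phi^+$ (die letzte Gleichung, weil $w_0$ die Partition $\Phi = \Phi^+\sqcup\Phi^-$ umkehrt) folgt $w_0^2 = 1$.

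F\"ur die L\"angenformeln rechne ich direkt mit der Inversionsformel. Mit der Substitution $\beta := w_0\alpha$, die wegen $w_0\Phi^+=\Phi^-$ eine Bijektion $\Phi^+\to\Phi^-$ ist, ergibt sich
\[l(ww_0) = \abs{\Set{\alpha\in\Phi^+ | ww_0\alpha\in\Phi^-}} = \abs{\Set{\beta\in\Phi^- | w\beta\in\Phi^-}},\]
und mit der mit der $W$-Operation vertr\"aglichen Negation $\beta\mapsto-\beta$ ist dies gleich $\abs{\Set{\gamma\in\Phi^+ | w\gamma\in\Phi^+}} = \abs{\Phi^+}-l(w) = l(w_0)-l(w)$. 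Die Aussage $l(w_0 w)=l(w_0)-l(w)$ folgt v\"ollig analog oder durch Inversion unter Verwendung von $w_0=w_0^{-1}$.

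Die Hauptschwierigkeit dieses Zugangs besteht darin, dass weder die Inversionsformel noch das Wurzelsystem $\Phi$ in der Arbeit bisher explizit bereitgestellt wurden; f\"ur eine saubere Ausarbeitung m\"usste man diese einf\"uhren oder -- wie hier skizziert -- per Zitat nach Humphreys rechtfertigen. Eine rein kombinatorische Alternative per Induktion \"uber $l(w)$ scheint mir deutlich sperriger zu sein: Der Induktionsschritt erfordert im Wesentlichen die Aussage, dass ein vorgegebener Erzeuger ein Linksdeszent des Zwischenelements $(v')^{-1}w_0$ ist, was aus der (starken) Austauscheigenschaft allein nicht unmittelbar folgt.
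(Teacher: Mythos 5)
Der Beweis des Papers besteht lediglich aus dem Verweis auf \citep[5.6]{humphreys1992coxeter}, und genau dort wird das geometrische Argument \"uber das Wurzelsystem und die Inversionsformel gef\"uhrt, das Sie hier skizzieren; Sie rekonstruieren also exakt den zitierten Beweis. Ein kleiner Hinweis: Der Schritt, dass ein Element $w_0$ maximaler L\"ange automatisch $w_0\Phi^+=\Phi^-$ erf\"ullt, bedarf noch einer Begr\"undung -- aus der Maximalit\"at folgt $w_0(e_s)\in\Phi^-$ f\"ur alle $s\in S$ (sonst h\"atte $w_0 s$ gr\"o\ss ere L\"ange), und hieraus ergibt sich $w_0\Phi^+\subseteq\Phi^-$ durch Betrachtung der nichtnegativen Koeffizienten --, aber da Sie den gesamten Zugang ohnehin als auf Humphreys gest\"utzte Skizze kennzeichnen, ist das in Ordnung.
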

\begin{proof}
Siehe z.\,B. \citep[5.6]{humphreys1992coxeter}
\end{proof}

\subsection{Klassifikation der endlichen Coxeter-Gruppen}

\begin{lemmadef}[Parabolische Untergruppen]
\index{terms}{Parabolische Untergruppe}
\index{symbols}{WJ@$W_J$}
Sei $(W,S)$ eine Coxeter"=Gruppe und $J\subseteq S$ beliebig. Definiere dann die Untergruppe $W_J:=\braket{J}\leq W$. Mit diesen Bezeichnungen ist $(W_J,J)$ eine Coxeter"=Gruppe.
\end{lemmadef}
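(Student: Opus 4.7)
Der Plan ist, die Aussage mit Hilfe der Spiegelungsdarstellung zu beweisen. Sei $M':=(m_{st})_{s,t\in J}$ die auf $J\times J$ eingeschr�nkte Coxeter"=Matrix und $W'$ die zugeh�rige abstrakt pr�sentierte Coxeter"=Gruppe
\[
W':=\braket{J' \mid (s't')^{m_{st}}=1 \text{ f�r alle } s,t\in J},
\]
wobei $J'=\Set{s' | s\in J}$ eine formale Kopie sein soll. Da die definierenden Relationen von $W'$ alle in $W$ zwischen den entsprechenden Erzeugern aus $J$ erf�llt sind, liefert die universelle Eigenschaft der Pr�sentation einen Gruppenhomomorphismus $\varphi: W'\to W_J$ mit $\varphi(s')=s$, der wegen $W_J=\braket{J}$ surjektiv ist. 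Es bleibt zu zeigen, dass $\varphi$ auch injektiv ist; dann ist $(W_J,J)$ via $\varphi$ eine Coxeter"=Gruppe mit Coxeter"=Matrix $M'$. (Dass $m_{st}$ tats�chlich die Ordnung von $st$ im Sinne der Spezifikation ist, folgt sofort aus $W_J\leq W$ und Teil~c) des Satzes �ber die Spiegelungsdarstellung.)

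Die Injektivit�t von $\varphi$ zeige ich durch Vergleich der Spiegelungsdarstellungen. Sei $V=\IR^S$ mit der Bilinearform $\beta$ wie im Satz und $\rho: W\to\End(V)$ die zugeh�rige Spiegelungsdarstellung. Der Unterraum $V_J:=\mathrm{span}\Set{e_s | s\in J}\subseteq V$ wird unter $\rho(s)$ f�r $s\in J$ invariant gehalten, denn $\rho(s)v = v - 2\beta(e_s,v)e_s$ bleibt f�r $v\in V_J$ in $V_J$. Andererseits wende ich den Satz �ber die Spiegelungsdarstellung auf $W'$ mit Coxeter"=Matrix $M'$ an und erhalte eine (injektive) Spiegelungsdarstellung $\rho': W'\to\End(V')$ auf $V':=\IR^{J}$ mit Basis $(e'_s)_{s\in J}$ und Bilinearform $\beta'(e'_s,e'_t)=-\cos(\pi/m_{st})$.

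Der zentrale Punkt ist nun die Identifikation $\IR^J \xrightarrow{\sim} V_J$, $e'_s\mapsto e_s$: Unter dieser Identifikation stimmt $\beta'$ mit der Einschr�nkung von $\beta$ auf $V_J$ �berein, und folglich operiert $\varphi(s')=s$ auf $V_J$ genauso wie $\rho'(s')$ auf $V'$. Mit dem Satz von Matsumoto (bzw.\ der universellen Eigenschaft der Pr�sentation von $W'$) folgt daraus die Gleichung $\rho'(w')=\rho(\varphi(w'))|_{V_J}$ f�r alle $w'\in W'$. Ist nun $w'\in\ker\varphi$, so ist $\rho'(w')=\rho(1)|_{V_J}=\id_{V_J}$, und die Injektivit�t von $\rho'$ liefert $w'=1$. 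Also ist $\varphi$ injektiv und damit ein Isomorphismus.

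Die haupts�chliche technische H�rde liegt erwartungsgem�� nicht in der Konstruktion von $\varphi$, sondern in der Verifikation, dass man die Spiegelungsdarstellung von $W'$ durch Einschr�nkung der Spiegelungsdarstellung von $W$ auf $V_J$ wirklich wiederfindet --- also darin, dass die Bilinearformen $\beta$ und $\beta'$ auf den entsprechenden Basen �bereinstimmen und $V_J$ unter $\rho(W_J)$ invariant ist. Beides liest man jedoch unmittelbar aus der Definition $\beta(e_s,e_t)=-\cos(\pi/m_{st})$ und der expliziten Formel f�r $\rho(s)$ ab; der Rest ist dann eine formale Anwendung der Injektivit�tsaussage aus dem Satz �ber die Spiegelungsdarstellung.
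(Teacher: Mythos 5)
Der Beweis ist korrekt und nutzt den Standardansatz �ber die Spiegelungsdarstellung: Aus der abstrakt pr\"asentierten Gruppe $W'$ zur eingeschr\"ankten Coxeter"=Matrix $M'$ erh\"alt man die kanonische Surjektion $\varphi\colon W'\twoheadrightarrow W_J$, und die Injektivit\"at von $\varphi$ folgt, weil man die (injektive) Spiegelungsdarstellung von $W'$ durch Einschr\"ankung der Spiegelungsdarstellung von $W$ auf den invarianten Teilraum $V_J$ wiederfindet. Da die Arbeit an dieser Stelle lediglich auf Humphreys~5.5 verweist und keinen eigenen Beweis gibt, ist Ihr Beweis die ausgef\"uhrte Version genau dieses Arguments.

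Eine kleine Anmerkung: Der Satz von Matsumoto wird in dem Schritt, in dem Sie $\rho'(w')=\rho(\varphi(w'))|_{V_J}$ f\"ur alle $w'\in W'$ schlie\ss en, gar nicht ben\"otigt und ist auch nicht das passende Werkzeug, da er \"uber Monoide bzw.\ reduzierte W\"orter spricht. Was Sie tats\"achlich verwenden, ist schlicht, dass zwei Gruppenhomomorphismen mit Quelle $W'$ \"ubereinstimmen, sobald sie auf den Erzeugern \"ubereinstimmen; das ist die universelle Eigenschaft der Pr\"asentation, die Sie in der Klammer ohnehin korrekt nennen. Der Verweis auf Matsumoto sollte also gestrichen werden. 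Ebenso ist die Klammerbemerkung, dass $m_{st}$ die Ordnung von $st$ in $W_J$ ist, trivialerweise richtig, weil $W_J\leq W$ Untergruppe ist; Teil~c) des Satzes \"uber die Spiegelungsdarstellung wird daf\"ur nicht gebraucht.
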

\begin{proof}
Siehe z.\,B. \citep[5.5]{humphreys1992coxeter}.
\end{proof}

\begin{lemmadef}[Irreduzible Komponenten]
\index{terms}{Coxeter!-Gruppe!irreduzible}\index{terms}{Zusammenhangskomponente}
Eine Coxeter"=Gruppe $(W,S)$ heißt \udot{irreduzibel}, falls das Dynkin"=Diagramm von $(W,S)$ zusammenhängend ist.

Die Zerlegung $S=\coprod_{i=1\ldots n} S_i$ in Zusammenhangskomponenten induziert eine Zerlegung der Gruppe in irreduzible Faktoren $W=\prod_{i=1\ldots n} W_{S_i}$.
\end{lemmadef}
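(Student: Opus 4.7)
Der Plan ist, die Behauptung durch Angabe eines expliziten Isomorphismus zu beweisen, und zwar indem die natürliche Multiplikationsabbildung
\[\mu: \prod_{i=1}^n W_{S_i} \to W, \quad (w_1,\ldots,w_n)\mapsto w_1 w_2 \cdots w_n\]
als Gruppenisomorphismus entlarvt wird. Dazu sind drei Dinge nachzuweisen: dass $\mu$ überhaupt wohldefinierter Homomorphismus, dass er surjektiv und dass er injektiv ist.

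Der Ausgangspunkt ist die Beobachtung, dass per Definition des Dynkin-Diagramms für $s\in S_i$ und $t\in S_j$ mit $i\neq j$ die Bedingung $m_{st}=2$ gilt, also $st=ts$. Daraus folgt sofort, dass die Untergruppen $W_{S_i}$ und $W_{S_j}$ für $i\neq j$ elementweise kommutieren, und damit ist $\mu$ ein Gruppenhomomorphismus. Die Surjektivität ist klar, denn $S=\bigcup_i S_i$ liegt im Bild von $\mu$ und erzeugt $W$.

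Für die Injektivität, die ich als eigentlichen Kern des Beweises ansehe, werde ich eine Linksinverse $F:W\to\prod_i W_{S_i}$ mit Hilfe der universellen Eigenschaft der Coxeter-Präsentation konstruieren. Definiere $f:S\to\prod_i W_{S_i}$ durch $f(s):=(1,\ldots,1,s,1,\ldots,1)$, wobei $s$ an derjenigen Stelle $i$ steht, für die $s\in S_i$ gilt. Ich prüfe dann nach, dass $f$ sämtliche definierenden Relationen $(st)^{m_{st}}=1$ erfüllt: Für $s,t$ in derselben Komponente $S_i$ reduziert sich die Relation auf die Gleichung $(st)^{m_{st}}=1$ in $W_{S_i}$, die dort per Definition gilt; für $s\in S_i$ und $t\in S_j$ mit $i\neq j$ ist $m_{st}=2$ und das Produkt $f(s)f(t)$ hat offensichtlich die Ordnung teilt $2$, da beide Faktoren in getrennten Komponenten des Produkts leben und Involutionen sind. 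Nach der universellen Eigenschaft setzt sich $f$ zu einem eindeutigen Homomorphismus $F:W\to\prod_i W_{S_i}$ fort, und auf dem Erzeugendensystem $S_i\subseteq W_{S_i}$ gilt $F\circ\mu=\id$, also auf dem ganzen Produkt.

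Als Haupthindernis sehe ich weniger die konzeptionelle Struktur des Arguments als vielmehr den sauberen Nachweis, dass $f$ die Zopfrelationen respektiert, wenn $s$ und $t$ in derselben Komponente liegen -- hier geht man stillschweigend davon aus, dass $W_{S_i}$ tatsächlich die Coxeter-Gruppe mit Erzeugermenge $S_i$ und den ererbten $m_{st}$ ist; dies ist jedoch genau der Inhalt des vorangegangenen \textsc{Lemma und Definition} über parabolische Untergruppen und darf ohne Beweis benutzt werden. Alternativ könnte man die Injektivität auch über die Längenfunktion und die Wortkombinatorik erschließen, indem man zeigt, dass sich ein reduzierter Ausdruck für $w\in W$ mittels Typ-II-Operationen (die dank $m_{st}=2$ für $s,t$ aus verschiedenen Komponenten beliebiges Vertauschen erlauben) in ein Produkt reduzierter Ausdrücke in den einzelnen $W_{S_i}$ überführen lässt; dies ist aber aufwendiger als der obige Weg via universeller Eigenschaft.
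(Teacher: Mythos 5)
Your proof is correct and complete. The paper itself gives no argument for this statement but merely cites \citep[6.1]{humphreys1992coxeter}; the argument you present -- showing that the multiplication map $\mu:\prod_i W_{S_i}\to W$ is a homomorphism because generators in distinct components commute ($m_{st}=2$), is surjective because $S$ lies in its image, and is injective by constructing a left inverse via the universal property of the Coxeter presentation -- is precisely the standard proof one finds there, and you correctly note that the well-definedness of this left inverse rests on the preceding lemma that $(W_{S_i},S_i)$ is itself a Coxeter group with the restricted Coxeter matrix.
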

\begin{proof}
Siehe z.\,B. \citep[6.1]{humphreys1992coxeter}
\end{proof}

\begin{theorem}[Klassifikation der endlichen Coxeter-Gruppen]
\index{terms}{Dynkin-Diagramm}\index{terms}{Coxeter!-Gruppe!Klassifikation}\index{terms}{Coxeter!-Gruppe!irreduzible}\index{terms}{Coxeter!-Gruppe!exzeptionelle}
\index{symbols}{An@$A_n$}\index{symbols}{Bn@$B_n$}\index{symbols}{Dn@$D_n$}\index{symbols}{E6E7E8@$E_6$, $E_7$, $E_8$}\index{symbols}{F4@$F_4$}\index{symbols}{G2@$G_2$}\index{symbols}{I2m@$I_2(m)$}\index{symbols}{H3H4@$H_3$, $H_4$}
Die endlichen, irreduziblen Coxeter"=Gruppen sind genau diejenigen mit den Dynkin"=Diagrammen in Abbildung \ref{fig:fin_coxgrps}.
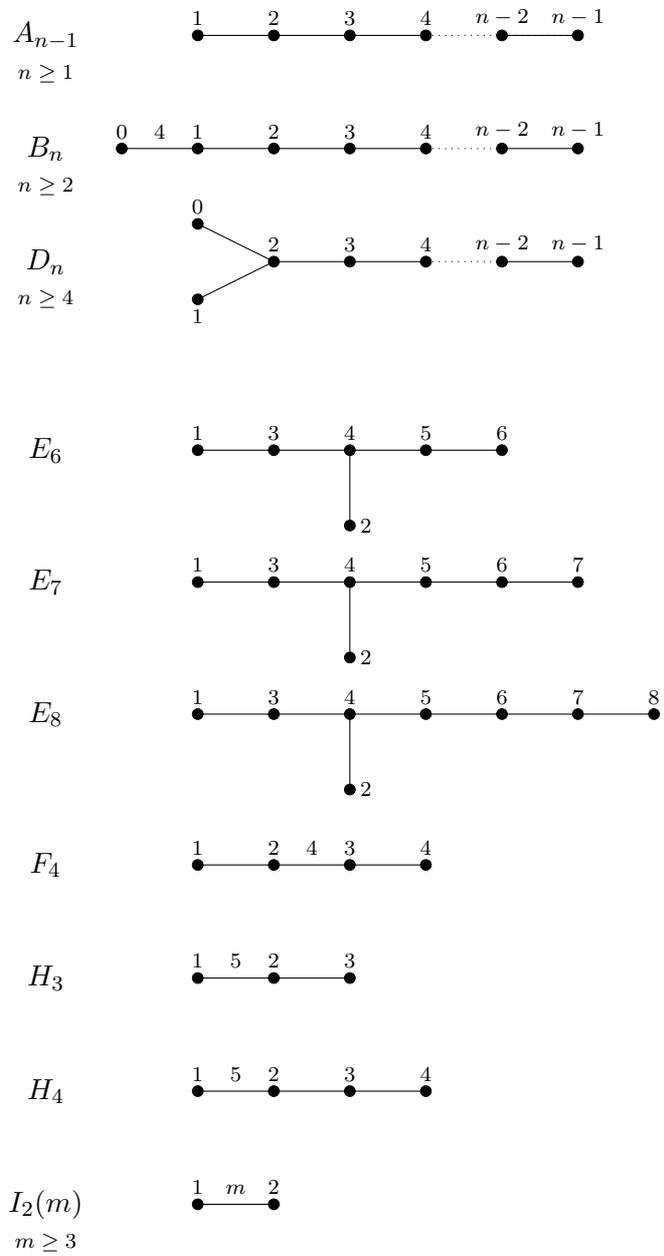
\begin{figure}[p]
	\centering
	\begin{tikzpicture}
\coordinate (A) at (-2,0);
\node at (A) {$A_{n-1}$};
\node[yshift=-0.5cm,font=\scriptsize] at (A) {$n\geq 1$};

\coordinate (A1)  at (0,0);
\coordinate (A2)  at (1,0);
\coordinate (A3)  at (2,0);
\coordinate (A4)  at (3,0);
\coordinate (An2) at (4,0);
\coordinate (An1) at (5,0);

\node[above,font=\scriptsize] at (A1)  {$1$};
\node[above,font=\scriptsize] at (A2)  {$2$};
\node[above,font=\scriptsize] at (A3)  {$3$};
\node[above,font=\scriptsize] at (A4)  {$4$};
\node[above,font=\scriptsize] at (An2) {$n-2$};
\node[above,font=\scriptsize] at (An1) {$n-1$};

\draw[fill=black] (A1)  circle (2pt);
\draw[fill=black] (A2)  circle (2pt);
\draw[fill=black] (A3)  circle (2pt);
\draw[fill=black] (A4)  circle (2pt);
\draw[fill=black] (An2) circle (2pt);
\draw[fill=black] (An1) circle (2pt);

\path[-]
(A1) edge (A2)
(A2) edge (A3)
(A3) edge (A4)
(An2) edge (An1);
\path[dotted]
(A4) edge (An1);

\coordinate (B) at (-2,-1.5);
\node at (B) {$B_n$};
\node[yshift=-0.5cm,font=\scriptsize] at (B) {$n\geq 2$};

\coordinate (B0)  at (-1,-1.5);
\coordinate (B1)  at ( 0,-1.5);
\coordinate (B2)  at ( 1,-1.5);
\coordinate (B3)  at ( 2,-1.5);
\coordinate (B4)  at ( 3,-1.5);
\coordinate (Bn2) at ( 4,-1.5);
\coordinate (Bn1) at ( 5,-1.5);

\node[above,font=\scriptsize] at (B0)  {$0$};
\node[above,font=\scriptsize] at (B1)  {$1$};
\node[above,font=\scriptsize] at (B2)  {$2$};
\node[above,font=\scriptsize] at (B3)  {$3$};
\node[above,font=\scriptsize] at (B4)  {$4$};
\node[above,font=\scriptsize] at (Bn2) {$n-2$};
\node[above,font=\scriptsize] at (Bn1) {$n-1$};

\draw[fill=black] (B0)  circle (2pt);
\draw[fill=black] (B1)  circle (2pt);
\draw[fill=black] (B2)  circle (2pt);
\draw[fill=black] (B3)  circle (2pt);
\draw[fill=black] (B4)  circle (2pt);
\draw[fill=black] (Bn2) circle (2pt);
\draw[fill=black] (Bn1) circle (2pt);

\path[-]
(B0) edge node[above,font=\scriptsize]{$4$} (B1)
(B1) edge (B2)
(B2) edge (B3)
(B3) edge (B4)
(Bn2) edge (Bn1);
\path[dotted]
(B4) edge (Bn2);

\coordinate (D) at (-2,-3);
\node at (D) {$D_n$};
\node[yshift=-0.5cm,font=\scriptsize] at (D) {$n\geq 4$};

\coordinate (D0)  at (0,-2.5);
\coordinate (D1)  at (0,-3.5);
\coordinate (D2)  at (1,-3);
\coordinate (D3)  at (2,-3);
\coordinate (D4)  at (3,-3);
\coordinate (Dn2) at (4,-3);
\coordinate (Dn1) at (5,-3);

\node[above,font=\scriptsize] at (D0)  {$0$};
\node[below,font=\scriptsize] at (D1)  {$1$};
\node[above,font=\scriptsize] at (D2)  {$2$};
\node[above,font=\scriptsize] at (D3)  {$3$};
\node[above,font=\scriptsize] at (D4)  {$4$};
\node[above,font=\scriptsize] at (Dn2) {$n-2$};
\node[above,font=\scriptsize] at (Dn1) {$n-1$};

\draw[fill=black] (D0)  circle (2pt);
\draw[fill=black] (D1)  circle (2pt);
\draw[fill=black] (D2)  circle (2pt);
\draw[fill=black] (D3)  circle (2pt);
\draw[fill=black] (D4)  circle (2pt);
\draw[fill=black] (Dn2) circle (2pt);
\draw[fill=black] (Dn1) circle (2pt);

\draw (D0) -- (D2);
\draw (D1) -- (D2);
\draw (D2) -- (D3) -- (D4);
\draw[dotted] (D4) -- (Dn2);
\draw (Dn2) -- (Dn1);

\coordinate (E6) at (-2,-5.5);
\node at (E6) {$E_6$};

\coordinate (E61) at (0,-5.5);
\coordinate (E62) at (2,-6.5);
\coordinate (E63) at (1,-5.5);
\coordinate (E64) at (2,-5.5);
\coordinate (E65) at (3,-5.5);
\coordinate (E66) at (4,-5.5);

\node[above,font=\scriptsize] at (E61) {$1$};
\node[right,font=\scriptsize] at (E62) {$2$};
\node[above,font=\scriptsize] at (E63) {$3$};
\node[above,font=\scriptsize] at (E64) {$4$};
\node[above,font=\scriptsize] at (E65) {$5$};
\node[above,font=\scriptsize] at (E66) {$6$};

\draw[fill=black] (E61) circle (2pt);
\draw[fill=black] (E62) circle (2pt);
\draw[fill=black] (E63) circle (2pt);
\draw[fill=black] (E64) circle (2pt);
\draw[fill=black] (E65) circle (2pt);
\draw[fill=black] (E66) circle (2pt);

\draw (E61) -- (E63) -- (E64) -- (E65) -- (E66);
\draw (E64) -- (E62);

\coordinate (E7) at (-2,-7.25);
\node at (E7) {$E_7$};

\coordinate (E71) at (0,-7.25);
\coordinate (E72) at (2,-8.25);
\coordinate (E73) at (1,-7.25);
\coordinate (E74) at (2,-7.25);
\coordinate (E75) at (3,-7.25);
\coordinate (E76) at (4,-7.25);
\coordinate (E77) at (5,-7.25);

\node[above,font=\scriptsize] at (E71) {$1$};
\node[right,font=\scriptsize] at (E72) {$2$};
\node[above,font=\scriptsize] at (E73) {$3$};
\node[above,font=\scriptsize] at (E74) {$4$};
\node[above,font=\scriptsize] at (E75) {$5$};
\node[above,font=\scriptsize] at (E76) {$6$};
\node[above,font=\scriptsize] at (E77) {$7$};

\draw[fill=black] (E71) circle (2pt);
\draw[fill=black] (E72) circle (2pt);
\draw[fill=black] (E73) circle (2pt);
\draw[fill=black] (E74) circle (2pt);
\draw[fill=black] (E75) circle (2pt);
\draw[fill=black] (E76) circle (2pt);
\draw[fill=black] (E77) circle (2pt);

\draw (E71) -- (E73) -- (E74) -- (E75) -- (E76) -- (E77);
\draw (E74) -- (E72);

\coordinate (E8) at (-2,-9.0);
\node at (E8) {$E_8$};

\coordinate (E81) at (0,-9.0);
\coordinate (E82) at (2,-10.0);
\coordinate (E83) at (1,-9.0);
\coordinate (E84) at (2,-9.0);
\coordinate (E85) at (3,-9.0);
\coordinate (E86) at (4,-9.0);
\coordinate (E87) at (5,-9.0);
\coordinate (E88) at (6,-9.0);

\node[above,font=\scriptsize] at (E81) {$1$};
\node[right,font=\scriptsize] at (E82) {$2$};
\node[above,font=\scriptsize] at (E83) {$3$};
\node[above,font=\scriptsize] at (E84) {$4$};
\node[above,font=\scriptsize] at (E85) {$5$};
\node[above,font=\scriptsize] at (E86) {$6$};
\node[above,font=\scriptsize] at (E87) {$7$};
\node[above,font=\scriptsize] at (E88) {$8$};

\draw[fill=black] (E81) circle (2pt);
\draw[fill=black] (E82) circle (2pt);
\draw[fill=black] (E83) circle (2pt);
\draw[fill=black] (E84) circle (2pt);
\draw[fill=black] (E85) circle (2pt);
\draw[fill=black] (E86) circle (2pt);
\draw[fill=black] (E87) circle (2pt);
\draw[fill=black] (E88) circle (2pt);

\draw (E81) -- (E83) -- (E84) -- (E85) -- (E86) -- (E87) -- (E88);
\draw (E84) -- (E82);

\coordinate (F4) at (-2,-11);
\node at (F4) {$F_4$};

\coordinate (F41)  at ( 0,-11);
\coordinate (F42)  at ( 1,-11);
\coordinate (F43)  at ( 2,-11);
\coordinate (F44)  at ( 3,-11);

\node[above,font=\scriptsize] at (F41)  {$1$};
\node[above,font=\scriptsize] at (F42)  {$2$};
\node[above,font=\scriptsize] at (F43)  {$3$};
\node[above,font=\scriptsize] at (F44)  {$4$};

\draw[fill=black] (F41)  circle (2pt);
\draw[fill=black] (F42)  circle (2pt);
\draw[fill=black] (F43)  circle (2pt);
\draw[fill=black] (F44)  circle (2pt);

\path[-]
(F41) edge (F42)
(F42) edge node[above,font=\scriptsize]{$4$} (F43)
(F43) edge (F44);

\coordinate (H3) at (-2,-12.5);
\node at (H3) {$H_3$};

\coordinate (H31)  at ( 0,-12.5);
\coordinate (H32)  at ( 1,-12.5);
\coordinate (H33)  at ( 2,-12.5);

\node[above,font=\scriptsize] at (H31)  {$1$};
\node[above,font=\scriptsize] at (H32)  {$2$};
\node[above,font=\scriptsize] at (H33)  {$3$};

\draw[fill=black] (H31)  circle (2pt);
\draw[fill=black] (H32)  circle (2pt);
\draw[fill=black] (H33)  circle (2pt);

\path[-]
(H31) edge node[above,font=\scriptsize]{$5$} (H32)
(H32) edge (H33);

\coordinate (H4) at (-2,-14);
\node at (H4) {$H_4$};

\coordinate (H41)  at ( 0,-14);
\coordinate (H42)  at ( 1,-14);
\coordinate (H43)  at ( 2,-14);
\coordinate (H44)  at ( 3,-14);

\node[above,font=\scriptsize] at (H41)  {$1$};
\node[above,font=\scriptsize] at (H42)  {$2$};
\node[above,font=\scriptsize] at (H43)  {$3$};
\node[above,font=\scriptsize] at (H44)  {$4$};

\draw[fill=black] (H41)  circle (2pt);
\draw[fill=black] (H42)  circle (2pt);
\draw[fill=black] (H43)  circle (2pt);
\draw[fill=black] (H44)  circle (2pt);

\path[-]
(H41) edge node[above,font=\scriptsize]{$5$} (H42)
(H42) edge (H43)
(H43) edge (H44);

\coordinate (I2) at (-2,-15.5);
\node at (I2) {$I_2(m)$};
\node[yshift=-0.5cm,font=\scriptsize] at (I2) {$m\geq 3$};

\coordinate (I21)  at ( 0,-15.5);
\coordinate (I22)  at ( 1,-15.5);

\node[above,font=\scriptsize] at (I21)  {$1$};
\node[above,font=\scriptsize] at (I22)  {$2$};

\draw[fill=black] (I21)  circle (2pt);
\draw[fill=black] (I22)  circle (2pt);

\path[-]
(I21) edge node[above,font=\scriptsize]{$m$} (I22);

\end{tikzpicture}
	\caption{Die Dynkin-Diagramme endlicher Coxeter-Gruppen.}
	\label{fig:fin_coxgrps}
\end{figure}

Die Typen $E_6, E_7, E_8$, $F_4$, $G_2=I_2(6)$, $H_3, H_4$ und $I_2(m)$ für $m\geq 5$ heißen \udot{exzeptionelle} oder \udot{Ausnahmetypen}.
\end{theorem}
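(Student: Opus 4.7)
Der Plan ist, die klassische Klassifikationsstrategie via der Spiegelungsdarstellung und der zugeh�rigen Bilinearform $\beta$ aus Satz 1.1.2 zu verfolgen. Der entscheidende Ansatz beruht auf der folgenden �quivalenz: $(W,S)$ ist genau dann endlich, wenn die in Satz 1.1.2 konstruierte Bilinearform $\beta$ positiv definit ist. Eine Richtung dieser Aussage ist leicht: Ist $W$ endlich, so besitzt $V$ ein $W$"~invariantes Skalarprodukt (Mittelung �ber $W$), und da die Spiegelungsdarstellung irreduzibel auf jeder Zusammenhangskomponente des Dynkin"=Diagramms wirkt, folgt mittels Schur"=artiger Argumente, dass $\beta$ bis auf positiven Skalar mit diesem Skalarprodukt �bereinstimmt, also positiv definit ist. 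Die umgekehrte Richtung folgt daraus, dass im positiv definiten Fall $\rho(W)$ eine diskrete Untergruppe der kompakten orthogonalen Gruppe $\mathrm{O}(V,\beta)$ wird, und Satz 1.1.2 b.) liefert $W\isomorphic\rho(W)$.

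Danach w�re der eigentliche Klassifikationsschritt rein kombinatorisch: Man betrachtet die Matrix $G=(\beta(e_s,e_t))_{s,t\in S}$ und fragt, f�r welche Coxeter"=Matrizen $M$ diese Gram"=Matrix positiv definit ist. Zun�chst reduziere ich durch Lemma/Definition 1.1.10 auf den irreduziblen Fall, d.\,h. auf zusammenh�ngende Dynkin"=Diagramme. Anschlie�end gehe ich die m�glichen Diagramme systematisch durch: Erstens zeige ich, dass bestimmte "`verbotene"' Teilgraphen (z.\,B. Zyklen, Diagramme vom Typ $\widetilde A_n$, $\widetilde D_n$, $\widetilde E_n$ oder ein Knoten mit Grad $\geq 4$, sowie zu hohe Kantenbeschriftungen) bereits eine nicht positiv definite Gram"=Matrix erzwingen, indem man den Kern bzw. einen nullen/negativen Vektor explizit angibt. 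Zweitens verbleiben nur endlich viele Diagrammfamilien als Kandidaten, und f�r jede davon verifiziere ich explizit (etwa durch Determinantenberechnung oder das Hauptminorenkriterium), dass $G$ tats�chlich positiv definit ist.

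Der Hauptaufwand liegt in diesem zweiten, kombinatorischen Teil: Man muss sorgf�ltig argumentieren, warum genau die Listen $A_n$, $B_n$, $D_n$, $E_{6,7,8}$, $F_4$, $H_{3,4}$, $I_2(m)$ alle M�glichkeiten ersch�pfen. Standardm��ig f�hrt man dies per Induktion �ber $\abs{S}$ und mittels einer Fallunterscheidung nach der Struktur der Verzweigungspunkte und der Position von Kanten mit Beschriftung $\geq 4$. Die Hauptschwierigkeit ist nicht konzeptioneller, sondern rein buchhalterischer Natur; insbesondere ist die Ausschlussargumentation f�r die sogenannten affinen Erweiterungen (die Grenzf�lle mit $\det G=0$) der subtilste Punkt, da die entsprechenden Gram"=Matrizen nur "`knapp"' nicht positiv definit sind.

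Da dieser Beweis in der Standardliteratur vollst�ndig ausgef�hrt ist und konzeptionell nichts Neues zur vorliegenden Arbeit beitr�gt, w�rde ich ihn nicht ausschreiben, sondern mich mit einem Verweis auf \citep[Chapter~2 und 6]{humphreys1992coxeter} begn�gen, wo die skizzierte Argumentation detailliert durchgef�hrt wird.
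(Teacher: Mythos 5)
Dein Vorschlag skizziert die klassische Klassifikationsstrategie (positive Definitheit der Bilinearform $\beta$, kombinatorische Klassifikation der positiv definiten Coxeter"=Diagramme, Ausschluss der affinen Grenzf�lle) korrekt und schlie�t genau wie die Arbeit mit einem Verweis auf Humphreys, Kapitel~2. Das entspricht dem Vorgehen der Arbeit, die denselben Satz ohne weiteren Kommentar durch \citep[Ch.\,2]{humphreys1992coxeter} belegt.
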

\begin{proof}
Siehe \citep[Ch.\,2]{humphreys1992coxeter}.
\end{proof}

\subsection{Kombinatorik II: Die Bruhat-Chevalley-Ordnung}

\begin{theoremdef}[Bruhat-Ordnung]
\index{terms}{Bruhat!-Ordnung}\index{terms}{Liftungseigenschaft}\index{terms}{Ketteneigenschaft}
\index{symbols}{$\leq$}
Sei $(W,S)$ eine Coxeter"=Gruppe und $w=s_1 \ldots s_n$ ein reduzierter Ausdruck für $w\in W$. Sei $x\in W$ ein weiteres Gruppenelement. Wir definieren, dass $x\leq w$ genau dann gelten soll, falls Indizes $1\leq i_1<\ldots<i_k\leq n$ existieren derart, dass $x=s_{i_1} \cdots s_{i_k}$ ist.

Dies ist eine partielle Ordnung auf $W$, die \udot{Bruhat"=Chevalley"=Ordnung} oder kurz \udot{Bruhat"=Ordnung} genannt wird. Es gilt:
\begin{enumerate}
	\item Das kleinste Element der Bruhat"=Ordnung ist $1$ und das größte ist -- sofern existent -- das Element maximaler Länge von $W$.
	\item Ketteneigenschaft:
	
	Zu je zwei $x,y\in W$ mit $x<y$ gibt es Elemente $x=x_0<x_1<\ldots<x_k=y$ so, dass $l(x_i)=l(x_{i-1})+1$ für $i=1,\ldots,k$ gilt. Insbesondere gilt $l(x)<l(y)$ und für alle $s\in S$ gilt weiter $sx<x \iff l(sx)<l(x)$.
	\item Liftungseigenschaft:
	
	Für $s\in S$ mit $l(sx)<l(x)$ gilt:
	\[w\leq x \iff \begin{cases} sw\leq sx & \text{falls}\ l(sw)<l(w) \\ w\leq sx & \text{falls}\ l(sw)>l(w)\end{cases}\]
\end{enumerate}
\end{theoremdef}
\begin{proof}
Beweise können z.\,B. in \citep[Ch.\,2]{bjorner2005combinatorics} gefunden werden.
\end{proof}

\begin{remark}
Der Satz von Tits zeigt, dass das Wort-Problem in Coxeter"=Gruppen algorithmisch lösbar ist, indem reduzierte Darstellungen von Elementen berechnet werden (Er liefert jedoch nicht die effizienteste Methode dafür!). Es kann also für beliebige $x\in W$ und $s\in S$ entschieden werden, welche der Alternativen $l(sx)<l(x)$ oder $l(sx)>l(x)$ gilt (und analog für Rechtsmultiplikation). Aus der Liftungseigenschaft ergibt sich, dass auch $x<y$ algorithmisch entschieden werden kann.

Der Algorithmus, um $x<y$ zu entscheiden, funktioniert wie folgt: Durch iteratives Anwenden der Liftungseigenschaft wird die Länge von $y$ reduziert und in jedem Schritt $x$ entsprechend der Liftungseigenschaft durch $x$ oder $sx$ ersetzt. Abgebrochen werden kann, sobald $l(x)\geq l(y)$ ist (d.\,h. spätestens dann, wenn $y$ vollständig zu $y=1$ gekürzt wurde). In diesem Fall ist $x\leq y$ genau dann gegeben, wenn $x=y$ ist. Algorithmus \ref{algo:bruhat_simple} gibt eine Pseudocode-Implementierung dieses Algorithmus an.

\bigbreak
Der Algorithmus liefert ebenfalls eine Möglichkeit, um die Definition der Bruhat"=Ordnung effektiv zu machen: Indem man Buch führt, welches $s\in S$ in jedem Schritt verwendet und ob $x$ durch $sx$ ersetzt wurde, findet man eine reduzierte Darstellung von $y$ und ein Teilwort dieser Darstellung, das gleich $x$ ist. In Algorithmus \ref{algo:bruhat_subwords} ist eine Pseudocode-Implementierung angegeben.
\end{remark}

\begin{remark}
\index{terms}{Coxeter!-Gruppe!reduzible}\index{terms}{Bruhat!-Ordnung}\index{terms}{Liftungseigenschaft}
Sei $(W,S)$ eine reduzible Coxeter"=Gruppe, etwa $S=S_1 \coprod S_2$ und $W=W_1\times W_2$. Dann gilt $(x_1, x_2) \leq (y_1, y_2)$ genau dann, wenn $x_1\leq y_1$ und $x_2\leq y_2$ gelten, d.\,h. die Bruhat"=Ordnung zerlegt sich ebenfalls als Produkt der Bruhat"=Ordnungen. Das kann man beispielsweise mit Hilfe der Liftungseigenschaft einfach einsehen.
\end{remark}

\begin{theorem}[Bruhat-Intervalle]\label{bruhat:intervals}
\index{terms}{Bruhat!-Ordnung}\index{terms}{Bruhat!-Intervall}
\index{symbols}{xy@$[x,y]$}
Sei $(W,S)$ eine Coxeter"=Gruppe, $u,w\in W$ so, dass ein $s\in S$ existiert mit $sw>w$ und $su>u$. Für Elemente $x,z\in W$ sei das Intervall zwischen $x$ und $z$ in der Bruhat"=Ordnung wie gewohnt mit $[x,z]:=\Set{y\in W | x\leq y \leq z}$ bezeichnet. Dann gilt:

Die Abbildung $\eta: [u,w]\times\Set{1,s} \to [u,sw]$ mit
\[\eta(x,1):=x \quad \text{und}\]
\[\eta(x,s):=\begin{cases} x & \text{falls}\ sx<x \\ sx & \text{falls}\ sx>x\end{cases}\]
ist surjektiv.
\end{theorem}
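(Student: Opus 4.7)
The plan is to take an arbitrary $y \in [u, sw]$ and exhibit a preimage under $\eta$ by a case split on whether $sy < y$ or $sy > y$. In each case the required inequalities will be reduced to a single application of the Liftungseigenschaft, applied either to $sw$ (which satisfies $l(s \cdot sw) = l(w) < l(sw)$ by hypothesis) or to $y$ itself.

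First I would handle the case $sy > y$. Applying the Liftungseigenschaft with ``$x$'' $= sw$ and test element ``$w$'' $= y$, the branch $l(sy) > l(y)$ yields
\[
y \leq sw \iff y \leq s \cdot sw = w.
\]
Hence $y \in [u,w]$, and $\eta(y,1) = y$ is a preimage.

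In the remaining case $sy < y$, the natural candidate is $(sy, s)$: putting $x := sy$ one has $sx = y > sy = x$, so by definition $\eta(sy, s) = sx = y$. It then remains to verify $sy \in [u,w]$. For $sy \leq w$ the Liftungseigenschaft applied again to $sw$ and to $y$, now in the branch $l(sy) < l(y)$, gives $y \leq sw \iff sy \leq w$. For $u \leq sy$ I would apply the Liftungseigenschaft with ``$x$'' $= y$ (which satisfies $sy < y$) and test element ``$w$'' $= u$; since the hypothesis $su > u$ places us in the branch $l(su) > l(u)$, we obtain $u \leq y \iff u \leq sy$, so $u \leq sy$ follows from $u \leq y$.

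I do not expect any real obstacle: once the case split is set up, each step is a single invocation of the Liftungseigenschaft. The two hypotheses of the theorem are used in exactly the places where they are indispensable. The assumption $sw > w$ is what makes the Liftungseigenschaft applicable to $sw$, and the assumption $su > u$ is what allows the step from $u \leq y$ to $u \leq sy$ in the second case; without the latter one would only obtain $su \leq sy$, which is insufficient.
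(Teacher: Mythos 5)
Your proof is correct. Note that the paper itself does not give a proof of this theorem; it merely cites \citep[Prop.\,5.1]{Reading}. Your argument is a clean, self-contained proof that uses exactly the right tool: every step is a single instance of the Liftungseigenschaft, applied either with $sw$ (using $s\cdot sw = w < sw$) or with $y$ (using $sy < y$) in the role of the "longer" element. The case distinction $sy > y$ versus $sy < y$ correctly mirrors the two shapes of preimage $(y,1)$ and $(sy,s)$, and both hypotheses $sw>w$ and $su>u$ are invoked exactly where they are needed. This is in the spirit of the remark immediately following the theorem, which notes that the lifting property underlies the algorithmic treatment of Bruhat intervals, so your argument is well aligned with the paper's perspective even though the paper delegates the proof to the literature.
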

\begin{proof}
Siehe \citep[Prop.\,5.1]{Reading}.
\end{proof}

\begin{remark}
Dieser Satz liefert nun eine Verfeinerung des Algorithmus zur Entscheidung der Bruhat"=Ordnung, die zusätzlich fähig ist, Bruhat"=Intervalle zu berechnen. Algorithmus \ref{algo:bruhat_interval} gibt eine Pseudocode-Implementierung dieses Algorithmus an.
\end{remark}
\section{Iwahori-Hecke-Algebren}

\begin{remark}
Für diesen Abschnitt fixieren wir eine Coxeter"=Gruppe $(W,S)$. Aus der universellen Eigenschaft der Präsentation $\smash{W=\braket{s\in S \mid s^2=1, \underbrace{sts\ldots}_{m_{st}} = \underbrace{tst\ldots}_{m_{st}} \ \text{für alle}\ s,t\in S}}$ lässt sich das folgende Lemma ableiten:
\end{remark}

\begin{lemma}[$W^\text{ab}$ und Konjugationsklassen in $S$]
\index{terms}{Coxeter!-Gruppe!Konjugationsklassen}\index{terms}{Coxeter!-Gruppe!Abelianisierung}\index{terms}{Dynkin-Diagramm}
In jeder Coxeter"=Gruppe gilt:
\begin{enumerate}
	\item $W^\text{ab} := W/[W,W]$ ist isomorph zu $(\IZ/2)^z$, wobei $z$ die Anzahl der Zusammenhangskomponenten des Graphen ist, der entsteht, indem man im Dynkin"=Diagramm von $(W,S)$ alle Kanten $s \text{---} t$ mit $m_{st}\in 2\IN$ oder $m_{st}=\infty$ entfernt.
	\item Zwei Elemente $s,t\in S$ sind genau dann konjugiert in $W$, wenn es eine Folge $s=s_0, s_1, \ldots, s_k=t$ gibt so, dass $\ord(s_i s_{i+1})<\infty$ und ungerade ist, d.\,h. wenn sie in derselben Zusammenhangskomponente dieses Graphen liegen.
\end{enumerate}
\end{lemma}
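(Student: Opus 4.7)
The plan is to treat both parts simultaneously by analyzing the braid relations modulo commutativity. Part (a) is a direct computation using the universal property of the given presentation of $W$, and the "only if" direction of part (b) will then follow from part (a) almost for free.

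\textbf{Proof of (a).} I would apply the abelianization functor to the presentation
$W=\braket{S\mid s^2=1,\ (st)^{m_{st}}=1}$. This produces a presentation of $W^{\text{ab}}$ as an abelian group on the generators $(\overline{s})_{s\in S}$ subject to the relations coming from $s^2=1$ and $(st)^{m_{st}}=1$. Using commutativity in $W^{\text{ab}}$, the braid/order relation $(st)^{m_{st}}=1$ becomes
$\overline{s}^{\,m_{st}}\overline{t}^{\,m_{st}}=1$.
Since $\overline{s}^{\,2}=\overline{t}^{\,2}=1$, this relation is automatically satisfied whenever $m_{st}$ is even, and reduces to $\overline{s}=\overline{t}$ whenever $m_{st}$ is odd (the case $m_{st}=\infty$ gives no relation at all). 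Hence $W^{\text{ab}}$ is the free $\mathbb{F}_2$-vector space on the equivalence classes of the relation $s\sim t \Leftrightarrow m_{st}\text{ odd and finite}$. The transitive closure of this relation partitions $S$ into exactly the connected components of the graph obtained from the Dynkin diagram by deleting all edges labelled with even or infinite $m_{st}$, which gives $W^{\text{ab}}\cong(\mathbb{Z}/2)^z$.

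\textbf{Proof of (b), "if" direction.} It suffices to show that $s$ and $t$ are conjugate whenever $m_{st}=2k+1$ is odd, since the general case follows by transitivity along the sequence $s=s_0,\ldots,s_k=t$. Writing out the braid relation $\underbrace{sts\cdots}_{2k+1}=\underbrace{tst\cdots}_{2k+1}$ as $(st)^k s=(ts)^k t$ and rearranging gives $s\cdot (ts)^k = (ts)^k\cdot t$, i.e. $(ts)^{-k}\,s\,(ts)^k=t$. So $s$ and $t$ are genuinely conjugate in $W$.

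\textbf{Proof of (b), "only if" direction.} This follows from part (a): the canonical map $W\twoheadrightarrow W^{\text{ab}}$ sends conjugate elements to the same image, and by (a) the images $\overline{s},\overline{t}\in(\mathbb{Z}/2)^z$ coincide exactly when $s$ and $t$ lie in the same component of the graph from part (a). Hence $s$ and $t$ in distinct components cannot be conjugate.

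\textbf{Main obstacle.} There is no deep obstacle here. The only subtle point is the verification in part (a) that the listed relations really \emph{are} a complete presentation of $W^{\text{ab}}$; this is exactly the universal property of the abelianization applied to the Coxeter presentation, so once one notices the effect of oddness of $m_{st}$ on the simplified relation $\overline{s}^{\,m_{st}}\overline{t}^{\,m_{st}}=1$, the rest of the argument is essentially formal.
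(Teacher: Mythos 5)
Your proof is correct, and it follows the same line the paper gestures at: the paper states the lemma without an explicit proof, only remarking that it can be derived from the universal property of the Coxeter presentation, which is exactly what you do in part (a), and your part (b) is the standard explicit conjugation argument for odd $m_{st}$ combined with the abelianization obstruction for the converse.
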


\begin{remark}
Zur Definition der Iwahori"=Hecke"=Algebra und verwandter Algebren wird folgender vorbereitender Satz benötigt:
\end{remark}
\begin{theorem}[Generische Algebren]
\index{terms}{Zopfrelationen}
Sei $R$ ein kommutativer Ring und $(a_s,b_s)_{s\in S}$ eine Familie von Elementen von $R$ derart, dass $(a_s, b_s)=(a_t, b_t)$ gilt, wann immer $m_{st}<\infty$ und ungerade ist (d.\,h. wann immer $s$ und $t$ in $W$ konjugiert sind).

Es gibt eine eindeutig bestimmte, assoziative $R$"~Algebra $H:=H_R(W,S,(a_s,b_s)_{s\in S})$, die durch die folgenden, äquivalenten Präsentationen beschrieben wird:
\begin{enumerate}
	\item Erzeuger $(T_w)_{w\in W}$ und die Relationen
	\[\forall s\in S, w\in W: T_s T_w = \begin{cases} T_{sw} & \text{falls } l(sw) > l(w) \\ a_s T_{sw} + b_s T_w & \text{falls } l(sw) < l(w)\end{cases}.\]

	\item Erzeuger $(T_s)_{s\in S}$ und die Relationen
	\[\forall s\in S: T_s^2 = a_s\cdot 1_H + b_s T_s,\]
	\[\forall s,t\in S: \underbrace{T_s T_t T_s \ldots }_{m_{st}} = \underbrace{T_t T_s T_t \ldots}_{m_{st}}.\]
	Dann sind die Elemente $T_w$ durch $T_{s_1} \ldots T_{s_n}$ gegeben, wobei $w=s_1 \ldots s_n$ eine reduzierte Darstellung ist (vgl. Satz von Matsumoto\index{terms}{Satz von!Matsumoto}).
\end{enumerate}
Weiter ist $H$ als $R$"~Modul frei mit Basis $(T_w)_{w\in W}$.
\end{theorem}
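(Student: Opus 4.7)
The plan is to construct $H$ via presentation (b), to deduce (a) from (b) using Matsumoto's theorem, and to establish the $R$-linear independence of $(T_w)_{w\in W}$ by exhibiting a faithful representation on a free $R$-module. Uniqueness of $H$ is then automatic from the universal property of (b).

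First I would define $H$ as the quotient of the free associative $R$-algebra on $(T_s)_{s\in S}$ by the two-sided ideal generated by the quadratic and braid relations of (b). Since the generators $T_s$ satisfy the braid relations in the multiplicative monoid of $H$, the corollary of Matsumoto's theorem yields a unique function $F: W \to H$ with $F(w) = T_{s_1}\cdots T_{s_n}$ for every reduced expression $w = s_1\cdots s_n$; set $T_w := F(w)$. In the case $l(sw) > l(w)$ one may concatenate reduced expressions and obtain $T_s T_w = T_{sw}$ directly, while for $l(sw) < l(w)$ one writes $w = s\cdot(sw)$, so $T_s T_w = T_s^2 T_{sw} = (a_s + b_s T_s) T_{sw} = a_s T_{sw} + b_s T_w$. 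This proves the multiplication formula of (a), and an easy induction on length then shows that $(T_w)_{w\in W}$ spans $H$ as an $R$-module.

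For the $R$-linear independence I would construct a faithful module. Let $M := \bigoplus_{w\in W} R e_w$ and define $L_s \in \End_R(M)$ by $L_s e_w := e_{sw}$ if $l(sw) > l(w)$ and $L_s e_w := a_s e_{sw} + b_s e_w$ if $l(sw) < l(w)$. A brief case analysis verifies $L_s^2 = a_s\cdot\id_M + b_s L_s$. Once the $L_s$ are shown to satisfy the braid relations as well, presentation (b) gives an $R$-algebra homomorphism $H \to \End_R(M)$ with $T_s\mapsto L_s$, and the recursion above yields $T_w\cdot e_1 = e_w + \sum_{l(w')<l(w)} c_{w'} e_{w'}$ for suitable $c_{w'}\in R$; the triangular shape of this expression forces $(T_w)_{w\in W}$ to be $R$-linearly independent.

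The main obstacle is the braid identity $\underbrace{L_s L_t L_s\cdots}_{m_{st}} = \underbrace{L_t L_s L_t\cdots}_{m_{st}}$ for $m_{st}<\infty$. To tame it I would reduce to the dihedral parabolic $W_{\{s,t\}}$: letting $d$ run through the minimal-length representatives of the right cosets $W_{\{s,t\}}\backslash W$, each set $W_{\{s,t\}} d \subset W$ is stable under $L_s$ and $L_t$, and $l(ud) = l(u) + l(d)$ for $u\in W_{\{s,t\}}$. Consequently the action of both sides of the braid identity on the subspace $\bigoplus_{u\in W_{\{s,t\}}} R e_{ud}$ is, up to the index shift $u\mapsto ud$, identical to the corresponding action on $R[W_{\{s,t\}}]$, and so the verification reduces to an explicit, finite computation inside the dihedral group of order $2m_{st}$, which is routine (this is where the compatibility hypothesis $(a_s,b_s) = (a_t,b_t)$ for conjugate $s,t$ is consumed). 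Finally, the derivation of presentation (b) from (a) is immediate: the quadratic relation is the case $w = s$ of (a), and the braid relations hold because, by the corollary of Matsumoto, both sides of the braid identity compute $T_{w_0^{s,t}}$ for the longest element $w_0^{s,t}$ of the finite dihedral group $W_{\{s,t\}}$.
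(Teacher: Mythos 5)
Your strategy is sound and, if the dihedral verification is carried out, gives a complete proof; but it takes a genuinely different route from the one the paper invokes (Humphreys \citep[7.1--7.3]{humphreys1992coxeter}). Humphreys sidesteps the explicit braid-relation check entirely: on the free $R$-module $E$ with basis $(a_w)_{w\in W}$ he introduces not only the left operators $\lambda_s$ (your $L_s$) but also right operators $\rho_s$ defined by the analogous recursion on the right, and proves by a short case analysis that $\lambda_s\rho_t = \rho_t\lambda_s$ for all $s,t\in S$. This commutation implies that the evaluation map $\phi\mapsto\phi(a_1)$ is injective on the subalgebra generated by the $\lambda_s$: any element of this subalgebra commutes with all $\rho_t$, so if it kills $a_1$ it kills every $a_w = \rho_{s_n}\cdots\rho_{s_1}a_1$. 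Since any product $\lambda_{s_1}\cdots\lambda_{s_n}$ along a reduced expression for $w$ sends $a_1$ to $a_w$, injectivity forces all such products to coincide --- this \emph{is} the braid relation, obtained without ever looking inside a dihedral group. Your reduction to the parabolic $W_{\{s,t\}}$ via minimal coset representatives is a legitimate and well-known alternative (it is closer in spirit to the treatment in \cite{geckpfeiffer}), and it correctly isolates where the compatibility hypothesis $(a_s,b_s)=(a_t,b_t)$ for odd $m_{st}$ is used. What it buys in conceptual transparency it costs in labour: the phrase ``routine finite computation'' defers the only nontrivial part of the whole theorem, namely an induction on word length in the $2m_{st}$-dimensional dihedral module, which is finite but certainly not a one-liner for general $m_{st}$. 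Two small further remarks: your triangularity claim can be sharpened to the exact identity $T_w\cdot e_1 = e_w$ (no lower-order terms appear, since $L_s e_u = e_{su}$ whenever $l(su)>l(u)$), which makes the linear independence immediate; and the derivation of (b) from (a) that you sketch at the end should be phrased without assuming $W_{\{s,t\}}$ finite --- for $m_{st}=\infty$ there is no braid relation to check, so the argument is only needed when $m_{st}<\infty$, which is exactly what you do.
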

\begin{proof}
Siehe \citep[7.1-7.3]{humphreys1992coxeter}.
\end{proof}

\begin{definition}[Gewichtsfunktionen auf Coxeter-Gruppen]
\index{terms}{Coxeter!-Gruppe!-mit-Gewicht}\index{terms}{Gewichtsfunktion}
\index{symbols}{WSL@$(W,S,L)$}\index{symbols}{L@$L$}
Sei $\Gamma$ eine total geordnete, abelsche Gruppe. Eine Funktion $L:S\to\Gamma$ heißt \udot{Ge\-wichts\-funk\-tion}, falls für alle konjugierten $s,t\in S$ stets $L(s)=L(t)$ gilt. Die Fortsetzung via Matsumotos Satz auf ganz $W$ werden wir in diesem Fall ebenfalls mit $L$ bezeichnen.
\end{definition}

\begin{example}
\index{terms}{Coxeter!-Gruppe!reduzible}\index{terms}{Coxeter!-Gruppe!Längenfunktion}
Die Längenfunktion $l:W\to\IZ$ ist eine Gewichtsfunktion. Für irreduzible Coxeter"=Gruppen, deren Dynkin"=Diagramm keine Kanten $s-t$ mit $m_{st}\in 2\IN$ oder $m_{st}=\infty$ hat, ist dies im Wesentlichen die einzige Funktion, weil alle Gewichtsfunktionen konstant auf $S$ sein müssen und damit ein Vielfaches von $l$ sind.
Für die anderen irreduziblen Coxeter"=Gruppen (im endlichen Fall sind das nur $B_n$, $F_4$ und $I_2(m)$ mit $2\mid m$) gibt es hingegen eine Vielzahl von Gewichtsfunktionen.

Die Gewichtsfunktionen reduzibler Coxeter"=Gruppen sind durch beliebige Kombinationen von Gewichtsfunktionen auf den irreduziblen Komponenten gegeben.
\end{example}

\begin{convention}[Ringe von Laurent-Polynomen]\label{def:laurent_polynomials}
\index{terms}{Laurent-Polynome}
\index{symbols}{RGamma@$R[\Gamma]$}
Wir werden die Bezeichnung $\Gamma$ für eine total geordnete, abelsche Gruppe ab jetzt festhalten. Zu dieser Gruppe werden wir sehr häufig den Gruppenring $R[\Gamma]$ ($R$ ein kommutativer Ring) benutzen. Um die additive Schreibweise in $\Gamma$ mit der multiplikativen Schreibweise im Gruppenring zu vereinen, bezeichnen wir das dem Gruppenelement $\gamma$ entsprechenden Element der Standardbasis von $R[\Gamma]$ mit $v^\gamma$. Es gilt also $v^{\gamma+\gamma'}=v^\gamma\cdot v^{\gamma'}$.

Falls $\Gamma\isomorphic\IZ^k$ ist, wird durch jede Wahl einer $\IZ$"~Basis ein Isomorphismus zum Ring der Laurent"=Polynome in $k$ Unbestimmten induziert: $R[\Gamma]\isomorphic R[X_1^{\pm 1}, \ldots, X_k^{\pm 1}]$.

\medbreak
Ist $R$ ein Integritätsbereich, so ist $R[\Gamma]$ nullteilerfrei für alle torsionsfreien, abelschen Gruppen $\Gamma$: Wählt man wie oben eine Totalordnung auf $\Gamma$, dann erhält man eine "`Gradfunktion"' $R[\Gamma]\to\Gamma\cup\Set{-\infty}$ und kann den Beweis, dass Polynomringe nullteilerfrei sind, wörtlich übernehmen. Insbesondere können wir den Quotientenkörper bilden. Ist $R=F$ ein Körper, so werden wir für diesen Körper die Bezeichnung $F(\Gamma)$ verwenden. Ist $\Gamma\isomorphic\IZ^k$ so wird durch Wahl einer Basis, ähnlich wie zuvor, ein Isomorphismus $F(\Gamma)\to F(X_1, \ldots X_k)$ induziert.

\medbreak
Wir heben hervor, dass wir mit $R[\Gamma]$ stets diesen Gruppenring meinen, in dem also insbesondere die $v^\gamma$ transzendent über $R$ sind, selbst dann, wenn $R$ selber schon ein Laurent-Polynomring war.
\end{convention}

\begin{definition}[Einparameterfall]
\index{terms}{Einparameterfall}\index{terms}{Multiparameterfall}
Falls in obiger Situation $L$ konstant auf $S$ ist, sprechen wir vom \udot{Einparameterfall}. Dies ist im Wesentlichen durch $\Gamma=\IZ$ und $L(s)=1$ realisiert, alle Betrachtungen in diesem Fall können darauf zurückgeführt werden. Dann ist insbesondere $\IZ[\Gamma]=\IZ[v^{\pm1}]$ der gewöhnliche Ring der Laurent"=Polynome.

Entsprechend sind alle Fälle, in denen $L$ nicht konstant ist, unter der Bezeichnung \udot{Multiparameterfall} zusammengefasst.
\end{definition}

\begin{definition}[Hecke-Algebren]
\index{terms}{Iwahori-Hecke-Algebra|see{Hecke-Algebra}}\index{terms}{Hecke-Algebra}
\index{symbols}{vs@$v_s$}\index{symbols}{HWSL@$H(W,S,L)$}
Sei $(W,S,L)$ eine Coxeter"=Gruppe und $L:W\to\Gamma$ eine Gewichtsfunktion. Wir führen die Abkürzung $v_s:=v^{L(s)}$ ein.

Die \udot{Iwahori"=Hecke"=Algebra} (oder kurz \udot{Hecke"=Algebra}) $H=H(W,S,L)$ ist die $\IZ[\Gamma]$"~Al\-ge\-bra mit Erzeugern $(T_s)_{s\in S}$ und den Relationen
\[T_s^2 = 1+(v_s-v_s^{-1})T_s \quad\text{sowie}\]
\[\underbrace{T_s T_t T_s \ldots }_{m_{st}} = \underbrace{T_t T_s T_t \ldots}_{m_{st}}\]
für alle $s,t\in S$.
\end{definition}

\begin{remark}
\index{terms}{Hecke-Algebra!$\IZ[q]$-Form}\index{symbols}{Tw@$\dot{T}_w$}
Dies ist die Definition, die \cite{geckjacon} verwendet. Es gibt abweichende Definitionen, z.\,B. in \cite{geckpfeiffer}. Dort wird $H$ als $\IZ[q]$"~Algebra analog mit Erzeugern $\dot{T_s}$, den gleichen Zopfrelationen aber den quadratischen Relationen $\dot{T_s}^2 = q 1+(q-1)\dot{T_s}$ definiert. Die so entstehende Algebra hat den Vorteil, auch auf $q=0$ spezialisiert werden zu können, ist aber keine symmetrische Algebra. Unsere Definition liefert hingegen eine symmetrische Algebra.
	
Wenn man erst die Koeffizienten in der zweiten Definition zu $\IZ[q^{\pm 1/2}]$ erweitert hat, erhält man jedoch dieselbe Algebra. Die Elemente $v^{L(w)} T_w$ erfüllen dann die Relationen der $\dot{T}_w$ in der zweiten Definition, wenn man $q:=v^2$ setzt. Umgekehrt erfüllen die Elemente $q^{-l(w)/2} \dot{T}_w$ die Relationen in unserer Definition.
\end{remark}
\begin{remark}	
Die diversen Spezialisierungen dieser Algebra (bzw. der anderen Variante) werden ebenfalls als Hecke"=Algebren bezeichnet. Wir werden nur diese generischen Varianten verwenden.
\end{remark}
\begin{remark}
\index{terms}{Coxeter!-Gruppe!reduzible}\index{terms}{Hecke-Algebra}
Sei $(W,S)$ eine reduzible Coxeter"=Gruppe, etwa $S=S_1 \coprod S_2$ und $W=W_1\times W_2$. Ist $L:W\to\Gamma$ eine Gewichtsfunktion, so ist $L_i:=L_{|S_i}$ eine Gewichtsfunktion für die Coxeter"=Gruppe $(W_i,S_i)$ und die Hecke"=Algebra $H:=H(W,S,L)$ zerlegt sich als Tensorprodukt $H = H_1 \otimes_{\IZ[\Gamma]} H_2$, wobei $H_i:=H(W_i, S_i, L_i)$ die korrespondierende parabolische Unteralgebra sei. Der Isomorphismus ist ganz explizit einfach durch die Einbettung der parabolischen Unteralgebren und Multiplikation $h_1\otimes h_2\mapsto h_1 h_2$ gegeben. Die Standardbasen werden dabei aufeinander abgebildet, d.\,h. $T_u\otimes T_v \mapsto T_{uv}$. Mit dieser Beobachtung lassen sich viele Konstruktionen auf den Fall irreduzibler Coxeter"=Gruppen zurückführen.
\end{remark}

\begin{convention}
\index{terms}{Coxeter!-Gruppe!-mit-Gewicht}
Wir werden ab jetzt stets annehmen, dass $L(s)\geq 0$ für alle $s\in S$ gilt. Dies ist keine wesentliche Einschränkung, denn definiert man $\varepsilon_s := +1$, falls $L(s)\geq 0$ ist, und $\varepsilon_s:=-1$, falls $L(s)<0$ ist, so ist durch $\abs{L}(s):=\varepsilon_s L(s)$ eine Gewichtsfunktion definiert und $T_s\mapsto \varepsilon_s T_s$ ein Isomorphismus $H(W,S,L)\to H(W,S,\abs{L})$.

Eine Einschränkung auf $L(s)>0$ ist hingegen wirklich eine Beschränkung der Allgemeinheit, jedoch kann man die Darstellungstheorie von solchen Hecke"=Algebren zurückführen auf die Darstellungstheorie von Hecke"=Algebren von (anderen!) Coxeter"=Gruppen mit strikt positiver Gewichtsfunktion. In der Tat erhält man in dieser Situation eine Zerlegung $H(W,S,L) = H(W_1, S_1, L_1)\rtimes\Omega$ für zwei Coxeter"=Gruppen $W_1,\Omega\leq W$ (von denen i.\,A. nur $\Omega$ eine parabolische Untergruppe ist), siehe z.\,B. \citep[2.4]{geckjacon}.
\end{convention}

\subsection{Struktureigenschaften}

\begin{convention}
Wir nehmen nun an, dass $(W,S)$ eine endliche Coxeter"=Gruppe, $L:W\to\Gamma$ eine Gewichtsfunktion und $H=H(W,S,L)$ ist.
\end{convention}

\begin{lemma}
\index{terms}{Symmetrische Algebra}
Durch die Spurfunktion $\tau:H\to\IZ[\Gamma], T_w \mapsto \delta_{1,w}$ wird $H$ zu einer symmetrischen $\IZ[\Gamma]$"~Algebra. Hierbei bilden $(T_w)_{w\in W}$ und $(T_{w^{-1}})_{w\in W}$ ein Paar dualer Basen.
\end{lemma}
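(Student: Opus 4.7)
The plan is to establish the single master formula
\[\tau(T_u T_v) = \delta_{u,v^{-1}} \quad \text{for all } u,v\in W,\]
from which all three assertions of the lemma follow at once. Symmetry of the bilinear form $\langle a,b\rangle := \tau(ab)$ is visible on basis elements via $\delta_{u,v^{-1}}=\delta_{v,u^{-1}}$ and extends by bilinearity; its Gram matrix with respect to $(T_w)_{w\in W}$ is the permutation matrix of the involution $w\mapsto w^{-1}$ and hence invertible, so the form is non-degenerate and $H$ really is a symmetric $\IZ[\Gamma]$-Algebra. Finally, replacing $v$ by $v^{-1}$ in the master formula gives $\tau(T_u T_{v^{-1}}) = \delta_{u,v}$, which is exactly the statement that $(T_w)_{w\in W}$ and $(T_{w^{-1}})_{w\in W}$ are dual bases.

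The formula itself I would prove by induction on $l(v)$. The base case $v=1$ reduces to the definition $\tau(T_u)=\delta_{u,1}$. For the induction step, pick $s\in S$ with $v=sv'$ a reduced expression, i.e.\ $l(v')=l(v)-1$, so that $T_v=T_s T_{v'}$ and hence $T_u T_v = (T_u T_s) T_{v'}$. The quadratic relation gives
\[T_u T_s = \begin{cases} T_{us} & \text{falls } l(us) > l(u), \\ T_{us} + (v_s - v_s^{-1})\, T_u & \text{falls } l(us) < l(u).\end{cases}\]
After right-multiplying by $T_{v'}$ and applying $\tau$, the induction hypothesis turns the right-hand side into either $\delta_{us,(v')^{-1}}$ or $\delta_{us,(v')^{-1}} + (v_s - v_s^{-1})\, \delta_{u,(v')^{-1}}$, respectively.

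The key observation that closes the case analysis is that $v=sv'$ reduced forces $v^{-1}=(v')^{-1}s$ with $l((v')^{-1}s) = l((v')^{-1})+1$. Hence the equality $u=v^{-1}$ automatically places us in the second branch (because $l(us)=l((v')^{-1}) < l(v^{-1}) = l(u)$); there $\delta_{us,(v')^{-1}}$ equals $1$ while $\delta_{u,(v')^{-1}}$ vanishes (since $(v')^{-1}s \neq (v')^{-1}$ for $s\neq 1$), giving the value $1$. Conversely, if $u\neq v^{-1}$ then a short length comparison rules out $us=(v')^{-1}$ in the first branch and $u=(v')^{-1}$ in the second (the latter because $u=(v')^{-1}$ would force $l(us)=l(v')+1>l(u)$, contradicting the second branch hypothesis). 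This closes the induction. The main -- and in fact only -- obstacle is this disciplined bookkeeping of the sub-cases; no structure beyond the quadratic relation and the freeness of the basis $(T_w)_{w\in W}$ is needed.
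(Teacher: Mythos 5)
The paper gives no proof of this lemma (it is quoted as a standard fact, with the background referenced to Geck--Pfeiffer), so there is nothing to compare against; on its own your argument is correct. Reducing everything to the master formula $\tau(T_u T_v) = \delta_{u,v^{-1}}$ by induction on $l(v)$, using the right-multiplication rule $T_u T_s = T_{us}$ resp.\ $T_{us} + (v_s - v_s^{-1})T_u$, is exactly the standard route, and your bookkeeping of the four sub-cases is accurate --- in particular the observation that $u = v^{-1}$ automatically lands in the branch $l(us)<l(u)$, and that $u=(v')^{-1}$ would contradict that branch. The conclusion of symmetry, non-degeneracy (note $W$ is finite here, so invertibility of the permutation Gram matrix suffices), and the dual-basis identity all follow cleanly.
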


\begin{lemma}[Ein Antiautomorphismus von $H$]
\index{symbols}{$\ast$}
Definiere $\ast: H\to H$ durch $\IZ[\Gamma]$-lineare Fortsetzung von
\[T_w^\ast := T_{w^{-1}}.\]
Mit dieser Bezeichnung ist $\ast$ ein involutiver Antiautomorphismus von $H$.
\end{lemma}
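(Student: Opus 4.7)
Der Plan ist, die zweite Pr\"asentation von $H$ aus dem vorausgegangenen Satz \"uber generische Algebren auszunutzen und $\ast$ zun\"achst \emph{als Antiautomorphismus der Erzeuger} zu konstruieren. Konkret w\"urde ich die Abbildung $\sigma: S\to H^{\text{op}}, s\mapsto T_s$ betrachten und pr\"ufen, dass diese die definierenden Relationen von $H$ (jetzt im Gegenring gelesen) erf\"ullt. F\"ur die quadratische Relation ist nichts zu zeigen, denn $T_s\cdot_{\text{op}} T_s = T_s\cdot T_s = T_s^2 = 1 + (v_s-v_s^{-1})T_s$ ist ein einzelnes Monom in $T_s$ und die Relation daher symmetrisch in der Reihenfolge der Faktoren. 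F\"ur die Zopfrelation nutze ich aus, dass das R\"uckw\"artslesen eines alternierenden Wortes $sts\ldots$ der L\"ange $m_{st}$ wieder ein alternierendes Wort derselben L\"ange liefert -- n\"amlich $tst\ldots$ falls $m_{st}$ gerade ist und $sts\ldots$ falls $m_{st}$ ungerade ist. In beiden F\"allen geht die Zopfrelation beim Umkehren der Faktorreihenfolge genau in sich selbst \"uber. Damit liefert die universelle Eigenschaft der Pr\"asentation einen eindeutigen Algebrenhomomorphismus $H\to H^{\text{op}}$, d.\,h. einen Antiautomorphismus (den Isomorphismus werden wir gleich nachreichen) $\ast: H\to H$ mit $T_s^\ast = T_s$ f\"ur alle $s\in S$.

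Anschlie{\ss}end weise ich die Verbindung zur Basisdarstellung nach: Ist $w=s_1\ldots s_n$ ein reduzierter Ausdruck, so ist nach dem Satz \"uber generische Algebren $T_w=T_{s_1}\cdots T_{s_n}$. Wendet man $\ast$ an und nutzt die Antihomomorphie, so folgt
\[T_w^\ast = T_{s_n}^\ast \cdots T_{s_1}^\ast = T_{s_n}\cdots T_{s_1}.\]
Nun ist $s_n\ldots s_1$ aber eine reduzierte Darstellung von $w^{-1}$, denn wegen $s_i^2=1$ gilt $w^{-1}=s_n\cdots s_1$, und da $l(w)=l(w^{-1})$ ist, muss dieser Ausdruck bereits reduziert sein. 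Folglich stimmt $T_{s_n}\cdots T_{s_1}$ mit $T_{w^{-1}}$ \"uberein, womit die angegebene Formel $T_w^\ast = T_{w^{-1}}$ bewiesen ist.

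Die Involutivit\"at folgt schlie{\ss}lich elegant aus der Eindeutigkeit: $\ast\circ\ast$ ist als Verkettung zweier Antihomomorphismen ein Algebrenhomomorphismus, der jedes $T_s$ auf $T_s$ abbildet. Da die $T_s$ die Algebra erzeugen, muss $\ast\circ\ast=\id_H$ gelten; insbesondere ist $\ast$ bijektiv und damit wirklich ein Antiautomorphismus.

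Eine ernsthafte Schwierigkeit erwarte ich nicht; der einzige Punkt, an dem man sorgf\"altig sein muss, ist die Fallunterscheidung bei der Zopfrelation (gerades vs.\ ungerades $m_{st}$), und die formale Rechtfertigung, dass sich ein Antihomomorphismus in $H$ durch einen Homomorphismus nach $H^{\text{op}}$ beschreiben l\"asst, so dass man tats\"achlich die universelle Eigenschaft der Pr\"asentation anwenden darf. Alles andere ist symbolische Routine.
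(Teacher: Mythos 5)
Ihr Beweis ist korrekt und vollst\"andig. Da das Paper f\"ur dieses Lemma keinen eigenen Beweis angibt (es wird offenbar als Routine angesehen), gibt es nichts zu vergleichen; Ihr Vorgehen -- erst einen Antihomomorphismus $H\to H^{\text{op}}$ \"uber die Pr\"asentation mit $T_s\mapsto T_s$ zu konstruieren, die Symmetrie der quadratischen und der Zopfrelation unter Faktorumkehr zu verifizieren, und anschlie\ss end via Matsumoto $T_w^\ast=T_{w^{-1}}$ sowie die Involutivit\"at aus der Erzeugereigenschaft zu folgern -- ist genau der Standardweg und l\"uckenlos ausgef\"uhrt.
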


\begin{theorem}
\index{symbols}{ZW@$\IZ_W$}\index{symbols}{QW@$\IQ_W$}
Setze
\[\IZ_W:=\IZ[\textstyle{2\cos(\frac{2\pi}{m_{st}}}) \mid s,t\in S] \quad\text{und}\]
\[\IQ_W:=\QuotFld(\IZ_W).\]
Dann gilt:
\begin{enumerate}
	\item $\IQ_W$ ist ein Zerfällungskörper von $W$.
	\item Für $K:=\IQ_W(\Gamma)$ ist $KH$ zerfallend halbeinfach.
\end{enumerate}
\end{theorem}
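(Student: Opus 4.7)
Der Beweis zerfällt natürlicherweise in zwei unabhängige Teile, die unterschiedliche Techniken erfordern.

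Für Teil (a) nutze ich die Klassifikation der endlichen Coxeter"=Gruppen aus. Nach der früheren Bemerkung über reduzible Coxeter"=Gruppen zerlegt sich die Gruppenalgebra als Tensorprodukt über die irreduziblen Komponenten, sodass sich der Zerfällungskörper einer reduziblen Coxeter"=Gruppe als Kompositum der Zerfällungskörper ihrer irreduziblen Faktoren ergibt. Es reicht daher, die in der Klassifikation aufgelisteten irreduziblen Typen einzeln zu behandeln. Für die Typen $A_n$, $B_n$, $D_n$ sowie die exzeptionellen Typen $E_6, E_7, E_8, F_4$ sind sämtliche Charakterwerte ganzrational, sodass bereits $\IQ\subseteq\IQ_W$ als Zerfällungskörper genügt. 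Für die Diedergruppen $I_2(m)$ treten in der Charaktertafel nur Werte der Form $2\cos(k\pi/m) \in \IZ_W$ auf, was die Behauptung aus der expliziten Darstellungstheorie dieser Gruppen liefert. Für $H_3$ und $H_4$ wird $\IQ(\sqrt{5})$ benötigt, und wegen $\sqrt{5} = 1 + 2\cdot 2\cos(2\pi/5)$ ist dies in $\IQ_W$ enthalten; dass $\IQ(\sqrt{5})$ hier tatsächlich Zerfällungskörper ist, ist der klassische Satz von Benard bzw.\ Springer, dessen Beweis auf expliziten Rechnungen mit den Charaktertafeln beruht.

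Für Teil (b) ist der Tits'sche Deformationssatz der entscheidende Schritt. Der Plan: Die Ring"=Spezialisierung $\theta: \IZ_W[\Gamma] \to \IQ_W$, $v^\gamma\mapsto 1$, induziert nach Tensorieren mit $H$ einen Algebra"=Isomorphismus $H \otimes_{\IZ[\Gamma]} \IQ_W \isomorphic \IQ_W W$, denn die quadratische Relation $T_s^2 = 1 + (v_s - v_s^{-1})T_s$ degeneriert unter $\theta$ zu $T_s^2 = 1$, während die Zopfrelationen unverändert bleiben. Nach Teil (a) ist die rechte Seite zerfallend halbeinfach. Zum Übertragen dieser Eigenschaft auf die generische Faser $KH$ wende ich Tits' Deformationssatz an: Lokalisiert man $\IZ_W[\Gamma]$ an einem maximalen Ideal mit Restklassenkörper $\IQ_W$, erhält man einen lokalen Ring $R$, über dem $H\otimes R$ frei von endlichem Rang ist; Halbeinfachheit und Zerfallen heben sich dann von der speziellen Faser $\IQ_W W$ zur generischen Faser $KH$, wobei die Zerlegung in einfache Komponenten dimensionstreu übertragen wird. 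Alternativ lässt sich direkt die Diskriminante der Spurform verwenden: Diese ist über $\IZ_W[\Gamma]$ ein von Null verschiedenes Element, weil ihre Spezialisierung $\theta$ die Diskriminante von $\IQ_W W$ liefert, welche nach Teil (a) nicht verschwindet. Ein ausführlicher Beweis der Deformationsargumentation findet sich in \citep[7.4]{geckpfeiffer} bzw.\ \citep[Kap.\,1]{geckjacon}.

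Der Hauptaufwand liegt klar in Teil (a) bei den nicht"=kristallographischen Typen $H_3, H_4, I_2(m)$, da hier das Rationalitätsargument versagt und der Benard/Springer"=Satz bzw.\ eine explizite Verifikation notwendig wird. Teil (b) enthält demgegenüber keinen wesentlich neuen Beweisschritt, sondern ist eine Standardanwendung des Tits'schen Deformationssatzes auf die spezielle Situation der Iwahori"=Hecke"=Algebren.
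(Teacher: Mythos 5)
Dein Beweis ist korrekt und folgt im Kern demselben Weg wie der des Papers, nämlich einer Anwendung des Tits'schen Deformationssatzes bezüglich der Spezialisierung $v^\gamma\mapsto 1$. Der Unterschied ist eher darstellerischer Natur: Das Paper ist ein reiner Zitatbeweis, der sowohl Teil (a) als auch die Zerfällungsaussage von Teil (b) direkt an \citep[6.3.8]{geckpfeiffer} bzw.\ \citep[9.3.5]{geckpfeiffer} delegiert und nur noch die Halbeinfachheit via Deformation begründet. Du führst hingegen die Fallunterscheidung für Teil (a) explizit aus (kristallographische Typen sind rational; $I_2(m)$ aus der expliziten Darstellungstheorie; $H_3$, $H_4$ via Benard/Springer und der Beobachtung $\sqrt 5 = 1+2\cdot 2\cos(2\pi/5)$) und gewinnst in Teil (b) sowohl "zerfallend" als auch "halbeinfach" aus einer einzigen Anwendung des Deformationssatzes, statt die Zerfällungsaussage separat zu zitieren. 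Beides ist legitim; deine Fassung ist selbstgenügsamer, das Paper knapper. Ein kleiner Hinweis: Bei der Zerlegung reduzibler Gruppen in irreduzible Faktoren wäre es sauberer, explizit zu sagen, dass das Tensorprodukt zerfallend halbeinfacher Algebren wieder zerfallend halbeinfach ist (Schur-Elemente multiplizieren sich) — eine Aussage, die das Paper an anderer Stelle in Kapitel 2 tatsächlich als eigenes Lemma behandelt.
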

\begin{proof}
In \citep[6.3.8]{geckpfeiffer} wird Teil a. bewiesen und in \citep[9.3.5]{geckpfeiffer}, dass $K$ ein Zerfällungskörper für $KH$ ist. Dass $KH$ halbeinfach ist, folgt aus Tits' Deformationssatz (siehe \citep[7.4.6]{geckpfeiffer})\index{terms}{Satz von!Tits' Deformationssatz}, denn $v^\gamma\mapsto 1$ definiert einen Homomorphismus $\IZ_W[\Gamma]\to\IQ_W$ bezüglich dessen $H$ zu $\IQ_W[W]$ spezialisiert. Aufgrund des Satzes von Maschke und Teilaussage a. ist $\IQ_W[W]$ zerfallend halbeinfach, also ist auch $KH$ halbeinfach.
\end{proof}

\begin{remark}
Aus dem Deformationssatz von Tits folgt außerdem, dass $\chi\mapsto\chi_1$ eine Bijektion $\Irr(KH)\to\Irr(W)$ ist. Dabei bezeichne $\chi_1$ das Bild von ${\chi: H\to\IC[\Gamma]}$ unter der Spezialisierung $v^\gamma\mapsto 1$.

Man kann daher eine gemeinsame Indexmenge $\Lambda$ für die Isomorphietypen der einfachen Moduln beider Algebren wählen. Wir werden dieser Tradition folgen und die Charaktere von $KH$ mit $\chi_\lambda$ bezeichnen.
\end{remark}

\subsection{Kazhdan-Lusztig-Theorie}

\begin{remark}
Die beiden folgenden Definitionen gehen zurück auf \cite{KL} im Einparameterfall und \cite{lusztig2003hecke} sowie \cite{geckjacon} im Multiparameterfall.
\end{remark}

\begin{theoremdef}[Kazhdan-Lusztig-Basis]\label{KL:def:KL_basis}
\index{terms}{Kazhdan-Lusztig!-Basis}\index{terms}{Spiegelung eines Laurent-Polynoms}
\index{symbols}{Cw@$C_w$}\index{symbols}{$\overline{\phantom{m}}$}
Definiere $\overline{\phantom{m}}:\IZ[\Gamma]\to\IZ[\Gamma]$ durch $v^\gamma\mapsto v^{-\gamma}$.
\begin{enumerate}
	\item Durch $\sum_{w\in W} a_w T_w \mapsto \sum_{w\in W} \overline{a_w} T_{w^{-1}}^{-1}$ ist ein $\overline{\phantom{m}}$-semilinearer Ringautomorphismus von $H$ gegeben, den wir ebenfalls mit $\overline{\phantom{m}}$ bezeichnen.
	\item Es gibt genau eine $\IZ[\Gamma]$"~Basis $(C_w)_{w\in W}$ von $H$ mit den Eigenschaften
	\begin{enumerate}
		\item $C_w \in T_w + \sum_{y\in W} \IZ[\Gamma_{>0}] T_y$ und
		\item $\overline{C_w} = C_w$.
	\end{enumerate}
	\item Ist $W$ endlich, so ist $C_w$ auch eindeutig bestimmt durch die Eigenschaften
	\begin{enumerate}
		\item $C_w \in T_w + \sum_{y\in W} \IZ[\Gamma_{>0}] T_y$ und
		\item $C_w T_{w_0} \in \sum_{y\in W} \IZ[\Gamma_{\leq 0}] T_y$.
	\end{enumerate}
	Dabei ist $w_0$ das längste Element von $W$.
\end{enumerate}
Die $C_w$ werden \udot{Kazhdan-Lusztig-Basis} von $H$ genannt. Sie haben zusätzlich die folgende Eigenschaft:
\begin{enumerate}[resume]
	\item $C_w^\ast = C_{w^{-1}}$.
\end{enumerate}
\end{theoremdef}

\begin{theoremdef}[Kazhdan-Lusztig-Polynome, \cite{KL}]\label{KL:def:KL_poly}
\index{terms}{Kazhdan-Lusztig!-Polynom}\index{terms}{Kazhdan-Lusztig!-$\mu$}
\index{symbols}{Pyw@$P_{y,w}^\ast$}\index{symbols}{Pyw@$P_{y,w}$}\index{symbols}{muyws@$\mu_{y,w}^s$}
Sei $P_{y,w}^\ast\in\IZ[\Gamma]$ und $\mu_{y,w}^s\in\IZ[\Gamma]$ durch
\[\smash{C_w = \sum_{y\in W} (-1)^{l(y)+l(w)} \overline{P_{y,w}^\ast} T_y}\]
bzw.
\[\smash{C_s C_w = C_{sw} + \sum_{y<w} (-1)^{l(y)+l(w)+1} \mu_{y,w}^s C_y}\]
für alle $w\in W$,$s\in S$ definiert. Dann gilt:
\begin{enumerate}
	\item $P_{w,w}^\ast = 1$,	$P_{y,w}^\ast\in\IZ[\Gamma_{<0}]$ für $y<w$ und $P_{y,w}^\ast = 0$ für $y \not\leq w$.
	\item Für feste $w\in W$ und $y<w$ gilt $P_{y,w}^\ast = P_{y^{-1},w^{-1}}^\ast$ sowie
	\begin{enumerate}
		\item Für alle $t\in S$ mit $tw<w$:
		\[P_{y,w}^\ast = \begin{cases} P_{ty,tw}^\ast & L(t)=0 \\
		v_t^{-1} P_{ty,w}^\ast & L(t)>0 \wedge ty>y \\
		\smash{v_t P_{ty,w}^\ast + P_{ty,tw}^\ast - \sum_{\substack{z\in W \\ y\leq z < tw \\ tz < z}} P_{y,z}^\ast \mu_{z,tw}^t} & L(t)>0 \wedge ty<y
		\end{cases}\]
		\item Für alle $t\in S$ mit $wt<w$:
		\[P_{y,w}^\ast = \begin{cases} P_{yt,wt}^\ast & L(t)=0 \\
		v_t^{-1} P_{yt,w}^\ast & L(t)>0 \wedge yt>y \\
		\smash{v_t P_{yt,w}^\ast + P_{yt,wt}^\ast - \sum_{\substack{z\in W \\ y\leq z < wt \\ zt < z}} P_{y,z}^\ast \mu_{z,tw^{-1}}^t} & L(t)>0 \wedge yt<y
		\end{cases}\]
	\end{enumerate}
	\item $P_{y,w} := v^{L(w)-L(y)} P_{y,w}^\ast \in\IZ[\Gamma_{\geq 0}]$. Falls $L(s)>0$ für alle $s\in S$ ist, dann ist $P_{y,w}\neq 0$ und hat konstanten Term $1$.
	\item Für alle $y,w\in W, s\in S$ gilt $\mu_{y,w}^s \neq 0 \implies L(s)>0$ und $sy<y<w<sw$.
	\item Außerdem gilt für alle $y,w\in W, s\in S$ mit $L(s)>0$ und $sy<y<w<sw$:
	\begin{enumerate}
		\item $\smash{\mu_{y,w}^s - v_s P_{y,w}^\ast + \sum_{\substack{z\in W \\ y<z<w \\ sz<z}} P_{y,z}^\ast \mu_{z,w}^s \in\IZ[\Gamma_{<0}]}$.
		\item $\overline{\mu_{y,w}^s} = \mu_{y,w}^s$.
		\item $v_s \mu_{y,w}^s \in\IZ[\Gamma_{>0}]$.
		\item Falls $L(s)=1$ für alle $s\in S$ ist, ist $\mu_{y,w}^s\in\IZ$ und gleich dem Koeffizienten von $v^{-1}$ in $P_{y,w}^\ast\in\IZ[v^{-1}]$.
	\end{enumerate}
\end{enumerate}
Die $P_{y,w}$ heißen \udot{Kazhdan-Lusztig-Polynome}. Die $\mu_{y,w}^s$ werden wir als \udot{Kazhdan-Lusztig-$\mu$} bezeichnen.
\end{theoremdef}

\begin{example}
\begin{itemize}
	\item Man kann zeigen, dass $C_1=T_1=1$ sowie $C_s = T_s - v_s$ für alle $s\in S$ gilt. Insbesondere ist $\Set{C_s | s\in S}$ ein Erzeugendensystem der $\IZ[\Gamma]$"~Algebra $H$.
	\item In einer endlichen Coxeter"=Gruppe gilt für das längste Element $w_0$ und beliebige $x\in W$ stets $P_{x,w_0}=1$.
	\item Man kann ebenfalls zeigen (siehe etwa \citep[7.12.a]{humphreys1992coxeter}), dass für Dieder"=Gruppen im Einparameterfall $P_{x,y}=\begin{cases} 1 & \text{falls}\,x\leq y \\ 0 &\text{sonst}\end{cases}$ gilt.
\end{itemize}
\end{example}

\begin{remark}
Der Satz liefert einen rekursiven Algorithmus, um $P_{y,w}^\ast$ und $\mu_{y,w}^s$ simultan zu berechnen. Indem wir auf der Menge $\Set{(y,w)\in W\times W | y\leq w}$ die partielle Ordnung
\[(y',w')\sqsubset(y,w) :\iff w'<w \text{ oder } (w=w' \text{ und } y'>y)\]
definieren, erhalten wir, dass alle Terme der rechten Seite in den Teilaussagen b.i. und b.ii. beziehungsweise alle Summanden in e.i. echt kleiner bezüglich dieser Ordnung sind. Das kleinste Element ist $(1,1)$. Da außerdem jedes Element $w\in W$ nur endlich viele Vorgänger (nämlich $\leq 2^{l(w)}$) in der Bruhat"=Ordnung hat, hat die partielle Ordnung keine unendlichen absteigenden Ketten, d.\,h. wir können diese Rekursion tatsächlich in einen Algorithmus umwandeln.

Man beachte, dass aufgrund der großen Anzahl von nicht-verschwindenden Kazhdan-Lusztig-Polynomen und der Art der Rekursion die Berechnung der $P_{x,y}$ mit diesem Algorithmus eine sehr anspruchsvolle Aufgabe sein kann, falls das Intervall $[x,y]$ sehr groß ist. Die Berechnung von allen Kazhdan-Lusztig-Polynomen für $E_8$ ist beispielsweise an der Grenze der Machbarkeit für heutige Computer aufgrund des enormen Zeit- und Speicherplatzbedarfs.

Die Wahl von $t\in S$ mit $tw<w$ ist uns in b. und c. freigestellt. Im Sinne einer zeit- und speicherplatzeffizienten Implementierung der Rekursion ist es wünschenswert, den jeweils dritten Fall in b. und c. so selten wie möglich tatsächlich zu benutzen und nur so wenig Werte von $P_{y,w}^\ast$ wie möglich dauerhaft zu speichern. Dies kann man dadurch erreichen, dass man zuerst nach $t\in S$ sucht, die $tw<w$, aber $ty>y$ beziehungsweise $wt<w$, aber $yt>y$ erfüllen. Paare $(y,w)$, für die $y<w$ und 
\[\forall t: (tw<w \implies L(t)>0 \wedge ty<y) \wedge (wt<w \implies L(t)>0 \wedge yt<y)\]
gilt, heißen \udot{kritische Paare}\index{terms}{Kritisches Paar}. Es ist mit den ersten beiden Fällen von b.i. und b.ii. problemlos möglich, aus der Kenntnis von $P_{y,w}^\ast$ für alle kritischen Paare die restlichen Werte zu berechnen. Man sollte daher die Polynome nur für kritische Paare abspeichern und auch nur für kritische Paare tatsächlich die Summation ausführen.

Im Einparameterfall ist keine gesonderte Berechnung der $\mu_{y,w}^s$ nötig, wie aus e.iv. hervorgeht. Man braucht also nur für den Multiparameterfall einen speziellen Algorithmus für die $\mu$-Werte.

Eine Pseudocode-Implementierung dieses Algorithmus ist in \ref{algo:KL_poly_mu} zu finden.
\end{remark}
\begin{remark}
\index{terms}{Coxeter!-Gruppe!reduzible}\index{terms}{Kazhdan-Lusztig!-Basis}
Es sei $(W,S)$ reduzibel, etwa $S=S_1 \coprod S_2$ und $W=W_1\times W_2$. Es seien weiter $L_i:=L_{|S_i}$ die eingeschränkten Gewichtsfunktionen. Dann ist, wie bereits festgestellt, $H:=H(W,S,L)$ das Tensorprodukt der parabolischen Unteralgebren $H_i:=H(W_i,S_i,L_i)$. Da sowohl $\overline{\phantom{m}}$ als auch die Standardbasen $T$ der beteiligten Hecke"=Algebren mit den parabolischen Einbettungen $H_i\hookrightarrow H$ verträglich sind, ist $(C_{w_1} \cdot C_{w_2})_{w_i\in W_i}$ eine Basis von $H$, die die Bedingungen der Definition erfüllt, also gilt $C_{w_1 w_2} = C_{w_1} C_{w_2}$ für alle $w_i\in W_i$.

In der gleichen Situation zerlegen sich auch die Polynome $P_{y,w}^\ast$ und $\mu_{y,w}^s$ entsprechend: Es gilt $P_{y_1 y_2, w_1 w_2}^\ast = P_{y_1, w_1}^\ast \cdot P_{y_2, w_2}^\ast$ für alle $y_i,w_i\in W_i$ sowie
\[\mu_{y_1 y_2, w_1 w_2}^s =\begin{cases} \mu_{y_1,w_1}^s & \text{falls }s\in S_1\text{ und }y_2=w_2 \\ \mu_{y_2,w_2}^s & \text{falls } s\in S_2\text{ und }y_1=w_1 \\ 0 & \text{sonst}\end{cases}\]
für alle $y_i,w_i\in W_i$ und alle $s\in S$ mit $s(y_1 y_2)<y_1 y_2 < w_1 w_2 < s (w_1 w_2)$.
\end{remark}

\begin{corollary}[Symmetrieeigenschaften]\label{KL:lemma:symmetry_KL_poly}
\index{terms}{Kazhdan-Lusztig!-Polynom}\index{terms}{Kazhdan-Lusztig!-$\mu$}
Sei ein Gruppenhomomorphismus $\Gamma\to\Set{\pm 1}$ gegeben. Wir bezeichnen ihn der Übersichtlichkeit halber mit $\gamma\mapsto (-1)^\gamma$ und definieren einen Automorphismus $\theta$ von $\IZ[\Gamma]$ via $\theta(v^\gamma):=(-1)^{\gamma}v^\gamma$. Mit diesen Bezeichnungen gilt:
\begin{enumerate}
	\item $\theta(P_{y,w}^\ast) = (-1)^{L(w)+L(y)} P_{y,w}^\ast$.
	\item $\theta(\mu_{y,w}^s) = (-1)^{L(w)+L(y)+L(s)} \mu_{y,w}^s$.
\end{enumerate}
Insbesondere folgt:
\begin{enumerate}[resume]
	\item $P_{y,w}\in\IZ[2\Gamma]$.
	\item $v^{L(w)-L(y)} \cdot v_s \mu_{y,w}^s\in \IZ[2\Gamma]$.
\end{enumerate}
\end{corollary}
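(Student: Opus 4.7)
Mein Plan ist, einen $\theta$-semilinearen Ringautomorphismus $\hat\theta$ von $H$ zu definieren durch $\hat\theta(T_s) := (-1)^{L(s)} T_s$ f�r alle $s\in S$, und dann die Eindeutigkeit der Kazhdan-Lusztig-Basis auszunutzen. Zun�chst pr�fe ich die Wohldefiniertheit: Die quadratische Relation $T_s^2 = 1 + (v_s - v_s^{-1}) T_s$ geht nach Anwendung von $\theta$ auf die Koeffizienten und gleichzeitiger Ersetzung $T_s \mapsto (-1)^{L(s)} T_s$ wieder in sich �ber (die beiden Vorzeichen $(-1)^{L(s)}$ aus $v_s$ bzw.\ aus $T_s$ k�rzen sich heraus). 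F�r die Zopfrelation ist zu zeigen, dass die entstehenden Vorzeichen auf beiden Seiten gleich sind: Bei geradem $m_{st}$ ist das offensichtlich; bei ungeradem $m_{st}$ sind $s$ und $t$ nach dem Lemma �ber Konjugationsklassen in $S$ konjugiert in $W$, also $L(s) = L(t)$, sodass die Vorzeichen ebenfalls �bereinstimmen. Damit existiert $\hat\theta$, und per Induktion (bzw.\ dem Satz von Matsumoto angewandt auf die getwisteten Generatoren) folgt $\hat\theta(T_w) = (-1)^{L(w)} T_w$ f�r alle $w\in W$.

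Im n�chsten Schritt zeige ich, dass $\hat\theta$ mit dem Balken-Automorphismus vertauscht: Auf Koeffizienten ist $\theta(\overline{v^\gamma}) = (-1)^{-\gamma} v^{-\gamma} = (-1)^\gamma v^{-\gamma} = \overline{\theta(v^\gamma)}$, und auf den Generatoren $\hat\theta(T_s^{-1}) = ((-1)^{L(s)} T_s)^{-1} = (-1)^{L(s)} T_s^{-1} = \overline{\hat\theta(T_s)}$. Folglich erf�llt $(-1)^{L(w)} \hat\theta(C_w)$ beide Bedingungen aus \ref{KL:def:KL_basis}.b: Es ist invariant unter $\overline{\phantom{m}}$, und da $\theta$ keine Grade bez�glich $\Gamma$ �ndert, liegt es in $T_w + \sum_y \IZ[\Gamma_{>0}] T_y$. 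Die Eindeutigkeit der Kazhdan-Lusztig-Basis liefert $\hat\theta(C_w) = (-1)^{L(w)} C_w$.

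F�r Teil a. entwickle ich beide Seiten dieser Identit�t in der Standardbasis unter Verwendung der Definition von $P_{y,w}^\ast$ und vergleiche die Koeffizienten von $T_y$: Es ergibt sich $\theta(\overline{P_{y,w}^\ast}) = (-1)^{L(w)+L(y)} \overline{P_{y,w}^\ast}$, und Anwendung von $\overline{\phantom{m}}$ (das mit $\theta$ vertauscht) liefert die Behauptung. F�r Teil b. wende ich $\hat\theta$ auf beide Seiten der Relation $C_s C_w = C_{sw} + \sum_y (-1)^{l(y)+l(w)+1} \mu_{y,w}^s C_y$ an; da $\mu_{y,w}^s\neq 0$ stets $sw > w$ erzwingt, gilt $L(sw) = L(s) + L(w)$, sodass sich der $C_{sw}$-Anteil das korrekte Vorzeichen $(-1)^{L(s)+L(w)}$ einf�ngt und der Koeffizientenvergleich in der $C$-Basis unmittelbar die behauptete Identit�t liefert.

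Schlie�lich folgen c. und d. aus a. und b. durch direktes Einsetzen: $\theta(P_{y,w}) = \theta(v^{L(w)-L(y)})\,\theta(P_{y,w}^\ast) = (-1)^{L(w)-L(y)+L(w)+L(y)} P_{y,w} = P_{y,w}$, und v�llig analog wird $v^{L(w)-L(y)} v_s \mu_{y,w}^s$ von $\theta$ fixiert. Da der Homomorphismus $\Gamma\to\Set{\pm 1}$ beliebig gew�hlt werden kann und $\Gamma$ als total geordnete, abelsche Gruppe torsionsfrei ist, trennen die $\IZ/2$-linearen Funktionale auf $\Gamma/2\Gamma$ die Nicht-Null-Klassen in diesem $\IZ/2$-Vektorraum; somit k�nnen die unter allen solchen $\theta$ fixierten Ausdr�cke nur Monome mit Exponenten in $2\Gamma$ enthalten. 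Der einzige wirklich nicht-triviale Punkt ist das Nachrechnen der Wohldefiniertheit von $\hat\theta$ auf der Zopfrelation im ungeraden Fall (das aber gerade durch das Lemma �ber Konjugationsklassen gesichert ist); alles �brige ist Koeffizientenarithmetik und Anwendung der Eindeutigkeit der Kazhdan-Lusztig-Basis.
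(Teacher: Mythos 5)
Dein Beweis ist korrekt, geht aber einen anderen Weg als der des Papers. Das Paper beweist a.\ und b.\ durch Induktion entlang der partiellen Ordnung $\sqsubseteq$, indem es die Rekursionsformeln aus \ref{KL:def:KL_poly} direkt einsetzt und $\theta$ auf jeden Rekursionsfall anwendet. Du hingegen konstruierst den $\theta$-semilinearen Ringautomorphismus $\hat\theta$ von $H$ mit $\hat\theta(T_s)=(-1)^{L(s)}T_s$, pr�fst die Wohldefiniertheit (insbesondere die Zopfrelation bei ungeradem $m_{st}$ mittels des Lemmas �ber Konjugationsklassen in $S$), zeigst, dass $\hat\theta$ mit $\overline{\phantom{m}}$ kommutiert, und schlie�t aus der Eindeutigkeit der Kazhdan-Lusztig-Basis auf $\hat\theta(C_w)=(-1)^{L(w)}C_w$; a.\ und b.\ ergeben sich dann durch Koeffizientenvergleich in $T$- bzw.\ $C$-Basis. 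Dein Ansatz ist konzeptuell sauberer und kompakter: die Symmetrie wird als Gleichvariantheit der KL-Basis unter einem strukturerhaltenden Twist sichtbar gemacht, statt die Koeffizienten Fall f�r Fall durch die Rekursion zu jagen. Der Preis ist, dass man die Existenz von $\hat\theta$ (Zopf- und quadratische Relation) sorgf�ltig pr�fen muss; das Paper vermeidet das, zahlt daf�r aber mit einer l�ngeren Fallunterscheidung. Teile c.\ und d.\ behandeln beide Beweise identisch (Dichteargument f�r $\Gamma\to\Set{\pm 1}$). Beide Varianten sind vollst�ndig; die deine ist eine echte Alternative.

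Ein kleiner Punkt zur Darstellung in Teil b.: Die Relation $C_sC_w=C_{sw}+\sum_{y<w}(-1)^{l(y)+l(w)+1}\mu_{y,w}^s C_y$ gilt in der Form nur f�r $sw>w$; f�r $sw<w$ sind alle $\mu_{y,w}^s$ nach \ref{KL:def:KL_poly}.d ohnehin Null, sodass dort nichts zu zeigen ist. Das ist bei dir implizit in der Formulierung \emph{da $\mu_{y,w}^s\neq 0$ stets $sw>w$ erzwingt} enthalten, sollte aber explizit als Fallunterscheidung notiert werden, damit klar ist, dass du $\hat\theta$ nur auf eine in diesem Fall g�ltige Identit�t anwendest.
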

\begin{proof}
Der Beweis von a. und b. erfolgt durch Induktion entlang der oben definierten Ordnung $\sqsubseteq$ mittels der Rekursionsformeln. Für $y=w$ und $y\not\leq w$ gilt die Behauptung sicherlich, da $P_{w,w}^\ast = 1$ bzw. $P_{y,w}^\ast=0$ sowie $\mu_{y,w}^s = 0$ ist.

Gelten die Behauptungen nun für alle $(y',w')\sqsubset(y,w)$, dann gelten sie auch für $(y,w)$ aufgrund der Rekursion: Wir wählen ein $t\in S$ mit $tw<w$. Falls $L(t)=0$ ist, gilt
\[\theta(P_{y,w}^\ast) = \theta(P_{ty,tw}^\ast) = (-1)^{L(tw)+L(ty)} P_{ty,tw}^\ast = (-1)^{L(w)+L(y)} P_{y,w}^\ast.\]
Falls $L(t)>0$ und $ty>y$ ist, gilt 
\[\theta(P_{y,w}^\ast) = \theta(v_t^{-1} P_{ty,w}^\ast) = (-1)^{-L(t)}v_t^{-1} (-1)^{L(w)+L(ty)} P_{ty,w}^\ast = (-1)^{L(w)+L(y)} P_{y,w}^\ast.\]
Ist schließlich $L(t)>0$ und $ty<y$, so gilt
\begin{align*}
	\theta(P_{y,w}^\ast) &= \theta(v_t P_{ty,w}^\ast)+\theta(P_{ty,tw}^\ast) - \sum_z \theta(P_{y,z}^\ast \mu_{z,tw}^t) \\
	&= (-1)^{L(t)} v_t (-1)^{L(w)+L(ty)} P_{ty,w}^\ast + (-1)^{L(tw)+L(ty)} P_{ty,tw}^\ast \\
	&\phantom{\text{= }} - \sum_z (-1)^{L(y)+L(z)} P_{y,z}^\ast (-1)^{L(z)+L(tw)+L(t)} \mu_{z,tw}^t \\
	&= (-1)^{L(w)+L(y)}\cdot \Big(v_t P_{ty,w}^\ast + P_{ty,tw}^\ast - \sum_z P_{y,z}^\ast \mu_{z,tw}^t \Big) \\
	&= (-1)^{L(w)+L(y)} P_{y,w}^\ast.
\end{align*}
Eine analoge Anwendung der Rekursionsformel für $\mu$ zeigt den Induktionsschritt. Insbesondere ergibt sich daraus nun
\[\theta(P_{y,w}) = \theta\big(v^{L(w)-L(y)} P_{y,w}^\ast\big) = (-1)^{L(w)-L(y)} v^{L(w)-L(y)} (-1)^{L(w)+L(y)} P_{y,w}^\ast = P_{y,w}\]
und analog ist auch $v^{L(w)-L(y)} v_s \mu_{xy}^s$ invariant unter $\theta$.

c. und d. ergeben sich nun, da a. und b. für alle $\Gamma\to\Set{\pm 1}$ gelten. Da es zu jedem $\gamma\in\Gamma\setminus2\Gamma$ einen Homomorphismus $\Gamma\to\Set{\pm 1}$ mit $\gamma\mapsto-1$ gibt, folgt aus der eben bewiesenen Invarianz von $P_{y,w}$ unter allen solchen $\theta$, dass der Koeffizient vor $v^\gamma$ in $P_{y,w}$ gleich Null sein muss.
\end{proof}

\begin{remark}
In Kontexten wie \cite{humphreys1992coxeter} und \cite{geckpfeiffer}, wo man die Hecke"=Algebra mit der $\dot{T}$"~Basis und den Relationen
\[\dot{T_s}^2 = q_s + (q_s-1)\dot{T_s}\]
definiert ($q_s:=v_s^2$), erhält man insbesondere, dass die Kazhdan-Lusztig-Polynome in $\IZ[q_s | s\in S]$ liegen. Das Korollar liefert die Möglichkeit, dies direkt einzusehen, ohne die $\IZ[q]$-Form der Hecke"=Algebra benutzen zu müssen. Nach meinem Kenntnisstand ist obige Beweismethode noch nicht (öffentlich) bemerkt worden.
\index{terms}{Hecke-Algebra!$\IZ[q]$-Form}
\end{remark}

\subsection{Zellen}

\begin{definition}[Zellen, siehe {\cite{stembridge2012finiteness}}]\label{KL:def:cells}
\index{symbols}{$\preceq$}
Sei $k$ ein kommutativer Ring, $H$ eine $k$"~Algebra sowie $M$ ein $H$"~Modul, der als $k$"~Modul frei ist mit Basis $C\subseteq M$. Dann definiere eine Quasiordnung auf $C$ wie folgt: Wir legen fest, dass $x\leftarrow y$ genau dann gelten soll, falls ein $h\in H$ existiert derart, dass in der Darstellung $hy = \sum_{z\in C} a_{zy} z$ mit $a_{zy}\in k$ der Koeffizient $a_{xy}\neq 0$ ist. Die Quasiordnung $\preceq$ sei dann definiert als der transitive Abschluss von $\leftarrow$.

Die Äquivalenzklassen der von $\preceq$ induzierten Äquivalenzrelation (d.\,h. $x\sim y$ genau dann, wenn $x\preceq y \wedge y\preceq x$) werden als \udot{Zellen} von $(M,C)$ bezeichnet.
\end{definition}

\begin{lemmadef}
\index{terms}{Zellmodul einer Zelle}
Die Quasiordnung kann in der Situation der Definition wie folgt charakterisiert werden: Für alle $y\in C$ ist
\[\Set{x\in C | x\preceq y} = \bigcap \Set{A\subseteq C | y\in A \text{ und }\textrm{span}_k A \text{ ist ein  $H$-Untermodul von $M$}}.\]

Insbesondere kann zu jeder Zelle auf kanonische Weise ein $H$"~Modul-mit-Basis assoziiert werden. Diese werden \udot{Zellmoduln von $(M,C)$} genannt.
\end{lemmadef}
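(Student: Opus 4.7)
Der Plan ist, beide Inklusionen zwischen der linken Seite $L:=\Set{x\in C | x\preceq y}$ und der rechten Seite $R$ (der Durchschnitt) getrennt zu beweisen, und dann mit Hilfe der so erhaltenen Beschreibung die Zellmoduln als Subquotienten von $M$ zu konstruieren.

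Zuerst zeige ich $R\subseteq L$: Dazu bemerke man, dass $L$ selbst eine zul�ssige Menge in dem Durchschnitt ist. Denn $y\preceq y$ gilt trivialerweise (etwa wegen $1\cdot y=y$, so dass $y\leftarrow y$), also $y\in L$. F�r jedes $w\in L$ und $h\in H$ gilt $hw=\sum_{z\in C} a_{zw} z$, und $a_{zw}\neq 0$ impliziert $z\leftarrow w$, also $z\preceq w\preceq y$ und damit $z\in L$. Folglich liegt $hw$ in $\textrm{span}_k L$, d.\,h. $\textrm{span}_k L$ ist ein $H$"~Untermodul. Damit ist $L$ eine der im Durchschnitt auftretenden Mengen, woraus $R\subseteq L$ folgt.

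F�r die umgekehrte Inklusion $L\subseteq R$ sei $A\subseteq C$ eine beliebige Menge mit $y\in A$, die einen $H$"~Untermodul $\textrm{span}_k A$ aufspannt. Zu zeigen ist, dass $x\in A$ f�r alle $x\preceq y$ gilt. Der Schl�sselbeobachtung ist, dass f�r jedes $w\in A$ und $z\leftarrow w$ automatisch $z\in A$ folgt: Aus $w\in A$ und der Untermoduleigenschaft folgt n�mlich $hw\in\textrm{span}_k A$ f�r alle $h\in H$, und da $C$ eine $k$"~Basis von $M$ ist, k�nnen in der eindeutigen Entwicklung von $hw$ nach $C$ nur Basiselemente aus $A$ mit nicht-verschwindendem Koeffizienten auftreten. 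Ist nun $x\preceq y$, so existiert eine Kette $x=x_0 \leftarrow x_1 \leftarrow\ldots\leftarrow x_n=y$; per Induktion entlang der Kette (beginnend mit $x_n=y\in A$) liegt jedes $x_i$ in $A$, insbesondere $x_0=x$. Damit ist $L\subseteq A$ f�r jedes zul�ssige $A$, also $L\subseteq R$.

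Der Hauptteil der Arbeit steckt dann in der Zusatzaussage �ber die Zellmoduln. Sei $\Gamma$ eine Zelle und $y\in\Gamma$ ein beliebiger Repr�sentant. Aus dem Bewiesenen ergibt sich, dass $M^{\preceq y}:=\textrm{span}_k\Set{x\in C | x\preceq y}$ ein $H$"~Untermodul von $M$ ist. Analog definiere $M^{\prec y}:=\textrm{span}_k\Set{x\in C | x\preceq y, x\not\sim y}$; dies ist ebenfalls ein $H$"~Untermodul, denn ist $w\preceq y$ mit $y\not\preceq w$ und $z\leftarrow w$, so gilt $z\preceq w\preceq y$, und $y\preceq z$ w�rde $y\preceq w$ nach sich ziehen, Widerspruch. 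Der Quotient $M^{\preceq y}/M^{\prec y}$ hat als $k$"~Basis genau die Bilder der Elemente von $\Gamma$, h�ngt (ebenso wie die beiden Unterr�ume) nur von der Zelle $\Gamma$ und nicht von der Wahl von $y$ ab und bildet somit den kanonisch assoziierten $H$"~Modul-mit-Basis. Die erwartete Haupth�rde ist die saubere Verifikation der Untermoduleigenschaft von $M^{\prec y}$; alles andere ist rein formal.
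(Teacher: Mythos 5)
Your proof is correct, and it is the natural argument: you show $\{x : x\preceq y\}$ is itself one of the admissible sets $A$ (so the intersection is contained in it), and conversely that every admissible $A$ must contain it by pushing along a $\leftarrow$-chain from $y$ and using that the basis expansion in the free module is unique. The cell-module construction as the quotient $M^{\preceq y}/M^{\prec y}$, together with the observation that $\{x\preceq y\}$ and $\{x\prec y\}$ depend only on the cell of $y$, is likewise exactly what the statement requires; the paper itself states this Lemma und Definition without proof, so there is no alternative argument to compare against.
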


\begin{remark}
Da ein $k$-Untermodul genau dann ein $H$-Untermodul ist, wenn er unter Multiplikation mit einem festen $k$"~Algebra"=Erzeugendensystem von $H$ abgeschlossen ist, reicht es in der Definition von $\leftarrow$ das Element $h$ aus einem solchen Erzeugendensystem zu wählen.

$\leftarrow$ definiert daher für jedes Erzeugendensystem $S\subseteq H$ einen gerichteten Graphen mit der Eckenmenge $C$. Die Zellen sind genau die starken Zusammenhangskomponenten dieses Graphen.
\index{terms}{stark zusammenhängend}\index{terms}{Zusammenhangskomponente!starke}
\end{remark}
\begin{remark}
Sind $H_1, H_2$ zwei $k$"~Algebren und $(M_i, C_i)$ jeweils ein $H_i$"~Modul-mit-Basis wie in der Definition, dann lässt sich der Graph des $H_1\otimes_k H_2$"~Moduls-mit-Basis $(M_1\otimes_k M_2, C_1 \times C_2)$ aus den Graphen von $(M_1, C_1)$ und $(M_2, C_2)$ konstruieren:
	
Ist $S_i\subseteq H_i$ je ein Erzeugendensystem der beiden Algebren, dann ist $S_1\otimes 1 \cup 1\otimes S_2$ ein Erzeugendensystem von $H_1\otimes H_2$ und der dazugehörige Graph ist das sogenannte kartesische Produkt der beiden Graphen, d.\,h. $(x_1, x_2)\leftarrow (y_1, y_2)$ gilt genau dann, wenn $x_1=x_2$ und $y_1\leftarrow y_2$ oder umgekehrt $x_1\leftarrow x_2$ und $y_1=y_2$ gilt.
	
Ist $1\in S_i$ und wählt man $\Set{s_1\otimes s_2 | s_i\in S_i}$ als Erzeugendensystem, erhält man das sogenannte Tensorprodukt der beiden Graphen, d.\,h. das Produkt im kategorientheoretischen Sinne: $(x_1, x_2)\leftarrow (y_1, y_2)$ gilt genau dann, wenn $x_1\leftarrow x_2$ und $y_1\leftarrow y_2$ gilt.
	
Insbesondere sind die Zellen des Produkts genau die kartesischen Produkte der Zellen.
	
Die Quasiordnung erfüllt entsprechend auch $(x_1, x_2)\preceq (y_1, y_2) \iff x_1\preceq x_2$ und $y_1\preceq y_2$ für alle $x_i,y_i\in M_i$.
\end{remark}

\begin{definition}[Kazhdan-Lusztig-Zellen, siehe {\cite{KL}}]\label{KL:def:KL_cells}
\index{terms}{Kazhdan-Lusztig!-Basis}\index{terms}{Kazhdan-Lusztig!-Zellen}\index{terms}{Kazhdan-Lusztig!-$W$-Graph}\index{terms}{Kazhdan-Lusztig!-$\mu$}
\index{symbols}{$\preceq$!$\preceq_\mathcal{L}$,$\preceq_\mathcal{R}$,$\preceq_\mathcal{LR}$}\index{symbols}{$\sim_\mathcal{L}$,$\sim_\mathcal{R}$,$\sim_\mathcal{LR}$}
Sei $(W,S)$ eine Coxeter"=Gruppe und $H$ die dazugehörige Hecke"=Algebra. Wir fixieren die Kazhdan-Lusztig-Basis $\Set{C_w | w\in W}$.

\medbreak
Die Zellen von $(H,\Set{C_w | w\in W})$ aufgefasst als Links-, Rechts- oder Bimodul über $H$ heißen \udot{Links-}, \udot{Rechts-} bzw. \udot{zweiseitige Kazhdan-Lusztig-Zellen}.

\medbreak
Die in der Definition angegebene Quasiordnung werden wir entsprechend mit $\preceq_\mathcal{L}$, $\preceq_\mathcal{R}$ beziehungsweise $\preceq_\mathcal{LR}$ sowie die zugehörige Äquivalenzrelation mit $\sim_\mathcal{L}$, $\sim_\mathcal{R}$ beziehungsweise $\sim_\mathcal{LR}$ bezeichnen.

Den gerichteten Graphen auf $W$, der durch das Erzeugendensystem $\Set{C_s | s\in S}$ definiert ist, nennen wir \udot{Kazhdan-Lusztig-$W$"~Graphen} (dies ist tatsächlich ein $W$"~Graph im Sinne von Definition \ref{def:W_graph}, wenn man die $\mu_{xy}^s$ als Kantengewichte wählt).
\end{definition}

\subsection{Lusztigs Vermutungen}

\begin{remark}
Die Kazhdan-Lusztig-Basis hat tiefliegende kombinatorische, geometrische und darstellungstheoretische Interpretationen, wenn $W$ eine Weyl-Gruppe einer Lie-Gruppe oder reduktiven algebraischen Gruppe ist. Die algebraischen Konsequenzen dieser Eigenschaften sind für allgemeine Coxeter"=Gruppen und allgemeine Gewichtsfunktionen zum Teil noch unbewiesen, werden jedoch in der Theorie oft als Voraussetzungen benutzt. Daher wollen wir die wichtigsten dieser Vermutungen noch einmal in diesem Abschnitt zusammenfassen.

Wir fixieren dafür eine Coxeter"=Gruppe $(W,S)$, eine Gewichtsfunktion $L:W\to\Gamma$ und die dazugehörige Hecke"=Algebra $H$.
\end{remark}

\begin{definition}[Strukturkonstanten der KL-Basis. Siehe \cite{KL}]\label{KL:def:h_xyz}
\index{terms}{Kazhdan-Lusztig!-Basis}
\index{symbols}{hxyz@$h_{x,y,z}$}

Die Strukturkonstanten der Kazhdan-Lusztig-Basis in dieser Hecke"=Algebra seien mit $h_{xyz}\in\IZ[\Gamma]$ bezeichnet, d.\,h. es gilt:
\[C_x C_y = \sum_{z\in W} h_{xyz} C_z\]
für alle $x,y,z\in W$.
\end{definition}

\begin{conjecture}[Positivitätsvermutung von Kazhdan und Lusztig. Siehe \cite{KL}]\label{KL:conj:positivity}
\index{terms}{Einparameterfall}
Im Einparameterfall gilt:
\begin{enumerate}
	\item Die Kazhdan-Lusztig-Polynome haben nichtnegative Koeffizienten, d.\,h. $P_{x,y}\in\IN[v]$ für alle $x,y\in W$.
	\item Für alle $x,y,z\in W$ haben die nichtverschwindenden Koeffizienten der Strukturkonstanten $h_{x,y,z}$ dasselbe Vorzeichen. Genauer soll $(-1)^{l(x)+l(y)+l(z)}h_{x,y,z}\in\IN[v]$ sein.
\end{enumerate}
\end{conjecture}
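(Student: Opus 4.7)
Die Strategie wäre die Kategorifizierung der Hecke"=Algebra durch Soergel"=Bimoduln. Sei $V$ die Spiegelungsdarstellung von $W$ über $\IR$ und $R := \textnormal{Sym}(V)$ graduiert mit $V$ in Grad $2$. Für jedes $s\in S$ bilde den graduierten $R$-Bimodul $B_s := R\otimes_{R^s} R\langle 1\rangle$, wobei $\langle 1\rangle$ eine Graduierungsverschiebung bezeichnet. Die volle, additive, Karoubische, monoidale Unterkategorie $\mathcal{SB}$ der graduierten $R$-Bimoduln, die von den $B_s$ erzeugt wird, heißt Kategorie der \udot{Soergel"=Bimoduln}. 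Ihr Split-Grothendieck-Ring $[\mathcal{SB}]_\oplus$ trägt eine kanonische $\IZ[v^{\pm 1}]$-Algebrenstruktur via $v\cdot [M] := [M\langle 1\rangle]$, und Soergels Kategorifizierungssatz liefert einen Isomorphismus $\epsilon: H \to [\mathcal{SB}]_\oplus$ von $\IZ[v^{\pm 1}]$-Algebren mit $\epsilon(C_s)=[B_s]$. In $\mathcal{SB}$ gilt die Krull-Schmidt-Eigenschaft, und die unzerlegbaren Objekte (bis auf Graduierungsverschiebung) sind gerade die $B_w$ für $w\in W$; dabei ist $B_w$ als der eindeutige unzerlegbare Summand von $B_{s_1}\otimes_R\cdots\otimes_R B_{s_n}$ (für einen reduzierten Ausdruck $w=s_1\cdots s_n$) charakterisiert, der nicht in einer kürzeren Bott-Samelson-Kette auftritt.

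Der entscheidende Schritt ist dann Soergels Vermutung $\epsilon(C_w) = [B_w]$ für alle $w\in W$. Akzeptiert man diese Identifikation, so folgen beide Positivitätsaussagen unmittelbar: Die Koeffizienten der Kazhdan-Lusztig-Polynome $P_{y,w}$ ergeben sich als graduierte Multiplizitäten von $B_y$ in der Standardfiltrierung (bzw.\ als graduierte Dimensionen gewisser Hom-Räume) von $B_w$, also als nichtnegative ganze Zahlen. Ebenso erhält man aus einer Zerlegung $B_x\otimes_R B_y \cong \bigoplus_z B_z^{\oplus m_{x,y,z}(v)}$ direkt $h_{x,y,z} = m_{x,y,z}(v)\in\IN[v^{\pm 1}]$; der Vorzeichenfaktor $(-1)^{l(x)+l(y)+l(z)}$ in der Behauptung rührt daher, dass die in dieser Arbeit gewählte Normierung der $C_w$ von der bei Soergel üblichen Konvention um eine längenabhängige Vorzeichenverschiebung abweicht.

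Die eigentliche Haupthürde ist somit Soergels Vermutung selbst. Für endliche Weyl"=Gruppen folgt sie aus dem Zerlegungssatz für perverse Garben: Die $B_w$ realisieren sich als äquivariante Schnittkohomologien der Schubert-Varietäten $\overline{BwB/B}$, und das Kazhdan-Lusztig-Polynom erscheint als deren Poincaré-Polynom, sodass die Positivität ein Theorem der Hodge-Theorie algebraischer Varietäten wird. Für allgemeine Coxeter"=Gruppen ist ein solches geometrisches Modell nicht verfügbar, und die zugehörige Hodge-Theorie muss rein algebraisch aufgebaut werden: In einer simultanen Induktion nach $l(w)$ müssen Hard-Lefschetz und die Hodge-Riemann-Bilinearrelationen für eine geeignete invariante, positiv definite Bilinearform auf jedem $B_w$ nachgewiesen werden, im Geiste des induktiven Arguments von de Cataldo-Migliorini. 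Diese hodge-theoretische Eingabe, die in der rein kombinatorischen Kazhdan-Lusztig-Theorie kein direktes Gegenstück besitzt, stellt die eigentliche Tiefe der Vermutung dar und ist der Schritt, an dem jeder naive Versuch scheitert.
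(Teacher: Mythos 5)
Die Aussage ist im Paper ausdr\"ucklich als \emph{Vermutung} formuliert und wird dort nicht bewiesen. Die auf die Vermutung folgenden Bemerkungen z\"ahlen lediglich die bekannten F\"alle und Referenzen auf: den algebro-geometrischen Beweis f\"ur Weyl-Gruppen (Beilinson--Bernstein, Brylinski--Kashiwara), die expliziten Rechnungen f\"ur $I_2(m)$, $H_3$, $H_4$, die Vertr\"aglichkeit mit Produkten zur Reduktion auf irreduzible Gruppen, und schlie{\ss}lich den rein algebraischen Ansatz von Elias und Williamson \cite{elias2012hodge} f\"ur beliebige Coxeter-Gruppen. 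Deine Skizze ist also kein ``anderer Beweis als der des Papers'' -- das Paper hat keinen -- sondern eine korrekte, komprimierte Darstellung genau desjenigen Arguments, auf das das Paper verweist.

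Inhaltlich ist deine Darstellung stimmig: Soergels Kategorifizierungssatz identifiziert $H$ mit dem Split-Grothendieck-Ring der Soergel-Bimoduln; Soergels Vermutung $\epsilon(C_w)=[B_w]$ \"ubersetzt dann Kazhdan-Lusztig-Polynome in graduierte Multiplizit\"aten der Standardfiltrierung und die $h_{x,y,z}$ in Zerlegungsmultiplizit\"aten von $B_x\otimes_R B_y$, woraus beide Positivit\"atsaussagen folgen. Auch dein Hinweis auf den Vorzeichenfaktor ist korrekt: das Paper verwendet in \ref{KL:def:KL_poly} die ``signierte'' Kazhdan-Lusztig-Basis (dort treten explizit Faktoren $(-1)^{l(y)+l(w)}$ auf), w\"ahrend Soergel die vorzeichenfreie Normierung benutzt, weshalb sich die Strukturkonstanten gerade um $(-1)^{l(x)+l(y)+l(z)}$ unterscheiden. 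Ebenso richtig ist, dass die eigentliche Last auf Soergels Vermutung ruht und dass der Beweis f\"ur allgemeine Coxeter-Gruppen die Hard-Lefschetz-Eigenschaft und die Hodge-Riemann-Bilinearrelationen in einer simultanen Induktion nach $l(w)$ rein algebraisch etablieren muss. Einzig die Charakterisierung der unzerlegbaren $B_w$ solltest du pr\"aziser fassen (als der eindeutige unzerlegbare Summand des Bott-Samelson-Bimoduls zu einem reduzierten Ausdruck von $w$, der nicht bereits in Bott-Samelson-Bimoduln zu Elementen kleinerer L\"ange vorkommt), und f\"ur die Aussage \"uber die Standardfiltrierung w\"are ein expliziter Verweis auf Soergels Kategorifizierungssatz angebracht -- aber das sind Details der Pr\"asentation, keine L\"ucken im Argument.
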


\begin{remark}
Ist $W$ eine Weyl"=Gruppe einer halbeinfachen, komplexen Lie"=Algebra, so ist diese Vermutung unabhängig durch Beilinson und Bernstein (\cite{beilinson1981localisation}) sowie etwa zeitgleich durch Brylinski und Kashiwara (\cite{brylinski1981kazhdan}) bewiesen worden. Diesen Beweisen liegen tiefe algebro-geometrische Interpretationen der Kazhdan-Lusztig-Polynome zugrunde.

Da man die Kazhdan-Lusztig-Polynome für Diedergruppen kennt und die Strukturkonstanten ebenfalls explizit ausschreiben kann, konnte man die Gültigkeit der Vermutung auch für $W=I_2(m)$ für alle $m\in\IN_{\geq 3}$ nachweisen (siehe \cite{ducloux2006positivity}).

Durch explizites Nachrechnen ist ein Computerbeweis der Vermutung für die Typen $H_3$ und $H_4$ erbracht worden (siehe \cite{ducloux2006positivity} und \cite{alvis1987left}).

\index{terms}{Coxeter!-Gruppe!reduzible}\index{terms}{Kazhdan-Lusztig!-Polynom}
Ist $(W,S)$ reduzibel, etwa $S=S_1 \coprod S_2$ und $W=W_1\times W_2$ und sind weiter $L_i:=L_{|S_i}$ die eingeschränkten Gewichtsfunktionen, dann sind, wie bereits festgestellt, die Kazhdan-Lusztig-Basen und Kazhdan-Lusztig-Polynome mit dieser Zerlegung verträglich. Das überträgt sich auf die Strukturkonstanten: Es gilt $h_{x_1x_2, y_1 y_2, z_1 z_2} = h_{x_1, y_1, z_1} \cdot h_{x_2, y_2, z_2}$ für alle $x_i,y_i,z_i\in W_i$.
	
Daher reicht es, die beiden Teile der Positivitätsvermutung für irreduzible Coxeter"=Gruppen nachzuprüfen. Da dies für die nichtkristallographischen Gruppen durch explizite Rechnungen und für Weyl"=Gruppen durch allgemeine Argumente geschehen ist, gilt die Vermutung somit für alle endlichen Coxeter"=Gruppen.
\end{remark}
\begin{remark}
Im Dezember 2012 veröffentlichen Ben Elias und Geordie Williamson einen rein algebraischen Beweisansatz der Positivitätsvermutung für beliebige Coxeter"=Gruppen, siehe \cite{elias2012hodge}.
\end{remark}
\begin{remark}
Im Multiparameterfall gilt die Positivitätsvermutung schon in einfachen Fällen nicht mehr. Es können negative Koeffizienten in den Polynomen auftreten. Als Ersatz für die Positivitätseigenschaften formulierte Lusztig in \cite{lusztig2003hecke} eine Reihe schwächerer Vermutungen, die alle wesentlichen algebraischen Konsequenzen der Positivitätsvermutung implizieren sollen. Um diese Vermutungen formulieren zu können, benötigen wir weitere Definitionen.
\end{remark}

\begin{definition}[Lusztigs Funktionen $\textbf{a}(z)$ und $\Delta(z)$, siehe \cite{lusztig2003hecke} und {\citep[2.3]{geckjacon}}]\label{KL:def:Lusztig_a}
\index{terms}{Kazhdan-Lusztig!-Polynom}
\index{symbols}{az@$\textbf{a}(z)$}\index{symbols}{Deltaz@$\Delta(z)$}
Definiere für alle $x,y,z\in W$ nun
\[\textbf{a}(z) := \min\Set{\gamma\in\Gamma | v^\gamma h_{xyz}\in\IZ[\Gamma_{\geq 0}] \,\text{für alle}\,x,y\in W}\]
und $c_{x,y,z}\in\IZ$ durch $v^{\textbf{a}(z)} h_{x,y,z} \equiv c_{x,y,z^{-1}} \mod \IZ[\Gamma_{>0}]$.

\medbreak
Definiere, falls $P_{1,z}^\ast\neq 0$ ist, außerdem
\[\Delta(z) := \max\Set{\gamma\in\Gamma | v^\gamma P_{1,z}^\ast \in \IZ[\Gamma_{\leq 0}]}\]
sowie $n_z\in\IZ\setminus\Set{0}$ durch $v^{\Delta(z)} P_{1,z}^\ast \equiv n_z \mod \IZ[\Gamma_{<0}]$.
\end{definition}

\begin{definition}[Duflo-Involutionen, siehe \cite{lusztig2003hecke}]\label{def:KL_Duflo_inv}
\index{symbols}{D@$\mathcal{D}$}
Mit obigen Bezeichnungen definiere
\[\mathcal{D}:=\Set{z\in W | P_{1,z}^\ast\neq 0 \,\text{und}\, \textbf{a}(z)=\Delta(z)}.\]
\end{definition}

\begin{conjecture}[Lusztig-Vermutungen, siehe \cite{lusztig2003hecke}]\label{KL:conj:Lusztig_P1_P15}
\index{terms}{Lusztig-Vermutungen}\index{terms}{Kazhdan-Lusztig!-Zellen}\index{terms}{Parabolische Untergruppe}
\index{symbols}{P15@\textbf{P1}-\textbf{P15}}
Für beliebige Gewichtsfunktionen $L:W\to\Gamma$ gilt:
\begin{enumerate}[label=\textbf{P\arabic*}]
	\item Für alle $z\in W$ mit $P_{1,z}^\ast\neq 0$ gilt $\textbf{a}(z)\leq\Delta(z)$.
	\item Für alle $d\in\mathcal{D}$ und $x,y\in W$ mit $c_{x,y,d}\neq 0$ gilt $x=y^{-1}$.
	\item Für alle $y\in W$ existiert genau ein $d\in\mathcal{D}$ mit $c_{y^{-1},y,d}\neq 0$.
	\item Für alle $x,y\in W$ mit $x\preceq_\mathcal{LR} y$ gilt $\textbf{a}(x)\geq\textbf{a}(y)$. Insbesondere gilt $\textbf{a}(x)=\textbf{a}(y)$, falls $x\sim_\mathcal{LR}y$.
	\item Für alle $y\in W$, $d\in\mathcal{D}$ mit $c_{y^{-1},y,d}\neq 0$ gilt $c_{y^{-1},y,d} n_d = (-1)^{l(d)}$.
	\item Für alle $d\in\mathcal{D}$ gilt $d^2=1$.
	\item Für alle $x,y,z\in W$ gilt $c_{x,y,z} = c_{y,x,z}$.
	\item Für alle $x,y,z\in W$ mit $c_{x,y,z}\neq 0$ gilt $x\sim_\mathcal{L} y^{-1}$, $y\sim_\mathcal{L} z^{-1}$ und $z\sim_\mathcal{L} x^{-1}$.
	\item Für alle $x,y\in W$ mit $x\preceq_\mathcal{L} y$ und $\textbf{a}(x)=\textbf{a}(y)$ gilt $x\sim_\mathcal{L} y$.
	\item Für alle $x,y\in W$ mit $x\preceq_\mathcal{R} y$ und $\textbf{a}(x)=\textbf{a}(y)$ gilt $x\sim_\mathcal{R} y$.
	\item Für alle $x,y\in W$ mit $x\preceq_\mathcal{LR} y$ und $\textbf{a}(x)=\textbf{a}(y)$ gilt $x\sim_\mathcal{LR} y$.
	\item Sei $I\subseteq S$ und $W_I\leq W$ die von $I$ erzeugte parabolische Untergruppe. Für alle $y\in W_I$ ist der Wert $\textbf{a}(y)$ unabhängig davon, ob man ihn bezüglich $(W,S,L)$ oder bezüglich $(W_I,I,L_{|I})$ berechnet.
	\item Jede Kazhdan-Lusztig-Linkszelle $\mathfrak{C}\subseteq W$ enthält genau ein Element $d\in\mathcal{D}$ und es gilt $c_{x^{-1},x,d}\neq 0$ für alle $x\in\mathfrak{C}$.
	\item Für alle $z\in W$ gilt $z\sim_\mathcal{LR} z^{-1}$.
	\item Für alle $w,w',x,y\in W$ mit $\textbf{a}(x)=\textbf{a}(y)$ gilt
	\[\sum_{z\in W} h_{x,w',z}\otimes h_{w,z,y} = \sum_{z\in W} h_{z,w',y}\otimes h_{w,x,z}\]
	als Gleichung in $\IZ[\Gamma] \otimes_\IZ \IZ[\Gamma]$.
\end{enumerate}
\end{conjecture}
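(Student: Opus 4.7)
Mein Vorgehen würde in drei Phasen entlang des Allgemeinheitsgrades zerfallen. In der ersten Phase würde ich mich zunächst auf den Einparameterfall und speziell auf Weyl-Gruppen beschränken und die geometrische Interpretation der Kazhdan-Lusztig-Basis als Klassen perverser IC-Garben auf Schubert-Variet�ten ausnutzen (Beilinson-Bernstein \cite{beilinson1981localisation} bzw. Brylinski-Kashiwara \cite{brylinski1981kazhdan}). In dieser Situation liegt die Positivit�tsvermutung \ref{KL:conj:positivity} vor, nach der sowohl $P_{y,w}\in\IN[v]$ als auch $(-1)^{l(x)+l(y)+l(z)}h_{x,y,z}\in\IN[v]$ gilt. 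Aus der Positivit�t der $h_{x,y,z}$ l�sst sich $\textbf{a}(z)$ unmittelbar als Grad"=Supremum interpretieren und erlaubt saubere Monotonieargumente entlang $\preceq_\mathcal{LR}$; daraus gewinne ich \textbf{P1}, \textbf{P4} und \textbf{P15} im Wesentlichen durch Gradbuchhaltung, und \textbf{P7} durch Ausnutzung der Spureigenschaft aus dem Strukturlemma �ber $H$ als symmetrische Algebra.

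In der zweiten Phase w�rde ich die asymptotische Algebra $J$ mit Basis $(t_w)_{w\in W}$ und Multiplikation $t_x t_y = \sum_z c_{x,y,z^{-1}} t_z$ konstruieren. Zu zeigen ist, dass diese Verkn�pfung assoziativ und die entstehende Algebra unit�r ist, wobei die Einsidempotente genau den Duflo"=Involutionen in $\mathcal{D}$ entsprechen sollten. Aus Existenz und Eindeutigkeit der Duflo"=Involutionen innerhalb jeder Linkszelle erwachsen \textbf{P2}, \textbf{P3}, \textbf{P5}, \textbf{P6} und \textbf{P13} gleichsam als Nebenprodukt. \textbf{P8} folgt aus der Selbstdualit�t $C_w^\ast = C_{w^{-1}}$, und in Kombination mit der Quasiordnungscharakterisierung der Zellen erh�lt man \textbf{P9}, \textbf{P10}, \textbf{P11} sowie \textbf{P14} durch formale Argumente. \textbf{P12} ergibt sich aus einer Kompatibilit�tsanalyse der Kazhdan-Lusztig-Basis mit parabolischen Einbettungen zusammen mit einer Reduktion der $\textbf{a}$-Funktion auf die Unteralgebra $H(W_I, I, L_{|I})$.

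Die dritte Phase, der Multiparameterfall, stellt das zentrale Hindernis dar, denn hier versagt die geometrische Interpretation und die Positivit�t gilt nachweislich nicht. Mein Ansatz w�re zun�chst die Verifikation der Vermutungen f�r Diedergruppen $I_2(m)$, wo nach obigem Beispiel die Kazhdan-Lusztig-Polynome $P_{x,y}$ explizit bekannt sind und sich die $h_{x,y,z}$ direkt ausrechnen lassen; die verbleibenden endlichen Multiparametertypen $B_n$ und $F_4$ k�nnte man computergest�tzt angreifen. F�r beliebige Coxeter"=Gruppen und Gewichtsfunktionen w�re ein Ersatz der Positivit�t n�tig, etwa Lusztigs boundedness"=Bedingung, die die Endlichkeit von $\textbf{a}(z)$ erzwingt und abgeschw�chte Gradabsch�tzungen f�r $h_{x,y,z}$ liefert. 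Die Hauptschwierigkeit bleibt die gegenseitige Verschr�nkung der Vermutungen: \textbf{P1}-\textbf{P15} m�ssen simultan bewiesen werden, und die zul�ssige Induktionsordnung (vermutlich entlang der Werte von $\textbf{a}$ kombiniert mit der Bruhat"=Ordnung) in korrekt verschachtelter Reihenfolge zu finden, halte ich f�r den eigentlichen kritischen Punkt eines vollen Beweises.
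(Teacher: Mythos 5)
Dieses Statement ist in der Arbeit als \emph{Vermutung} (und nicht als Satz) formuliert; die Arbeit gibt daf{\"u}r keinen Beweis und notiert in den anschlie{\ss}enden Remarks explizit, dass der allgemeine Beweis -- insbesondere f{\"u}r den Typ $B_n$ mit beliebiger Gewichtsfunktion -- noch aussteht. Es existiert also kein ``paper's own proof'', gegen den Ihr Vorschlag zu vergleichen w{\"a}re; Ihre Skizze ist im Wesentlichen ein korrekter Survey des Forschungsstandes, der auch im Text selbst referiert wird (Positivit{\"a}t via Beilinson--Bernstein / Brylinski--Kashiwara f{\"u}r Weyl-Gruppen, explizite Rechnungen f{\"u}r $I_2(m)$, $H_3$, $H_4$, der algebraische Ansatz von Elias--Williamson).

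Inhaltlich ist jedoch Ihre zweite Phase teilweise zirkul{\"a}r formuliert: Dass $(t_w)_{w\in W}$ mit $t_x t_y = \sum_z c_{x,y,z^{-1}} t_z$ eine assoziative, unit{\"a}re Algebra mit Einselement $\sum_{d\in\mathcal{D}} n_d t_d$ bildet, ist kein ``Nebenprodukt'', aus dem \textbf{P2}, \textbf{P3}, \textbf{P5}, \textbf{P6}, \textbf{P13} folgen -- es ist gerade der Inhalt dieser Aussagen, bzw. ihr Beweis erfordert sie. Lusztigs eigene Herleitung schlie{\ss}t den Zirkel im Einparameterfall durch die Positivit{\"a}t der $h_{x,y,z}$, die die n{\"o}tigen Gradargumente f{\"u}r die simultane Induktion liefert; die in der vorliegenden Arbeit verfolgte Konstruktion der asymptotischen Algebra $\widetilde{J}$ via balancierter Darstellungen und f{\"u}hrender Koeffizienten (Kapitel~2) ist dagegen unbedingt, liefert aber f{\"u}r sich allein noch nicht die Pn, sondern wird erst unter Annahme von \textbf{P1} und \textbf{P4} mit Lusztigs $J$ identifiziert (\citep[2.3.16]{geckjacon}). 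Sie benennen am Ende selbst die verbleibende L{\"u}cke -- der Multiparameterfall und die korrekt verschachtelte Induktionsordnung --, womit Sie den offenen Status der Vermutung korrekt wiedergeben; ein tats{\"a}chlicher Beweis ist damit nicht geliefert und auch nicht behauptet.
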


\begin{remark}
Die Positivitätsvermutung impliziert \textbf{P1} bis \textbf{P15} im Einparameterfall (siehe \cite{lusztig2003hecke}).\index{terms}{Einparameterfall} Der allgemeine Beweis dieser Vermutungen, insbesondere im Fall $B_n$ mit beliebiger Gewichtsfunktion, steht jedoch noch aus.
\end{remark}
\begin{remark}
\index{terms}{Coxeter!-Gruppe!reduzible}
Auch diese Eigenschaften sind gut verträglich mit einer Zerlegung $W=W_1\times W_2$ einer reduziblen Coxeter"=Gruppe. Man kann zeigen, dass $\textbf{a}(z_1 z_2)=\textbf{a}(z_1)+\textbf{a}(z_2)$, $\Delta(z_1 z_2) = \Delta(z_1)+\Delta(z_2)$ sowie $n_{z_1 z_2} = n_{z_1} n_{z_2}$ für alle $z_i\in W_i$ gilt (wobei man $\Delta(z)=\infty$ setzt, falls $P_{1,z}^\ast=0$ sein sollte).
	
Es folgt dann, dass die Gültigkeit von \textbf{P1}-\textbf{P15} für beide Faktoren die Gültigkeit von \textbf{P1}-\textbf{P15} für das Produkt nach sich zieht. Ist das der Fall, so gilt auch $\mathcal{D}=\mathcal{D}_1\cdot\mathcal{D}_2$ für die Duflo-Involutionen. Erneut wäre es also wieder ausreichend, die Lusztig-Vermutungen für irreduzible Coxeter-Gruppen nachzuweisen.

\end{remark}
\begin{remark}
Anstatt die vollen Lusztig-Vermutungen anzunehmen, haben Geck und Jacon in \cite{geckjacon}, drei schwächere Vermutungen formuliert, die für ihre Zwecke ausreichen. Dazu konstruieren sie ohne weitere Annahmen eine Version von Lusztigs asymptotischer Algebra $J$, die sie $\widetilde{J}$ nennen und welche unter Annahme von \textbf{P1} und \textbf{P4} kanonisch zu $J$ isomorph ist.
\end{remark}

\chapter{Darstellungen I: Balanciertheit}
\setcounter{section}{-1}
\section{Setup}

\begin{remark}
Die Abschnitte 2.1 und 2.2 haben den Zweck, die Konstruktion der asymptotischen Algebra, die in \citep[1.4+1.5]{geckjacon} gegeben wird, auf eine naheliegende Weise zu verallgemeinern.
\end{remark}

\begin{convention}
\index{terms}{$\ast$-symmetrisch}\index{terms}{Schur-Element}\index{terms}{Symmetrische Algebra}\index{terms}{Bewertung}\index{terms}{Bewertung!-sring}
\index{symbols}{K@$K$}\index{symbols}{O@$\mathcal{O}$}\index{symbols}{F@$F$}\index{symbols}{Gamma@$\Gamma$}\index{symbols}{nu@$\nu$}\index{symbols}{m@$\mathfrak{m}$}
\index{symbols}{Lambda@$\Lambda$}
\index{symbols}{clambda@$c_\lambda$}
\index{symbols}{$\ast$}\index{symbols}{x@$x^\vee$}
Wir legen dazu folgende Voraussetzungen und Notationen für die Abschnitte \thechapter.1 und \thechapter.2 fest:
\begin{itemize}
	\item $K$ sei ein Körper, $\nu: K\twoheadrightarrow\Gamma\cup\Set{\infty}$ eine Bewertung, $\mathcal{O}=\Set{x\in K | \nu(x)\geq 0}$ der zugehörige Bewertungsring, $\mathfrak{m}=\Set{x\in K | \nu(x)>0}$ sein maximales Ideal und $F=\mathcal{O}/\mathfrak{m}$ sein Restklassenkörper.
	
	\item Wir werden die Bezeichnung $\nu$ auch für die Bewertung von Matrizen, Zeilen- und Spaltenvektoren mit Einträgen aus $K$ verwenden. Wir definieren also für alle ${A\in K^{n\times m}}$ die Fortsetzung
	\[\nu(A) := \min\Set{\nu(A_{ij}) | 1\leq i \leq n, 1\leq j\leq m}\in\Gamma\cup\Set{\infty}.\]
	
	\item $H$ sei eine endlichdimensionale $K$"~Algebra.
	\item $H$ sei weiter eine symmetrische $K$"~Algebra mit Spurform $\tau: H\to K$.
	\item $\Lambda$ sei eine Indexmenge für die einfachen $H$"~Moduln.
	\item $H$ sei zerfallend halbeinfach. Insbesondere hat jeder einfache Modul ein Schur"=Element (siehe \citep[Ch.\,7]{geckpfeiffer} für eine Definition von Schur"=Elementen und Anwendungen in der Darstellungstheorie symmetrischer Algebren). Wir bezeichnen das Schur"=Element der Moduln mit Isomorphietyp $\lambda\in\Lambda$ mit $c_\lambda$.
	\item $\ast:H\to H$ sei ein $K$"~linearer Antiautomorphismus mit $h^{\ast\ast}=h$.
	\item Es gebe eine \udot{$\ast$"~symmetrische} $K$"~Basis $B\subseteq H$, d.\,h. $B$ ist eine Basis mit $B^\ast=B$ so, dass $b^\ast$ für alle $b\in B$ mit dem dualen Basiselement $b^\vee$ übereinstimmt. Mit anderen Worten soll $\tau(b\cdot c^\ast)=\delta_{bc}$ für alle $b,c\in B$ gelten.
\end{itemize}
\end{convention}

\begin{remark}
Die Bedingung, dass es eine $\ast$"~symmetrische Basis gibt, impliziert ${\tau(h^\ast)=\tau(h)}$ für alle $h\in H$: Es gilt $\tau(bc^\ast) = \delta_{bc} = \delta_{cb} = \tau(cb^\ast) = \tau((bc^\ast)^\ast)$ für alle $b,c\in B$. Da diese Bedingung $K$"~bilinear ist, folgt $\tau(xy^\ast) = \tau((xy^\ast)^\ast)$ für alle $x,y\in H$ und somit insbesondere für $y=1$.

Über Körpern mit hinreichend vielen Quadratwurzeln gilt auch die Umkehrung, falls $\CharFld(K)\neq 2$. Dazu betrachten wir die Bilinearform $(x,y)\mapsto\tau(x^\ast y)$. Ist die Spurform $\ast$"~invariant, so ist diese Bilinearform symmetrisch und eine $\ast$"~symmetrische Basis ist nichts anderes als eine Orthonormalbasis für diese Bilinearform.
\end{remark}

\begin{example}[Laurent-Polynome und rationale Funktionen]
\index{terms}{Laurent-Polynome}
\index{symbols}{nu@$\nu$}
Für jede total geordnete, abelsche Gruppe $(\Gamma,+,\leq)$ und jeden Körper $F$ ist der Gruppenring $F[\Gamma]$ ein Integritätsbereich, wie wir schon in \ref{def:laurent_polynomials} bemerkt haben. Mit den dort vereinbarten Schreibweisen ist auf $F[\Gamma]$ eine kanonische Bewertung durch
\[\nu\Big(\sum_{\gamma\in\Gamma} r_\gamma v^\gamma\Big):=\min\Set{\gamma\in\Gamma | r_\gamma\neq 0}\]
definiert. Sie kann durch $\nu(\frac{f}{g})=\nu(f)-\nu(g)$ eindeutig auf $K:=F(\Gamma)$ fortgesetzt werden. Der Restklassenkörper von $\mathcal{O}$ kann via $\sum_{\gamma\geq 0} r_\gamma v^\gamma \mapsto r_0$ mit $F$ selbst identifiziert werden.
\end{example}

\begin{example}[Hecke-Algebren]\label{ex:Hecke1}
\index{terms}{Hecke-Algebra}
Das Beispiel von vorrangigem Interesse ist das der Hecke"=Algebren. Dafür sei $(W,S,L)$ eine endliche Coxeter"=Gruppe mit Gewichtsfunktion $L:W\to\Gamma$. Wie im vorherigen Beispiel benutzen wir die kanonische Bewertung auf $K:=\IQ_W(\Gamma)$ und identifizieren den Restklassenkörper $F$ mit $\IQ_W$ selbst.

Die Hecke"=Algebra $H=H(W,S,L)$ erfüllt nun die Voraussetzungen. Es gibt die Standardbasis $(T_w)_{w\in W}$, die kanonische Spurform $\tau(T_w) := \delta_{w,1}$ sowie den Antiautomorphismus $T_w^\ast := T_{w^{-1}}$ bezüglich dessen $(T_w)$ eine $\ast$"~symmetrische Basis ist.
\end{example}

\begin{lemma}\label{symm_alg:symmetric_base_change}
\index{terms}{Matrix!orthogonale}\index{terms}{$\ast$-symmetrisch}
\index{symbols}{On@$O_n(K)$}
Sind $(b_i)_{i=1\ldots n}$ und $(c_i)_{i=1\ldots n}$ zwei $\ast$"~symmetrische Basen von $H$, dann ist die Basiswechselmatrix eine orthogonale Matrix. Umgekehrt liefert jeder Basiswechsel mit $A\in O_n(K)$ wieder eine $\ast$"~symmetrische Basis.
\end{lemma}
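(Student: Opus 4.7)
Die Strategie ist, den Begriff der $\ast$-symmetrischen Basis mit dem einer Orthonormalbasis bez\"uglich einer geeigneten symmetrischen Bilinearform zu identifizieren. Dann reduziert sich die Aussage auf einen Standardsatz der linearen Algebra.

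Zun\"achst f\"uhre ich die Bilinearform $\beta: H\times H\to K$ durch $\beta(x,y):=\tau(xy^\ast)$ ein. Diese ist symmetrisch: Nach der Bemerkung unmittelbar vor Beispiel \ref{ex:Hecke1} folgt aus der Existenz einer $\ast$-symmetrischen Basis bereits $\tau(h^\ast)=\tau(h)$ f\"ur alle $h\in H$, und zusammen mit $(xy^\ast)^\ast = yx^\ast$ (da $\ast$ ein involutiver Antiautomorphismus ist) erh\"alt man $\beta(x,y)=\tau((xy^\ast)^\ast)=\tau(yx^\ast)=\beta(y,x)$. Die definierende Bedingung $\tau(b_ib_j^\ast)=\delta_{ij}$ einer $\ast$-symmetrischen Basis $(b_i)$ ist damit nichts anderes als Orthonormalit\"at bez\"uglich $\beta$.

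Nach dieser Vor\"uberlegung ist die eigentliche Rechnung zwangsl\"aufig. Seien $(b_i)$ und $(c_j)$ zwei $\ast$-symmetrische Basen und $A\in K^{n\times n}$ die Basiswechselmatrix, d.\,h.\ $c_j=\sum_i A_{ij} b_i$. Da $\ast$ nach Voraussetzung $K$-linear ist, gilt $c_k^\ast=\sum_l A_{lk} b_l^\ast$, und folglich
\[
\delta_{jk} \;=\; \tau(c_j c_k^\ast) \;=\; \sum_{i,l} A_{ij}A_{lk}\,\tau(b_i b_l^\ast) \;=\; \sum_i A_{ij}A_{ik} \;=\; (A^T A)_{jk}.
\]
Damit gilt $A^T A=I$, also $A\in O_n(K)$. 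Die Umkehrung ist dieselbe Rechnung r\"uckw\"arts gelesen: Geht man von einer $\ast$-symmetrischen Basis $(b_i)$ und einem $A\in O_n(K)$ aus und definiert $c_j:=\sum_i A_{ij} b_i$, so liefert dieselbe Rechnung sofort $\tau(c_j c_k^\ast)=(A^T A)_{jk}=\delta_{jk}$, und $(c_j)$ ist $\ast$-symmetrisch.

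Ein echtes Hindernis sehe ich nicht; der Kern des Arguments ist der wohlbekannte Umstand, dass die Transformationsmatrix zwischen zwei Orthonormalbasen einer nichtausgearteten symmetrischen Bilinearform orthogonal ist. Der einzige Punkt, den man nicht \"ubersehen darf, ist, dass die $K$-Linearit\"at von $\ast$ (nicht nur Semilinearit\"at) wesentlich in die Berechnung von $c_k^\ast$ eingeht.
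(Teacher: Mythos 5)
Dein Beweis ist korrekt und stimmt im Kern mit dem des Papers überein: Beide führen dieselbe Rechnung durch — man drückt die eine Basis durch die andere aus, wendet $\ast$ an und setzt in $\tau(b_ib_j^\ast)=\delta_{ij}$ ein, um $A^{\mathrm{Tr}}A=I$ bzw. $AA^{\mathrm{Tr}}=I$ zu erhalten. Deine einleitende Identifikation der $\ast$-symmetrischen Basen mit Orthonormalbasen der Bilinearform $\tau(xy^\ast)$ macht lediglich explizit, was im Paper in der der Lemma vorangestellten Bemerkung bereits angedeutet ist.
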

\begin{proof}
Sei $A\in K^{n\times n}$ die Matrix mit $b_i = \sum_{k=1}^n A_{ik} c_k$. Wenden wir $\ast$ an, so erhalten wir
\[b_i^\vee = b_i^\ast = \sum_{k=1}^n A_{ik} c_k^\ast = \sum_{k=1}^n A_{ik} c_k^\vee\]
und somit:
\begin{align*}
	\delta_{ij} &= \tau(b_i b_j^\vee) \\
	&= \sum_{k,l=1\ldots n} A_{ik} A_{jl} \underbrace{\tau(c_k c_l^\vee)}_{=\delta_{kl}} \\
	&= \sum_{s=1\ldots n} A_{is}A_{js} \\
	&= (AA^\text{Tr})_{ij}
\end{align*}

Die Umkehrung ergibt sich aus der analogen Rechnung.
\end{proof}
\section{Balancierte Darstellungen}

\begin{definition}[$a$-Werte]
\index{terms}{Schur-Element}
\index{symbols}{alambda@$a_\lambda$}
Betrachte die Schur"=Elemente $c_\lambda$. Wir definieren $a_\lambda\in\frac{1}{2}\Gamma$ durch
\[a_\lambda := -\tfrac{1}{2}\nu(c_\lambda)\in\tfrac{1}{2}\Gamma.\]
\end{definition}

\begin{definition}[Partielle Schnitte, $f$-Werte]
\index{terms}{Partieller Schnitt}
\index{symbols}{flambda@$f_\lambda$}
Ist $a_\lambda\in\Gamma$ für alle $\lambda\in\Lambda$, dann setze $\Gamma_0:=\braket{a_\lambda \mid \lambda\in\Lambda} \leq \Gamma$.

Einen Homomorphismus $\Gamma_0\to K^\times, \gamma\mapsto v^\gamma$ mit $ \nu(v^\gamma)=\gamma$ nennen wir einen \udot{partiellen Schnitt}. Einen Homomorphismus $\Gamma\to K^\times$ mit dieser Eigenschaft nennen wir einen \udot{globalen Schnitt}.

Ist ein partieller Schnitt gewählt worden, dann definieren wir $f_\lambda:=v^{2a_\lambda} c_\lambda$. Man beachte, dass dies wegen $\nu(c_\lambda)=-2a_\lambda$ in $\mathcal{O}^\times$ liegt.
\end{definition}

\begin{remark}
Man beachte, dass $\frac{1}{2}\Gamma$ echt größer sein kann als $\Gamma$, weil $\Gamma$ nicht $2$-teilbar zu sein braucht. Im Beispiel der Hecke"=Algebren gilt beispielsweise oft $\Gamma\isomorphic\IZ$, sodass $\Gamma\neq\frac{1}{2}\Gamma$ tatsächlich der Fall ist. Es wird sich in diesem Beispiel jedoch herausstellen, dass $\nu(c_\lambda)\in 2\Gamma$ gilt, sodass $a_\lambda\in\Gamma$ ist.
\end{remark}
\begin{remark}
Partielle Schnitte existieren immer, weil $\Gamma_0$ ja frei abelsch ist. Globale Schnitte hingegen brauchen nicht zu existieren, weil beispielsweise $\Gamma$ teilbar sein könnte, während $K^\times$ dies nicht zu sein braucht.

Wenn $\Gamma$ insgesamt endlich erzeugt ist, kann man sogar einen globalen Schnitt wählen. Im Fall der Funktionenkörper $K=F(\Gamma)$ ist durch die Einbettung $\Gamma\to F[\Gamma]\to F(\Gamma)$ eine kanonische Wahl eines solchen Schnittes möglich. Weil das bei beliebigen Bewertungsringen nicht der Fall ist, benötigen wir obige Definition. Die meisten der nachfolgenden Überlegungen sind unabhängig von der Wahl des partiellen Schnittes.

$f_\lambda$ ist jedoch von der Wahl des Schnittes abhängig, aber wegen $f_\lambda = v^{2a_\lambda} c_\lambda$ ist die Nebenklasse von $f_\lambda$ in $\mathcal{O}^\times / (\mathcal{O}^\times)^2$ eindeutig bestimmt. Wählt man nämlich einen anderen Schnitt, dann ist der entsprechende $f$-Wert $\widetilde{f_\lambda} = (\frac{\widetilde{v}^{a_\lambda}}{v^{a_\lambda}})^2 f_\lambda$. Falls wir also $F\subseteq\IR$ betrachten, ist etwa das Vorzeichen von $f_\lambda$ eindeutig festgelegt.
\end{remark}
\begin{remark}
\index{terms}{Komplexität}
Ist $\rho:H\to K^{d\times d}$ eine Matrixdarstellung von $H$, so liefern die Bewertungen $\nu(\rho(b))$ der Matrizen der Basiselemente ein (recht grobes) Maß für die "`Komplexität"' dieser Darstellung. Falls $K=F(\Gamma)$ ein Körper von rationalen Funktionen ist und sogar $\rho(b)\in F[\Gamma]^{d\times d}$ für alle $b\in B$ gilt, ist $\nu(\rho(b))$ gleich dem kleinsten in einem Eintrag $\rho(b)_{ij}$ vorkommenden Exponenten. Minimale und maximale vorkommende Exponenten sind ein grobes Maß für die Speicherkapazität, die die Speicherung dieser Matrizen benötigen würde.

Das folgende Lemma liefert eine Abschätzung für dieses Komplexitätsmaß und eine Motivation für die Definition balancierter Darstellungen als Darstellungen von minimaler Komplexität.
\end{remark}

\begin{lemmadef}[Balancierte Darstellungen]
\index{terms}{Darstellung!balancierte}\index{terms}{Führende Koeffizienten}
\index{symbols}{cx@$c(x)$}
Sei $B$ eine $\ast$"~symmetrische Basis von $H$. Ist $\rho: H\to K^{d\times d}$ eine irreduzible Matrixdarstellung vom Isomorphietyp $\lambda\in\Lambda$, so gilt
\[\nu(\rho(b)) \leq -a_\lambda\]
für mindestens ein $b\in B$.

Wir nennen $\rho$ \udot{balanciert}, falls $a_\lambda\in\Gamma$ und für alle $\ast$"~symmetrischen Basen $B$ diese Schranke scharf ist, also
\[\forall b\in B: \nu(\rho(b)) \geq -a_\lambda\]
gilt.

Ist $\rho$ balanciert und $\gamma\mapsto v^\gamma$ ein partieller Schnitt, dann definieren wir die Matrizen der \udot{führenden Koeffizienten} $c(x)\in F^{d\times d}$ durch
\[\forall x\in B: c(x) := v^{a_\lambda} \rho(x) \mod \mathfrak{m}.\]
\end{lemmadef}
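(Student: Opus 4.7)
The plan is to exploit a version of Schur orthogonality for symmetric algebras. The key identity is
\[\sum_{b \in B} \rho(b)_{kl}\, \rho(b^\vee)_{lk} = c_\lambda,\]
valid for any indices $k,l \in \{1,\dots,d\}$. To establish it I express matrix entries via the trace form: in the Wedderburn decomposition $H \cong \prod_\mu M_{d_\mu}(K)$, let $E_{lk}^{(\lambda)}$ denote the standard matrix unit in the $\lambda$-factor. Since $\tau$ restricts to $c_\lambda^{-1}\tr$ on that factor, one obtains $\rho(h)_{kl} = c_\lambda\,\tau\bigl(h\, E_{lk}^{(\lambda)}\bigr)$ for every $h \in H$. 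Substituting $h = b$ and using the dual-basis property then gives $\sum_{b \in B} \rho(b)_{kl}\, b^\vee = c_\lambda E_{lk}^{(\lambda)}$ in $H$; applying the matrix coefficient $\rho(\cdot)_{mn}$ yields the full orthogonality relation $\sum_b \rho(b)_{kl}\rho(b^\vee)_{mn} = c_\lambda\,\delta_{lm}\delta_{kn}$, of which our identity is the special case $m=l$, $n=k$.

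Next I apply the valuation. Since every matrix entry has valuation at least that of the full matrix, and $\nu$ satisfies the ultrametric triangle inequality, the identity above implies
\begin{align*}
-2a_\lambda \;=\; \nu(c_\lambda)
&\;=\; \nu\bigg(\sum_{b \in B} \rho(b)_{kl}\,\rho(b^\vee)_{lk}\bigg) \\
&\;\geq\; \min_{b \in B}\bigl(\nu(\rho(b)_{kl}) + \nu(\rho(b^\vee)_{lk})\bigr) \\
&\;\geq\; \min_{b \in B}\bigl(\nu(\rho(b)) + \nu(\rho(b^\vee))\bigr).
\end{align*}
Hence some $b \in B$ satisfies $\nu(\rho(b)) + \nu(\rho(b^\vee)) \leq -2a_\lambda$, which in turn forces $\min\bigl(\nu(\rho(b)),\,\nu(\rho(b^\vee))\bigr) \leq -a_\lambda$.

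Finally, the $\ast$-symmetry of $B$ closes the argument: because $b^\vee = b^\ast \in B$, the element attaining the minimum above is either $b$ itself or $b^\ast$, and in both cases it lies in $B$ and witnesses the claimed bound.

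The main obstacle I anticipate is cleanly establishing the orthogonality identity, which hinges on the correct Schur-element conventions for symmetric algebras and on the identification of matrix coefficients with trace pairings in the Wedderburn decomposition. Once that identity is in place, the valuation estimate and the appeal to $\ast$-symmetry are entirely routine.
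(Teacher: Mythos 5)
Your proof is correct and uses essentially the same idea as the paper: both rest on the Schur orthogonality relation $\sum_{b\in B}\rho(b)_{kl}\rho(b^\vee)_{lk}=c_\lambda$ together with the valuation and the $\ast$-symmetry $b^\vee=b^\ast\in B$. The paper phrases it as a short contrapositive (assume $\nu(\rho(b))>-a_\lambda$ for all $b$ and derive $v^{2a_\lambda}c_\lambda\in\mathfrak{m}$), while you argue directly via the ultrametric inequality and spell out the Wedderburn derivation of the Schur relation, but the two routes are the same argument in different dress.
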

\begin{proof}
Wäre die Schranke verletzt, d.\,h. wäre $\nu(\rho(b))>-a_\lambda$ für alle $b\in B$, dann folgt aus den Schur-Relationen:
\[v^{2a_\lambda} c_\lambda = \sum_{x\in B} (v^{a_\lambda} \rho(x)_\mathfrak{st})(v^{a_\lambda} \rho(x^\vee)_\mathfrak{ts}) \in\mathfrak{m}\]
Andererseits ist $a_\lambda$ gerade so definiert worden, dass $v^{2a_\lambda} c_\lambda$ eben nicht mehr in $\mathfrak{m}$ liegt.
\end{proof}

\begin{remark}
Ist $\gamma\mapsto\widetilde{v}^\gamma$ ein weiterer partieller Schnitt, dann gilt $\nu(v^\gamma)=\gamma=\nu(\widetilde{v}^\gamma)$, also $\frac{\widetilde{v}^\gamma}{v^\gamma}\in\mathcal{O}^\times$. Die führenden Koeffizienten bezüglich dieser beiden Schnitte unterscheiden sich genau um die Konstante $\frac{\widetilde{v}^{a_\lambda}}{v^{a_\lambda}}\mod\mathfrak{m}$.
\end{remark}

\subsection{Einschub: Formal reelle Körper}

\begin{remark}
Wir erinnern an die Definition formal reeller Körper und ihre elementaren Eigenschaften (siehe etwa \citep[Ch.\,20]{lorenz2007algebra} für eine ausführlichere Behandlung).
\end{remark}

\begin{lemmadef}
\index{terms}{formal reell|(}
Für einen Körper $L$ sind äquivalent:
\begin{enumerate}
	\item Es gibt eine Totalordnung auf $L$, mit der $L$ zu einem angeordneten Körper wird.
	\item $-1$ ist keine Summe von Quadraten in $L$.
	\item Es gibt ein Element, das keine Summe von Quadraten ist, und $L$ ist nicht von Charakteristik $2$.
	\item $\sum_{i=1}^n x_i^2 = 0 \implies \forall i: x_i=0$.
\end{enumerate}
Gegebenenfalls heißt $L$ \udot{formal reell}.
\end{lemmadef}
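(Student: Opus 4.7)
The plan is to prove the four equivalences by the standard cyclic chain $(1)\Rightarrow(2)\Rightarrow(3)\Rightarrow(4)\Rightarrow(1)$, where everything except the last implication is almost immediate.

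For $(1)\Rightarrow(2)$: In an ordered field every square is non-negative, hence every sum of squares is non-negative, while $-1<0$; so $-1$ cannot be written as a sum of squares. For $(2)\Rightarrow(3)$: If $-1$ is not a sum of squares, then $-1$ itself witnesses (3); moreover in characteristic $2$ we would have $-1=1=1^2$, contradicting (2), so $\operatorname{char}(L)\neq 2$.

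For $(3)\Rightarrow(4)$: Contrapositively, suppose $\sum_{i=1}^n x_i^2=0$ with some $x_j\neq 0$. Dividing by $x_j^2$ (which is allowed since $\operatorname{char}(L)\neq 2$ and $x_j\ne 0$) yields an expression of $-1$ as a sum of squares, say $-1=\sum_k a_k^2$. Using the identity
\[y=\Bigl(\tfrac{y+1}{2}\Bigr)^2-\Bigl(\tfrac{y-1}{2}\Bigr)^2=\Bigl(\tfrac{y+1}{2}\Bigr)^2+\sum_k\Bigl(a_k\cdot\tfrac{y-1}{2}\Bigr)^2,\]
valid because the characteristic is not $2$, every $y\in L$ becomes a sum of squares, contradicting (3).

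The main obstacle is $(4)\Rightarrow(1)$, which requires genuinely constructing a total order. The approach is to exhibit a \emph{positive cone}, i.e.\ a subset $P\subseteq L$ satisfying $P+P\subseteq P$, $P\cdot P\subseteq P$, $L^2\subseteq P$, $P\cup(-P)=L$ and $P\cap(-P)=\{0\}$; such a $P$ induces an order via $x\leq y:\iff y-x\in P$. One starts with the preorder $\Sigma:=\{\sum_{i}x_i^2\mid x_i\in L\}$, which already satisfies all conditions except possibly $P\cup(-P)=L$, and applies Zorn's lemma to the poset of subsets of $L$ containing $\Sigma$, closed under addition and multiplication, and not containing $-1$. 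Assumption (4) ensures that $-1\notin\Sigma$ (otherwise $1+\sum a_i^2=0$ would force $1=0$). For a maximal such $P$, one shows that for every $a\in L\setminus P$ the set $P[a]:=P+aP$ would still be closed under addition and multiplication and by maximality must contain $-1$; writing $-1=p+aq$ with $p,q\in P$ and using that $q\ne 0$ implies $q$ is invertible (the inverse of a non-zero sum of squares is again a sum of squares, hence in $P$), one deduces $-a\in P$. This gives $P\cup(-P)=L$, and $P\cap(-P)=\{0\}$ follows from $-1\notin P$ together with (4). Hence $P$ is the desired positive cone, completing the equivalence.
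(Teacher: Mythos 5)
The paper does not prove this statement itself; it simply cites Lorenz, \emph{Algebra}, Ch.\,20 and treats the equivalences as standard. Your proposal supplies the classical Artin--Schreier argument, and the overall structure of the chain $(1)\Rightarrow(2)\Rightarrow(3)\Rightarrow(4)\Rightarrow(1)$ is correct, including the use of the identity $y=\bigl(\tfrac{y+1}{2}\bigr)^2-\bigl(\tfrac{y-1}{2}\bigr)^2$ and the Zorn-maximal positive cone.

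Two small points of precision in the step $(4)\Rightarrow(1)$. First, the parenthetical justification that $q^{-1}\in P$ because ``the inverse of a non-zero sum of squares is again a sum of squares'' does not directly apply, since the element $q$ lies in the maximal set $P$, which may be strictly larger than the set of sums of squares. The correct and more general argument is $q^{-1}=q\cdot(q^{-1})^2\in P\cdot L^2\subseteq P$, valid for any nonzero $q\in P$. Second, $P\cap(-P)=\{0\}$ does not really need (4) at that stage; it follows from $-1\notin P$ alone: if $0\neq x\in P\cap(-P)$ then $-1=(-x)\cdot x\cdot(x^{-1})^2\in P$. Neither point affects the validity of the proof, only the justifications given.
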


\begin{lemma}
Ist in unserer Situation der Restklassenkörper $F$ formal reell, so ist auch $K$ formal reell.
\end{lemma}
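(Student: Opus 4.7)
The plan is to use the characterization that a field is formally real iff $\sum_{i=1}^n x_i^2 = 0$ forces every $x_i = 0$. So I will argue by contradiction: suppose there exist $x_1, \ldots, x_n \in K$, not all zero, with $\sum_{i=1}^n x_i^2 = 0$, and derive a corresponding nontrivial relation in $F$.

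First I would normalize the $x_i$ using the valuation. Since $\nu: K \twoheadrightarrow \Gamma \cup \{\infty\}$ is surjective and at least one $x_i$ is nonzero, the minimum $\gamma := \min_i \nu(x_i) \in \Gamma$ exists. Pick any $t \in K^\times$ with $\nu(t) = \gamma$ and set $y_i := x_i / t$. Then $\nu(y_i) = \nu(x_i) - \gamma \geq 0$, so $y_i \in \mathcal{O}$ for all $i$, and by choice of $\gamma$ at least one $y_{i_0}$ satisfies $\nu(y_{i_0}) = 0$, i.e., $y_{i_0} \in \mathcal{O}^\times$. Dividing the relation $\sum x_i^2 = 0$ by $t^2 \in K^\times$ yields $\sum_{i=1}^n y_i^2 = 0$ as an identity in $\mathcal{O}$.

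Now I would reduce modulo $\mathfrak{m}$. Writing $\overline{y_i} \in F$ for the residue class, the previous relation gives $\sum_{i=1}^n \overline{y_i}^{\,2} = 0$ in $F$. Since $y_{i_0} \in \mathcal{O}^\times$, its residue $\overline{y_{i_0}} \in F^\times$ is nonzero, so this is a nontrivial relation of squares summing to zero in $F$. By the equivalent characterization (d.) in the preceding Lemma, this contradicts the assumption that $F$ is formally real. Hence no such relation exists in $K$, and $K$ is formally real.

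There is no real obstacle here: the whole argument is a one-line valuation trick (normalize to the valuation ring, reduce modulo $\mathfrak{m}$) followed by invoking the characterization of formal reality. The only mild point to note is that surjectivity of $\nu$ onto $\Gamma$ is what guarantees an element $t$ of the required valuation; without it one would have to pick a $t$ of smallest valuation among the $x_i$ themselves, which works equally well since one of the $x_i$ already realizes the minimum.
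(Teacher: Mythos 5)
Your proof is correct and uses essentially the same idea as the paper's: normalize a hypothetical counterexample by a suitable element of valuation $\min_i\nu(x_i)$ so that everything lands in $\mathcal{O}$ with at least one unit, then reduce modulo $\mathfrak{m}$ and invoke formal reality of $F$. The only difference is cosmetic: the paper starts from characterization (b.) (``$-1$ is not a sum of squares''), which forces it to first establish $\alpha=0$ before deriving the contradiction, whereas you start directly from (d.) (``$\sum x_i^2=0\Rightarrow$ all $x_i=0$''), which collapses that two-step argument into one.
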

\begin{proof}
Seien $x_i\in K^\times$ mit $-1 = \sum_{i=1}^k x_i^2$. Dann setze $\alpha := -\min\Set{\nu(x_i) | i=1,\ldots,k}$ und wähle ein $v^\alpha\in K$ mit Bewertung $\alpha$. Dann erhalten wir:
\[-v^{2\alpha} = \sum_{i=1}^k (\underbrace{v^\alpha x_i}_{\in\mathcal{O}})^2\]
Nach Definition ist $v^\alpha x_i\in\mathcal{O}^\times$ für mindestens ein $i$. Wenn wir also modulo $\mathfrak{m}$ reduzieren, verschwinden auf der rechten Seite nicht alle Quadrate. Weil $F$ formal reell ist, ist die rechte Seite daher ungleich Null. Das heißt, dass auch die linke Seite ungleich Null modulo $\mathfrak{m}$ ist. Daher muss $\alpha=0$ sein.

Das hieße aber, dass $x_i\in\mathcal{O}$ für alle $i$ und $x_i\in\mathcal{O}^\times$ für mindestens ein $i$ ist und somit $-1 \equiv \sum_{i=1}^k x_i^2 \mod\mathfrak{m}$ gilt im Widerspruch dazu, dass $F$ formal reell ist. Also ist, wie behauptet, auch $K$ formal reell.
\end{proof}

\begin{lemma}[Symmetrische, semidefinite Matrizen über formal reellen Körpern]\label{formally_real:symmetric_matrices}
\index{terms}{Matrix!positiv semidefinite}\index{terms}{Matrix!symmetrische}
Ist $L$ ein formal reeller Körper und sind $X_i\in L^{n\times m}$ Matrizen, dann gilt:
\begin{enumerate}
	\item $X:=\sum_{i=1}^k X_i^\text{Tr} X_i$ ist positiv semidefinit in dem Sinne, dass $v^\text{Tr} X v$ für alle $v\in L^m$ eine Summe von Quadraten ist.
	\item Weiter gilt:
	\[\forall v\in L^m: v^\text{Tr} X v = 0 \implies X_i v=0\]
	Und somit insbesondere $X=0 \implies \forall i: X_i=0$.
	\item Ist $L=K$ und $F$ formal reell, so gilt für die Bewertungen:
	\begin{enumerate}
		\item $\nu(X) = 2\min\Set{\nu(X_i) | 1\leq i\leq k}$.
		\item $\nu(X_{jj}) = 2\min\Set{\nu(X_i e_j) | 1\leq i\leq k}$ für alle $j=1,\ldots,m$.
	\end{enumerate}
\end{enumerate}
\end{lemma}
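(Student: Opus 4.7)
Der Beweis hat vier Teile, die einander aufbauend genutzt werden.

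Für Teil a. genügt eine direkte Rechnung: Aus
\[v^\text{Tr} X v = \sum_{i=1}^k v^\text{Tr} X_i^\text{Tr} X_i v = \sum_{i=1}^k (X_i v)^\text{Tr}(X_i v) = \sum_{i=1}^k \sum_{j=1}^n (X_i v)_j^2\]
liest man direkt ab, dass $v^\text{Tr} X v$ eine Summe von Quadraten in $L$ ist.

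Für Teil b. benutze ich nun, dass $L$ formal reell ist. Aus der äquivalenten Charakterisierung d. der obigen Definition folgt, dass jede verschwindende Summe von Quadraten bereits summandenweise verschwindet. Wendet man das auf die in a. erhaltene Darstellung von $v^\text{Tr} X v$ an, so ergibt sich sofort $(X_i v)_j = 0$ für alle $i,j$, also $X_i v = 0$. Die Spezialisierung $X=0 \implies X_i=0$ folgt dann, indem man $v$ über alle Standardbasisvektoren laufen lässt.

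Teil c.i. ist der eigentliche Kern: Setze $\alpha := \min\Set{\nu(X_i) | 1\leq i\leq k}$. Die Abschätzung $\nu(X) \geq 2\alpha$ folgt aus der Submultiplikativität der Bewertung, denn jeder Summand $X_i^\text{Tr} X_i$ hat Einträge mit Bewertung $\geq 2\nu(X_i) \geq 2\alpha$. Für die umgekehrte Abschätzung normalisiere ich: Wähle $v^\alpha\in K$ mit $\nu(v^\alpha)=\alpha$ und setze $Y_i := v^{-\alpha} X_i \in \mathcal{O}^{n\times m}$. Nach Konstruktion hat mindestens ein $Y_i$ einen Eintrag in $\mathcal{O}^\times$, d.\,h. die Reduktion $\overline{Y_i}\in F^{n\times m}$ ist nicht für alle $i$ die Nullmatrix. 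Reduziere nun $Y := v^{-2\alpha} X = \sum_i Y_i^\text{Tr} Y_i$ modulo $\mathfrak{m}$: es gilt $\overline{Y} = \sum_i \overline{Y_i}^\text{Tr}\, \overline{Y_i}$ in $F^{m\times m}$. Da $F$ formal reell ist, darf ich Teil b. über $F$ anwenden; die Tatsache, dass mindestens ein $\overline{Y_i}\neq 0$ ist, zwingt $\overline{Y}\neq 0$, also $\nu(Y)=0$ und somit $\nu(X)=2\alpha$. Teil c.ii. folgt dann durch Spezialisierung: Wendet man c.i. auf die einspaltigen Matrizen $X_i' := X_i e_j \in L^{n\times 1}$ an, so ist $\sum_i (X_i')^\text{Tr} X_i' = X_{jj}$, was die behauptete Formel liefert.

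Die eigentliche Schwierigkeit liegt in c.i., genauer im Nachweis der oberen Schranke $\nu(X)\leq 2\alpha$. Hier muss die Voraussetzung, dass der Restklassenkörper $F$ formal reell ist, benutzt werden, um einen ``Cancellation''-Effekt auszuschließen: ohne Formal-Reellheit könnten die $\overline{Y_i}^\text{Tr}\,\overline{Y_i}$ sich weg\-heben, obwohl die $\overline{Y_i}$ einzeln nicht verschwinden. Der Trick, b. erst über $L=K$ und dann erneut über $L=F$ zu verwenden, ist die zentrale Idee des Beweises.
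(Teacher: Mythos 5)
Dein Beweis ist korrekt und verläuft im Wesentlichen genauso wie der im Text: a. durch direkte Rechnung, b. über die vierte Charakterisierung formaler Reellheit, c.i. durch Normieren auf $\mathcal{O}$ und Reduktion modulo $\mathfrak{m}$ unter Ausnutzung der formalen Reellheit von $F$, c.ii. durch Anwendung auf die Spaltenmatrizen $X_i e_j$. Der einzige (triviale) Unterschied ist, dass Du für das Nichtverschwinden von $\overline{Y}$ korrekterweise b. über $F$ zitierst, während das Paper etwas locker auf a. verweist -- gemeint ist in beiden Fällen dasselbe Argument.
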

\begin{proof}
a. Sei $v\in L^m$. Dann gilt:
\begin{align*}
	v^\text{Tr} X v &= \sum_{i=1}^k (X_i v)^\text{Tr}(X_i v) \\
	&= \sum_{i=1}^k \sum_{j=1}^n (X_i v)_j^2 \\
	&\geq 0
\end{align*}

b. ergibt sich dann aus a. wie folgt:
\begin{align*}
	0 &= v^\text{Tr} X v \\
	\implies \forall i,j: 0 &= (X_i v)_j \\
	\implies \forall i: 0 &= X_i v
\end{align*}
Wenn nun also $X=0$ ist, gilt $v^\text{Tr} X v=0$ für alle $v$ und nach obiger Überlegung auch $X_i v=0$ für alle $i$ und alle $v$. Das heißt aber $X_i=0$.

Setze für c. nun $\alpha:=\min\Set{\nu(X_i) | 1\leq i\leq k}$. Wähle ein $v^\alpha\in K$ mit Bewertung $\alpha$. Dann ist $v^{-\alpha} X_i\in\mathcal{O}^{n\times m}$ für alle $i$ und $v^{-\alpha} X_i\not\equiv 0 \mod\mathfrak{m}^{n\times m}$ für mindestens ein $i$. Es ist also $v^{-2\alpha} X = \sum_i (v^{-\alpha} X_i)^\text{Tr} (v^{-\alpha} X_i)\in\mathcal{O}^{m\times m}$ und $v^{-2\alpha} X \not\equiv 0 \mod\mathfrak{m}^{m\times m}$ aufgrund von a. Das zeigt $\nu(X)=2\alpha$.

Es ist $X_{jj} = e_j^\text{Tr} X e_j = \sum_i (X_i e_j)^\text{Tr} (X_i e_j)$. Dieselbe Überlegung angewendet auf die $1\times m$-Matrizen $X_i e_j$ ergibt die zweite Behauptung in c.
\end{proof}

\begin{lemma}[Orthogonale Matrizen über $K$ und $\mathcal{O}$]\label{formally_real:orthogonal_matrices}
\index{terms}{Matrix!orthogonale}
Ist in unserer Situation $F$ formal reell, so ist
\[O_n(K) = O_n(\mathcal{O}).\]
\end{lemma}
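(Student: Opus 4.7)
The inclusion $O_n(\mathcal{O})\subseteq O_n(K)$ is trivial since $\mathcal{O}\subseteq K$ and the orthogonality relation $AA^{\text{Tr}}=I$ is the same in both cases; so the content is the reverse inclusion. My plan is to reduce this to Lemma \ref{formally_real:symmetric_matrices} part c, applied to a single matrix.

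Let $A\in O_n(K)$, so $AA^{\text{Tr}}=I$. Set $X_1:=A^{\text{Tr}}\in K^{n\times n}$ and $X:=X_1^{\text{Tr}} X_1 = A A^{\text{Tr}} = I$. I would like to invoke Lemma \ref{formally_real:symmetric_matrices}.c.ii with $k=1$, which requires that $K$ itself be formal reell — but this is exactly what the preceding lemma gives us from the assumption that $F$ is formal reell. The lemma then yields, for every index $j$,
\[
0 \;=\; \nu(I_{jj}) \;=\; \nu(X_{jj}) \;=\; 2\,\nu(X_1 e_j) \;=\; 2\,\min_{i}\nu(A_{ji}),
\]
where I used that $X_1 e_j$ is the $j$-th column of $A^{\text{Tr}}$, i.e.\ the $j$-th row of $A$ written as a column. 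Hence $\min_i\nu(A_{ji})=0$ for every $j$, which in particular forces $\nu(A_{ji})\geq 0$ for all $i,j$. Thus $A\in\mathcal{O}^{n\times n}$, and together with $AA^{\text{Tr}}=I$ this gives $A\in O_n(\mathcal{O})$ as desired.

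Essentially the whole work has already been done in the symmetric-matrix lemma: the crucial ingredient is the positive semi-definiteness argument showing that for a sum $\sum_i X_i^{\text{Tr}}X_i$ over a formal reell base field, the valuation of each diagonal entry controls the valuation of the corresponding column of every $X_i$. I do not anticipate any real obstacle here — the only point deserving a sentence of comment is that the hypothesis \emph{$F$ formal reell} is needed precisely to pass to \emph{$K$ formal reell} via the earlier lemma, since part c of Lemma \ref{formally_real:symmetric_matrices} was formulated under that hypothesis on $K$. Everything else is a direct application.
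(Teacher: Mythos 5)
Your proof is correct and follows essentially the same route as the paper: both reduce the claim to Lemma~\ref{formally_real:symmetric_matrices} by observing that $I=X_1^{\text{Tr}}X_1$ with $X_1=A^{\text{Tr}}$ and extracting $\nu(A)=0$ (the paper phrases this by normalizing to $v^{\alpha}A$ with $\alpha=-\nu(A)$ and reducing modulo $\mathfrak{m}$; you apply part~c directly, which is if anything a touch cleaner). One small side remark: part~c of Lemma~\ref{formally_real:symmetric_matrices} is stated and proved under the hypothesis ``$L=K$ und $F$ formal reell'', so the detour through the lemma passing from $F$ formal reell to $K$ formal reell that you mention at the end is not actually needed to invoke it.
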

\begin{proof}
Die eine Inklusion ist klar. Sei für die andere $A\in O_n(K)$ beliebig. Dann setze $\alpha:=-\nu(A)$ und wähle $v^\alpha\in K$ mit Bewertung $\alpha$. Dann ist $v^\alpha A\in\mathcal{O}^{n\times n}$, $v^\alpha A \not\equiv 0 \mod\mathfrak{m}^{n\times n}$ und $v^{2\alpha} I = (v^\alpha A)^\text{Tr}(v^\alpha A)$. Weil $F$ formal reell ist, ist wegen \ref{formally_real:symmetric_matrices} die Matrix auf der linken Seite ungleich Null modulo $\mathfrak{m}^{n\times n}$. Das heißt, dass $2\alpha=0$ sein muss, d.\,h. $A\in\mathcal{O}^{n\times n}$, wie behauptet.
\end{proof}

\begin{corollary}
\index{terms}{Darstellung!balancierte}\index{terms}{$\ast$-symmetrisch}
Ist $F$ formal reell, dann hängt die Balanciertheit von $\rho: H\to K^{d\times d}$ nicht von der Wahl der $\ast$"~symmetrischen Basis ab.
\end{corollary}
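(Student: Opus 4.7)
The plan is to reduce the claim to the two preceding lemmas about orthogonal matrices and to the characterization of balancedness as the sharpness of the a priori bound on $\min_{b\in B}\nu(\rho(b))$.

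First, I take two $\ast$"~symmetrische Basen $B$ und $B'$ of $H$ and invoke Lemma \ref{symm_alg:symmetric_base_change} to conclude that the transition matrix $A\in K^{n\times n}$ expressing the elements of $B'$ in terms of $B$ lies in $O_n(K)$. Because $F$ is formally real, Lemma \ref{formally_real:orthogonal_matrices} upgrades this to $A\in O_n(\mathcal{O})$, so every entry satisfies $\nu(A_{b',b})\geq 0$; the same holds for $A^{-1}=A^{\text{Tr}}$.

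Next, for each $b'\in B'$ the identity $\rho(b')=\sum_b A_{b',b}\,\rho(b)$ combined with the subadditivity $\nu\bigl(\sum_i x_i y_i\bigr)\geq\min_i\bigl(\nu(x_i)+\nu(y_i)\bigr)$ of $\nu$ on matrices yields $\nu(\rho(b'))\geq\min_{b\in B}\nu(\rho(b))$. Running the symmetric argument via $A^{-1}$ gives the reverse inequality, so $\min_{b\in B}\nu(\rho(b))=\min_{b'\in B'}\nu(\rho(b'))$. Since balancedness precisely amounts to this minimum being $\geq -a_\lambda$, and the matching upper bound $\min_{b\in B}\nu(\rho(b))\leq -a_\lambda$ holds automatically by the Lemmadef on balanced representations, the property is insensitive to the chosen $\ast$"~symmetrischen Basis.

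I do not anticipate any genuine obstacle. The only nontrivial input, namely that every $K$-orthogonal matrix already has entries in $\mathcal{O}$, is exactly what Lemma \ref{formally_real:orthogonal_matrices} supplies under the hypothesis that $F$ is formally real; without that hypothesis the transition matrix could have entries of strictly negative valuation and the argument would collapse. Everything else is a one-line valuation estimate, so the whole proof should consist of assembling the two cited lemmas and reading off the conclusion.
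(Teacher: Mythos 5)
Your proof is correct and follows essentially the same route as the paper: both hinge on Lemma \ref{symm_alg:symmetric_base_change} to identify the transition matrix as orthogonal and on Lemma \ref{formally_real:orthogonal_matrices} (using formal reality of $F$) to place it in $O_n(\mathcal{O})$, after which the valuation estimate does the rest. You spell out both inequalities to show the minimum itself is preserved, which is slightly more than the paper explicitly writes, but the underlying argument is identical.
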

\begin{proof}
Sind $(b_i)_{i=1\ldots n}$ und $(c_i)_{i=1\ldots n}$ zwei $\ast$"~symmetrische Basen von $H$, dann ist die Basiswechselmatrix $A\in O_n(K)=O_n(\mathcal{O})$, wie in \ref{symm_alg:symmetric_base_change} gesehen.
	
Jetzt gilt für alle Matrizendarstellungen:
\[\rho(b_i) = \sum_{k=1}^n A_{ik} \rho(c_k)\]
Wenn also $\nu(\rho(c_k))\geq -a_\lambda$ für alle $k$ gilt, gilt das auch für $\rho(b_i)$, weil die Koeffizienten in $\mathcal{O}$ liegen.
\end{proof}

\subsection{Ein Kriterium für Balanciertheit über formal reellen Körpern}

\begin{remark}
Wir beweisen nun ein hinreichendes Kriterium für Balanciertheit.
\end{remark}

\begin{theorem}[Balanciertheit aus invarianten Bilinearformen]\label{balanced_reps:invariant_blf1}
\index{terms}{Invariante Bilinearform|(}\index{terms}{Führende Koeffizienten}\index{terms}{Darstellung!balancierte|(}
Sei $F$ formal reell. Sei $\rho: H\to K^{d\times d}$ eine Matrizendarstellung vom Isomorphietyp $\lambda$. Es gilt:
\begin{enumerate}
	\item Gibt es ein $\Omega\in\mathcal{O}^{d\times d}$ mit
	\[\forall x\in B: \Omega\rho(x) = \rho(x^\ast)^\text{Tr} \Omega\]
	und ist
	\[D:=\Omega+\mathfrak{m}^{d\times d}\]
	eine Diagonalmatrix $D=\diag(d_\mathfrak{s}) \in GL_d(F)$, dann ist $a_\lambda\in\Gamma$ und $\rho$ balanciert. Die Führenden-Koeffizienten-Matrizen erfüllen die zusätzliche Bedingung
	\[\forall x\in B: d_\mathfrak{t} \cdot c(x^\ast)_\mathfrak{ts} = d_\mathfrak{s} \cdot c(x)_\mathfrak{st}.\]
	\item Gibt es ein $\Omega\in GL_d(\mathcal{O})$ mit $\Omega^\text{Tr}=\Omega$ und
	\[\forall x\in B: \Omega\rho(x) = \rho(x^\ast)^\text{Tr} \Omega\]
	dann ist $a_\lambda\in\Gamma$ und $\rho$ balanciert.
\end{enumerate}
\end{theorem}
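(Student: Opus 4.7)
Der Plan ist, Teil a.\ direkt durch Einsetzen der Verflechtungsrelation in die Schur"=Orthogonalität zu beweisen und Teil b.\ durch Diagonalisierung von $\Omega\mod\mathfrak m$ auf a.\ zurückzuführen.

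Für Teil a.\ würde ich so vorgehen: Aus $D\in GL_d(F)$ folgt zunächst $\Omega\in GL_d(\mathcal O)$, also läßt sich die Verflechtungsrelation als $\rho(x^\ast)^\text{Tr}=\Omega\rho(x)\Omega^{-1}$ schreiben. Setzt man dies in die (schon im vorangehenden Beweis benutzte) Schur"=Identität $c_\lambda=\sum_{x\in B}\rho(x)_{\mathfrak s\mathfrak t}\rho(x^\ast)_{\mathfrak t\mathfrak s}$ ein, erhält man
\[c_\lambda = \sum_{u,v}\Omega_{\mathfrak s u}(\Omega^{-1})_{v\mathfrak t}\sum_{x\in B}\rho(x)_{\mathfrak s\mathfrak t}\rho(x)_{uv}.\]
Sei nun $\alpha:=\min\Set{\nu(\rho(x)_{ij}) | x\in B,\,i,j=1,\ldots,d}\in\Gamma$ und $(\mathfrak s,\mathfrak t)$ so gewählt, daß dieses Minimum für ein $x_0\in B$ am $(\mathfrak s,\mathfrak t)$"=Eintrag angenommen wird. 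Im Hauptsummanden $(u,v)=(\mathfrak s,\mathfrak t)$ ist der Vorfaktor $\Omega_{\mathfrak s\mathfrak s}(\Omega^{-1})_{\mathfrak t\mathfrak t}$ wegen der Diagonalform von $D$ eine Einheit in $\mathcal O^\times$, und $\sum_{x}\rho(x)_{\mathfrak s\mathfrak t}^2$ ist eine Summe von Quadraten in $K$, welches nach dem vorhergehenden Lemma ebenfalls formal reell ist. Lemma \ref{formally_real:symmetric_matrices} liefert deshalb für diese Quadratsumme die Bewertung $2\alpha$. Für jedes $(u,v)\neq(\mathfrak s,\mathfrak t)$ liegt entweder $\Omega_{\mathfrak s u}$ oder $(\Omega^{-1})_{v\mathfrak t}$ in $\mathfrak m$, also ist $\nu(\Omega_{\mathfrak s u}(\Omega^{-1})_{v\mathfrak t})>0$, und zusammen mit $\nu(\sum_x\rho(x)_{\mathfrak s\mathfrak t}\rho(x)_{uv})\geq 2\alpha$ ergibt sich für die Nebensummanden Bewertung strikt $>2\alpha$. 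Der Hauptterm dominiert also, man erhält $\nu(c_\lambda)=2\alpha$ und somit $a_\lambda=-\alpha\in\Gamma$ sowie $\nu(\rho(x))\geq-a_\lambda$ für alle $x\in B$. Die Zusatzrelation $d_\mathfrak s c(x)_{\mathfrak s\mathfrak t}=d_\mathfrak t c(x^\ast)_{\mathfrak t\mathfrak s}$ ergibt sich unmittelbar, indem man den $(\mathfrak s,\mathfrak t)$"=Eintrag von $\Omega\rho(x)=\rho(x^\ast)^\text{Tr}\Omega$ mit $v^{a_\lambda}$ multipliziert und modulo $\mathfrak m$ reduziert; wegen $\Omega\mod\mathfrak m=D$ diagonal überlebt auf jeder Seite genau ein Summand.

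Für Teil b.\ nutze ich aus, daß $F$ formal reell und damit $\CharFld F\neq 2$ ist, sodaß die symmetrische invertierbare Matrix $\bar\Omega:=\Omega\mod\mathfrak m$ über $F$ durch Kongruenz diagonalisiert werden kann: Es gibt $P\in GL_d(F)$ mit $P^\text{Tr}\bar\Omega P$ diagonal. Ein beliebiger Lift $\tilde P\in\mathcal O^{d\times d}$ liegt wegen $\det\tilde P\equiv\det P\neq 0\mod\mathfrak m$ bereits in $GL_d(\mathcal O)$, und die äquivalente Darstellung $\rho'(x):=\tilde P^{-1}\rho(x)\tilde P$ trägt die Verflechtungsmatrix $\Omega':=\tilde P^\text{Tr}\Omega\tilde P\in\mathcal O^{d\times d}$, deren Reduktion diagonal und invertierbar ist. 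Damit greift a., und aus $\nu(\rho'(x))\geq-a_\lambda$ folgt wegen $\tilde P,\tilde P^{-1}\in\mathcal O^{d\times d}$ sofort $\nu(\rho(x))=\nu(\tilde P\rho'(x)\tilde P^{-1})\geq-a_\lambda$. Der eigentliche Knackpunkt liegt im Bewertungsvergleich in a.: Erst die formal reelle Positivität (welche die Bewertung der Quadratsumme exakt festlegt) zusammen mit der Fastdiagonalität von $\Omega$ (welche die Nebenterme klein hält) erlaubt es, aus der Schur"=Identität die scharfe Abschätzung $\nu(c_\lambda)=2\alpha$ zu gewinnen.
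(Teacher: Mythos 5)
Dein Beweis ist korrekt und folgt im Wesentlichen derselben Strategie wie der Beweis im Manuskript: Einsetzen der Verflechtungsrelation in die Schur-Identit\"at, formale Reellheit zur Kontrolle der Bewertung der Quadratsumme, Diagonalgestalt von $\Omega \bmod \mathfrak{m}$ zur Eliminierung der Nebenterme, und in Teil b.\ Diagonalisieren und Liften zur R\"uckf\"uhrung auf a. Deine explizite Bewertungsanalyse der Summanden $\Omega_{\mathfrak{s}u}(\Omega^{-1})_{v\mathfrak{t}}\sum_{x\in B}\rho(x)_{\mathfrak{st}}\rho(x)_{uv}$ ersetzt dabei lediglich die im Manuskript gew\"ahlte Vorgehensweise, nach Multiplikation mit $v^{2a}$ direkt modulo $\mathfrak{m}$ zu reduzieren und die daraus folgende Relation $\widetilde{c}(x^\vee)_{\mathfrak{ts}} = \tfrac{d_\mathfrak{s}}{d_\mathfrak{t}}\widetilde{c}(x)_{\mathfrak{st}}$ auszunutzen.
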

\begin{proof}
Wir setzen
\[a := -\min\Set{\nu(\rho(x)_\mathfrak{st}) | x\in B,\mathfrak{s},\mathfrak{t}=1,\ldots,d}\]

Wir wählen ein $v^a \in K^\times$ mit Bewertung $a$ und setzen:
\[\widetilde{c}(x) := v^a\cdot \rho(x) \mod\mathfrak{m}^{d\times d}\]
Es gilt jetzt also $v^a \rho(x)\in\mathcal{O}^{d\times d}$ für alle $x\in B$ und $v^a \rho(x)\not\equiv 0 \mod\mathfrak{m}^{d\times d}$ für mindestens ein $x\in B$.

\medbreak
Unser Ziel ist es nun, $\nu(c_\lambda)=-2a$, d.\,h. $a=a_\lambda$ zu zeigen. Wir setzen zunächst in die Voraussetzungen ein und erhalten:
\[\Omega \cdot \widetilde{c}(x) = \widetilde{c}(x^\ast)^\text{Tr} \cdot \Omega\]
Weil nun die Restklasse $\Omega+\mathfrak{m}^{d\times d}$ eine Diagonalmatrix ist, erhalten wir die Gleichungen
\[\forall \mathfrak{s},\mathfrak{t}: d_\mathfrak{s} \widetilde{c}(x)_\mathfrak{st} = \widetilde{c}(x^\ast)_\mathfrak{st}^\text{Tr} d_\mathfrak{t} = \widetilde{c}(x^\vee)_\mathfrak{ts} d_\mathfrak{t}\]
Das setzen wir nun in die Schur-Relationen ein:
\begin{alignat*}{2}
	 v^{2a} \cdot c_\lambda  &= \sum_{x\in B} \underbrace{v^a \rho(x)_\mathfrak{st}}_{\in\mathcal{O}} \cdot \underbrace{v^a \rho(x^\vee)_\mathfrak{ts}}_{\in\mathcal{O}} \in\mathcal{O} & \implies 2a+\nu(c_\lambda)\geq 0 \\
	 &= \sum_{x\in B} \widetilde{c}(x)_\mathfrak{st} \cdot \widetilde{c}(x^\vee)_\mathfrak{ts} \mod\mathfrak{m}\\
	 &=\sum_{x\in B} \widetilde{c}(x)_\mathfrak{st} \frac{d_\mathfrak{s}}{d_\mathfrak{t}} \widetilde{c}(x)_\mathfrak{st} \\
	 &= \frac{d_\mathfrak{s}}{d_\mathfrak{t}} \sum_{x\in B} \widetilde{c}(x)_\mathfrak{st}^2
\end{alignat*}
Weil $F$ nach Annahme formal reell ist, ist die Summe ungleich Null. Weil $d_\mathfrak{s}\neq 0$ für alle $\mathfrak{s}$ ist, ist also der gesamte Term ungleich Null. Das zeigt, dass $\nu(c_\lambda)+2a=0$ ist, wie gewünscht. Die Matrizen $\widetilde{c}(x)$ sind insbesondere die Führende"=Koeffizienten"=Matrizen von $\rho$. Das zeigt die Behauptung a.

\bigbreak
In der Situation von b. ist $\Omega\mod\mathfrak{m}$ symmetrisch und hat eine von Null verschiedene Determinante, induziert also eine nichtentartete, symmetrische Bilinearform auf $F^d$. Weil $\CharFld(F)=0$ ist, können wir eine Orthogonalbasis wählen, d.\,h. wir finden einen Basiswechsel $P\in GL_d(\mathcal{O})$ derart, dass 
\[\Omega':=P^\text{Tr} \Omega P \equiv \diag(d_\mathfrak{s}) \mod\mathfrak{m}^{d\times d}\]
ist (finde ein passendes $\overline{P}\in GL_d(F)$ und wähle irgendein Urbild $P\in GL_d(\mathcal{O})$). Die entsprechend geänderte Darstellung $\rho':=P^{-1} \rho P$ erfüllt dann die Voraussetzungen von a. Weil Balanciertheit unter $GL_d(\mathcal{O})$-Konjugation invariant ist, ist $\rho$ damit balanciert.
\end{proof}
\begin{definition}\label{balanced_reps:strictly_balanced}
\index{terms}{Darstellung!balancierte!strikt}
Eine Matrixdarstellung, die nicht nur balanciert ist, sondern auch die stärkere Bedingung aus a. erfüllt, wollen wir \udot{strikt balanciert} nennen.
\end{definition}

\begin{remark}
Es bleibt zu zeigen, dass solche invarianten Bilinearformen auch immer existieren:
\end{remark}
\begin{theorem}\label{balanced_reps:invariant_blf2}
\index{terms}{Invariante Bilinearform|)}\index{terms}{Matrix!positiv definite}
Sei $F$ formal reell. Sei $V_\lambda$ ein einfacher $H$"~Modul vom Isomorphietyp $\lambda\in\Lambda$. Dann gilt:
\begin{enumerate}
	\item Es gibt eine symmetrische Bilinearform $\braket{\cdot,\cdot}_\lambda: V_\lambda\times V_\lambda\to K$, die $H$"~invariant im Sinne von
	\[\forall h\in H: \braket{h^\ast v,w}_\lambda = \braket{v,h w}_\lambda\]
	und positiv definit ist in dem Sinne, dass $\braket{w,w}_\lambda$ stets eine Summe von Quadraten ist, die genau dann Null ist, wenn $w=0$ ist. Insbesondere ist $\braket{\cdot,\cdot}_\lambda$ nichtentartet.
	\item Für jedes $w\in V_\lambda\setminus\Set{0}$ gibt es ein $\gamma\in\Gamma$ mit $\nu(\braket{w,w}_\lambda)=2\gamma$.
	\item Es gilt $a_\lambda\in\Gamma$ und es gibt eine balancierte Matrixdarstellung vom Isomorphietyp~$\lambda$.
\end{enumerate}
\end{theorem}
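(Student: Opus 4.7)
The plan is to construct an $H$-invariant symmetric bilinear form on $V_\lambda$ by averaging over the $\ast$-symmetric basis $B$, to read off (b) from the explicit sum-of-squares expression via Lemma \ref{formally_real:symmetric_matrices}, and to deduce (c) from Theorem \ref{balanced_reps:invariant_blf1}~b. after an orthogonalisation followed by a valuation-adjusted rescaling.

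For (a) I would fix any $K$-basis of $V_\lambda$ and take $b_0(v,w) := \sum_i v_i w_i$ for the coordinate bilinear form; it is symmetric and, since $K$ is formally real, $b_0(v,v)$ is a sum of squares which vanishes iff $v=0$. Setting
\[\braket{v,w}_\lambda := \sum_{x \in B} b_0(xv, xw),\]
symmetry and the sum-of-squares positivity are immediate, and $\braket{v,v}_\lambda = 0$ forces $xv = 0$ for all $x \in B$, hence $v = 1\cdot v = 0$ because $1 \in \langle B\rangle_K$. The only nontrivial step is $H$-invariance, for which I would first establish the Casimir-type identity
\[\sum_{x\in B} xh^\ast \otimes x \;=\; \sum_{y\in B} y \otimes yh \qquad \text{in } H \otimes_K H.\]
This comes out of the dual-basis formula $\sum_x \tau(ax)\,x^\vee = a$ (which holds because $B = B^\vee = B^\ast$), applied to $a = hy^\vee$ to give $\sum_x xh \otimes x^\vee = \sum_y y \otimes hy^\vee$, and then composed with $\mathrm{id}\otimes\ast$ plus the substitution $h \leftrightarrow h^\ast$. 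Evaluating $b_0$ on both sides then yields $\braket{h^\ast v, w}_\lambda = \braket{v, hw}_\lambda$. I expect this Casimir identity to be the genuine obstacle: it is precisely the point at which the $\ast$-symmetry of $B$ is indispensable, and without it the naive average is not $H$-invariant.

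Part (b) is then a direct application of Lemma \ref{formally_real:symmetric_matrices}~c. to the $1\times 1$ matrices $X_{(x,i)} := (xw)_i$: the expression $\braket{w,w}_\lambda = \sum_{x,i} (xw)_i^2$ is a finite sum of squares in $K$ and, by (a), not all the $(xw)_i$ vanish when $w \neq 0$. The lemma therefore gives $\nu(\braket{w,w}_\lambda) = 2\min_{x,i}\nu((xw)_i) \in 2\Gamma$.

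For (c) I would start from any matrix realisation $\rho: H \to K^{d\times d}$ of type $\lambda$; the Gram matrix $G$ of $\braket{\cdot,\cdot}_\lambda$ in the chosen basis is symmetric, lies in $GL_d(K)$ by nondegeneracy, and satisfies $G\rho(x) = \rho(x^\ast)^{\mathrm{Tr}} G$ for all $x \in B$ by (a). I orthogonalise the basis via Gram--Schmidt over $K$, which is legitimate because at each stage the newly constructed vector is nonzero and hence has nonzero self-pairing by sum-of-squares positivity. After orthogonalisation, $G = \diag(d_1,\ldots,d_d)$ with $d_i \in K^\times$. Using (b), write $\nu(d_i) = 2\gamma_i$ with $\gamma_i \in \Gamma$, pick any $v^{\gamma_i} \in K^\times$ of valuation $\gamma_i$ (possible since $\nu$ is surjective), and rescale the $i$-th basis vector by $v^{-\gamma_i}$. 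The new Gram matrix $\diag(v^{-2\gamma_i} d_i)$ then lies in $GL_d(\mathcal{O})$, is symmetric, and still intertwines $\rho$ with $x \mapsto \rho(x^\ast)^{\mathrm{Tr}}$ throughout the construction. Theorem \ref{balanced_reps:invariant_blf1}~b. accordingly delivers $a_\lambda \in \Gamma$ together with balancedness of the rescaled representation, finishing (c).
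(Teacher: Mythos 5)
Your construction gives exactly the bilinear form the paper uses (the Gram matrix $\sum_{x\in B}\rho(x)^{\mathrm{Tr}}\rho(x)$), and parts (b) and (c) are carried out in essentially the same way as the paper's proof, but you argue two of the key steps differently and more explicitly. For the $H$-invariance, the paper observes that $\Omega_1$ is the representing matrix of a Gasch\"utz--Ikeda projection $\Phi(f):V\to\widehat{V}$ and simply cites its $H$-linearity from Geck--Pfeiffer; you instead derive the required intertwining from scratch via the Casimir-type identity $\sum_{x\in B}xh^\ast\otimes x=\sum_{y\in B}y\otimes yh$, which you correctly obtain from the dual-basis expansion, the cyclicity of $\tau$, and $\id\otimes\ast$. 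This makes the role of $\ast$-symmetry of $B$ fully visible, at the cost of a short computation the paper outsources. For nondegeneracy, the paper shows $\Omega_1\neq0$ via Lemma~\ref{formally_real:symmetric_matrices} and then invokes Schur's lemma between the simple modules $V$ and $\widehat{V}$; you bypass Schur entirely with the elementary observation that $\braket{v,v}_\lambda=0$ forces $xv=0$ for all $x\in B$ and hence $v=1\cdot v=0$, which directly gives positive definiteness rather than deducing it a posteriori from invertibility. Both routes are correct; yours is more self-contained, while the paper's emphasises that the form is a $H$-homomorphism $V\to\widehat V$ (which is also the structural reason for uniqueness up to scalar). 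One cosmetic remark: after the rescaling in (c) the Gram matrix is already diagonal with entries in $\mathcal{O}^\times$, so part~a.\ of Theorem~\ref{balanced_reps:invariant_blf1} applies directly (as in the paper); invoking part~b.\ is also fine, since a diagonal invertible matrix over $\mathcal{O}$ is in particular symmetric in $GL_d(\mathcal{O})$.
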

\begin{proof}
Sei $(v_i)_{i=1\ldots d}$ eine $K$"~Basis von $V:=V_\lambda$. Wir bezeichnen die zugehörige Matrixdarstellung mit $\rho$.

Dann ist $\widehat{V}=\Hom_K(V,K)$ via $h\cdot f:v\mapsto f(h^\ast\cdot v)$ ein $H$"~Modul und bzgl. der dualen Basis $(v_i^\ast)_{i=1\ldots d}$ hat dieser die Matrizendarstellung $\widehat{\rho}:H\to K^{d\times d}, h\mapsto\rho(h^\ast)^\text{Tr}$.

Wir setzen dann
\[\Omega_1 := \sum_{x\in B} \rho(x)^\text{Tr} \rho(x) = \sum_{x\in B} \widehat{\rho}(x^\ast)\rho(x) = \sum_{x\in B} \widehat{\rho}(x^\vee)\cdot 1_{d\times d}\cdot \rho(x).\]
Für eine $K$"~lineare Abbildung $f: V\to\widehat{V}$ ist die Gaschütz-Ikeda-Projektion $\Phi(f):V\to\widehat{V}$ durch
\[\Phi(f)v:= \sum_{x\in B} x^\vee \cdot f(x\cdot v)\]
definiert (siehe \citep[7.1.9]{geckpfeiffer}). Die Matrix $\Omega_1$ ist somit die Darstellungsmatrix von $\Phi(f)$ für die durch $f(v_i):=v_i^\ast$ definierte lineare Abbildung. $\Phi(f)$ ist $H$"~linear (siehe \citep[7.1.10]{geckpfeiffer}), d.\,h. $\Phi(f)(h\cdot v) = h\cdot\Phi(f)(v)$ für alle $h\in H$. In Matrizenschreibweise heißt das
\[\forall h\in H: \Omega_1 \rho(h)=\widehat{\rho}(h) \Omega_1 = \rho(h^\ast)^\text{Tr} \Omega_1.\]

Die durch $\Omega_1$ auf $V$ definierte Bilinearform $\braket{\cdot,\cdot}_\lambda$ ist nach Konstruktion symmetrisch und aufgrund des eben Bewiesenen auch $H$"~invariant. Wir müssen jetzt noch zeigen, dass $\Omega_1$ nichtentartet ist. Das folgt daraus, dass $F$, und somit auch $K$, formal reell ist. Dann zeigt uns das Lemma \ref{formally_real:symmetric_matrices}, dass $\Omega_1$ ungleich Null ist. Weil aber $V$ und $\widehat{V}$ einfache Moduln sind und $\Omega_1$ die Darstellungsmatrix von $\Phi(f): V\to\widehat{V}$ ist, sagt uns das Lemma von Schur, dass $\Omega_1$ sogar invertierbar sein muss.

Nach Konstruktion ist auch klar, dass $w^\text{Tr} \Omega_1 w=\sum_{x\in B} (\rho(x) w)^\text{Tr}\cdot (\rho(x)w)$ eine Summe von Quadraten ist.

\bigbreak
b. Sei nun $0\neq w\in K^d$ beliebig. Dann ist $w^\text{Tr} \Omega_1 w$ eine Summe von Quadraten. Aus \ref{formally_real:symmetric_matrices} folgt $\nu(w^\text{Tr}\Omega_1 w)\in 2\Gamma$.

\bigbreak
c. Weil $\braket{\cdot,\cdot}_\lambda$ positiv definit ist, gibt es eine Orthogonalbasis $(w_i)_{i=1\ldots d}$ bzgl. $\braket{\cdot,\cdot}_\lambda$ von $V$. Setze jeweils $\gamma_i:=-\frac{1}{2}\nu(w_i)$. Das ist aufgrund von b. in $\Gamma$. Wähle nun Elemente $v^{\gamma_i}\in K$ mit Bewertung $\gamma_i$ und setze $u_i:=v^{\gamma_i} w_i$. Dann ist $(u_i)_{i=1\ldots d}$ immer noch eine Orthogonalbasis von $V_\lambda$, aber jetzt ist $\braket{u_i,u_i}_\lambda \in \mathcal{O}^\times$.

Wenn wir nun $\Omega$ als Darstellungsmatrix von $\braket{\cdot,\cdot}_\lambda$ und $\rho$ als Matrixdarstellung von $V$ bzgl. der Basis $(u_i)_{i=1\ldots d}$ wählen, sind die Bedingungen aus dem vorherigen Satz erfüllt.
\end{proof}

\begin{corollary}\label{balanced_reps:characters}
\index{terms}{formal reell|)}\index{terms}{Charakter}
Sei $F$ formal reell. Dann gilt für alle $\lambda\in\Lambda$:
\begin{enumerate}
	\item Für alle $h\in H$ ist $\chi_\lambda(h)=\chi_\lambda(h^\ast)$.
	\item Für alle $x\in B$ ist $\nu(\chi_\lambda(x)) \geq -a_\lambda$ mit Gleichheit für mindestens ein $x\in B$.
\end{enumerate}
\end{corollary}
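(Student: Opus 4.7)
Mein Plan ist, beide Aussagen gleichzeitig anzugehen, indem ich eine nach Satz \ref{balanced_reps:invariant_blf2} existierende balancierte Matrixdarstellung $\rho:H\to K^{d\times d}$ vom Isomorphietyp $\lambda$ zusammen mit der zugeh\"origen invarianten, nichtentarteten, symmetrischen Bilinearform $\Omega\in GL_d(\mathcal{O})$ w\"ahle. Wegen $\Omega\rho(h) = \rho(h^\ast)^\text{Tr}\Omega$ sind $\rho(h)$ und $\rho(h^\ast)^\text{Tr}$ in $GL_d(K)$ konjugiert und haben somit dieselbe Spur; zusammen mit $\tr(A) = \tr(A^\text{Tr})$ folgt daraus sofort Teil a. -- die Balanciertheit wird hierf\"ur nicht einmal ben\"otigt, nur die bereits im Beweis von \ref{balanced_reps:invariant_blf2} konstruierte invariante Bilinearform.

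F\"ur Teil b. nutze ich zun\"achst die Balanciertheit unmittelbar aus: Aus $\chi_\lambda(x) = \sum_{i=1}^d \rho(x)_{ii}$ und $\nu(\rho(x)_{ii}) \geq -a_\lambda$ folgt $\nu(\chi_\lambda(x))\geq -a_\lambda$ f\"ur alle $x\in B$. Entscheidend ist der Nachweis der Gleichheit f\"ur mindestens ein $x$; hierf\"ur m\"ochte ich die zweite Schur"=Orthogonalit\"at in der Form
\[\sum_{x\in B}\chi_\lambda(x)\chi_\lambda(x^\vee) = d_\lambda c_\lambda\]
heranziehen, die sich aus der \"ublichen Formel $e_\lambda = \frac{1}{c_\lambda}\sum_{x\in B}\chi_\lambda(x^\vee)\,x$ f\"ur das zentrale primitive Idempotent und $\chi_\lambda(e_\lambda)=d_\lambda$ ergibt.

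Unter Anwendung von Teil a. und der $\ast$"~Symmetrie der Basis gilt $\chi_\lambda(x^\vee) = \chi_\lambda(x^\ast) = \chi_\lambda(x)$, so dass sich die Summe zu $\sum_{x\in B}\chi_\lambda(x)^2 = d_\lambda c_\lambda$ vereinfacht. Multipliziert mit $v^{2a_\lambda}$ ergibt dies
\[\sum_{x\in B}\bigl(v^{a_\lambda}\chi_\lambda(x)\bigr)^2 = d_\lambda f_\lambda \in\mathcal{O}^\times,\]
denn $f_\lambda\in\mathcal{O}^\times$ nach Definition und $d_\lambda$ ist als nat\"urliche Zahl von Null verschieden in $F$, da formal reelle K\"orper die Charakteristik $0$ haben. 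Die einzelnen Summanden $v^{a_\lambda}\chi_\lambda(x)$ liegen jeweils in $\mathcal{O}$ dank der schon bewiesenen unteren Schranke, und Lemma \ref{formally_real:symmetric_matrices} -- bzw. dessen direkte Konsequenz f\"ur Quadratsummen in $\mathcal{O}$ \"uber formal reellen Restklassenk\"orpern -- besagt, dass eine solche Quadratsumme genau dann in $\mathcal{O}^\times$ liegt, wenn wenigstens einer der Summanden dies tut. F\"ur ein solches $x\in B$ erh\"alt man $\nu(v^{a_\lambda}\chi_\lambda(x)) = 0$ und somit $\nu(\chi_\lambda(x)) = -a_\lambda$. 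Als Haupth\"urde sehe ich nicht technische Schwierigkeiten, sondern den konzeptionellen Schritt, die Schur"=Orthogonalit\"at gerade in ihrer "`Quadrat-Form"' zu erkennen -- was erst durch Teil a. m\"oglich wird -- und dann die formal reelle Struktur \"uber Lemma \ref{formally_real:symmetric_matrices} einzubringen, um aus der Summengleichheit die Existenz eines Summanden von Bewertung exakt $-a_\lambda$ zu extrahieren.
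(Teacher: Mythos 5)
Ihre Argumentation stimmt im Kern mit dem Beweis der Arbeit \"uberein: Teil a.\ folgt dort ebenfalls aus $\chi_\lambda(x)=\tr(\rho(x))=\tr(\Omega^{-1}\rho(x^\ast)^{\text{Tr}}\Omega)=\chi_\lambda(x^\ast)$ mithilfe der in Satz \ref{balanced_reps:invariant_blf2} konstruierten invarianten Bilinearform, und Teil b.\ wird genauso \"uber die Schur"=Relation $\sum_{x\in B}(v^{a_\lambda}\chi_\lambda(x))^2 = v^{2a_\lambda}c_\lambda d_\lambda\equiv f_\lambda d_\lambda\ \mathrm{mod}\ \mathfrak{m}$ und das Quadratsummen-Argument aus Lemma \ref{formally_real:symmetric_matrices} abgeschlossen. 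Der einzige Unterschied ist pr\"asentatorischer Natur (Herleitung der Charakterform der Schur"=Relation \"uber das zentrale Idempotent); die Beweislogik ist identisch.
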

\begin{proof}
Aufgrund des eben bewiesenen Satzes können wir eine balancierte Matrixdarstellung $\rho: H\to K^{d_\lambda\times d_\lambda}$ vom Isomorphietyp $\lambda$ und eine invariante Bilinearform mit Darstellungsmatrix $\Omega$ wählen. Haben wir das, dann erhalten wir
\[\chi_\lambda(x) = \tr(\rho(x)) = \tr(\Omega^{-1}\rho(x^\ast)^\text{Tr}\Omega) = \tr(\rho(x^\ast)^\text{Tr}) = \tr(\rho(x^\ast)) = \chi_\lambda(x^\ast).\]
Nun folgt zum Einen
\[v^{a_\lambda} \chi_\lambda(x) = \sum_\mathfrak{s} \underbrace{v^{a_\lambda} \rho(x)_\mathfrak{ss}}_{\in\mathcal{O}}\]
und zum anderen
\[\sum_{x\in B} (v^{a_\lambda} \chi_\lambda(x))^2 = \sum_{x\in B} v^{2a_\lambda} \chi_\lambda(x)\chi_\lambda(x^\ast) = v^{2a_\lambda} c_\lambda d_\lambda \equiv f_\lambda d_\lambda \mod\mathfrak{m}\]
Daher muss mindestens ein Summand auf der linken Seite in $\mathcal{O}^\times$ liegen.
\end{proof}

\subsection{Eigenschaften balancierter Darstellungen}

\begin{theorem}[Schur-Relationen für führende Koeffizienten]\label{schur_relations_leading_coeff}
\index{terms}{Darstellung!balancierte|)}\index{terms}{Schur-Relationen}
Sei $B$ eine $\ast$"~invariante Basis und $F$ formal reell wie zuvor.

Sind $\rho_\lambda: H\to K^{d_\lambda\times d_\lambda}$ balancierte Matrizendarstellungen für alle $\lambda\in\Lambda$, dann gilt für die zugeordneten führenden Koeffizienten:
\begin{enumerate}
	\item Für alle $\lambda,\mu\in\Lambda$ gilt:
	\[\sum_{x\in B} c^\lambda(x)_\mathfrak{st} c^\mu(x^\ast)_\mathfrak{uv} = \begin{cases} f_\lambda \delta_\mathfrak{sv} \delta_\mathfrak{tu} & \lambda=\mu \\ 0 & \text{sonst} \end{cases}\]
	\item Für alle $x,y\in B$ gilt:
	\[\sum_{\substack{\lambda\in\Lambda \\ 1\leq\mathfrak{s},\mathfrak{t}\leq d_\lambda}} f_\lambda^{-1} c^\lambda(x)_\mathfrak{st} c^\lambda(y^\ast)_\mathfrak{ts} = \begin{cases} 1 & x=y \\ 0 & \text{sonst} \end{cases}\]
	\item Falls $\lambda,\mathfrak{s},\mathfrak{t}$ gegeben sind, gibt es ein $x\in B$ mit $c^\lambda(x)_\mathfrak{st}\neq 0$. Ist umgekehrt $x\in B$ gegeben, so gibt es $\lambda,\mathfrak{s},\mathfrak{t}$ mit $c^\lambda(x)_\mathfrak{st}\neq 0$.
	
	Insbesondere ist
	\[-2a_\lambda = \min\Set{\nu(\rho_\lambda(x)_\mathfrak{st}) | x\in B, 1\leq\mathfrak{s},\mathfrak{t}\leq d_\lambda}\]
\end{enumerate}
\end{theorem}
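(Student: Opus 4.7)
The plan is to deduce all three assertions from the classical Schur relations for the balanced representations by reducing modulo $\mathfrak{m}$, using that balancedness plus $f_\lambda \in \mathcal{O}^\times$ keeps all relevant quantities in $\mathcal{O}$.

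For part a., I would start from the usual Schur orthogonality relations for the split semisimple algebra $KH$ with $\ast$-symmetric basis $B$ (so $x^\vee = x^\ast$): for all $\lambda, \mu \in \Lambda$,
\[\sum_{x\in B} \rho_\lambda(x)_{\mathfrak{st}}\,\rho_\mu(x^\ast)_{\mathfrak{uv}} \;=\; \delta_{\lambda\mu}\,c_\lambda\,\delta_{\mathfrak{sv}}\,\delta_{\mathfrak{tu}}.\]
Multiplying both sides by $v^{a_\lambda+a_\mu}$ rewrites the left-hand side as $\sum_{x\in B} (v^{a_\lambda}\rho_\lambda(x))_{\mathfrak{st}}(v^{a_\mu}\rho_\mu(x^\ast))_{\mathfrak{uv}}$, and each factor lies in $\mathcal{O}$ by the balancedness hypothesis, so the whole sum lies in $\mathcal{O}$. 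For $\lambda = \mu$ the right-hand side becomes $v^{2a_\lambda}c_\lambda\,\delta_{\mathfrak{sv}}\delta_{\mathfrak{tu}} = f_\lambda\,\delta_{\mathfrak{sv}}\delta_{\mathfrak{tu}} \in \mathcal{O}^\times \cup \{0\}$; reducing modulo $\mathfrak{m}$ turns each $v^{a_\lambda}\rho_\lambda(x)$ into $c^\lambda(x)$ and yields the claimed identity. For $\lambda \ne \mu$ the right-hand side is zero, so the reduction yields $0$ directly.

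For part b., I would invoke the dual Schur (inversion) relations, which follow from $\tau = \sum_\lambda c_\lambda^{-1}\chi_\lambda$ together with $\tau(xy^\ast) = \delta_{xy}$:
\[\delta_{xy} \;=\; \tau(xy^\ast) \;=\; \sum_{\lambda, \mathfrak{s}, \mathfrak{t}} c_\lambda^{-1}\,\rho_\lambda(x)_{\mathfrak{st}}\,\rho_\lambda(y^\ast)_{\mathfrak{ts}}.\]
Multiplying numerator and denominator of each summand by $v^{2a_\lambda}$ rewrites the sum as $\sum f_\lambda^{-1}(v^{a_\lambda}\rho_\lambda(x))_{\mathfrak{st}}(v^{a_\lambda}\rho_\lambda(y^\ast))_{\mathfrak{ts}}$. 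Since $f_\lambda \in \mathcal{O}^\times$ and the $v^{a_\lambda}\rho_\lambda$-factors are in $\mathcal{O}^{d_\lambda\times d_\lambda}$, every summand lies in $\mathcal{O}$; reducing modulo $\mathfrak{m}$ gives the assertion. The only mild point to check is that the identity $\tau = \sum c_\lambda^{-1}\chi_\lambda$ together with $\tau(xy^\ast) = \delta_{xy}$ may be taken from the general theory of split semisimple symmetric algebras (cf.\ Geck-Pfeiffer, Ch.~7) and does not require any extra positivity.

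For part c., both non-vanishing statements are specializations of the identities just proved. Fixing $\lambda, \mathfrak{s}, \mathfrak{t}$ and taking $\mu = \lambda$, $\mathfrak{u} = \mathfrak{t}$, $\mathfrak{v} = \mathfrak{s}$ in a.\ gives $\sum_{x\in B} c^\lambda(x)_{\mathfrak{st}} c^\lambda(x^\ast)_{\mathfrak{ts}} = f_\lambda \ne 0$ (here $f_\lambda$ is a unit in $\mathcal{O}$, hence nonzero in $F$), so at least one $x \in B$ has $c^\lambda(x)_{\mathfrak{st}} \ne 0$. Conversely, fixing $x \in B$ and setting $y = x$ in b.\ gives $\sum_{\lambda,\mathfrak{s},\mathfrak{t}} f_\lambda^{-1} c^\lambda(x)_{\mathfrak{st}} c^\lambda(x^\ast)_{\mathfrak{ts}} = 1 \ne 0$, so some triple $(\lambda,\mathfrak{s},\mathfrak{t})$ makes $c^\lambda(x)_{\mathfrak{st}}$ non-zero. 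The final sharpness statement is immediate: balancedness supplies the lower bound $\nu(\rho_\lambda(x)_{\mathfrak{st}}) \ge -a_\lambda$, while the existence of $x,\mathfrak{s},\mathfrak{t}$ with $c^\lambda(x)_{\mathfrak{st}} \ne 0$ means some entry attains $\nu(\rho_\lambda(x)_{\mathfrak{st}}) = -a_\lambda$.

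The whole argument is really a routine modulo-$\mathfrak{m}$ reduction, and I anticipate no conceptual obstacle; the one bookkeeping point to watch is that every summand (not just the total sum) remains integral, which is guaranteed by balancedness of every $\rho_\lambda$ simultaneously — that is precisely why the hypothesis that balanced representations are chosen for all $\lambda \in \Lambda$ appears in the theorem.
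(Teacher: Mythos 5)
Your parts a.\ and c.\ are essentially the paper's own argument: a.\ by multiplying the Schur relations through by the appropriate power of $v$ and reducing modulo $\mathfrak{m}$ (your choice of $v^{a_\lambda+a_\mu}$ is the careful normalization, and the check that each summand lands in $\mathcal{O}$ is exactly the point), and c.\ as a specialization of a.\ and b.

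For part b.\ you take a genuinely different route. The paper deduces b.\ \emph{formally} from a.: it interprets a.\ as a matrix equation $XY^{\mathrm{Tr}}=\diag(f_\lambda)$ over $F$, where both matrices are square of size $\lvert B\rvert = \sum_\lambda d_\lambda^2$, and then obtains b.\ as the inverted relation $Y X^{\mathrm{Tr}}\diag(f_\lambda)^{-1}=1$. You instead invoke the Fourier inversion identity $\tau=\sum_\lambda c_\lambda^{-1}\chi_\lambda$ for split semisimple symmetric algebras together with $\tau(xy^\ast)=\delta_{xy}$, and reduce this dual Schur relation modulo $\mathfrak{m}$ directly, again checking summand-wise integrality from balancedness and $f_\lambda\in\mathcal{O}^\times$. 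Both are correct. Your argument is more self-contained at the level of b.\ itself and does not rely on the (implicit) dimension count that makes $X$ and $Y$ square; the paper's argument has the aesthetic advantage that b.\ becomes a purely linear-algebraic consequence of a.\ and needs no separate appeal to the inversion formula.

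One small flag on the last display of c.: your argument yields $\min\{\nu(\rho_\lambda(x)_{\mathfrak{st}})\}=-a_\lambda$, and that is the correct value --- it is precisely the balancedness bound $\nu(\rho_\lambda(x))\geq -a_\lambda$ being attained. The theorem text prints $-2a_\lambda$ on the left-hand side, which appears to be a typo (one has $\nu(c_\lambda)=-2a_\lambda$, but the bound on $\nu(\rho(b))$ for balanced $\rho$ is $-a_\lambda$). Your derived constant is the intended one.
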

\begin{proof}
a. folgt sofort, indem man die Schur-Relationen
\[\sum_{x\in B} \rho_\lambda(x^\vee)_\mathfrak{st} \rho_\mu(x)_\mathfrak{uv} = \begin{cases} c_\lambda \delta_\mathfrak{sv} \delta_\mathfrak{tu} & \lambda=\mu \\ 0 & \text{sonst}\end{cases}\]
mit $v^{2a_\lambda}$ multipliziert und modulo $\mathfrak{m}$ reduziert.

\bigbreak
b. folgt aus a., indem man die Gleichung als Matrizengleichung auffasst, wobei $B$ die Indexmenge für die Spalten und $\Set{(\lambda,\mathfrak{s},\mathfrak{t}) | \lambda\in\Lambda, 1\leq\mathfrak{s},\mathfrak{t}\leq d_\lambda}$ die Indexmenge für die Zeilen ist. Aussage a. behauptet dann eine Gleichung der Form $X \cdot Y^\text{Tr} = \diag(f_\lambda)$ und b. behauptet die entsprechende Gleichung $Y \cdot X^\text{Tr} \diag(f_\lambda)^{-1} = 1$.

\bigbreak
c. folgt aus a. und b.
\end{proof}
\section{Die asymptotische Algebra}

\begin{convention}
Wir nehmen in diesem Abschnitt an, dass $F$ formal reell ist. Wir wählen außerdem ein für alle Mal einen partiellen Schnitt $\Gamma\supseteq\langle a_\lambda \mid \lambda\in\Lambda\rangle\to K^\times, \gamma\mapsto v^\gamma$.
\end{convention}

\begin{definition}[$J$-Algebra nach Geck, vgl. {\citep[1.5]{geckjacon}}]\label{J_alg:def:J_algebra}
\index{terms}{Darstellung!balancierte}\index{terms}{Führende Koeffizienten}\index{terms}{Asymptotische Algebra|(}
\index{symbols}{tx@$t_x$}\index{symbols}{gammaxyz@$\gamma_{x,y,z}$}\index{symbols}{D@$\mathcal{D}$}\index{symbols}{nz@$n_z$}\index{symbols}{J@$J$|(}

Sei $B$ eine $\ast$"~symmetrische Basis von $H$. Definiere in dieser Situation
\[\gamma_{x,y,z} := \sum_{\lambda\in\Lambda} \sum_{\mathfrak{s},\mathfrak{t},\mathfrak{u}} f_\lambda^{-1} c^\lambda(x)_\mathfrak{st} c^\lambda(y)_\mathfrak{tu} c^\lambda(z)_\mathfrak{us}\]
\[n_x := \sum_{\lambda\in\Lambda} \sum_{\mathfrak{s}} f_\lambda^{-1} c^\lambda(x^\ast)_\mathfrak{ss}\]
\[\mathcal{D} := \set{x\in B | n_x \neq 0}\]
für alle $x,y,z\in B$.

Die \udot{asymptotische Algebra} (bzgl. $B$) ist nun definiert als die $F$"~Algebra $J$ mit der $F$"~Basis $(t_x)_{x\in B}$ und der Multiplikation
\[t_x t_y := \sum_{z} \gamma_{x,y,z} t_{z^\ast}.\]
\end{definition}

\begin{remark}
Die folgenden beiden Beweise verallgemeinern die Behandlung der asymptotischen Algebra in \cite{geckjacon} von der dort betrachteten konkreten Situation, dass $H$ die Hecke"=Algebra ist, auf den hier betrachteten allgemeinen Fall. Die Beweise sind allerdings fast wortgleich.
\end{remark}

\begin{theorem}\label{J_alg:welldef_gamma_n}
\index{terms}{Darstellung!balancierte}\index{terms}{Führende Koeffizienten}\index{terms}{Asymptotische Algebra}
In obiger Situation gilt für alle $x,y,z\in B$:

\begin{enumerate}
	\item Wohldefiniertheit: $\gamma_{x,y,z}$ und $n_z$ sind eindeutig durch $H$ bestimmt und hängen nicht von der Wahl der $\rho_\lambda$ ab. Genauer gilt:
	\[\gamma_{x,y,z} = \sum_{\lambda\in\Lambda} f_\lambda^{-1} v^{3a_\lambda} \chi_\lambda(xyz) \mod\mathfrak{m}\]
	\[n_x = \sum_{\lambda\in\Lambda} f_\lambda^{-1} v^{a_\lambda} \chi_\lambda(x^\ast) \mod\mathfrak{m}\]
\end{enumerate}
Es gelten weiterhin folgende Rechenregeln:
\begin{enumerate}[resume]
	\item $\gamma_{x,y,z} = \gamma_{y,z,x}$
	\item $\sum\limits_{z\in B} \gamma_{x^\ast,y,z} n_z = \delta_{xy}$
	\item $\gamma_{x,y,z} = \gamma_{y^\ast,x^\ast,z^\ast}$
	\item $n_x=n_{x^\ast}$
\end{enumerate}
\end{theorem}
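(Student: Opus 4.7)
Der Plan ist, alle f�nf Teilaussagen auf die Spur-Interpretation der Definition zur�ckzuf�hren. F�r Teil (a) beobachten wir zun�chst, dass
\begin{align*}
\sum_{\mathfrak{s},\mathfrak{t},\mathfrak{u}} c^\lambda(x)_\mathfrak{st}\, c^\lambda(y)_\mathfrak{tu}\, c^\lambda(z)_\mathfrak{us} = \tr\bigl(c^\lambda(x)\,c^\lambda(y)\,c^\lambda(z)\bigr)
\end{align*}
gilt. Wegen $c^\lambda(x)\equiv v^{a_\lambda}\rho_\lambda(x) \mod \mathfrak{m}^{d_\lambda\times d_\lambda}$ und da alle Eintr�ge in $\mathcal{O}$ liegen, ist das Matrizenprodukt kongruent zu $v^{3a_\lambda}\rho_\lambda(xyz)$ modulo $\mathfrak{m}^{d_\lambda\times d_\lambda}$; Spurbildung liefert die angegebene Charakterformel f�r $\gamma_{x,y,z}$. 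Eine analoge Rechnung ergibt die Formel f�r $n_x$. Da die rechten Seiten nur noch Charakterwerte enthalten, die bekanntlich darstellungsunabh�ngig sind, folgt unmittelbar die Wohldefiniertheit.

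Die Aussagen (b), (d) und (e) sind dann reine Konsequenzen dieser Charakterformel. (b) folgt aus $\chi_\lambda(xyz)=\chi_\lambda(yzx)$, also der Zyklizit�t der Spur; (e) folgt aus Korollar \ref{balanced_reps:characters}(a), welches $\chi_\lambda(x^\ast)=\chi_\lambda(x)$ garantiert (die formale Reellheit von $F$ wird im gesamten Abschnitt vorausgesetzt); und (d) folgt aus der Kombination beider Eigenschaften, denn der Antiautomorphismus liefert $y^\ast x^\ast z^\ast = (zxy)^\ast$, mithin $\chi_\lambda(y^\ast x^\ast z^\ast) = \chi_\lambda(zxy) = \chi_\lambda(xyz)$.

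Der einzige wirklich rechnerische Teil ist (c). Dort setzen wir die Definitionen ein und vertauschen die Summationsreihenfolge:
\begin{align*}
\sum_{z\in B} \gamma_{x^\ast,y,z}\, n_z = \sum_{\lambda,\mu} f_\lambda^{-1} f_\mu^{-1} \sum_{\mathfrak{s},\mathfrak{t},\mathfrak{u},\mathfrak{v}} c^\lambda(x^\ast)_\mathfrak{st}\, c^\lambda(y)_\mathfrak{tu} \sum_{z\in B} c^\lambda(z)_\mathfrak{us}\, c^\mu(z^\ast)_\mathfrak{vv}.
\end{align*}
Die innerste Summe liefert nach Satz \ref{schur_relations_leading_coeff}(a) den Ausdruck $f_\lambda\,\delta_{\lambda\mu}\,\delta_\mathfrak{uv}\,\delta_\mathfrak{sv}$; Summation �ber $\mu$ und $\mathfrak{v}$ reduziert das Ganze zu $\sum_\lambda f_\lambda^{-1} \sum_{\mathfrak{s},\mathfrak{t}} c^\lambda(x^\ast)_\mathfrak{st}\, c^\lambda(y)_\mathfrak{ts}$. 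Ein zweites Anwenden der Schur-Relationen, diesmal in Form von Satz \ref{schur_relations_leading_coeff}(b) mit den Substitutionen $x\mapsto x^\ast$, $y\mapsto y^\ast$, liefert schlie�lich $\delta_{x^\ast y^\ast}=\delta_{xy}$. Der Hauptaufwand liegt zweifellos in Teil (c), insbesondere in der sauberen Verwaltung der f�nf Summationsindizes beim Einsetzen der Schur-Relationen; die �brigen Aussagen sind nach Teil (a) elementare Charaktereigenschaften.
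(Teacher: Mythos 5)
Dein Beweis ist korrekt und verl�uft im Wesentlichen identisch zu dem der Arbeit: Teil (a) durch Erkennen der Spur in der Definition, Teil (c) durch zweimaliges Anwenden der Schur-Relationen aus Satz \ref{schur_relations_leading_coeff}, und die Teile (b), (d), (e) als Folgerungen aus der Charakterformel und der Eigenschaft $\chi_\lambda(h^\ast)=\chi_\lambda(h)$ aus Korollar \ref{balanced_reps:characters}. Der einzige kosmetische Unterschied ist, dass die Arbeit (b) direkt aus der zyklischen Symmetrie des Indexsummens in der Definition von $\gamma_{x,y,z}$ herleitet, w�hrend du es (gleichwertig und etwas einheitlicher) aus der Zyklizit�t der Spur in der Charakterformel folgerst.
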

\begin{proof}
Die Behauptungen in a. ergeben sich direkt aus den Definitionen:
\begin{align*}
	\gamma_{x,y,z} &= \sum_{\lambda\in\Lambda} \sum_{\mathfrak{s},\mathfrak{t},\mathfrak{u}} f_\lambda^{-1} c^\lambda(x)_\mathfrak{st} c^\lambda(y)_\mathfrak{tu} c^\lambda(z)_\mathfrak{us} \\
	&= \sum_{\lambda\in\Lambda} \sum_{\mathfrak{s},\mathfrak{t},\mathfrak{u}} f_\lambda^{-1} (v^{a_\lambda} \rho_\lambda(x)_\mathfrak{st})(v^{a_\lambda} \rho_\lambda(y)_\mathfrak{tu})(v^{a_\lambda} \rho_\lambda(z)_\mathfrak{us}) \mod\mathfrak{m} \\
	&= \sum_\lambda f_\lambda^{-1} v^{3a_\lambda} \sum_\mathfrak{s} \rho_\lambda(xyz)_\mathfrak{ss} \\
	&= \sum_\lambda f_\lambda^{-1} v^{3a_\lambda} \chi_\lambda(xyz) \\
	n_x &= \sum_{\lambda\in\Lambda} \sum_{\mathfrak{s}} f_\lambda^{-1} c^\lambda(x^\ast)_\mathfrak{ss} \\
	&= \sum_\lambda f_\lambda^{-1} \sum_\mathfrak{s} v^{a_\lambda} \rho_\lambda(x^\ast)_\mathfrak{ss} \mod\mathfrak{m} \\
	&= \sum_\lambda f_\lambda^{-1} v^{a_\lambda} \chi_\lambda(x^\ast)
\end{align*}

b. ist erfüllt, weil $\gamma_{x,y,z}$ bereits symmetrisch bzgl. zyklischer Shifts definiert wurde.

c. folgt aus den Schur-Relationen:
\begin{align*}
	\sum_{z\in B} \gamma_{x^\ast,y,z} n_z &= \sum_{z\in B} \left(\sum_{\lambda\in\Lambda} \sum_{\mathfrak{s},\mathfrak{t},\mathfrak{u}} f_\lambda^{-1} c^\lambda(x^\ast)_\mathfrak{st} c^\lambda(y)_\mathfrak{tu} c^\lambda(z)_\mathfrak{us}\right) \cdot \left(\sum_{\mu\in\Lambda} \sum_{\mathfrak{v}} f_\mu^{-1} c^\mu(z^\ast)_\mathfrak{vv}\right) \\
	&= \sum_{\lambda,\mu} \sum_{\mathfrak{s},\mathfrak{t},\mathfrak{u},\mathfrak{v}} f_\lambda^{-1} f_\mu^{-1} c^\lambda(x^\ast)_\mathfrak{st} c^\lambda(y)_\mathfrak{tu} \smash{\underbrace{\sum_{z\in B} c^\lambda(z)_\mathfrak{us} c^\mu(z^\ast)_\mathfrak{vv}}_{=\delta_{\lambda\mu}\delta_\mathfrak{uv}\delta_\mathfrak{sv}\cdot f_\lambda}} \\
	&= \sum_\lambda \sum_{\mathfrak{s},\mathfrak{t}} f_\lambda^{-1} f_\lambda^{-1} c(x^\ast)_\mathfrak{st} c(y)_\mathfrak{ts}\cdot f_\lambda \\
	&= \sum_\lambda \sum_{\mathfrak{s},\mathfrak{t}} f_\lambda^{-1} c(x^\ast)_\mathfrak{st} c(y)_\mathfrak{ts} \\
	&= \delta_{xy}
\end{align*}

\bigbreak
d.+e. folgen aus a., weil wir bereits festgestellt haben, dass $\chi_\lambda(h)=\chi_\lambda(h^\ast)$ für alle $h\in H$ gilt. Damit gilt:
\begin{align*}
	\gamma_{x,y,z} &= \sum_\lambda f_\lambda^{-1} v^{3a_\lambda} \chi_\lambda(xyz) \mod\mathfrak{m} \\
	&= \sum_\lambda f_\lambda^{-1} v^{3a_\lambda} \chi_\lambda(z^\ast y^\ast x^\ast) \\
	&= \sum_\lambda f_\lambda^{-1} v^{3a_\lambda} \chi_\lambda(y^\ast x^\ast z^\ast) \\
	&= \gamma_{y^\ast, x^\ast, z^\ast} \\
	n_x &= \sum_\lambda f_\lambda^{-1} v^{a_\lambda} \chi_\lambda(x^\ast) \mod\mathfrak{m} \\
	&= \sum_\lambda f_\lambda^{-1} v^{a_\lambda} \chi_\lambda(x) \\
	&= n_{x^\ast} \qedhere
\end{align*}
\end{proof}

\begin{theorem}
\index{terms}{Symmetrische Algebra}\index{terms}{$\ast$-symmetrisch}
In obiger Situation gilt:
\begin{enumerate}
	\item $J$ ist eine assoziative $F$"~Algebra mit Einselement
	\[1_J = \sum_{d\in D} n_d t_d\]
	\item Durch $\overline{\tau}(t_x):=n_x$ ist eine Spurform auf $J$ gegeben, die $J$ zu einer symmetrischen Algebra macht.
	\item Durch $t_x^\ast := t_{x^\ast}$ ist ein $F$"~linearer Antiautomorphismus auf $J$ gegeben und $(t_x)_{x\in B}$ ist eine $\ast$"~symmetrische Basis von $J$.
\end{enumerate}
\end{theorem}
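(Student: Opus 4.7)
The plan is to exhibit a concrete isomorphism $J \xrightarrow{\sim} \bigoplus_{\lambda \in \Lambda} F^{d_\lambda \times d_\lambda}$ of $F$-modules that transports the formally defined product on $J$ to ordinary matrix multiplication. Once this is in place, associativity in part~a and the explicit form of $1_J$ follow automatically, while parts~b and c reduce to short calculations using the Schur relations of~\ref{schur_relations_leading_coeff} and the identities~b--e of~\ref{J_alg:welldef_gamma_n}.

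For part~a, I would define, for each $\lambda \in \Lambda$, an $F$-linear map $\pi_\lambda \colon J \to F^{d_\lambda \times d_\lambda}$ on the basis by $\pi_\lambda(t_x) := c^\lambda(x)$. A direct index computation shows that the first Schur relation (part~a of~\ref{schur_relations_leading_coeff}) collapses the defining sum for $\gamma_{x,y,z}$, yielding $\sum_{z\in B}\gamma_{x,y,z}\,c^\lambda(z^\ast) = c^\lambda(x)\,c^\lambda(y)$; equivalently, $\pi_\lambda(t_xt_y) = \pi_\lambda(t_x)\pi_\lambda(t_y)$ on basis elements. Assembling $\pi := \bigoplus_\lambda \pi_\lambda$, the second Schur relation (part~b of~\ref{schur_relations_leading_coeff}) functions as an inverse Fourier transform and gives injectivity of $\pi$, while the dimension count $\dim_F J = \abs{B} = \dim_K H = \sum_\lambda d_\lambda^2$ gives surjectivity. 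Transporting the matrix-algebra structure back along $\pi$ then makes $J$ an associative $F$-algebra. One further Schur computation, $\sum_{d \in \mathcal{D}} n_d\, c^\lambda(d) = \sum_{x\in B} n_x\, c^\lambda(x) = I_{d_\lambda}$, shows $\pi\bigl(\sum_{d \in \mathcal{D}} n_d\, t_d\bigr) = 1$, and so identifies $\sum_{d \in \mathcal{D}} n_d\, t_d$ as the unit of $J$.

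For part~b, I would expand $\overline{\tau}(t_x t_y) = \sum_{z\in B}\gamma_{x,y,z}\,n_{z^\ast} = \sum_{z\in B}\gamma_{x,y,z}\,n_z$ using property~e of~\ref{J_alg:welldef_gamma_n}. Substituting $x \mapsto x^\ast$ in property~c of the same theorem gives $\overline{\tau}(t_x t_y) = \delta_{x^\ast,y}$, which is visibly symmetric in $x$ and $y$; by $F$-bilinearity, $\overline{\tau}$ is therefore a trace form. The same identity also shows that $(t_x)_{x\in B}$ and $(t_{x^\ast})_{x\in B}$ are dual bases with respect to the associated symmetric bilinear form, so $J$ is a symmetric $F$-algebra, and it yields $\overline{\tau}(t_x \cdot t_y^\ast) = \delta_{xy}$, which is precisely the $\ast$-symmetry assertion in part~c.

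It only remains, for part~c, to verify that $t_x^\ast := t_{x^\ast}$ defines an involutive antiautomorphism. Involutivity is immediate from $(x^\ast)^\ast = x$. On basis elements, antimultiplicativity $(t_x t_y)^\ast = t_y^\ast t_x^\ast$ reduces to the single identity $\gamma_{x^\ast, y^\ast, z} = \gamma_{y, x, z^\ast}$, which is one application of property~d of~\ref{J_alg:welldef_gamma_n}. The only step I expect to require care is the index-gymnastics in the Schur-relation calculation that makes $\pi_\lambda$ multiplicative; once that is carried out cleanly, everything else in parts~a--c is a formal consequence of the orthogonality relations for the leading coefficients and the identities already recorded in~\ref{J_alg:welldef_gamma_n}.
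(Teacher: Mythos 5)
Your proof is correct, but it takes a genuinely different route for part~a than the paper does. The paper verifies associativity by a direct double-sum computation: it shows $\sum_u \gamma_{x,y,u}\gamma_{u^\ast,z,v}$ and $\sum_u \gamma_{x,u^\ast,v}\gamma_{y,z,u}$ are both equal to $\sum_\lambda f_\lambda^{-1} v^{4a_\lambda}\chi_\lambda(xyzv) \bmod\mathfrak{m}$ via the Schur relations, and separately checks the unit via identity c of \ref{J_alg:welldef_gamma_n}. Your approach instead constructs the putative Wedderburn isomorphism $\pi=\bigoplus_\lambda\pi_\lambda$ \emph{first}, proves multiplicativity on basis vectors by one collapse with the Schur orthogonality, proves injectivity with the second Schur relation, and then inherits associativity and the unit from the target matrix algebra. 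In effect you have folded the paper's subsequent theorem (``F\"uhrende Koeffizienten und Darstellungen von $J$'', where $\overline{\rho}_\lambda(t_x):=c^\lambda(x)$ is shown to be a representation and $J$ shown to be split semisimple) into the associativity proof: the paper proves associativity by hand and then uses it to deduce semisimplicity, while you establish the matrix decomposition up front and get associativity as a byproduct. Both are sound given the ordering of results in the paper — \ref{schur_relations_leading_coeff} and \ref{J_alg:welldef_gamma_n} precede the statement, and your multiplicativity computation does not secretly assume associativity, since $t_x t_y$ denotes a fixed element of the $F$-module $J$. One small remark: you invoke surjectivity of $\pi$ via the dimension count $\abs{B}=\sum_\lambda d_\lambda^2$, but injectivity and multiplicativity of $\pi$ already suffice for associativity and for verifying $1_J$; the surjectivity is a bonus (and is exactly the content of the paper's next theorem). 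For parts~b and~c your calculations agree with the paper's up to trivial rewriting; you are slightly more explicit than the paper in checking the trace property $\overline{\tau}(ab)=\overline{\tau}(ba)$, which the paper leaves implicit in the identity $\overline{\tau}(t_{x^\ast}t_y)=\delta_{xy}$.
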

\begin{proof}
a. folgt wieder aus den Schur-Relationen. Wir formen zunächst die Behauptung um:
\begin{align*}
	(t_x t_y) t_z &= t_x (t_y t_z) \\
	\iff \sum_u \gamma_{x,y,u} t_{u^\ast} t_z &= \sum_u \gamma_{y,z,u} t_x t_{u^\ast} \\
	\iff \sum_{u,v} \gamma_{x,y,u} \gamma_{u^\ast,z,v} t_{v^\ast} &= \sum_{u,v} \gamma_{x,u^\ast,v} \gamma_{y,z,u} t_{v^\ast} \\
	\iff \forall v\in B: \sum_u \gamma_{x,y,u} \gamma_{u^\ast,z,v} &= \sum_u \gamma_{x,u^\ast,v} \gamma_{y,z,u}
\end{align*}
und das rechnen wir jetzt nach:
\begin{align*}
	\sum_u \gamma_{x,y,u} \gamma_{u^\ast,z,v} &= \sum_{\lambda,\mu} \sum_{\substack{\mathfrak{s},\mathfrak{t},\mathfrak{u} \\ \mathfrak{a},\mathfrak{b},\mathfrak{c}}} f_\lambda^{-1} c^\lambda(x)_\mathfrak{st} c^\lambda(y)_\mathfrak{tu} \cdot f_\mu^{-1} c^\mu(z)_\mathfrak{bc} c^\mu(v)_\mathfrak{ca} \cdot \smash{\underbrace{\sum_u c^\lambda(u)_\mathfrak{us} c^\mu(u^\ast)_\mathfrak{ab}}_{=\delta_{\lambda\mu} \delta_\mathfrak{ub} \delta_\mathfrak{sa} \cdot f_\lambda}} \\
	&= \sum_\lambda f_\lambda^{-1} \sum_{\substack{\mathfrak{s},\mathfrak{t},\mathfrak{u} \\ \mathfrak{c}}}  c^\lambda(x)_\mathfrak{st} c^\lambda(y)_\mathfrak{tu} \cdot c^\lambda(z)_\mathfrak{uc} c^\lambda(v)_\mathfrak{cs} \\
	&= \sum_\lambda f_\lambda^{-1} v^{4a_\lambda} \chi_\lambda(xyzv) \mod \mathfrak{m} \\
	\sum_u \gamma_{x,u^\ast,v} \gamma_{y,z,u} &= \sum_{\lambda,\mu} \sum_{\substack{\mathfrak{s},\mathfrak{t},\mathfrak{u} \\ \mathfrak{a},\mathfrak{b},\mathfrak{c}}} f_\lambda^{-1} c^\lambda(x)_\mathfrak{st} c^\lambda(v)_\mathfrak{us} \cdot f_\mu^{-1} c^\mu(y)_\mathfrak{ab} c^\mu(z)_\mathfrak{bc} \cdot \smash{\underbrace{\sum_u c^\lambda(u^\ast)_\mathfrak{tu} c^\mu(u)_\mathfrak{ca}}_{=\delta_{\lambda\mu} \delta_\mathfrak{ta} \delta_{uc}\cdot f_\lambda}} \\
	&= \sum_\lambda f_\lambda^{-1} \sum_{\substack{\mathfrak{s},\mathfrak{t},\mathfrak{u} \\ \mathfrak{b}}} c^\lambda(x)_\mathfrak{st} c^\lambda(v)_\mathfrak{us} \cdot c^\lambda(y)_\mathfrak{tb} c^\lambda(z)_\mathfrak{bu} \\
	&= \sum_\lambda f_\lambda^{-1} \sum_{\substack{\mathfrak{s},\mathfrak{t},\mathfrak{u} \\ \mathfrak{b}}} c^\lambda(x)_\mathfrak{st} \cdot c^\lambda(y)_\mathfrak{tb} c^\lambda(z)_\mathfrak{bu} c^\lambda(v)_\mathfrak{us} \\
	&= \sum_\lambda f_\lambda^{-1} v^{4a_\lambda} \chi_\lambda(xyzv) \mod \mathfrak{m}
\end{align*}

b. und c. folgen aus obigem Lemma. c. folgt z.\,B. sofort aus $\gamma_{x,y,z} = \gamma_{y^\ast,x^\ast,z^\ast}$. Mit dessen Hilfe zeigen wir jetzt, dass $(t_x)$ eine $\ast$"~symmetrische Basis von $J$ ist:
\begin{align*}
	\overline{\tau}(t_{x^\ast} t_y) &= \sum_z \gamma_{x^\ast,y,z} \overline{\tau}(t_{z^\ast}) \\
	&= \sum_z \gamma_{x^\ast,y,z} n_{z^\ast} \\
	&= \sum_z \gamma_{x^\ast,y,z} n_z \\
	&= \delta_{xy} \qedhere
\end{align*}
\end{proof}

\begin{remark}
Die folgenden beiden Lemmata kommen in der Behandlung der asymptotischen Algebra bei \cite{geckjacon} nicht vor, sind aber auch für sich genommen interessante Ergebnisse, weil sie zeigen, dass die Konstruktion nicht so unnatürlich ist, wie sie erscheint.
\end{remark}

\begin{lemma}[Basis-Unabhängigkeit]\label{J_alg:base_independent}
\index{terms}{$\ast$-symmetrisch}
Sei $(b_i)_{i=1\ldots n}$ die Basis, mit der wir die ganze Zeit gearbeitet haben. Sei $b_i' = \sum_{j=1}^n A_{ij} b_j$ eine weitere $\ast$"~symmetrische Basis von $H$. Definiere dann $c^\lambda(x)'$, $\gamma_{x,y,z}'$, $n_x'$ und $J'$ analog wie in \ref{J_alg:def:J_algebra}.

Bezeichne mit $\sigma,\sigma'\in\operatorname{Sym}(n)$ die beiden Bijektionen mit $b_i^\ast = b_{\sigma(i)}$ bzw. $(b_i')^\ast = b_{\sigma'(i)}'$.

Definiere dann $\alpha:J\to J'$ durch
\[\alpha(t_x) := \sum_i A_{ix} t_i'.\]
(Man beachte, dass nach \ref{formally_real:orthogonal_matrices} $A_{ij}\in\mathcal{O}$ ist und $J$ als $\mathcal{O}/\mathfrak{m}$"~Algebra definiert wurde.)
\begin{enumerate}
	\item $\alpha$ ist ein Isomorphismus von $F$"~Algebren.
	\item $\alpha$ überführt die Spurformen von $J$ und $J'$ ineinander, d.\,h. $\tau'\circ\alpha = \tau$.
	\item $\alpha$ kommutiert mit den Involutionen von $J$ und $J'$, d.\,h. $\alpha(x^\ast) = \alpha(x)^\ast$ für alle $x\in J$.
\end{enumerate}
\end{lemma}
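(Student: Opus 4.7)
The plan is to reduce everything to the character-theoretic formulas of Theorem \ref{J_alg:welldef_gamma_n} combined with orthogonality of the base-change matrix modulo $\mathfrak{m}$. First I would observe that, since $F$ is formally real, Lemma \ref{formally_real:orthogonal_matrices} gives $A \in O_n(K) = O_n(\mathcal{O})$, so $\bar{A} := A \bmod \mathfrak{m}$ lies in $O_n(F)$. In particular $\alpha$ is a well-defined $F$-linear bijection $J \to J'$.

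Next I would establish two transformation identities. By the corollary to Lemma \ref{formally_real:orthogonal_matrices}, balancedness is independent of the choice of $\ast$-symmetric basis, so each $\rho_\lambda$ is balanced with respect to both bases; write $c^\lambda(\cdot)$ and $\tilde{c}^\lambda(\cdot)$ for the leading-coefficient matrices computed from $(b_i)$ and $(b_i')$ respectively. Multiplying $\rho_\lambda(b_i') = \sum_j A_{ij}\, \rho_\lambda(b_j)$ by $v^{a_\lambda}$ and reducing modulo $\mathfrak{m}$ yields $\tilde{c}^\lambda(b_i') = \sum_j \bar{A}_{ij}\, c^\lambda(b_j)$. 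Separately, applying $\ast$ to $b_i' = \sum_j A_{ij} b_j$ and comparing with $b_{\sigma'(i)}' = \sum_k A_{\sigma'(i), k}\, b_k$ gives the $\ast$-compatibility identity $A_{\sigma'(i), k} = A_{i, \sigma(k)}$.

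Writing $\gamma_{ijk} := \gamma_{b_i, b_j, b_k}$ and $n_i := n_{b_i}$ for brevity (and analogously for the primed quantities), substitution into the definitions produces the transformation formulas $\gamma'_{ijk} = \sum_{p, q, r} \bar{A}_{ip}\, \bar{A}_{jq}\, \bar{A}_{kr}\, \gamma_{pqr}$ and $n'_j = \sum_l \bar{A}_{jl}\, n_l$. From here the three claims become mechanical. For the homomorphism property one expands $\alpha(t_p) \alpha(t_q)$ and uses $\sum_i \bar{A}_{ip}\, \bar{A}_{ip'} = \delta_{p p'}$ to collapse all but one of the six summation indices; matching the result to $\alpha(t_p t_q)$ is the step that requires the $\ast$-compatibility identity. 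Trace preservation reduces to $\tau'(\alpha(t_i)) = \sum_j \bar{A}_{ji}\, n'_j = \sum_{j, l} \bar{A}_{ji}\, \bar{A}_{jl}\, n_l = n_i = \tau(t_i)$. For compatibility with the involution one computes $\alpha(t_i^\ast) = \alpha(t_{\sigma(i)}) = \sum_j \bar{A}_{j, \sigma(i)}\, t'_j = \sum_k \bar{A}_{ki}\, t'_{\sigma'(k)} = \alpha(t_i)^\ast$, once again via the $\ast$-compatibility identity. Preservation of the unit $1_J$ is then automatic, since $\alpha$ is a bijective ring homomorphism.

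The main obstacle is the bookkeeping between the two involution permutations $\sigma, \sigma'$ and the base-change matrix $A$: without the identity $A_{\sigma'(i), k} = A_{i, \sigma(k)}$, neither the homomorphism check nor the involution check closes. Everything else is routine linear algebra in $O_n(F)$.
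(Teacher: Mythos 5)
Your proposal is correct and takes essentially the same route as the paper: orthogonality of $A$ modulo $\mathfrak{m}$, the transformation law for the leading-coefficient matrices (and hence for the $\gamma$'s), and the compatibility identity relating $A$, $\sigma$ and $\sigma'$ are precisely the ingredients used there. The only cosmetic deviation is that you obtain the transformation formula $n'_j = \sum_l A_{jl}\, n_l$ directly from the definitions (which requires also using $n_x = n_{x^\ast}$ from Theorem~\ref{J_alg:welldef_gamma_n}), whereas the paper derives it from $\alpha(1_J)=1_{J'}$ after part a.\ has already been established; both yield the same identity.
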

\begin{proof}
Wir wissen aus \ref{formally_real:orthogonal_matrices}, dass $A$ in $O_n(\mathcal{O})$ liegt. Die Reduktion liegt also in $O_n(F)$. Insbesondere ist $\alpha$ ein $F$-Vektorraumisomorphismus mit Darstellungsmatrix $A\mod\mathfrak{m}$ und $\alpha^{-1}(t_i') = \sum_j A_{ij} t_j$.

Es gilt nach Voraussetzung:
\begin{align}
	b_i' &= \sum_{j=1}^n A_{ij} b_j
\label{eq:J_alg1}\tag{1} \\
	b_j &= \sum_{k=1}^n A_{kj} b_k'
\label{eq:J_alg2}\tag{2}
\end{align}
Analoges gilt dann auch für die Führenden-Koeffizienten-Matrizen.

\bigbreak
Wir werten zunächst $(b_i')^\ast$ auf zwei verschiedene Weisen aus:
\begin{align*}
	(b_i')^\ast &= b_{\sigma'(i)}' \\
	&\overset{\text{\eqref{eq:J_alg1}}}{=} \sum_j A_{\sigma'(i),j} b_j \\
	&= \sum_k A_{\sigma'(i),\sigma(k)} b_{\sigma(k)} \\
	(b_i')^\ast &\overset{\text{\eqref{eq:J_alg1}}}{=} (\sum_k A_{ik} b_k)^\ast \\
	&= \sum_k A_{ik} b_k^\ast \\
	&= \sum_k A_{ik} b_{\sigma(k)}
\end{align*}
Also ergibt sich für alle $i,k\in\Set{1,\ldots,n}$:
\begin{equation}
	A_{i,k} = A_{\sigma'(i),\sigma(k)}
	\label{eq:J_alg3}\tag{3}
\end{equation}

\bigbreak
Weil \eqref{eq:J_alg2} analog auch für die Führenden-Koeffizienten-Matrizen gilt, folgt:
\begin{align*}
	\gamma_{x,y,z} &= \sum_\lambda f_\lambda^{-1} \sum_{\mathfrak{s},\mathfrak{t},\mathfrak{u}} c^\lambda(x)_\mathfrak{st} c^\lambda(y)_\mathfrak{tu} c^\lambda(z)_\mathfrak{us} \\
	 &= \sum_\lambda f_\lambda^{-1} \sum_{\mathfrak{s},\mathfrak{t},\mathfrak{u}} \sum_{i,j,k} A_{ix} c^\lambda(i)_\mathfrak{st}' A_{jy} c^\lambda(j)_\mathfrak{tu}' A_{kz} c^\lambda(k)_\mathfrak{us}' \\
	 &= \sum_{i,j,k} \sum_\lambda \sum_{\mathfrak{s},\mathfrak{t},\mathfrak{u}} f_\lambda^{-1} A_{ix} A_{jy} A_{kz}  c^\lambda(i)_\mathfrak{st}' c^\lambda(j)_\mathfrak{tu}' c^\lambda(k)_\mathfrak{us}' \\
 	 &= \sum_{i,j,k} \gamma_{i,j,k}' A_{ix} A_{jy} A_{kz}
 	 \label{eq:J_alg4}\tag{4}
\end{align*}

Es ergibt sich:
\begin{align*}
	\alpha(t_x t_y) &= \alpha\Big(\sum_z \gamma_{x,y,z} t_{\sigma(z)}\Big) \\
	&= \sum_z \sum_a \gamma_{x,y,z} A_{a\sigma(z)} t_a' \\
	&= \sum_{z,a} \gamma_{x,y,z} A_{\sigma'(a)\sigma(z)} t_{\sigma'(a)}' \\
	&\overset{\text{\eqref{eq:J_alg3}}}{=} \sum_{z,a} \gamma_{x,y,z} A_{az} t_{\sigma'(a)}' \\
	&\overset{\text{\eqref{eq:J_alg4}}}{=} \sum_{z,a} \sum_{i,j,k} \gamma_{i,j,k}' A_{ix} A_{jy} A_{kz} A_{az} t_{\sigma'(a)}' \\
	&= \sum_{i,j,k} \sum_a \gamma_{i,j,k}' A_{ix} A_{jy} \smash{\underbrace{\Big(\sum_z A_{az} (A^\text{Tr})_{zk}\Big)}_{=\delta_{ka}}} t_{\sigma'(a)}' \\
	&= \sum_{i,j,k} \gamma_{i,j,k}' A_{ix} A_{jy} t_{\sigma'(k)}' \\
	&= \sum_{i,j} A_{ix} A_{jy} \Big(\sum_k \gamma_{i,j,k}' t_{\sigma'(k)}'\Big) \\
	&= \sum_{i,j} A_{ix} A_{jy} t_i' t_j' \\
	&= \alpha(t_x)\alpha(t_y)
\end{align*}
Das zeigt a.

Als Algebra-Isomorphismus bildet $\alpha$ insbesondere die Einselemente aufeinander ab, d.\,h. wir folgern
\begin{align*}
	\sum_i n_i' t_i' &= 1_{J'} \\
	&= \alpha(1_J) \\
	&= \alpha\Big(\sum_z n_z t_z\Big) \\
	&= \sum_{i,z} n_z A_{iz} t_i' \\
	\implies \forall i: n_i' &= \sum_z A_{iz} n_z
	\label{eq:J_alg5}\tag{5}
\end{align*}

Daraus wiederum können wir b. folgern:
\begin{align*}
	\tau'(\alpha(t_x)) &= \tau'\Big(\sum_i A_{ix} t_i'\Big) \\
	&= \sum_i A_{ix} n_i' \\
	&\overset{\text{\eqref{eq:J_alg5}}}{=} \sum_i \sum_j A_{ix} A_{ij} n_i \\
	&= \sum_j \smash{\underbrace{\Big(\sum_i (A^\text{Tr})_{ji} A_{ix}\Big)}_{=\delta_{jx}}} n_i \\
	&= n_x \\
	&= \tau(t_x)
\end{align*}
Damit haben wir b. gezeigt.

Es bleibt c. zu zeigen. Das folgt ebenfalls aus \eqref{eq:J_alg3}:
\begin{align*}
	\alpha(t_x^\ast) &= \alpha(t_{\sigma(x)}) \\
	&= \sum_i A_{i\sigma(x)} t_i' \\
	&= \sum_i A_{\sigma'(i)\sigma(x)} t_{\sigma'(i)}' \\
	&\overset{\text{\eqref{eq:J_alg3}}}{=} \sum_i A_{ix} (t_i')^\ast \\
	&= \Big(\sum_i A_{ix} t_i'\Big)^\ast \\
	&= \alpha(t_x)^\ast \qedhere
\end{align*}
\end{proof}

\begin{lemma}[Verträglichkeit mit Tensorprodukten]
Es seien $(H^1,\ast)$, $(H^2,\ast)$ zwei endlichdimensionale, zerfallend halbeinfache $K$"~Algebren mit involutiven Antiautomorphismen, $\tau^1$, $\tau^2$ je eine Spurform, die $H^1$ bzw. $H^2$ zu einer symmetrischen Algebra macht, $B^1=(b_i^1)_{i=1\ldots n}$, $B^2=(b_j^2)_{j=1\ldots m}$ je eine $\ast$"~symmetrische Basis sowie $J^1$, $J^2$ die dazugehörigen asymptotischen Algebren. Es gilt dann:
\begin{enumerate}
	\item $H:=H^1\otimes_K H^2$ ist eine zerfallend halbeinfache $K$"~Algebra, $\tau(x\otimes y):=\tau^1(x)\tau^2(y)$ ist eine Spurform, $(x\otimes y)^\ast := x^\ast \otimes y^\ast$ ein involutiver Antiautomorphismus und $B:=(b_i^1 \otimes b_j^2)_{(i,j)\in\Set{1,\ldots,n}\times\set{1,\ldots,m}}$ eine $\ast$"~symmetrische Basis von $H$.
	\item Sei $J$ die asymptotische Algebra von $H$. Durch $t_{x^1\otimes x^2} \leftrightarrow t_{x^1} \otimes t_{x^2}$ für alle $x^1\in B^1$ und alle $x^2\in B^2$ ist ein Isomorphismus von $F$"~Algebren $J \leftrightarrow J^1\otimes_F J^2$ gegeben. Die Involution und die Spurform auf $J$ sind von den Involutionen und Spurformen auf $J^1$ bzw. $J^2$ induziert, d.\,h. $(a^1\otimes a^2)^\ast = (a^1)^\ast \otimes (a^2)^\ast$ und $\overline{\tau}(a^1\otimes a^2)=\overline{\tau^1}(a^1)\overline{\tau^2}(a^2)$ für alle $a^1\in J^1, a^2\in J^2$.
\end{enumerate}
\end{lemma}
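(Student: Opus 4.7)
Teil a. plane ich als direkte Verifikation: Die universelle Eigenschaft des Tensorprodukts liefert sofort, dass $(x\otimes y)^\ast := x^\ast\otimes y^\ast$ ein involutiver Antiautomorphismus ist, und die Rechnung
\[\tau\bigl((b_i^1\otimes b_j^2)(b_k^1\otimes b_l^2)^\ast\bigr) = \tau^1\bigl(b_i^1 (b_k^1)^\ast\bigr)\cdot\tau^2\bigl(b_j^2 (b_l^2)^\ast\bigr) = \delta_{ik}\delta_{jl}\]
zeigt die $\ast$"~Symmetrie der Basis. Die zerfallende Halbeinfachheit von $H=H^1\otimes_K H^2$ folgt aus der Standardaussage, dass die einfachen $H$"~Moduln genau die \"au\ss{}eren Tensorprodukte $V_\lambda^1\otimes V_\mu^2$ mit $(\lambda,\mu)\in\Lambda^1\times\Lambda^2=:\Lambda$ sind.

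Der Kern von b. ist die multiplikative Faktorisierung s\"amtlicher Eingabedaten aus Definition \ref{J_alg:def:J_algebra}. Ausgehend von der Charakterfaktorisierung $\chi_{(\lambda,\mu)}(x\otimes y) = \chi_\lambda^1(x)\chi_\mu^2(y)$ und der charakterisierenden Formel $\tau = \sum_\lambda c_\lambda^{-1}\chi_\lambda$ f\"ur Schur"=Elemente plane ich, durch Koeffizientenvergleich
\[c_{(\lambda,\mu)} = c_\lambda^1\cdot c_\mu^2\]
zu folgern, woraus sich $a_{(\lambda,\mu)} = a_\lambda^1+a_\mu^2\in\Gamma$ und $f_{(\lambda,\mu)} = f_\lambda^1 f_\mu^2$ (bei kompatibler Wahl des partiellen Schnitts) ergeben.

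Mit diesen Faktorisierungen werde ich anschlie\ss{}end in die charakterfreien Darstellungen von $\gamma_{x,y,z}$ und $n_x$ aus \ref{J_alg:welldef_gamma_n}.a einsetzen. Weil $v^{3 a_{(\lambda,\mu)}} = v^{3 a_\lambda^1}\cdot v^{3 a_\mu^2}$ gilt und sich die Charaktere multiplikativ verhalten, spaltet die Summe \"uber $(\lambda,\mu)$ als Produkt zweier Summen auf, und man erh\"alt
\[\gamma^H_{x^1\otimes x^2,\, y^1\otimes y^2,\, z^1\otimes z^2} \equiv \gamma^1_{x^1,y^1,z^1}\cdot\gamma^2_{x^2,y^2,z^2} \mod\mathfrak{m}\]
sowie analog $n^H_{x^1\otimes x^2} = n^1_{x^1}\cdot n^2_{x^2}$. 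Damit reduziert sich der Nachweis, dass $\Phi: J\to J^1\otimes_F J^2$ mit $\Phi(t_{x^1\otimes x^2}):=t_{x^1}\otimes t_{x^2}$ ein Algebra-Isomorphismus ist (inklusive Vertr\"aglichkeit mit $\ast$ und Spurform), auf eine einzeilige Rechnung pro Eigenschaft: Multiplikativit\"at folgt aus der Faktorisierung der $\gamma$-Werte, die Vertr\"aglichkeit mit $\ast$ aus $(x^1\otimes x^2)^\ast = (x^1)^\ast\otimes (x^2)^\ast$ und die mit $\overline{\tau}$ aus der Faktorisierung der $n$-Werte.

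Die haupts\"achliche inhaltliche H\"urde liegt im Nachweis von $c_{(\lambda,\mu)} = c_\lambda^1 c_\mu^2$ zusammen mit der Bedingung $a_{(\lambda,\mu)}\in\Gamma$ (welche n\"otig ist, damit die Konstruktion von $J$ \"uberhaupt anwendbar ist). Ist diese multiplikative Faktorisierung der Schur"=Elemente einmal etabliert, so wird der Rest eine mechanische Buchhaltungs\"ubung sein; entscheidend ist dabei, dass die charakterfreie Darstellung aus \ref{J_alg:welldef_gamma_n}.a die Strukturkonstanten unabh\"angig von konkreten Matrixdarstellungen und damit insbesondere unabh\"angig von der Wahl balancierter Darstellungen ausdr\"uckt, sodass sich die Faktorisierung der Charaktere direkt auf die Strukturkonstanten \"ubertr\"agt.
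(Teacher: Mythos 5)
Die Beweisskizze ist korrekt und folgt im Kern demselben Weg wie das Paper: Zerlegung $\Lambda=\Lambda^1\times\Lambda^2$, multiplikative Faktorisierung der Schur"=Elemente, Charaktere und $a$"=Werte, und anschließendes Einsetzen in die charakterbasierte Formel für $\gamma_{x,y,z}$ und $n_x$ aus Satz \ref{J_alg:welldef_gamma_n}.a, wodurch die Summe über $\Lambda$ als Produkt aufspaltet. Der einzige kosmetische Unterschied liegt in der logischen Reihenfolge: Du setzt die zerfallende Halbeinfachheit von $H$ über das Standardresultat zu äußeren Tensorprodukten voraus und leitest daraus $c_{(\lambda,\mu)}=c_\lambda^1 c_\mu^2$ per Koeffizientenvergleich mit $\tau=\sum_\lambda c_\lambda^{-1}\chi_\lambda$ her, während das Paper umgekehrt zuerst $c_{V^1\otimes V^2}=c_{V^1}c_{V^2}$ direkt prüft und daraus (nichtverschwindende Schur"=Elemente $\Rightarrow$ absolut irreduzibel, plus Dimensionszählung) die zerfallende Halbeinfachheit folgert; beide Wege sind gültig.
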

\begin{proof}
a. Es ist einfach einzusehen, dass $H$ eine $K$"~Algebra mit involutivem Antiautomorphismus, $\tau$ eine Spurform und $B^1\otimes B^2$ eine $\ast$"~symmetrische Basis von $H$ ist. Insbesondere ist die Spurform daher nichtausgeartet und $H$ somit eine symmetrische $K$"~Algebra.

Sind nun $V^1$ und $V^2$ einfache Moduln von $H^1$ bzw. $H^2$, dann ist $V:=V^1\otimes_K V^2$ ein $H$"~Modul und man überzeugt sich leicht davon, dass die Schurelemente $c_{V^1\otimes V^2} = c_{V^1} c_{V^2}$ erfüllen. Insbesondere sind die Tensorprodukte irreduzibler Moduln selbst absolut irreduzibel, da ihre Schurelemente ungleich Null sind. Aus einer Dimensionsbetrachtung folgt, dass $H$ zerfallend halbeinfach ist.

\medbreak
b. Sind $\Lambda^1$ und $\Lambda^2$ indizierende Mengen für die einfachen $H^1$- bzw. $H^2$"~Moduln, so können wir daher $\Lambda:=\Lambda^1\times\Lambda^2$ als indizierende Menge für die einfachen $H$"~Moduln wählen. Aus der Betrachtung der Schurelemente folgt so auch, dass $a_\lambda = a_{\lambda^1}+a_{\lambda^2}$ und $f_\lambda = f_{\lambda^1} f_{\lambda^2}$ für alle $\lambda=(\lambda^1,\lambda^2)$ gilt. Es gilt außerdem für die Charaktere $\chi_\lambda(h^1\otimes h^2) = \chi_{\lambda^1}(h^1)\chi_{\lambda^2}(h^2)$ für alle $h^1\in H, h^2\in H^2$.

Daher erhalten wir
\begin{align*}
	\gamma_{x,y,z} &= \sum_{\lambda\in\Lambda} f_\lambda^{-1} v^{3a_\lambda} \chi_\lambda(xyz) \mod\mathfrak{m} \\
	&= \sum_{\substack{\lambda^1\in\Lambda^1 \\ \lambda^2\in\Lambda^2}} f_{\lambda^1}^{-1} f_{\lambda^2}^{-1} v^{3a_{\lambda^1}} v^{3a_{\lambda^2}} \chi_{\lambda^1}(x^1 y^1 z^1) \chi_{\lambda^2}(x^2 y^2 z^2) \\
	&= \Big(\sum_{\lambda^1\in\Lambda^1} f_{\lambda^1}^{-1} v^{3a_{\lambda^1}} \chi_{\lambda^1}(x^1 y^1 z^1) \Big)\cdot\Big(\sum_{\lambda^2\in\Lambda^2} f_{\lambda^2}^{-1} v^{3a_{\lambda^2}} \chi_{\lambda^2}(x^2 y^2 z^2) \Big) \\
	&= \gamma_{x^1,y^1,z^1} \cdot \gamma_{x^2,y^2,z^2}
\end{align*}
für alle Basiselemente $x=x^1\otimes x^2, y=y^1\otimes y^2, z=z^1\otimes z^2\in B$. Daraus folgt die Behauptung, dass die angegebene Abbildung ein Isomorphismus von $F$"~Algebren ist. Insbesondere werden die Einselemente aufeinander abgebildet, woraus wir $n_x = n_{x^1}\cdot n_{x^2}$ erhalten. Das zeigt die Verträglichkeitsaussage für die Spurformen. Die Verträglichkeit von den Involutionen folgt, weil es sich für die Basiselemente bereits aus der Definition ergibt.
\end{proof}

\begin{conjecture}
Angesichts dieser letzten beiden Lemmata ist es naheliegend zu vermuten, dass es eine abstrakte, basisfreie Konstruktion von $J$ aus $H$ geben sollte.

Weitergehend könnte $J$ sogar eine universelle Eigenschaft erfüllen. Vielleicht kann der folgende Satz dafür als Vorbild dienen.
\end{conjecture}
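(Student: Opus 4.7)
Da die vorstehende Aussage eine Vermutung darstellt und keine pr�zise formulierte Behauptung, besteht die eigentliche Aufgabe darin, zuerst eine geeignete Formalisierung der "`basisfreien Konstruktion"' zu finden und anschlie�end deren G�ltigkeit zu beweisen. Der nat�rliche Ausgangspunkt sollte die Formel $\gamma_{x,y,z} \equiv \sum_{\lambda\in\Lambda} f_\lambda^{-1} v^{3a_\lambda} \chi_\lambda(xyz) \mod\mathfrak{m}$ aus \ref{J_alg:welldef_gamma_n} sein. Da sie die Strukturkonstanten allein durch die Charaktere und die Konstanten $a_\lambda, f_\lambda$ beschreibt, ist die Konstruktion bereits weitgehend basisfrei; die Wahl der $\ast$"~symmetrischen Basis $B$ bestimmt nur noch, welche Elemente mit $t_x$ bezeichnet werden.

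F�r die konkrete basisfreie Konstruktion w�rde ich die Wedderburn-Zerlegung $\Phi: KH \xrightarrow{\sim} \prod_{\lambda\in\Lambda} \End_K(V_\lambda)$ als Ausgangspunkt w�hlen und in jedem $V_\lambda$ ein $\mathcal{O}$"~invariantes Gitter $\mathcal{L}_\lambda$ bestimmen, das zu einer balancierten Matrixdarstellung geh�rt (Existenz nach Satz \ref{balanced_reps:invariant_blf2}, sofern $F$ formal reell ist). Dies liefert eine $\mathcal{O}$"~Ordnung $\mathcal{A} := \prod_\lambda v^{-a_\lambda}\End_\mathcal{O}(\mathcal{L}_\lambda) \subseteq \prod_\lambda \End_K(V_\lambda)$. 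Die $\mathcal{O}$"~Unteralgebra $H^\mathcal{O} := \Phi^{-1}(\mathcal{A})$ und deren Reduktion $J' := H^\mathcal{O}/\mathfrak{m} H^\mathcal{O}$ sollten die gesuchte basisfreie Konstruktion bilden. Ist $B$ eine $\ast$"~symmetrische Basis, so ist deren Balanciertheit gerade die Aussage $B\subseteq H^\mathcal{O}$, und aus den Schur-Relationen in \ref{schur_relations_leading_coeff} c. w�rde folgen, dass die Klassen der $b\in B$ eine $F$"~Basis von $J'$ bilden. Ein direkter Vergleich der Strukturkonstanten liefert dann den Isomorphismus $J'\cong J$.

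Als universelle Eigenschaft w�re folgende Formulierung denkbar: $J$ ist die bis auf eindeutigen Isomorphismus bestimmte $F$"~Algebra, zusammen mit einer $F$"~linearen Abbildung $H^\mathcal{O} \to J$, welche f�r jede balancierte Matrixdarstellung $\rho: H\to K^{d\times d}$ vom Isomorphietyp $\lambda$ die Zuordnung $h \mapsto v^{a_\lambda}\rho(h) \mod\mathfrak{m}$ faktorisiert, und die unter allen solchen Faktorisierungen initial ist. Die Vertr�glichkeit mit Tensorprodukten sowie die Basis"=Unabh�ngigkeit w�rden dann automatisch aus der Universalit�t folgen.

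Die Hauptschwierigkeit liegt meines Erachtens darin, die Unabh�ngigkeit der vorgeschlagenen Konstruktion $J'$ von allen Wahlen (Gitter $\mathcal{L}_\lambda$, partieller Schnitt $v^\gamma$, interne Konjugation in den $\End_K(V_\lambda)$) direkt zu zeigen, ohne �ber eine Basis zu gehen. Die Rechnung in \ref{J_alg:base_independent} liefert Basis-Unabh�ngigkeit im schwachen Sinne, w�hrend die basisfreie Version eine sch�rfere Form von Wohldefiniertheit erfordert; insbesondere m�sste die Vertauschung mit Morphismen von $H$"~Moduln gezeigt werden. Ferner ist die exakte Formulierung der universellen Eigenschaft (samt dem Nachweis, dass sie $J$ tats�chlich charakterisiert) der subtile Punkt; ein Vergleich mit dem vom Autor zitierten "`folgenden Satz"' d�rfte hierbei unumg�nglich sein.
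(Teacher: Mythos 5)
Zun\"achst ist festzuhalten: Die zu diskutierende Aussage ist eine offene Vermutung (``\dots dass es eine abstrakte, basisfreie Konstruktion von $J$ aus $H$ geben sollte''), und die Arbeit liefert daf\"ur keinen Beweis. Ein Vergleich mit ``dem Beweis der Arbeit'' entf\"allt also; zu pr\"ufen ist nur, ob Ihre Skizze einen plausiblen Angriffsweg beschreibt.

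Der wesentliche Einwand betrifft die Definition $\mathcal{A} := \prod_\lambda v^{-a_\lambda}\End_\mathcal{O}(\mathcal{L}_\lambda)$. Das ist f\"ur $a_\lambda>0$ kein Ring: Sind $x,y\in v^{-a_\lambda}\End_\mathcal{O}(\mathcal{L}_\lambda)$, so liegt $xy$ im Allgemeinen nur in $v^{-2a_\lambda}\End_\mathcal{O}(\mathcal{L}_\lambda)\supsetneq v^{-a_\lambda}\End_\mathcal{O}(\mathcal{L}_\lambda)$. Damit ist auch $H^\mathcal{O}:=\Phi^{-1}(\mathcal{A})$ kein $\mathcal{O}$"~Unterring von $KH$, und die Bildung des Quotienten $J' := H^\mathcal{O}/\mathfrak{m}H^\mathcal{O}$ als $F$"~\emph{Algebra} ist nicht wohldefiniert. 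Genau dieses Ph\"anomen ist der Grund, warum $J$ nicht einfach eine Reduktion modulo $\mathfrak{m}$ von $H$ selbst ist: Die Multiplikation von $J$ ist eben \emph{nicht} die von $H$ geerbte Multiplikation, sondern die ``assoziiert graduierte'' (Schur-Relationen f\"ur f\"uhrende Koeffizienten, Satz \ref{schur_relations_leading_coeff}). W\"urde man das $v^{-a_\lambda}$-Twist einfach weglassen und $\mathcal{A}_0:=\prod_\lambda\End_\mathcal{O}(\mathcal{L}_\lambda)$ nehmen, erhielte man zwar eine echte $\mathcal{O}$-Ordnung, aber dann liegt $B$ im Allgemeinen nicht mehr in $\Phi^{-1}(\mathcal{A}_0)$, und die Reduktion identifiziert man nicht mit $J$ samt seiner ausgezeichneten Basis $(t_x)$.

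Es gibt allerdings einen in der Arbeit selbst erw\"ahnten Weg, der Ihrer Idee nahe kommt: In der Bemerkung nach Beispiel \ref{J_alg:ex:Hecke} wird festgestellt, dass f\"ur Hecke"=Algebren unter Annahme der Lusztig"=Vermutungen die Elemente $X_w := v^{\mathbf{a}(w)}C_w$ eine $\mathcal{O}$"~\emph{Unteralgebra} $X\subseteq KH$ aufspannen und auf kanonische Weise $X/\mathfrak{m}X=J$ gilt. Der entscheidende Unterschied zu Ihrer Skizze ist, dass hier die $w$-abh\"angige $\mathbf{a}$-Funktion (Definition \ref{KL:def:Lusztig_a}) und nicht die $\lambda$-abh\"angigen Werte $a_\lambda$ verwendet werden; erst die Ungleichungen aus Lusztigs Vermutungen (insbesondere \textbf{P4}) sichern, dass die reskalierte Basis multiplikativ abgeschlossen ist. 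Diese Konstruktion h\"angt jedoch von der Kazhdan"=Lusztig"=Basis ab und ist in dem Sinne nicht ``basisfrei''; der Schritt, sie durch eine intrinsische Charakterisierung zu ersetzen -- und die von Ihnen angedachte universelle Eigenschaft pr\"azise zu formulieren und zu beweisen -- bleibt offen. Ihr Vorschlag ben\"otigt also zumindest eine \"uberarbeitete Definition von $\mathcal{A}$ (etwa als filtrierte Ordnung mit passend gew\"ahlter Filtrierung, deren assoziierte graduierte Algebra gleich $J$ ist), damit \"uberhaupt ein Ring vorliegt.
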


\subsection{Darstellungen von \texorpdfstring{$J$}{J}}

\begin{theorem}[Führende Koeffizienten und Darstellungen von $J$, siehe {\citep[1.5.7]{geckjacon}}]
\index{terms}{Darstellung!balancierte|(}\index{terms}{Führende Koeffizienten}
\index{symbols}{rho@$\overline{\rho}$}
Ist $\rho_\lambda$ eine balancierte Darstellung von $H$, dann ist durch die Definition
\[\overline{\rho}_\lambda(t_x) := c_\lambda(x)\]
für alle $x\in B$ eine absolut irreduzible Matrixdarstellung $\overline{\rho}_\lambda: J\to F^{d_\lambda\times d_\lambda}$ gegeben. Jede irreduzible Darstellung von $J$ kann so erhalten werden. Insbesondere ist $J$ zerfallend halbeinfach und die Aussagen in Lemma \ref{schur_relations_leading_coeff} sind tatsächlich die echten Schur-Relationen und $f_\lambda$ die echten Schur-Elemente von $J$.
\end{theorem}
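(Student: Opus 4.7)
Der Plan ist, die simultane Matrixdarstellung $\overline{\rho} := \bigoplus_{\lambda\in\Lambda} \overline{\rho}_\lambda \colon J \to \prod_{\lambda\in\Lambda} F^{d_\lambda\times d_\lambda}$ als Isomorphismus von $F$"~Algebren nachzuweisen. Alle Einzelaussagen des Satzes folgen dann daraus, weil ein endliches Produkt voller Matrixalgebren �ber einem K�rper zerfallend halbeinfach ist und genau die Projektionen als einfache Darstellungen besitzt.

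Zun�chst identifizieren wir $\overline{\rho}$ als $F$"~Vektorraumisomorphismus. Die Abbildungsmatrix bez�glich der Basen $(t_x)_{x\in B}$ und $(E^\lambda_{\mathfrak{s}\mathfrak{t}})_{\lambda,\mathfrak{s},\mathfrak{t}}$ hat als Eintr�ge gerade die Zahlen $c^\lambda(x)_\mathfrak{st}$. Wegen der Halbeinfachheitsvoraussetzung an $H$ ist $|B| = \dim_K H = \sum_{\lambda\in\Lambda} d_\lambda^2$, sodass die Matrix quadratisch ist. Die Schur-Relationen \ref{schur_relations_leading_coeff}a und b besagen in Matrixform gerade, dass sie zur Matrix mit Eintr�gen $f_\lambda^{-1} c^\lambda(x^\ast)_\mathfrak{ts}$ beidseitig invers ist; insbesondere ist $\overline{\rho}$ bijektiv.

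Im zweiten Schritt rechnen wir die Multiplikativit�t $\overline{\rho}_\lambda(t_x t_y) = c^\lambda(x) c^\lambda(y)$ direkt nach. Setzt man in $\sum_{z\in B} \gamma_{x,y,z}\, c^\lambda(z^\ast)_{\mathfrak{s}\mathfrak{u}}$ die Definition von $\gamma_{x,y,z}$ ein und vertauscht die Summationsreihenfolge, so reduziert sich die innere $z$"~Summation verm�ge Lemma \ref{schur_relations_leading_coeff}a auf einen Ausdruck der Form $f_\lambda\delta_{\lambda\mu}\delta_{\mathfrak{c}\mathfrak{u}}\delta_{\mathfrak{a}\mathfrak{s}}$, und das Zusammenfassen liefert unmittelbar den Matrixeintrag $(c^\lambda(x) c^\lambda(y))_{\mathfrak{s}\mathfrak{u}}$. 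Damit ist $\overline{\rho}$ ein bijektiver Algebrenhomomorphismus; die Gleichheit $\overline{\rho}(1_J) = I$ folgt automatisch aus $1_J \cdot t_y = t_y$ und der Surjektivit�t.

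F�r die letzte Aussage ist noch zu pr�fen, dass $f_\lambda$ tats�chlich das Schur-Element des $\lambda$"~Blocks bez�glich $\overline{\tau}$ ist. Per Definition gilt $\overline{\tau}(t_x) = n_x = \sum_\mu f_\mu^{-1}\tr(c^\mu(x^\ast))$, und aufgrund von $\chi_\mu(h) = \chi_\mu(h^\ast)$ (Korollar \ref{balanced_reps:characters}a) �bertr�gt sich die $\ast$"~Invarianz auf die Charaktere $\tr\circ\overline{\rho}_\mu$, sodass $\overline{\tau} = \sum_\mu f_\mu^{-1}(\tr\circ\overline{\rho}_\mu)$ folgt. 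Das ist genau die charakterisierende Gleichung daf�r, dass $f_\mu$ das Schur-Element von $\overline{\rho}_\mu$ ist. Der gr��te Aufwand liegt in der sauberen Buchhaltung der mehrfachen Summationen im Multiplikativit�tsnachweis; alles Weitere ist direkte Ausnutzung bereits etablierter Aussagen.
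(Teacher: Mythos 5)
Dein Vorschlag ist korrekt und w\"ahlt einen etwas anderen Weg als die Arbeit. Das Original pr\"uft f\"ur jedes einzelne $\lambda$, dass $\overline{\rho}_\lambda$ multiplikativ ist und $\overline{\rho}_\lambda(1_J)=I$ gilt, beruft sich dann auf die Schur-Relationen aus Lemma~\ref{schur_relations_leading_coeff}, um absolute Irreduzibilit\"at und paarweise Nichtisomorphie der $\overline{\rho}_\lambda$ zu schlie\ss en, und beendet den Beweis mit dem Dimensionsz\"ahlargument $\sum_\lambda d_\lambda^2=\abs{B}=\dim_F J$. Du gehst global vor: Du zeigst zuerst mit Hilfe von Lemma~\ref{schur_relations_leading_coeff}a und b, dass die Gesamtabbildung $\overline{\rho}=\bigoplus_\lambda\overline{\rho}_\lambda$ ein $F$"~Vek\-tor\-raum\-iso\-mor\-phis\-mus nach $\prod_\lambda F^{d_\lambda\times d_\lambda}$ ist (die Schur-Relationen liefern die explizite Inverse), und weist dann die Multiplikativit\"at nach -- diese Rechnung stimmt inhaltlich mit der aus der Arbeit \"uberein. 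Dass $\overline{\rho}(1_J)=I$ gilt, bekommst du elegant aus der Surjektivit\"at geschenkt, anstatt es wie in der Arbeit noch einmal per Hand mit den Schur-Relationen auszurechnen. Alle strukturellen Aussagen (zerfallende Halbeinfachheit, Klassifikation der Irreduziblen) fallen dann auf einmal aus dem expliziten Isomorphismus auf ein Produkt von Matrixalgebren heraus. Deine Erg\"anzung, dass $f_\lambda$ via $\overline{\tau}=\sum_\mu f_\mu^{-1}(\tr\circ\overline{\rho}_\mu)$ als Schur-Element erkannt wird, ist korrekt und macht explizit, was die Arbeit nur mit ``die Aussagen in Lemma~\ref{schur_relations_leading_coeff} sind tats\"achlich die echten Schur-Relationen'' andeutet. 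Kurz: dein Ansatz etabliert direkt die Wedderburn-Zerlegung von $J$; die Arbeit argumentiert modul\-theoretischer. Beide benutzen dieselben Schur-Relationen und dieselbe Kernrechnung, dein Weg b\"undelt die Schlussfolgerungen aber sauberer.
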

\begin{proof}
Das ist einfach nachzurechnen. Nach Definition ist $\overline{\rho}_\lambda$ bereits $F$"~linear. Wir zeigen, dass die Abbildung auch mit der Multiplikation verträglich ist:
\begin{align*}
	\overline{\rho}_\lambda(t_x t_y)_\mathfrak{ab} &= \sum_z \gamma_{x,y,z} \overline{\rho}_\lambda(t_{z^\ast})_\mathfrak{ab} \\
	&= \sum_z \gamma_{x,y,z} c^\lambda(z^\ast)_\mathfrak{ab} \\
	&= \sum_\mu \sum_{\mathfrak{s},\mathfrak{t},\mathfrak{u}} f_\mu^{-1} c^\mu(x)_\mathfrak{st}c^\mu(y)_\mathfrak{tu} \smash{\underbrace{\sum_z c^\mu(z)_\mathfrak{us} c^\lambda(z^\ast)_\mathfrak{ab}}_{=\delta_{\lambda\mu} \delta_\mathfrak{ub} \delta_\mathfrak{sa} \cdot f_\lambda}} \\
	&= \sum_{\mathfrak{t}} c^\lambda(x)_\mathfrak{at}c^\lambda(y)_\mathfrak{tb} \\
	&= (c^\lambda(x)\cdot c^\lambda(y))_\mathfrak{ab} \\
	&= (\overline{\rho}_\lambda(t_x)\cdot\overline{\rho}_\lambda(t_y))_\mathfrak{ab}
\end{align*}
Analog beweisen wir auch $\overline{\rho}_\lambda(1_J)=I$:
\begin{align*}
	\overline{\rho}_\lambda(1_J)_\mathfrak{ab} &= \sum_z n_z \overline{\rho}_\lambda(t_z)_\mathfrak{ab} \\
	&= \sum_z n_z c^\lambda(z)_\mathfrak{ab} \\
	&= \sum_\mu \sum_\mathfrak{s} f_\mu^{-1} \smash{\underbrace{\sum_z c^\mu(z^\ast)_\mathfrak{ss} c^\lambda(z)_\mathfrak{ab}}_{\delta_{\lambda\mu} \delta_\mathfrak{sa} \delta_\mathfrak{sb} \cdot f_\lambda}} \\
	&= \delta_\mathfrak{ab}
\end{align*}

Die Schur"=Relationen für die Führenden"=Koeffizienten"=Matrizen (Lemma \ref{schur_relations_leading_coeff}) zeigen nun, dass die so definierten $J$"~Moduln absolut irreduzibel und paarweise nichtisomorph sind. Weil außerdem $\sum_\lambda d_\lambda^2 = \dim_K H = \abs{B} = \dim_F J$ gilt, ist $J$ sogar halbeinfach und alle irreduziblen Darstellungen kommen in der so konstruierten Liste vor.
\end{proof}

\begin{remark}
$J$ ist von der Basiswahl unabhängig, wie wir oben gesehen haben. Wir haben nun gesehen, dass $J$ zerfallend halbeinfach ist und die absolut irreduziblen Moduln die Dimensionen $d_\lambda$ haben. Damit ist der abstrakte Isomorphietyp von $J$ als $F$"~Algebra von der Wahl des partiellen Schnittes $\gamma\mapsto v^\gamma$ unabhängig. Diese Wahl spielt jedoch tatsächlich eine kleine Rolle, denn die Schur-Elemente von $J$, also die $f_\lambda$, hängen von der Wahl dieses Schnittes ab. Anders formuliert: Die induzierte Spurform $\overline{\tau}$ hängt von der Wahl des partiellen Schnittes ab.
\end{remark}

\begin{lemma}[Elementare Eigenschaften der Darstellungen von $J$]
\index{terms}{Darstellung!balancierte}\index{terms}{Asymptotische Algebra|)}
Mit den Bezeichnungen von oben gilt:
\begin{enumerate}
	\item Für den Charakter $\chi_\lambda^J: J\to F$ der irreduziblen $J$-Darstellung mit Index $\lambda$ und den Charakter $\chi_\lambda^H: H\to K$ der irreduziblen $H$-Darstellung mit Index $\lambda$ gilt:
	\[\forall x\in B: \chi_\lambda^J(t_x) = v^{a_\lambda} \chi_\lambda^H(x) \mod\mathfrak{m}\]
	\item Für zwei balancierte, irreduzible Matrixdarstellungen $\rho_1, \rho_2: H\to K^{d\times d}$ sind äquivalent:
	\begin{enumerate}
		\item $\rho_1 \isomorphic \rho_2$.
		\item $\overline{\rho_1} \isomorphic \overline{\rho_2}$.
	\end{enumerate}
	\item Genauer gesagt können in der Situation von b. Isomorphismen geliftet und reduziert werden:
	\begin{enumerate}
		\item Ist $M'\in GL_d(K)$ mit $M'\rho_1 = \rho_2 M'$, dann gibt es ein $v^\alpha\in K^\times$ mit Bewertung $\alpha$ so, dass $M:=v^\alpha M'\in GL_d(\mathcal{O})$ ist und $\overline{M}\overline{\rho_1} = \overline{\rho_2} \overline{M}$.
		\item Ist umgekehrt $\overline{M}\in GL_d(F)$ mit  $\overline{M}\overline{\rho_1} = \overline{\rho_2} \overline{M}$ gegeben, dann gibt es ein Urbild $M\in GL_d(\mathcal{O})$ mit $M\rho_1 = \rho_2 M$.
	\end{enumerate}	
\end{enumerate}
\end{lemma}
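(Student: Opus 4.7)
The three parts are closely interlinked, and I would handle them in the order (a), (c)(i), (b), (c)(ii), so that each step becomes available as a tool for the next.

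Part (a) is a direct unpacking of definitions. By construction $\overline{\rho}_\lambda(t_x) = c_\lambda(x)$, and the leading-coefficient matrix satisfies $c_\lambda(x) = v^{a_\lambda}\rho_\lambda(x) \bmod \mathfrak{m}$. Taking traces and using $F$-linearity of reduction modulo $\mathfrak{m}$ gives $\chi_\lambda^J(t_x) = \tr(c_\lambda(x)) \equiv v^{a_\lambda}\chi_\lambda^H(x) \bmod \mathfrak{m}$ at once. For (c)(i), given $M'\in GL_d(K)$ with $M'\rho_1 = \rho_2 M'$, I would set $\alpha := -\nu(M')$ and $M := v^\alpha M'$, so that $M \in \mathcal{O}^{d\times d}$ with $\overline{M}\not\equiv 0$ modulo $\mathfrak{m}^{d\times d}$. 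Multiplying the intertwining relation for each $x\in B$ by $v^{a_\lambda}$ yields $M\cdot(v^{a_\lambda}\rho_1(x)) = (v^{a_\lambda}\rho_2(x))\cdot M$; balancedness of $\rho_1,\rho_2$ places all factors in $\mathcal{O}^{d\times d}$, so reducing modulo $\mathfrak{m}$ gives $\overline{M}\,\overline{\rho_1}(t_x) = \overline{\rho_2}(t_x)\,\overline{M}$ for all $x\in B$ and hence for all elements of $J$. Since the preceding theorem guarantees that $\overline{\rho_1}, \overline{\rho_2}$ are absolutely irreducible $J$-modules, Schur's lemma forces the nonzero intertwiner $\overline{M}$ to be invertible, so $M \in GL_d(\mathcal{O})$.

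Part (b) then follows almost formally: by the preceding theorem, the assignment $\rho \mapsto \overline{\rho}$ sends a balanced $H$-module of type $\lambda\in\Lambda$ to the absolutely irreducible $J$-module indexed by that same $\lambda$, and every element of $\Irr(J)$ arises in this way. Thus $\rho_1 \isomorphic \rho_2$ iff both have the same type iff $\overline{\rho_1} \isomorphic \overline{\rho_2}$. For (c)(ii), given $\overline{M}\in GL_d(F)$ intertwining $\overline{\rho_1},\overline{\rho_2}$, part (b) supplies some $N'\in GL_d(K)$ with $N'\rho_1 = \rho_2 N'$. Applying (c)(i) to $N'$ produces $N\in GL_d(\mathcal{O})$ with $\overline{N}\in GL_d(F)$ also intertwining $\overline{\rho_1},\overline{\rho_2}$. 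By absolute irreducibility, Schur's lemma forces $\overline{M} = c\,\overline{N}$ for some $c\in F^\times$; lifting $c$ to any $\tilde{c}\in\mathcal{O}^\times$ and setting $M := \tilde{c}\,N$ yields an element of $GL_d(\mathcal{O})$ that intertwines $\rho_1,\rho_2$ and reduces to the prescribed $\overline{M}$.

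The only delicate step is (c)(i), specifically the claim that $\overline{M}$ lies in $GL_d(F)$ rather than merely in $\mathcal{O}^{d\times d}\setminus\mathfrak{m}^{d\times d}$; a nonzero reduction need not be invertible in general, and the argument upgrades ``nonzero'' to ``invertible'' only through Schur's lemma applied to the pair $\overline{\rho_1},\overline{\rho_2}$. This is the point where the assertion of the preceding theorem that $J$ is split semisimple of the same numerical type as $H$ is genuinely used, and where the formal-reality hypothesis on $F$ enters via its role in the construction of balanced representations.
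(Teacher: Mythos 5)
Your proposal is correct and follows essentially the same route as the paper's own proof. The reordering (a), (c)(i), (b), (c)(ii) is harmless, and your closing remark correctly identifies the one non-trivial step: upgrading $\overline{M}\ne 0$ to $\overline{M}\in GL_d(F)$ by Schur's lemma applied to the absolutely irreducible reductions. The only cosmetic difference is that for (b) the paper argues via characters (using split semisimplicity of both $H$ and $J$ and part (a) to identify $\chi^J_\lambda$ with $\chi^H_\lambda$ modulo $\mathfrak{m}$), while you appeal directly to the bijection $\Lambda\to\Irr(J)$ established by the preceding theorem together with the Schur relations; both formulations rest on the same facts.
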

\begin{proof}
a. folgt sofort daraus, dass man balancierte Darstellungen wählen kann. Ist nämlich $\rho_\lambda: H\to K^{d_\lambda\times d_\lambda}$ balanciert vom Isomorphietyp $\lambda\in\Lambda$, dann ist
\begin{align*}
	\chi_\lambda^J(x) &= \sum_\mathfrak{s} \overline{\rho}_\lambda(x)_\mathfrak{ss} \\
	&= \sum_\mathfrak{s} v^{a_\lambda} \rho_\lambda(x)_\mathfrak{ss} \mod\mathfrak{m} \\
	&= v^{a_\lambda} \chi_\lambda^H(x).
\end{align*}

\bigbreak
b. Weil $H$ und $J$ beide zerfallend halbeinfach sind, sind die Darstellungen äquivalent genau dann, wenn ihre Charaktere gleich sind. Zusammen mit den Schur-Relationen folgt somit 
\begin{align*}
	\rho_\lambda &\isomorphic \rho_\mu \\
	\iff\chi_\lambda^H &= \chi_\mu^H \\
	\iff \lambda &= \mu \\
	\iff \chi_\lambda^J &= \chi_\mu^J \\
	\iff \overline{\rho_\lambda} &\isomorphic \overline{\rho_\mu}.
\end{align*}

\bigbreak
c. Beide Richtungen sind einfach: Falls $M'\in GL_d(K)$ mit $M'\rho_1=\rho_2 M'$ ist, setze ${\alpha:=-\nu(M')}$. Dann ist $M:=v^\alpha M'\in\mathcal{O}^{d\times d}$ und $M\not\equiv 0 \mod\mathfrak{m}^{d\times d}$. Indem wir die Gleichung mit $v^{a_\lambda+\alpha}$ durchmultiplizieren (wobei $\lambda\in\Lambda$ der Isomorphietyp der beiden Darstellungen sei), erhalten wir:
\[\forall x\in B: M(v^{a_\lambda}\rho_1(x)) = (v^{a_\lambda}\rho_2(x))M.\]
Nach Annahme sind $\rho_1, \rho_2$ balanciert, sodass wir nun modulo $\mathfrak{m}$ reduzieren können und
\[\forall x\in B: \overline{M} \overline{\rho}_1(x) = \overline{\rho}_2(x) \overline{M}\]
erhalten. Weil $\overline{M}\neq 0$ ist und $\overline{\rho}_1, \overline{\rho}_2$ absolut irreduzible Darstellungen von $J$ sind, folgt hieraus $\overline{M}\in GL_d(F)$ und daher auch $M\in GL_d(\mathcal{O})$. Damit ist eine Richtung gezeigt.

\medbreak
Sei für die andere Richtung $\overline{M}$ gegeben. Nach Annahme ist $\overline{\rho}_1\isomorphic\overline{\rho}_2$, d.\,h. nach a., dass auch $\rho_1$ und $\rho_2$ isomorph sind. Wir erhalten also ein $X\in GL_d(K)$ mit $X\rho_1 = \rho_2 X$. Indem wir das obige Argument wiederholen, können wir o.\,B.\,d.\,A. $X\in GL_d(\mathcal{O})$ annehmen. Dann ist $\overline{X}$ ein Isomorphismus $\overline{\rho}_1\to\overline{\rho}_2$. Weil diese Darstellungen nun absolut irreduzibel sind, gibt es ein $\overline{c}\in F^\times$ mit $\overline{M} = \overline{c} \overline{X}$. Indem wir ein Urbild $c\in\mathcal{O}^\times$ wählen und $M:=cX$ setzen, erhalten wir eine Matrix $M\in\mathcal{O}^{d\times d}$, deren Projektion genau die gegebene Matrix $\overline{M}$ ist. Insbesondere ist $M$ invertierbar. Nach Konstruktion gilt auch $M\rho_1=\rho_2 M$.
\end{proof}

\begin{remark}
Der folgende Satz verallgemeinert \citep[1.5.11]{geckjacon} auf die naheliegende Weise.
\end{remark}
\begin{lemma}[Invariante Bilinearformen]\label{balanced_reps:invariant_blf3}
\index{terms}{Invariante Bilinearform}
Sei $R\subseteq F$ ein Hauptidealring mit $F=\QuotFld(R)$ und $f_\lambda^{\pm 1}, \gamma_{x,y,z}, n_w\in R$ für alle $\lambda\in\Lambda$ und alle $x,y,z,w\in B$. Dann gilt:
\begin{enumerate}
	\item Es gibt zu jedem Isomorphietyp $\lambda$ balancierte Darstellungen $\rho_\lambda$ dieses Typs mit $\overline{\rho_\lambda}(t_x)\in R^{d_\lambda\times d_\lambda}$ für alle $x\in B$.
	\item Es gibt eine nichtentartete, symmetrische, invariante Bilinearform $B_\lambda\in GL_d(R)$ für $\overline{\rho_\lambda}$, d.\,h.
	\[B_\lambda \overline{\rho_\lambda}(a) = \overline{\rho_\lambda}(a^\ast)^\text{Tr} B_\lambda\]
	für alle $a\in J$.
\end{enumerate}
\end{lemma}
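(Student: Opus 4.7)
Die Voraussetzungen $\gamma_{x,y,z}, n_w, f_\lambda^{\pm 1}\in R$ implizieren, dass $J_R := \bigoplus_{x\in B} R\cdot t_x$ eine $R$"~Unteralgebra von $J$ bildet, die wegen $1_J = \sum_{d\in\mathcal{D}} n_d t_d$ das Einselement enth\"alt. Die Spurform $\overline{\tau}$ eingeschr\"ankt auf $J_R$ nimmt Werte in $R$ an, und $(t_x)_{x\in B}$ sowie $(t_{x^\ast})_{x\in B}$ bilden zueinander duale $R$"~Basen. Insbesondere ist $J_R$ eine symmetrische $R$"~Algebra mit Schur"=Elementen $f_\lambda\in R^\times$.

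F\"ur Teil a. verwende ich ein Gitterargument. F\"ur jeden einfachen $J$"~Modul $V_\lambda$ w\"ahle ich eine beliebige $F$"~Basis $(v_i)_{i=1,\ldots,d_\lambda}$ und bilde
\[L_\lambda := \sum_{x\in B}\sum_{i=1}^{d_\lambda} R\cdot\overline{\rho}_\lambda(t_x)v_i \subseteq V_\lambda.\]
Dies ist ein endlich erzeugter, torsionsfreier, $J_R$"~stabiler $R$"~Untermodul, der wegen $1_J\in J_R$ die $v_i$ enth\"alt und somit ein $R$"~Gitter vom vollen Rang $d_\lambda$ ist. Da $R$ ein Hauptidealring ist, ist $L_\lambda$ frei vom Rang $d_\lambda$. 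Eine Wahl einer $R$"~Basis von $L_\lambda$ liefert eine zu $\overline{\rho}_\lambda$ \"uber $F$ \"aquivalente Matrixdarstellung $\tilde{\overline{\rho}}_\lambda\colon J\to F^{d_\lambda\times d_\lambda}$ mit Werten in $R^{d_\lambda\times d_\lambda}$. Beginne nun mit irgendeiner balancierten Darstellung $\rho_\lambda^{(0)}$ vom Typ $\lambda$ (existent nach \ref{balanced_reps:invariant_blf2}) und sei $P\in GL_{d_\lambda}(F)$ ein Basiswechsel mit $P\,\overline{\rho}_\lambda^{(0)}\,P^{-1} = \tilde{\overline{\rho}}_\lambda$. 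Weil $F=\mathcal{O}/\mathfrak{m}$ ist, hebe ich $P$ eintragsweise zu $\tilde P\in GL_{d_\lambda}(\mathcal{O})$ und setze $\rho_\lambda := \tilde P \rho_\lambda^{(0)} \tilde P^{-1}$. Da Balanciertheit unter $GL_{d_\lambda}(\mathcal{O})$"~Konjugation invariant ist, ist $\rho_\lambda$ balanciert und die Reduktion $v^{a_\lambda}\rho_\lambda \mod\mathfrak{m}$ ist genau $\tilde{\overline{\rho}}_\lambda$, hat also Werte in $R^{d_\lambda\times d_\lambda}$.

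F\"ur Teil b. wende ich die Gasch\"utz"=Ikeda"=Projektion auf die in a. konstruierte Darstellung an und setze
\[B_\lambda := \sum_{x\in B} \overline{\rho}_\lambda(t_x)^{\text{Tr}}\,\overline{\rho}_\lambda(t_x)\in R^{d_\lambda\times d_\lambda}.\]
$B_\lambda$ ist per Konstruktion symmetrisch, hat Eintr\"age in $R$ und ist $J$"~invariant im Sinne $B_\lambda\overline{\rho}_\lambda(a) = \overline{\rho}_\lambda(a^\ast)^{\text{Tr}} B_\lambda$ (der Beweis verl\"auft w\"ortlich wie der entsprechende Teil in \ref{balanced_reps:invariant_blf2}, da $(t_x),(t_{x^\ast})$ duale Basen sind). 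Weil $F$ formal reell ist, liefert \ref{formally_real:symmetric_matrices} $B_\lambda\neq 0$ und damit aus Schurs Lemma $\det B_\lambda\in F^\times$.

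Die Hauptschwierigkeit wird es sein, $\det B_\lambda\in R^\times$ zu sichern, also die Unimodularit\"at der Form. Diesen Schritt w\"urde ich \"uber die Struktur von $J_R$ f\"uhren: Da $J_R$ eine symmetrische $R$"~Algebra mit Schur"=Elementen in $R^\times$ ist, folgt aus der Theorie symmetrischer Ordnungen \"uber Hauptidealringen, dass $J_R$ separabel und $J_R\isomorphic\prod_{\lambda\in\Lambda} R^{d_\lambda\times d_\lambda}$ ist. Der Antiautomorphismus $\ast$ stabilisiert $J_R$ (er permutiert die Basis $(t_x)$) und induziert auf jedem Wedderburn"=Faktor $R^{d_\lambda\times d_\lambda}$ eine involutive Antihomomorphie der Gestalt $X\mapsto Q_\lambda^{-1} X^{\text{Tr}} Q_\lambda$ mit einem bis auf eine skalare Einheit eindeutigen, symmetrischen $Q_\lambda\in GL_{d_\lambda}(R)$. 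Dieses $Q_\lambda$ ist dann selbst eine nichtentartete, symmetrische, $J$"~invariante Bilinearform mit Matrix in $GL_{d_\lambda}(R)$. \"Aquivalent kann man argumentieren, dass unter der obigen Wedderburn"=Isomorphie jedes $J_R$"~Gitter im absolut einfachen $J_F$"~Modul $V_\lambda$ bis auf Skalierung selbstdual bez\"uglich der Gasch\"utz"=Ikeda"=Form ist; durch geeignete Anpassung des Gitters $L_\lambda$ in Teil a.\ kann man dann $B_\lambda$ selbst in $GL_{d_\lambda}(R)$ landen lassen.
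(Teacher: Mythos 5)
Teil a.\ stimmt im Wesentlichen mit dem Beweis der Arbeit überein: Beide Argumente nutzen, dass $J_R=\sum_x Rt_x$ eine $R$"~Ordnung ist, konstruieren mittels der PID"~Eigenschaft ein $J_R$"~stabiles freies $R$"~Gitter in $V_\lambda$, und liften den resultierenden Basiswechsel aus $GL_{d_\lambda}(F)$ in $GL_{d_\lambda}(\mathcal{O})$, um eine balancierte Darstellung mit $\overline{\rho_\lambda}(t_x)\in R^{d_\lambda\times d_\lambda}$ zu gewinnen.

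In Teil b.\ beginnen Sie identisch mit der Gasch\"utz"=Ikeda"=Form $\widetilde B_\lambda=\sum_x\overline{\rho}_\lambda(t_x)^{\text{Tr}}\overline{\rho}_\lambda(t_x)\in R^{d_\lambda\times d_\lambda}$, weichen dann aber deutlich ab. Die Arbeit teilt $\widetilde B_\lambda$ durch den $\operatorname{ggT}$ seiner Eintr\"age und zeigt $\det(B_\lambda)\in R^\times$ primidealweise: Wegen $f_\lambda\in R^\times$ bleibt $\overline{\rho}_\lambda$ nach Reduktion modulo jedem $\mathfrak p\in\operatorname{Spec}(R)$ absolut irreduzibel, und da $B_\lambda\not\equiv 0\bmod\mathfrak p$ ist, erzwingt Schurs Lemma die Invertierbarkeit. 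Ihr Vorschlag ersetzt dies durch eine strukturelle Argumentation: $J_R\cong\prod_\lambda R^{d_\lambda\times d_\lambda}$ (Struktursatz f\"ur symmetrische $R$"~Ordnungen mit Schur"=Elementen in $R^\times$), Skolem"=Noether \"uber Hauptidealringen liefert f\"ur den von $\ast$ induzierten Antiautomorphismus eine symmetrische Matrix $Q_\lambda\in GL_{d_\lambda}(R)$, und $B_\lambda$ ist proportional zu $Q_\lambda$. Das ist in der Tat ein anderer und prinzipiell gangbarer Weg, der eine klarere strukturelle Einsicht liefert -- allerdings sind genau die dabei zitierten Tatsachen (dass die kanonische Abbildung $J_R\to\prod_\lambda R^{d_\lambda\times d_\lambda}$ surjektiv ist; dass Antiautomorphismen von $R^{d\times d}$ \"uber einem PID stets die Gestalt $X\mapsto Q^{-1}X^{\text{Tr}}Q$ haben; dass jedes Gitter bis auf Skalierung selbstdual ist) nicht trivial und werden von Ihnen nur benannt, nicht bewiesen. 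Ihr eigener Schlusssatz (,,durch geeignete Anpassung des Gitters $L_\lambda$ kann man dann $B_\lambda$ selbst in $GL_{d_\lambda}(R)$ landen lassen``) ist genau die Stelle, an der das $\operatorname{ggT}$"~Argument der Arbeit explizit und elementar wird; ohne diesen Schritt bleibt eine L\"ucke, da die un-normalisierte Gasch\"utz"=Ikeda"=Form i.\,A. keine Einheitsdeterminante hat. Der Beweis der Arbeit ist insofern selbstgen\"ugsamer, weil er nur die bereits im Text etablierten Schur"=Relationen und Schurs Lemma verwendet, w\"ahrend Ihre Route zus\"atzliche Theorie \"uber symmetrische Ordnungen und Azumaya"=Algebren importiert, die im vorliegenden Rahmen erst bereitgestellt werden m\"usste.
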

\begin{proof}
a. $J_0:=\sum_{x\in B} Rt_x$ ist nach Annahme eine $R$-Unteralgebra von $J$ mit $J=FJ_0$. Da $R$ ein Hauptidealring ist, können wir wie gewohnt zu jeder irreduziblen Darstellung $V$ von $J$ einen $J_0$"~Modul $V_0\subseteq V$ mit $FV_0=V$ finden, der als $R$"~Modul frei ist. Durch Wahl einer $R$"~Basis erhalten wir so, wie gewünscht, eine Matrixdarstellung $\alpha: J\to F^{d_\lambda\times d_\lambda}$ vom selben Isomorphietyp mit $\alpha(t_x)\in R^{d_\lambda\times d_\lambda}$. Ist nun $\rho_\lambda: H\to K^{d_\lambda\times d_\lambda}$ eine balancierte Matrixdarstellung von $H$ mit Isomorphietyp $\lambda$, dann ist $\overline{\rho_\lambda}$ eine Darstellung von $J$ vom Isomorphietyp $\lambda$ und somit äquivalent zu $\alpha$, d.\,h. es gibt eine Matrix $P\in GL_{d_\lambda}(\mathcal{O})$ mit $\overline{P^{-1}\rho_\lambda P}=\alpha$. Das zeigt a.

\medbreak
b. Da $(t_x)_{x\in B}$ eine $\ast$"~symmetrische Basis von $J$ ist, ist durch $\widetilde{B}:=\sum_{x\in B} \overline{\rho_\lambda}(t_x)^\text{Tr} \overline{\rho_\lambda}(t_x)$ eine positiv semidefinite (im Sinne von \ref{formally_real:symmetric_matrices}) invariante Bilinearform gegeben. Sie ist auch positiv definit, denn ist $u\in F^{d_\lambda}$ ein Vektor mit $u^\text{Tr} \widetilde{B} u = 0$, dann folgt aus \ref{formally_real:symmetric_matrices}, dass $\overline{\rho_\lambda}(t_x)u=0$ für alle $x\in B$ ist. Damit ist insbesondere $u=\overline{\rho_\lambda}(1_J)u=0$.

Wenn wir nun dabei $\rho_\lambda$ wie in a. wählen, folgt $\widetilde{B}\in R^{d_\lambda\times d_\lambda}$. Nun setzen wir 
\[B_\lambda := \operatorname{ggT}(\widetilde{b}_{ij} \mid i,j=1,\ldots,d_\lambda)^{-1} \widetilde{B}.\]
Dies ist die gesuchte Matrix. Dazu müssen wir $\det(B_\lambda)\in R^\times$ beweisen. Betrachte dazu ein beliebiges Primideal $\mathfrak{p}\in\operatorname{Spec}(R)$ und den Quotientenkörper $k:=\QuotFld(R/\mathfrak{p})$. Da die Einträge von $B_\lambda$ teilerfremd sind, ist $B_\lambda\not\equiv 0 \mod\mathfrak{p}^{d_\lambda\times d_\lambda}$. Weil wir $f_\lambda^{\pm 1}\in R$ vorausgesetzt haben, ist $f_\lambda\in R^\times$. Wir setzen in die Schur-Relationen \ref{schur_relations_leading_coeff} ein und erhalten
\[\sum_{x\in B} \overline{\rho_\lambda}(t_x^\ast)_\mathfrak{st} \overline{\rho_\lambda}(t_x)_\mathfrak{uv} = \begin{cases} f_\lambda\mod\mathfrak{p} & \text{falls}\,\mathfrak{s}=\mathfrak{v}, \mathfrak{t}=\mathfrak{u} \\ 0 &\text{sonst}\end{cases}\]
für alle $\mathfrak{s},\mathfrak{t},\mathfrak{u},\mathfrak{v}\in\Set{1,\ldots,d}$. Da $f_\lambda\not\equiv 0 \mod\mathfrak{p}$ ist, folgt aus der Theorie der Schur-Elemente, dass $\overline{\rho_\lambda}: kJ_0 \to k^{d_\lambda\times d_\lambda}$ eine absolut irreduzible Darstellung ist (siehe \citep[Ch.\,7]{geckpfeiffer}).

Nun ist aber $B_\lambda\mod\mathfrak{p}$ die Darstellungsmatrix eines $J$"~linearen Operators zwischen den beiden absolut irreduziblen Matrixdarstellungen $a\mapsto\overline{\rho_\lambda}(a)$ und $a\mapsto\overline{\rho_\lambda}(a^\ast)^\text{Tr}$ von $kJ_0$. Da $B_\lambda\not\equiv 0 \mod\mathfrak{p}^{d_\lambda\times d_\lambda}$ ist, folgt aus dem Lemma von Schur also, dass dieser Operator invertierbar ist, d.\,h. $\det(B_\lambda)\not\equiv 0 \mod\mathfrak{p}$. Da $\mathfrak{p}$ beliebig war, folgt ${\det(B_\lambda)\in R^\times}$.
\end{proof}

\begin{remark}
Wie in \citep[1.5.11]{geckjacon} bemerkt wurde, ist für die Aussage in b. nicht unbedingt nötig, dass $R$ ein Hauptidealring ist. Dies tritt z.\,B. für Hecke"=Algebren von nichtkristallographischen Coxeter"=Gruppen auf. Dort ist $\IZ_W$ i.\,A. kein Hauptidealring mehr, durch die explizite Angabe von $W$"~Graphen kann man jedoch trotzdem jede irreduzible Darstellung von $H$ über $\IZ_W[\Gamma]$ und daher jede irreduzible Darstellung von $J$ über $\IZ_W$ realisieren. In den Fällen, wo das auftritt (dies ist bei irreduziblen Coxeter"=Gruppen nur für $I_2(m)$ der Fall), kann eine solche invariante Bilinearform explizit angegeben werden.
\end{remark}
\begin{remark}
Jede invariante Bilinearform $\Omega\in GL_d(K)$ für $\rho_\lambda$ liefert auch eine invariante Bilinearform für $\overline{\rho_\lambda}$, indem $\Omega$ zunächst durch ein geeignetes $v^\alpha\Omega$ ersetzt wird so, dass $\nu(\Omega)=0$ ist. Die Reduktion $\Omega\mod\mathfrak{m}$ ist dann invariant für $\overline{\rho_\lambda}$. Da invariante Bilinearformen wegen Schurs Lemma bis auf Skalare eindeutig bestimmt sind, folgt, dass die hinreichende Bedingung aus \ref{balanced_reps:invariant_blf1} sogar notwendig ist: Jede balancierte Darstellung hat eine invariante Bilinearform, deren Reduktion symmetrisch und nichtentartet (eben sogar definit) ist.
\index{terms}{Darstellung!balancierte|)}
\end{remark}

\subsection{Beispiele}

Nach den langen Vorbereitungen schauen wir uns einige Beispiele an:
\begin{example}[Halbeinfache Reduktionen]
Es ist in vielen Beispielen der Fall, dass $H$ als Skalarerweiterung $KH_0=K\otimes_R H_0$ gewonnen wurde für einen Unterring $R\subseteq K$ und eine $R$"~Algebra $H_0$, die als $R$"~Modul frei und endlich erzeugt ist, d.\,h. es gibt eine $K$"~Basis $B\subseteq H$ bzgl. der alle Strukturkonstanten in $R$ liegen.

Wir nehmen nun an, dass $R=\mathcal{O}$ ist und auch die restliche Struktur bereits über $R=\mathcal{O}$ vorhanden ist, d.\,h. $\tau$ schränkt sich zu einer Spurform $H_0\to\mathcal{O}$ ein, bzgl. der $H_0$ bereits symmetrisch ist, $\ast$ schränkt sich zu einer Involution von $H_0$ ein und $B$ ist $\ast$"~symmetrisch.

\medbreak
Weil dann $H_0$ eine $\mathcal{O}$"~Algebra ist, die endlich erzeugt als $\mathcal{O}$"~Modul ist, lässt sich jede endlichdimensionale Darstellung von $H$ bereits über $\mathcal{O}$ realisieren, d.\,h. zu jedem $\lambda\in\Lambda$ gibt es Matrixdarstellungen $\rho_\lambda: H_0\to\mathcal{O}^{d_\lambda\times d_\lambda}$, die (als $H$"~Moduln) den Isomorphietyp $\lambda$ haben.

Da damit die Schurelemente $c_\lambda$ allesamt in $\mathcal{O}$ liegen, ist automatisch $a_\lambda=-\frac{1}{2}\nu(c_\lambda) \leq 0$. Weil sich $1\in H_0$ als $\mathcal{O}$"~Linearkombination von $B$ schreiben lässt, muss in der Tat $a_\lambda=0$ sein, denn sonst wäre das Bild $\rho_\lambda(H_0)$ ja vollständig in einem echten Ideal von $\mathcal{O}^{d_\lambda\times d_\lambda}$ enthalten.

Daraus ergibt sich, dass die $c_\lambda$ Einheiten in $\mathcal{O}$ und die Darstellungen $\rho_\lambda$ bereits automatisch balancierte Darstellungen sind. Es ist $\braket{a_\lambda}=0 \leq \Gamma$, d.\,h. wir müssen keinen partiellen Schnitt wählen, denn er ist sowieso eindeutig bestimmt: $v^\gamma=1$. Es gilt damit:
\[\forall\lambda\in\Lambda\forall x\in B: c^\lambda(x) = \rho_\lambda(x) \mod\mathfrak{m}\]
und $f_\lambda=c_\lambda$.

\medbreak
Wenn wir jetzt die Strukturkonstanten ausrechnen, erhalten wir:
\begin{align*}
	\gamma_{x,y,z} &\equiv \sum_{\lambda\in\Lambda} f_\lambda^{-1} v^{3a_\lambda} \chi_\lambda(xyz) \mod\mathfrak{m} \\
	&= \sum_\lambda c_\lambda^{-1} \chi_\lambda(xyz) \\
	&= \tau(xyz)
\end{align*}
Das sind nun genau die modulo $\mathfrak{m}$ reduzierten Strukturkonstanten von $H$, da
\[ x\cdot y = \sum_{z\in B} \tau(xyz)z^\vee = \sum_{z\in B} \tau(xyz)z^\ast.\]
Mit anderen Worten: $J$ ist nichts anderes als die Reduktion von $H_0$ modulo $\mathfrak{m}$, d.\,h. die Skalarerweiterung $FH_0$. Auf diese Weise erhalten wir den klassischen modularen Fall. Die gesamte Konstruktion der asymptotischen Algebra kann also als eine Verallgemeinerung dieser Standardkonstruktion aus der modularen Darstellungstheorie verstanden werden.
\end{example}

\begin{example}[Hecke-Algebren]\label{J_alg:ex:Hecke}
Sei $(W,S)$ eine endliche Coxeter"=Gruppe und $L:W\to\Gamma$ eine Gewichtsfunktion und $H=H(W,S,L)$ die dazugehörige Hecke"=Algebra.

\medbreak
In dieser Situation stimmt unsere Definition der asymptotischen Algebra genau mit Gecks Konstruktion der asymptotischen Algebra überein, die in \cite{geckjacon} mit $\widetilde{J}$ bezeichnet wird. In \citep[2.3.16]{geckjacon} wird bewiesen, dass unter Annahme von Lusztigs Vermutungen \textbf{P1} und \textbf{P4} die Algebra $\widetilde{J}$ kanonisch isomorph zu Lusztigs $J$"~Algebra ist.
\end{example}

\begin{remark}
In der Tat lässt sich $J$ auch im Fall der Hecke"=Algebren als Reduktion modulo $\mathfrak{m}$ einer geeigneten $\mathcal{O}$-Unteralgebra von $KH$ schreiben, wenn man die Lusztig"=Vermutungen bereits zur Verfügung hat (zu deren Beweis aber natürlich die Lusztig"=Algebra verwendet wurde), denn die Basis $X_w := v^{\textbf{a}(w)} C_w$ spannt dank der Definition von $\textbf{a}(w)$ (siehe \ref{KL:def:Lusztig_a}) eine $\mathcal{O}$-Unteralgebra $X$ von $KH$ auf. Die Lusztig-Vermutungen stellen sicher, dass auf kanonische Weise $X/\mathfrak{m}X = J$ gilt, wobei die Restklasse von $X_w\in X$ mit $t_w\in J$ identifiziert wird.
\end{remark}

\subsection{Zellen in der asymptotischen Algebra}

\begin{remark}
Fixieren wir eine $\ast$"~symmetrische Basis $B\subseteq H$, so können wir die regulären $H$- bzw. $J$"~Moduln bzgl. der Basen $B$ bzw. $(t_x)_{x\in B}$ betrachten und wie in Definition \ref{KL:def:cells} die Zellen dieser Moduln-mit-Basis definieren. Während über die Zellzerlegung von $H$ wenig Allgemeines ausgesagt werden kann (etwa könnte $B\subseteq H^\times$ und damit ganz $B$ eine einzige Zelle sein), können wir für die asymptotische Algebra nichttriviale Aussagen treffen:
\end{remark}

\begin{theorem}
Für alle $x,y,z\in B$ gilt:
\begin{enumerate}
	\item $\gamma_{x,y,z}\neq 0 \implies t_x \sim_\mathcal{L} t_y^\ast$, $t_y \sim_\mathcal{L} t_z^\ast$, $t_z \sim_\mathcal{L} t_x^\ast$.
	\item $t_x \sim_\mathcal{L} t_y \implies x \sim_\mathcal{L} y$. Insbesondere ist die Zerlegung von $B$, die von der Zerlegung von $\Set{t_x | x\in B}$ in $J$-Linkszellen herkommt, eine Verfeinerung der Zerlegung in $H$-Linkszellen.
	\item Analoge Aussagen gelten auch für Rechtszellen und zweiseitige Zellen.
	\item $t_x \sim_\mathcal{LR} t_x^\ast$.
\end{enumerate}
\end{theorem}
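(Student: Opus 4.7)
Mein Plan ist, die vier Teilaussagen in der Reihenfolge (a), (b), (c), (d) zu beweisen, wobei (b) den eigentlichen Kern bildet, w\"ahrend die \"ubrigen Teile im Wesentlichen formaler Natur sind.

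F\"ur Teil (a) werte ich die Multiplikationsformel $t_x t_y = \sum_v \gamma_{x,y,v} t_{v^\ast}$ aus: Der Koeffizient von $t_{z^\ast}$ darin ist genau $\gamma_{x,y,z}$, sodass $\gamma_{x,y,z}\neq 0$ direkt $t_{z^\ast}\preceq_\mathcal{L} t_y$ liefert. Zur Gewinnung der umgekehrten Inklusionen und der drei behaupteten \"Aquivalenzen wende ich die zyklische Symmetrie $\gamma_{x,y,z}=\gamma_{y,z,x}=\gamma_{z,x,y}$ sowie die Involutionssymmetrie $\gamma_{x,y,z}=\gamma_{y^\ast,x^\ast,z^\ast}$ aus Satz \ref{J_alg:welldef_gamma_n} systematisch auf die Ausgangsrelation an. So erhalte ich etwa aus $\gamma_{x^\ast,z^\ast,y^\ast}\neq 0$ (Kombination beider Symmetrien) die Inklusion $t_y\preceq_\mathcal{L} t_{z^\ast}$, was zusammen mit dem bereits gezeigten $t_{z^\ast}\preceq_\mathcal{L} t_y$ die \"Aquivalenz $t_y\sim_\mathcal{L} t_{z^\ast}$ ergibt; die anderen beiden folgen analog.

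F\"ur Teil (b) sei $t_x\preceq_\mathcal{L} t_y$ in $J$. Entlang der definierenden Kette von $\leftarrow$-Schritten gibt es ein $u\in B$ mit $\gamma_{u,y,x^\ast}\neq 0$. Mit $f_\lambda^{-1}v^{3a_\lambda} = v^{a_\lambda}c_\lambda^{-1}$ nimmt die Formel aus Satz \ref{J_alg:welldef_gamma_n}(a) die Gestalt
\[\gamma_{u,y,x^\ast} \equiv \sum_{\lambda\in\Lambda} v^{a_\lambda} c_\lambda^{-1} \chi_\lambda(uyx^\ast) \mod \mathfrak{m}\]
an; damit die linke Seite nicht verschwindet, muss es ein $\mu\in\Lambda$ mit $\chi_\mu(uyx^\ast)\neq 0$ geben. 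Da $H$ zerfallend halbeinfach ist, liegt das zentrale primitive Idempotent $e_\mu = c_\mu^{-1} \sum_{b\in B} \chi_\mu(b^\ast) b$ bereits in $H$. F\"ur $h:=e_\mu u\in H$ ergibt sich unter Verwendung der Zerlegung $\tau=\sum_\lambda c_\lambda^{-1}\chi_\lambda$ und $\chi_\lambda(e_\mu\cdot)=\delta_{\lambda\mu}\chi_\mu(\cdot)$ der Koeffizient von $x$ in $hy$ als
\[\tau(hyx^\ast) = \sum_{\lambda\in\Lambda} c_\lambda^{-1}\chi_\lambda(e_\mu uy x^\ast) = c_\mu^{-1}\chi_\mu(uyx^\ast)\neq 0.\]
Das beweist $x\preceq_\mathcal{L} y$ in $H$; Vertauschen von $x$ und $y$ liefert die andere Inklusion und damit $x\sim_\mathcal{L} y$.

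Teil (c) verl\"auft nach dem gleichen Schema wie (b), wobei $u$ bei Rechtszellen rechts an $y$ multipliziert bzw. $y$ bei zweiseitigen Zellen beidseitig eingerahmt wird. F\"ur Teil (d) nutze ich die Halbeinfachheit von $J$: aus der Zerlegung $J=\bigoplus_\lambda J_\lambda$ in einfache Matrixalgebren folgt $Jt_aJ = \bigoplus_{\lambda\in\Lambda_a} J_\lambda$ mit $\Lambda_a:=\Set{\lambda\in\Lambda | c^\lambda(a)\neq 0}$, also $t_a\sim_\mathcal{LR} t_b \iff \Lambda_a = \Lambda_b$. Die aus Satz \ref{balanced_reps:invariant_blf1}(b) stammende invariante Bilinearform $\Omega\in GL_{d_\lambda}(\mathcal{O})$ erf\"ullt $\rho_\lambda(x^\ast)=\Omega^{-1}\rho_\lambda(x)^\text{Tr}\Omega$; nach Multiplikation mit $v^{a_\lambda}$ und Reduktion modulo $\mathfrak{m}$ folgt $c^\lambda(x^\ast)=\overline{\Omega}^{-1}c^\lambda(x)^\text{Tr}\overline{\Omega}$ mit $\overline{\Omega}\in GL_{d_\lambda}(F)$, also $c^\lambda(x)=0\iff c^\lambda(x^\ast)=0$. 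Somit ist $\Lambda_x = \Lambda_{x^\ast}$ und daher $t_x\sim_\mathcal{LR} t_{x^\ast}$. Die eigentliche Subtilit\"at des Beweises liegt in Teil (b), wo sowohl die Existenz der zentralen Idempotente in $H$ selbst (und nicht nur in $KH$) als auch die saubere Verkettung der Formeln f\"ur $\gamma$, $\tau$ und den Basiskoeffizienten in $hy$ zusammenspielen m\"ussen.
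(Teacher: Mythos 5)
Dein Beweis ist korrekt, w\"ahlt aber in den Teilen (b) und (d) eine andere Route als die Arbeit. In (b) argumentiert die Arbeit abstrakter: Aus $\gamma_{x,y,z^\ast}\neq 0$ wird \"uber die F\"uhrenden-Koeffizienten-Matrizen nur $yz^\ast\neq 0$ gefolgert und dann direkt die Nichtentartetheit von $\tau$ zitiert, um ein $w\in B$ mit $\tau(wyz^\ast)\neq 0$ zu erhalten. Du konstruierst stattdessen ein explizites $h=e_\mu u$ mittels des primitiven zentralen Idempotents und der Zerlegung $\tau=\sum_\lambda c_\lambda^{-1}\chi_\lambda$ -- das ist konstruktiver, aber auch aufwendiger; beide Wege sind g\"ultig. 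In (d) geht die Arbeit den wesentlich k\"urzeren Weg: Aus $\gamma_{x,y,z}\neq 0$ (f\"ur irgendwelche $y,z$, was wegen $t_x\cdot 1_J\neq 0$ immer der Fall ist) folgt mit (a) und (c) sofort $t_x\sim_\mathcal{L}t_{y^\ast}\sim_\mathcal{R}t_z\sim_\mathcal{L}t_{x^\ast}$, also $t_x\sim_\mathcal{LR}t_{x^\ast}$. Dein Argument \"uber die Wedderburn-Zerlegung und die invariante Bilinearform funktioniert zwar auch, ist jedoch deutlich schwerer, und die behauptete \"Aquivalenz $t_a\sim_\mathcal{LR}t_b\iff\Lambda_a=\Lambda_b$ ist eine \"Uberbehauptung: F\"ur beliebige Algebren-mit-Basis kann die Hinrichtung scheitern (etwa bei $J=F^{2\times 2}\times F$ mit Basis $(e_{11},0),(e_{12},0),(e_{21},0),(e_{22},1),(0,1)$, wo die ersten vier Basiselemente gemeinsam mit dem vierten in einer Zelle liegen, obwohl ihre $\Lambda$-Mengen verschieden sind). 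Du verwendest aber nur die R\"uckrichtung $\Lambda_a\subseteq\Lambda_b\implies t_a\preceq_\mathcal{LR}t_b$, die korrekt ist, da jeder Unterbimodul, der $t_b$ enth\"alt, auch $Jt_bJ\supseteq Jt_aJ\ni t_a$ enth\"alt -- insofern ist dein Beweis von (d) g\"ultig, aber die Formulierung sollte pr\"azisiert werden.
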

\begin{proof}
a. Wir nutzen aus, dass zum einen $\gamma_{x,y,z}=\gamma_{y,z,x}=\gamma_{z,x,y}$ und zum anderen $\gamma_{x,y,z} = \gamma_{y^\ast,x^\ast,z^\ast}$ gilt (siehe Lemma \ref{J_alg:welldef_gamma_n}). Daraus folgt, dass $\gamma_{x,y,z}\neq 0$ nicht nur $t_y\preceq_\mathcal{L} t_z^\ast$ impliziert wie es nach Definition gilt, sondern auch die rotierten Aussagen $t_z \preceq_\mathcal{L} t_x^\ast$ und $t_x \preceq_\mathcal{L} t_y^\ast$. Aus der zweiten Gleichung folgt, dass mit $\gamma_{x,y,z}\neq 0$ auch $t_y^\ast \preceq_\mathcal{L} t_x$, $t_x^\ast \preceq_\mathcal{L} t_z$ und $t_z \preceq_\mathcal{L} t_y^\ast$ gilt. Das zeigt a.

\medbreak
b. Ist $t_z\preceq_\mathcal{L} t_y$, so gibt es ein $x\in B$ so, dass $0\neq\gamma_{x,y,z^\ast} = \sum_{\substack{\lambda\in\Lambda \\ \mathfrak{s},\mathfrak{t},\mathfrak{u}}} f_\lambda^{-1} c^\lambda(x)_\mathfrak{st} c^\lambda(y)_\mathfrak{tu} c^\lambda(z^\ast)_\mathfrak{us}$ ist. Es gibt also $\lambda\in\Lambda$ und Indizes $\mathfrak{s},\mathfrak{t}$ mit $0\neq\sum_{\mathfrak{u}} c^\lambda(y)_\mathfrak{tu} c^\lambda(z^\ast)_\mathfrak{us}$. Insbesondere muss $0\neq\sum_{\mathfrak{u}} \rho_\lambda(y)_\mathfrak{tu}\rho_\lambda(z^\ast)_\mathfrak{us} = \rho_\lambda(yz^\ast)_\mathfrak{ts}$ und daher $yz^\ast\neq 0$ sein.

Da die Spurform $\tau$ nichtentartet ist, muss es ein $w\in B$ geben mit $0\neq\tau(wyz^\ast)$. Wenn wir nun $wy = \sum_{v} h_{w,y,v} v$ schreiben für $h_{w,y,v}\in K$, dann ist $\tau(wyz^\ast)=h_{w,y,z}$, weil $z^\ast$ der zu $z$ duale Basisvektor ist. Daher ist $z \preceq_\mathcal{L} y$. Aus Symmetriegründen folgt die Behauptung.

\medbreak
c. folgt aus $x \preceq_\mathcal{L} y \iff x^\ast \preceq_\mathcal{R} y^\ast$.

\medbreak
d. folgt aus $\gamma_{x,y,z}\neq 0 \implies x \sim_\mathcal{L} y^\ast \sim_\mathcal{R} z \sim_\mathcal{L} x^\ast$.
\end{proof}

\begin{corollary}
\begin{enumerate}
	\item Die Algebra $J$ hat als Links-, Rechts- bzw. Bimodul über sich selbst die folgenden Zerlegungen in Links-, Rechts- oder Bimoduln:
	\[J = \bigoplus_\mathfrak{L} \Big(\sum_{x\in\mathfrak{L}} F t_x\Big) = \bigoplus_\mathfrak{R} \Big(\sum_{y\in\mathfrak{R}} Ft_y\Big) = \bigoplus_\mathfrak{Z} \Big(\sum_{z\in\mathfrak{Z}} Ft_z\Big)\]
	wobei $\mathfrak{L},\mathfrak{R},\mathfrak{Z}\subseteq B$ über Links-, Rechts- bzw. zweiseitige Zellen von $(J,(t_x)_{x\in B})$ läuft.
	\item $\sum_{d\in\mathcal{D}\cap\mathfrak{L}} n_d t_d$ ist ein Idempotent für jede Linkszelle $\mathfrak{L}\subseteq B$. Diese Idempotente sind paarweise orthogonal.
	\item $\sum_{d\in\mathcal{D}\cap\mathfrak{Z}} n_d t_d$ ist ein zentrales Idempotent für jede zweiseitige Zelle $\mathfrak{Z}\subseteq B$.
\end{enumerate}
\end{corollary}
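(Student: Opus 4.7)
Mein Plan ist, die drei Teile in der Reihenfolge a., c., b. zu beweisen und die Halbeinfachheit von $J$ zum zentralen Werkzeug zu machen. Zur Abk�rzung schreibe ich $J_\mathfrak{L}:=\sum_{x\in\mathfrak{L}} Ft_x$ und analog $J_\mathfrak{R}$, $J_\mathfrak{Z}$.

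F�r Teil a. sollte sich die Linkszellen-Zerlegung unmittelbar aus dem vorangegangenen Satz ergeben: F�r beliebiges $t_x\in J$ und $y\in\mathfrak{L}$ entwickelt sich $t_x t_y=\sum_w \gamma_{x,y,w^\ast} t_w$, und die Implikation $\gamma_{x,y,w^\ast}\neq 0 \implies t_y\sim_\mathcal{L} t_w$ (Teil a. des vorigen Satzes, angewendet mit $z=w^\ast$) zeigt, dass nur Basiselemente $t_w$ mit $w\in\mathfrak{L}$ auftreten k�nnen. Somit ist $J_\mathfrak{L}$ ein Linksuntermodul, und die Argumentation f�r Rechts- und zweiseitige Zellen verl�uft v�llig analog.

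F�r Teil c. werde ich die bereits bewiesene Halbeinfachheit von $J$ ausnutzen. Nach a. ist jedes $J_\mathfrak{Z}$ ein zweiseitiges Ideal; in einer halbeinfachen Algebra besitzt jedes solche Ideal ein zentrales Idempotent $\hat e_\mathfrak{Z}$ als Einselement, und die Zerlegung $1_J=\sum_\mathfrak{Z} \hat e_\mathfrak{Z}$ mit $\hat e_\mathfrak{Z}\in J_\mathfrak{Z}$ ist eindeutig. Andererseits liefert $1_J=\sum_{d\in\mathcal{D}} n_d t_d = \sum_\mathfrak{Z}\big(\sum_{d\in\mathcal{D}\cap\mathfrak{Z}} n_d t_d\big)$ bereits eine solche Zerlegung, weil jeder Summand in $J_\mathfrak{Z}$ liegt. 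Aus der Eindeutigkeit folgt sofort die Identifizierung $\hat e_\mathfrak{Z}=\sum_{d\in\mathcal{D}\cap\mathfrak{Z}} n_d t_d$, wie behauptet.

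F�r Teil b. arbeite ich innerhalb der halbeinfachen Unteralgebra $J_\mathfrak{Z}$ mit Einselement $\hat e_\mathfrak{Z}$ aus c. Nach a. ist $J_\mathfrak{Z}=\bigoplus_{\mathfrak{L}\subseteq\mathfrak{Z}} J_\mathfrak{L}$ eine direkte Summe von Linksidealen, und wegen $\mathcal{D}\cap\mathfrak{Z}=\coprod_{\mathfrak{L}\subseteq\mathfrak{Z}} (\mathcal{D}\cap\mathfrak{L})$ stimmt die Zerlegung $\hat e_\mathfrak{Z}=\sum_{\mathfrak{L}\subseteq\mathfrak{Z}} e_\mathfrak{L}$ mit $e_\mathfrak{L}\in J_\mathfrak{L}$ gerade mit der behaupteten Formel $e_\mathfrak{L}=\sum_{d\in\mathcal{D}\cap\mathfrak{L}} n_d t_d$ �berein. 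F�r beliebiges $x\in J_\mathfrak{L}$ gilt $x=x\cdot\hat e_\mathfrak{Z}=\sum_{\mathfrak{L}'\subseteq\mathfrak{Z}} x\cdot e_{\mathfrak{L}'}$, und die Linksidealeigenschaft jedes $J_{\mathfrak{L}'}$ sichert $x\cdot e_{\mathfrak{L}'}\in J_{\mathfrak{L}'}$; die Eindeutigkeit der direkten Summe erzwingt dann $x\cdot e_\mathfrak{L}=x$ und $x\cdot e_{\mathfrak{L}'}=0$ f�r $\mathfrak{L}'\neq\mathfrak{L}$. Einsetzen von $x:=e_\mathfrak{L}$ liefert Idempotenz sowie Orthogonalit�t innerhalb derselben zweiseitigen Zelle, und f�r Linkszellen in verschiedenen zweiseitigen Zellen $\mathfrak{Z}\neq\mathfrak{Z}'$ folgt die Orthogonalit�t aus $J_\mathfrak{Z}\cdot J_{\mathfrak{Z}'}\subseteq J_\mathfrak{Z}\cap J_{\mathfrak{Z}'}=0$. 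Den einzigen feinen Punkt, den ich als Hauptschwierigkeit erwarte, sehe ich in der Richtung der Multiplikation im Argument zu b.: Da $J_\mathfrak{L}$ im Allgemeinen kein Rechtsideal ist, muss mit Rechtsmultiplikation $x\cdot e_{\mathfrak{L}'}$ (statt mit $e_{\mathfrak{L}'}\cdot x$) gearbeitet werden, damit die Summanden in den Linksidealen $J_{\mathfrak{L}'}$ verbleiben. Diese Reihenfolge a. $\to$ c. $\to$ b. ist daher wesentlich, weil b. das in c. etablierte Einselement $\hat e_\mathfrak{Z}$ der Unteralgebra $J_\mathfrak{Z}$ voraussetzt.
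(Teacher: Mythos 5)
Der Beweis ist korrekt und verwendet im Kern dieselben Werkzeuge wie die Arbeit: Teil a.\ folgt direkt aus dem vorangegangenen Satz, und Teile b.\ und c.\ beruhen auf der Identit\"at $1_J = \sum_d n_d t_d$ zusammen mit der direkten Summenzerlegung. Die Arbeit verzichtet dabei allerdings ganz auf die Halbeinfachheit von $J$: b.\ und c.\ folgen \emph{beide} direkt aus derselben Rechnung, indem man $x = x\cdot 1_J = \sum x\,e_{\mathfrak{L}'}$ bzw.\ $x = x\cdot 1_J = \sum x\,\hat e_{\mathfrak{Z}'}$ entlang der direkten Summe vergleicht; f\"ur die Zentralit\"at in c.\ vergleicht man zus\"atzlich mit $x = 1_J\cdot x = \sum\hat e_{\mathfrak{Z}'}\,x$. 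Dein Umweg \"uber die Halbeinfachheit f\"ur c.\ ist g\"ultig (und nutzt ein Resultat, das unmittelbar vor dem Korollar steht), aber entbehrlich. Deine Behauptung, die Reihenfolge a.\,$\to$\,c.\,$\to$\,b.\ sei \emph{wesentlich}, gilt nur f\"ur deine spezielle Argumentation: b.\ ben\"otigt tats\"achlich kein $\hat e_\mathfrak{Z}$, sondern kann genauso direkt mit $1_J$ gef\"uhrt werden. Deine Beobachtung zur Multiplikationsrichtung (rechts statt links, damit $x\cdot e_{\mathfrak{L}'}$ in $J_{\mathfrak{L}'}$ bleibt) ist hingegen v\"ollig richtig und tats\"achlich der entscheidende Punkt der Rechnung.
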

\begin{proof}
a. folgt sofort aus obigem Satz, weil dieser zeigt, dass Links-, Rechts- bzw. beidseitige Multiplikation mit Basiselementen $t_w$ nicht aus den angegebenen Unterräumen herausführt, dies also wirklich Untermoduln sind.

b. und c. folgen sofort daraus, dass $1_J = \sum_d n_d t_d$ ist.
\end{proof}

\begin{definition}\label{J_alg:def:order_Lambda}
\index{symbols}{Flambda@$\mathcal{F}_\lambda$}\index{symbols}{$\preceq$}
Seien $e_\mathfrak{Z}$ die zu den zweiseitigen Zellen von $(H,B)$ assoziierten zentralen Idempotente aus dem Korollar. Für jede irreduzible Darstellung $\overline{\rho}_\lambda: J\to F^{d_\lambda\times d_\lambda}$ gibt es dann genau ein $\mathfrak{Z}$ derart, dass $\overline{\rho}_\lambda(e_\mathfrak{Z})\neq 0$ ist, da die Idempotente zentral und paarweise orthogonal sind und sich zu $1_J$ aufsummieren.

Auf diese Weise erhalten wir eine Abbildung $\Lambda\to\Set{\text{zweiseitige Zellen von $H$}}, \lambda\mapsto\mathcal{F}_\lambda$.

Da die zweiseitigen Zellen von $H$ durch $\preceq_\mathcal{LR}$ partiell geordnet sind, erhalten wir auch eine partielle Ordnung $\preceq$ auf $\Lambda$, für die $\lambda \prec \mu$ genau dann gilt, wenn $\mathcal{F}_\lambda \prec_\mathcal{LR} \mathcal{F}_\mu$ gilt.
\end{definition}

\begin{remark}
Man beachte, dass also $\lambda$ und $\mu$ unvergleichbar sind, wenn $\lambda\neq\mu$ aber $\mathcal{F}_\lambda=\mathcal{F}_\mu$ gilt. Das unterscheidet diese Definition auch von der in \citep[2.2.1]{geckjacon} getroffenen.
\end{remark}
\section{\texorpdfstring{$H$}{H} und \texorpdfstring{$J$}{J}}

\begin{remark}
Kehren wir zurück zum konkreten Fall mit einer Coxeter"=Gruppe"=mit"=Gewicht $(W,S,L)$, ihrer Hecke"=Algebra und der asymptotischen Algebra der Hecke"=Algebra (siehe \ref{J_alg:ex:Hecke}). Folgende Definition ist dafür nützlich.
\end{remark}

\begin{definition}[$L$-gute Ringe, siehe {\citep[1.5.9]{geckjacon}}]\label{J_alg:def:L_good}
Ein Ring $R$ heißt \udot{$L$"~gut}, wenn $\IZ_W\subseteq R\subseteq\IC$ und $f_\lambda\in R^\times$ für alle $\lambda\in\Lambda$ gilt.
\end{definition}

\begin{convention}
Wir nehmen für den Rest des Abschnitts an, dass $R$ ein $L$"~guter Ring ist und setzen weiterhin $F=\QuotFld(R)$ und $K=F(\Gamma)$.
\end{convention}

\begin{remark}
Als Alternative zu den Lusztig-Vermutungen \textbf{P1} bis \textbf{P15} definieren Geck und Jacon die Bedingungen $(\clubsuit)$, $(\spadesuit)$ und $(\vardiamond)$, die speziell für die Arbeit mit ihrer Variante der asymptotischen Algebra angepasst sind:
\end{remark}
\begin{conjecture}[Geck-Jacon, siehe {\citep[2.5.3]{geckjacon}}]
\index{symbols}{$(\spadesuit)$, $(\vardiamond)$, $(\clubsuit)$}
Bezeichne mit $\gamma_{x,y,z}$ die Strukturkonstanten der asymptotischen Algebra, die aus $KH$ bzgl. der $\ast$"~symmetrischen Basis $\Set{T_w | w\in W}$ gewonnen wurde.
\begin{itemize}
	\item[$(\clubsuit)$] Für alle $\lambda,\mu\in\Lambda$ soll $a_\mu<a_\lambda$ gelten, falls $\lambda\prec\mu$ ist, und $a_\lambda=a_\mu$, falls $\mathcal{F}_\lambda=\mathcal{F}_\mu$ ist. Dabei seien $\preceq$ und $\mathcal{F}_\lambda$ wie in \ref{J_alg:def:order_Lambda} definiert.
	\item[$(\spadesuit)$] Für alle $w,w',x,y\in W$ mit $x \sim_\mathcal{LR} y$ gilt:
	\[\smash{\sum_{z\in W} \gamma_{x,w',z^{-1}} h_{w,z,y} = \sum_{z\in W} h_{w,x,z} \gamma_{z,w',y^{-1}}}.\]
	\item[$(\vardiamond)$] Jede Kazhdan-Lusztig-Linkszelle $\mathfrak{C}$ enthält genau ein Element $d$ mit $n_d\neq 0$ (dabei sei $n_w$ wie in \ref{J_alg:def:J_algebra} definiert).
\end{itemize}
\end{conjecture}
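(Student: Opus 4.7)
Der Plan ist, jede der drei Bedingungen $(\clubsuit)$, $(\spadesuit)$ und $(\vardiamond)$ als Konsequenz einer Teilmenge der Lusztig-Vermutungen \textbf{P1}-\textbf{P15} zu erkennen. Damit zerf�llt der Beweis in drei Einzelreduktionen plus eine Verifikation in denjenigen F�llen, in denen \textbf{P1}-\textbf{P15} bereits bekannt sind -- also Weyl"=Gruppen nach Beilinson"=Bernstein bzw. Brylinski"=Kashiwara, Diedergruppen nach du Cloux, $H_3$ und $H_4$ computergest�tzt sowie der allgemeine Einparameterfall nach Elias"=Williamson. Der vorangegangene Text hat ja schon festgestellt, dass die drei Geck-Jacon-Bedingungen als "`schw�chere Versionen"' von Lusztigs Vermutungen gedacht sind; meine Aufgabe bei diesem Plan ist es also, diese Abschw�chungsrelation explizit nachzurechnen.

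F�r $(\clubsuit)$ w�rde ich zuerst $a_\lambda$ mit Lusztigs $\mathbf{a}(w)$ f�r beliebige $w\in\mathcal{F}_\lambda$ identifizieren. Daf�r verwende ich die Formel $\chi_\lambda^J(t_w) = v^{a_\lambda}\chi_\lambda^H(T_w)\mod\mathfrak{m}$ aus dem vorangegangenen Abschnitt zusammen mit dem asymptotischen Verhalten von $C_w$ modulo $\mathbf{a}(w)$. Konkret: $\overline{\rho}_\lambda(e_{\mathcal{F}_\lambda})\neq 0$ erzwingt, dass mindestens ein Duflo-Element $d\in\mathcal{D}\cap\mathcal{F}_\lambda$ in einer Komponente von $\overline{\rho}_\lambda(t_d)$ nichttrivial beitr�gt, was unter \textbf{P4} die Konstanz von $\mathbf{a}$ auf $\mathcal{F}_\lambda$ in die Gleichheit $a_\lambda=\mathbf{a}(d)$ �bersetzt. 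Die Monotonie $a_\mu<a_\lambda$ bei $\lambda\prec\mu$, also bei $\mathcal{F}_\lambda\prec_\mathcal{LR}\mathcal{F}_\mu$, folgt dann ebenfalls direkt aus \textbf{P4}.

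F�r $(\spadesuit)$ ist die zu beweisende algebraische Identit�t im Wesentlichen Lusztigs \textbf{P15}, transportiert in unseren Kontext. Die Strategie: \textbf{P15} zitieren, beide Tensorfaktoren mit einem geeigneten $v^{a_\lambda}$ multiplizieren, wobei $\lambda$ so gew�hlt sei, dass $a_\lambda=\mathbf{a}(x)=\mathbf{a}(y)$ gilt (m�glich dank $x\sim_\mathcal{LR}y$ und $(\clubsuit)$ bzw.\ \textbf{P4}). Nach Reduktion eines Tensorfaktors modulo $\mathfrak{m}$ gehen die Strukturkonstanten $h_{x,w',z}$ bzw. $h_{z,w',y}$ in $\gamma_{x,w',z^{-1}}$ bzw. $\gamma_{z,w',y^{-1}}$ �ber (vgl. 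Definition \ref{J_alg:def:J_algebra} zusammen mit der durch $\ast$ gegebenen dualen Indizierung $z\mapsto z^\ast=z^{-1}$), w�hrend die �brigen Tensorfaktoren unver�ndert als Elemente von $\IZ[\Gamma]$ stehenbleiben. Die Hauptarbeit in diesem Schritt ist sorgf�ltige Buchhaltung: nachzurechnen, dass die Reduktion tats�chlich genau die $\gamma$-Konstanten in der von $(\spadesuit)$ verlangten Form liefert, einschlie�lich der Inversion der Indizes.

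F�r $(\vardiamond)$ steht Lusztigs \textbf{P13} unmittelbar zur Verf�gung: sie behauptet genau, dass jede Kazhdan-Lusztig-Linkszelle $\mathfrak{C}$ genau ein $d\in\mathcal{D}$ enth�lt. Durch Vergleich der Duflo"=Charakterisierungen "`$d\in\mathcal{D}$ gdw. $n_d\neq 0$"' (gem�� Definition \ref{J_alg:def:J_algebra}) mit der urspr�nglichen Definition �ber $\mathbf{a}(d)=\Delta(d)$ (siehe Definition \ref{def:KL_Duflo_inv} sowie die Absch�tzungen aus Satz \ref{KL:def:KL_poly}) folgt die Behauptung. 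Die Haupth�rde f�r einen allgemeinen, unbedingten Beweis liegt dann offensichtlich im Status der Lusztig-Vermutungen selbst: im Multiparameterfall, insbesondere f�r Typ $B_n$ mit beliebiger Gewichtsfunktion, sind \textbf{P1}-\textbf{P15} noch offen; ein unbedingter Beweis von $(\clubsuit)$-$(\vardiamond)$ in voller Allgemeinheit w�rde also einen Zugang abseits von Lusztigs Rahmen ben�tigen. Im Einparameterfall und damit insbesondere f�r alle endlichen Coxeter"=Gruppen liefert die bereits dokumentierte Kette "`Positivit�tsvermutung $\Rightarrow$ \textbf{P1}-\textbf{P15} $\Rightarrow (\clubsuit),(\spadesuit),(\vardiamond)$"' jedoch ein bedingungsloses Ergebnis.
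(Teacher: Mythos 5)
Die Aussage, die du beweisen m\"ochtest, ist im Haupttext als \emph{Vermutung} formuliert und wird dort bewusst nicht bewiesen -- sie ist aus \citep[2.5.3]{geckjacon} importiert. Was die Arbeit unmittelbar danach anbietet, ist lediglich eine Bemerkung, die (unter Zitat von [geckjacon]) behauptet, dass \textbf{P1}, \textbf{P4}, \textbf{P15} zusammen $(\clubsuit)$ und $(\spadesuit)$ implizieren und \textbf{P1}, \textbf{P4}, \textbf{P13} zusammen $(\vardiamond)$ implizieren, sowie dass im Einparameterfall $(\spadesuit)$ bereits aus $(\clubsuit)$ folgt. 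Es gibt also keinen "`Beweis der Arbeit"', mit dem dein Vorschlag verglichen werden k\"onnte; vergleichbar ist er nur mit dieser Bemerkung, und genau deren Inhalt skizzierst du.

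Dein Vorgehen -- die drei Bedingungen auf \textbf{P1}-\textbf{P15} zur\"uckzuf\"uhren -- ist als Strategie richtig erkannt und deckt sich mit dem, was die Arbeit behauptet. Es ist aber notwendigerweise kein Beweis der Vermutung: $(\clubsuit)$, $(\spadesuit)$, $(\vardiamond)$ wurden von Geck und Jacon gerade deshalb als eigenst\"andige Vermutungen eingef\"uhrt, weil \textbf{P1}-\textbf{P15} im Multiparameterfall (insbesondere Typ $B_n$ mit beliebiger Gewichtsfunktion) noch offen sind; eine Reduktion auf diese kann die offenen F\"alle also nicht erledigen. Du siehst das selbst am Ende, aber der Titel "`Beweis"' ist damit irref\"uhrend.

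Innerhalb der bedingten Reduktion ist die Skizze f\"ur $(\vardiamond)$ zu optimistisch: Die Arbeit hat zwei a priori verschiedene Definitionen von $\mathcal{D}$ und von $n_d$ -- einerseits \"uber $\mathbf{a}(z)=\Delta(z)$ und $n_z$ als Leitkoeffizient von $P_{1,z}^\ast$ (siehe \ref{KL:def:Lusztig_a} und \ref{def:KL_Duflo_inv}), andererseits \"uber $n_x:=\sum_\lambda\sum_\mathfrak{s} f_\lambda^{-1} c^\lambda(x^\ast)_\mathfrak{ss}$ und $\mathcal{D}=\{x\mid n_x\neq 0\}$ in \ref{J_alg:def:J_algebra}. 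Dass diese \"ubereinstimmen, ist genau die Stelle, an der neben \textbf{P13} auch \textbf{P1} und \textbf{P4} ben\"otigt werden; dein "`durch Vergleich der Charakterisierungen"' \"uberspringt das wesentliche Argument. Auch bei $(\spadesuit)$ bleibt die entscheidende Br\"ucke -- n\"amlich die Asymptotik $v^{\mathbf{a}(z)}h_{x,y,z}\equiv\gamma_{x,y,z^{-1}}\bmod\IZ[\Gamma_{>0}]$, die an anderer Stelle als \citep[2.5.11]{geckjacon} zitiert wird -- unerw\"ahnt, obwohl sie den eigentlichen Kern der Reduktion modulo $\mathfrak{m}$ tr\"agt. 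Die Grobstruktur stimmt also mit der Bemerkung der Arbeit \"uberein, aber keine der drei Implikationen ist tats\"achlich durchgef\"uhrt; es bleibt bei Skizzen mit offenen Identifikationen.
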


\begin{remark}
Da die Charaktere von $H$ Werte in $\IZ_W[\Gamma]$ annehmen, enthält jeder $L$-gute Ring die Strukturkonstanten $\gamma_{x,y,z}$ und $n_w$ aus Definition \ref{J_alg:def:J_algebra} für alle $w,x,y,z\in W$. In der Tat ist unter gewissen Voraussetzungen sogar $\gamma_{x,y,z}, n_w\in\IZ$ (diese Voraussetzung kann beispielsweise \textbf{P1}+\textbf{P4} oder $(\clubsuit)$ sein).

$J$ kann also auch als $R$"~Algebra oder sogar $\IZ$"~Algebra mit der Basis $(t_x)_{x\in W}$ aufgefasst werden. Dies werden wir von Zeit zu Zeit tun. Da wir keine anderen $R$- oder $\IZ$-Formen dieser Algebra betrachten werden, ist dies ohne Verwechslungsgefahr möglich.
\end{remark}
\begin{remark}
Wie bereits erwähnt, implizieren die Lusztig-Vermutungen die Gültigkeit dieser drei Vermutungen. Genauer implizieren \textbf{P1}, \textbf{P4} und \textbf{P15} gemeinsam $(\clubsuit)$ und $(\spadesuit)$. Weiter implizieren \textbf{P1}, \textbf{P4}, \textbf{P13} gemeinsam $(\vardiamond)$.

Im Einparameterfall ist $(\spadesuit)$ außerdem eine Konsequenz von $(\clubsuit)$. (Siehe \citep[2.5.12]{geckjacon}.)
\end{remark}
\begin{remark}
Wir wissen bereits, dass $KH$ und $FJ$ zerfallend halbeinfache Algebren sind, gleich viele einfache Moduln besitzen und die einfachen Moduln dieselben Dimensionen haben. Als abstrakte $K$"~Algebren sind $KH$ und $KJ$ also zueinander isomorph. Dieses Argument liefert jedoch keine kanonische Möglichkeit, einen Isomorphismus konkret anzugeben. Es gibt mit Hilfe der Kazhdan-Lusztig-Theorie jedoch auch eine explizite Konstruktion eines Isomorphismus:
\end{remark}

\begin{definition}[Lusztigs Homomorphismus, siehe {\cite{lusztig2003hecke} und \citep[2.5.5]{geckjacon}}]
\index{terms}{Lusztig-Isomorphismus}
\index{symbols}{phi@$\phi$}
Sei $R$ ein $L$-guter Ring.

Definiere $\phi: R[\Gamma] H\to R[\Gamma]J$ durch $R[\Gamma]$"~lineare Fortsetzung von
\[\phi(C_w) := \sum_{\substack{z\in W,d\in\mathcal{D} \\ z \sim_\mathcal{LR} d}} n_d h_{w,d,z} t_z.\]
\end{definition}

\begin{remark}
Falls die $n_d$ ganzzahlig sind, ist $\phi$ in Wirklichkeit schon über $\IZ[\Gamma]$ definiert.
\end{remark}

\begin{theorem}[Lusztig]
Falls $(\spadesuit)$ gilt, ist $\phi$ ein Homomorphismus. Ist $k$ eine kommutative $R[\Gamma]$"~Algebra, dann ist $\ker(kH \xrightarrow{\phi} kJ)$ ein nilpotentes Ideal. Insbesondere ist $KH \xrightarrow{\phi} KJ$ ein Isomorphismus.
\end{theorem}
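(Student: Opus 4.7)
Der Beweis zerf\"allt in drei Etappen.

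Zuerst zeige ich, dass $\phi$ ein Algebrahomomorphismus ist. Nach $\IZ[\Gamma]$"~Linearit\"at ist $\phi(C_x C_y) = \sum_w h_{x,y,w}\phi(C_w)$, w\"ahrend sich $\phi(C_x)\phi(C_y)$ mittels der Multiplikationsregel $t_u t_v = \sum_z \gamma_{u,v,z^{-1}} t_z$ in $J$ ausmultiplizieren l\"asst. Ein Koeffizientenvergleich vor $t_z$ f\"uhrt auf eine Identit\"at zwischen $(\gamma,h)$"~Doppelsummen indiziert \"uber $\mathcal{D}$, die sich durch iterierte Anwendung von $(\spadesuit)$ zusammen mit der zyklischen Symmetrie $\gamma_{x,y,z}=\gamma_{y,z,x}$ und der Dualit\"atsrelation $\sum_{d\in\mathcal{D}} n_d\gamma_{x^{-1},y,d}=\delta_{xy}$ aus Lemma \ref{J_alg:welldef_gamma_n} ergibt. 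Der Einheitenerhalt $\phi(C_1)=1_J=\sum_{d\in\mathcal{D}} n_d t_d$ folgt unmittelbar aus $C_1=1$ und $h_{1,d,z}=\delta_{dz}$.

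Zweitens zeige ich die Nilpotenz des Kerns. Ich versehe $kH$ und $kJ$ mit der Filtrierung durch zweiseitige Zellen in der Kazhdan"=Lusztig"=Ordnung $\preceq_\mathcal{LR}$, d.\,h. ich setze $kH_{\preceq \mathfrak{Z}} := \sum_{w\preceq_\mathcal{LR}\mathfrak{Z}} k\cdot C_w$ und analog f\"ur $kJ$. Aus $h_{w,d,z}\neq 0 \implies z\preceq_\mathcal{L} d$ und $z\preceq_\mathcal{R} w$ folgt $z\preceq_\mathcal{LR} w$, und somit respektiert $\phi$ diese Filtrierung. Auf dem assoziierten Graduierten wird $\phi$ nach Reskalierung mittels der $\mathbf{a}$"~Funktion zur kanonischen Identifikation \"uber die f\"uhrenden Koeffizienten und ist dort ein Isomorphismus. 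Folglich liegt $\ker\phi$ im Schnitt aller Filtrationsstufen; da die Filtrierung endlich ist, ergibt sich die Nilpotenz.

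Der abschlie\ss ende Isomorphismus $KH\xrightarrow{\phi}KJ$ folgt rein formal: Beide Algebren sind zerfallend halbeinfach mit identischer Dimension $\sum_\lambda d_\lambda^2$ und besitzen daher triviales Jacobson"=Radikal. Somit ist $\ker(\phi_K)=0$, und ein Dimensionsvergleich liefert die Surjektivit\"at.

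Die gr\"o\ss te Schwierigkeit erwarte ich im zweiten Schritt: Die pr\"azise Formulierung der Reskalierung mittels der $\mathbf{a}$"~Werte und der saubere Nachweis, dass $\phi$ auf dem graduierten Objekt tats\"achlich mit der Leitkoeffizientenstruktur \"ubereinstimmt, erfordern eine detaillierte Buchhaltung mit $\mathbf{a}$- und $n$"~Werten. Insbesondere ist die Rolle der Voraussetzung $(\spadesuit)$ im ersten Schritt sauber von den zus\"atzlichen Eigenschaften der $\mathbf{a}$"~Funktion und der Zellstruktur im zweiten Schritt zu trennen.
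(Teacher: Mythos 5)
Das Papier gibt keinen eigenen Beweis, sondern verweist auf Lusztig und Geck--Jacon; Ihr Vorschlag ist daher als eigenst\"andiger Rekonstruktionsversuch zu bewerten. Etappe eins ist von der Anlage her vern\"unftig, bleibt aber skizzenhaft: dass die entstehende $(\gamma,h)$-Identit\"at tats\"achlich aus $(\spadesuit)$ zusammen mit den Rechenregeln f\"ur $\gamma$ und $n$ folgt, wird nur behauptet, nicht gerechnet.

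Etappe zwei enth\"alt eine echte L\"ucke. Sie behaupten, $\operatorname{gr}(\phi)$ sei nach Reskalierung mittels der $\textbf{a}$-Funktion ein Isomorphismus, und schlie\ss en daraus, dass $\ker\phi$ im Schnitt aller Filtrationsstufen liegt, also null ist. Wenn das stimmte, w\"are $\phi$ f\"ur \emph{alle} $R[\Gamma]$-Algebren $k$ injektiv -- das ist wesentlich st\"arker als die behauptete Nilpotenz und in dieser Allgemeinheit falsch. Konkret: Im Fall $A_1$ ist $\phi(C_s) = (v+v^{-1})\,t_s$. Spezialisiert man $v$ auf eine primitive vierte Einheitswurzel in $k=\IC$, so verschwindet $v+v^{-1}$, also $\phi(C_s)=0$ und $\ker\phi = k\,C_s \neq 0$. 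Der Kern ist nichtsdestoweniger nilpotent, denn $C_s^2 = -(v+v^{-1})C_s = 0$ in $k$ -- v\"ollig im Einklang mit dem Satz, aber unvereinbar mit Ihrer Folgerung. Die Reskalierung mit $v^{\textbf{a}(z)}$ \"andert daran nichts, da $v\in k^\times$ ist: Die Matrix von $\operatorname{gr}(\phi)$ hat nach Reskalierung die Gestalt (Einheitsmatrix) $+$ (Terme echt positiven Grades); deren Determinante ist \"uber $K$ eine Einheit, \"uber beliebigem $k$ aber nicht. Die Nilpotenz von $\ker\phi$ ist eine genuin schw\"achere Aussage als Injektivit\"at, und das eigentliche Argument muss zeigen, dass $\ker(\operatorname{gr}\phi)$ seinerseits ein nilpotentes Ideal der assoziierten graduierten Zellalgebra ist -- genau das ist der technische Kern des Satzes, den Ihre Skizze \"ubergeht. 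Etappe drei ist korrekt, beruht aber vollst\"andig auf der so zu reparierenden Etappe zwei.
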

\begin{proof}
Siehe z.\,B. \cite{lusztig2003hecke} (verwendet die Lusztig-Vermutungen) oder \citep[2.5.5-2.5.7]{geckjacon} (verwendet nur $(\spadesuit)$).
\end{proof}

\begin{remark}
Wir haben bereits eine Konstruktion von Darstellungen für $J$ aus (balancierten) Darstellungen von $H$. Lusztigs Homomorphismus bietet eine Möglichkeit für eine Konstruktion in der umgekehrten Richtung. Die nächsten beiden Lemmata sollten diese Möglichkeit genauer untersuchen:
\end{remark}
\begin{lemma}\label{lusztig_hom:balanced_reps}
Es gelte $(\clubsuit)$. Sei $\rho: KH\to K^{d\times d}$ eine irreduzible Matrixdarstellung von $KH$ und ${f: J\to F^{d\times d}}$ eine Matrixdarstellung von $J$. Falls $\rho$ als $KH \xrightarrow{\phi} KJ \xrightarrow{f} K^{d\times d}$ faktorisiert, ist $\rho$ balanciert und $f=\overline{\rho}$.
\end{lemma}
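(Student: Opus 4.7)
The plan is to compute $v^{a_\lambda}\rho(T_w)\bmod\mathfrak{m}$ directly, combining the factorization $\rho=f\circ\phi$ with the defining formula of $\phi$ on the Kazhdan-Lusztig basis. First I pass between bases: since $C_w\in T_w+\sum_{y<w}\IZ[\Gamma_{>0}]T_y$ is a unitriangular change of basis, inversion gives $T_w=C_w+\sum_{y<w}\alpha_{y,w}C_y$ with every $\alpha_{y,w}\in\IZ[\Gamma_{>0}]\subseteq\mathfrak{m}$. Substituting the formula $\phi(C_y)=\sum_{d\in\mathcal{D},\,z\sim_\mathcal{LR} d}n_d h_{y,d,z}t_z$ yields
\[\rho(T_w) = \sum_{d,z}n_d h_{w,d,z}f(t_z) + \sum_{y<w}\alpha_{y,w}\sum_{d,z}n_d h_{y,d,z}f(t_z).\]

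Next I restrict the sums and estimate valuations. The irreducible representation $f$ has some type $\mu$, which must coincide with $\lambda$ under the bijection between irreducibles induced by $\phi$. By the decomposition of $J$ into zweiseitige Ideale $J_\mathfrak{Z}=\sum_{z\in\mathfrak{Z}}F t_z$, the representation $f$ annihilates $t_z$ for every $z\notin\mathcal{F}_\lambda$; moreover the cell theorem for $J$ forces $d\in\mathcal{F}_\lambda$ whenever $\gamma_{w,d,z^{-1}}\neq 0$ and $z\in\mathcal{F}_\lambda$. For such $z$ assumption $(\clubsuit)$ yields $\mathbf{a}(z)=a_\lambda$, so the defining property of $\mathbf{a}$ gives $v^{a_\lambda}h_{y,d,z}\in\mathcal{O}$ for arbitrary $y,d\in W$. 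Multiplying the displayed identity by $v^{a_\lambda}$ then produces a matrix in $\mathcal{O}^{d\times d}$, proving $\nu(\rho(T_w))\geq -a_\lambda$ and hence the balancedness of $\rho$.

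To identify $f=\overline{\rho}$ I reduce modulo $\mathfrak{m}$. The $y<w$ contributions vanish since $\alpha_{y,w}\in\mathfrak{m}$, leaving
\[v^{a_\lambda}\rho(T_w)\equiv\sum_{d\in\mathcal{D}\cap\mathcal{F}_\lambda,\,z\in\mathcal{F}_\lambda} n_d\cdot\bigl(v^{a_\lambda}h_{w,d,z}\bmod\mathfrak{m}\bigr)\cdot f(t_z)\pmod\mathfrak{m}.\]
Using the identification $v^{a_\lambda}h_{w,d,z}\equiv\gamma_{w,d,z^{-1}}\pmod\mathfrak{m}$, together with the multiplication rule $t_w t_d=\sum_u\gamma_{w,d,u^{-1}}t_u$ in $J$ and $1_J=\sum_{d\in\mathcal{D}}n_d t_d$, the right-hand side collapses to $f(t_w\cdot 1_J)=f(t_w)$. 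Hence $f(t_w)=\overline{\rho}(t_w)$ for every $w$, i.e.\ $f=\overline{\rho}$.

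The main obstacle will be the identification $v^{a_\lambda}h_{w,d,z}\bmod\mathfrak{m}=\gamma_{w,d,z^{-1}}$ for $z\in\mathcal{F}_\lambda$, reconciling Lusztig's original leading-coefficient definition of $\gamma$ with the character-theoretic formula of Satz~\ref{J_alg:welldef_gamma_n} used here. A Schur-relation argument should do it: under $(\clubsuit)$ only the $\mu=\lambda$ summand in the character formula for $\gamma_{w,d,z^{-1}}$ survives modulo $\mathfrak{m}$, because for any other $\mu$ either $\mathcal{F}_\mu\neq\mathcal{F}_\lambda$ (forcing the relevant products of leading-coefficient matrix entries to vanish after reduction, since $\overline{\rho}_\mu(t_z)=0$ for $z\in\mathcal{F}_\lambda$) or $a_\mu<a_\lambda$, in which case the prefactor $v^{3a_\mu}$ pushes the summand into a strictly higher power of $\mathfrak{m}$.
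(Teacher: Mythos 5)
Your proof is correct and follows essentially the same route as the paper's: compute the leading coefficient of $\rho$ by factoring through $\phi$, restrict to the two-sided cell $\mathcal{F}_\lambda$ where $\mathbf{a}(z)=a_\lambda$ under $(\clubsuit)$, and collapse the sum via $1_J=\sum_d n_d t_d$. The only cosmetic difference is that the paper works directly with $v^{a_\lambda}\rho(C_w)$ (noting it has the same leading coefficient as $v^{a_\lambda}\rho(T_w)$) instead of inverting the unitriangular base change as you do, and for the key identification $v^{a_\lambda}h_{w,d,z}\equiv\gamma_{w,d,z^{-1}}\bmod F[\Gamma_{>0}]$, which you correctly flag as the main obstacle, the paper simply cites \citep[2.5.11]{geckjacon} rather than reproving it.
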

\begin{proof}
Dazu nutzen wir, dass $C_w\in T_w + \sum_{y<w} \IZ[\Gamma_{>0}] T_y$ gilt. Somit kann Balanciertheit auch äquivalent durch die Bedingung $\nu(\rho(C_w))\geq -a_\lambda$ charakterisiert werden und ggf. gilt $v^{a_\lambda} \rho(C_w) \equiv v^{a_\lambda} \rho(T_w) \mod\mathfrak{m}$ für alle $w\in W$.

Der Rest ist Rechnen:
\begin{align*}
	v^{a_\lambda} \rho(C_w) &= v^{a_\lambda} f(\phi(C_w)) \\
	&= v^{a_\lambda} f\Big(\sum_{z,d} n_d h_{w,d,z} t_z \Big) \\
	&= \sum_{z,d} \underbrace{n_d}_{\in F} \underbrace{v^{a_\lambda} h_{w,d,z}}_{\in\IZ[\Gamma_{\geq 0}]} \underbrace{f(t_z)}_{\in F^{d\times d}} \quad\in F[\Gamma_{\geq 0}]\\
\intertext{Aufgrund von \citep[2.5.11.]{geckjacon} gilt nun $v^{a_\lambda} h_{w,d,z} \equiv \gamma_{w,d,z^{-1}} \mod F[\Gamma_{>0}]$, womit wir weiter erhalten:}
	&\equiv \sum_{z,d} n_d \gamma_{w,d,z^{-1}} f(t_z) \mod F[\Gamma_{>0}] \\ 
	&= f\Big(\sum_{z,d} n_d \gamma_{w,d,z^{-1}} t_z\Big) \\
	&= f\Big(\sum_{d} n_d t_w t_d\Big) \\
	&= f(t_w\cdot 1_J) \\
	&= f(t_w)
\end{align*}
Also ist $\rho$ tatsächlich balanciert und $f=\overline{\rho}$, wie behauptet.
\end{proof}

\begin{lemma}\label{h_vs_j:conj_class}
\index{terms}{Darstellung!balancierte}
Diejenigen irreduziblen, balancierten Matrixdarstellungen $\rho:H\to K^{d\times d}$ vom Isomorphietyp $\lambda\in\Lambda$, die als $\rho=\overline{\rho}\circ\phi$ faktorisieren, bilden eine vollständige $GL_d(F)$"~Kon\-ju\-ga\-tions\-klas\-se.
\end{lemma}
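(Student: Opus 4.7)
My plan is to decompose the claim into three subclaims: the set $\mathcal{S}$ of such representations is nonempty, it is closed under $GL_d(F)$-conjugation, and the $GL_d(F)$-action on $\mathcal{S}$ is transitive.

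For nonemptiness I would start from any balanced matrix representation $\rho_\lambda: H\to K^{d\times d}$ of type $\lambda$, which exists by Satz \ref{balanced_reps:invariant_blf2}. Scalar-extending the associated irreducible $J$-representation $\overline{\rho_\lambda}: J\to F^{d\times d}$ to a map $KJ\to K^{d\times d}$, I define $\rho := \overline{\rho_\lambda}\circ\phi$ and restrict to $H$. Since $\phi$ is an isomorphism of $K$-algebras by Lusztig's theorem and $\overline{\rho_\lambda}$ is absolutely irreducible, $\rho$ is also absolutely irreducible. Lemma \ref{lusztig_hom:balanced_reps} then says automatically that $\rho$ is balanced and $\overline{\rho}=\overline{\rho_\lambda}$; since $\mu\mapsto\overline{\rho_\mu}$ is a bijection on isomorphism types of irreducible $J$-modules, $\rho$ has type $\lambda$ and lies in $\mathcal{S}$.

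For closure and transitivity the key point is that every $P\in GL_d(F)$ lies in $GL_d(\mathcal{O})$ via the fixed partial section $F\hookrightarrow\mathcal{O}$ and reduces to itself modulo $\mathfrak{m}$. Conjugation by such a $P$ preserves the valuation estimates defining balancedness, and one computes $\overline{P^{-1}\rho P}(t_x) = v^{a_\lambda}P^{-1}\rho(x)P \mod\mathfrak{m} = P^{-1}\overline{\rho}(t_x)P$. Hence if $\rho=\overline{\rho}\circ\phi$ then $P^{-1}\rho P = \overline{P^{-1}\rho P}\circ\phi$, so $\mathcal{S}$ is $GL_d(F)$-stable. For transitivity, given $\rho_1,\rho_2\in\mathcal{S}$ the associated $\overline{\rho_1},\overline{\rho_2}$ are both realizations of the absolutely irreducible $J$-representation with index $\lambda$, hence $GL_d(F)$-conjugate, say $\overline{\rho_2}=P^{-1}\overline{\rho_1}P$; composing with $\phi$ then yields $\rho_2 = P^{-1}\rho_1 P$.

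The only nontrivial step is existence, and more precisely the verification that the pullback $\overline{\rho_\lambda}\circ\phi$ really lies in the same index class $\lambda$ as the starting $\rho_\lambda$. A priori $\phi$ could shuffle the parametrization of irreducibles, but Lemma \ref{lusztig_hom:balanced_reps} resolves this immediately by identifying $\overline{\rho}$ with $\overline{\rho_\lambda}$, after which the injectivity of $\mu\mapsto\overline{\rho_\mu}$ forces the type of $\rho$ to equal $\lambda$. Everything else is bookkeeping with the fact that $GL_d(F)$ acts through $GL_d(\mathcal{O})$ and commutes with reduction modulo $\mathfrak{m}$.
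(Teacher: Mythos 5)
Your proof is correct and takes essentially the same approach as the paper: both argue transitivity by passing to the associated $J$-representations (absolutely irreducible of the same type, hence $GL_d(F)$-conjugate) and pulling back along $\phi$, and both argue closure by observing that conjugation by $GL_d(F)\subseteq GL_d(\mathcal{O})$ commutes with reduction modulo $\mathfrak{m}$. The one thing you add that the paper leaves implicit is the nonemptiness argument — the paper's proof only establishes that the set is saturated under conjugation and that any two of its members are conjugate, tacitly relying on earlier constructions (e.g.\ Lemma \ref{balanced_reps:invariant_blf3} and the discussion of cell modules) to guarantee that the set is nonempty; your explicit construction of $\overline{\rho_\lambda}\circ\phi$ together with Lemma \ref{lusztig_hom:balanced_reps} closes that small gap cleanly.
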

\begin{proof}
Sind $\rho_1$, $\rho_2$ zwei balancierte Matrixdarstellungen vom $KH$"~Isomorphietyp $\lambda$, die beide als $\rho_i=\overline{\rho_i}\circ\phi$ faktorisieren, so sind $\overline{\rho_1}$ und $\overline{\rho_2}$ als $J$"~Moduln vom $J$"~Isomorphietyp $\lambda$. Daher gibt es ein $A\in GL_d(\mathcal{O})$ mit $\overline{A}\overline{\rho_1}=\overline{\rho_2}\overline{A}$. Daraus folgt dann $\overline{A}\rho_1=\rho_2\overline{A}$. Die Reduktion $\overline{A}$ ist in $GL_d(F)$. Das zeigt, dass $\rho_1$ und $\rho_2$ in derselben $GL_d(F)$"~Konjugationsklasse liegen.

Sei umgekehrt $\rho_1$ balanciert, irreduzibel und faktorisiere als $\overline{\rho_1}\circ\phi$. Sei weiter $A\in GL_d(F)$ mit $A\rho_1=\rho_2 A$. Dann ist $\rho_2$ ebenfalls balanciert und irreduzibel. Durch Reduktion folgt  $A\overline{\rho_1}=\overline{\rho_2}A$ und daraus folgt $\rho_2=\overline{\rho_2}\circ\phi$.
\end{proof}

\chapter{Zelluläre Algebren}
\section{Definitionen}

\begin{remark}
$R$ sei für das gesamte Kapitel ein kommutativer Ring mit 1.

Wir geben im ersten Abschnitt zwei alternative Definitionen der Zellularität einer $R$"~Al\-ge\-bra.
\end{remark}

\subsection{Kombinatorische Definition}

\begin{definition}[Zelldatum, siehe \cite{GrahamLehrer}]\label{CellAlg:def}
\index{terms}{Zelldatum}\index{terms}{Zelluläre Algebra|(}
\index{symbols}{Lambda@$\Lambda$}\index{symbols}{$\ast$}\index{symbols}{$\leq$}\index{symbols}{Cstlambda@$C_\mathfrak{st}^\lambda$}\index{symbols}{Mlambda@$M(\lambda)$}
Sei $A$ eine $R$"~Algebra. $(\Lambda,M,C,\ast)$ heißt \udot{Zelldatum für $A$}, falls
\begin{enumerate}[label=(C\arabic*),leftmargin=35pt]
	\item $\Lambda$ ist eine endliche, partiell geordnete Menge, $M(\lambda)$ ist eine endliche, nichtleere Menge für alle $\lambda\in\Lambda$ und
	\[C: \coprod_{\lambda\in\Lambda} M(\lambda)\times M(\lambda) \to A\]
	ist eine Injektion, deren Bild eine $R$"~Basis von $A$ ist. Wir bezeichnen das Bild von $(\mathfrak{s},\mathfrak{t})\in M(\lambda)\times M(\lambda)$ unter $C$ mit $C_\mathfrak{st}^\lambda$.
	\item $\ast: A\to A$ ist ein $R$"~linearer, involutiver Antiautomorphismus mit
	\[(C_\mathfrak{st}^\lambda)^\ast = C_\mathfrak{ts}^\lambda\]
	\item Für alle $\lambda,\mathfrak{s},\mathfrak{t}$ und alle $a\in A$ ist
	\[aC_\mathfrak{st}^\lambda \equiv \sum_{\mathfrak{u}\in M(\lambda)} r_a(\mathfrak{u},\mathfrak{s}) C_\mathfrak{ut}^\lambda \mod A(<\lambda)\]
	mit von $a,\mathfrak{u},\mathfrak{s}$, aber nicht von $\mathfrak{t}$ abhängigen Koeffizienten $r_a(\mathfrak{u},\mathfrak{s})\in R$. Dabei ist $A(<\lambda)$ der von $\Set{C_\mathfrak{vw}^\mu | \mu<\lambda, \mathfrak{v},\mathfrak{w}\in M(\mu)}$ erzeugte $R$-Untermodul von $A$.
\end{enumerate}
Statt (C3) könnte man aufgrund von (C2) offenbar auch
\begin{enumerate}[resume*]
	\item[(C$3^\ast$)] Für alle $\lambda,\mathfrak{s},\mathfrak{t}$ und alle $a\in A$ ist
	\[C_\mathfrak{ts}^\lambda a^\ast \equiv \sum_{\mathfrak{u}\in M(\lambda)} C_\mathfrak{tu}^\lambda r_a(\mathfrak{u},\mathfrak{s}) \mod A(<\lambda)\]
	mit von $a,\mathfrak{u},\mathfrak{s}$, aber nicht von $\mathfrak{t}$ abhängigen Koeffizienten $r_a(\mathfrak{u},\mathfrak{s})\in R$.
\end{enumerate}
fordern.

$A$ heißt \udot{zelluläre Algebra}, falls es ein Zelldatum für $A$ gibt.
\end{definition}

\begin{remark}
Zelluläre Algebren sind insbesondere frei und endlich erzeugt als $R$"~Moduln.
\end{remark}
\begin{remark}
\index{terms}{Zelluläre Algebra!strikt zellulär}
Statt (C2) fordern einige Autoren (siehe etwa \cite{goodman2011cellularity}, \cite{goodman2011jucysmurphy} oder \cite{geetha2013wreath}) auch nur die schwächere Bedingung $(C_\mathfrak{st}^\lambda)^\ast \equiv C_\mathfrak{ts}^\lambda \mod A(<\lambda)$ und bezeichnen Algebren, die (C2) erfüllen, als "`strikt zellulär"'.
\end{remark}

\begin{example}[Matrizenringe]\label{CellAlg:example_matrix}
$R^{n\times n}$ ist zellulär mit Zelldatum 
\[\Lambda:=\Set{1}\]
\[M(1):=\Set{1,\ldots,n}\]
\[C_\mathfrak{st}^1:=E_\mathfrak{st}\]
\[a^\ast:=a^T\]
wobei $E_\mathfrak{st}$ die Standardbasis des Matrizenrings meint. Die Konstanten $r_a(\mathfrak{u},\mathfrak{s})$ sind dabei ganz einfach durch die Matrizeneinträge von $a$ gegeben: $r_a(\mathfrak{u},\mathfrak{s}) = a_{\mathfrak{u}\mathfrak{s}}$.

Eine simple Deformation dieser Algebren erhält man, indem man $X\circ Y:=XAY$ für eine feste Matrix $A\in GL_n(R)$ setzt. Die Einheit ist dann offenkundig $A^{-1}$. Transponieren ist genau dann ein Antiautomorphismus dieser Algebra, wenn $A$ symmetrisch ist. In diesem Fall liefert dieselbe Konstruktion auch ein Zelldatum für die Algebra $(R^{n\times n},\circ)$. Weil $(R^{n\times n},\circ) \to R^{n\times n}, X\mapsto AX$ ein Isomorphismus dieser $R$"~Algebren ist, ist daher
\[\Lambda:=\Set{1}\]
\[M(1):=\Set{1,\ldots,n}\]
\[C_\mathfrak{st}^1:=AE_\mathfrak{st}\]
\[a^\ast:=a^T\]
ebenfalls ein Zelldatum für $R^{n\times n}$ für alle symmetrischen $A\in GL_n(R)$.
\end{example}

\begin{example}[Polynomringe]\label{CellAlg:example_polynom}
$R[x]/(x^n)$ ist zellulär mit Zelldatum
\[\Lambda:=\Set{0,1,\ldots,n-1} \,\text{mit der Ordnung}\,n-1<\ldots<1<0\]
\[M(j):=\Set{j}\]
\[C_{jj}^j := x^j\]
\[a^\ast := a\]
\end{example}

\begin{remark}
Die beiden Beispiele befinden sich an den Extrempunkten der möglichen zellulären Algebren. In dem einen Beispiel ist die Ordnung $\Lambda$ trivial, in dem anderen die Matrizengröße $M(\lambda)$. Allgemeine zelluläre Algebren sind in gewisser Weise zwischen diesen beiden Beispielen "`interpoliert"'.

Die Definition sichert, dass die Multiplikation in $A$ sich aus (wie oben deformierten) Matrizenringen zusammensetzt, die entlang der Ordnung $\Lambda$ aufgereiht sind. Das stimmt bis auf Restterme, die kleiner in der Ordnung sind.
\end{remark}

\begin{example}[Hecke-Algebren]\label{CellAlg:example_hecke}
\index{terms}{Hecke-Algebra}\index{terms}{Kazhdan-Lusztig!-Basis}
\index{symbols}{Lambda@$\Lambda$}\index{symbols}{$\ast$}\index{symbols}{$\leq$}\index{symbols}{dlambda@$d_\lambda$}\index{symbols}{Cstlambda@$C_\mathfrak{st}^\lambda$}\index{symbols}{Mlambda@$M(\lambda)$}
Sei $(W,S,L)$ eine Coxeter"=Gruppe"=mit"=Gewicht und $R\subseteq\IR$ ein "`$L$-guter"' Ring im Sinne von \citep[1.5.9]{geckjacon}, d.\,h. $\IZ_W\subseteq R$ und $f_\lambda\in R^\times$ für alle $\lambda\in\Lambda$. Setze $F:=\QuotFld(R)$ und $K:=F(\Gamma)$.

Dann ist ein Zelldatum von $F[\Gamma]H$ wie folgt gegeben:
\begin{itemize}
	\item $\ast$ sei die Involution $T_w\mapsto T_{w^{-1}}$.
	\item $\Lambda=\Irr(W)$ sei durch $\lambda < \mu :\iff \mathcal{F}_\lambda \preceq_\mathcal{LR} \mathcal{F}_\mu, \mathcal{F}_\lambda \not\sim_\mathcal{LR} \mathcal{F}_\mu$ partiell geordnet, siehe \ref{J_alg:def:order_Lambda}.
	\item Für $\lambda\in\Lambda$ setze $M(\lambda):=\Set{1,\ldots,d_\lambda}$, wobei $d_\lambda$ der Grad des irreduziblen Charakters $\lambda$, d.\,h. die Dimension der zugehörigen Darstellung sei.
	\item Die Zellbasis wird wie folgt konstruiert:
	
	Wähle zu jedem $\lambda\in\Lambda$ eine balancierte Darstellung $\rho_\lambda: KH\to K^{d_\lambda\times d_\lambda}$, bezeichne die dazugehörige Darstellung der $J$"~Algebra wie gewohnt mit $\overline{\rho}_\lambda: J\to F^{d_\lambda\times d_\lambda}$ und wähle eine invariante Bilinearform $B_\lambda\in GL_{d_\lambda}(F)$ für $\overline{\rho}_\lambda$ (siehe \ref{balanced_reps:invariant_blf3}). Dann setze:
	\[C_\mathfrak{st}^\lambda := \sum_{w\in W} (B_\lambda\cdot\overline{\rho}_\lambda(t_w))_\mathfrak{st} C_w\]
\end{itemize}
Dass dies wirklich ein Zelldatum auf $F[\Gamma]H$ liefert, wird in \cite[2.6.8-2.6.12]{geckjacon} bewiesen.

Man erhält sogar ein Zelldatum für $R[\Gamma]H$, wenn man $\rho_\lambda$ mit $\rho_\lambda(H) \subseteq R[\Gamma]^{d_\lambda\times d_\lambda}$ und ein $B_\lambda\in GL_{d_\lambda}(R)$ wählt. Nach Lemma \ref{balanced_reps:invariant_blf3} ist dies möglich.
\end{example}

\begin{definition}[Kanonische Ideale aus einem Zelldatum, siehe {\citep[1.5+1.6]{GrahamLehrer}}]
\index{terms}{Ordnungsideal}
\index{symbols}{APhi@$A(\Phi)$, $A(\Set{\lambda})$, $A(<\lambda)$}
Sei $A$ eine zelluläre Algebra mit Zelldatum wie oben.

Sei weiter $\Phi\subseteq\Lambda$ ein Ordnungsideal, d.\,h. eine "`nach unten abgeschlossene"' Teilmenge: Für alle $\lambda\in\Lambda$ und $\phi\in\Phi$ folgt aus $\lambda\leq\phi$ automatisch auch $\lambda\in\Phi$.

Dann definiere $A(\Phi)$ als den von $\Set{C_\mathfrak{st}^\mu | \mu\in\Phi, \mathfrak{s},\mathfrak{t}\in M(\mu)}$ erzeugten $R$-Untermodul. $A(\Phi)$ ist offenbar ein zweiseitiges, $\ast$"~invariantes Ideal von $A$.

Sind $\Phi'\subseteq\Phi$ zwei Ideale von $\Lambda$, so setze $\Psi:=\Phi\setminus\Phi'$ und $A(\Psi):=A(\Phi)/A(\Phi')$.
\end{definition}

\begin{example}
Das in der Definition vorkommende $A(<\lambda)$ ist eines dieser Ideale.
\end{example}
\begin{remark}
Offenbar gilt $A(\Phi)+A(\Phi') = A(\Phi\cup\Phi')$ und $A(\Phi)\cap A(\Phi')=A(\Phi\cap\Phi')$ für alle Ordnungsideale $\Phi,\Phi'\subseteq\Lambda$.
\end{remark}
\begin{remark}
\index{symbols}{xy@$[x,y]$}
Man macht sich leicht klar, dass $A(\Psi)$ tatsächlich nur von $\Psi=\Phi\setminus\Phi'$ und nicht von der konkreten Wahl von $\Phi$ und $\Phi'$ abhängt.

Es gibt eine intrinsische Beschreibung der Teilmengen $\Psi\subseteq\Lambda$, die sich als Differenz zweier Ideale schreiben lassen: $\Psi$ ist genau dann von dieser Form, wenn mit jedem Paar $x,y\in\Psi$ auch das ganze Intervall $[x,y]:=\Set{\lambda\in\Lambda | x\leq \lambda\leq y}$ Teilmenge von $\Psi$ ist.

Insbesondere ist $A(\Set{\lambda})=A(\leq\lambda)/A(<\lambda)$ wohldefiniert.
\end{remark}

\begin{lemma}[Vererbung von Zellularität unter Standardoperationen]
Sei $A$ eine zelluläre $R$"~Algebra mit Zelldatum $(\Lambda,M,C,\ast)$. Dann gelten folgende Verträglichkeiten:
\begin{enumerate}
	\item Skalarerweiterungen:
	
	Ist $R\to S$ ein unitärer Ringhomomorphismus, dann ist $SA:=S\otimes_R A$ eine zelluläre $S$"~Algebra mit Zelldatum $(\Lambda,M,1\otimes C,\id\otimes\ast)$.
	\item Quotienten:
	
	Ist $\Phi\subseteq\Lambda$ ein Ordnungsideal, so ist $A(\Lambda\setminus\Phi)=A/A(\Phi)$ zellulär mit Zelldatum $(\Lambda\setminus\Phi,M_{|\Lambda\setminus\Phi},C_{|\Lambda\setminus\Phi},\ast)$.
	\item Duale Algebren:
	
	$A^\text{op}$ ist zellulär mit Zelldatum $(\Lambda,M,C,\ast)$.
	
	\item Produkte:
	
	Sind $A_1,\ldots,A_m$ zellulär mit Zelldaten $(\Lambda_i,M_i,C_i,\ast)$, so ist $A=A_1\times\ldots\times A_m$ zellulär mit Zelldatum $(\Lambda,M,C,\ast)$, wobei
	\[\Lambda:=\coprod_i \Lambda_i\]
	\[\forall \lambda\in\Lambda_i: M(\lambda):=M_i(\lambda)\]
	\[\forall \lambda\in\Lambda_i,\mathfrak{s},\mathfrak{t}\in M(\lambda): C(\mathfrak{s},\mathfrak{t}):=C_i(\mathfrak{s},\mathfrak{t})\]
	\[\forall a_i\in A_i: (a_1,\ldots,a_m)^\ast := (a_1^\ast,\ldots,a_m^\ast)\]
	
	\item Tensorprodukte:
	
	Sind $A_1,\ldots,A_m$ zellulär mit Zelldaten $(\Lambda_i,M_i,C_i,\ast)$, so ist $A=A_1\otimes_R \ldots\otimes_R A_m$ zellulär mit Zelldatum $(\Lambda,M,C,\ast)$, wobei
	\[\Lambda:=\prod_i \Lambda_i\]
	\[\forall (\lambda_i)\in\Lambda: M(\lambda):=\prod_i M_i(\lambda_i)\]
	\[\forall \mathfrak{s},\mathfrak{t}\in M(\lambda): C(\mathfrak{s},\mathfrak{t}):=\bigotimes_i C_i(\mathfrak{s}_i,\mathfrak{t}_i)\]
	\[\forall a_i\in A_i: (a_1\otimes\ldots\otimes a_m)^\ast := a_1^\ast\otimes\ldots\otimes a_m^\ast\]
	
\end{enumerate}
\end{lemma}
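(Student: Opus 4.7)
F�r jede der f�nf Aussagen geht es darum, die Axiome (C1)-(C3) f�r die angegebenen Daten zu verifizieren. Die F�lle a.) und c.) sind unmittelbar: Bei der Skalarerweiterung �bertragen sich Basis, Involution und Multiplikationsstruktur unver�ndert auf $S\otimes_R A$, da Tensorieren Freiheit und die Strukturkonstanten $r_a(\mathfrak{u},\mathfrak{s})$ erh�lt; f�r die Oppositionalgebra $A^\text{op}$ bleibt $\ast$ ein involutiver Antiautomorphismus, und das Axiom (C3) f�r $A^\text{op}$ ist nichts anderes als die Formulierung (C$3^\ast$) f�r $A$, welche nach der Definition zu (C3) �quivalent ist.

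F�r b.) werde ich zuerst bemerken, dass $A(\Phi)$ aufgrund von (C2) und (C3) ein zweiseitiges, $\ast$"~invariantes Ideal von $A$ ist, sodass der Quotient $A(\Lambda\setminus\Phi)=A/A(\Phi)$ als $R$"~Algebra mit Antiautomorphismus wohldefiniert ist. Die Restklassen der $C_\mathfrak{st}^\lambda$ mit $\lambda\notin\Phi$ bilden eine $R$"~Basis. Der entscheidende Punkt ist die Identifikation der "`kleineren"' Untermoduln im Quotienten mit dem Bild von $A(<\lambda)$; hier sorgt die Ordnungsidealeigenschaft von $\Phi$ daf�r, dass alle wegdividierten Basiselemente indexweise in $\Phi$ liegen und somit (C3) sich wortw�rtlich aus $A$ �bertr�gt.

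Den Fall d.) plane ich durch Bildung der disjunkten Vereinigung der Ordnungen $\Lambda_i$ abzuhandeln, in welcher Elemente verschiedener Komponenten unvergleichbar bleiben. Da die Multiplikation im direkten Produkt komponentenweise wirkt und die $C_\mathfrak{st}^\lambda$ f�r $\lambda\in\Lambda_i$ nur in der $i$-ten Komponente nichttrivial sind, reduziert sich (C3) unmittelbar auf (C3) der einzelnen $A_i$.

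Die eigentliche Arbeit steckt in e.). Ich w�hle die Produktordnung auf $\Lambda=\prod_i\Lambda_i$ komponentenweise, d.\,h. $(\lambda_i)\leq(\mu_i)$ genau dann, wenn $\lambda_i\leq\mu_i$ f�r alle $i$ gilt, und weise nach, dass die Basis $\bigotimes_i C_i(\mathfrak{s}_i,\mathfrak{t}_i)$ mit der induzierten Involution $\bigotimes_i\ast_i$ die Axiome (C1)-(C3) erf�llt. (C1) folgt aus der Freiheit der Tensorproduktbasis; (C2) ergibt sich daraus, dass $\bigotimes_i\ast_i$ ein involutiver Antiautomorphismus des Tensorprodukts ist und die Basiselemente wie verlangt vertauscht. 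Der Kernpunkt ist (C3): Wegen $R$"~Bilinearit�t beider Seiten darf man sich auf reine Tensoren $a=a_1\otimes\ldots\otimes a_m$ beschr�nken, und dann faktorisiert die Multiplikation in die Einzelrelationen
\[a_i\cdot C_i(\mathfrak{s}_i,\mathfrak{t}_i) = \sum_{\mathfrak{u}_i\in M_i(\lambda_i)} r_{a_i}(\mathfrak{u}_i,\mathfrak{s}_i)\,C_i(\mathfrak{u}_i,\mathfrak{t}_i) + x_i, \qquad x_i\in A_i(<\lambda_i).\]
Die Hauptschwierigkeit besteht darin, sorgf�ltig zu pr�fen, dass beim Ausmultiplizieren der entstehenden Restterme ein Tensor $\bigotimes_i y_i$, in dem mindestens ein $y_j$ aus $A_j(<\lambda_j)$ stammt, tats�chlich zu $A(<(\lambda_i))$ geh�rt. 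Dies folgt unmittelbar aus der komponentenweisen Definition der Produktordnung, da ein solcher Index $(\lambda_1,\ldots,\lambda_{j-1},\mu_j,\lambda_{j+1},\ldots,\lambda_m)$ mit $\mu_j<\lambda_j$ echt kleiner als $(\lambda_i)$ ist. Die entstehende Koeffizientenfunktion $r_a$ ist das Tensorprodukt der einzelnen Koeffizienten und h�ngt damit wie gefordert nicht von $(\mathfrak{t}_i)$ ab.
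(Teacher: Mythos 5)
Das Lemma wird in der Arbeit ohne Beweis zitiert; ein Vergleich mit dem Originalbeweis ist also nicht m�glich. Dein Beweisplan ist korrekt und vollst�ndig: Die Verifikation von (C1)--(C3) f�r alle f�nf Konstruktionen ist sauber strukturiert. Insbesondere identifizierst du im Quotientenfall b.) richtig, dass sich das Bild von $A(<\lambda)$ in $A/A(\Phi)$ mit $(A/A(\Phi))(<\lambda)$ identifiziert, weil $\Phi$ ein Ordnungsideal ist, und im Tensorproduktfall e.) arbeitest du die entscheidende Stelle heraus: Beim Ausmultiplizieren $\prod_i(b_i + x_i)$ mit $x_i\in A_i(<\lambda_i)$ liegt jeder Restterm, der mindestens ein $x_j$ enth�lt, wegen der komponentenweisen Produktordnung in $A(<(\lambda_i))$, und der Hauptterm $\prod_i b_i$ hat Koeffizienten $\prod_i r_{a_i}(\mathfrak{u}_i,\mathfrak{s}_i)$, die wie gefordert nicht von $(\mathfrak{t}_i)$ abh�ngen.
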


\begin{remark}
Man kann zeigen, dass (unter einer harmlosen zusätzlichen Voraussetzung) auch die Umkehrung in d. gilt: Wenn $A$ zellulär ist und sich als Produkt von zwei Algebren schreiben lässt, sind auch die Faktoren zellulär.
\end{remark}

\begin{example}[Spezialisierungen von Hecke-Algebren]
\index{terms}{Hecke-Algebra}\index{terms}{Kazhdan-Lusztig!-Basis}
Da das Beispiel \ref{CellAlg:example_hecke} eine Zellbasis für die generischen Hecke"=Algebren definiert, folgt aus der Verträglichkeit mit Spezialisierungen, dass z.\,B. auch die Gruppenalgebren $\IQ_W[W]$ ein Zelldatum haben, das durch Spezialisierung aus dem in \ref{CellAlg:example_hecke} hervorgeht. Dies ist im Wesentlichen die Kazhdan-Lusztig-Basis selbst und somit verschieden vom Zelldatum, das von einem Wedderburn"=Isomorphismus $\IQ_W[W] \isomorphic \prod_{\lambda\in\Lambda} \IQ_W^{d_\lambda\times d_\lambda}$ herkäme. Diese Eigenschaft der Kazhdan-Lusztig-Basis war das motivierende Beispiel für die Definition von zellulären Algebren durch Graham und Lehrer (und entsprechend bereits vor der Definition zellulärer Algebren bekannt).

In der Tat ist dies ja sogar ein Zelldatum für die Gruppenalgebra $R[W]$ und überträgt sich daher z.\,B. auch auf modulare Gruppenalgebren (außer für die sogenannten "`schlechten"' Primzahlen, da $R$ ja ein "`$L$-guter"' Ring zu sein hatte für die Konstruktion der Zellbasis nach Geck).
\end{example}

\subsection{Idealtheoretische Definition}

\begin{definition}[Zellideale und Zellketten, siehe \cite{KoenigXi_cellalg_1}]\label{CellAlg:def:Cellideals}
\index{terms}{Zellideal|(}\index{terms}{Zellkette}
\index{symbols}{alpha@$\alpha$}
Sei $A$ eine $R$"~Algebra mit einem involutiven Antiautomorphismus $\ast:A\to A$. Ein zweiseitiges Ideal $J\subseteq A$ heißt \udot{Zellideal} (bzgl. $\ast$), falls
\begin{enumerate}
	\item $J^\ast=J$ ist,
	\item es ein Linksideal $\Delta\subseteq J$ gibt derart, dass $\Delta$ frei und endlich erzeugt als $R$"~Modul ist und ein Isomorphismus $\alpha: \Delta\otimes_R \Delta^\ast \to J$ von $A$"~$A$"~Bimoduln existiert, sowie
	\item $\alpha$ und $\ast$ verträglich sind in dem Sinne, dass das Diagramm \ref{fig:cellideal} kommutiert.
	\begin{figure}[ht]
	\centering
		\begin{tikzpicture}
		\matrix (m) [matrix of math nodes, row sep=3em,
		column sep=6em, text height=1.5ex, text depth=0.25ex]
		{
			\Delta\otimes_R\Delta^\ast & \Delta\otimes_R\Delta^\ast \\
			J & J\\
		};
		\path[->,font=\scriptsize]
		(m-1-1) edge node[above]{$x\otimes y^\ast \mapsto y\otimes x^\ast$} (m-1-2)
		(m-2-1) edge node[below]{$\ast$} (m-2-2)
		(m-1-1) edge node[left]{$\alpha$} (m-2-1)
		(m-1-2) edge node[right]{$\alpha$} (m-2-2);
		\end{tikzpicture}
		\caption{Verträglichkeitsbedingung für Zellideale}
		\label{fig:cellideal}
	\end{figure}
	Explizit soll also $\alpha(x\otimes y^\ast)^\ast = \alpha(y\otimes x^\ast)$ gelten.
\end{enumerate}

Eine Kette von zweiseitigen Idealen
\[0=J_0 \subseteq J_1 \subseteq \ldots \subseteq J_n =A\]
heißt \udot{Zellkette}, falls es eine Zerlegung der Form
\[A=U_1 \oplus \ldots \oplus U_n\]
in $\ast$"~invariante $R$"~Untermoduln gibt derart, dass $J_i=\sum_{j\leq i} U_j$ und $J_i/J_{i-1} \unlhd A/J_{i-1}$ für alle $i=1,\ldots,n$ ein Zellideal bzgl. der induzierten Involution ist.
\end{definition}

\begin{theorem}[Äquivalenz der Definitionen, siehe {\citep[3.4]{KoenigXi_cellalg_1}}]
Sei $(A,\ast)$ eine $R$"~Algebra mit einem involutiven Antiautomorphismus. Dann sind äquivalent:
\begin{enumerate}
	\item $A$ ist zellulär mit $\ast$ als Teil des Zelldatums.
	\item Es gibt eine Zellkette
	\[0=J_0 \subseteq J_1 \subseteq \ldots \subseteq J_n = A\]
\end{enumerate}
\end{theorem}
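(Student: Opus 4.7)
Der Beweis zerf\"allt in zwei Implikationen.

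F\"ur a. $\Rightarrow$ b. w\"ahle ich eine lineare Erweiterung der partiellen Ordnung auf $\Lambda$, etwa $\Lambda = \{\lambda_1, \ldots, \lambda_n\}$ mit $\lambda_i \leq \lambda_j \Rightarrow i \leq j$. Die Ordnungsideale $\Phi_i := \{\lambda_1, \ldots, \lambda_i\}$ liefern die Kette $J_i := A(\Phi_i)$ zweiseitiger, nach (C2) $\ast$-invarianter Ideale; die $\ast$-invariante Zerlegung $A = \bigoplus_i U_i$ mit $U_i := \operatorname{span}_R\{C^{\lambda_i}_{\mathfrak{s}\mathfrak{t}} \mid \mathfrak{s},\mathfrak{t} \in M(\lambda_i)\}$ ist unmittelbar. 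Zu jedem $i$ definiere ich $\Delta_i$ als den freien $R$-Modul mit Basis $(e_\mathfrak{s})_{\mathfrak{s} \in M(\lambda_i)}$, versehen mit der durch $a \cdot e_\mathfrak{s} := \sum_\mathfrak{u} r_a(\mathfrak{u},\mathfrak{s}) e_\mathfrak{u}$ gegebenen Linksmodulstruktur \"uber $A/J_{i-1}$. Die Wohldefiniertheit folgt aus (C3), da die Koeffizienten $r_a$ f\"ur $a \in J_{i-1}$ verschwinden; via $e_\mathfrak{s} \mapsto C^{\lambda_i}_{\mathfrak{s},\mathfrak{t}_0} + J_{i-1}$ (f\"ur ein beliebig fixiertes $\mathfrak{t}_0$) bettet sich $\Delta_i$ als Linksideal in $J_i/J_{i-1}$ ein.

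Die $R$-lineare Abbildung $\alpha: \Delta_i \otimes_R \Delta_i^\ast \to J_i/J_{i-1}$ mit $\alpha(e_\mathfrak{s} \otimes e_\mathfrak{t}^\ast) := C^{\lambda_i}_{\mathfrak{s},\mathfrak{t}} + J_{i-1}$ ist auf den Basen bijektiv. Ihre Vertr\"aglichkeit mit der Linkswirkung ergibt sich unmittelbar aus (C3); f\"ur die Rechtswirkung liefert (C3$^\ast$) angewendet auf $a^\ast$ die Formel
\[C^{\lambda_i}_{\mathfrak{s},\mathfrak{t}} \cdot a \equiv \sum_\mathfrak{u} r_{a^\ast}(\mathfrak{u},\mathfrak{t}) \, C^{\lambda_i}_{\mathfrak{s},\mathfrak{u}} \mod J_{i-1},\]
welche genau der Rechtsmodulstruktur auf $\Delta_i^\ast$ entspricht, die durch $y^\ast \cdot a := (a^\ast y)^\ast$ aus der Linkswirkung abgeleitet ist. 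Die Vertr\"aglichkeit von $\alpha$ mit $\ast$ gem\"a\ss{} Abbildung \ref{fig:cellideal} folgt schlie\ss{}lich aus (C2). Damit ist $J_i/J_{i-1}$ ein Zellideal im Sinne von Definition \ref{CellAlg:def:Cellideals}.

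F\"ur die R\"uckrichtung b. $\Rightarrow$ a. w\"ahle ich zu jedem $i$ eine $R$-Basis $(e_\mathfrak{s})_{\mathfrak{s} \in M(i)}$ von $\Delta_i$. Die Bilder $\alpha_i(e_\mathfrak{s} \otimes e_\mathfrak{t}^\ast)$ bilden eine Basis von $J_i/J_{i-1}$, die sich dank $J_i = J_{i-1} \oplus U_i$ eindeutig zu einer Basis $(C^i_{\mathfrak{s}\mathfrak{t}})$ von $U_i$ hebt. Mit $\Lambda := \{1,\ldots,n\}$ in der nat\"urlichen Ordnung ist (C1) offensichtlich. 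Axiom (C3) folgt aus $a \cdot \alpha_i(e_\mathfrak{s} \otimes e_\mathfrak{t}^\ast) = \alpha_i(a e_\mathfrak{s} \otimes e_\mathfrak{t}^\ast)$ in $J_i/J_{i-1}$, da die Strukturkonstanten der Linkswirkung auf $\Delta_i$ nicht von $\mathfrak{t}$ abh\"angen. F\"ur (C2) nutze ich die $\ast$-Invarianz von $U_i$: Es ist $(C^i_{\mathfrak{s}\mathfrak{t}})^\ast \in U_i$, und modulo $J_{i-1}$ stimmt dies wegen der Zellideal-Kompatibilit\"at mit $C^i_{\mathfrak{t}\mathfrak{s}} + J_{i-1}$ \"uberein; da $U_i \cap J_{i-1} = 0$ ist, folgt $(C^i_{\mathfrak{s}\mathfrak{t}})^\ast = C^i_{\mathfrak{t}\mathfrak{s}}$ exakt. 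Die technisch subtilste Stelle des Beweises liegt in der Vorw\"artsrichtung beim Nachweis, dass $\alpha$ ein Bimodul- und nicht nur Linksmodul-Isomorphismus ist, wof\"ur die saubere Trennung von Links- und Rechtswirkung via (C3) und (C3$^\ast$) entscheidend ist.
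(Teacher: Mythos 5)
Ihr Beweis ist korrekt und verfolgt im Wesentlichen denselben Ansatz wie das Original: In beiden Richtungen wird der Bimodul-Isomorphismus $\alpha$ über die Zellbasis bzw. eine Basis von $\Delta_i$ definiert und die Vertr\"aglichkeit mit der Links-/Rechtswirkung und mit $\ast$ aus (C3), (C$3^\ast$) und (C2) bzw. den Zellideal-Axiomen abgeleitet. Der einzige Unterschied ist rein darstellerischer Natur -- Sie konstruieren alle Schichten der Kette auf einmal, w\"ahrend der Text induktiv nur die unterste Schicht behandelt und den Rest an den Quotienten $A/J_1$ delegiert.
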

\begin{proof}
a.$\implies$b.

Sei $\lambda\in\Lambda$ minimal. Man beachte, dass dann $A(<\lambda)=0$ gilt.

Wir behaupten, dass $J:=A(\leq\lambda)$ ein Zellideal ist. Setze dafür $\Delta:=\sum_{\mathfrak{s}\in M(\lambda)} RC_\mathfrak{st}^\lambda$ für ein festes $\mathfrak{t}\in M(\lambda)$. Aufgrund von (C3) ist $\Delta$ ein Linksideal von $A$. Wegen (C1) ist es frei über $R$. Es ist $\Delta^\ast=\sum_{\mathfrak{s}\in M(\lambda)} RC_\mathfrak{ts}^\lambda$ wegen (C2).

Wir definieren $\alpha: \Delta\otimes_R\Delta^\ast \to J$ durch $\alpha(C_\mathfrak{st}^\lambda\otimes C_\mathfrak{tu}^\lambda):=C_\mathfrak{su}^\lambda$. Nach Konstruktion ist $\alpha$ dann bijektiv. Wegen (C3) und (C$3^\ast$) ist $\alpha$ ein $A$"~$A$"~Bimodulhomomorphismus. Wegen (C2) ist die Verträglichkeit mit $\ast$ gesichert.

Indem wir nun induktiv die Existenz einer Zellkette in $A/J$ annehmen -- was ja geht, da dies eine zelluläre Algebra mit $\Lambda\setminus\Set{\lambda}$ als partieller Ordnung und den von $A$ induzierten restlichen Daten ist -- und mit $J$ kombinieren, erhalten wir eine Zellkette in $A$. Man beachte, dass dies wirklich geht: Die $\ast$"~invarianten Unterräume sind alle von der Form $\sum_{\mathfrak{s},\mathfrak{t}} RC_\mathfrak{st}^\mu$ für ein $\mu\in\Lambda$.

\medbreak
b.$\implies$a.

Wir gehen den umgekehrten Weg und konstruieren aus der Idealkette ein Zelldatum mit $\Lambda=\Set{1,\ldots,n}$, versehen mit der natürlichen (totalen!) Ordnung, sowie
\[M(i)=\Set{1,\ldots,\sqrt{\dim_R J_i/J_{i-1}}}\]
für alle $i$. Man beachte, dass $\dim_R J_i/J_{i-1}$ nach Definition eines Zellideals stets eine Quadratzahl ist.

Wir wählen eine Zerlegung $A=U_1\oplus\ldots\oplus U_n$ so, dass wie in der Definition $U_i^\ast=U_i$ und $J_i = \bigoplus_{j\leq i} U_j$ ist.

Per Induktion über $n$ ist $A/J_1$ bereits zellulär, d.\,h. wir finden für $A/J_1$ eine Zellbasis $\Set{C_\mathfrak{st}^\lambda+J_1 | \lambda>1, \mathfrak{s},\mathfrak{t}\in M(\lambda)}$. Wir wählen Urbilder $C_\mathfrak{st}^\lambda\in U_2\oplus\ldots\oplus U_n$. Weil ${U_2\oplus\ldots\oplus U_n}$ ein Komplement von $J_1$ ist, sind diese Urbilder eindeutig bestimmt. Somit ist insbesondere (C2) für diese Elemente bereits erfüllt.

(C3) ist für diese Elemente weiterhin erfüllt, weil alle Störterme, die dazukommen können, ja in $J_1$ liegen und damit nach Definition der Ordnung am unteren Ende.

Seien nun $\Delta_1\subseteq J_1$ und $\alpha$ wie in der Definition eines Zellideals. Wähle dann eine Basis $B_\mathfrak{s}$ von $\Delta_1$ und setze $C_\mathfrak{st}^1 :=\alpha(B_\mathfrak{s}\otimes B_\mathfrak{t})$ für alle $\mathfrak{s},\mathfrak{t}\in M(1)$.

Da $\alpha$ ein $A$"~$A$"~Bimodulisomorphismus ist, ist $C_\mathfrak{st}^1$ eine $R$"~Basis von $J_1$, zusammen mit den anderen Elementen haben wir damit eine $R$"~Basis von $A$. Daher ist (C1) erfüllt. (C2) ist aufgrund der Induktionsvoraussetzung bzw. der Verträglichkeitsforderung an $\alpha$ erfüllt. (C3) ist erfüllt, weil $\alpha$ $A$"~linear ist.
\end{proof}

\begin{remark}
Der Beweis zeigt deutlich die Uneindeutigkeit der Basis und der partiell geordneten Menge im Zelldatum.

Außerdem wird klar, dass Zellularität gar nicht von $\Lambda,M$ und $C$ abhängt, sondern nur von $A$ und $\ast$. Der Satz rechtfertigt also, in Zukunft "`Sei $(A,\ast)$ zellulär"' zu schreiben, wenn wir auf die Wahl eines konkreten Zelldatums verzichten wollen.

Man beachte aber, dass die Involution tatsächlich einen Einfluss auf die Zellularität hat. Es gibt Algebren, die bzgl. einer Involution zellulär sind, bzgl. einer anderen aber nicht. Beispielsweise ist $\IQ[X]/(X^2)$ bezüglich der Identität zellulär (Beispiel \ref{CellAlg:example_polynom}), jedoch nicht bezüglich $X\mapsto -X$, denn wäre dem so, dann müsste $(X)$ ein Zellideal sein (denn es ist das einzige von Null verschiedene Ideal, dessen Dimension eine Quadratzahl ist), es besitzt aber keine $\ast$"~invariante Basis. (Dieses Beispiel stammt aus \cite{KoenigXi_cellalg_1}.)
\end{remark}
\begin{remark}
Jede Zellkette der Länge $n$ liefert ein Zelldatum mit $\abs{\Lambda}=n$. Es gibt Algebren, in denen es Zellketten verschiedener Länge gibt, siehe \cite{KoenigXi_cellalg_no_cells}. Das zeigt, dass nicht nur die Ordnung selbst, sondern sogar die Größe von $\Lambda$ uneindeutig ist. Wir werden aber später sehen unter welchen Bedingungen dem abgeholfen werden kann und dass nichtsdestotrotz ein guter Teil des Zelldatums alleine durch die algebraischen Eigenschaften von $A$ festgelegt ist.
\end{remark}
\begin{remark}
Die Konstruktion zeigt weiterhin, dass jedes minimale Element $\lambda\in\Lambda$ ein Zellideal definiert. Allgemeiner ist $A(\Set{\lambda})=A(\leq\lambda)/A(<\lambda)$ ein Zellideal in der zellulären Algebra $A/A(<\lambda)$. Damit gelten alle Sätze über Zellideale auch automatisch für diese Ideale.
\end{remark}

\begin{remark}
Die Definition über Idealketten macht es möglich, eine weitere Vererbungseigenschaft von zellulären Algebren zu beweisen. Das folgende Korollar ist eine naheliegende Verallgemeinerung von \citep[4.3]{KoenigXi_cellalg_1}.
\end{remark}
\begin{corollary}[Kondensationen von zellulären Algebren]\label{CellAlg:condensation}
\index{terms}{Zelluläre Algebra|)}\index{terms}{Kondensation}\index{terms}{Zellideal|)}
$R$ habe die Eigenschaft, dass endlich erzeugte, projektive $R$"~Moduln bereits frei sind (z.\,B. $R$ lokal oder ein Polynomring in endlich vielen Unbestimmen über einem Hauptidealring). Weiter sei $(A,\ast)$ eine zelluläre $R$"~Algebra.

Ist dann $e\in A$ ein Idempotent mit $e^\ast=e$, so ist auch $eAe$ zellulär bzgl. der Einschränkung von $\ast$.
\end{corollary}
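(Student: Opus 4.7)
The plan is to work with the ideal-theoretic characterization of cellular algebras (the equivalence theorem just proven) and to show that applying the condensation functor $e(-)e$ to a cell chain of $A$ produces, after collapsing trivial steps, a cell chain for $eAe$. So I fix a cell chain $0 = J_0 \subsetneq J_1 \subsetneq \cdots \subsetneq J_n = A$ together with an associated $\ast$-invariant $R$-module decomposition $A = U_1 \oplus \cdots \oplus U_n$ with $J_i = \bigoplus_{j \leq i} U_j$. Applying $e(-)e$ yields a chain $0 = eJ_0e \subseteq eJ_1e \subseteq \cdots \subseteq eJ_ne = eAe$ of two-sided ideals of $eAe$, together with the decomposition $eAe = \bigoplus_i eU_ie$ into $\ast$-invariant $R$-submodules (using $e^\ast = e$). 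After discarding those indices with $eU_ie = 0$ I obtain a strictly increasing chain, and the claim is that it is a cell chain.

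The key reduction is the following. Fix $i$ with $eU_ie \neq 0$ and set $\bar A := A/J_{i-1}$, $\bar J := J_i/J_{i-1}$. By assumption $\bar J$ is a cell ideal of $\bar A$, so there exist a left ideal $\Delta \subseteq \bar J$ that is free of finite rank as $R$-module and a bimodule isomorphism $\alpha \colon \Delta \otimes_R \Delta^\ast \to \bar J$ satisfying $\alpha(x \otimes y^\ast)^\ast = \alpha(y \otimes x^\ast)$. I set $\bar\Delta := e\Delta$. Since $\Delta$ is a left ideal of $\bar A$ and $e^2 = e$, the subspace $\bar\Delta$ is a left ideal of $e\bar A e$, because $(e\bar a e)(ex) = e(\bar a e x) \in e\Delta$ for all $\bar a \in \bar A$, $x \in \Delta$. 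As an $R$-module, $\bar\Delta$ is a direct summand of the free $R$-module $\Delta$ via the $R$-linear idempotent $x \mapsto ex$.

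Because $\ast$ is $R$-linear and $e^\ast = e$, one has $\bar\Delta^\ast = (e\Delta)^\ast = \Delta^\ast e$, and $e\Delta \otimes_R \Delta^\ast e$ is the direct summand of $\Delta \otimes_R \Delta^\ast$ cut out by the $R$-linear idempotent $x \otimes y \mapsto ex \otimes ye$. Since $\alpha$ is a bimodule homomorphism, $\alpha(ex \otimes y^\ast e) = e\,\alpha(x \otimes y^\ast)\,e$; therefore $\alpha$ restricts to a bijection $\bar\alpha \colon \bar\Delta \otimes_R \bar\Delta^\ast \to e\bar J e$ of $(e\bar Ae)$-bimodules, and the $\ast$-compatibility diagram for $\bar\alpha$ is inherited directly from the one for $\alpha$. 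Consequently $eJ_ie/eJ_{i-1}e = e\bar Je$ is a cell ideal of $eAe/eJ_{i-1}e$, which is what was needed.

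The only step that uses the hypothesis on $R$ is the passage from $\bar\Delta$ being a finitely generated direct summand of a free $R$-module to $\bar\Delta$ being itself free. Over a general commutative ring $\bar\Delta$ would merely be projective, which is not enough to meet condition (2) in the definition of a cell ideal. The assumption that finitely generated projective $R$-modules are automatically free is exactly what bridges this gap, and is in fact the only non-formal ingredient in the argument; everything else is a routine diagram chase based on the fact that $e$ acts as an $R$-linear idempotent endomorphism commuting with $\alpha$.
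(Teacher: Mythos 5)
Your proof takes essentially the same route as the paper: fix a cell chain for $A$, apply $e(-)e$, and show that $eJe$ remains a cell ideal because $e\Delta$ is a direct summand of the free module $\Delta$ and hence projective, whence free by the hypothesis on $R$; the restriction of $\alpha$ to $e\Delta\otimes_R\Delta^\ast e$ is the required bimodule isomorphism. Your remark that one must discard indices $i$ with $eU_ie=0$ (so that the induced chain is strictly increasing and the associated $M(\lambda)$ is non-empty) is a small point the paper passes over silently, but it does not change the argument.
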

\begin{proof}
Sei $0=J_0 \subseteq J_1 \subseteq \ldots \subseteq J_n =A$ eine Zellkette in $A$. Wir müssen eine Zellkette von $eAe$ konstruieren. Dazu zeigen wir, dass für jedes Zellideal $J\subseteq A$ das Ideal $eJe\subseteq eAe$ auch ein Zellideal ist. Induktiv folgt dann die Behauptung.

Sei also $J$ ein Zellideal, d.\,h. es gibt einen Linksmodul $\Delta\subseteq J$ und einen Isomorphismus $\alpha: \Delta\otimes_R\Delta^\ast\to J$, der im Sinne von Definition \ref{CellAlg:def:Cellideals} mit $\ast$ verträglich ist. Dann ist $eJe$ ein zweiseitiges Ideal von $eAe$, welches wegen $e^\ast=e$ invariant unter $\ast$ ist, $e\Delta \subseteq eJe$ ein $eAe$-Linksideal und die Einschränkung von $\alpha$ auf $e\Delta \otimes_R \Delta^\ast e \to eJe$ immer noch ein Isomorphismus, der im selben Sinne mit $\ast$ verträglich ist.

Es gilt $\Delta=e\Delta\oplus(1-e)\Delta$ als $R$"~Moduln, d.\,h. $e\Delta$ ist $R$"~projektiv und endlich erzeugt. Nach Voraussetzung ist somit $e\Delta$ frei und endlich erzeugt als $R$"~Modul. Das zeigt, dass $eJe$ ein Zellideal von $eAe$ ist.
\end{proof}
\section{Zellideale und invariante Bilinearformen}

\begin{remark}
Bereits in der Orginalarbeit von Graham und Lehrer sind die invarianten Bilinearformen das wesentliche Hilfsmittel, um die Darstellungstheorie zellulärer Algebren zu kontrollieren. In \cite{GrahamLehrer} werden die Bilinearformen durch explizite Konstruktion definiert. Folgendes Lemma charakterisiert die invarianten Bilinearformen auf Zellidealen im abstrakten Kontext nach König und Xi. Die Idee dazu findet sich implizit bereits in \cite{KoenigXi_cellalg_3} und \cite{GrahamLehrer}.
\end{remark}

\begin{lemmadef}[Invariante Bilinearform zu Zellidealen]
\index{terms}{Zellideal}\index{terms}{Invariante Bilinearform}
Sei $(A,\ast)$ eine $R$"~Algebra mit einem involutiven Antiautomorphismus und $J\subseteq A$ ein Zellideal. Seien $\Delta\subseteq J$ ein Linksideal und $\alpha: \Delta\otimes_R\Delta^\ast\to J$ ein Isomorphismus wie in der Definition von Zellidealen \ref{CellAlg:def:Cellideals}.

Es gilt in dieser Situation:
\begin{enumerate}
	\item Es gibt genau eine $R$"~lineare Abbildung $\phi: \Delta^\ast\otimes_A \Delta\to R$ derart, dass das Diagramm \ref{CellAlg:fig:invariant_bilinearform} kommutiert.
	
	\begin{figure}[hbp]
	\centering
		\begin{tikzpicture}
		\matrix (m) [matrix of math nodes, row sep=3em,
		column sep=2.5em, text height=1.5ex, text depth=0.25ex]
		{
			\Delta\otimes_R \Delta^\ast \otimes_A \Delta \otimes_R \Delta^\ast & \Delta\otimes_R \Delta^\ast \\
			J \otimes_A J & J \\
		};
		\path[->,font=\scriptsize]
			(m-1-1) edge node[above]{$\beta$} (m-1-2)
			(m-1-1) edge node[left]{$\alpha\otimes\alpha$} (m-2-1)
			(m-2-1) edge node[below]{$x\otimes y\mapsto xy$} (m-2-2)
			(m-1-2) edge node[right]{$\alpha$} (m-2-2);
		\end{tikzpicture}
		\caption{Definition von $\phi$}
		\label{CellAlg:fig:invariant_bilinearform}
	\end{figure}
	Dabei ist $\beta(w\otimes x\otimes y\otimes z)=\phi(x\otimes y)\cdot w\otimes z$.
	
	\item Wählt man eine Basis $(C_\mathfrak{s})$ von $\Delta$, sodass $C_\mathfrak{st}=\alpha(C_\mathfrak{s}^\ast\otimes C_\mathfrak{t})$ eine Basis von $J$ ist, dann ist $\phi$ charakterisiert durch
	\[C_\mathfrak{st} C_\mathfrak{uv} = \phi(C_\mathfrak{t}^\ast \otimes C_\mathfrak{u}) C_\mathfrak{sv}.\]
	\item $\phi$ ist $A$"~invariant:
	\[\forall x,y\in\Delta, a\in A: \phi(x^\ast a \otimes y)=\phi(x^\ast \otimes ay)\]
	\item $\phi$ ist symmetrisch:
	\[\forall x,y\in\Delta: \phi(x^\ast \otimes y)=\phi(y^\ast \otimes x)\]
	\item Für alle $x,y,z\in\Delta$ gilt:
	\[\alpha(x\otimes y^\ast)z=x\phi(y^\ast\otimes z)\]
	\[x^\ast\alpha(y\otimes z^\ast)=\phi(x^\ast\otimes y)z^\ast\]
\end{enumerate}
\end{lemmadef}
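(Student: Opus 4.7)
Der Plan ist, eine Basis $(D_\mathfrak{s})$ von $\Delta$ zu fixieren. Dann bildet $(D_\mathfrak{s}^\ast)$ eine Basis von $\Delta^\ast$, und weil $\alpha$ ein Isomorphismus ist, liefern die $E_\mathfrak{st} := \alpha(D_\mathfrak{s}\otimes D_\mathfrak{t}^\ast)$ eine $R$"~Basis von $J$. Die entscheidende Beobachtung wird sein, dass das Produkt $E_\mathfrak{st}\cdot D_\mathfrak{u}$ (welches als Element von $A\cdot\Delta$ in $\Delta$ liegt) stets ein skalares Vielfaches von $D_\mathfrak{s}$ sein muss, wobei der Skalar nur von $\mathfrak{t}$ und $\mathfrak{u}$ abh�ngt. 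Genau dieser Skalar soll $\phi(D_\mathfrak{t}^\ast\otimes D_\mathfrak{u})$ definieren, und $\phi$ wird dann durch $R$"~bilineare Fortsetzung auf ganz $\Delta^\ast\otimes_R\Delta$ erkl�rt.

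Zum Nachweis dieser Schl�sselbehauptung berechnet man das Produkt $E_\mathfrak{st}\cdot E_\mathfrak{uv}$ auf zwei Weisen: Mittels Rechts-$A$-Linearit�t von $\alpha$ ergibt sich $\alpha(D_\mathfrak{s}\otimes D_\mathfrak{t}^\ast E_\mathfrak{uv})$, mittels Links-$A$-Linearit�t $\alpha(E_\mathfrak{st}D_\mathfrak{u}\otimes D_\mathfrak{v}^\ast)$. Da $\alpha$ bijektiv ist, m�ssen die beiden Argumente in $\Delta\otimes_R\Delta^\ast$ bereits �bereinstimmen. Entwickelt man diese Gleichung in der Tensorbasis $D_\mathfrak{a}\otimes D_\mathfrak{b}^\ast$, so erzwingt der Koeffizientenvergleich, dass $E_\mathfrak{st}D_\mathfrak{u}$ ein Vielfaches von $D_\mathfrak{s}$ und $D_\mathfrak{t}^\ast E_\mathfrak{uv}$ ein Vielfaches von $D_\mathfrak{v}^\ast$ ist, und zwar mit demselben, nur von $\mathfrak{t}$ und $\mathfrak{u}$ abh�ngigen Skalar. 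Damit ist $\phi$ wohldefiniert, und zugleich ergeben sich die Formel aus b.) sowie die erste Identit�t aus e.) unmittelbar.

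Die $A$"~Invarianz c.) erh�lt man aus der Assoziativit�t $E_\mathfrak{st}(aE_\mathfrak{uv})=(E_\mathfrak{st}a)E_\mathfrak{uv}$, indem auf beiden Seiten die Formel b.) angewandt wird: Der resultierende Koeffizientenvergleich liefert $\phi(D_\mathfrak{t}^\ast\otimes aD_\mathfrak{u})=\phi(D_\mathfrak{t}^\ast a\otimes D_\mathfrak{u})$, d.\,h. $\phi$ faktorisiert tats�chlich �ber $\otimes_A$. Die Symmetrie d.) folgt, indem $\ast$ auf b.) angewendet wird unter Verwendung der Zellideal-Bedingung $E_\mathfrak{st}^\ast=E_\mathfrak{ts}$, und die zweite Identit�t aus e.) ergibt sich anschlie�end ebenso durch Anwenden von $\ast$ auf die erste Gleichung, kombiniert mit d.). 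Die Aussage a.) ist schlie�lich nur die Umformulierung von b.) durch das angegebene Diagramm: Da $\alpha$ bijektiv ist, ist $\phi$ durch die Forderung $\alpha(w\otimes x)\alpha(y\otimes z)=\phi(x\otimes y)\alpha(w\otimes z)$ bereits eindeutig festgelegt. Der wesentliche Knackpunkt steckt im eingangs erw�hnten Koeffizientenvergleich, da gerade dieser Schritt die bijektive Bimodul-Struktur von $\alpha$ in voller Sch�rfe ausnutzt � insbesondere das Zusammenspiel von Links- und Rechts-$A$-Linearit�t, welches die "`skalare Form"' von $E_\mathfrak{st}D_\mathfrak{u}$ erst erzwingt.
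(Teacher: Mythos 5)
Dein Beweis ist korrekt und verfolgt im Wesentlichen denselben Weg wie die Arbeit: beide nutzen aus, dass $\alpha$ ein $A$-$A$-Bimodulisomorphismus ist und die $C_\mathfrak{st}$ bzw.\ $E_\mathfrak{st}$ eine freie $R$-Basis von $J$ bilden, um zu zeigen, dass $C_\mathfrak{st}C_\mathfrak{uv}$ ein Vielfaches von $C_\mathfrak{sv}$ mit nur von $\mathfrak{t},\mathfrak{u}$ abh�ngigem Skalar sein muss. Der einzige stilistische Unterschied: die Arbeit leitet zun�chst separat $AC_\mathfrak{st}\subseteq\bigoplus_\mathfrak{u}RC_\mathfrak{ut}$ und $C_\mathfrak{st}A\subseteq\bigoplus_\mathfrak{v}RC_\mathfrak{sv}$ her und schneidet dann, w�hrend du beides in die einzelne Identit�t $\alpha(E_\mathfrak{st}D_\mathfrak{u}\otimes D_\mathfrak{v}^\ast)=\alpha(D_\mathfrak{s}\otimes D_\mathfrak{t}^\ast E_\mathfrak{uv})$ b�ndelst, $\alpha^{-1}$ anwendest und in der Tensorbasis Koeffizienten vergleichst -- inhaltlich dasselbe Argument, nur kompakter formuliert. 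Auch die Folgerungen b.)--e.) gehen denselben Weg wie in der Arbeit (Bimodullinearit�t f�r die Invarianz, Anwenden von $\ast$ und der Zellideal-Kompatibilit�t $E_\mathfrak{st}^\ast=E_\mathfrak{ts}$ f�r Symmetrie und die zweite Gleichung in e.)).
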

\begin{proof}
Betrachte diese Basis $C_\mathfrak{s}$. Weil $\alpha$ ein $A$"~$A$"~Bimodulhomomorphismus ist, gilt
\[AC_\mathfrak{st} = A\alpha(C_\mathfrak{s}\otimes C_\mathfrak{t}^\ast) = \alpha(AC_\mathfrak{s}\otimes C_\mathfrak{t}^\ast) \subseteq \sum_\mathfrak{u} \alpha(RC_\mathfrak{u}\otimes C_\mathfrak{t}^\ast) = \bigoplus_\mathfrak{u} RC_\mathfrak{ut}\]
und analog $C_\mathfrak{st}A \subseteq\bigoplus_\mathfrak{v} RC_\mathfrak{sv}$ und die auftretenden Koeffizienten sind jeweils unabhängig von $\mathfrak{t}$ bzw. $\mathfrak{s}$.

Mit dieser Überlegung ist
\[C_\mathfrak{st} C_\mathfrak{uv} \in \Big(\bigoplus_{\mathfrak{t}'} RC_{\mathfrak{st}'}\Big)\cap\Big(\bigoplus_{\mathfrak{u}'} RC_{\mathfrak{u}'\mathfrak{v}}\Big) = RC_\mathfrak{sv}\]
und der Koeffizient ist von $\mathfrak{s}$ und $\mathfrak{v}$ unabhängig. Wenn wir diesen Koeffizient mit $\phi(C_\mathfrak{t}^\ast\otimes C_\mathfrak{u})$ bezeichnen, gilt also
\[C_\mathfrak{st} C_\mathfrak{uv} = \phi(C_\mathfrak{t}^\ast \otimes C_\mathfrak{u}) C_\mathfrak{sv},\]
wie behauptet. Das lässt sich auch schreiben als
\[\alpha(C_\mathfrak{s}\otimes C_\mathfrak{t}^\ast)\cdot\alpha(C_\mathfrak{u}\otimes C_\mathfrak{v}^\ast) = \phi(C_\mathfrak{t}^\ast \otimes C_\mathfrak{u}) \alpha(C_\mathfrak{s}\otimes C_\mathfrak{v}^\ast)\]
Wenn wir $\phi$ jetzt als die $R$-bilineare Fortsetzung dieser Gleichung auffassen, erhalten wir ganz allgemein
\[\forall w,x,y,z\in\Delta: \alpha(w\otimes x^\ast)\cdot\alpha(y\otimes z^\ast) = \phi(x^\ast \otimes y) \alpha(w\otimes z^\ast)\]
Außerdem zeigt uns das die Eindeutigkeit von $\phi$. Da $\alpha$ ein $A$"~$A$"~Bimodulhomomorphismus ist, erhalten wir weiter:
\begin{align*}
	\phi(x^\ast a\otimes y) \alpha(w\otimes z^\ast) &= \alpha(w\otimes x^\ast a)\cdot\alpha(y\otimes z^\ast) \\
	&= \alpha(w\otimes x^\ast)\cdot a\cdot\alpha(y\otimes z^\ast) \\
	&= \alpha(w\otimes x^\ast)\cdot\alpha(ay\otimes z^\ast) \\
	&= \phi(x^\ast \otimes ay) \alpha(w\otimes z^\ast)
\end{align*}
d.\,h. $\phi$ ist $A$"~invariant, wie behauptet. Das zeigt nun a., b. und c.

\bigbreak
Die Behauptung d. folgt, indem wir $\ast$ auf beide Seiten von
\[\alpha(w\otimes x^\ast)\cdot\alpha(y\otimes z^\ast) = \phi(x^\ast \otimes y) \alpha(w\otimes z^\ast)\]
anwenden. Dann erhalten wir nämlich
\begin{align*}
	\alpha(z\otimes y^\ast)\cdot\alpha(x\otimes w^\ast) &=\alpha(y\otimes z^\ast)^\ast \cdot \alpha(w\otimes x^\ast)^\ast \\
	&= \phi(x^\ast \otimes y) \alpha(w\otimes z^\ast)^\ast \\
	&= \phi(x^\ast \otimes y) \alpha(z\otimes w^\ast).
\end{align*}
Weil die linke Seite auch gleich $\phi(y^\ast \otimes x)\alpha(z\otimes w^\ast)$ ist, zeigt das die Symmetrie.

\bigbreak
e. erhalten wir ebenfalls aus der definierenden Gleichung. Seien nämlich $r_\mathfrak{v}\in R$ die Koeffizienten in
\[C_\mathfrak{st} \cdot C_\mathfrak{u} = \sum_\mathfrak{v} r_\mathfrak{v} C_\mathfrak{v}.\]
Dann folgt:
\begin{align*}
	\sum_\mathfrak{v} r_\mathfrak{v} C_\mathfrak{vz} &= \alpha\Big(\sum_\mathfrak{v} r_\mathfrak{v} C_\mathfrak{v}\otimes C_\mathfrak{z}^\ast\Big) \\
	&= \alpha(C_\mathfrak{st} C_\mathfrak{u} \otimes C_\mathfrak{z}^\ast) \\
	&= C_\mathfrak{st}\cdot\alpha(C_\mathfrak{u}\otimes C_\mathfrak{z}^\ast) \\
	&= C_\mathfrak{st} C_\mathfrak{uz} \\
	&= \phi(C_\mathfrak{t}^\ast \otimes C_\mathfrak{u}) C_\mathfrak{sz}
\end{align*}
Also ist $r_\mathfrak{v}=0$ für $\mathfrak{v}\neq\mathfrak{s}$ und $r_\mathfrak{s}=\phi(C_\mathfrak{t}^\ast \otimes C_\mathfrak{u})$. Eingesetzt in die ursprüngliche Gleichung heißt das:
\[C_\mathfrak{st} \cdot C_\mathfrak{u} = \phi(C_\mathfrak{t}^\ast,C_\mathfrak{u}) C_\mathfrak{s} = C_\mathfrak{s} \phi(C_\mathfrak{t}^\ast,C_\mathfrak{u})\]
Das heißt unsere gewünschte Gleichung
\[\alpha(x\otimes y^\ast)z = x\phi(y^\ast\otimes z)\]
ist für alle Basiselemente richtig. Weil beide Seiten $R$-trilinear sind, folgt daraus die allgemeine Gültigkeit. Die andere Gleichung folgt durch Anwenden von $\ast$.
\end{proof}

\begin{remark}
Der folgende Satz ist eine Verallgemeinerung von \citep[4.1]{KoenigXi_cellalg_1} von Körpern auf allgemeine Koeffizientenringe. Es werden dabei erneut Ideen aus \cite{GrahamLehrer} in den abstrakten Kontext von König und Xi gesetzt.
\end{remark}

\begin{theorem}[Multiplikation mit Zellidealen]\label{CellAlg:Mult_cellideals}
\index{terms}{Zellideal}
\index{symbols}{a@$\mathfrak{a}$}\index{symbols}{az@$\mathfrak{a}_z$}
Seien $A,\ast,J,\Delta,\alpha: \Delta\otimes_R \Delta^\ast\to J$ und $\phi:\Delta^\ast\otimes_A \Delta\to R$ wie oben. Dann gilt:
\begin{enumerate}
	\item Für alle $z\in\Delta$ ist $\mathfrak{a}_z:=\Set{\phi(y,z) | y\in\Delta^\ast}$ ein Ideal von $R$. Es gilt:
	\[Jz=\Delta\mathfrak{a}_z\]
	Insbesondere ist $Jz=\Delta$, falls $\mathfrak{a}_z=R$ ist.
	\item $\mathfrak{a}:=\sum_{z\in\Delta} \mathfrak{a}_z=\im(\phi)$ ist ein endlich erzeugtes Ideal von $R$ und es gilt $J\Delta=\mathfrak{a}\Delta$ und $J^2=\mathfrak{a}J$. Insbesondere ist $\mathfrak{a}$ unabhängig von der Wahl von $\Delta$ und $\alpha$.
\end{enumerate}
Ist speziell $R=K$ ein Körper, so gilt weiter:
\begin{enumerate}[resume]
	\item $J^2=\begin{cases} 0 &\text{falls}\,\phi=0 \\ J &\text{falls}\,\phi\neq0\end{cases}$.
	\item Im Fall $J^2=J$ gibt es außerdem ein primitives Idempotent $e\in J$ mit
	\begin{enumerate}
		\item $Ae \isomorphic \Delta$ und $eA\isomorphic \Delta^\ast$ als $A$"~Links- bzw. $A$"~Rechtsmoduln.
		\item $J=AeA$ und $eAe=Ke$.
		\item Die Multiplikation von $A$ ist ein Isomorphismus $Ae\otimes_K eA\to J$ von $A$"~$A$"~Bi\-mo\-duln.
	\end{enumerate}
\end{enumerate}
\end{theorem}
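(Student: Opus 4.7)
The plan is to exploit the identities from the preceding lemma, in particular $\alpha(x\otimes y^\ast)z = x\cdot\phi(y^\ast\otimes z)$ and $\alpha(w\otimes x^\ast)\cdot\alpha(y\otimes z^\ast) = \phi(x^\ast\otimes y)\cdot\alpha(w\otimes z^\ast)$, which translate products in $J$ into values of $\phi$. Everything will reduce to algebraic manipulations with $\alpha$ and $\phi$.

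For a.), note that since $\phi$ is $R$-bilinear, $\mathfrak{a}_z$ is visibly an $R$-submodule of $R$, hence an ideal. The identity $\alpha(x\otimes y^\ast)z=x\phi(y^\ast\otimes z)$ gives directly $Jz=\alpha(\Delta\otimes\Delta^\ast)\cdot z=\Delta\cdot\phi(\Delta^\ast\otimes z)=\Delta\mathfrak{a}_z$. The special case $\mathfrak{a}_z=R$ is immediate. For b.), $\mathfrak{a}=\sum_z\mathfrak{a}_z$ is exactly the image of $\phi$ by definition, and since $\Delta$ is finitely generated free over $R$ with some basis $(C_\mathfrak{s})$, the finitely many values $\phi(C_\mathfrak{t}^\ast\otimes C_\mathfrak{u})$ generate $\mathfrak{a}$. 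Summing the identity from a.) over $z\in\Delta$ yields $J\Delta=\mathfrak{a}\Delta$, and applying the product identity for $\alpha$ gives $J^2=\mathfrak{a}J$. For independence of $\mathfrak{a}$ from $\Delta$ and $\alpha$, I would argue that $\mathfrak{a}$ is characterised by the equality $\mathfrak{a}J=J^2$ together with the property that $\mathfrak{a}$ is generated by the entries of the structure matrix of multiplication $J\otimes_A J\to J$, which only depends on $J$ and $\ast$; alternatively, one checks directly that any other choice $(\Delta',\alpha')$ yields an isomorphism of $\phi$'s under the canonical identifications, and the image is preserved.

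For c.) over a field $K$, the only ideals of $K$ are $0$ and $K$, so $\mathfrak{a}\in\{0,K\}$ and the claim follows from $J^2=\mathfrak{a}J$. For d.), given $\phi\neq0$, pick $y_0\in\Delta^\ast$, $z_0\in\Delta$ with $\phi(y_0,z_0)\neq0$ and rescale so that $\phi(y_0,z_0)=1$. Then $e:=\alpha(z_0\otimes y_0)$ satisfies
\[e^2=\alpha(z_0\otimes y_0)\cdot\alpha(z_0\otimes y_0)=\phi(y_0,z_0)\cdot\alpha(z_0\otimes y_0)=e.\]
From a.), $\mathfrak{a}_{z_0}=K$ forces $Jz_0=\Delta$, hence $Az_0=\Delta$; by the symmetric argument (working with $\Delta^\ast$ as a right module, using the second identity $x^\ast\alpha(y\otimes z^\ast)=\phi(x^\ast\otimes y)z^\ast$) one gets $y_0A=\Delta^\ast$. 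Therefore $Ae=\alpha(Az_0\otimes y_0)=\alpha(\Delta\otimes y_0)$, and the map $\Delta\to Ae$, $x\mapsto\alpha(x\otimes y_0)$, is an $A$-linear surjection which is injective because $\alpha$ is bijective and $y_0\neq0$ in the $K$-vector space $\Delta^\ast$. Symmetrically $eA\cong\Delta^\ast$. Then $AeA=\alpha(Az_0\otimes y_0A)=\alpha(\Delta\otimes\Delta^\ast)=J$, and composing these isomorphisms with $\alpha$ yields the bimodule isomorphism $Ae\otimes_K eA\xrightarrow{\sim}J$. Finally, for primitivity, the computation
\[eae=\alpha(z_0\otimes y_0)\,a\,\alpha(z_0\otimes y_0)=\phi(y_0a,z_0)\,e\in Ke\]
shows $eAe=Ke$, so $eAe$ is a field and $e$ is primitive.

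The routine work lies in juggling the two basic identities for $\alpha$ and $\phi$, and the main obstacle I anticipate is the independence-of-$\Delta$-and-$\alpha$ claim in b.): unlike the other parts, which follow by direct computation, this requires either pinning down $\mathfrak{a}$ by an intrinsic property of $(J,\ast)\subseteq(A,\ast)$ or explicitly tracing an isomorphism between different choices of $(\Delta,\alpha)$ and showing the induced $\phi$'s have the same image. I would aim for the intrinsic characterisation via the multiplication map $J\otimes_A J\to J$, observing that its image is $J^2$ and that $\mathfrak{a}$ is the smallest ideal with $J^2=\mathfrak{a}J$ in the relevant sense.
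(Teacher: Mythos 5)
Your proof is correct and follows essentially the same route as the paper: reduce everything to the two identities $\alpha(x\otimes y^\ast)z=x\phi(y^\ast\otimes z)$ and $\alpha(w\otimes x^\ast)\alpha(y\otimes z^\ast)=\phi(x^\ast\otimes y)\alpha(w\otimes z^\ast)$ from the preceding lemma, produce the idempotent $e$ from a pair with $\phi(y_0,z_0)=1$, and read off $Ae\isomorphic\Delta$, $eA\isomorphic\Delta^\ast$, $AeA=J$, $eAe=Ke$. Two small remarks. First, in d.iii. the paper proves injectivity of the multiplication map $Ae\otimes_K eA\twoheadrightarrow J$ by a $K$-dimension count ($\dim Ae=\dim eA=\dim\Delta$), whereas you verify directly that under the identifications $\beta_1:\Delta\to Ae$, $x\mapsto\alpha(x\otimes y_0)$, and $\beta_2:\Delta^\ast\to eA$, $y\mapsto\alpha(z_0\otimes y)$, multiplication corresponds to $\alpha$ itself (indeed $\beta_1(x)\beta_2(y)=\phi(y_0\otimes z_0)\alpha(x\otimes y)=\alpha(x\otimes y)$); both work, and your version is arguably cleaner since it exhibits the inverse. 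Second, you flag the independence of $\mathfrak{a}$ from $(\Delta,\alpha)$ as the main worry and propose an intrinsic characterisation; the quickest closing of this gap is to note that $J$ is free of finite positive rank over $R$ (being isomorphic to $\Delta\otimes_R\Delta^\ast$), and for a free module $M$ of positive rank the equality $\mathfrak{a}M=\mathfrak{b}M$ forces $\mathfrak{a}=\mathfrak{b}$; hence $J^2=\mathfrak{a}J$ pins down $\mathfrak{a}$ intrinsically, which is exactly the flavour of argument you were reaching for.
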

\begin{proof}
Dass $\mathfrak{a}_z$ und $\mathfrak{a}$ Ideale sind, ist klar. In a. folgt die behauptete Gleichung aus ${J=\im(\alpha)}$ und
\[\forall x,y\in\Delta: \alpha(x\otimes y^\ast)z=x\phi(y^\ast\otimes z)\]
Dieselbe Gleichung mit variablem $z\in\Delta$ zeigt $J\Delta=\mathfrak{a}\Delta$.

\medbreak
In b. folgt die zweite Gleichung aus
\[C_\mathfrak{st} C_\mathfrak{uv} = \phi(C_\mathfrak{t}^\ast \otimes C_\mathfrak{u}) C_\mathfrak{sv}\]
Denn daraus ergibt sich direkt $J^2 \subseteq \mathfrak{a}J$. Ist andererseits $a=\sum_{\mathfrak{t},\mathfrak{u}} r_\mathfrak{tu} \phi(C_\mathfrak{t}^\ast \otimes C_\mathfrak{u})$, so ist $aC_\mathfrak{sv} = \sum_{\mathfrak{t},\mathfrak{u}} r_\mathfrak{ru} C_\mathfrak{st} C_\mathfrak{uv}\in J^2$ und daher $\mathfrak{a}J\subseteq J^2$.

\medbreak
c. ist klar aufgrund von b. und $\mathfrak{a}\neq 0\implies \mathfrak{a}=K$, da $K$ ein Körper ist.

\medbreak
d.i. und d.ii. zeigen wir gemeinsam:

Ist $\phi\neq 0$, so gibt es $\mathfrak{u},\mathfrak{v}$ mit $k:=\phi(C_\mathfrak{v}^\ast \otimes C_\mathfrak{u})\neq 0$. Es folgt:
\[C_\mathfrak{uv} C_\mathfrak{uv} = \phi(C_\mathfrak{v}^\ast \otimes C_\mathfrak{u}) C_\mathfrak{uv} = kC_\mathfrak{uv}\]
Daher ist $e:=k^{-1} C_\mathfrak{uv}$ das gesuchte Idempotent in $J$. Wir behaupten, dass $Ae = \sum_\mathfrak{s} KC_\mathfrak{sv}$ ist. Es ist $Ae\subseteq \sum_\mathfrak{s} KC_\mathfrak{sv}$, weil rechter Hand ein Linksideal steht und $e$ in diesem enthalten ist. Umgekehrt ist wegen $C_\mathfrak{sv} e= k^{-1}C_\mathfrak{sv} C_\mathfrak{uv} = k^{-1}\phi(C_\mathfrak{v}^\ast \otimes C_\mathfrak{u}) C_\mathfrak{sv}=C_\mathfrak{sv}$ jedes der Basiselemente $C_\mathfrak{sv} \in Ae$. Das zeigt die Gleichheit $Ae=\sum_\mathfrak{s} KC_\mathfrak{sv}$ und das ist zu $\Delta$ isomorph. Die zweite Gleichheit folgt analog.

\medbreak
$AeA\subseteq J$ und $Ke\subseteq eAe$ sind klar. Da für alle $\mathfrak{s},\mathfrak{y}$ ja
\[C_\mathfrak{sv} e C_\mathfrak{uy} = k^{-1} C_\mathfrak{sv} C_\mathfrak{uv} C_\mathfrak{uy} = k^{-1} \phi(C_\mathfrak{v}^\ast \otimes C_\mathfrak{u}) \phi(C_\mathfrak{v}^\ast \otimes C_\mathfrak{u}) C_\mathfrak{sy} = kC_\mathfrak{sy}\]
und für alle $a\in A$
\[eae = k^{-2}C_\mathfrak{uv} a C_\mathfrak{uv} \in KC_\mathfrak{uv} = Ke\]
ist, gelten auch die umgekehrten Inklusionen. Insbesondere ist $K=eAe=\End_A(Ae)$ lokal, also $Ae$ unzerlegbar, also $e$ primitiv.

\medbreak
d.iii. Die Multiplikation liefert einen $A$"~$A$"~Bimodulepimorphismus $Ae\otimes_K eA\twoheadrightarrow AeA=J$. Da $\dim_K Ae=\dim_K \Delta=\dim_K \Delta^\ast = \dim_K eA$, sind die $K$-Dimensionen auf beiden Seiten gleich $(\dim_K \Delta)^2$, d.\,h. der Homomorphismus ist auch injektiv.
\end{proof}
\section{Zellmoduln}

\begin{remark}
Die Darstellungstheorie zellulärer Algebren ist unter anderem deshalb so gut kontrollierbar, weil aus einer Zellbasis bereits alle einfachen $A$"~Moduln explizit abgelesen werden können. In diesem Abschnitt wollen wir zusammenfassen, wie das geht, und fixieren dafür eine zelluläre $R$"~Algebra $A$ mit Zelldatum $(\Lambda,M,C,\ast)$.
\end{remark}

\begin{definition}[Zellmoduln, siehe {\citep[2.1]{GrahamLehrer}}]
\index{terms}{Zellmodul}
\index{symbols}{Wlambda@$W(\lambda)$}
Definiere für alle ${\lambda\in\Lambda}$ den \udot{(Links-)Zellmodul} $W(\lambda)$ als den freien $R$"~Modul über der Basis $(C_\mathfrak{s})_{\mathfrak{s}\in M(\lambda)}$ und der Multiplikation
\[aC_\mathfrak{s} = \sum_{\mathfrak{u}\in M(\lambda)} r_a(\mathfrak{u},\mathfrak{s})C_\mathfrak{u}\]
wobei $r_a(\mathfrak{u},\mathfrak{s})$ die Koeffizienten aus der Definition \ref{CellAlg:def} bezeichne. $W(\lambda)$ wird auf diese Weise ein $A$"~$R$"~Bimodul.

Der entsprechende Rechtsmodul ist $W(\lambda)^\ast$ mit derselben Basis, aber der Rechtsmultiplikation
\[C_\mathfrak{s}^\ast a := \sum_{\mathfrak{u}\in M(\lambda)} C_\mathfrak{u}^\ast r_{a^\ast}(\mathfrak{u},\mathfrak{s})\]
$W(\lambda)^\ast$ ist damit ein $R$"~$A$"~Bimodul.
\end{definition}

\begin{example}[Polynomringe]
Wir betrachten wieder $A=R[X]/(X^n)$. Für alle $\lambda\in\Set{0,\ldots,n-1}$ ist
\[W(\lambda) = (X^\lambda)/(X^{\lambda+1}) \isomorphic R[X]/(X).\]
Es gibt nur eine Operation auf diesem Modul, nämlich $X\cdot W(\lambda)=0$.
\end{example}

\begin{example}[Matrizenringe]
Ist $A=R^{d\times d}$, dann ist $W(\lambda) = R^{d\times 1}$, wobei $A$ durch Linksmultiplikation auf $W$ operiert.
\end{example}

\begin{example}[Hecke-Algebren, siehe {\citep[2.6.12]{geckjacon}}]\label{CellAlg:ex:CellModules_Hecke}
\index{terms}{Hecke-Algebra}\index{terms}{Kazhdan-Lusztig!-Basis}
Sind wir in der Situation von \ref{CellAlg:example_hecke}, dann ist abstrakt $W(\lambda)$ der irreduzible $KH$"~Modul vom Isomorphietyp $\lambda$. Ganz konkret ist jedoch die Operation gegeben durch (mit den Bezeichnungen von \ref{CellAlg:example_hecke}) die Matrixdarstellung
\[C_w \mapsto \sum_{\substack{z\in W, d\in\mathcal{D} \\ z \sim_\mathcal{LR} d}} n_d h_{w,d,z} \overline{\rho}_\lambda(t_z) = \overline{\rho}_\lambda\circ\phi(C_w)\]
d.\,h. die Zellmoduldarstellungen $KH\to K^{d_\lambda\times d_\lambda}$ faktorisieren durch den Lusztig-Iso\-mor\-phis\-mus $\phi$.
\end{example}

\begin{remark}\label{CellAlg:Hecke_cell_mod}
Aus dieser Darstellung der Zellmoduln von Hecke"=Algebren ergibt sich eine unmittelbare Folgerung:

Die irreduziblen Matrixdarstellungen $\rho: KH\to K^{d\times d}$, die als Zellmoduln bzgl. einer Zellbasis nach Geck auftreten, sind genau diejenigen balancierten Darstellungen, die als ${\rho=\overline{\rho}\circ\phi}$ faktorisieren. Insbesondere bilden sie eine vollständige $GL_d(F)$"~Kon\-ju\-ga\-tions\-klas\-se (siehe \ref{h_vs_j:conj_class}).
\end{remark}

\begin{remark}
Diese Beobachtung liefert uns eine Möglichkeit, algorithmisch zu überprüfen, ob eine gegebene Matrixdarstellung von $KH$ als Zellmodul auftritt: Zuerst wird getestet, ob es sich um eine balancierte Darstellung handelt und wenn dies der Fall ist, wird $\rho(C_s) = \overline{\rho}(\phi(C_s))$ getestet für alle $s\in S$. Siehe Algorithmus \ref{algo:cell_rep} für eine Pseudocodeimplementierung dieser Idee.
\end{remark}
\begin{remark}
\index{terms}{Zellmodul}
Die Definition sorgt dafür, dass $A(\Set{\lambda})=A(\leq\lambda)/A(<\lambda)$ geschrieben werden kann als
\[\bigoplus_{\mathfrak{t}\in M(\lambda)} \bigg(\overline{\sum_{\mathfrak{s}\in M(\lambda)} RC_\mathfrak{st}^\lambda}\bigg) \isomorphic \bigoplus_{\mathfrak{t}\in M(\lambda)} W(\lambda) \quad \text{als $A$"~Linksmoduln}\]
bzw.
\[\bigoplus_{\mathfrak{s}\in M(\lambda)} \bigg(\overline{\sum_{\mathfrak{t}\in M(\lambda)} RC_\mathfrak{st}^\lambda}\bigg) \isomorphic \bigoplus_{\mathfrak{s}\in M(\lambda)} W(\lambda)^\ast \quad \text{als $A$"~Rechtsmoduln}.\]

Wir werden die kanonischen Einbettungen
\[W(\lambda)\hookrightarrow A(\Set{\lambda}), C_\mathfrak{s}\mapsto C_\mathfrak{st}^\lambda\]
für ein festes $\mathfrak{t}$ bzw.
\[W(\lambda)^\ast\hookrightarrow A(\Set{\lambda}), C_\mathfrak{t}^\ast\mapsto C_\mathfrak{st}^\lambda\]
für ein festes $\mathfrak{s}$ von Zeit zu Zeit benutzen.
\end{remark}
\begin{remark}
\index{terms}{Zellmodul}\index{terms}{Zellideal}\index{terms}{Invariante Bilinearform}
\index{symbols}{philambda@$\phi_\lambda$}
Man beachte, dass $A(\Set{\lambda})$ ein Zellideal von $A/A(<\lambda)$ im Sinne von \ref{CellAlg:def:Cellideals} ist: $W(\lambda)$ kann als der Linksmodul $\Delta$ gewählt werden, der Isomorphismus $\alpha$ ist in Übereinstimmung mit obigen Bezeichnungen durch $C_\mathfrak{s}^\ast\otimes C_\mathfrak{t}\mapsto C_\mathfrak{st}^\lambda$ gegeben (Aus (C3) und (C$3^\ast$) folgt, dass dies wirklich ein $A$"~$A$"~Bimodulisomorphismus ist).
	
Damit sind alle obigen Überlegungen auf die Zellmoduln anwendbar. Insbesondere gibt es zu jedem $\lambda\in\Lambda$ eine invariante Bilinearform $\phi_\lambda: W(\lambda)^\ast\otimes_A W(\lambda)\to R$, die durch
\[C_\mathfrak{st}^\lambda C_\mathfrak{uv}^\lambda \equiv \phi_\lambda(C_\mathfrak{t}^\ast \otimes C_\mathfrak{u})C_\mathfrak{sv}^\lambda \mod A(<\lambda)\]
gegeben ist. Es gilt dabei nach (C3) und (C$3^\ast$):
\[\phi_\lambda(C_\mathfrak{t}^\ast \otimes C_\mathfrak{u}) = r_{C_\mathfrak{st}^\lambda}(\mathfrak{s},\mathfrak{u}) = r_{(C_\mathfrak{uv}^\lambda)^\ast}(\mathfrak{v},\mathfrak{t})\]

Unmittelbare Folgerungen aus dieser Erkenntnis sind in folgendem Lemma zu\-sam\-men\-ge\-fasst:
\end{remark}

\begin{lemma}[Annullatoren, siehe {\cite{GrahamLehrer}}]
\index{terms}{Zellmodul}
Seien $\lambda,\mu\in\Lambda$, $\mathfrak{s},\mathfrak{t}\in M(\lambda)$, $\mathfrak{u},\mathfrak{v}\in M(\mu)$ und $a\in A$ beliebig.
\begin{enumerate}
	\item Für die Multiplikation gilt:
	\[C_\mathfrak{st}^\lambda a C_\mathfrak{uv}^\mu \in A(\Set{\leq\lambda} \cap \Set{\leq\mu}) \subseteq \begin{cases} A(\leq\mu) & \mu\leq\lambda \\ A(<\mu)\cap A(<\lambda) & \mu,\lambda\,\text{unvergleichbar} \\ A(\leq\lambda) & \mu\geq\lambda \end{cases}\]
	\item Für die Annullatoren der Zellmoduln gilt:
	\[A(\not\geq\lambda) \subseteq \operatorname{Ann}(W(\lambda))\]
	\[A(\not\geq\lambda) \subseteq \operatorname{Ann}(W(\lambda)^\ast)\]
\end{enumerate}
\end{lemma}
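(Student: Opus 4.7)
The plan is to reduce both statements to the single calculation of a product of two basis elements, using the two one-sided versions (C3) and (C3$^\ast$) of the third cell axiom.

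First I would establish a key intermediate claim: for all $\lambda,\mu\in\Lambda$ and all $\mathfrak{s},\mathfrak{t}\in M(\lambda)$, $\mathfrak{u},\mathfrak{v}\in M(\mu)$,
\[
C_{\mathfrak{st}}^\lambda \cdot C_{\mathfrak{uv}}^\mu \;\in\; A(\leq\lambda)\cap A(\leq\mu).
\]
The inclusion into $A(\leq\mu)$ comes from applying (C3) with $a=C_{\mathfrak{st}}^\lambda$ to the basis element $C_{\mathfrak{uv}}^\mu$: the left-multiplication expands into a combination of $C_{\mathfrak{u}'\mathfrak{v}}^\mu$ modulo $A(<\mu)$. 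The inclusion into $A(\leq\lambda)$ is obtained by rewriting $C_{\mathfrak{uv}}^\mu = (C_{\mathfrak{vu}}^\mu)^\ast$ and applying (C3$^\ast$) with $a=C_{\mathfrak{vu}}^\mu$ to $C_{\mathfrak{st}}^\lambda$, which gives a combination of $C_{\mathfrak{sw}}^\lambda$ modulo $A(<\lambda)$. Using the general identity $A(\Phi)\cap A(\Phi')=A(\Phi\cap\Phi')$ already noted in the text, this yields $C_{\mathfrak{st}}^\lambda C_{\mathfrak{uv}}^\mu\in A(\{\leq\lambda\}\cap\{\leq\mu\})$.

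For part a, I would then expand the middle factor $a$ in the cell basis and apply the intermediate claim term by term; the part with basis elements $C_{\mathfrak{pq}}^\nu$ contributes $C_{\mathfrak{st}}^\lambda C_{\mathfrak{pq}}^\nu C_{\mathfrak{uv}}^\mu\in A(\leq\lambda)\cap A(\leq\nu)\cap A(\leq\mu)\subseteq A(\leq\lambda)\cap A(\leq\mu)$. The three refined case statements are then purely order-theoretic: if $\mu\leq\lambda$ the intersection of down-sets collapses to $\{\leq\mu\}$; symmetrically for $\lambda\leq\mu$; if $\lambda,\mu$ are incomparable, neither lies in $\{\leq\lambda\}\cap\{\leq\mu\}$, so this set is contained in $\{<\lambda\}\cap\{<\mu\}$, which translates to the ideal statement $A(<\lambda)\cap A(<\mu)$.

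For part b, I would first verify that $\{\mu\in\Lambda \mid \mu\not\geq\lambda\}$ is indeed an order ideal (if $\nu\leq\mu$ and $\nu\geq\lambda$, then $\mu\geq\lambda$), so that $A(\not\geq\lambda)$ is well-defined. It then suffices to check that each basis generator $C_{\mathfrak{uv}}^\mu$ with $\mu\not\geq\lambda$ annihilates every $C_\mathfrak{s}\in W(\lambda)$. By (C3), the action on $W(\lambda)$ reads off the coefficients $r_{C_{\mathfrak{uv}}^\mu}(\mathfrak{s}',\mathfrak{s})$ of $C_{\mathfrak{s}'\mathfrak{t}}^\lambda$ in $C_{\mathfrak{uv}}^\mu C_{\mathfrak{st}}^\lambda\bmod A(<\lambda)$. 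By the intermediate claim, this product lies in $A(\{\leq\mu\}\cap\{\leq\lambda\})$, and since $\mu\not\geq\lambda$ implies $\lambda\notin\{\leq\mu\}$, this set sits inside $\{<\lambda\}$; the product therefore lies entirely in $A(<\lambda)$ and all coefficients vanish. The right-module annihilator statement follows by applying $\ast$ (which swaps the roles of (C3) and (C3$^\ast$), exchanges $C_\mathfrak{st}^\lambda$ with $C_\mathfrak{ts}^\lambda$, and preserves the ideals $A(\Phi)$).

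There is no real obstacle here; the only care required is in keeping track of which factor plays the role of "$a$" when invoking (C3) versus (C3$^\ast$), so that the product of two basis elements from different cell layers is controlled from both sides simultaneously.
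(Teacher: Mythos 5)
Your proposal is correct and uses essentially the same mechanism as the paper's proof: the key observation that the triple product lands in $A(\leq\lambda)\cap A(\leq\mu)$ because (C3) controls it on the right and (C3$^\ast$) controls it on the left, followed by the trivial order-theoretic case analysis for (a) and the observation that $\{\leq\mu\}\cap\{\leq\lambda\}\subseteq\{<\lambda\}$ for $\mu\not\geq\lambda$ in (b). The paper's version of (a) is marginally slicker in that it never expands $a$ in the cell basis; it simply writes the same product as $(C_\mathfrak{st}^\lambda a)C_\mathfrak{uv}^\mu\in A(\leq\lambda)$ and as $C_\mathfrak{st}^\lambda(aC_\mathfrak{uv}^\mu)\in A(\leq\mu)$ directly (using that $A(\leq\lambda)$, $A(\leq\mu)$ are two-sided ideals) and compares the coefficients, but this is the same idea you invoke via your intermediate claim plus expansion, so the substance is identical.
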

\begin{proof}
a. Wir erhalten die Aussage durch Auswerten auf zwei verschiedene Weisen:
\[(C_\mathfrak{st}^\lambda a) C_\mathfrak{uv}^\mu = \sum_{\substack{\kappa\leq\lambda \\ \mathfrak{p},\mathfrak{q}\in M(\kappa)}} x_{a\kappa\mathfrak{p}\mathfrak{q}} C_{\mathfrak{pq}}^{\kappa}\]
\[C_\mathfrak{st}^\lambda (a C_\mathfrak{uv}^\mu) = \sum_{\substack{\kappa\leq\mu \\ \mathfrak{p},\mathfrak{q}\in M(\kappa)}} x_{a\kappa\mathfrak{p}\mathfrak{q}} C_{\mathfrak{pq}}^{\kappa}\]
Die einzigen Summanden, die auf beiden Seiten vorkommen, sind diejenigen, bei denen $\kappa\leq\lambda \wedge \kappa\leq\mu$ ist.

\medbreak
b. folgt sofort aus der Zerlegung $A(\leq\lambda)/A(<\lambda) \isomorphic \bigoplus_\mathfrak{t} W(\lambda)$ bzw. $\isomorphic \bigoplus_\mathfrak{s} W(\lambda)^\ast$.
\end{proof}

\begin{remark}
Wir haben in \ref{KL:def:cells} bereits eine Definition von Zellmoduln kennengelernt. Wir stellen nun fest, dass die beiden Definitionen dieses Begriffs in wichtigen Fällen übereinstimmen:
\end{remark}
\begin{lemma}[Zellmoduln sind Zellmoduln]\label{CellAlg:Cellmodules_are_Cells}
\index{terms}{Zellmodul}\index{terms}{Zellmodul einer Zelle}
Sei $R$ ein Körper. Betrachte dann $(A,(C_\mathfrak{st}^\lambda))$ als $A$-Links- bzw. als $A$"~$A$"~Bimodul-mit-Basis. Für die Quasiordnungen $\preceq_\mathcal{L}$ bzw. $\preceq_\mathcal{LR}$ nach \ref{KL:def:cells} gilt:
\begin{enumerate}
	\item Die partielle Ordnung $\leq$ auf $\Lambda$ ist eine Verfeinerung der Ordnung $\preceq_\mathcal{LR}$ auf der Zellbasis: $C_\mathfrak{st}^\lambda\preceq_\mathcal{LR} C_\mathfrak{uv}^\mu \implies \lambda\leq\mu$.
	\item Ist $\phi_\lambda$ nichtentartet, so ist $(W(\lambda),(C_\mathfrak{s}))$ als Modul-mit-Basis identisch mit einem Zellmodul des Linksmoduls und $(A(\Set{\lambda}),C_\mathfrak{st}^\lambda)$ ist ein Zellmodul des Bimoduls. 
\end{enumerate}
\end{lemma}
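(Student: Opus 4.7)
My plan is to derive (a) directly from axioms (C3) and (C$3^\ast$) and then exploit it in (b) to pin down cells via the multiplication table on $A(\Set{\lambda})$.

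For (a), iterating (C3) and (C$3^\ast$) gives $a C_{\mathfrak{u}\mathfrak{v}}^\mu b \in A(\leq\mu)$ for all $a, b \in A$, since $A(<\mu)$ is a two-sided ideal (again by (C3)). Hence any basis element $C_{\mathfrak{p}\mathfrak{q}}^\kappa$ with nonzero coefficient in such a product must satisfy $\kappa \leq \mu$, so a single $\leftarrow$-step in $\preceq_\mathcal{LR}$ or $\preceq_\mathcal{L}$ can only weakly decrease the $\Lambda$-label, and (a) follows by transitivity.

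For (b), the engine is the multiplication formula
\[C_{\mathfrak{s}'\mathfrak{v}}^\lambda \cdot C_{\mathfrak{s}\mathfrak{t}}^\lambda \equiv \phi_\lambda(C_\mathfrak{v}^\ast \otimes C_\mathfrak{s})\, C_{\mathfrak{s}'\mathfrak{t}}^\lambda \mod A(<\lambda)\]
together with its right-multiplication analogue. In the left-module case I fix $\mathfrak{t} \in M(\lambda)$ and consider $\mathfrak{C}_\mathfrak{t} := \Set{C_{\mathfrak{u}\mathfrak{t}}^\lambda | \mathfrak{u} \in M(\lambda)}$. Non-degeneracy of $\phi_\lambda$ lets me, for each $\mathfrak{s}$, choose $\mathfrak{v}$ with $\phi_\lambda(C_\mathfrak{v}^\ast \otimes C_\mathfrak{s}) \neq 0$, so $C_{\mathfrak{s}'\mathfrak{t}}^\lambda \leftarrow C_{\mathfrak{s}\mathfrak{t}}^\lambda$ for all $\mathfrak{s}, \mathfrak{s}'$. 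On the other hand, (C3) shows that a single $\leftarrow$-step from $C_{\mathfrak{s}\mathfrak{t}}^\lambda$ produces only basis elements of the form $C_{\mathfrak{u}\mathfrak{t}}^\lambda$ or elements lying in $A(<\lambda)$; by (a) the latter can never return to $\lambda$-level. Thus $\mathfrak{C}_\mathfrak{t}$ is precisely the left cell of $C_{\mathfrak{s}\mathfrak{t}}^\lambda$, and the associated cell module $(A \cdot C_{\mathfrak{s}\mathfrak{t}}^\lambda)/(A \cdot C_{\mathfrak{s}\mathfrak{t}}^\lambda \cap A(<\lambda))$ is identified as module-with-basis with $W(\lambda)$ via $C_\mathfrak{u} \mapsto \overline{C_{\mathfrak{u}\mathfrak{t}}^\lambda}$, since both sides transform under $A$ by the same matrix $(r_a(\mathfrak{u}, \mathfrak{s}))$ dictated by (C3). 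The two-sided case is entirely parallel: choosing $\mathfrak{u}, \mathfrak{v}$ so that both $\phi_\lambda(C_\mathfrak{u}^\ast \otimes C_{\mathfrak{s}'})$ and $\phi_\lambda(C_{\mathfrak{t}'}^\ast \otimes C_\mathfrak{v})$ are nonzero yields $C_{\mathfrak{s}\mathfrak{t}}^\lambda \leftarrow C_{\mathfrak{s}'\mathfrak{t}'}^\lambda$ in $\preceq_\mathcal{LR}$ for any four indices, making $\Set{C_{\mathfrak{s}\mathfrak{t}}^\lambda | \mathfrak{s}, \mathfrak{t} \in M(\lambda)}$ a single two-sided cell whose cell module is $A(\Set{\lambda})$.

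The main obstacle I expect is bookkeeping rather than substance: I have to verify that the "span of basis elements strictly below the cell in $\preceq$" that appears in the denominator of the cell-module quotient of Definition \ref{KL:def:cells} really coincides with $A \cdot C_{\mathfrak{s}\mathfrak{t}}^\lambda \cap A(<\lambda)$ (and its two-sided analogue). This should be automatic, because $A(<\lambda)$ is spanned by basis elements whose $\Lambda$-label is strictly smaller, and by (a) no such element can lie in the cell, so the intersection picks out exactly the span of reachable basis elements below the cell.
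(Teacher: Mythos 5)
Your argument is correct and matches the paper's proof in essence: for (a), both use that a single $\leftarrow$-step only weakly decreases the $\Lambda$-label, and for (b), both first bound the left cell of $C_\mathfrak{st}^\lambda$ from above by $\Set{C_\mathfrak{ut}^\lambda \mid \mathfrak{u}\in M(\lambda)}$ using (C3) together with (a), then use non-degeneracy of $\phi_\lambda$ to produce, for each $\mathfrak{s}$, some $\mathfrak{v}$ with $\phi_\lambda(C_\mathfrak{v}^\ast\otimes C_\mathfrak{s})\neq 0$ and conclude that all of $\Set{C_\mathfrak{ut}^\lambda}$ are $\sim_\mathcal{L}$-equivalent. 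One imprecision in your final paragraph: the conjectured equality between the intrinsic denominator $N' := \operatorname{span}\Set{x \mid x\prec_\mathcal{L} C_\mathfrak{st}^\lambda}$ and $A\cdot C_\mathfrak{st}^\lambda\cap A(<\lambda)$ is false in general, since $A\cdot C_\mathfrak{st}^\lambda$ is a left ideal that need not be spanned by basis elements, so the intersection can be strictly smaller than $N'$. But the detour through $A\cdot C_\mathfrak{st}^\lambda$ is unnecessary: by Definition~\ref{KL:def:cells}, the cell module is already the subquotient $N/N'$ (with $N=\operatorname{span}\Set{x\mid x\preceq_\mathcal{L} C_\mathfrak{st}^\lambda}$) whose basis is the images of $\mathfrak{C}_\mathfrak{t}$, and (C3) directly yields the action $a\cdot\overline{C_\mathfrak{ut}^\lambda}=\sum_{\mathfrak{u}'} r_a(\mathfrak{u}',\mathfrak{u})\,\overline{C_{\mathfrak{u}'\mathfrak{t}}^\lambda}$ in the quotient, which is literally the defining action of $W(\lambda)$. (Your quotient is nonetheless canonically isomorphic to $N/N'$ by the second isomorphism theorem, since non-degeneracy gives $A\cdot C_\mathfrak{st}^\lambda+N'=N$, so nothing in the substance goes wrong.)
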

\begin{proof}
a. folgt direkt aus der Definition der Zellularität und der Ordung $\preceq_\mathcal{LR}$.

\medbreak
b. Fixiere ein Basiselement $C_\mathfrak{ab}^\lambda$ der Zellbasis.

Die Definition einer Zellbasis liefert uns nun
\[AC_\mathfrak{ab}^\lambda \subseteq \sum_\mathfrak{s} RC_\mathfrak{sb}^\lambda + A(<\lambda).\]
Alle Basiselemente $C_\mathfrak{uv}^\mu$, die $\preceq_L$-kleiner als $C_\mathfrak{ab}^\lambda$ sind, müssen $\mu\leq\lambda$ erfüllen. Diejenigen, deren oberer Index gleich $\lambda$ ist, müssen weiter den rechten unteren Index gleich $\mathfrak{b}$ haben. Und diejenigen, deren oberer Index echt kleiner als $\lambda$ ist, können nicht $\preceq_\mathcal{L}$-größer als $C_\mathfrak{ab}^\lambda$ sein, weil $A(<\lambda)$ ein Ideal ist, d.\,h. sie können nicht in derselben Zelle liegen. Also muss die Zelle von $C_\mathfrak{ab}^\lambda$ in $\Set{C_\mathfrak{sb}^\lambda | \mathfrak{s}\in M(\lambda)}$ enthalten sein.

Nun ist aber $\phi_\lambda$ nichtentartet. Daher gibt es zu jedem $\mathfrak{u}\in M(\lambda)$ ein $\mathfrak{t}\in M(\lambda)$ mit $\phi_\lambda(C_\mathfrak{t}^\ast, C_\mathfrak{u})\neq 0$. Wegen
\[C_\mathfrak{st}^\lambda C_\mathfrak{uv}^\lambda \in \phi_\lambda(C_\mathfrak{t}^\ast \otimes C_\mathfrak{u}) C_\mathfrak{sv}^\lambda + A(<\lambda)\]
folgt daraus $C_\mathfrak{sv}^\lambda \preceq_\mathcal{L} C_\mathfrak{uv}^\lambda$ für alle $\mathfrak{s}\in M(\lambda)$. Da $\mathfrak{u}$ beliebig war, ist $\Set{C_\mathfrak{sb}^\lambda | \mathfrak{s}\in M(\lambda)}$ bereits eine volle Zelle. Das analoge Argument zeigt dieselbe Behauptung auch für zweiseitige Zellen und Bimoduln.
\end{proof}

\begin{remark}
\index{symbols}{Glambda@$G^\lambda$}
Eigentlich brauchen wir gar nicht, dass $\phi_\lambda$ nichtentartet ist, sondern nur, dass die Darstellungsmatrix $G^\lambda:=(\phi_\lambda(C_\mathfrak{t}^\ast \otimes C_\mathfrak{u}))_{\mathfrak{tu}}$ keine Nullzeilen oder -spalten hat. Im generischen Fall reicht also auch $\phi_\lambda\neq 0$ als Voraussetzung.
\end{remark}
\begin{remark}
Um die Klassifikation der einfachen Moduln von zellulären Algebren vorzubereiten, untersuchen wir genauer, was uns das Zusammenspiel von $\phi_\lambda: W(\lambda)^\ast\otimes_A W(\lambda)\to R$ und $\alpha_\lambda: W(\lambda)\otimes_R W(\lambda)^\ast\to A(\Set{\lambda})$ sagt:
\end{remark}

\begin{theorem}[Homomorphismen zwischen Zellmoduln, siehe {\citep[2.6]{GrahamLehrer}}]\label{CellAlg:Hom_cellmodules}
\index{terms}{Zellmodul}
Seien $\lambda,\mu\in\Lambda$, $U\leq W(\mu)$ ein $A$-Untermodul und $f: W(\lambda)\to W(\mu)/U$ ein Homomorphismus von $A$"~Linksmoduln. Sei weiter $\phi_\lambda\neq 0$. Dann gilt:
\begin{enumerate}
	\item Ist $W(\mu)/U$ torsionsfrei über $R$ und $f\neq 0$, so folgt $\mu\leq\lambda$
	\item Falls $\lambda=\mu$ ist, gibt es $r_0\in R\setminus\Set{0}$ und $r_1\in R$ so, dass
	\[\forall x\in W(\lambda): f(x)r_0=xr_1+U\]
	\item Ist $R$ ein Integritätsbereich und $W(\lambda)/U$ frei über $R$, so ist $\End_A(W(\lambda)/U)=R$. Insbesondere ist $\End_A(W(\lambda))=R$.
\end{enumerate}
\end{theorem}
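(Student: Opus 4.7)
My plan is to derive all three assertions from the single structural identity established in the preceding lemma on invariant bilinear forms: for any $x,y,z\in W(\lambda)$ one has $\alpha_\lambda(x\otimes y^\ast)\cdot z=x\cdot\phi_\lambda(y^\ast\otimes z)$ as an equation in $W(\lambda)$, where the left-hand side uses the natural $A(\Set{\lambda})$-action on $W(\lambda)$ (which is well defined because $A(<\lambda)$ annihilates $W(\lambda)$). Since $\phi_\lambda\neq 0$ by hypothesis, I would begin by fixing once and for all a pair $y,z\in W(\lambda)$ such that $r_0:=\phi_\lambda(y^\ast\otimes z)\neq 0$ in $R$. Everything else reduces to bookkeeping with this one formula.

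For (a), I would assume $\mu\not\leq\lambda$ and observe that any $\kappa\leq\lambda$ automatically satisfies $\kappa\not\geq\mu$, so $A(\leq\lambda)\subseteq A(\not\geq\mu)$; by the annihilator lemma this kills $W(\mu)$ and hence $W(\mu)/U$. For arbitrary $x\in W(\lambda)$ the chain
\[r_0f(x)=f(r_0x)=f(\alpha_\lambda(x\otimes y^\ast)\cdot z)=\alpha_\lambda(x\otimes y^\ast)\cdot f(z)=0,\]
exploiting $A$-linearity of $f$ and the fact that any lift of $\alpha_\lambda(x\otimes y^\ast)\in A(\Set{\lambda})$ to $A(\leq\lambda)\subseteq A$ acts as zero on $W(\mu)/U$, would show $r_0f(x)=0$. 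Torsion-freeness of $W(\mu)/U$ together with $r_0\neq 0$ then forces $f=0$, which is the contrapositive of the claim.

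For (b), I would specialise to $\lambda=\mu$ and pick an arbitrary lift $\tilde w\in W(\lambda)$ of the element $f(z)\in W(\lambda)/U$. Setting $r_1:=\phi_\lambda(y^\ast\otimes\tilde w)\in R$, the same identity now reads
\[r_0f(x)=\alpha_\lambda(x\otimes y^\ast)\cdot f(z)=\alpha_\lambda(x\otimes y^\ast)\cdot\tilde w+U=xr_1+U,\]
which, after rewriting $r_0f(x)=f(x)r_0$ using the commutativity of the $R$-action, is exactly the asserted identity $f(x)r_0=xr_1+U$.

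For (c), I would compose any $f\in\End_A(W(\lambda)/U)$ with the canonical projection $W(\lambda)\twoheadrightarrow W(\lambda)/U$ and apply (b) to obtain $r_0\neq 0$ and $r_1\in R$ with $r_0f(\bar x)=r_1\bar x$ for all $\bar x\in W(\lambda)/U$. Choosing an $R$-basis $(\bar e_i)$ of the free module $W(\lambda)/U$ and writing $f(\bar e_j)=\sum_ic_{ij}\bar e_i$ with $c_{ij}\in R$, the relation translates into $r_0c_{ij}=r_1\delta_{ij}$ in $R$; since $R$ is an integral domain and $r_0\neq 0$, this forces $c_{ij}=0$ for $i\neq j$ and a common diagonal value $c:=c_{jj}\in R$ independent of $j$, whence $f=c\cdot\id$ and $\End_A(W(\lambda)/U)=R$. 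The only delicate point will be the verification that this scalar really lies in $R$ rather than merely in $\QuotFld(R)$, and freeness of $W(\lambda)/U$ as an $R$-module is precisely what makes the basis argument above work. Beyond that no genuine obstacle arises — the whole proof is a disciplined exploitation of the one identity $\alpha_\lambda(x\otimes y^\ast)z=x\phi_\lambda(y^\ast\otimes z)$ combined with the annihilator lemma.
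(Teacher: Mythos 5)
Your proposal is correct and follows essentially the same route as the paper's own proof: part (a) rests on the annihilator lemma combined with the identity $\alpha_\lambda(x\otimes y^\ast)z=x\,\phi_\lambda(y^\ast\otimes z)$ and torsion-freeness, part (b) is the same lift-and-compute argument with $r_1:=\phi_\lambda(y^\ast\otimes\tilde w)$, and part (c) is the same coefficient comparison in a free $R$-basis (the paper likewise expands $f(b_i)=\sum_j c_{ij}b_j$ with $c_{ij}\in R$ and concludes $c_{ij}=0$ for $i\neq j$ and a common diagonal value). Your closing worry about the scalar lying in $\QuotFld(R)$ rather than $R$ is in fact already moot, since the $c_{ij}$ are defined as coefficients with respect to an $R$-basis of the free module and hence lie in $R$ from the outset.
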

\begin{proof}
a. Wenn $\mu\not\leq\lambda$ wäre, wäre auch $C_\mathfrak{st}^\lambda \cdot W(\mu) = 0$ für alle $\mathfrak{s},\mathfrak{t}\in M(\lambda)$. Also wäre dann auch $f(C_\mathfrak{st}^\lambda W(\lambda)) = C_\mathfrak{st}^\lambda f(W(\lambda)) = 0$, da $\im(f)\subseteq W(\mu)/U$ von allen $C_\mathfrak{st}^\lambda$ annulliert wird.

Nun gibt es jedoch wegen $\phi_\lambda\neq0$ Elemente $y\in W(\lambda)^\ast, z\in W(\lambda)$ so, dass $\phi_\lambda(y \otimes z)\neq 0$. Wenn wir ein $x\in W(\lambda)$ mit $f(x)\neq 0$ dazu nehmen und die Torsionsfreiheit ausnutzen, erhalten wir $0\neq f(x)\phi_\lambda(y \otimes z) = f(x\phi_\lambda(y \otimes z)) = f(\alpha_\lambda(x\otimes y) z)= 0$, da $\alpha_\lambda(x\otimes y)$ ja eine Linearkombination der $C_\mathfrak{st}^\lambda$ ist.

\medbreak
b. Wähle $y\in W(\lambda)^\ast, z,z'\in W(\lambda)$ derart, dass $r_0:=\phi_\lambda(y \otimes z)\neq 0$ und $f(z)=z'+U$. Dann gilt für alle $x\in W(\lambda)$:
\begin{align*}
	f(x)r_0 &= f(x)\phi_\lambda(y \otimes z) \\
	&= f(x\phi_\lambda(y \otimes z)) \\
	&= f(\alpha_\lambda(x\otimes y)z) \\
	&= \alpha_\lambda(x\otimes y)f(z) \\
	&= \alpha_\lambda(x\otimes y)z' + U \\
	&= x\phi_\lambda(y \otimes z') + U
\end{align*}
d.\,h. $r_1:=\phi_\lambda(y \otimes z')$ leistet das Gewünschte.

\medbreak
c. Ist $R$ ein Integritätsbereich, so ist jeder freie $R$"~Modul auch torsionsfrei. Wir betrachten einen beliebigen Homomorphismus $f: W(\lambda)\twoheadrightarrow W(\lambda)/U\to W(\lambda)/U$ und wählen $r_0$ und $r_1$ so, dass wie in b.
\[\forall x\in W(\lambda): f(x)r_0 = xr_1 + U\]
gilt. Wähle eine Basis $(b_i)_{i\in I}$ von $W(\lambda)/U$. Es gibt dann Koeffizienten $c_{ij}\in R$ so, dass
\[f(b_i) = \sum_j c_{ij} b_j \implies f(b_i)r_0 = \sum_j c_{ij} r_0 b_j \]
gilt. Koeffizientenvergleich liefert $c_{ij}r_0 = 0$ für alle $i\neq j$ und $c_{ii}r_0 = r_1$. Da $R$ ein Integritätsbereich ist, folgt daraus, dass $c:=c_{ii}$ unabhängig von $i$ und $f(b_i)=cb_i$ für alle $i$ ist, d.\,h. $f(x)=cx+U$ für alle $x\in W(\lambda)$. Daher ist jeder Endomorphismus von $W(\lambda)/U$ eine Multiplikation mit Elementen aus $R$. $c\mapsto c\cdot id$ ist also ein Isomorphismus $R \to \End_A(W(\lambda)/U)$ (Injektivität benutzt erneut, dass $R$ ein Integritätsring und der $R$"~Modul frei ist).
\end{proof}

\subsection{Flache Moduln}

\begin{remark}
Folgender Satz ist in \cite{GrahamLehrer} bereits formuliert, dort aber unnötigerweise für projektive Moduln, obwohl nur Flachheit benutzt wird:
\end{remark}

\begin{theorem}[Zellketten in flachen Moduln, siehe {\cite{GrahamLehrer}}]\label{CellAlg:flat_modules}
\index{symbols}{Plambda@$P(\Set{\lambda})$}
Sei $P$ ein flacher $A$"~Modul. Für alle Ideale $\Phi\subseteq\Lambda$ definiere
\[P(\Phi):=A(\Phi)P\]
Dann gilt für alle Ideale $\Phi'\subseteq\Phi\subseteq\Lambda$:
\begin{enumerate}
	\item $A(\Phi)\otimes_A P \isomorphic P(\Phi)$ via $a\otimes p\mapsto ap$.
	\item $P(\Phi)/P(\Phi') \isomorphic A(\Phi\setminus\Phi') \otimes_A P$.
	\item $P$ hat eine Filtrierung
	\[0=P_0 \leq P_1 \leq \ldots \leq P_n=P\]
	derart, dass $P_i/P_{i-1} \isomorphic A(\Set{\lambda_i})\otimes_A P$ ist für eine beliebige topologische Sortierung $\lambda_1, \ldots, \lambda_n$ von $\Lambda$.
\end{enumerate}
\end{theorem}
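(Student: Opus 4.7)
The plan is to exploit flatness of $P$ to turn short exact sequences of $A$-bimodules coming from the order-ideal structure on $\Lambda$ into short exact sequences of $P$-tensor products. Parts a.\ and b.\ are the core computations, and part c.\ is then an immediate iteration along any linear extension of $\Lambda$.

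For a., I would start from the obvious surjection $\mu:A(\Phi)\otimes_A P\twoheadrightarrow P(\Phi)$, $a\otimes p\mapsto ap$. To see injectivity, I would embed this into the short exact sequence of right $A$-modules
\[
0\longrightarrow A(\Phi)\longrightarrow A\longrightarrow A/A(\Phi)\longrightarrow 0
\]
and tensor on the right with $P$. Since $P$ is flat, the resulting sequence
\[
0\longrightarrow A(\Phi)\otimes_A P\longrightarrow A\otimes_A P\longrightarrow (A/A(\Phi))\otimes_A P\longrightarrow 0
\]
is exact. Composing with the canonical isomorphism $A\otimes_A P\cong P$ identifies $A(\Phi)\otimes_A P$ with the image of $\mu$ inside $P$, which by definition is $A(\Phi)P=P(\Phi)$. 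Hence $\mu$ is an isomorphism.

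For b., the key observation is that, by an earlier remark in the excerpt, $A(\Phi')\subseteq A(\Phi)$ are both two-sided ideals and $A(\Phi\setminus\Phi')=A(\Phi)/A(\Phi')$ by definition. Thus we have a short exact sequence $0\to A(\Phi')\to A(\Phi)\to A(\Phi\setminus\Phi')\to 0$ of right $A$-modules; tensoring with the flat module $P$ preserves exactness, and substituting the isomorphisms $A(\Phi')\otimes_A P\cong P(\Phi')$ and $A(\Phi)\otimes_A P\cong P(\Phi)$ from part a.\ yields
\[
0\longrightarrow P(\Phi')\longrightarrow P(\Phi)\longrightarrow A(\Phi\setminus\Phi')\otimes_A P\longrightarrow 0,
\]
which gives the stated isomorphism $P(\Phi)/P(\Phi')\cong A(\Phi\setminus\Phi')\otimes_A P$.

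For c., let $\lambda_1,\ldots,\lambda_n$ be any topological sorting of $\Lambda$, i.e., a linear extension of the partial order such that $\lambda_i<\lambda_j$ implies $i<j$. Set $\Phi_i:=\{\lambda_1,\ldots,\lambda_i\}$; by the choice of sorting each $\Phi_i$ is an order ideal, $\Phi_{i-1}\subseteq\Phi_i$, and $\Phi_i\setminus\Phi_{i-1}=\{\lambda_i\}$. Define $P_i:=P(\Phi_i)$. Then $P_0=0$ because $A(\emptyset)=0$, and $P_n=P$ because $A(\Lambda)=A$ and $1\in A$. Part b.\ applied to each pair $\Phi_{i-1}\subseteq\Phi_i$ gives the claimed subquotients $P_i/P_{i-1}\cong A(\{\lambda_i\})\otimes_A P$.

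None of the three steps looks to me like a genuine obstacle; the only thing to keep track of carefully is the left/right module structure when forming the tensor product $A(\Phi)\otimes_A P$, because the identifications $A(\Phi\setminus\Phi')=A(\Phi)/A(\Phi')$ are identifications of two-sided ideals/quotients and the exact sequence in b.\ must be read as a sequence of right $A$-modules before tensoring on the right with $P$. Once that bookkeeping is set up, the whole proof is a direct consequence of the definition of flatness.
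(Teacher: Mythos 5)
Your proof is correct and follows essentially the same strategy as the paper: start from a short exact sequence of right $A$-modules built from the order-ideal filtration, use flatness of $P$ to preserve exactness under $-\otimes_A P$, identify $A(\Phi)\otimes_A P$ with $A(\Phi)P$ inside $P$, and then iterate along a topological sorting. The only cosmetic difference is the order in which you exploit the two short exact sequences: you first tensor $0\to A(\Phi)\to A\to A/A(\Phi)\to 0$ to get a., then tensor $0\to A(\Phi')\to A(\Phi)\to A(\Phi\setminus\Phi')\to 0$ for b., whereas the paper tensors the latter first and obtains a. by specializing to $\Phi=\Lambda$; the two routes are logically interchangeable.
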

\begin{proof}
Es ist
\[0\to A(\Phi') \to A(\Phi) \to A(\Phi\setminus\Phi') \to 0\]
exakt, also auch
\[0\to A(\Phi') \otimes_A P \to A(\Phi)\otimes_A P \to A(\Phi\setminus\Phi') \otimes_A P \to 0.\]
Setzt man $\Phi=\Lambda$ ein, erhält man, dass $A(\Phi')\otimes P$ via der induzierten Abbildung zu einem Untermodul von $A\otimes P$ isomorph ist. Zusammen mit dem Isomorphismus ${A\otimes P \to P}, {a\otimes p\mapsto ap}$ zeigt das, dass $A(\Phi')\otimes P \isomorphic A(\Phi')P$ ist. Das zeigt a. Dieselbe kurze, exakte Sequenz zeigt die Aussage über den Quotienten in b.

c. folgt aus b. nach Wahl einer topologischen Sortierung $\lambda_1\leq\ldots\leq\lambda_n$ von $\Lambda$ und $P_i:=P(\Phi_i)$ mit $\Phi_i:=\Set{\lambda_j | j\leq i}$.
\end{proof}

\begin{definition}\label{CellAlg:def:P_lambda}
\index{symbols}{Plambda@$P^\lambda$}
Sei $P$ ein beliebiger $A$"~Modul. Definiere den $R$"~Modul $P^\lambda$ dann durch
\[P^\lambda:=W(\lambda)^\ast\otimes_A P\]
\end{definition}

\begin{lemma}[Siehe {\citep[2.10]{GrahamLehrer}}]\label{CellAlg:P_lambda}
In obiger Situation gilt:
\begin{enumerate}
	\item $A(\Set{\lambda}) \otimes_A P \isomorphic W(\lambda) \otimes_R P^\lambda$ als $A$"~Moduln.
	\item Ist $R$ ein Integritätsbereich und $\phi_\lambda\neq 0$, so gilt $\Hom_A(A(\Set{\lambda})\otimes_A P,W(\lambda)) \isomorphic \Hom_R(P^\lambda,R)$ als $R$"~Moduln.
\end{enumerate}
\end{lemma}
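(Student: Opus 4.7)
F�r Teil a. plane ich, direkt den bereits im Anschluss an die Definition der Zellmoduln etablierten Bimodul-Isomorphismus
\[\alpha_\lambda: W(\lambda)\otimes_R W(\lambda)^\ast \xrightarrow{\sim} A(\Set{\lambda})\]
auszunutzen, der $A(\Set{\lambda})$ als Zellideal in $A/A(<\lambda)$ charakterisiert. Tensoriert man beide Seiten von rechts �ber $A$ mit $P$, so liefert die Assoziativit�t des Tensorprodukts unmittelbar
\[A(\Set{\lambda})\otimes_A P \isomorphic \big(W(\lambda)\otimes_R W(\lambda)^\ast\big)\otimes_A P \isomorphic W(\lambda)\otimes_R \big(W(\lambda)^\ast\otimes_A P\big) = W(\lambda)\otimes_R P^\lambda,\]
wobei die $A$-Linksmodulstruktur auf allen Termen ausschlie�lich vom Faktor $W(\lambda)$ herr�hrt. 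Das ist der einzige substantielle Schritt in diesem Teil; alles Weitere sind rein formale Identifikationen.

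F�r Teil b. w�re die Strategie zweistufig. Zuerst setze ich Teil a. ein, um die linke Seite in $\Hom_A(W(\lambda)\otimes_R P^\lambda, W(\lambda))$ umzuschreiben. Dann wende ich die Hom-Tensor-Adjunktion an, die angesichts der Kommutativit�t von $R$ und der $A$-$R$-Bimodulstruktur von $W(\lambda)$ den Isomorphismus
\[\Hom_A(W(\lambda)\otimes_R P^\lambda, W(\lambda)) \isomorphic \Hom_R(P^\lambda, \End_A(W(\lambda)))\]
liefert. Setzt man dann $\End_A(W(\lambda)) = R$ ein, folgt die Behauptung.

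Als Hauptschwierigkeit sehe ich dabei genau die Identifikation $\End_A(W(\lambda)) = R$ an; sie ist kein rein formaler Schritt, sondern zieht Satz \ref{CellAlg:Hom_cellmodules} c. ernstlich ins Spiel. Dort wird vorausgesetzt, dass $R$ ein Integrit�tsbereich ist, dass der betrachtete Quotient $W(\lambda)/U$ frei �ber $R$ ist und dass $\phi_\lambda\neq 0$ gilt. Ich w�hle hier $U=0$; $W(\lambda)$ ist per Konstruktion ein freier $R$-Modul, und die beiden verbleibenden Voraussetzungen sind genau die Annahmen von Teil b. Insbesondere wird an dieser Stelle deutlich, warum die Bedingung $\phi_\lambda\neq 0$ in der Formulierung �berhaupt auftaucht: W�re sie verletzt, k�nnte $\End_A(W(\lambda))$ echt gr��er als $R$ sein (im Extremfall $\phi_\lambda=0$ sogar der volle Matrizenring �ber $R$), und die behauptete Identifikation br�che zusammen.
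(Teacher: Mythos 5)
Dein Beweis ist korrekt und deckt sich im Wesentlichen mit dem des Papiers: Teil a.\ verwendet den Bimodul-Isomorphismus $W(\lambda)\otimes_R W(\lambda)^\ast \isomorphic A(\Set{\lambda})$ und die Assoziativit\"at des Tensorprodukts, Teil b.\ verkettet Teil a.\ mit der Hom-Tensor-Adjunktion und der Identifikation $\End_A(W(\lambda))=R$ aus Satz~\ref{CellAlg:Hom_cellmodules}. Deine zus\"atzliche Erl\"auterung, wo genau die Voraussetzungen $R$ Integrit\"atsbereich und $\phi_\lambda\neq 0$ eingehen, ist eine sinnvolle Erg\"anzung, ver\"andert aber die Argumentation nicht.
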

\begin{proof}
Beides ist nur Rechnen mittels der schon bewiesenen Isomorphien:
\begin{align*}
	A(\Set{\lambda})\otimes_A P &\isomorphic (W(\lambda)\otimes_R W(\lambda)^\ast) \otimes_A P \\
	&= W(\lambda) \otimes_R (W(\lambda)^\ast \otimes_A P) \\
	&= W(\lambda) \otimes_R P^\lambda
\end{align*}
\begin{align*}
	\Hom_A(A(\Set{\lambda})\otimes_A P,W(\lambda)) &\isomorphic \Hom_A(W(\lambda)\otimes_R P^\lambda,W(\lambda)) \\
	&\isomorphic \Hom_R(P^\lambda,\Hom_A(W(\lambda),W(\lambda))) \\
	&\isomorphic \Hom_R(P^\lambda,R) \qedhere
\end{align*}
\end{proof}

\subsection{Einfache Moduln}

\begin{remark}
Mit Hilfe der invarianten Bilinearformen kann man nun explizite Konstruktionen von einfachen Moduln aus den Zellmoduln angeben:
\end{remark}

\begin{theorem}[Einfache Moduln in zellulären Algebren, siehe {\citep[Ch.\,3]{GrahamLehrer}}]\label{CellAlg:SimpleMods}
\index{terms}{Zellmodul}\index{terms}{Invariante Bilinearform!Radikal}
\index{symbols}{Lambda0@$\Lambda_0$}\index{symbols}{Lplambda@$L_\mathfrak{p}(\lambda)$}
Sei $R=K$ ein Körper.

Definiere dann $\Lambda_0 := \Set{\lambda\in\Lambda | \phi_\lambda\neq 0}$ und $L_0(\lambda) := W(\lambda) / \rad(\phi_\lambda)$. Dabei ist das Radikal der Bilinearform wie üblich $\rad(\phi_\lambda) := \Set{x\in W(\lambda) | \forall y\in W(\lambda)^\ast: \phi_\lambda(y \otimes x)=0}$.

\begin{enumerate}
	\item Für alle $\lambda\in\Lambda_0$ ist $\rad(\phi_\lambda)$ ein $A$-Untermodul von $W(\lambda)$ und stimmt mit $\rad(W(\lambda))$ überein.
	\item Für alle $\lambda\in\Lambda_0$ ist $L_0(\lambda)$ absolut irreduzibel.
	\item Für alle $\lambda,\mu\in\Lambda_0$ mit $\lambda\neq\mu$ ist $L_0(\lambda) \not\isomorphic L_0(\mu)$.
	\item Jeder irreduzible $A$"~Modul ist zu einem der $L_0(\lambda)$ isomorph.
\end{enumerate}
\end{theorem}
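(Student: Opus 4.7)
Der Plan ist, die vier Teilaussagen in der Reihenfolge (a) (erster Teil), (b), (c) und (d) zu beweisen und am Ende den zweiten Teil von (a) durch ein Unzerlegbarkeitsargument abzuschlie�en. Alle wesentlichen Hilfsmittel liegen bereits vor: das Zellideal-Lemma �ber die invariante Bilinearform, Satz \ref{CellAlg:Mult_cellideals} �ber Multiplikation mit Zellidealen und Lemma \ref{CellAlg:Hom_cellmodules} �ber Homomorphismen zwischen Zellmoduln.

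\textbf{Einfachheit und absolute Irreduzibilit�t.} Zun�chst leite ich die Untermoduleigenschaft von $\rad(\phi_\lambda)$ direkt aus der $A$-Invarianz der Bilinearform ab: aus $\phi_\lambda(y^\ast\otimes x) = 0$ f�r alle $y^\ast\in W(\lambda)^\ast$ folgt $\phi_\lambda(y^\ast\otimes ax) = \phi_\lambda(y^\ast a\otimes x) = 0$ f�r jedes $a\in A$. F�r die Einfachheit von $L_0(\lambda)$ werde ich die Identit�t $\alpha_\lambda(x\otimes y^\ast)z = x\cdot\phi_\lambda(y^\ast\otimes z)$ aus dem Zellideal-Lemma einsetzen: Zu jedem $z\in W(\lambda)\setminus\rad(\phi_\lambda)$ existiert ein $y^\ast\in W(\lambda)^\ast$ mit $c := \phi_\lambda(y^\ast\otimes z)\neq 0$, sodass $x = c^{-1}\alpha_\lambda(x\otimes y^\ast)z\in Az$ f�r alle $x\in W(\lambda)$ gilt, also $Az = W(\lambda)$. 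Somit erzeugt jedes von Null verschiedene Element von $L_0(\lambda)$ den gesamten Modul, und $L_0(\lambda)$ ist einfach. Die absolute Irreduzibilit�t aus (b) ergibt sich dann aus Lemma \ref{CellAlg:Hom_cellmodules}~c, angewandt auf $U = \rad(\phi_\lambda)$, was $\End_A(L_0(\lambda)) = K$ liefert.

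\textbf{Nicht-Isomorphie und zweiter Teil von (a).} F�r (c) verwende ich Lemma \ref{CellAlg:Hom_cellmodules}~a zweifach: Aus $L_0(\lambda)\isomorphic L_0(\mu)$ mit $\lambda,\mu\in\Lambda_0$ liefern die kanonischen Epimorphismen $W(\lambda)\to L_0(\mu)$ und $W(\mu)\to L_0(\lambda)$ (jeweils mit $K$-linearen, also torsionsfreien Bildern) die Ungleichungen $\mu\leq\lambda$ und $\lambda\leq\mu$, also $\lambda=\mu$. F�r den verbleibenden zweiten Teil von (a) stelle ich fest, dass $\End_A(W(\lambda)) = K$ nach Lemma \ref{CellAlg:Hom_cellmodules}~c lokal ist; der endlichdimensionale Modul $W(\lambda)$ �ber der endlichdimensionalen Algebra $A$ ist daher unzerlegbar und besitzt somit einen einfachen Kopf und genau einen maximalen Untermodul $\rad(W(\lambda))$. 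Da $\rad(\phi_\lambda)$ dank der Einfachheit von $L_0(\lambda)$ ebenfalls maximal ist, folgt $\rad(\phi_\lambda) = \rad(W(\lambda))$.

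\textbf{Klassifikation der einfachen Moduln.} F�r (d) sei $L$ ein beliebiger einfacher $A$-Modul. Da $L\neq 0$ ist, ist $\Set{\lambda\in\Lambda | A(\leq\lambda)L\neq 0}$ nichtleer, und ich w�hle darin ein minimales $\lambda$, sodass $A(<\lambda)L = 0$ gilt. Dann existieren $\mathfrak{s},\mathfrak{t}\in M(\lambda)$ und $l\in L$ mit $C_{\mathfrak{s}\mathfrak{t}}^\lambda l\neq 0$, und die Abbildung $f:W(\lambda)\to L$, $C_\mathfrak{u}\mapsto C_{\mathfrak{u}\mathfrak{t}}^\lambda\cdot l$ ist wegen $A(<\lambda)L = 0$ ein wohldefinierter, nichttrivialer $A$-Homomorphismus, also surjektiv. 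Die Hauptschwierigkeit sehe ich im anschlie�enden Nachweis, dass $\lambda\in\Lambda_0$ gelten muss: W�re n�mlich $\phi_\lambda = 0$, so lieferte Satz \ref{CellAlg:Mult_cellideals}~c angewandt auf das Zellideal $A(\Set{\lambda})\subseteq A/A(<\lambda)$ die Relation $A(\leq\lambda)^2\subseteq A(<\lambda)$; zusammen mit der expliziten Operationsformel $C_{\mathfrak{u}\mathfrak{v}}^\lambda\cdot C_\mathfrak{s} = \phi_\lambda(C_\mathfrak{v}^\ast\otimes C_\mathfrak{s})C_\mathfrak{u}$ f�r $A(\Set{\lambda})$ auf $W(\lambda)$ folgt daraus $A(\leq\lambda)W(\lambda) = 0$ und damit $A(\leq\lambda)L = f(A(\leq\lambda)W(\lambda)) = 0$, was der Wahl von $\lambda$ widerspricht. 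Also ist $\lambda\in\Lambda_0$, und da $L_0(\lambda)$ nach den obigen Schritten der eindeutige einfache Quotient von $W(\lambda)$ ist, folgt $L\isomorphic L_0(\lambda)$.
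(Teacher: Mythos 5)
Ihr Vorgehen f�r die Untermoduleigenschaft in (a), f�r (b) und f�r (c) deckt sich inhaltlich mit dem Beweis der Arbeit; der Nachweis von $\rad(\phi_\lambda) = \rad(W(\lambda))$ enth�lt jedoch eine L�cke. Aus $\End_A(W(\lambda)) = K$ folgt zwar die Unzerlegbarkeit von $W(\lambda)$, doch unzerlegbare endlichdimensionale Moduln besitzen im Allgemeinen keineswegs nur einen maximalen Untermodul: Der unzerlegbare Modul mit Dimensionsvektor $(1,1,1)$ �ber der Pfadalgebra des K�chers $1 \to 2 \leftarrow 3$ hat Endomorphismenring $K$, aber zwei verschiedene maximale Untermoduln und daher keinen einfachen Kopf. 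Die behauptete Gleichheit gilt dennoch, und der Fix steckt bereits in Ihrem eigenen Argument f�r die Einfachheit von $L_0(\lambda)$: Da $Az = W(\lambda)$ f�r alle $z \notin \rad(\phi_\lambda)$ gilt, ist jeder echte Untermodul in $\rad(\phi_\lambda)$ enthalten, d.\,h. $\rad(\phi_\lambda)$ ist der gr��te echte Untermodul von $W(\lambda)$ und damit gleich $\rad(W(\lambda))$. Genau so argumentiert die Arbeit; der Umweg �ber Unzerlegbarkeit ist hier weder n�tig noch zul�ssig.

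Ihr Beweis von (d) schl�gt hingegen einen genuin anderen Weg ein als die Arbeit und ist elementarer: Sie w�hlen ein bzgl.\ $\leq$ minimales $\lambda$ mit $A(\leq\lambda)L \neq 0$, konstruieren direkt einen surjektiven Homomorphismus $W(\lambda)\twoheadrightarrow L$ und zeigen dann per Widerspruch ($\phi_\lambda = 0$ w�rde $A(\leq\lambda)L = 0$ erzwingen), dass $\lambda\in\Lambda_0$ liegt. Die Arbeit argumentiert stattdessen mit einer projektiven H�lle $P = Ae$ von $L$ und der Zellkettenfiltrierung flacher Moduln aus Satz~\ref{CellAlg:flat_modules}, um $L$ als einfachen Quotienten von $\bigoplus_\mathfrak{t} W(\lambda)$ zu identifizieren. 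Ihr Zugang kommt ohne projektive Moduln und Flachheit aus und macht die Rolle der Bilinearform $\phi_\lambda$ im Klassifikationssatz unmittelbar sichtbar; der Zugang der Arbeit liefert daf�r nebenbei die Zellfiltrierung von $P$, die an anderer Stelle von Nutzen ist.
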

\begin{proof}
a. und b. Dass $\rad(\phi_\lambda)$ ein $A$-Untermodul ist, folgt daraus, dass $\phi_\lambda$ $A$"~invariant ist. Ist $z\notin\rad(\phi_\lambda)$, so gibt es ein $y\in W(\alpha)^\ast$ mit $\phi_\lambda(y \otimes z)\neq 0$, d.\,h. das in \ref{CellAlg:Mult_cellideals} definierte Ideal ${\mathfrak{a}_z=\Set{\phi_\lambda(y \otimes z) | y\in W(\lambda)^\ast}}$ ist ungleich $0$ und somit gleich $K$, da $K$ ein Körper ist. Also ist
\[W(\lambda) = W(\lambda)\mathfrak{a}_z \overset{\text{\ref{CellAlg:Mult_cellideals}}}{=} A(\Set{\lambda})\cdot z \subseteq A\cdot z\]
Das zeigt, dass die einzigen echten Untermoduln in $\rad(\phi_\lambda)$ enthalten sein müssen. Daher ist $\rad(\phi_\lambda)$ der größte echte Untermodul und somit gleich $\rad(W(\lambda))$. Das zeigt außerdem, dass $L_0(\lambda)$ einfach ist. \ref{CellAlg:Hom_cellmodules} zeigt nun, dass $\End_A(L_0(\lambda))=K$ ist, also ist $L_0(\lambda)$ absolut irreduzibel.

\medbreak
c. Angenommen, $L_0(\lambda) \xrightarrow{f} L_0(\mu)$ wäre ein Isomorphismus. Aus \ref{CellAlg:Hom_cellmodules} und $f\neq 0$ folgt $\lambda\geq\mu$. Aus $f^{-1}\neq 0$ folgt genauso $\mu\geq\lambda$. Daraus folgt die Behauptung.

\medbreak
d. Wähle ein primitives Idempotent $e\in A$ so, dass $P:=Ae$ eine projektive Überdeckung des einfachen $A$"~Moduls $L$ ist. Dann gilt aufgrund \ref{CellAlg:flat_modules} einerseits
\[0\neq P/P(<\lambda) = P(\leq\lambda)/P(<\lambda) \isomorphic A(\Set{\lambda})\otimes_A P\]
für ein geeignetes $\lambda$ und andererseits
\[A(\Set{\lambda})\otimes_A P \isomorphic A(\Set{\lambda}) \otimes_A Ae \isomorphic A(\Set{\lambda})e\]
als $A$"~Linksmoduln. Nun ist $\bigoplus_\mathfrak{t} W(\lambda)\isomorphic A(\Set{\lambda}) = A(\Set{\lambda})e \oplus A(\Set{\lambda})(1-e)$, d.\,h. wir erhalten einen surjektiven Homomorphismus
\[\bigoplus_\mathfrak{t} W(\lambda) \twoheadrightarrow A(\Set{\lambda})e \xrightarrow{\isomorphic} P/P(<\lambda) \twoheadrightarrow P/\rad(P)=L\]
Also ist $L$ ein einfacher Quotient von $\bigoplus_\mathfrak{t} W(\lambda)$, d.\,h. ein Quotient von $\bigoplus_\mathfrak{t} L_0(\lambda)$, und daher zu $L_0(\lambda)$ isomorph.
\end{proof}

\begin{remark}
\index{terms}{Zerfällungskörper}\index{terms}{Invariante Bilinearform!Radikal}
Man erhält als unmittelbares Korollar: $K$ ist ein Zerfällungskörper für $A$, d.\,h. alle irreduziblen $A$"~Moduln sind absolut irreduzibel. Das wird auch klar, wenn man sich alle durchgeführten Konstruktionen vor Augen hält: Die Zellbasis und ihre Strukturkonstanten ändern sich nicht bei Skalarerweiterung, also sind auch die Darstellungsmatrizen aller invarianten Bilinearformen sowie die der Operationen auf allen Zellmoduln gleich. Somit bleiben die Dimensionen der Radikale der Bilinearformen gleich, wenn man zu Erweiterungskörpern übergeht. Die Größe dieser Radikale bestimmt, wie eben bewiesen, ob ein Modul einfach ist oder nicht. Wenn also ein Modul nach Skalarerweiterung nichteinfach ist, muss er auch vorher schon nichteinfach gewesen sein.
\end{remark}
\begin{definition}
\index{symbols}{Glambda@$G^\lambda$}\index{symbols}{Lambdap@$\Lambda_\mathfrak{p}$, $\Lambda_\mathfrak{p}^\times$}
Wähle für alle $\lambda\in\Lambda$ eine Basis $(C_\mathfrak{s})_{\mathfrak{s}\in M(\lambda)}$ von $W(\lambda)$ und definiere $G^\lambda$ als Darstellungsmatrix von $\phi_\lambda$ bzgl. dieser Basis, d.\,h. $G_\mathfrak{st}^\lambda=\phi_\lambda(C_\mathfrak{s}^\ast \otimes C_\mathfrak{t})$.

Sei $\mathfrak{p}\in\operatorname{Spec}(R)$ ein Primideal. Definiere dann:
\[\Lambda_\mathfrak{p} := \Set{\lambda\in\Lambda | G^\lambda \not\equiv 0 \mod\mathfrak{p}}\]
\[\Lambda_\mathfrak{p}^\times := \Set{\lambda\in\Lambda | \det(G^\lambda) \not\equiv 0 \mod\mathfrak{p}}\]
\end{definition}

\begin{remark}
\index{symbols}{Lambda0@$\Lambda_0$}
Man macht sich leicht klar, dass $\Lambda_\mathfrak{p}$ und $\Lambda_\mathfrak{p}^\times$ nicht von der Wahl der Basis abhängen.

Mit diesen Bezeichnungen entspricht das Radikal von $\phi_\lambda$ dem Kern von $G^\lambda$. Insbesondere ist $\Lambda_{(0)}$ identisch mit $\Lambda_0$ wie in Satz \ref{CellAlg:SimpleMods} definiert.

Hat man einen allgemeinen Ring $R$ statt eines Körpers, so liefert der Satz also auch eine Parametrisierung aller modularer Darstellungen, indem man die Algebra modulo eines $\mathfrak{p}\in\operatorname{Spec}(R)$ reduziert und den obigen Satz anwendet.
\end{remark}

\begin{corollary}[Modulare Darstellungen]
\index{terms}{Zellmodul}\index{terms}{Zerfällungskörper}
\index{symbols}{Lplambda@$L_\mathfrak{p}(\lambda)$}
Sei $R\to K$ ein Homomorphismus in einen Körper mit Kern $\mathfrak{p}$. Dann gilt:
\begin{enumerate}
	\item Die $KA$"~Moduln $KL_\mathfrak{p}(\lambda) := KW(\lambda) / \ker(KW(\lambda)\xrightarrow{G^\lambda} KW(\lambda))$ für $\lambda\in\Lambda_\mathfrak{p}$ sind ein vollständiges Repräsentantensystem der einfachen $KA$"~Moduln.
	\item $K$ ist ein Zerfällungskörper für $KA$.
\end{enumerate}
\end{corollary}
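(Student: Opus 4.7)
The plan is to reduce this corollary directly to the preceding theorem (\ref{CellAlg:SimpleMods}) by exploiting the good behaviour of cellularity under scalar extension. First I would observe that the homomorphism $R\to K$ factors through $R/\mathfrak{p}\hookrightarrow \operatorname{Quot}(R/\mathfrak{p})\to K$, so $KA = K\otimes_R A$ is obtained from $A$ by scalar extension to a field. By the vererbungs lemma on scalar extensions, $KA$ is a cellular $K$-algebra with cell datum $(\Lambda, M, 1\otimes C, \operatorname{id}\otimes \ast)$; in particular the cell modules of $KA$ are the $K$-vector spaces $KW(\lambda) = K\otimes_R W(\lambda)$ with the induced action.

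Next I would identify the invariant bilinear form on $KW(\lambda)$. Since the form $\phi_\lambda: W(\lambda)^\ast\otimes_A W(\lambda)\to R$ is characterized by the structure constants
\[
C_{\mathfrak{s}\mathfrak{t}}^\lambda\, C_{\mathfrak{u}\mathfrak{v}}^\lambda \equiv \phi_\lambda(C_\mathfrak{t}^\ast\otimes C_\mathfrak{u})\, C_{\mathfrak{s}\mathfrak{v}}^\lambda \mod A(<\lambda),
\]
and these structure constants are preserved by scalar extension, the form $\phi_\lambda^K$ associated with the cellular structure on $KA$ is obtained from $\phi_\lambda$ simply by reducing the entries of $G^\lambda$ modulo $\mathfrak{p}$ (and then regarding them in $K$). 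Consequently $\phi_\lambda^K = 0$ iff $G^\lambda\equiv 0 \mod \mathfrak{p}$, so the set $\Lambda_0$ of the preceding theorem, computed for $KA$, is precisely $\Lambda_\mathfrak{p}$.

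For part a., I then feed this into \ref{CellAlg:SimpleMods}: the simple $KA$-modules are the quotients $KW(\lambda)/\operatorname{rad}(\phi_\lambda^K)$ for $\lambda\in\Lambda_\mathfrak{p}$, pairwise non-isomorphic and exhausting the isomorphism classes. Since $\operatorname{rad}(\phi_\lambda^K)$ is by definition the kernel of the map $KW(\lambda)\xrightarrow{G^\lambda} KW(\lambda)^\ast$ given by the matrix $G^\lambda \mod\mathfrak{p}$, this radical coincides with $\ker(KW(\lambda)\xrightarrow{G^\lambda}KW(\lambda))$, and the quotient is exactly $KL_\mathfrak{p}(\lambda)$. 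Part b. is then just the remark following \ref{CellAlg:SimpleMods}: the theorem gives absolute irreducibility of each $L_0(\lambda)$, hence $K$ is a splitting field for $KA$.

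I do not expect any genuine obstacle here; the whole content of the corollary is the translation of notation ($\Lambda_0 \leftrightarrow \Lambda_\mathfrak{p}$, $\rad(\phi_\lambda) \leftrightarrow \ker G^\lambda \mod \mathfrak{p}$) once one has noticed that the invariant bilinear form commutes with the reduction $R\twoheadrightarrow R/\mathfrak{p}\hookrightarrow K$. The only point that deserves a sentence of justification is the identification of $\phi_\lambda^K$ with the reduction of $G^\lambda$, which follows from the basis-independence of the definition of $\phi_\lambda$ via structure constants and the fact that the embedding $W(\lambda)\hookrightarrow A(\{\lambda\})$ is compatible with scalar extension.
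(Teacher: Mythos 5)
Your proposal is correct and matches what the paper (which states this corollary without a separate proof, treating it as an immediate consequence of the remark preceding it and of Satz \ref{CellAlg:SimpleMods}) clearly has in mind: pass to $KA$ by scalar extension, use that the structure constants and hence the Gram matrix $G^\lambda$ of $\phi_\lambda$ carry over to the reduction, identify $\Lambda_0$ for $KA$ with $\Lambda_\mathfrak{p}$, and then read off both parts from the theorem and the following remark on Zerf\"allungsk\"orper. The one small remark is that the kernel $\ker(KW(\lambda)\xrightarrow{G^\lambda}KW(\lambda))$ in fact only depends on $\mathfrak{p}$ and not on $K$ itself, since the entries of $G^\lambda\bmod\mathfrak{p}$ already lie in $R/\mathfrak{p}\hookrightarrow K$ and the rank of a matrix is insensitive to further field extension; this is implicitly used in writing $\Lambda_\mathfrak{p}$ rather than something depending on $K$, and your argument does make it clear, so no gap remains.
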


\begin{corollary}[Halbeinfachheit zellulärer Algebren, siehe {\citep[3.8]{GrahamLehrer}}]\label{CellAlg:Semisimplicity}
\index{terms}{Satz von!Tits' Deformationssatz}\index{terms}{Halbeinfach}
Sei $R\to K$ ein Homomorphismus in einen Körper mit Kern $\mathfrak{p}$. Dann gilt:
\begin{enumerate}
	\item $KA$ ist halbeinfach $\iff \Lambda=\Lambda_\mathfrak{p}^\times$.
\end{enumerate}
Nehmen wir nun $KA$ als halbeinfach an, dann gilt weiter:
\begin{enumerate}[resume]
	\item $\abs{\Lambda}=\abs{\Lambda_\mathfrak{p}}=\abs{\Irr(KA)}$ und $\forall\lambda\in\Lambda: \abs{M(\lambda)}=\dim_K KL_p(\lambda)$.
	\item Tits' Deformationssatz für zelluläre Algebren:

	Sei $R$ ein Integritätsbereich und $Q:=\QuotFld(R)$ sein Quotientenkörper. Ist die Spezialisierung $KA$ halbeinfach, so ist auch $QA$ halbeinfach.
\end{enumerate}
\end{corollary}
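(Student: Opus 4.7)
Mein Plan ist, alle drei Teile gemeinsam aus dem vorangehenden Korollar und dem Satz von Wedderburn abzuleiten. Zun�chst liefert die Zellbasis die Dimensionsformel $\dim_K(KA)=\sum_{\lambda\in\Lambda}\abs{M(\lambda)}^2$, und das vorangehende Korollar identifiziert die einfachen $KA$"~Moduln als die Quotienten $KL_\mathfrak{p}(\lambda)=KW(\lambda)/\ker(G^\lambda\mod\mathfrak{p})$ f�r $\lambda\in\Lambda_\mathfrak{p}$; ihre Dimensionen sind durch $\dim_K KL_\mathfrak{p}(\lambda)=\operatorname{Rang}(G^\lambda\mod\mathfrak{p})\leq\abs{M(\lambda)}$ gegeben, mit Gleichheit genau dann, wenn $\lambda\in\Lambda_\mathfrak{p}^\times$ liegt. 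Da $K$ nach demselben Korollar auch Zerf�llungsk�rper von $KA$ ist, liefert der Satz von Wedderburn, dass $KA$ genau dann halbeinfach ist, wenn
\[\sum_{\lambda\in\Lambda_\mathfrak{p}}\operatorname{Rang}(G^\lambda)^2 \;=\; \dim_K(KA) \;=\; \sum_{\lambda\in\Lambda}\abs{M(\lambda)}^2\]
gilt. Wegen $\Lambda_\mathfrak{p}\subseteq\Lambda$ und $\operatorname{Rang}(G^\lambda)^2\leq\abs{M(\lambda)}^2$ zwingt Gleichheit sowohl $\Lambda_\mathfrak{p}=\Lambda$ als auch $\operatorname{Rang}(G^\lambda)=\abs{M(\lambda)}$ f�r jedes $\lambda$, zusammengenommen also genau die Bedingung $\Lambda=\Lambda_\mathfrak{p}^\times$. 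Damit ist a. gezeigt.

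Unter Annahme der Halbeinfachheit sind in der eben gezeigten Ungleichungskette durchgehend Gleichheiten erreicht, woraus sich b. unmittelbar ablesen l�sst: $\abs{\Lambda}=\abs{\Lambda_\mathfrak{p}}=\abs{\Irr(KA)}$ und $\abs{M(\lambda)}=\operatorname{Rang}(G^\lambda)=\dim_K KL_\mathfrak{p}(\lambda)$ f�r alle $\lambda\in\Lambda$. F�r c. beobachte ich, dass die Matrixeintr�ge $G_\mathfrak{st}^\lambda=\phi_\lambda(C_\mathfrak{s}^\ast\otimes C_\mathfrak{t})$ bereits in $R$ liegen, da $\phi_\lambda$ �ber $R$ definiert ist; folglich gilt $\det(G^\lambda)\in R$. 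Die Halbeinfachheit von $KA$ liefert nach a. die Bedingung $\det(G^\lambda)\not\equiv 0 \mod\mathfrak{p}$ f�r alle $\lambda\in\Lambda$, insbesondere also $\det(G^\lambda)\neq 0$ in $R$ selbst. Da $R$ als Integrit�tsbereich in $Q$ einbettet, bleibt $\det(G^\lambda)\neq 0$ auch in $Q$ erhalten, d.\,h. $\Lambda=\Lambda_{(0)}^\times$. Eine zweite Anwendung von a. auf die Einbettung $R\hookrightarrow Q$ mit Kern $(0)$ liefert dann die Halbeinfachheit von $QA$.

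Der Beweis ist angenehm elementar, weil die eigentliche Arbeit bereits in der Klassifikation der einfachen Moduln und der Zerf�llungsk�rper-Aussage des vorangehenden Korollars geleistet wurde. Der einzige Punkt, an dem man kurz innehalten muss, ist in c. die Verankerung der Matrizen $G^\lambda$ in $R$, ohne die das Nichtverschwinden der Determinanten nicht von der Spezialisierung $R\to K$ zur Einbettung $R\hookrightarrow Q$ transportiert werden k�nnte; dies folgt jedoch unmittelbar aus der expliziten Definition von $\phi_\lambda$ bzgl.\ der $R$-Zellbasis.
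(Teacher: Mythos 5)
Dein Beweis ist korrekt und folgt im Wesentlichen demselben Weg wie der des Papers: Wedderburn-Dimensionsz\"ahlung mit $\sum_{\lambda\in\Lambda_\mathfrak{p}}(\dim KL_\mathfrak{p}(\lambda))^2 \leq \sum_{\lambda\in\Lambda}\abs{M(\lambda)}^2 = \dim_K KA$, wobei Gleichheit genau $\Lambda=\Lambda_\mathfrak{p}^\times$ erzwingt, und f\"ur c.\ die Beobachtung $\Lambda_\mathfrak{p}^\times\subseteq\Lambda_{(0)}^\times$ aus $(0)\subseteq\mathfrak{p}$. Du fasst die beiden Richtungen von a.\ in eine einzige \"Aquivalenz zusammen, w\"ahrend das Paper sie getrennt behandelt, aber das zugrundeliegende Argument ist identisch.
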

\begin{proof}
\index{terms}{Satz von!Wedderburn}
Ist $KA$ halbeinfach, dann ist es zerfallend halbeinfach und aufgrund obigen Satzes sind die einfachen $KA$"~Mo\-duln durch $\Lambda_\mathfrak{p}$ parametrisiert. Es gilt wegen der Wedderburn"=Zerlegung:
\begin{align*}
	\dim_R A &= \dim_K KA \\
	&= \sum_{\lambda\in\Lambda_\mathfrak{p}} (\dim_K KL_\mathfrak{p}(\lambda))^2 \\
	&\leq \sum_{\lambda\in\Lambda} (\dim_K KW(\lambda))^2 \\
	&= \sum_{\lambda\in\Lambda} \abs{M(\lambda)}^2 \\
	&= \dim_R A
\end{align*}
Daher gilt $\Lambda=\Lambda_\mathfrak{p}$ und $\dim_K KL_\mathfrak{p}(\lambda)=\dim_K KW(\lambda)$ für alle $\lambda\in\Lambda$. Das zeigt die Behauptung b.

Weil deshalb aber auch $KW(\lambda)=KL_\mathfrak{p}(\lambda)$ gilt, muss ${\ker(G^\lambda\mod\mathfrak{p})=0}$ und somit, wie behauptet, $\lambda\in\Lambda_\mathfrak{p}^\times$ sein. Das zeigt die Richtung $\implies$ in a.

\medbreak
Für die Umkehrung sei $\Lambda=\Lambda_\mathfrak{p}^\times$. Dann ist $\ker(G^\lambda\mod\mathfrak{p})=0$, d.\,h. ${KW(\lambda)=KL_\mathfrak{p}(\lambda)}$ ist einfach für alle $\lambda$. Der Quotient $KA/\rad(KA)$ ist halbeinfach und seine Dimension ist die Summe der Quadrate der Dimensionen aller einfachen Moduln. Weil das nun aber $\sum_\lambda \abs{M(\lambda)}^2=\dim_K KA$ ist, muss $\rad(KA)=0$ und $KA$ halbeinfach sein.

\bigbreak
c. ist einfach: Wegen $0\subseteq\mathfrak{p}$ ist $\Lambda_0^\times \supseteq \Lambda_\mathfrak{p}^\times = \Lambda$. Wegen a. ist $QA$ halbeinfach.
\end{proof}

\begin{remark}
Wenn man sich also fragt, ob eine Algebra zellulär sein könnte, ist man durch dieses Korollar in der Wahlfreiheit des potentiellen Zelldatums schon sehr eingeschränkt.

Man beachte auch, dass $\Irr(KA)$ gar nicht von der zellulären Struktur abhängt. Wenn es also eine halbeinfache Spezialisierung gibt, ist nicht nur in einer, sondern in jeder zellulären Struktur $\abs{\Lambda}$ festgelegt. Insbesondere haben alle Zellketten dieselbe Länge.

Das erklärt z.\,B., wieso bei Geck die Konstruktion eines Zelldatums für die Hecke"=Algebra $H(W,S,L)$ von vornherein mit $\abs{\Lambda}=\abs{\Irr(W)}$ und $\abs{M(\lambda)}=d_\lambda$ beginnt (zur Erinnerung: $d_\lambda$ ist der Grad des irreduziblen Charakters vom Typ $\lambda$). Das muss so sein, weil die Spezialisierung $\IZ[\Gamma]\to F, v^\gamma\mapsto 1$ der Hecke"=Algebra gleich der Gruppenalgebra $F[W]$ und diese halbeinfach ist.
\end{remark}

\chapter{Darstellungen II: \texorpdfstring{$W$}{W}-Graphen}
\section{Definition und Beispiele}

\begin{convention}
\index{terms}{Coxeter!-Gruppe!-mit-Gewicht}
In diesem Kapitel fixieren wir eine endliche Coxeter"=Gruppe"=mit"=Gewicht $(W,S,L)$, bezeichnen die Gewichtegruppe mit $\Gamma$ und die dazugehörige Hecke"=Algebra mit $H$. Wir nehmen außerdem ab jetzt an, dass $L(s)>0$ für alle $s\in S$ gilt.

Außerdem sei $\IQ_W\subseteq F\subseteq\IR$ und $K:=F(\Gamma)$ der Körper der rationalen Funktionen mit Exponenten aus $\Gamma$ und Koeffizienten aus $F$.
\end{convention}

\begin{definition}
\index{symbols}{kC@$k^{(\mathfrak{C})}$}
Ist $k$ ein kommutativer Ring und $\mathfrak{C}$ eine beliebige Menge, dann bezeichnen wir mit $k^{(\mathfrak{C})}$ den freien $k$-Modul mit Basis $\mathfrak{C}$.
\end{definition}

\begin{remark}
\index{terms}{Matrix!spaltenfinite}
Es sei darauf hingewiesen, dass alle Endomorphismen von $k^{(\mathfrak{C})}$ Darstellungsmatrizen bzgl. der Basis $\mathfrak{C}$ haben. Im Gegensatz zum endlichdimensionalen Fall sind jedoch nicht alle Elemente des Matrizenraums $k^{\mathfrak{C}\times\mathfrak{C}}$ auch tatsächlich Darstellungsmatrizen von Endomorphismen. Stattdessen ist $\End(k^{(\mathfrak{C})})$ zum Raum der \udot{spaltenfiniten} Matrizen isomorph. Spaltenfinit heißen diejenigen Matrizen, welche in jeder Spalte endlich viele von Null verschiedene Einträge haben.
\end{remark}

\begin{remark}
Seit der ursprünglichen Definition von $W$"~Graphen in Kazhdan und Lusztigs Originalarbeit \cite{KL} sind verschiedene Varianten der Definition benutzt worden. Wir entscheiden uns für die folgende, die eine einfache darstellungstheoretische Interpretation von $W$"~Graphen zulässt, und die wir direkt danach spezialisieren werden:
\end{remark}

\begin{definition}[$W$-Graphen]\label{def:W_graph}
\index{terms}{Matrix!spaltenfinite}\index{terms}{W-Graph@$W$-Graph}\index{terms}{W-Graph@$W$-Graph!mit Gewichten in $k$}\index{terms}{W-Graph@$W$-Graph!-Modul}
\index{symbols}{Ix@$I(x)$}\index{symbols}{mxys@$m_{xy}^s$}\index{symbols}{omegaTs@$\omega(T_s)$}
Sei $k$ eine kommutative $\IZ[\Gamma]$"~Algebra (beispielsweise $k=K$).

Ein \udot{$W$"~Graph mit Kantengewichten in $k$} ist eine (auch unendliche) Eckenmenge $\mathfrak{C}$ zusammen mit Eckenlabeln $I:\mathfrak{C}\to 2^S$ und spaltenfiniten Matrizen $m^s\in k^{\mathfrak{C}\times\mathfrak{C}}$ für alle $s\in S$ derart, dass:
\begin{enumerate}
	\item $m_{xy}^s \neq 0 \implies s\in I(x)\setminus I(y)$.
	\item Die Matrizen
	\[\omega(T_s)_{xy} := \begin{cases}
			-v_s^{-1}\cdot 1_k & \text{falls}\,x=y \ \text{und}\ s\in I(x) \\
			v_s\cdot 1_k & \text{falls}\,x=y \ \text{und}\ s\notin I(x) \\
			m_{xy}^s & \text{sonst}
		\end{cases}\]
		induzieren eine Matrixdarstellung $H\to\End_k(k^{(\mathfrak{C})})$.
\end{enumerate}
Wir fassen dies als gerichteten Graph mit den Ecken $\mathfrak{C}$ auf, bei dem genau dann eine Kante $y\to x$ existiert, wenn eines der Gewichte $m_{xy}^s\neq 0$ ist. Man beachte, dass eine Kante mehrere Gewichte haben kann, nämlich bis zu $\abs{I(x)\setminus I(y)}$ viele.

Der \udot{$W$"~Graph"=Modul} des $W$"~Graphen $(\mathfrak{C},I,m)$ ist der $k$-Modul-mit-Basis $(k^{(\mathfrak{C})},\mathfrak{C})$, auf dem $H$ durch $\omega$ operiert. Wir unterscheiden nicht immer präzise zwischen dem $W$"~Gra\-phen und dem $W$"~Graph"=Modul. Offenkundig lässt sich aber das eine aus dem anderen rekonstruieren.
\end{definition}

\begin{remark}
Die Bedingung, dass $\omega$ eine Matrixdarstellung von $H$ induziert, fordert im Wesentlichen nur, dass die $\omega(T_s)$ die Zopfrelationen erfüllen, weil die quadratische Relation bereits durch Bedingung a. und die spezielle Form der Matrizen sichergestellt ist, wie man leicht nachrechnen kann. (Siehe auch Lemma \ref{wgraph_alg:iota_morphism}.)
\index{terms}{Zopfrelationen}
\end{remark}
\begin{remark}
Wir haben Links"=$W$"~Graphen definiert. Natürlich kann man auch Rechts"=$W$"~Gra\-phen definieren. Zweiseitige $W$"~Graphen sind dasselbe wie $W\times W^\textrm{op}$-Graphen und wurden ebenfalls bereits von Kazhdan und Lusztig betrachtet (siehe \citep[1.3]{KL}).
\end{remark}

\begin{definition}[Geck-$W$-Graphen, siehe {\citep[1.4.11]{geckjacon}}]
\index{terms}{W-Graph@$W$-Graph!Geck-}\index{terms}{palindromisch}
\index{symbols}{$\overline{\phantom{m}}$}
Ein $W$"~Graph $(\mathfrak{C},I,m)$ heiße \udot{Geck-$W$"~Graph}, falls er Gewichte in $k=A[\Gamma]$ hat für einen kommutativen Ring $A$, falls zusätzlich die Kantengewichte palindromisch sind (d.\,h. $\overline{m_{xy}^s}=m_{xy}^s$) und sie außerdem die Gradschranke $v_s m_{xy}^s \in A[\Gamma_{>0}]$ erfüllen (d.\,h. alle in $m_{xy}^s$ vorkommenden Exponenten sind echt größer als $-L(s)$ und echt kleiner als $+L(s)$) für alle $x,y\in\mathfrak{C}$ und alle $s\in S$.
\end{definition}

\begin{definition}[$W$-Graphen mit konstanten Koeffizienten]
\index{terms}{W-Graph@$W$-Graph!mit konstanten Gewichten}
Sei $(\mathfrak{C},I,m)$ ein $W$"~Graph mit Gewichten in $k$. Wir sagen, der $W$"~Graph habe \udot{konstante Gewichte}, falls $(v^\gamma\cdot 1_k)_{\gamma\in\Gamma}$ linear unabhängig über dem von $\Set{m_{xy}^s | x,y\in\mathfrak{C}, s\in S}$ erzeugten Teilring von $k$ ist.
\end{definition}

\begin{remark}
Ist in obiger Situation $A$ der von den $m_{x,y}^s$ erzeugte Teilring von $k$, dann ist ${A[\Gamma]\to k}$ eine Einbettung. Wenn wir $A[\Gamma]$ als Ring von Laurent"=Polynomen auffassen, dann sind $W$"~Graphen mit konstanten Gewichten mit anderen Worten also genau diejenigen $W$"~Graphen, deren Kantengewichte konstante Laurent"=Polynome sind. Es ist dann jeder $W$"~Graph mit konstanten Gewichten ein Geck-$W$"~Graph, da konstante Laurent"=Polynome palindromisch sind und die Gradschranke erfüllen.

Im Einparameterfall, d.\,h. $\Gamma=\IZ$ und $L(s)=1$ für alle $s\in S$, hat umgekehrt jeder Geck-$W$"~Graph mit Kantengewichten in $A[v^{\pm 1}]$ automatisch auch konstante Gewichte, denn die Bedingung, dass die in $m_{xy}^s$ vorkommenden Exponenten echt größer als $-L(s)$ und echt kleiner $+L(s)$ sind für alle $s\in S$, stellt $m_{x,y}^s\in A$ sicher.
\index{terms}{Einparameterfall}
\end{remark}

\begin{example}[Kazhdan-Lusztig-$W$-Graphen, siehe \cite{KL}]
\index{terms}{Kazhdan-Lusztig!-$W$-Graph}\index{terms}{W-Graph@$W$-Graph!mit konstanten Gewichten}\index{terms}{W-Graph@$W$-Graph!Geck-}\index{terms}{Kazhdan-Lusztig!-$\mu$}
Das motivierende Beispiel für die Definition von $W$"~Graphen sind die Kazhdan-Lusztig-$W$"~Graphen: Der reguläre $H$"~Linksmodul ist eine $W$"~Graph"=Darstellung bzgl. der Kazh\-dan"=Lusz\-tig-Basis $(C_w)_{w\in W}$, d.\,h. die Darstellungsmatrizen bzgl. dieser Basis sind die Matrizen eines $W$"~Graphen. Die Eckenlabel sind dabei durch die Linksabstiegsmengen
\[D_L(w) := \Set{s\in S | sw < w}\]
gegeben. Die Kantengewichte sind dabei durch die Kazhdan-Lusztig-$\mu$ gegeben: Für ${s\in D_L(x)\setminus D_L(y)}$ hat dieser $W$"~Graph die Kantengewichte
\[m_{xy}^s := \begin{cases}1 & \text{falls}\, x=sy, \\ (-1)^{l(x)+l(y)+1}\mu_{xy}^s & \text{falls}\, x<y, \\ 0 & \text{sonst}. \end{cases}\]

Diese $W$"~Graphen erfüllen auch die zusätzliche Bedingung der Palindromität der Kantengewichte und die Gradbeschränkungen, es sind also Geck-$W$"~Graphen (siehe \citep[2.1.8+2.1.10]{geckjacon}).

Sie haben jedoch im Allgemeinen keine konstanten Gewichte. Im Einparameterfall muss das zwar der Fall sein, wie wir eben festgestellt haben, im Multiparameterfall wird es jedoch fast immer fehlschlagen:

Betrachten wir die Coxeter"=Gruppe vom Typ $I_2(m)$ mit $m\in\Set{4,6,8,\ldots}\cup\Set{\infty}$, wobei die Erzeuger $s$ und $t$ sowie die Gewichte $L(s)=a \neq b=L(t)$ seien. Für die Elemente ${y:=s}$, ${w:=ts}$ gilt dann $\mu_{y,w}^s = v^{a-b}+v^{b-a}$, wie man sich leicht mittels der Rekursionsformeln in \ref{KL:def:KL_poly} überzeugen kann. Insbesondere ist $\Set{v^{a-b},v^0,v^{b-a}}$ nicht linear unabhängig über dem von den $\mu$-Werten erzeugten Teilring, der Kazhdan-Lusztig-$W$"~Graph ist also kein $W$"~Graph mit konstanten Kantengewichten.

Alle irreduziblen Coxeter"=Gruppen, für die der Multiparameterfall überhaupt eintreten kann, enthalten eine parabolische Untergruppe von dieser Gestalt, sodass auch in all diesen Fällen der Kazhdan-Lusztig-$W$"~Graph keine konstanten Kantengewichte hat.

Für reduzible Coxeter"=Gruppen $W=W_1\times W_2$ gilt
\[\mu_{y_1 y_2, w_1 w_2}^s =\begin{cases} \mu_{y_1,w_1}^s & \text{falls }s\in S_1\text{ und }y_2=w_2 \\ \mu_{y_2,w_2}^s & \text{falls } s\in S_2\text{ und }y_1=w_1 \\ 0 & \text{sonst}\end{cases}\]
für alle $y_i,w_i\in W_i$ und alle $s\in S$ mit $s(y_1 y_2)<y_1 y_2 < w_1 w_2 < s (w_1 w_2)$. Wenn $L$ nun eine Gewichtsfunktion auf $W$ ist, dann können folgende Fälle eintreten: $L$ ist auf jeder irreduziblen Komponente konstant oder es gibt eine parabolische Untergruppe von obigem Typ. Im ersten Fall sind alle $\mu$-Werte ganzzahlig und der Kazhdan-Lusztig-$W$"~Graph hat konstante Kantengewichte. Im zweiten Fall greift erneut das obige Argument.

Ausgangspunkt für dieses Beispiel war ein eMail"=Austausch mit Meinolf Geck (\cite{geck2013kl_wgraphs}).
\end{example}

\begin{example}[Spiegelungsdarstellung]\label{wgraphs:ex:refl_rep}
\index{terms}{Spiegelungsdarstellung!einer Hecke-Algebra}\index{terms}{Dynkin-Diagramm}
$F$ enthält alle Zahlen der Form $\zeta_m+\zeta_m^{-1}$, wobei $\zeta_m$ eine primitive $m$-te Einheitswurzel ist und $m=\ord(st)$ für $s,t\in S$.

Wähle nun $c_{st}\in K$ so, dass folgende Gleichungen erfüllt sind:
\begin{alignat*}{2}
	c_{ss} &= v_s+v_s^{-1} \\
	c_{st} &= c_{ts} = 0 & \text{falls}\,m=2 \\
	c_{st}c_{ts} &= \frac{v_s}{v_t}+(\zeta_m+\zeta_m^{-1})+\frac{v_t}{v_s} \quad & \text{falls}\,m>2
\end{alignat*}
Dann definiert $e_s\mapsto v_s e_s - c_{st} e_t$ eine Darstellung von $H$ auf $K^S$, die sogenannte \udot{Spie\-ge\-lungs\-dar\-stel\-lung}. (Siehe \citep[8.1.11+11.1.3]{geckpfeiffer}. Man beachte, dass dort die $\dot{T}$"~Basis verwendet wird.)

Dies ist eine $W$"~Graph"=Darstellung. Der dazugehörige $W$"~Graph hat das Dynkin"=Diagramm von $(W,S)$ als zugrundeliegenden (ungerichteten) Graphen, die Eckenlabel $I(s)=\Set{s}$ und die Kantenlabel
\[m_{xy}^s := \begin{cases}
-c_{xy} & \text{falls}\,\ord(xy)>2 \text{ und } s=x \\
0 & \text{sonst}
\end{cases}\]
für alle $x,y,s\in S$.

Da die Wahl der $c_{st}$ nicht eindeutig festgelegt ist, sehen wir insbesondere, dass die $W$"~Graphen durch den Isomorphietyp des $W$"~Graph-Moduls nicht vollständig festgelegt sind, weil für alle Wahlen von $c_{st}$ isomorphe $H$"~Moduln herauskommen.
\end{example}

\begin{example}[Eindimensionale Darstellungen]\label{wgraphs:ex:1D_reps}
\index{terms}{Darstellung!eindimensionale}\index{terms}{Signum}
Alle eindimensionalen Darstellungen von $H$ über $K$ sind $W$"~Graph"=Darstellungen.

Sei also $\rho: H\to K^{1\times 1}$ eine eindimensionale Matrixdarstellung. Aufgrund der quadratischen Relationen muss $\rho(T_s)\in\Set{v_s,-v_s^{-1}}$ für alle $s\in S$ gelten. Sei $I$ definiert als $\Set{s\in S | \rho(T_s)=-v_s^{-1}}$. Der $W$"~Graph mit einer Ecke und dem Eckenlabel $I$ definiert dann genau die Darstellung $\rho$ (Kantengewichte müssen wir nicht definieren, da laut Definition sowieso nur Kanten zwischen Ecken mit verschiedenen Labeln vorkommen).

Insbesondere definieren $I:=\emptyset$ die triviale und $I:=S$ die Signumsdarstellung.
\end{example}

\begin{example}[Äußere Potenzen der Spiegelungsdarstellung]\label{wgraphs:ex:ext_powers_refl_rep}
\index{terms}{Spiegelungsdarstellung!äußere Potenzen}\index{terms}{Signum}\index{terms}{Hecke-Algebra!$\IZ[q]$-Form}
Alle äußeren Potenzen der Spiegelungsdarstellung sind $H$"~Moduln via
\[T_s \diamond (v_1 \wedge \ldots \wedge v_r) := v_s^{-(r-1)} (T_s v_1 \wedge \ldots \wedge T_s v_r).\]
(Dies ist einer der Fälle, in denen die $\dot{T}$"~Basis die Formeln schöner macht, weil dort wirklich $\dot{T}_s \diamond (v_1\wedge\ldots\wedge v_r)=\dot{T}_s v_1 \wedge\ldots\wedge \dot{T}_s v_r$ gilt)

Dies kann zu einem $W$"~Graphen gemacht werden, indem man $\mathfrak{C}:=\Set{A\subseteq S | \abs{A}=r}$, $I(A):=A$ sowie $m_{AB}^s := (-1)^r c_{st}$ setzt, falls $A\setminus B=\Set{s}$ und $B\setminus A=\Set{t}$ ist, bzw. $m_{AB}^s=0$ andernfalls. Dabei sei $c_{st}$ wie in \ref{wgraphs:ex:refl_rep} gewählt. Für $r=0$ ist das die triviale Darstellung, für $r=1$ die Spiegelungsdarstellung, für $r=\abs{S}-1$ das duale der Spiegelungsdarstellung und für $r=\abs{S}$ schließlich die Signumsdarstellung.
\end{example}

\begin{remark}
Eine umfangreichere Liste expliziter Beispiele für $W$"~Graphen für $H_3$, $H_4$, $F_4$ ist in \citep[11.1-11.3]{geckpfeiffer} zu finden.
\end{remark}

\begin{lemma}[Konstruktionen mit $W$-Graphen]\label{wgraphs:constructions}
\index{terms}{Zellmodul einer Zelle}\index{terms}{Darstellung!duale}\index{terms}{W-Graph@$W$-Graph!dualer}\index{terms}{Parabolische Untergruppe}\index{terms}{Restriktion}

Sei $(\mathfrak{C},I,m)$ ein $W$"~Graph mit Kantengewichten in $k$, $V=k^{(\mathfrak{C})}$ der zugehörige $W$"~Graph"=Modul und $\omega:kH\to\End_k(V)$ die Matrixdarstellung.

Die folgenden Konstruktionen liefern dann ebenfalls $W$"~Graphen:
\begin{enumerate}
	\item Zellen:
	
	Ist $\mathfrak{Z}\subseteq\mathfrak{C}$ eine Zelle (im Sinne von \ref{KL:def:cells}) von $(V,\mathfrak{C})$, dann ist $(\mathfrak{Z},I_{|\mathfrak{Z}},m_{|\mathfrak{Z}\times\mathfrak{Z}})$ ein $W$"~Graph. Der zugehörige $W$"~Graph"=Modul stimmt als Modul-mit-Basis mit dem Zellmodul $V(\mathfrak{Z})$ überein.
	\item Algebraische Konjugation:
	
	Ist $\alpha:k\to k'$ ein Homomorphismus von $\IZ[\Gamma]$"~Algebren, dann ist $(\mathfrak{C},I,\alpha m)$ ein $W$"~Graph mit Kantengewichten in $k'$.
	\item Dualität:
	
	Falls $\abs{\mathfrak{C}}<\infty$ ist, dann ist $(\mathfrak{C},I',m')$ ein $W$"~Graph, wobei $I'(x):=S\setminus I(x)$ und $(m_{xy}^s)' := -m_{yx}^s$. Der zugehörige Modul ist $V^\dagger$ und die Matrixdarstellung durch ${\omega^\dagger(T_s) = \omega(-T_s^{-1})^{Tr}}$ gegeben.
	\item Parabolische Restriktion:
	
	Ist $J\subseteq S$ beliebig, dann ist  $(\mathfrak{C},I_J,m)$ ein $W_J$"~Graph, wobei $I_J(x):=I(x)\cap J$.
\end{enumerate}
\end{lemma}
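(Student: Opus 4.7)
Mein Beweisplan gliedert sich in vier Teile entsprechend der Behauptung, wobei die eigentliche Arbeit nur in Teil c.\ steckt; die anderen Teile sind im Wesentlichen formales Nachrechnen.

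F�r Teil a.\ werde ich benutzen, dass nach Definition \ref{KL:def:cells} eine Zelle $\mathfrak{Z}\subseteq\mathfrak{C}$ so charakterisiert ist, dass der $k$"~Untermodul $\sum_{y\in\Set{x\in\mathfrak{C} | x\preceq z}} ky$ (f�r ein beliebiges $z\in\mathfrak{Z}$) ein $H$"~Untermodul von $V$ ist, und der Zellmodul $V(\mathfrak{Z})$ ist der Quotient dieses Untermoduls nach dem entsprechenden Untermodul zu $\Set{x | x\prec z}$. Daraus folgt, dass die Operation auf $V(\mathfrak{Z})$ bzgl. der Basis $\mathfrak{Z}$ gerade durch die Einschr�nkung der $\omega(T_s)$-Matrizen auf $\mathfrak{Z}\times\mathfrak{Z}$ gegeben ist (denn andere Eintr�ge werden entweder mit Basiselementen au�erhalb $\mathfrak{Z}$ multipliziert und landen im Quotienten bei Null, oder sie verletzen die Zellbedingung gar nicht). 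Damit hat das eingeschr�nkte Datum offenbar wieder die Form eines $W$"~Graphen, und $V(\mathfrak{Z})$ ist nach Konstruktion der zugeh�rige $W$"~Graph"=Modul.

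F�r Teil b.\ argumentiere ich, dass $\alpha$ als $\IZ[\Gamma]$"~Algebra"=Homo\-mor\-phis\-mus die Diagonaleintr�ge $\pm v_s^{\pm 1}\cdot 1_k$ bis auf die Umwandlung $1_k\mapsto 1_{k'}$ fixiert und auf die Nicht-Diagonaleintr�ge als Ringhomomorphismus wirkt. Die Zopf- und quadratischen Relationen bleiben als polynomiale Identit�ten in den Matrixeintr�gen erhalten, ebenso die Bedingung a.\ der Definition (wegen $\alpha(0)=0$). Die Spaltenfinitheit der $\alpha m^s$ ist trivial.

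Teil d.\ ist die Beobachtung, dass die Zopf- und quadratischen Relationen f�r $J$ eine Teilmenge derjenigen f�r $S$ bilden, und dass aus $m_{xy}^s\neq 0$ mit $s\in J$ die Bedingung $s\in I(x)\setminus I(y)$ folgt, woraus wegen $s\in J$ auch $s\in I_J(x)\setminus I_J(y)$ folgt. Damit hat das neue Datum wieder $W_J$"~Graph"=Gestalt.

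Der interessanteste Teil ist c. Hier ist die Hauptaufgabe, zu verifizieren, dass $T_s\mapsto -T_s^{-1}$ sich zu einem involutiven Antiautomorphismus $\tau$ von $H$ fortsetzt: Die quadratische Relation rechnet man direkt nach (mittels $T_s^{-1}=T_s-(v_s-v_s^{-1})$), und die Zopfrelationen erh�lt man durch Invertieren und Bilden der $\tau$-Bildrelation im gegen�berliegenden Produkt. Ist $\tau$ einmal etabliert, dann ist $\omega^\dagger(h):=\omega(\tau(h))^\text{Tr}$ eine echte Darstellung (nicht nur Anti-Darstellung) von $H$ auf $V^\dagger$. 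Es bleibt zu zeigen, dass diese Darstellung genau durch das angegebene kombinatorische Datum $(I',m')$ gegeben ist. Das rechne ich durch direktes Auswerten: Die Diagonaleintr�ge von $\omega(T_s^{-1})=\omega(T_s)-(v_s-v_s^{-1})I$ werden zu $-v_s$ bzw. $v_s^{-1}$ je nachdem, ob $s\in I(x)$ oder nicht, sodass $-T_s^{-1}$ an der Diagonale genau $v_s$ bzw. $-v_s^{-1}$ liefert, also nach Transposition den Wert der $W$"~Graph"=Darstellung zu $I'(x)=S\setminus I(x)$; f�r die Nichtdiagonaleintr�ge ergibt sich $\omega(-T_s^{-1})^\text{Tr}_{xy}=-m_{yx}^s=(m')_{xy}^s$. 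Die Endlichkeit $\abs{\mathfrak{C}}<\infty$ wird an einer subtilen Stelle gebraucht: Die Spaltenfinitheit von $m'^s$ entspricht der Zeilenfinitheit von $m^s$, die im unendlichen Fall nicht aus der Spaltenfinitheit folgt. Das ist die eigentliche technische H�rde, aber bei endlicher Eckenmenge verschwindet sie.
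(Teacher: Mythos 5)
Ihr Beweis ist korrekt und folgt in allen vier Teilen im Wesentlichen derselben Argumentation wie der Beweis im Text: Teil a.\ über die Dreiecksgestalt der Darstellungsmatrizen bez\"uglich einer an die $\preceq$-Ordnung angepassten Basis, Teil b.\ über die Vertr\"aglichkeit von $\alpha$ mit $v_s^{\pm1}$ und den Zopfrelationen, Teil d.\ durch Weglassen der nicht zu $J$ geh\"origen Relationen.

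Bei Teil c.\ liegt ein kleiner, aber durchaus erw\"ahnenswerter Unterschied in der Formalisierung vor: Sie verwenden den \emph{Anti}automorphismus $\tau$ mit $\tau(T_s)=-T_s^{-1}$ und schlie\ss en daraus, dass $\omega^\dagger(h):=\omega(\tau(h))^{\text{Tr}}$ ein echter Algebrahomomorphismus ist. Das Originalargument st\"utzt sich stattdessen darauf, dass $T_s\mapsto -T_s^{-1}$ als \emph{Auto}morphismus $\phi$ fortgesetzt die Zopfrelationen erh\"alt und Transponieren als Antiautomorphismus des Matrizenrings die Zopfrelationen (wegen deren Palindrom-Symmetrie) ebenfalls bewahrt; daraus wird geschlossen, dass die Matrizen $\omega^\dagger(T_s)=\omega(\phi(T_s))^{\text{Tr}}$ die Zopfrelationen erf\"ullen, ohne $\omega^\dagger$ selbst als Homomorphismus zu konstruieren. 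Beide Wege f\"uhren zum Ziel; Ihr Zugang ist insofern der sauberere, als $\omega^\dagger$ sofort als Darstellung erkannt und nicht nur das Erf\"ulltsein der Relationen nachgewiesen wird. Ihre Bemerkung, dass die Endlichkeit $\abs{\mathfrak{C}}<\infty$ genau deshalb gebraucht wird, weil Spaltenfinitheit von $m'^s$ der Zeilenfinitheit von $m^s$ entspricht, ist v\"ollig richtig und im Originaltext nicht explizit gemacht -- eine willkommene Pr\"azisierung.
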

\begin{proof}
a. ergibt sich sofort aus den Definitionen: Bei geeigneter Anordnung haben die Matrizen die Gestalt
\[\omega(T_s) = \begin{pmatrix}
	\ast & \ast & \ast \\
	0 & A_s & \ast \\
	0 & 0 & \ast
\end{pmatrix}\]
wobei die mittleren Indizes genau zu den Elementen von $\mathfrak{Z}$ gehören. Das folgt direkt aus der Definition von Zellen. Aus der Definition von $W$"~Graphen folgt außerdem, dass die $\omega_{|\mathfrak{Z}}(T_s):=A_s$ genau die Matrizen zum eingeschränkten Graphen sind. Wenn nun die $\omega(T_s)$ die Zopfrelationen erfüllen, tun es die $\omega_{|\mathfrak{Z}}(T_s)$ auch. Also ist der eingeschränkte Graph tatsächlich selbst ein $W$"~Graph.

\medbreak
b. Die Matrizen von $(\mathfrak{C},I,\alpha m)$ ergeben sich einfach, indem man $\alpha$ auf alle Einträge von $\omega(T_s)$ anwendet, da $\alpha(v_s)=v_s$ gilt. Da $k^{\mathfrak{C}\times\mathfrak{C}}\xrightarrow{\alpha} k'^{\mathfrak{C}\times\mathfrak{C}}$ sich zu einem Ringhomomorphismus $\End_k(k^{(\mathfrak{C})})\to\End_{k'}(k'^{(\mathfrak{C})})$ einschränkt, erfüllen mit den $\omega(T_s)$ auch die $\alpha\omega(T_s)$ die Zopfrelationen, also ist der Graph tatsächlich ein $W$"~Graph.

\medbreak
c. Wenn man die Indizes nach $s\in I(x)$ und $s\notin I(x)$ sortiert, hat $\omega(T_s)$ die folgende Dreiecksgestalt
\[\omega(T_s)=\begin{pmatrix} -v_s^{-1} & m \\ 0 & v_s \end{pmatrix}\]
und die Darstellungsmatrix von $(\mathfrak{C},I',m')$ hat die Form
\[\begin{pmatrix} v_s & 0 \\ -m & -v_s^{-1} \end{pmatrix} = -(\omega(T_s)-(v_s-v_s^{-1}))^T.\]
Nun ist $T_s-(v_s-v_s^{-1})=T_s^{-1}$, d.\,h. $\omega^\dagger(T_s) = \omega(-T_s^{-1})^T$. Weiter ist $T_s\mapsto -T_s^{-1}$ ein Automorphismus von $H$ und Transponieren ein Antiautomorphismus von $k^{\mathfrak{C}\times\mathfrak{C}}$, d.\,h. wenn die $\omega(T_s)$ die Zopfrelationen erfüllen, dann tun es auch die $\omega^\dagger(T_s)$. In der Tat folgt aus dieser Darstellung, dass $\omega^\dagger$ den mit dem obigen Automorphismus getwisteten Dualmodul $\Hom_k(V,k)$ realisiert.

\medbreak
d. Die Matrizen $\omega(T_s)$ für den $W_J$-Graphen sind exakt dieselben wie diejenigen für den $W$"~Graphen, wenn $s\in J$ ist. Daher gelten die Zopfrelationen.
\end{proof}

\begin{remark}
\index{terms}{Kazhdan-Lusztig!-Zellen}\index{terms}{Kazhdan-Lusztig!-$W$-Graph}\index{terms}{Darstellung!eindimensionale}
Die Kazhdan-Lusztig-Linkszellen sind genau die Zellen des Moduls-mit-Basis $(H,(C_w)_{w\in W})$, d.\,h. die Linkszellmoduln sind $W$"~Graph"=Moduln. Das liefert die Hauptquelle von Beispielen für $W$"~Graphen.

$\Set{w_0}$ und $\Set{1}$ sind (zweiseitige) Kazhdan-Lusztig-Zellen. Sie realisieren die Signums- bzw. die triviale Darstellung.
\end{remark}
\begin{remark}
\index{terms}{Parabolische Unteralgebra}\index{terms}{Restriktion}\index{terms}{Induktion}
Nicht nur parabolische Restriktion $\text{Res}_{H_J}^H$, auch Induktion $\text{Ind}_{H_J}^H$ entlang von parabolischen Unteralgebren liefert wieder $W$"~Graphen. Die entsprechende Konstruktion ist aber komplex, siehe z.\,B. \cite{howlett2003inducingI} und \cite{howlett2004inducingII}. Howlett und Yin haben in diesen beiden Artikeln die Kombination aus Induktion und Zellen-Extraktion genutzt, um $W$"~Graphen für viele irreduzible Darstellungen zu finden.
\end{remark}
\section{\texorpdfstring{$W$}{W}-Graph-Algebren}

\begin{definition}[Zopfkommutator]
\index{terms}{Zopfkommutator}\index{terms}{Zopfrelationen}
\index{symbols}{Deltam@$\Delta_m$}
Für $m\in\IN$ und zwei Elemente $x,y$ eines beliebigen Ringes $A$ definiere den \udot{$m$-ten Zopfkommutator} als
\[\Delta_m(x,y) := \underbrace{xyx\ldots}_{m \text{ Faktoren}}-\underbrace{yxy\ldots}_{m\text{ Faktoren}}\]
Insbesondere ist also $\Delta_0(x,y)=1-1=0$, $\Delta_1(x,y)=x-y$, $\Delta_2(x,y)=xy-yx=[x,y]$. Der Bequemlichkeit halber definieren wir außerdem $\Delta_{\infty}(x,y):=\Delta_0(x,y)=0$.
\end{definition}

\newcommand{\OmegaGy}{\Omega^\text{Gy}}
\newcommand{\OmegaGJ}{\Omega^\text{GJ}}

\begin{definition}[$W$-Graph-Algebren, siehe {\citep[2.4]{Gyoja}}]
\index{terms}{W-Graph@$W$-Graph!-Algebra}
\index{symbols}{Xi@$\Xi$, $\Xi_\Gamma$, $\Xi_G$}\index{symbols}{es@$e_s$}\index{symbols}{xs@$x_s$}
\index{symbols}{Omega@$\Omega$, $\OmegaGJ_G$, $\OmegaGy$}
Sei $S$ eine endliche Menge. Definiere die $\IZ$"~Algebra $\Xi$ wie folgt: Erzeuger seien Symbole $e_s, x_s$ für $s\in S$. Die Relationen seien zunächst die folgenden:
\begin{enumerate}
	\item $e_s^2 = e_s$, $e_s e_t = e_t e_s$,
	\item $e_s x_s = x_s$ und $x_s e_s = 0$
\end{enumerate}
für alle $s,t\in S$.

\medbreak
Sei nun $(W,S,L)$ eine Coxeter"=Gruppe"=mit"=Gewicht. Betrachte die $\Gamma$"~graduierte Algebra $\IZ[\Gamma]\otimes_\IZ \Xi$. Darin definieren wir das Element
\[\iota(T_s) := -v_s^{-1} e_s + v_s (1-e_s) + x_s.\]
Die \udot{universelle $W$"~Graph"=Algebra} $\Omega=\Omega(W,S,L)$ ist definiert als der Quotient von $\IZ[\Gamma]\otimes_\IZ\Xi$ nach den Zopfrelationen $\Delta_{m_{st}}(\iota(T_s),\iota(T_t))=0$ für alle $s,t\in S$.

\bigbreak
Wir definieren eine weitere $W$"~Graph"=Algebra. Dazu fangen wir erneut mit einer $\IZ$"~Algebra $\Xi_\Gamma$ an, die von $e_s, x_{s,\gamma}$ für $s\in S$ und $\gamma\in\Gamma$ erzeugt wird und folgende Relationen hat:
\begin{enumerate}
	\item $e_s^2 = e_s$,\quad $e_s e_t = e_t e_s$,
	\item $e_s x_{s,\gamma} = x_{s,\gamma}$,\quad $x_{s,\gamma} e_s = 0$,
	\item $x_{s,\gamma} = x_{s,-\gamma}$ und
	\item $x_{s,\gamma} = 0$ falls $\gamma\notin (-L(s),+L(s))$
\end{enumerate}
für alle $s,t\in S$, $\gamma\in\Gamma$. Für jede \textit{endliche} Menge $G\subseteq\Gamma$ definieren wir $\Xi_G$ als den Quotient von $\Xi_\Gamma$ nach den zusätzlichen Relationen
\begin{enumerate}[resume]
	\item $x_{s,\gamma} = 0$ falls $\gamma\notin G$.
\end{enumerate}

Wir definieren ebenfalls ein Element $\iota_G(T_s)\in\IZ[\Gamma]\otimes_{\IZ}\Xi_G$ durch
\[\iota_G(T_s) := -v_s^{-1} e_s + v_s (1-e_s) + \sum_{\gamma\in G} x_{s,\gamma} v^\gamma.\]
Wir nehmen nun die Zopfkommutatoren und trennen sie nach $\Gamma$-homogenen Komponenten, d.\,h. wir schreiben
\[\Delta_{m_{st}}(\iota_G(T_s),\iota_G(T_t)) = \sum_{\gamma\in\Gamma} y_G^\gamma(s,t) v^\gamma\]
mit $y_G^\gamma(s,t)\in\Xi_G$ und definieren $\OmegaGJ_G$ als den Quotienten von $\Xi_G$ nach den Relationen $y_G^\gamma(s,t)=0$ für alle $s,t\in S$ und alle $\gamma\in\Gamma$.

\medbreak
Den Spezialfall $G=\Set{0}$ nennen wir \udot{Gyojas $W$"~Graph"=Algebra} und bezeichnen diese Algebra als $\OmegaGy := \OmegaGJ_{\Set{0}}$.
\end{definition}

\begin{lemma}[Universelle Eigenschaft]\label{wgraph_alg:univ_prop_OmegaGJ}
\index{terms}{W-Graph@$W$-Graph!-Algebra!universelle Eigenschaft}
Sei $k$ ein kommutativer Ring und $G\subseteq\Gamma$ eine feste, endliche Teilmenge. Dann hat $k\OmegaGJ_G$ die folgende universelle Eigenschaft in der Kategorie der $k$"~Algebren:
\begin{align*}
\Hom(k\OmegaGJ_G, Y) \isomorphic \big\lbrace & f\in\Hom(k\Xi_G, Y) \big| \\
	& \forall s,t\in S: (\id\otimes f)\big(\Delta_{m_{st}}(\iota_G(T_s),\iota_G(T_t))\big) = 0 \:\text{in}\:k[\Gamma]\otimes_k Y \big\rbrace
\end{align*}
wobei der natürliche Isomorphismus durch Zurückziehen entlang von $k\Xi_G\to k\OmegaGJ_G$ gegeben ist. Dabei ist $\id\otimes f$ der von $f$ induzierte Homomorphismus $k[\Gamma]\otimes_k k\Xi_G \to k[\Gamma]\otimes_k Y$.
\end{lemma}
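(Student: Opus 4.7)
The plan is to deduce this from the universal property of the quotient construction itself. By definition, $k\OmegaGJ_G$ is the quotient of $k\Xi_G$ by the two-sided ideal generated by the relations $y_G^\gamma(s,t)$ for all $s,t\in S$ and $\gamma\in\Gamma$. Hence, by the universal property of quotient algebras, a $k$-algebra homomorphism $f:k\Xi_G\to Y$ factors through $k\OmegaGJ_G$ if and only if $f(y_G^\gamma(s,t))=0$ for every $s,t\in S$ and every $\gamma\in\Gamma$. So the content of the lemma reduces to showing the equivalence
\[
\big(\forall s,t,\gamma:\ f(y_G^\gamma(s,t))=0\big)
\iff
\big(\forall s,t:\ (\id\otimes f)(\Delta_{m_{st}}(\iota_G(T_s),\iota_G(T_t)))=0\big).
\]

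Next I would apply $\id\otimes f$ to the defining decomposition
\[
\Delta_{m_{st}}(\iota_G(T_s),\iota_G(T_t))=\sum_{\gamma\in\Gamma} v^\gamma\otimes y_G^\gamma(s,t)
\]
(viewing $\iota_G(T_s)\in k[\Gamma]\otimes_k k\Xi_G$ and using that only finitely many $y_G^\gamma(s,t)$ are nonzero, since $G$ is finite and $m_{st}<\infty$ in the braid relation). This yields
\[
(\id\otimes f)\big(\Delta_{m_{st}}(\iota_G(T_s),\iota_G(T_t))\big)=\sum_{\gamma\in\Gamma} v^\gamma\otimes f(y_G^\gamma(s,t))\in k[\Gamma]\otimes_k Y.
\]
The key input is now the freeness of $k[\Gamma]$ as a $k$-module with basis $(v^\gamma)_{\gamma\in\Gamma}$, from which the canonical isomorphism $k[\Gamma]\otimes_k Y\cong\bigoplus_{\gamma\in\Gamma} Y$ (as $k$-modules) shows that an expression of the form $\sum_\gamma v^\gamma\otimes y_\gamma$ vanishes precisely when every $y_\gamma$ vanishes. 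Applied to the sum above, this gives exactly the desired equivalence.

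Handling the case $m_{st}=\infty$ is immediate, since by convention $\Delta_{\infty}(x,y)=0$ and no relation is imposed on $\OmegaGJ_G$ for such pairs, so both sides of the equivalence are vacuous. The naturality statement — that the bijection is induced by pullback along the canonical map $k\Xi_G\to k\OmegaGJ_G$ — is automatic from the construction: the factorisation of $f$ through the quotient is, when it exists, uniquely determined by $f$, and composing with the quotient map reproduces $f$. I do not foresee a real obstacle; the proof is essentially bookkeeping, and the only nontrivial ingredient is the freeness of $k[\Gamma]$ over $k$, which permits separating $\Gamma$-homogeneous components after extending scalars through $f$.
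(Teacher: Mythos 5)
Your proposal is correct and follows essentially the same route as the paper: reduce to the universal property of the quotient $k\Xi_G\twoheadrightarrow k\OmegaGJ_G$, then separate $\Gamma$-homogeneous components by exploiting the freeness of $k[\Gamma]$ over $k$ (the paper phrases this as linear independence of the $v^\gamma\otimes 1$ over $1\otimes Y$, yours as the direct-sum decomposition $k[\Gamma]\otimes_k Y\cong\bigoplus_\gamma Y$, but these are the same fact). Your remarks on the $m_{st}=\infty$ case and on naturality are correct and harmless additions to what the paper records.
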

\begin{proof}
Wir schreiben 
\[\Delta_{m_{st}}(\iota_G(T_s),\iota_G(T_t)) = \sum_{\gamma\in\Gamma} v^\gamma \otimes y_G^\gamma(s,t)\]
mit $y_G^\gamma(s,t)\in\Xi_G$ wie in der Definition.

Ist nun $f:k\OmegaGJ_G\to Y$ ein Homomorphismus, dann können wir $f$ als Homomorphismus $k\Xi_G\to Y$ auffassen, für den $f(y_G^\gamma(s,t))=0$ gilt. Es folgt dann:
\begin{align*}
	0 &= \sum_{\gamma} v^\gamma\otimes f(y_G^\gamma(s,t)) \\
	&= (\id\otimes f)\Big(\sum_\gamma v^\gamma \otimes y_G^\gamma(s,t)\Big) \\
	&= (\id\otimes f)\big(\Delta_{m_{st}}(\iota_G(T_s),\iota_G(T_t))\big)
\end{align*}
wie gewünscht.

Ist umgekehrt $f: k\Xi_G\to Y$ mit $(\id\otimes f)\big(\Delta_{m_{st}}(\iota_G(T_s),\iota_G(T_t))\big) =0$ in $k[\Gamma]\otimes_k Y$, dann gilt
\[ 0 = (\id\otimes f)\Big(\sum_\gamma v^\gamma\otimes y_G^\gamma(s,t) \Big) = \sum_{\gamma} v^\gamma\otimes f(y_G^\gamma(s,t)). \]
Da in $k[\Gamma]\otimes_k Y$ nun die $v^\gamma\otimes 1$ linear unabhängig über $1\otimes Y$ sind, folgt $1\otimes f(y_G^\gamma(s,t)) = 0$ und somit (da $y\mapsto 1\otimes y$ injektiv ist) auch $f(y_G^\gamma(s,t))=0$ für alle $s,t\in S$, $\gamma\in\Gamma$. Daher steigt $f$ zu einem Homomorphismus $k\OmegaGJ_G\to Y$ ab, wie gewünscht.
\end{proof}

\begin{definition}[$W$-Graph-Algebren, Teil II]
\index{terms}{W-Graph@$W$-Graph!-Algebra}
\index{symbols}{kOmega@$k\OmegaGJ$}
Sind $\Gamma\supseteq G_1\supseteq G_2$ zwei endliche Teilmengen, so existiert ein kanonischer Projektionsmorphismus $\Xi_{G_1}\twoheadrightarrow\Xi_{G_2}$ mit
\[e_s\mapsto e_s \quad\text{und}\quad x_{s,\gamma}\mapsto\begin{cases} x_{s,\gamma} & \text{falls }\gamma\in G_2 \\ 0 &\text{sonst}\end{cases}.\]
Dieser induziert aufgrund der universellen Eigenschaft einen Morphismus ${\OmegaGJ_{G_1} \twoheadrightarrow \OmegaGJ_{G_2}}$.

\medbreak
Ist $k$ ein kommutativer Ring, so definieren wir die \udot{Geck-Jacon-$W$"~Graph"=Algebra} $k\OmegaGJ$ als den projektiven Limes
\[k\OmegaGJ := \lim_{\substack{\longleftarrow \\ G\subseteq\Gamma \,\text{endl.}}} (k\otimes_\IZ \OmegaGJ_G)\]
in der Kategorie der topologischen $k$"~Algebren. Dabei versehen wir $k$ und $k\otimes_\IZ \OmegaGJ_G$ jeweils mit der diskreten Topologie.
(Man beachte, dass die Bezeichnung ein Missbrauch von Notation ist, denn i.\,A. vertauschen Skalarerweiterungen und Limites nicht miteinander, d.\,h. es gibt a priori keinen Grund $k\otimes_\IZ \IZ\OmegaGJ_G = k\OmegaGJ$ zu erwarten!)

\bigbreak
Die Morphismen $\Omega\to\IZ[\Gamma]\OmegaGJ_G$, $e_s\mapsto e_s$, $x_s\mapsto\sum_{\gamma\in G} x_{s,\gamma}v^\gamma$ sind wohldefiniert und mit den Projektionen ${\OmegaGJ_{G_1}\twoheadrightarrow\OmegaGJ_{G_2}}$ verträglich, induzieren also einen kanonischen Morphismus ${\Omega\to \IZ[\Gamma]\OmegaGJ}$.

\bigbreak
Ebenso ist die Familie der Homomorphismen ${\Xi_\Gamma\twoheadrightarrow\Xi_G\twoheadrightarrow\OmegaGJ_G}$ mit den kanonischen Projektionen ${\OmegaGJ_{G_1}\twoheadrightarrow\OmegaGJ_{G_2}}$ verträglich, d.\,h. es gibt einen Morphismus ${k\Xi_\Gamma\to k\OmegaGJ}$.

\bigbreak
Man beachte ebenso, dass ${e_s\mapsto e_s, x_s\mapsto x_{s,0}}$ einen Isomorphismus $\Xi\to\Xi_{\Set{0}}$ definiert und allgemeiner $e_s\mapsto e_s, x_s\mapsto \sum_{\gamma\in G} x_{s,\gamma} v^\gamma$ einen Morphismus $\IZ[\Gamma]\Xi\to\IZ[\Gamma]\Xi_G$ für alle endlichen $G\subseteq\Gamma$ definiert.

\bigbreak
Wir werden all diese kanonischen Morphismen sowie auch die Projektionen ${k\OmegaGJ\to k\OmegaGJ_G}$ von Zeit zu Zeit ohne weitere Kennzeichnung benutzen (siehe auch Abbildung \ref{fig:wgraph_alg:can_morphs}). Insbesondere werden wir die Bezeichnungen $e_s$, $x_s$ und $x_{s,\gamma}$ gleichermaßen für die entsprechenden Elemente von $\Xi$, $\Xi_\Gamma$, $\Xi_G$, $\OmegaGJ_G$ und $k\OmegaGJ$ verwenden. Insbesondere gilt etwa $x_s = \sum_{\gamma\in G} x_{s,\gamma} v^\gamma$ in $\IZ[\Gamma]\Xi_G$ und $\IZ[\Gamma]\OmegaGJ_G$, $x_s=x_{s,0}$ in $\Xi_{\Set{0}}$ und $\OmegaGy$ etc.
\begin{figure}[ht]
	\centering
	\begin{tikzpicture}[
		crossing line/.style = {preaction={draw=white,-,line width=6pt}}
	]
	\matrix (m) [matrix of math nodes, row sep=3em,
	column sep=3.5em, text height=1.5ex, text depth=0.25ex]
	{
	           & k\Xi_\Gamma &             &                \\
		k\Xi    &             & k\Xi_G      & k\Xi_{\Set{0}} \\
		k\Omega & k\OmegaGJ   & k\OmegaGJ_G & k\OmegaGy      \\
	};
	\path[->,dashed]
		(m-2-1) edge[crossing line] (m-2-3)
		(m-3-1) edge (m-3-2);
	\path[->>,dashed]
		(m-2-1) edge (m-3-1);
	\path[->]
		(m-1-2) edge (m-3-2);
	\path[->>]
		(m-1-2) edge (m-2-3)
		(m-2-3) edge (m-3-3)
		(m-2-4) edge (m-3-4)
		(m-3-2) edge (m-3-3)
		(m-3-1) edge[bend right=20] (m-3-4);
	\path[->>,dotted,thick]
		(m-3-3) edge[dotted,thick] (m-3-4)
		(m-2-3) edge[dotted,thick] (m-2-4);
	\path[right hook->>]
		(m-2-1) edge[bend left=20,crossing line] (m-2-4);
	\end{tikzpicture}
	\setcapwidth[c]{0.80\textwidth}
	\caption{Kommutatives Diagramm der kanonischen Morphismen zwischen den $W$"~Graph"=Algebren. Gestrichelte Pfeile existieren, wenn $k$ eine $\IZ[\Gamma]$"~Algebra ist, und gepunktete Pfeile, wenn $0\in G$ ist.}
	\label{fig:wgraph_alg:can_morphs}
\end{figure}
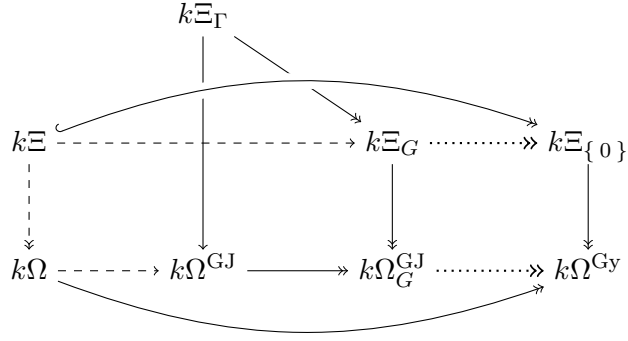
\end{definition}

\begin{remark}
Man beachte, dass $\Omega$ eine $\IZ[\Gamma]$"~Algebra ist, während $\OmegaGy$ und $\OmegaGJ_G$ als $\IZ$"~Algebren definiert wurden.
\end{remark}
\begin{remark}
$k\OmegaGJ\to k\OmegaGy$ ist ein Quotient und auch ${\Omega\to\IZ[\Gamma]\OmegaGy}$ ist surjektiv. Jedoch ist ${k\Omega\to k\OmegaGJ}$ i.\,A. kein Quotient. Für $\abs{S}=1$ und $\Gamma=\IQ$ ist $\IC\OmegaGJ$ beispielsweise unendlichdimensional, während $\IC\Omega=\IC\Xi$ dreidimensional ist, siehe Beispiel \ref{wgraph_alg:small_ex}.
\end{remark}
\begin{remark}
Da jede endliche Teilmenge $G$ in der endlichen Teilmenge ${G\cup -G}$ enthalten ist, sind die Mengen mit $G=-G$ kofinal in allen endlichen Teilmengen von $\Gamma$. Daher kann man den projektiven Limes auch nur über solche Mengen laufen lassen.

Da wir die Relationen $x_{s,\gamma}=x_{s,-\gamma}$ und $x_{s,\gamma}=0$ für $\gamma\notin G$ gefordert haben, ist außerdem $\OmegaGJ_G = \OmegaGJ_{G\cap -G}$.

Wir werden, wann immer es nützlich erscheint, darauf zurückgreifen, dass o.\,B.\,d.\,A. ${G=-G}$ angenommen werden kann.
\end{remark}
\begin{remark}
Wenn das Intervall $(-L(s),L(s))$ für alle $s\in S$ endlich ist (was z.B. für $\Gamma\isomorphic\IZ$ der Fall ist), dann gibt es eine maximale, endliche Teilmenge derart, dass $x_{s,\gamma}=0$ für alle $\gamma\notin G$ gilt, nämlich $G=\bigcup_{s\in S} (-L(s),L(s))$. Dann gilt $k\OmegaGJ=k\OmegaGJ_G$ für diese maximale Teilmenge.
\end{remark}
\begin{remark}
Da die Zopfrelationen nur endlich viele von Null verschiedene $\Gamma$-Komponenten haben und nur endlich viele Zopfrelationen existieren, sind alle $\OmegaGJ_G$ endlich präsentiert für endliche $G\subseteq\Gamma$.
\end{remark}

\begin{example}\label{wgraph_alg:small_ex}
Für $\abs{S}=0$ ist mangels Erzeugern einfach $k\OmegaGJ_G=k\Xi_G=k$ für alle endlichen $G\subseteq\Gamma$. Daher ist auch $k\OmegaGJ=\lim\limits_{\longleftarrow} k\Xi_G=k$.

\medbreak
Für $\abs{S}=1$ ist die Situation bereits komplizierter. Es gilt ebenfalls $k\OmegaGJ_G=k\Xi_G$, weil keine verschiedenen $s,t\in S$ existieren, die nichttriviale Zopfrelationen verursachen könnten.

Es reicht im Limes $\lim\limits_{\longleftarrow} k\Xi_G$ o.\,B.\,d.\,A nur solche $G\subseteq\Gamma$ mit $G=-G$ zu betrachten, da diese Teilmengen kofinal in der Menge aller endlichen Teilmengen von $\Gamma$ sind. $\Xi_G$ ist dann eine Pfadalgebra über dem Köcher mit den zwei Knoten $\emptyset$ und $S$ sowie $\abs{G\cap[0,L(s))}$ Kanten von $\emptyset$ nach $S$. Als $k$"~Modul ist dies also $k E_\emptyset \times k E_S \times \prod_{\substack{\gamma\in G \\ 0\leq\gamma<L(s)}} k X_{S,\emptyset}^{s,\gamma}$. Durch Übergang zum projektiven Limes gilt somit
\[k\OmegaGJ \isomorphic k E_\emptyset \times k E_S \times \prod_{0\leq\gamma<L(s)} k X_{S,\emptyset}^{s,\gamma}.\]
Insbesondere ist $k\OmegaGJ$ i.\,A. weder frei als $k$"~Modul noch endlich erzeugt.
\end{example}

\begin{lemma}[Diskrete Darstellungen von $\OmegaGJ$]\label{wgraph_alg:continuous_reps}
Sei $k$ ein kommutativer Ring und $A$ eine $k$"~Algebra.

\begin{enumerate}
	\item Es sei $V$ ein $k\OmegaGJ$"~$A$"~Bimodul, der als $A$"~Modul endlich erzeugt ist.
	
	Wenn wir $V$ als diskreten topologischen Raum betrachten und $V$ so ein stetiger $k\OmegaGJ$"~Modul wird, faktorisiert die Darstellung $k\OmegaGJ\to\End_{A}(V)$ durch einen Quotienten $k\OmegaGJ\to k\OmegaGJ_G$.
	\item Ein Morphismus $k\OmegaGJ\to A$ von $k$-Algebren ist genau dann stetig für die diskrete Topologie auf $A$, wenn er durch einen Quotienten $k\OmegaGJ\to k\OmegaGJ_G$ faktorisiert.
\end{enumerate}
\end{lemma}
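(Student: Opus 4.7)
The plan is to exploit the definition of $k\OmegaGJ$ as the projective limit of the discrete algebras $k\OmegaGJ_G$ (where $G\subseteq\Gamma$ ranges over the finite subsets). Under this topology, a fundamental system of open neighbourhoods of $0\in k\OmegaGJ$ is given by the kernels $U_G := \ker(k\OmegaGJ\twoheadrightarrow k\OmegaGJ_G)$. Crucially, whenever $G_1, G_2\subseteq\Gamma$ are both finite, their union $G:=G_1\cup G_2$ is still finite, and the factorisation $k\OmegaGJ_G\twoheadrightarrow k\OmegaGJ_{G_i}$ forces $U_G\subseteq U_{G_1}\cap U_{G_2}$. Hence the system $\{U_G\}_G$ is filtered under inclusion. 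This is the key combinatorial input.

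For part b., one direction is immediate: any morphism factoring as $k\OmegaGJ\twoheadrightarrow k\OmegaGJ_G \to A$ is continuous, since the first map is continuous by construction and the target is discrete. Conversely, if $\phi: k\OmegaGJ \to A$ is a continuous $k$-algebra morphism into a discrete $A$, then $\ker(\phi) = \phi^{-1}(0)$ is open, hence contains some basic neighbourhood $U_G$ of~$0$. Then $\phi$ vanishes on $U_G = \ker(k\OmegaGJ\twoheadrightarrow k\OmegaGJ_G)$ and therefore factors through $k\OmegaGJ_G$ by the universal property of quotients.

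For part a., I would reduce to part b.\ by a finite-generation argument. The continuity of the action map $k\OmegaGJ\times V\to V$ on the discrete target $V$ says that for every $v\in V$ the orbit map $\omega\mapsto\omega v$ is continuous, so its kernel $\mathrm{Ann}_{k\OmegaGJ}(v)$ is an open left ideal of $k\OmegaGJ$ and therefore contains some $U_{G_v}$. Choose a finite set of $A$-generators $v_1,\ldots,v_n$ of $V$ and set $G := G_{v_1}\cup\ldots\cup G_{v_n}$, which is finite. By the filtered property above, $U_G\subseteq U_{G_{v_i}}\subseteq\mathrm{Ann}_{k\OmegaGJ}(v_i)$ for each $i$. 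Since the $k\OmegaGJ$-action commutes with the $A$-action (we are working with a bimodule), an element $\omega\in U_G$ satisfies $\omega\cdot(a v_i) = a\cdot(\omega v_i) = 0$ for all $a\in A$ and all $i$, hence $\omega\cdot V=0$. Therefore $U_G$ is contained in the kernel of the representation $\rho: k\OmegaGJ \to \End_A(V)$, and $\rho$ factors through $k\OmegaGJ/U_G = k\OmegaGJ_G$ as required.

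The only mildly subtle point is the use of $A$-linearity to propagate the vanishing from the finitely many generators to all of $V$; this is what makes the finite generation hypothesis essential. The rest of the argument is purely topological, relying on no special property of the $\OmegaGJ_G$ beyond the fact that the projective system is indexed by a filtered poset of finite subsets of $\Gamma$.
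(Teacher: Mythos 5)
Your proof is correct, and it uses the same essential ingredients as the paper — the projective-limit topology on $k\OmegaGJ$, openness of annihilators of individual vectors, and the finite generation of $V$ over $A$ — but the organization differs in two ways worth flagging. First, you establish part b.\ directly and prove part a.\ by the analogous open-annihilator argument, whereas the paper proves part a.\ first (by tracking the explicit net of partial sums $z_G=\sum_{\gamma\in G}x_{s,\gamma}$ and their action on a fixed vector $w$) and then deduces b.\ from a.\ by taking $V$ to be the regular right $A$-module. Second, and more substantively, your argument for part a.\ proves $U_G\subseteq\ker\rho$ \emph{directly}, by observing that each $\operatorname{Ann}(v_i)$ is an open (necessarily also closed, being a subgroup) subset containing $0$ and therefore contains some basic neighbourhood $U_{G_{v_i}}$. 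The paper instead derives the weaker-looking statement that the generators $x_{s,\gamma}$ with $\gamma\notin G$ act as zero and then asserts the factorization through $\pi_G$; making that last implication rigorous requires one to know that $U_G$ is the closure of the ideal generated by those $x_{s,\gamma}$, which the paper does not spell out. Your version sidesteps this subtlety entirely, and also makes explicit the use of $A$-linearity to propagate the annihilation from the finitely many generators to all of $V$, which the paper treats as obvious. One small convention slip: $V$ being a $k\OmegaGJ$--$A$-bimodule means $A$ acts on the right, so the displayed identity should read $\omega\cdot(v_i a)=(\omega v_i)\cdot a=0$; the argument is otherwise unaffected.
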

\begin{proof}
Wir führen die Bezeichnung $\pi_G$ für die Projektionen $k\OmegaGJ\to k\OmegaGJ_G$ ein und halten zunächst ein paar Fakten fest:

\medbreak
Fakt 1: In einem diskreten Raum $X$ konvergiert ein Netz $(x_i)_{i\in I}$ genau dann gegen $x$, wenn ein $i\in I$ existiert derart, dass für alle $j\geq i$ stets $x_j=x$ gilt. (Siehe \cite{kelley1975general} für eine Einführung in Netze und ihre Konvergenzeigenschaften.)

\medbreak
Fakt 2: Der Limes $\smash{k\OmegaGJ=\lim\limits_{\longleftarrow} k\OmegaGJ_G}$ ist als topologische $k$"~Algebra gleich
\[\Big\lbrace (x_G)\in \smash{\prod_{G\subseteq\Gamma\ \text{endl.}}} k\OmegaGJ_G \mid \pi_G(z_H) = z_G \ \text{für alle}\ G\subseteq H\Big\rbrace \subseteq \prod_{G\subseteq\Gamma\ \text{endl.}} k\OmegaGJ_G\]
d.\,h. $k\OmegaGJ$ trägt die Topologie der punktweisen Konvergenz: $(x_i)_{i\in I}$ konvergiert gegen $x\in k\OmegaGJ$ genau dann, wenn für alle endlichen $G\subseteq\Gamma$ die Projektionen $\pi_G(x_i)$ gegen $\pi_G(x)$ konvergieren.

\medbreak
Fakt 3: Wenn wir für endliche $G\subseteq\Gamma$ die Partialsummen
\[z_{G} := \sum_{\gamma\in G} x_{s,\gamma} \in k\OmegaGJ\]
definieren, so erhalten wir ein Netz $(z_G)_{G\subseteq\Gamma}$ in $k\OmegaGJ$, wobei die endlichen Teilmengen durch Inklusion partiell geordnet seien. Weiter definieren wir ${z:=\big(\sum_{\gamma\in G} x_{s,\gamma}\big)_G\in k\OmegaGJ}$. Da $\pi_G(z)=\pi_G(z_H)$ für alle $H\supseteq G$ gilt, konvergiert das Netz $(z_G)$ aufgrund von Fakt 2 gegen $z$.

\medbreak
Wir folgern daraus nun a.: Ist $V$ ein stetiger $k\OmegaGJ$"~Modul, dann ist insbesondere für jedes $w\in V$ die Abbildung $k\OmegaGJ\to V, a\mapsto aw$ stetig. Das heißt bedeutet wegen Fakt~3 insbesondere, dass $\lim\limits z_G \cdot w = z\cdot w$ gelten muss. Da $V$ jedoch diskret ist, muss ein endliches $G\subseteq\Gamma$ existieren mit $z_H\cdot w = z\cdot w$ für alle $H\supseteq G$. Indem wir Mengen der Form $H=G\dot{\cup}\Set{\gamma}$ einsetzen, erkennen wir, dass $0 = zw-zw = (z_H-z_G)w = x_{s,\gamma}w$ gelten muss. Deshalb gilt $x_{s,\gamma}w=0$ für alle $\gamma\notin G$.

Indem wir für $w$ nun endlich viele Erzeuger des $A$"~Moduls $V$ einsetzen, erhalten wir eine endliche Menge $G$ mit $x_{s,\gamma}w=0$ für alle $w\in V$ und alle $s\in S, \gamma\notin G$. Das heißt, dass $f$ durch $\pi_G$ faktorisiert.

\medbreak
b. folgt, indem man in a. den regulären $A$"~Rechtsmodul für $V$ einsetzt, denn ist etwa ${f:k\OmegaGJ\to A}$ ein stetiger Morphismus für die diskrete Topologie auf $A$, so ist auch die Multiplikation $m: k\OmegaGJ\times A\to A$ aufgrund von
\[m^{-1}(\Set{x}) = \bigcup_{\substack{y,z\in A \\ x=yz}} f^{-1}(\Set{y})\times\Set{z}\]
stetig und $A$ somit ein stetiger $k\OmegaGJ$"~Modul. Da $\End_{\textbf{Mod}-A}(A) = A$ gilt, erhalten wir daher aus a., dass $f$ durch einen der Quotienten $k\OmegaGJ\to k\OmegaGJ_G$ faktorisiert. Die umgekehrte Richtung ist trivial.
\end{proof}

\begin{remark}
Sei $k$ nun speziell eine $\IZ[\Gamma]$"~Algebra. Analoge Überlegungen wie zu Fakt 3 in obigem Beweis zeigen, dass das Netz $\big(\sum_{\gamma\in G} x_{s,\gamma}v^\gamma\big)_{G\subseteq\Gamma\ \text{endl.}} \in k\OmegaGJ$ gegen $x_s\in k\OmegaGJ$ konvergiert. Wir können also in $k\OmegaGJ$ guten Gewissens $x_s = \sum_{\gamma\in\Gamma} x_{s,\gamma} v^\gamma$ schreiben.
\end{remark}

\begin{remark}
Obwohl man beliebige stetige $k\OmegaGJ$"~Moduln betrachten könnte, werden wir aufgrund des eben bewiesenen Lemmas bevorzugen, ausschließlich diskrete Moduln zu benutzen, da schon ein Beweis der Stetigkeit für beliebige Topologien viel komplizierter wäre. Für diskrete Moduln kann man hingegen einfach dieses Lemma zitieren.
\end{remark}

\subsection{\texorpdfstring{$W$}{W}-Graphen und \texorpdfstring{$\Omega$-Moduln}{Moduln von W-Graph-Algebren}}

\begin{remark}
Die $W$"~Graph"=Algebren sind so definiert, dass in gewisser Weise die $W$"~Graph"=Moduln bis auf eine geeignete Basiswahl $\Omega$"~Moduln entsprechen, dass $W$"~Graphen mit konstanten Gewichten $\OmegaGy$"~Moduln entsprechen und dass Geck-$W$"~Graphen $k\OmegaGJ$"~Mo\-duln entsprechen.
	
Ist etwa $(\mathfrak{C},I,m)$ ein $W$"~Graph mit $W$"~Graph"=Modul $V=k^{(\mathfrak{C})}$, dann realisiert $e_s$ die Projektion auf den von $\Set{x\in\mathfrak{C} | s\in I(x)}$ aufgespannten Unterraum von $V$ und $x_s$ die Anteile von $\omega(T_s)$, die nicht von den Projektionen herkommen, d.\,h. die Nichtdiagonalelemente der Matrix. Mit dieser Interpretation ist intuitiv einleuchtend, dass man aus $W$"~Graphen $\Omega$"~Moduln konstruieren kann. Die nächsten Lemmata haben den Zweck dies zu beweisen.
\index{terms}{W-Graph@$W$-Graph!-Modul}\index{terms}{W-Graph@$W$-Graph!mit konstanten Gewichten}\index{terms}{W-Graph@$W$-Graph!Geck-}

Der kanonische Homomorphismus $k\OmegaGJ\to k\OmegaGy$ ist in dieser Sichtweise eine Manifestation der Tatsache, dass $W$"~Graphen mit konstanten Kantengewichten automatisch Geck-$W$"~Graphen sind.

Wenn $L(s)=1$ für alle $s\in S$ gilt, ist $(-L(s),+L(s))=\Set{0}$. Daher ist in dieser Situation $k\OmegaGJ=k\OmegaGJ_{\Set{0}}=k\OmegaGy$. Das entspricht der Tatsache, dass Geck-$W$"~Graphen und $W$"~Graphen mit konstanten Gewichten im Einparameterfall identisch sind.
\end{remark}

\begin{lemma}[Siehe {\citep[2.4.3]{Gyoja}}]\label{wgraph_alg:iota_morphism}
\index{symbols}{iota@$\iota$}
Die Elemente $\iota(T_s)\in\Omega$ induzieren einen Morphismus von $\IZ[\Gamma]$"~Algebren $\iota: H\to\Omega$.
\end{lemma}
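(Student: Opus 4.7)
Mein Plan ist, die universelle Eigenschaft der Präsentation der Hecke"=Algebra $H$ durch Erzeuger $(T_s)_{s\in S}$ und die quadratischen sowie die Zopfrelationen auszunutzen (Satz über generische Algebren). Ich muss also nur nachweisen, dass die Elemente $\iota(T_s) = -v_s^{-1} e_s + v_s(1-e_s) + x_s \in \Omega$ beide Sorten von Relationen erfüllen; dann faktorisiert der $\IZ[\Gamma]$"~Algebrenhomomorphismus aus dem freien Erzeugendensystem durch $H$ und liefert den gewünschten Morphismus.

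Die Zopfrelationen $\Delta_{m_{st}}(\iota(T_s),\iota(T_t))=0$ gelten in $\Omega$ per Definition, denn $\Omega$ ist genau der Quotient von $\IZ[\Gamma]\otimes_\IZ\Xi$ nach ebendiesen Relationen. Hier ist also nichts mehr zu tun; dieser Teil ist reine Bauernschläue bei der Definition gewesen.

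Bleibt die quadratische Relation $\iota(T_s)^2 = 1 + (v_s-v_s^{-1})\iota(T_s)$. Diese rechne ich in $\Xi$ direkt nach. Die Relationen $e_s^2 = e_s$, $e_s x_s = x_s$ und $x_s e_s = 0$ implizieren unmittelbar $(1-e_s)^2 = 1-e_s$, $e_s(1-e_s) = 0 = (1-e_s)e_s$ sowie $(1-e_s)x_s = 0$ und $x_s(1-e_s) = x_s$. Entscheidend ist ferner die Gleichung
\[x_s^2 = x_s(e_s x_s) = (x_s e_s)x_s = 0,\]
die rein aus den Multiplikationsregeln von $\Xi$ folgt. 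Setzt man dies ein, erhält man nach dem Ausmultiplizieren
\[\iota(T_s)^2 = v_s^{-2} e_s + v_s^2(1-e_s) + (v_s - v_s^{-1})x_s,\]
während die rechte Seite sich zu
\[1 + (v_s-v_s^{-1})\iota(T_s) = v_s^{-2} e_s + v_s^2(1-e_s) + (v_s - v_s^{-1})x_s\]
vereinfacht (da $1 - (1-v_s^{-2})e_s + (v_s^2-1)(1-e_s) = v_s^{-2}e_s + v_s^2(1-e_s)$ ist). Damit stimmen beide Seiten überein.

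Eine ernsthafte Schwierigkeit sehe ich in diesem Beweis nicht; der einzige nicht ganz offensichtliche Schritt ist die Beobachtung $x_s^2 = 0$, die aber umgehend aus den beiden Relationen $e_s x_s = x_s$ und $x_s e_s = 0$ folgt. Die Rechnung lässt sich wahlweise direkt in $\Omega$ oder schon in $\IZ[\Gamma]\otimes_\IZ\Xi$ durchführen und überträgt sich automatisch auf jeden Quotienten. Insbesondere erhält man damit analog auch den Morphismus $\iota_G: H\to \IZ[\Gamma]\OmegaGJ_G$ für jede endliche Teilmenge $G\subseteq\Gamma$ und, durch Grenzwertbildung, einen Morphismus $H\to \IZ[\Gamma]\OmegaGJ$.
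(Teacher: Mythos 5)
Dein Beweis ist korrekt und verfolgt genau dieselbe Strategie wie die Arbeit: Die Zopfrelationen gelten per Konstruktion von $\Omega$, und die quadratische Relation wird durch direkte Rechnung unter Ausnutzung von $x_s^2 = 0$ (was sofort aus $e_s x_s = x_s$ und $x_s e_s = 0$ folgt) verifiziert. Die abschließende Bemerkung zur Übertragung auf $\OmegaGJ_G$ und $\OmegaGJ$ geht über das Lemma hinaus, ist aber richtig und wird in der Arbeit an anderer Stelle (über die universelle Eigenschaft, Lemma~\ref{wgraph_alg:univ_prop_OmegaGJ}) behandelt.
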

\begin{proof}
Die Definition stellt sicher, dass die Zopfrelationen erfüllt sind. Wir müssen also nur die quadratischen Relationen nachprüfen. Dazu stellen wir zunächst fest, dass $\smash{x_s^2 = (e_s x_s)^2 = e_s \underbrace{x_s e_s}_{=0} x_s = 0}$ gilt. Jetzt vergleichen wir die Terme der quadratischen Relation:
\begin{align*}
	\iota(T_s)^2 &= (-v_s^{-1} e_s + v_s(1-e_s) + x_s)(-v_s^{-1} e_s + v_s(1-e_s) + x_s) \\
	&= v_s^{-2} \underbrace{e_s^2}_{=e_s} - \underbrace{e_s(1-e_s)}_{=0} + (-v_s^{-1}) \underbrace{e_s x_s}_{=x_s} 
	                    -\underbrace{(1-e_s)e_s}_{=0} + v_s^2 \underbrace{(1-e_s)^2}_{=1-e_s} \\
	&\phantom{\text{=}} + v_s \underbrace{(1-e_s)x_s}_{=0} -v_s^{-1} \underbrace{x_s e_s}_{=0} + v_s \underbrace{x_s(1-e_s)}_{=x_s} + \underbrace{x_s^2}_{=0} \\
	&= v_s^{-2} e_s + v_s^2 (1-e_s) + (v_s-v_s^{-1})x_s
\end{align*}
\begin{align*}
	1+(v_s-v_s^{-1})\iota(T_s) &= 1+(-1+v_s^{-2})e_s + (v_s^2 - 1)(1-e_s) + (v_s-v_s^{-1})x_s \\
	&= 1-e_s-(1-e_s) + v_s^{-2} e_s + v_s^2 (1-e_s) + (v_s-v_s^{-1})x_s \\
	&= v_s^{-2} e_s + v_s^2 (1-e_s) + (v_s - v_s^{-1})x_s \qedhere
\end{align*}
\end{proof}

\begin{remark}
Wir werden in Kürze noch sehen, dass $\iota$ in der Tat eine Einbettung ist, $H$ also als eine Unteralgebra von $\Omega$ aufgefasst werden kann. In der Tat ist auch $H\to\Omega\to\IZ[\Gamma]\OmegaGJ$ eine Einbettung.
\end{remark}

\begin{lemmadef}[Siehe {\citep[2.5]{Gyoja}}]\label{wgraph_alg:E_I_and_X_IJ}
\index{terms}{Pfadalgebra}
\index{symbols}{EI@$E_I$}\index{symbols}{XIJs@$X_{IJ}^s$}
In $\Xi_\Gamma$ und seinen Quotienten definieren wir folgende Elemente: Für alle $I\subseteq S$ setze
\[E_I:=\Big( \prod_{s\in I} e_s \Big)\Big(\prod_{s\notin I} (1-e_s)\Big)\]
sowie für $I,J\subseteq S$ setze
\[X_{IJ}^s := E_I x_s E_J\quad\text{bzw.}\]
\[X_{IJ}^{s,\gamma} := E_I x_{s,\gamma} E_J.\]

Es gilt mit diesen Bezeichnungen:
\begin{enumerate}
	\item Die $E_I$ sind paarweise orthogonale Idempotente mit
	\[1 = \sum_{I\subseteq S} E_I \qquad\text{und}\qquad e_s = \smash{\sum_{\substack{I\subseteq S \\ s\in I}} E_I}.\]
	\item Es gilt weiterhin
	\[s\notin I\setminus J \implies X_{IJ}^s = 0, X_{IJ}^{s,\gamma}=0\quad\text{sowie}\]
	\[x_s = \sum_{\substack{I,J\subseteq S \\ s\in I\setminus J}} X_{IJ}^s \quad\text{bzw.}\quad x_{s,\gamma} = \sum_{\substack{I,J\subseteq S \\ s\in I\setminus J}} X_{IJ}^{s,\gamma}.\]
	\item $\Xi_G$ ist die Pfadalgebra $\IZ\mathcal{Q}_G$, wobei $\mathcal{Q}_G$ der Köcher mit den Ecken $I\subseteq S$ und $\abs{\Set{(s,\gamma)\in (I\setminus J) \times(G\cap -G) | 0\leq \gamma < L(s)}}$ Kanten $I \leftarrow J$ sei. Insbesondere ist $\Xi\isomorphic\Xi_{\Set{0}}$ die Pfadalgebra auf dem Köcher mit Eckenmenge $2^S$ und $\abs{I\setminus J}$ Kanten $I\leftarrow J$.
\end{enumerate}
\end{lemmadef}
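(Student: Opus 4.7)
The plan is to treat the three parts separately, deferring the bulk of the work to part (c).

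For part (a): The $e_s$ are pairwise commuting idempotents by relation (i), so every $E_I$, being a product of commuting idempotents in the elements $\{e_s, 1-e_s\}_{s\in S}$, is itself idempotent. For orthogonality with $I\neq J$, I pick some $s$ in the symmetric difference of $I$ and $J$; using commutativity of the $e_t$, I bring the factors $e_s$ and $1-e_s$ (one coming from $E_I$, one from $E_J$) next to each other in the product $E_I E_J$, producing $e_s(1-e_s)=0$. The identity $1 = \sum_I E_I$ drops out of the expansion $\prod_{s\in S}\bigl(e_s+(1-e_s)\bigr)=1$. The last formula follows from $e_s = e_s\cdot 1 = \sum_I e_s E_I$ together with $e_sE_I = E_I$ if $s\in I$ and $e_sE_I=0$ otherwise (again via $e_s(1-e_s)=0$, after commuting $e_s$ past the factors in $E_I$ that commute with it).

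For part (b): The key relations are $e_sx_s=x_s$ and $x_se_s=0$ (and the analogous statements for $x_{s,\gamma}$). Suppose $s\notin I$. Then $(1-e_s)$ appears as a factor in $E_I$; all the other factors $e_t,1-e_t$ (for $t\neq s$) commute with $e_s$ by relation (i), so I may move $(1-e_s)$ to be the rightmost factor of $E_I$. This yields $E_Ix_s = E_I'\cdot(1-e_s)x_s = E_I'\cdot(x_s-e_sx_s)=0$. Dually, if $s\in J$, I commute $e_s$ to the leftmost factor of $E_J$ and use $x_se_s=0$ to obtain $x_sE_J=0$. Therefore $X_{IJ}^s=E_Ix_sE_J=0$ unless $s\in I\setminus J$, and likewise for the $X_{IJ}^{s,\gamma}$. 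The claimed decomposition of $x_s$ (resp.\ $x_{s,\gamma}$) then follows by inserting $1=\sum_I E_I$ on both sides.

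For part (c): I will exhibit mutually inverse homomorphisms between $\Xi_G$ and the path algebra $\IZ\mathcal{Q}_G$. In one direction, parts (a) and (b) say precisely that the family $\{E_I\}_{I\subseteq S}$ together with $\{X_{IJ}^{s,\gamma}\mid s\in I\setminus J,\ \gamma\in G\cap -G,\ 0\le\gamma<L(s)\}$ satisfy the defining relations of a path algebra (orthogonal idempotents summing to $1$; each arrow $a$ from $J$ to $I$ satisfies $E_I a = a = a E_J$, which here is immediate from $E_I^2=E_I$ and $E_J^2=E_J$ together with the orthogonality in (a)). The universal property of $\IZ\mathcal{Q}_G$ then supplies a homomorphism $\psi:\IZ\mathcal{Q}_G\to\Xi_G$. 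In the other direction, I define $\phi:\Xi_G\to\IZ\mathcal{Q}_G$ on generators by $e_s\mapsto\sum_{I\ni s}\epsilon_I$ and $x_{s,\gamma}\mapsto\sum_{s\in I\setminus J} a_{IJ}^{s,\gamma}$, where $a_{IJ}^{s,\gamma}$ denotes the corresponding arrow of $\mathcal{Q}_G$ when $\gamma\in G\cap -G$ and $0\le\gamma<L(s)$, with the conventions $a_{IJ}^{s,\gamma}:=a_{IJ}^{s,-\gamma}$ when $\gamma<0$ and $a_{IJ}^{s,\gamma}:=0$ otherwise. I then check by direct computation that each of the defining relations (i)--(v) of $\Xi_G$ holds after applying $\phi$; here the restriction $\gamma\in G\cap -G$ with $0\le\gamma<L(s)$ in the quiver description is exactly what is needed to make (iii), (iv) and (v) hold. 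Finally, $\psi\circ\phi$ and $\phi\circ\psi$ are the identity on the respective sets of generators, hence they are identities.

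The main obstacle is the bookkeeping in the last step: one must be disciplined about the conventions $x_{s,\gamma}=x_{s,-\gamma}$ and $x_{s,\gamma}=0$ for $\gamma\notin G$ on the $\Xi_G$-side and match them precisely with the parametrisation of arrows in $\mathcal{Q}_G$, so that $\phi$ is well defined. Once this is set up properly, the verifications are routine and the inverse property is immediate from parts (a) and (b).
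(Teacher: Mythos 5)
Your proposal is correct and matches the paper's argument in all essentials: parts (a) and (b) are proved by the same elementary manipulations with the defining relations (expanding $\prod_{s\in S}(e_s+(1-e_s))=1$, inserting this identity around $x_{s,\gamma}$, and killing unwanted factors via $(1-e_s)x_{s,\gamma}=0$ and $x_{s,\gamma}e_s=0$), and part (c) is proved by exhibiting the same pair of mutually inverse homomorphisms built from the presentations on both sides, with precisely the same bookkeeping about $\gamma\leftrightarrow-\gamma$ that you flag.
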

\begin{proof}
a. Dass sie paarweise orthogonal sind, sieht man der Definition an. Dass sie zur $1$ summieren, folgt durch geschicktes Ausmultiplizieren:
\[1 = 1^\abs{S} = \prod_{s\in S} (e_s + (1-e_s)) = \sum_{I\subseteq S} E_I\]
Die Gleichung für $e_s$ folgt durch Einsetzen in $e_s\cdot 1$.

\medbreak
b. Die zweite Behauptung folgt sofort aus den definierenden Relationen: Wenn $s\notin I$ ist, enthält $E_I x_{s,\gamma}$ den Faktor $(1-e_s)x_{s,\gamma}=0$. Wenn $s\in J$ ist, enthält $x_{s,\gamma} E_J$ den Faktor $x_{s,\gamma} e_s=0$. Die Darstellung für $x_{s,\gamma}$ folgt jetzt durch Einsetzen:
\[x_{s,\gamma} = 1\cdot x_{s,\gamma}\cdot 1 = \sum_{I,J\subseteq S} E_I x_{s,\gamma} E_J = \sum_{\substack{I,J\subseteq S \\ s\in I\setminus J}} X_{IJ}^{s,\gamma}\]

c. Es gilt nach Definition $E_I E_J = \delta_{IJ} E_I$ und $E_I X_{JK}^{s,\gamma} E_L= \delta_{IJ} X_{JK}^{s,\gamma} \delta_{KL}$.

Die Pfadalgebra $\IZ\mathcal{Q}_G$ hat genau diese Präsentation durch Erzeuger und Relationen:
\[\IZ\mathcal{Q}_G=\frac{\IZ\left\langle \widetilde{E}_I, \widetilde{X}_{IJ}^{s,\gamma} \mid I,J\subseteq S, s\in I\setminus J, \gamma\in G\cap -G\cap(-L(s),+L(s))\right\rangle}{\big(\widetilde{E}_I \widetilde{E}_J = \delta_{IJ} \widetilde{E}_I,\; 1=\sum_I \widetilde{E}_I,\; \widetilde{E}_I \widetilde{X}_{JK}^{s,\gamma} \widetilde{E}_L = \delta_{IJ} \widetilde{X}_{JK}^{s,\gamma} \delta_{KL}, X_{IJ}^{s,\gamma}=X_{IJ}^{s,-\gamma}\big)}\]
Daher ist $\smash[t]{\widetilde{E}_I\mapsto E_I, \widetilde{X}_{IJ}^{s,\gamma} \mapsto X_{IJ}^{s,\gamma}}$ ein wohldefinierter Homomorphismus $\IZ\mathcal{Q}\to\Xi$. Man überzeugt sich leicht davon, dass umgekehrt die Elemente 
\[\widetilde{e}_s:=\sum_{\substack{I\subseteq S \\ s\in I}} \widetilde{E}_I \quad\text{und}\quad {\widetilde{x}_{s,\gamma}:=\sum_{\substack{I,J\subseteq S \\ s\in I\setminus J}} \widetilde{X}_{IJ}^{s,\gamma}}\]
die definierenden Relationen von $\Xi_G$ erfüllen und daher den inversen Morphismus induzieren. $\Xi_G$ und $\IZ\mathcal{Q}_G$ sind also isomorph.
\end{proof}

\begin{theorem}[$W$-Graphen und $\Omega$-Moduln, siehe {\citep[2.7]{Gyoja}}]
\index{terms}{W-Graph@$W$-Graph!-Modul}
Es gibt die folgenden Entsprechungen zwischen $W$"~Graphen und Moduln von $W$"~Graph-Algebren:
\begin{enumerate}
	\item Von $W$"~Graphen zu $\Omega$"~Moduln:
	
	Es sei $k$ eine kommutative $\IZ[\Gamma]$"~Algebra und $(\mathfrak{C},I,m)$ ein $W$"~Graph mit Kantengewichten in $k$ und $W$"~Graph"=Modul $V=k^{(\mathfrak{C})}$. Definiere
	\[\omega(e_s) y := \begin{cases} y & s\in I(y) \\ 0 & s\notin I(y) \end{cases} \quad\text{und}\quad \omega(x_s) y := \sum\limits_{\substack{x \in\mathfrak{C} \\ s\in I(x)\setminus I(y)}} m_{xy}^s\cdot x\]
	für alle $s\in S$ und alle $y\in\mathfrak{C}$.

	Mit diesen Bezeichnungen gilt:
	\begin{enumerate}
		\item $\omega$ ist eine Matrixdarstellung $\Omega\to\End_k(V)$ und
		\[\smash{H\xrightarrow{\iota}\Omega\xrightarrow{\omega} \End_k(V)}\]
		ist genau die von $(\mathfrak{C},I,m)$ induzierte Matrixdarstellung von $H$.
		
		\item Ist $A\subseteq k$ ein Teilring und $(\mathfrak{C},I,m)$ ein $W$"~Graph mit konstanten Kantengewichten in $A\subseteq k$, so induziert $\omega$ eine Matrixdarstellung ${A\OmegaGy\to\End_A(A^{(\mathfrak{C})})}$.
		
		\item Ist $A\subseteq k$ ein Teilring mit $A[\Gamma]=k$, $(\mathfrak{C},I,m)$ ein Geck-$W$"~Graph und $\abs{\mathfrak{C}}<\infty$, dann faktorisiert $\omega$ als $k\Omega\to k\OmegaGJ\xrightarrow{\varpi}\End_k(V)$: Schreibe $m_{xy}^s = \sum_{\gamma} m_{xy}^{s,\gamma} \cdot v^\gamma$ mit $m_{xy}^{s,\gamma}\in A$ und setze
		\[\varpi(e_s) y := \begin{cases} y & s\in I(y) \\ 0 & s\notin I(y) \end{cases} \quad\text{und}\quad\varpi(x_{s,\gamma}) y := \smash{\sum\limits_{\substack{x \in\mathfrak{C} \\ s\in I(x)\setminus I(y)}}} m_{xy}^{s,\gamma}\cdot x.\]
		In dieser Situation ist $V$ mit der diskreten Topologie ein stetiger $k\OmegaGJ$-Modul und ${A\OmegaGJ\xrightarrow{\varpi}\End_A(A^{(\mathfrak{C})})}$ macht $A^{(\mathfrak{C})}$ ebenfalls zu einem stetigen, diskreten $A\OmegaGJ$"~Modul.
	\end{enumerate}
	\item Von $\Omega$"~Moduln zu $W$"~Graphen:

	Sei umgekehrt $k$ ein beliebiger kommutativer Ring und $V$ ein $k\Xi$"~Modul mit Darstellung $\omega: k\Xi\to\End_k(V)$.
	
	Dann erhalten wir aus der orthogonalen Zerlegung $1=\sum_I E_I$ eine Zerlegung
	\[V=\bigoplus_{I\subseteq S} E_I V.\]
	Wir nehmen an, dass $E_I V$ als $k$"~Modul frei ist, wählen eine $k$"~Basis $\mathfrak{C}_I\subseteq V_I$ und setzen $\mathfrak{C} := \bigcup_{I\subseteq S} \mathfrak{C}_I$ sowie $I(x):=I$ für $x\in\mathfrak{C}_I$.
	
	Mit diesen Bezeichnungen gilt:
	
	\begin{enumerate}
		\item Ist $k$ speziell eine kommutative $\IZ[\Gamma]$"~Algebra und $V$ ein $k\Omega$"~Modul, dann ist $(\mathfrak{C},I,m)$ ein $W$"~Graph mit Kantengewichten in $k$, wenn wir die Kantengewichte durch die Darstellungsmatrizen $(m_{xy}^s)_{x,y\in\mathfrak{C}}$ von $\omega(x_s)$ bzgl. der Basis $\mathfrak{C}$ definieren. Der zugehörige $W$"~Graph"=Modul ist $(V,\mathfrak{C})$.
		
		\item Ist $V$ ein $k\OmegaGy$"~Modul, dann ist $(\mathfrak{C},I,m)$ ein $W$"~Graph mit konstanten Kantengewichten in $k\subseteq k[\Gamma]$, wenn wir die Kantengewichte durch die Darstellungsmatrizen $(m_{xy}^s)_{x,y\in\mathfrak{C}}$ von $\omega(x_s)$ bzgl. der Basis $\mathfrak{C}$ definieren. Der zugehörige $W$"~Graph"=Modul ist $(k[\Gamma]\otimes_k V,1\otimes\mathfrak{C})$.
		
		\item Ist $V$ ein stetiger, diskreter $k\OmegaGJ$"~Modul und als $k$"~Modul endlich erzeugt, dann ist $(\mathfrak{C},I,m)$ ein Geck-$W$"~Graph mit Kantengewichten in $k[\Gamma]$, wenn wir zunächst $(m_{xy}^{s,\gamma})_{x,y\in\mathfrak{C}}$ als Darstellungsmatrizen von $\omega(x_{s,\gamma})$ bzgl. der Basis $\mathfrak{C}$ und die Kantengewichte dann als $m_{xy}^s := \sum_{\gamma\in\Gamma} m_{xy}^{s,\gamma} \cdot v^\gamma$ definieren. Der zugehörige $W$"~Graph"=Modul ist $(k[\Gamma]\otimes_k V,1\otimes\mathfrak{C})$.
	\end{enumerate}	
\end{enumerate}
\end{theorem}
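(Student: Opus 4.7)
The plan is to reduce everything to a careful inspection of the defining relations of $\Xi$, $\Xi_\Gamma$, and $\Xi_G$ and then to exploit the fact, already recorded in \ref{wgraph_alg:iota_morphism}, that $\iota(T_s)=-v_s^{-1}e_s+v_s(1-e_s)+x_s$ has \emph{exactly} the shape of a $W$-graph matrix once $e_s$ is interpreted as a diagonal projection and $x_s$ as the off-diagonal part. Lemma \ref{wgraph_alg:E_I_and_X_IJ} is the central bookkeeping device: it recognises $\Xi_G$ as the path algebra on the quiver with vertices $I\subseteq S$ and arrows labelled by $(s,\gamma)$ with $s\in I\setminus J$, $0\le\gamma<L(s)$. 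A $k\Xi_G$-module $V$ together with a free decomposition $V=\bigoplus_I E_I V$ is therefore exactly the same datum as a family of $k$-linear maps $\omega(x_{s,\gamma})$ whose matrices on the $I$-stratified basis $\mathfrak{C}=\coprod \mathfrak{C}_I$ satisfy the support condition ``entry $m_{xy}^{s,\gamma}\ne 0$ only if $s\in I(x)\setminus I(y)$''. This establishes the bijection between the combinatorial data (labels plus edge matrices) and the purely algebraic data (modules) at the level of $\Xi_G$, without any braid relations entering yet.

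For direction (a), I would first prove (a.i): starting from a $W$-graph $(\mathfrak{C},I,m)$ over a $\IZ[\Gamma]$-algebra $k$, the operators $\omega(e_s)$ and $\omega(x_s)$ satisfy the $\Xi$-relations by inspection, and by construction $\omega(\iota(T_s))$ reproduces the $W$-graph matrix $\omega(T_s)$ of $T_s$ on $V=k^{(\mathfrak{C})}$. Hence $\Delta_{m_{st}}(\omega(\iota(T_s)),\omega(\iota(T_t)))=0$ as an equation in $\End_k(V)$, which is precisely the condition needed so that $\omega$ descends to $\Omega$. Composing with $\iota:H\to\Omega$ tautologically returns the original $W$-graph representation of $H$. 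For (a.ii) and (a.iii) I would use the universal property \ref{wgraph_alg:univ_prop_OmegaGJ}: when the edge weights are constant in $A$ (resp.\ a Geck-$W$-graph with finite $\mathfrak{C}$), the operator $\omega(x_s)$ decomposes as a finite sum $\sum_\gamma \omega(x_{s,\gamma})v^\gamma$ with $\omega(x_{s,\gamma})\in\End_A(V)$; the equation $\Delta_{m_{st}}(\omega(\iota_G(T_s)),\omega(\iota_G(T_t)))=0$ now holds in $A[\Gamma]\otimes_A\End_A(V)$, and since its $\Gamma$-homogeneous components vanish separately this gives a morphism out of the quotient $A\OmegaGJ_G$ (resp.\ $A\OmegaGy$ in the case $G=\{0\}$). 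Continuity and discreteness in (a.iii) are immediate because the factorisation goes through a single $A\OmegaGJ_G$, hence through $A\OmegaGJ\to A\OmegaGJ_G$, which is continuous.

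For direction (b), starting from an $\Omega$-, $\OmegaGy$-, or stetig-diskreten $\OmegaGJ$-module $V$ with a decomposition-compatible basis $\mathfrak{C}$, the support condition on the matrices $(m_{xy}^s)$ (resp.\ $(m_{xy}^{s,\gamma})$) is read off from $e_s x_s=x_s$, $x_s e_s=0$. The braid relations $\Delta_{m_{st}}(\iota(T_s),\iota(T_t))=0$ holding in $\Omega$ translate, via $\omega$, into the braid relations for $\omega(\iota(T_s))$ on $V$, which is the final missing ingredient in the definition of a $W$-graph. The Gewichts"=Zusatzbedingungen (constant edge weights for $\OmegaGy$, Palindromit\"at and Gradschranke for $\OmegaGJ$) are encoded in $x_{s,\gamma}=x_{s,-\gamma}$ and $x_{s,\gamma}=0$ for $\gamma\notin(-L(s),+L(s))$; they fall out automatically because these are defining relations of $\Xi_\Gamma$. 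In (b.iii), the continuity assumption combined with Lemma \ref{wgraph_alg:continuous_reps} is essential: it guarantees that the representation factors through some $k\OmegaGJ_G$ with $G$ finite, so that $m_{xy}^s=\sum_\gamma m_{xy}^{s,\gamma}v^\gamma$ is a Laurent polynomial with \emph{finite} support and hence a genuine element of $k[\Gamma]$.

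I expect the main obstacle to be purely notational: keeping the $\Gamma$-grading on $\IZ[\Gamma]\otimes\Xi_G$ disjoint from the operator grading on $\End_k(V)$, and confirming that the $\Gamma$-homogeneous decomposition of a braid commutator in $\IZ[\Gamma]\otimes\Xi_G$ is preserved under the evaluation $\omega$ (it is, because $\omega(x_{s,\gamma})$ carries no $v$). Every other verification is a short calculation using $e_s^2=e_s$, $e_se_t=e_te_s$, $e_sx_s=x_s$, $x_se_s=0$ already established in \ref{wgraph_alg:iota_morphism}. The topological part in (a.iii)/(b.iii) is a one-line appeal to \ref{wgraph_alg:continuous_reps}, so the conceptual content really is concentrated in the tautological identification between ``$W$-graph matrices'' and ``operators of the form $-v_s^{-1}e_s+v_s(1-e_s)+x_s$''.
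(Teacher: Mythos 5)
Your proposal follows essentially the same route as the paper's proof: (a.i) is a direct inspection of the $\Xi$-relations together with the observation that $\omega(\iota(T_s))$ reproduces the given $W$-graph matrix; (a.ii) and (a.iii) go through the universal property \ref{wgraph_alg:univ_prop_OmegaGJ} after finding a finite $G\subseteq\Gamma$ capturing all occurring exponents (using $\lvert\mathfrak{C}\rvert<\infty$ in (a.iii)); the continuity and discreteness statements come from the factorisation through a single $k\OmegaGJ_G$; and (b) reads the support and weight conditions off the relations $e_sx_s=x_s$, $x_se_s=0$, $x_{s,\gamma}=x_{s,-\gamma}$, $x_{s,\gamma}=0$ for $\gamma\notin(-L(s),+L(s))$, with the $\OmegaGJ_G$-factorisation in (b.iii) supplied by Lemma \ref{wgraph_alg:continuous_reps}. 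The only cosmetic difference is that you frame the tautological $\Xi$-level correspondence via the path-algebra identification from Lemma \ref{wgraph_alg:E_I_and_X_IJ}, which the paper leaves implicit; otherwise the argument is the same.
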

\begin{proof}
a.i. Dass die $\omega(e_s)$ und $\omega(x_s)$ die Relationen von $\Omega$ erfüllen, ist aus der Definition von $W$"~Graphen klar.

\medbreak
Für a.ii. müssen wir zeigen, dass $\omega(e_s)$ und $\omega(x_s)$ unter den angegebenen Bedingungen die Relationen von $A\OmegaGy$ ebenfalls erfüllen. Das folgt aus der universellen Eigenschaft von $A\OmegaGy=A\OmegaGJ_{\Set{0}}$ (siehe Lemma \ref{wgraph_alg:univ_prop_OmegaGJ}), da $\omega(x_s)$ nach Voraussetzung in $A^{\mathfrak{C}\times\mathfrak{C}}$ ist.

\medbreak
Für a.iii. sei also $k=A[\Gamma]$ und $m_{x,y}^s = \sum_{\gamma\in\Gamma} m_{x,y}^{s,\gamma} v^\gamma$ mit $m_{xy}^{s,\gamma}\in A$ wie in der Behauptung. Ist nun $\mathfrak{C}$ endlich, dann gibt es nur endlich viele nichtverschwindende $m_{x,y}^{s,\gamma}$, d.\,h. wir können eine endliche Menge $G\subseteq\Gamma$ mit $\gamma\notin G \implies m_{x,y}^{s,\gamma} = 0$ finden.

Weil die $\varpi(\iota_G(T_s))=\omega(\iota_G(T_s))$ die Zopf-Relationen erfüllen, folgt aus der universellen Eigenschaft \ref{wgraph_alg:univ_prop_OmegaGJ}, dass $\varpi$ ein wohldefinierter Morphismus $A\OmegaGJ_G\to\End_A(A^{(\mathfrak{C})})$ ist.

Daher faktorisiert $\omega$ als $k\Omega\to k\OmegaGJ\to k\OmegaGJ_G\xrightarrow{\varpi}\End_k(k^{(\mathfrak{C})})$ wie behauptet. Gleichermaßen faktorisiert die Multiplikation $k\OmegaGJ\times V\to V$ als $k\OmegaGJ\times V\to k\OmegaGJ_G\times V\to V$ und ist daher stetig. Somit ist $V$ ein stetiger, diskreter $k\OmegaGJ$"~Modul. Analog ist $A^{(\mathfrak{C})}$ ein diskreter $A\OmegaGJ$"~Modul.

\bigbreak
b. Hier muss geprüft werden, dass die Konstruktionen wirklich $W$"~Graphen liefern. Die Eigenschaft $m_{xy}^s \neq 0 \implies s\in I(x)\setminus I(y)$ folgt dabei aus ${X_{IJ}^s \neq 0 \implies s\in I\setminus J}$. Die Konstruktion der Kantengewichte sichert außerdem zu, dass die durch $(\mathfrak{C},I,m)$ definierten Matrizen gleich $\omega(\iota(T_s))$ sind und sie daher die Zopfrelationen erfüllen.

\medbreak
In b.ii. ist offenkundig, dass ein $W$"~Graph mit konstanten Kantengewichten konstruiert wurde.

\medbreak
Für b.iii. halten wir zum einen fest, dass $V$ als diskreter $k\OmegaGJ$"~Modul wegen Lemma \ref{wgraph_alg:continuous_reps} sogar ein $k\OmegaGJ_G$"~Modul ist für ein hinreichend großes, aber endliches $G\subseteq\Gamma$. Die Relationen von $\OmegaGJ_G$ sichern dann zu, dass $\omega(x_{s,\gamma})=0$ für $\gamma\notin G\cap(-L(s),+L(s))$ gilt. Zum einen ist daher die Summe $\sum_{\gamma\in\Gamma} m_{xy}^{s,\gamma} \cdot v^\gamma$ wohldefiniert. Zum anderen folgt daraus, dass die Kantengewichte die Gradschranke erfüllen, die in der Definition von Geck-$W$"~Graphen gefordert wird. Die Relation $x_{s,\gamma}=x_{s,-\gamma}$ sichert weiterhin, dass auch die Symmetriebedingung erfüllt ist.
\end{proof}

\begin{corollary}
\index{terms}{Kazhdan-Lusztig!-Basis}\index{terms}{Kazhdan-Lusztig!-$W$-Graph}\index{terms}{Einparameterfall}
Sei $(W,S,L)$ eine Coxeter"=Gruppe"=mit"=Gewicht und $k$ eine kommutative $\IZ[\Gamma]$"~Algebra.
\begin{enumerate}
	\item $\iota: kH\to k\Omega$ ist injektiv.
	\item Die Idempotente $E_I$ sind, aufgefasst als Elemente von $k\Omega$, von Null verschieden.
	\item a. und b. gelten auch, wenn $\abs{W}<\infty$ ist und $k\Omega$ durch $k\OmegaGJ$ oder $k\OmegaGJ_G$ für hinreichend großes $G\subseteq\Gamma$ ersetzt wird.
	\item a. und b. gelten auch im Einparameterfall, wenn $k\Omega$ durch $k\OmegaGy$ ersetzt wird.
\end{enumerate}
\end{corollary}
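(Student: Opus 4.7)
Der Plan ist, f\"ur jede der vier Teilaussagen einen konkreten Modul \"uber der jeweiligen $W$"~Graph"=Algebra anzugeben, der als Zeuge dient; der soeben etablierte Korrespondenzsatz zwischen $W$"~Graphen und $\Omega$- bzw.\ $\OmegaGJ_G$- bzw.\ $\OmegaGy$-Moduln erlaubt es dann, statt direkt in den Algebren zu rechnen, geeignete $W$"~Graphen explizit anzugeben.

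F\"ur (a) greife ich auf den Kazhdan"=Lusztig"=$W$"~Graphen zur\"uck: Dessen Matrizen sind spaltenfinit auch f\"ur unendliche $W$, sodass Teil a.i.\ des Korrespondenzsatzes einen wohldefinierten $k\Omega$"~Modul $k^{(W)}$ liefert. Die Komposition $kH \xrightarrow{\iota} k\Omega \xrightarrow{\omega} \End_k(k^{(W)})$ ist nach Konstruktion die linke regul\"are Darstellung $h\mapsto L_h$, welche offenkundig treu ist; daraus folgt unmittelbar die Injektivit\"at von $\iota$.

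F\"ur (b) konstruiere ich zu jedem $I \subseteq S$ einen $W$"~Graphen mit mindestens einer Ecke vom Label $I$. F\"ur $I$ mit $\abs{W_I} < \infty$ besorgt dies bereits der KL"=$W$"~Graph, denn das l\"angste Element $w_I \in W_I$ erf\"ullt die klassische Gleichung $D_L(w_I) = I$, und damit ist $\omega(E_I) C_{w_I} = C_{w_I} \neq 0$. Die Hauptarbeit steckt im Fall $\abs{W_I} = \infty$ (nur bei unendlichen Coxeter"=Gruppen \"uberhaupt relevant): Die Signumsdarstellung $T_s \mapsto -v_s^{-1}$ von $H(W_I,I,L_{|I})$ l\"asst sich als einziger Knoten mit Label $I$ zu einem $W_I$"~Graphen ausbauen (die Zopfrelationen sind dank Konstanz von $L$ auf Konjugationsklassen erf\"ullt), und deren parabolische Induktion gem\"a\ss{} \cite{howlett2003inducingI,howlett2004inducingII} liefert einen $W$"~Graphen mit einer Ecke vom Label $I$. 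Alternativ k\"onnte auf Gyojas direkte Konstruktion in \citep[2.5]{Gyoja} verwiesen werden.

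F\"ur (c) setzen wir $\abs{W} < \infty$ voraus. Der KL"=$W$"~Graph ist dann endlich und ein Geck"=$W$"~Graph (palindromisch und gradbeschr\"ankt, siehe \citep[2.1.8]{geckjacon}); nach Teil a.iii.\ des Korrespondenzsatzes faktorisiert die zugeh\"orige Darstellung \"uber einen stetigen diskreten $k\OmegaGJ$"~Modul und weiter \"uber $k\OmegaGJ_G$ f\"ur $G := \bigcup_{s\in S}(-L(s),L(s))\cap\Gamma$. Die Argumente aus (a) und (b) lassen sich unver\"andert \"ubertragen, wobei im finiten Fall automatisch alle $W_I$ endlich sind und die Schwierigkeit aus (b) entf\"allt. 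F\"ur (d) schlie\ss lich sei bemerkt, dass im Einparameterfall $(-L(s),L(s))\cap\IZ = \Set{0}$ und somit $k\OmegaGJ = k\OmegaGy$ gilt (vgl.\ die Bemerkung zur Maximalit\"at von $G$), sowie dass die KL"=Kantengewichte $\mu_{xy}^s$ hier ganzzahlig sind (\ref{KL:def:KL_poly}, e.iv.), sodass der KL"=$W$"~Graph konstante Koeffizienten hat und auch f\"ur unendliche $W$ direkt einen $k\OmegaGy$"~Modul (via Teil a.ii.\ des Korrespondenzsatzes) liefert. Aussage (d) reduziert sich damit auf die bereits behandelte Argumentation; der einzige wirklich anspruchsvolle Schritt des gesamten Beweises ist somit die Einbindung der parabolischen Induktion in (b) f\"ur $\abs{W_I}=\infty$.
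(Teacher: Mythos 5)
Your proposal follows essentially the same strategy as the paper: for all four parts, the key witness is the Kazhdan--Lusztig $W$-Graph $(kH, (C_w))$, whose module via the correspondence theorem factorizes $\iota$ through $k\Omega$ (resp. $k\OmegaGJ$, $k\OmegaGJ_G$, $k\OmegaGy$), so that injectivity of $\iota$ follows from faithfulness of the left regular representation; for (b), the labels $D_L(w)$ exhaust $2^S$ via the longest elements $w_I$ of the parabolic subgroups $W_I$. The paper merely states ``daher kommt jede Teilmenge $I \subseteq S$ vor'' without naming the witness $w_I$, so your explicit use of the longest element is a small clarification of exactly the same argument.

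The one place you diverge is the excursion into the case $\abs{W_I} = \infty$ for part (b), where you propose realizing a one-vertex $W_I$-graph with label $I$ (the sign representation) and invoking Howlett--Yin parabolic induction. This is moot given the standing convention at the top of Chapter~4 that $(W,S,L)$ is \emph{endlich}, under which every $W_I$ is finite and the longest element $w_I$ exists. (The paper's proof relies on this convention without comment.) If you did want the stronger statement for infinite $W$, your sketch still needs a citation or argument that the induced Howlett--Yin $W$-graph actually retains a vertex with label exactly $I$ --- this is plausible for the coset representative $d=1$, but you do not verify it, and it is the only nontrivial claim in that extension. Within the paper's actual hypotheses, however, your proof is correct and matches the paper's.
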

\begin{proof}
Wir betrachten den Kazhdan"=Lusztig"=$W$"~Graphen $(kH,(C_w))$. Da es sich um einen $W$"~Graphen handelt, können wir $kH\to\End_k(kH)$ dank des eben bewiesenen Satzes durch $\Omega$ faktorisieren. Wenn $W$ endlich ist, können wir sogar durch $k\OmegaGJ$ faktorisieren, da es sich um einen Geck-$W$"~Graphen handelt, und, wenn wir im Einparameterfall sind, durch $k\OmegaGy$. Da $kH\to\End_k(kH)$ injektiv ist, erhalten wir dann, dass $\iota$ in allen drei Fällen injektiv sein muss.

\medbreak
Für b. genügt die Feststellung, dass die $I$-Invarianten des $W$"~Graphen durch die Linksabstiegsmengen $I(w):=D_L(w)$ gegeben sind. Daher kommt in der Tat jede Teilmenge $I\subseteq S$ auch in diesem $W$"~Graphen vor und somit kann nicht $E_I=0$ sein.
\end{proof}

\begin{convention}
Insbesondere werden wir $kH$ von nun an als Unteralgebra von $k\Omega$ bzw. $k\OmegaGJ$ auffassen, ohne $\iota$ explizit zu erwähnen, wann immer das nützlich erscheint.
\end{convention}

\subsection{Verschiedene Morphismen}

\begin{lemmadef}[Funktorialität von $W$-Graph-Algebren]\label{wgraph:functors}
\index{terms}{W-Graph@$W$-Graph!-Algebra}\index{terms}{W-Graph@$W$-Graph!Zurückziehen von}
Es seien $(W_1,S_1,L_1)$ und $(W_2,S_2,L_2)$ zwei Coxeter"=Gruppen"=mit"=Gewicht mit gemeinsamer Gewichtsgruppe $\Gamma$. Weiter sei $k$ ein kommutativer Ring.

$\phi: W_1\to W_2$ sei ein Gruppenhomomorphismus, der sowohl ${\phi(S_1)\subseteq S_2\cup\Set{1}}$ als auch ${L_2(\phi(s_1))=L_1(s_1)}$ für alle $s_1\in S_1$ erfüllt.

\medbreak
Wir verwenden die Kurzschreibweisen $\Omega_i$ für $\Omega(W_i,S_i,L_i)$, $H_i$ für $H(W_i,S_i,L_i)$ usw. und bezeichnen mit $f$ die von $\phi$ induzierte Abbildung $S_1\cup\Set{1}\to S_2\cup\Set{1}$.

Mit diesen Bezeichnungen gilt:
\begin{enumerate}
	\item Wenn $k$ eine $\IZ[\Gamma]$"~Algebra ist, existiert ein $k$"~Algebramorphismus $k\Omega_1 \to k\Omega_2$, und wenn $k$ ein beliebiger Ring ist, existiert ein Morphismus $k\OmegaGJ_{1,G}\to k\OmegaGJ_{2,G}$ für alle endlichen $G\subseteq\Gamma$ sowie ein stetiger Morphismus $k\OmegaGJ_1\to k\OmegaGJ_2$ mit
	\[e_s\mapsto e_{f(s)} \quad\text{und}\quad x_s \mapsto x_{f(s)} \quad\text{bzw.}\quad x_{s,\gamma} \mapsto x_{f(s),\gamma}\]
	für alle $s\in S_1$ und $\gamma\in\Gamma$, wobei wir die Konvention $e_1=x_1=x_{1,\gamma}=0$ verwenden wollen.
	
	Der Einfachheit halber seien diese Morphismen ebenfalls mit $\phi$ bezeichnet. Es gilt:
	\begin{enumerate}
		\item 		
		$\displaystyle \phi(E_I) = \begin{cases} \sum_{\substack{K\subseteq S_2 \\ f^{-1}(K)=I}} E_K & \text{falls }f^{-1}(f(I))=I \\ 0 & \text{sonst}\end{cases}$
		
		und
		
		$\displaystyle \phi(X_{IJ}^s) = \begin{cases} \sum_{\substack{K,L\subseteq S_2 \\ f^{-1}(K)=I \\ f^{-1}(L) = J}} X_{KL}^{f(s)} & \text{falls }f^{-1}(f(I))=I, f^{-1}(f(J))=J \\ 0 &\text{sonst}\end{cases}$
		
		bzw.
		
		$\displaystyle \phi(X_{IJ}^{s,\gamma}) = \begin{cases} \sum_{\substack{K,L\subseteq S_2 \\ f^{-1}(K)=I \\ f^{-1}(L) = J}} X_{KL}^{f(s),\gamma} & \text{falls }f^{-1}(f(I))=I, f^{-1}(f(J))=J \\ 0 &\text{sonst}\end{cases}$.
		\item Falls $k$ eine $\IZ[\Gamma]$"~Algebra ist, dann gilt weiter $\phi(\iota_1(T_w)) = v^{g(w)} \iota_2(T_{\phi(w)})$ für alle $w\in W_1$, wobei die Gewichtsfunktion $g:W_1\to\Gamma$ durch $g(s):=L(s)$ für $f(s)=1$ und $g(s):=0$ sonst definiert sei.
	\end{enumerate}
	\item Ist $k$ eine $\IZ[\Gamma]$"~Algebra und $(\mathfrak{C},I,m)$ ein $W_2$"~Graph mit Gewichten in $k$, so ist $(\mathfrak{C},f^\ast I,f^\ast m)$ ein $W_1$"~Graph mit Gewichten in $k$, wobei
	\[(f^\ast I)(x) := f^{-1}(I(x)) \quad\text{und}\quad (f^\ast m)_{xy}^{s_1} := \begin{cases} m_{xy}^{f(s_1)} & \text{falls } f(s_1)\neq 1 \\ 0 &\text{falls }f(s_1)=1\end{cases}\]
	für alle $x,y\in\mathfrak{C}$ und $s_1\in S_1$ sei. Die $W_1$"~Graph-Darstellungen, die auf diese Weise entstehen, sind genau diejenigen, die durch $k\Omega_1\xrightarrow{\phi}k\Omega_2$ faktorisieren.
\end{enumerate}
\end{lemmadef}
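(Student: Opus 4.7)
Die Strategie ist ein direkter Beweis \"uber universelle Eigenschaften: In Teil (1) werden die Morphismen durch Vorgabe auf den Erzeugern definiert, wobei die definierenden Relationen zu pr\"ufen sind; in Teil (2) wird die Behauptung auf die zuvor bewiesene \"Aquivalenz zwischen $W$"~Graphen und $\Omega$"~Moduln zur\"uckgef\"uhrt.

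Zur Existenz der Morphismen in Teil (1): Die Relationen von $\Xi$ bzw.\ $\Xi_G$ sind mit der Konvention $e_1 = x_1 = x_{1,\gamma} = 0$ routinem\"a{\ss}ig nachzurechnen. Der Kern des Beweises liegt in der Verifikation der Zopfrelationen in $\Omega_2$ bzw.\ $\OmegaGJ_{2,G}$. F\"ur $s,t \in S_1$ mit $f(s), f(t) \in S_2$ gilt $\phi(\iota_1(T_s)) = \iota_2(T_{f(s)})$ (wegen $L_1(s) = L_2(f(s))$, also $v_s = v_{f(s)}$), und da $\phi$ ein Gruppenhomomorphismus ist, ist $m_{f(s),f(t)}^{W_2}$ ein Teiler von $m_{st}^{W_1}$. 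Entscheidend ist nun die Beobachtung, dass in der Hecke"=Algebra $H(I_2(m)) \subseteq H_2$ die Zopfrelation der Ordnung $m := m_{f(s),f(t)}^{W_2}$ zusammen mit den quadratischen Relationen die Zopfrelation jeder Vielfachenordnung $n = km$ impliziert: Denn $\underbrace{T_{f(s)} T_{f(t)} \cdots}_{n}$ l\"asst sich durch geeignetes Gruppieren der Faktoren als Potenz $(T_{w_0})^k$ des l\"angsten Elementes der parabolischen Untergruppe $\langle f(s), f(t)\rangle$ schreiben, und dasselbe gilt f\"ur das Pendant mit vertauschter Anfangsreihenfolge. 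Da $\iota_2(H_2) \subseteq \Omega_2$ ein homomorphes Bild von $H_2$ ist (Lemma \ref{wgraph_alg:iota_morphism}), transportiert sich diese Hecke"=Identit\"at nach $\Omega_2$. Im Grenzfall $f(s) = 1$ wird $\phi(\iota_1(T_s)) = v_s$ ein zentraler Skalar; da $\phi$ dann $m_{st}^{W_1}$ als Vielfaches der Ordnung von $\phi(t)$ in $W_2$ (insbesondere also gerade, falls $f(t)\in S_2$) erzwingt, verschwindet der Zopfkommutator unmittelbar. Der Fall $f(s) = f(t) = 1$ ist wegen der Kommutativit\"at von Skalaren trivial. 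Die Vertr\"aglichkeit der einzelnen $k\OmegaGJ_{1,G} \to k\OmegaGJ_{2,G}$ mit den Projektionen entlang der Halbordnung der endlichen Teilmengen $G \subseteq \Gamma$ ist offensichtlich und induziert den stetigen Morphismus $k\OmegaGJ_1 \to k\OmegaGJ_2$ auf den projektiven Limiten.

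Die Formeln in (a) f\"ur $\phi(E_I)$ und $\phi(X_{IJ}^s)$ bzw.\ $\phi(X_{IJ}^{s,\gamma})$ ergeben sich durch direktes Einsetzen in die Definitionen aus Lemma \ref{wgraph_alg:E_I_and_X_IJ}: Es ist $\phi(E_I) = \prod_{s \in I} e_{f(s)} \cdot \prod_{s \notin I} (1 - e_{f(s)})$. Wegen $e_1 = 0$ verschwindet dieses Produkt, falls ein $s \in I$ mit $f(s) = 1$ auftritt, und wegen $e_{f(s)} (1 - e_{f(s)}) = 0$ verschwindet es ebenfalls, falls verschiedene $s \in I, t \notin I$ mit $f(s) = f(t)$ existieren -- beide F\"alle werden genau durch die Bedingung $f^{-1}(f(I)) = I$ ausgeschlossen. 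Eine Summation \"uber alle $K \subseteq S_2$ mit $f^{-1}(K) = I$ liefert dann die behauptete Darstellung; f\"ur $X_{IJ}^s$ verf\"ahrt man analog. Die Identit\"at $\phi(\iota_1(T_w)) = v^{g(w)} \iota_2(T_{\phi(w)})$ in (b) beweist man induktiv nach der L\"ange eines reduzierten Ausdrucks $w = s_1 \cdots s_k$. Die Wohldefiniertheit von $g$ als Gewichtsfunktion auf $W_1$ ist per Matsumotos Satz zu pr\"ufen und folgt daraus, dass $g(s) = L_1(s) - L_2(f(s))$ auf Konjugationsklassen in $S_1$ konstant ist (weil $L_1$ und $L_2 \circ f$ es sind). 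Alternativ zeigt der Homomorphiesatz f\"ur die Hecke"=Algebra, dass $T_s \mapsto v^{g(s)} \iota_2(T_{f(s)})$ einen Algebrahomomorphismus $H_1 \to \Omega_2$ definiert, der auf Erzeugern mit $\phi \circ \iota_1$ \"ubereinstimmt und somit \"uberall mit diesem identisch ist.

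Teil (2) ist dann eine unmittelbare Konsequenz aus der \"Aquivalenz zwischen $W$"~Graphen und Moduln der $W$"~Graph"=Algebra: Der $W_2$"~Graph $(\mathfrak{C}, I, m)$ liefert einen $k\Omega_2$"~Modul $V = k^{(\mathfrak{C})}$, der via $\phi: k\Omega_1 \to k\Omega_2$ zu einem $k\Omega_1$"~Modul wird. Die Formel f\"ur $\phi(E_K)$ aus (1.a) zeigt, dass die Basenpartition von $V$ bez\"uglich der Idempotente von $\Omega_1$ genau durch die Label $(f^\ast I)(y) = f^{-1}(I(y))$ beschrieben wird. Die Kantengewichte ergeben sich als Matrixeintr\"age von $\omega_1(x_{s_1}) = \omega_2(\phi(x_{s_1})) = \omega_2(x_{f(s_1)})$, also $m_{xy}^{f(s_1)}$ f\"ur $f(s_1) \neq 1$ bzw.\ $0$ f\"ur $f(s_1) = 1$. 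Die Charakterisierung der faktorisierenden $W_1$"~Graph"=Darstellungen folgt direkt aus der Umkehrrichtung der \"Aquivalenz. Die gr\"o{\ss}te H\"urde des Gesamtbeweises ist und bleibt die Vielfachen"~Zopfrelation in $\iota_2(H_2) \subseteq \Omega_2$ -- ohne dieses Hecke"=Algebra-spezifische Argument w\"urde die Konstruktion des Morphismus $\phi$ scheitern.
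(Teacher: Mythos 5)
Your overall approach — defining $\phi$ on the generators $e_s, x_s$ and then verifying the braid relations in the target algebra — matches the paper's proof, and your case analysis along $f(s),f(t)\in S_2$ versus $f(s)=1$ or $f(t)=1$ is also what the paper does. Two points, however, deserve comment.

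First, a conceptual mischaracterisation: you present the implication $\Delta_{m_2}(x,y)=0 \Rightarrow \Delta_{m_1}(x,y)=0$ (for $m_2 \mid m_1$) as a Hecke-algebra-specific fact that requires the quadratic relations and a detour through $H_2$. In fact this is a purely monoid-theoretic statement, valid for arbitrary $x,y$ in any ring: writing $P_j := \underbrace{xyx\cdots}_{j}$ and $Q_j := \underbrace{yxy\cdots}_{j}$, one checks that $P_{km_2}$ and $Q_{km_2}$ are both $k$-fold alternating products of $P_{m_2}$ and $Q_{m_2}$ (the order depending on the parity of $m_2$), so $P_{m_2}=Q_{m_2}$ alone already forces $P_{km_2}=Q_{km_2}$. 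The paper argues directly in $\Omega_2$ (where the $m_2$-braid relation holds by construction) and never needs the quadratic relation of $H_2$, nor the map $\iota_2$. Your detour is therefore unnecessary — and your parenthetical claim that ``ohne dieses Hecke-Algebra-spezifische Argument w\"urde die Konstruktion scheitern'' is incorrect.

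Second, a genuine gap: in the case $f(s)=f(t)=1$ you dismiss the braid relation as ``trivial wegen Kommutativit\"at von Skalaren.'' Commutativity alone gives $\Delta_{m_1}(v_s,v_t)=0$ only when $m_1$ is even. For $m_1$ odd one gets $\Delta_{m_1}(v_s,v_t) = (v_s v_t)^{(m_1-1)/2}(v_s - v_t)$, which does not vanish a priori. The missing ingredient is that for odd $m_1$ the generators $s$ and $t$ are conjugate in $W_1$, so the weight function forces $L_1(s)=L_1(t)$, i.e.\ $v_s=v_t$. The paper makes this point explicit. Without it your proof of well-definedness of $\phi$ is incomplete. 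The rest of the proposal — the formulas in (a) by expanding $E_I$ and $X_{IJ}^s$, the induction on length for (b), and the derivation of part (2) from the equivalence between $W$-graphs and $\Omega$-modules — is fine.
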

\begin{proof}
$\phi$ ist auf jeden Fall als Morphismus $k\Xi_1\to k\Xi_2\to k\Omega_2$ wohldefiniert. Zunächst halten wir fest, dass für $\IZ[\Gamma]$"~Algebren $k$ stets
\begin{align*}
	\phi(\iota_1(T_s)) &= \phi(-v_s^{-1} e_s + v_s (1-e_s) + x_s) \\
	&= -v^{-L(s)} e_{f(s)} + v^{L(s)} (1-e_{f(s)}) + x_{f(s)} \\
	&= -v^{-L(f(s))} e_{f(s)} + v^{L(f(s))} (1-e_{f(s)}) + x_{f(s)} \\
	&= \begin{cases}\iota_2(T_{f(s)}) & \text{falls }f(s)\in S_2 \\ v_s & \text{falls }f(s)=1\end{cases} \tag{$\ast$}\label{eq:wgraph_alg:morphisms1}
\end{align*}
für alle $s\in S_1$ gilt.

\medbreak
Seien nun $s,t\in S_1$ beliebig, aber fest. Wir nehmen zunächst $f(s),f(t)\in S_2$ an. Da $W_1\xrightarrow{\phi} W_2$ ein Gruppenhomomorphismus ist, folgt, dass $m_2:=\ord(f(s)f(t))$ ein Teiler von $m_1:=\ord(st)$ ist. Die Gültigkeit der Zopfrelation $\Delta_{m_2}(x,y)=0$ impliziert daher die Gültigkeit von $\Delta_{m_1}(x,y)=0$. Daraus folgt
\[\phi(\Delta_{m_1}(\iota_1(T_s),\iota_1(T_t))) = \Delta_{m_1}(\phi(\iota_1(T_s)),\phi(\iota_1(T_t))) \overset{\eqref{eq:wgraph_alg:morphisms1}}{=} \Delta_{m_1}(\iota_2(T_{f(s)}),\iota_2(T_{f(t)}))\]
und das ist Null in $k\Omega_2$.

Wenn $f(s)=1$, aber $f(t)\neq 1$ gilt, ist $\phi(\iota_1(T_s))=v_s$ zentral in $k\Omega_2$ und es gilt ${m_1=\ord(st)\in 2\IN\cup\Set{\infty}}$.  Daraus ergibt sich ebenfalls $0=\phi(\Delta_{m_1}(\iota_1(T_s),\iota_2(T_t)))$ für diese $s,t$. Dies gilt analog, wenn $f(s)\neq 1$ und $f(t)=1$ ist. Im letzten Fall $f(s)=f(t)=1$ ist entweder $m_1$ gerade und
\[\Delta_{m_1}(\phi(\iota_1(T_s)),\phi(\iota_1(T_t))) = \Delta_{m_1}(v_s,v_t) = v_s^{m_1/2} v_t^{m_1/2} - v_t^{m_1/2} v_s^{m_1/2} = 0\]
oder $m_1$ ist ungerade und somit $v_s=v_t$, weshalb auch in diesem Fall $\Delta_{m_1}(v_s,v_t) = 0$ folgt.

\medbreak
Daraus folgt zum einen, dass $\phi$ als Homomorphismus $k\Omega_1\to k\Omega_2$ wohldefiniert ist. Wenn wir nun speziell die $\IZ[\Gamma]$"~Algebra $k[\Gamma]$ betrachten für einen beliebigen kommutativen Ring $k$, dann folgt aus dieser Überlegung auch, dass $\phi: k[\Gamma]\Xi_1 \to k[\Gamma]\Xi_2 \to k[\Gamma]\OmegaGJ_{2,G}$ die Zopfrelationen respektiert. Damit ergibt sich aus der universellen Eigenschaft \ref{wgraph_alg:univ_prop_OmegaGJ}, dass $\phi$ ein wohldefinierter Morphismus $k\OmegaGJ_{1,G}\to k\OmegaGJ_{2,G}$ ist, der offenbar mit den Projektionen verträglich ist und daher einen wohldefinierten, stetigen Morphismus $k\OmegaGJ_1\to k\OmegaGJ_2$ induziert.

\bigbreak
Die weiteren Aussagen in a. sind leicht nachzuprüfen. Es gilt etwa:
\begin{align*}
	\phi(E_I) &= \phi\Big(\prod_{s\in I} e_s \prod_{s\in S_1\setminus I} (1-e_s)\Big) \\
	&= \prod_{s\in I} e_{f(s)} \prod_{s\in S_1\setminus I} (1-e_{f(s)}) \\
	&= \prod_{t\in f(I)} e_t \prod_{t\in f(S_1\setminus I)} (1-e_t) \quad\text{da $e_t$ und $1-e_t$ Idempotente sind} \\
\intertext{Falls nun $f^{-1}(f(I))\supsetneq I$ ist, also ein $t\in f(I)\cap f(S_1\setminus I)$ existiert oder $1\in f(I)$ ist, dann ist $\phi(E_I)=0$. Ansonsten formen wir weiter um:}
	&= \prod_{t\in f(I)} e_t \prod_{t\in f(S_1)\setminus f(I)} (1-e_t) \prod_{t\in S_2\setminus f(S_1)} (e_t + (1-e_t)) \\
	&= \sum_{K'\subseteq S_2\setminus f(S_1)} \prod_{t\in f(I)} e_t \prod_{t\in f(S_1)\setminus f(I)} (1-e_t) \prod_{t\in K'} e_t \prod_{t\in S_2 \setminus (K'\cup f(S_1))} (1-e_t) \\
	&= \sum_{\substack{K\subseteq S_2 \\ f^{-1}(K)=I}} \prod_{t\in K} e_t \prod_{t\in S_2\setminus K} (1-e_t) \\
	&= \sum_{\substack{K\subseteq S_2 \\ f^{-1}(K)=I}} E_K
\end{align*}
Daraus folgt auch die Darstellung von $\phi(X_{IJ}^s)$.

\medbreak
Dass über $\IZ[\Gamma]$"~Algebren $k$ nun $\phi(T_s)=v^{g(s)} T_{f(s)}$ gilt, ist gerade die Aussage \eqref{eq:wgraph_alg:morphisms1}. Per Induktion nach der Länge folgt $\phi(T_w) = v^{g(w)} T_{\phi(w)}$ für alle $w\in W_1$, wie behauptet.

\bigbreak
Für b. müssen wir nur nachprüfen, dass die Matrizen $\omega_1(T_s)$, die zum neuen Graphen $(\mathfrak{C},f^\ast I,f^\ast m)$ gehören, die Zopfrelationen erfüllen, oder äquivalent, dass $\omega_1(e_s)$ und $\omega_1(x_s)$ die Relationen von $\Omega$ erfüllen. Es gilt aber $s\in (f^\ast I)(x) \iff f(s)\in I(x)$, d.\,h. $\omega_1(e_s) = \omega_2(e_{f(s)}) = \omega_2(\phi(e_s))$. Ebenso gilt 
\[s\in (f^\ast I)(x)\setminus (f^\ast I)(y) = f^{-1}(I(x)\setminus I(y)) \iff f(s)\in I(x)\setminus I(y),\]
woraus $\omega_1(x_s) = \omega_2(x_{f(s)}) = \omega_2(\phi(x_s))$ folgt.

Es ist aber $\omega_2\circ\phi$ ein Algebrahomomorphismus, also erfüllen $\omega_1(e_s)$ und $\omega_1(x_s)$ die Relationen von $\Omega_1$, d.\,h. der Graph ist wirklich ein $W_1$"~Graph und die Darstellung ${k\Omega_1\to\End_k(k^{(\mathfrak{C})})}$ faktorisiert durch $\phi$. Umgekehrt wird durch $\omega_2\circ\phi$ eine $W_1$"~Graph-Darstellung mit $W_1$"~Gra\-phen $(\mathfrak{C},f^\ast I,f^\ast m)$ definiert.
\end{proof}

\begin{example}[Graphautomorphismen]\label{wgraph_alg:def:graph_auto}
\index{terms}{Graphautomorphismus}
\index{symbols}{alpha@$\alpha$}
Wenn $\alpha:S\to S$ ein Graphautomorphismus des Dynkin"=Diagramms ist, d.\,h. für alle $s,t\in S$ gilt ${\ord(\alpha(s)\alpha(t))=\ord(st)}$, und wenn zusätzlich $L(\alpha(s))=L(s)$ erfüllt ist, setzt sich $\alpha$ zu einem Automorphismus auf den $W$"~Graph"=Algebren fort, der
\begin{align*}
	e_s &\mapsto e_{\alpha(s)} \\
	x_{s,\gamma} &\mapsto x_{\alpha(s),\gamma} \\
	E_I &\mapsto E_{\alpha(I)} \\
	X_{IJ}^{s,\gamma} &\mapsto X_{\alpha(I)\alpha(J)}^{\alpha(s),\gamma} \\
	T_s &\mapsto T_{\alpha(s)}
\end{align*}
abbildet.
\end{example}

\begin{example}[Parabolische $W$-Graph-Algebren]\label{wgraph_alg:def:parabolic}
\index{terms}{Parabolische Unteralgebra}\index{terms}{Restriktion}
\index{symbols}{jI@$j_I$}
Ist $I\subseteq S$, dann setzt sich die Inklusionsabbildung $I\hookrightarrow S$ zum \udot{parabolischen Morphismus} $j_I: \Omega(W_I,I,L_{|I})\to\Omega(W,S,L)$ (und analog für die anderen $W$-Graph-Algebren) fort. Dabei wird wie folgt abgebildet:
\begin{align*}
	e_s &\mapsto e_s \\
	x_{s,\gamma} &\mapsto x_{s,\gamma} \\
	E_K &\mapsto \sum_{\substack{K'\subseteq S \\ K'\cap I = K}} E_{K'} \\
	X_{KL}^{s,\gamma} &\mapsto \sum_{\substack{K',L'\subseteq S \\ K'\cap I=K, L'\cap I=L}} X_{K'L'}^{s,\gamma} \\
	T_s &\mapsto T_s
\end{align*}

Zurückziehen von $\Omega$"~Moduln entlang von $j_I$ entspricht genau der in Lemma \ref{wgraphs:constructions} definierten parabolischen Restriktion von $W$"~Graphen.
\end{example}

\begin{conjecture}
Es liegt auf der Hand, zu vermuten, dass diese parabolischen Morphismen in der Tat Einbettungen sind. Auf der Ebene der Hecke"=Algebren ist dies ja immer der Fall. In Spezialfällen kann man das tatsächlich mit Hilfe des obigen Satzes auch für die $W$"~Graph"=Algebren beweisen, wie folgendes Beispiel zeigt.
\end{conjecture}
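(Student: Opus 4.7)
Der Beweisansatz st\"utzt sich wesentlich auf Lemma \ref{wgraph:functors} b.: Das Zur\"uckziehen eines $\Omega(W, S, L)$"~Moduls entlang $j_I$ liefert einen $\Omega(W_I, I, L_{|I})$"~Modul, und jeder $\Omega(W_I)$"~Modul, der sich \"uber $j_I$ faktorisieren l\"asst, ist von dieser Form. Um die Injektivit\"at von $j_I$ zu beweisen, w\"urde ich daher eine Familie $(V_\alpha)_\alpha$ von $\Omega(W, S, L)$"~Moduln konstruieren, deren Zur\"uckziehungen eine treue Familie von $\Omega(W_I)$"~Moduln bilden. Dann wirkt n\"amlich jedes $a\in\ker(j_I)$ trivial auf allen $V_\alpha$ und muss somit bereits gleich Null sein.

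Als ersten Bestandteil dieser Familie bietet sich der Kazhdan"=Lusztig"=$W$"~Graph von $(W, S, L)$ an, der den regul\"aren $H(W, S, L)$"~Modul realisiert und daher auch einen $\Omega(W, S, L)$"~Modul definiert. Aus der Konstruktion des parabolischen Morphismus in Beispiel \ref{wgraph_alg:def:parabolic} ist unmittelbar ersichtlich, dass die Zur\"uckziehung dieses Moduls entlang $j_I$ den regul\"aren $H(W_I)$"~Modul umfasst. Damit ist die Einschr\"ankung von $j_I$ auf $\iota(H(W_I))\subseteq\Omega(W_I)$ bereits injektiv, was die Aussage f\"ur den durch Hecke"=Algebra"=Elemente gegebenen Teil von $\Omega(W_I)$ erledigt.

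F\"ur den restlichen Teil von $\Omega(W_I)$, der \"uber $\iota(H(W_I))$ hinausgeht, w\"urde ich die in der Einleitung angek\"undigte alternative Pr\"asentation von $\OmegaGy$ aus dem vierten Kapitel heranziehen, die diese Algebra als Quotient einer Pfadalgebra beschreibt. Mit den Formeln
\[j_I(E_K) = \sum_{K'\cap I=K} E_{K'} \quad\text{und}\quad j_I(X_{KL}^s) = \sum_{\substack{K'\cap I=K \\ L'\cap I=L}} X_{K'L'}^s\]
aus Lemma \ref{wgraph:functors} a. reduziert sich die Injektivit\"at dann auf die rein kombinatorische Frage, ob die Bildelemente in $\Omega(W, S, L)$ genau diejenigen Relationen erf\"ullen, die bereits in $\Omega(W_I, I, L_{|I})$ gelten.

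Das Haupthindernis liegt genau in dieser Relationsanalyse: Die Zopfrelationen $\Delta_{m_{st}}(\iota(T_s), \iota(T_t))=0$ zerfallen nach Multiplikation mit Idempotentenpaaren $E_K, E_L$ in viele Komponenten, von denen nur diejenigen mit $K, L\subseteq I$ direkt den Zopfrelationen in $\Omega(W_I)$ entsprechen. Die \"ubrigen Komponenten k\"onnten grunds\"atzlich zus\"atzliche Relationen auf der Bildalgebra induzieren, die $j_I$ einen nichttrivialen Kern aufzwingen w\"urden. In den im vierten Kapitel explizit behandelten Spezialf\"allen ($I_2(m)$, $A_3$, $A_4$, $B_3$) und insbesondere f\"ur $|I|\leq 2$ sollten diese Komponenten jedoch explizit --- gegebenenfalls computergest\"utzt --- durchrechenbar sein und sich als mit den Relationen von $\Omega(W_I)$ vertr\"aglich erweisen, womit die Injektivit\"at von $j_I$ in diesen F\"allen folgt.
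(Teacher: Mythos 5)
Du verpasst das zentrale Argument des Papers, das eine viel elegantere und vollst\"andigere L\"osung f\"ur den behandelten Spezialfall liefert: Die Idee ist nicht, eine treue Familie von Moduln zu konstruieren, sondern eine \emph{Retraktion} zu finden. Falls $I\subseteq S$ derart ist, dass alle Kanten zwischen $I$ und $S\setminus I$ im Dynkin"=Diagramm gerades oder unendliches Gewicht haben, dann faktorisiert die Identit\"at $W_I\xrightarrow{\id}W_I$ als
\[W_I\hookrightarrow W\to W_I\times W_{S\setminus I}\twoheadrightarrow W_I,\]
wobei der mittlere Pfeil ein wohldefinierter Gruppenhomomorphismus ist (genau weil keine ungeraden Zopfrelationen zwischen $I$ und $S\setminus I$ bestehen). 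Nach der Funktorialit\"at aus Lemma~\ref{wgraph:functors}~a. liftet diese Faktorisierung zu
\[\Omega(W_I)\to\Omega(W)\to\Omega(W_I\times W_{S\setminus I})\to\Omega(W_I),\]
und das Gesamtkompositum ist die Identit\"at auf $\Omega(W_I)$. Daraus folgt \emph{unmittelbar} (und ohne jede Rechnung in der Pfadalgebra), dass der erste Pfeil $j_I\colon\Omega(W_I)\to\Omega(W)$ injektiv ist.

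Dein erster Schritt --- Injektivit\"at auf $\iota(H(W_I))$ --- ist zwar korrekt, aber er bringt dich dem Ziel nicht wirklich n\"aher, denn er ist eine Umformulierung der bekannten Injektivit\"at der parabolischen Einbettung $H(W_I)\hookrightarrow H(W)$ und sagt nichts \"uber den viel gr\"o{\ss}eren Rest von $\Omega(W_I)$ aus. Dein zweiter Schritt ist kein Beweis: Du identifizierst das Hindernis (zus\"atzliche Relationen k\"onnten durch $j_I$ induziert werden), aber du gibst weder eine Methode an, es zu umgehen, noch f\"uhrst du die angek\"undigte Rechnung f\"ur irgendeinen Fall durch. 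Die Behauptung, dass in den F\"allen $I_2(m)$, $A_3$, $A_4$, $B_3$ alles ``mit den Relationen von $\Omega(W_I)$ vertr\"aglich'' sein sollte, ist eine unbewiesene Hoffnung. Zudem nutzt dein Ansatz nicht aus, dass die im Paper behandelte Spezialf\"alle gerade dadurch charakterisiert sind, dass eine Retraktion existiert; das ist eine strukturelle Einsicht, keine Rechenaufgabe. Die beiden Ans\"atze zielen also auch auf verschiedene Spezialf\"alle: Das Paper behandelt beliebige $W$, aber nur ``separable'' Teilmengen $I$; du schl\"agst eine fallweise Behandlung kleiner $W$ vor, die aber nicht ausgef\"uhrt wird.
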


\begin{example}[Inflation von $W$-Graphen]\label{wgraph_alg:inflation}
\index{terms}{Inflation}
Ist $I\subseteq S$ derart, dass alle Kanten zwischen $I$ und $S\setminus I$ im Dynkin"=Diagramm gerades oder unendliches Kantengewicht haben (mit anderen Worten ist kein Element von $I$ zu einem Element von $S\setminus I$ konjugiert), dann induziert die Identität auf $S$ einen Homomorphismus $W \to W_I \times W_{S\setminus I}$. Indem wir mit der Projektion auf $W_I$ verketten, erhalten wir somit eine Faktorisierung von $W_I\xrightarrow{\id} W_I$ als
\[W_I \hookrightarrow W \to W_I \times W_{S\setminus I} \twoheadrightarrow W_I,\]
die sich auf Ebene der $W$"~Graph"=Algebren dank des obigen Satzes zu einer Faktorisierung der Identität $\Omega(W_I)\xrightarrow{\id}\Omega(W_I)$ als
\[\Omega(W_I) \to \Omega(W) \to \Omega(W_I\times W_{S\setminus I}) \to \Omega(W_I)\]
fortsetzt. Insbesondere ist unter den gegebenen Voraussetzungen der parabolische Morphismus $\Omega(W_I)\to\Omega(W)$ injektiv.

\medbreak
Das Zurückziehen entlang von $\pi_I: W\twoheadrightarrow W_I$ entspricht auf Ebene der Gruppen der Inflation von Darstellungen entlang von $\pi_I$. Auf Ebene der $W$"~Graph"=Algebren sagt uns Teil b. des obigen Satzes, dass $\pi_I$ uns erlaubt, $W_I$"~Graphen als $W$"~Graphen aufzufassen. Indexmengen oder Kantengewichte werden dabei nicht verändert.

\medbreak
Beispielsweise ist so jeder $A_n$-Graph automatisch auch ein $B_{n+1}$-Graph.
\end{example}

\begin{lemmadef}[Ein Antiautomorphismus von $W$-Graph-Algebren]\label{wgraph_alg:duality}
\index{terms}{W-Graph@$W$-Graph!dualer}
\index{symbols}{delta@$\delta$}
Sei $(W,S,L)$ eine Coxeter"=Gruppe"=mit"=Gewicht und $k$ ein kommutativer Ring.

Wenn $k$ eine kommutative $\IZ[\Gamma]$"~Algebra ist, existiert ein Antiautomorphismus von $k\Omega(W,S,L)$, und wenn $k$ ein beliebiger kommutativer Ring ist, existiert ein Antiautomorphismus von $k\OmegaGJ_G(W,S,L)$ für alle endlichen $G\subseteq\Gamma$ sowie ein stetiger Antiautomorphismus von $k\OmegaGJ(W,S,L)$ mit
\[e_s \mapsto 1-e_s \quad\text{und}\quad x_s \mapsto -x_s \quad\text{bzw.}\quad x_{s,\gamma} \mapsto -x_{s,\gamma}\]
für alle $s\in S$ und $\gamma\in\Gamma$. Wir bezeichnen diese Antiautomorphismen jeweils mit $\delta$. Für diese gilt
\begin{enumerate}
	\item $\delta(E_I) = E_{I^c}$ und $\delta(X_{IJ}^s)=X_{J^c I^c}^s$ bzw. $\delta(X_{IJ}^{s,\gamma}) = -X_{J^c I^c}^{s,\gamma}$, wobei ${}^c:2^S\to 2^S$ die Komplementoperation bezeichne.
\end{enumerate}
sowie, falls $k$ eine $\IZ[\Gamma]$"~Algebra ist, außerdem
\begin{enumerate}[resume]
	\item $\delta(\iota(T_w))=(-1)^{l(w)} \iota(T_w)^{-1}$ für alle $w\in W$.
	\item Ist $(\mathfrak{C},I,m)$ ein $W$"~Graph mit Gewichten in $k$, $\abs{\mathfrak{C}}<\infty$, $W$"~Graph"=Modul $V=k^{\mathfrak{C}}$ und $W$"~Graph-Darstellung $\omega:\Omega\to k^{\mathfrak{C}\times\mathfrak{C}}$, so ist $\omega^\dagger = \omega(\delta(\cdot))^\text{Tr}$ ebenfalls eine $W$"~Graph"=Darstellung, siehe \ref{wgraphs:constructions}.
\end{enumerate}
\end{lemmadef}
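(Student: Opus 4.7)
The plan is to construct $\delta$ in two stages: first as an anti-homomorphism on the free algebras $\Xi$, $\Xi_G$, $\Xi_\Gamma$, then verify that it descends to each of the quotients $\Omega$, $\OmegaGJ_G$ and $\OmegaGJ$. For the first stage, I extend the prescribed action on generators anti-multiplicatively and check the Gyoja relations one by one: $(1-e_s)^2 = 1-e_s$ handles the idempotent identity, the commutativity $e_s e_t = e_t e_s$ transfers verbatim to $1-e_s, 1-e_t$, the sided relations $e_s x_s = x_s$ and $x_s e_s = 0$ swap under $\delta$ and stay consistent because of the sign on $x_s$, and the $\Gamma$-grading relations $x_{s,\gamma} = x_{s,-\gamma}$ together with the vanishing $x_{s,\gamma} = 0$ for $\gamma \notin (-L(s),L(s))$ are trivially invariant under $x \mapsto -x$. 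Since $\delta^2$ fixes every generator, $\delta$ is involutive on these free algebras.

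The main obstacle is showing that the braid ideal is $\delta$-stable. Here the crucial computation is
\[
\delta(\iota(T_s)) \;=\; v_s e_s - v_s^{-1}(1-e_s) - x_s \;=\; -\iota(T_s^{-1}),
\]
which, by Lemma \ref{wgraph_alg:iota_morphism} (where $\iota$ is shown to factor through $H$, so that $\iota(T_s)$ is invertible), equals $-\iota(T_s)^{-1}$ inside $\Omega$, and analogously inside $\IZ[\Gamma]\OmegaGJ_G$ and $\IZ[\Gamma]\OmegaGJ$. I then observe that if $\iota(T_s)$ and $\iota(T_t)$ satisfy the braid relation of length $m_{st}$, so do their inverses (inverting and reversing a braid relation of length $m_{st}$ produces the symmetric braid relation of the same length), and multiplying each factor by $-1$ preserves the relation because both sides have the same number $m_{st}$ of factors. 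Consequently
\[
\delta\bigl(\Delta_{m_{st}}(\iota(T_s),\iota(T_t))\bigr) \;=\; \pm\,\Delta_{m_{st}}\bigl(-\iota(T_s)^{-1},-\iota(T_t)^{-1}\bigr) \;=\; 0
\]
in the respective quotient. For $\OmegaGJ_G$ the relation $y_G^\gamma(s,t) = 0$ must be checked $\Gamma$-homogeneously; I would pass to the $\IZ[\Gamma]$-scalar extension, kill the full $\Delta_{m_{st}}(\iota_G(T_s),\iota_G(T_t))$ by the preceding argument, and then invoke the $\IZ$-linear independence of $(v^\gamma)_\gamma$ exactly as in the universal property of Lemma \ref{wgraph_alg:univ_prop_OmegaGJ} to separate the homogeneous components. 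For $\OmegaGJ$ itself, the resulting $\delta_G$'s agree on generators and are therefore compatible with the defining projections $\OmegaGJ_H \twoheadrightarrow \OmegaGJ_G$, so they assemble into a continuous anti-automorphism of the inverse limit.

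The remaining claims are then routine corollaries. For (a), anti-multiplicativity together with the fact that the factors $\{e_s,1-e_s\}_{s\in S}$ pairwise commute allows $\delta$ to be distributed freely over the defining product of $E_I$, yielding $\delta(E_I) = \prod_{s\notin I} e_s \prod_{s\in I}(1-e_s) = E_{I^c}$, from which $\delta(X_{IJ}^{s}) = \delta(E_J)\delta(x_s)\delta(E_I) = -X_{J^c I^c}^{s}$ follows immediately, and the analogous identity for $X_{IJ}^{s,\gamma}$. For (b), expanding $\iota(T_w) = \iota(T_{s_1})\cdots\iota(T_{s_n})$ in a reduced expression gives
\[
\delta(\iota(T_w)) \;=\; \delta(\iota(T_{s_n}))\cdots\delta(\iota(T_{s_1})) \;=\; (-1)^{n}\,\iota(T_{s_n})^{-1}\cdots\iota(T_{s_1})^{-1} \;=\; (-1)^{l(w)}\,\iota(T_w)^{-1}.
\]
For (c), Lemma \ref{wgraphs:constructions}c characterises the dual $W$-graph representation by $\omega^\dagger(T_s) = \omega(-T_s^{-1})^{\text{Tr}}$, which is precisely $\omega(\delta(\iota(T_s)))^{\text{Tr}}$; therefore $\omega\circ\delta$, composed with transposition, is the $H$-representation afforded by the dual $W$-graph $(\mathfrak{C},I',m')$ on the same basis $\mathfrak{C}$.
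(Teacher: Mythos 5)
Your proof is correct and follows essentially the same route as the paper: the key computation $\delta(\iota(T_s)) = -\iota(T_s)^{-1}$ combined with the observation that $\Delta_m(x,y)=0 \iff \Delta_m(-x^{-1},-y^{-1})=0$ and that anti-homomorphisms send braid relators to $\pm$braid relators, then passage to $\OmegaGJ_G$ via the universal property and to $\OmegaGJ$ via compatibility with the projections. One small point in your favor: you derived $\delta(X_{IJ}^s) = -X_{J^c I^c}^s$ (with the minus sign), which is forced by $\delta(x_s)=-x_s$, whereas the statement as printed reads $\delta(X_{IJ}^s)=X_{J^cI^c}^s$ without the sign --- this appears to be a typo in the paper, as the parallel formula $\delta(X_{IJ}^{s,\gamma})=-X_{J^cI^c}^{s,\gamma}$ carries the expected sign.
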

\begin{proof}
Dass $\delta$ als Abbildung $k\Xi\to k\Xi$ bzw. $k\Xi_G\to k\Xi_G$ wohldefiniert ist, ist leicht einzusehen. Dass sie auf der Heckealgebra die angegebene Form hat, kann man wie folgt nachrechnen:
\begin{align*}
	\delta(\iota(T_s)) &= \delta(-v_s^{-1}e_s+v_s(1-e_s)+x_s) \\
	&=-v_s^{-1}(1-e_s) + v_s e_s - x_s \\
	&=+v_s^{-1}e_s-v_s(1-e_s)-x_s + (v_s - v_s^{-1}) \\
	&=-\iota(T_s)+(v_s-v_s^{-1}) \\
	&=-\iota(T_s)^{-1}
\end{align*}

Es gilt nun ganz allgemein $\Delta_m(x,y) = 0 \iff \Delta_m(-x^{-1},-y^{-1}) = 0$ für invertierbare Elemente $x,y$ eines Ringes. Da die Zopfrelationen von Antihomomorphismen erhalten werden, ist $\delta$ ein wohldefinierter Antihomomorphismus $k\Omega\to k\Omega$.

\medbreak
Dass $\delta$ auch auf $k\OmegaGJ_G$ wohldefiniert ist, wenn $k$ ein beliebiger Ring ist, folgt wie zuvor aus der universellen Eigenschaft \ref{wgraph_alg:univ_prop_OmegaGJ}, indem wir die vorherige Überlegung auf $k[\Gamma]$ statt $k$ anwenden. Da $\delta$ offenbar mit den Projektionen $k\OmegaGJ_{G_1}\to k\OmegaGJ_{G_2}$ verträglich ist, ist auch $\delta: k\OmegaGJ\to k\OmegaGJ$ wohldefiniert.

\medbreak
Alle anderen Aussagen folgen nun unmittelbar durch Einsetzen in die Definitionen.
\end{proof}

\begin{lemmadef}\label{wgraph_alg:tensor_prod}
\index{symbols}{tau@$\tau$}
Sei $(W,S,L)$ eine Coxeter"=Gruppe"=mit"=Gewicht und $(W,S)$ reduzibel, etwa $S=S_1 \coprod S_2$ und ${W=W_1\times W_2}$. Definiere dann $L_i:=L_{|S_i}$ und schreibe zur Abkürzung entsprechend $H_i:=H(W_i,S_i,L_i)$, $\Omega_i:=\Omega(W_i,S_i,L_i)$ usw.
	
Sei nun $k$ ein kommutativer Ring. Wenn $k$ eine $\IZ[\Gamma]$"~Algebra ist, gibt es einen Morphismus $k\Omega\to k\Omega_1\otimes_k k\Omega_2$, und wenn $k$ ein beliebiger Ring ist, gibt es einen Morphimus $k\OmegaGJ_G\to k\OmegaGJ_{1,G}\otimes_k k\OmegaGJ_{2,G}$ für alle endlichen $G\subseteq\Gamma$ mit
\begin{alignat*}{3}
	e_s &\mapsto \begin{cases} e_s \otimes 1 & \text{falls }s\in S_1 \\ 1\otimes e_s & \text{falls }s\in S_2\end{cases} \quad&\text{und} \\
	x_s &\mapsto \begin{cases} x_s \otimes 1 & \text{falls }s\in S_1 \\ 1\otimes x_s & \text{falls }s\in S_2\end{cases} \quad&\text{bzw.} \\
	x_{s,\gamma}&\mapsto\begin{cases} x_{s,\gamma} \otimes 1 & \text{falls }s\in S_1 \\ 1\otimes x_{s,\gamma} & \text{falls }s\in S_2\end{cases}.
\end{alignat*}
Diese Morphismen seien jeweils mit $\tau$ bezeichnet. Dann gilt:
\begin{enumerate}
	\item $\tau(E_I) = E_{I_1} \otimes E_{I_2}$, wobei $I_i:=I\cap S_i$ sei,
	\item $\displaystyle\tau(X_{IJ}^{s}) = \begin{cases} X_{I_1 J_1}^{s} \otimes E_{I_2}E_{J_2} & \text{falls }s\in S_1 \\ E_{I_1}E_{J_1} \otimes X_{I_2 J_2}^{s} & \text{falls }s\in S_2\end{cases}$
	
	bzw.
	
	$\displaystyle\tau(X_{IJ}^{s,\gamma}) = \begin{cases} X_{I_1 J_1}^{s,\gamma} \otimes E_{I_2}E_{J_2} & \text{falls }s\in S_1 \\ E_{I_1}E_{J_1} \otimes X_{I_2 J_2}^{s,\gamma} & \text{falls }s\in S_2\end{cases}$.
\end{enumerate}
sowie, falls $k$ eine $\IZ[\Gamma]$"~Algebra ist, weiterhin auch
\begin{enumerate}[resume]
	\item $\displaystyle\tau(\iota(T_s)) = \begin{cases} \iota_1(T_s)\otimes 1 & \text{falls }s\in S_1 \\ 1\otimes\iota_2(T_s) & \text{falls }s\in S_2\end{cases}$,
	
	d.\,h. $\tau$ schränkt sich zum kanonischen Isomorphismus $kH \to kH_1 \otimes_k kH_2$ ein.
\end{enumerate}
\end{lemmadef}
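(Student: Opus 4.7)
The plan is to construct $\tau$ in two stages: first as a well-defined morphism on the free-algebra levels $k\Xi$ or $k\Xi_G$, and then to verify that it respects the braid-type relations defining $k\Omega$ or $k\OmegaGJ_G$. The map on generators sends commuting families of generators to commuting families in the tensor product, so the idempotent and nilpotent relations $e_s^2=e_s$, $e_se_t=e_te_s$, $e_sx_{s,\gamma}=x_{s,\gamma}$, $x_{s,\gamma}e_s=0$, $x_{s,\gamma}=x_{s,-\gamma}$ and the vanishing outside $(-L(s),L(s))$ all pull back from the corresponding relations in the $k\Xi_i$ (resp.\ $k\Xi_{i,G}$). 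Thus $\tau$ is well-defined as a morphism from $k\Xi$ (resp.\ $k\Xi_G$) to $k\Omega_1\otimes_k k\Omega_2$ (resp.\ $k\OmegaGJ_{1,G}\otimes_k k\OmegaGJ_{2,G}$); for the $\OmegaGJ$ case I would use the universal property \ref{wgraph_alg:univ_prop_OmegaGJ} applied to the $k$-algebra $k\OmegaGJ_{1,G}\otimes_k k\OmegaGJ_{2,G}$.

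The key step is to check that $\tau$ kills the braid relations $\Delta_{m_{st}}(\iota(T_s),\iota(T_t))$ for all $s,t\in S$. I split into three cases. If $s,t\in S_1$, then $\tau(\iota(T_s))=\iota_1(T_s)\otimes 1$ and $\tau(\iota(T_t))=\iota_1(T_t)\otimes 1$, so
\[
\tau\bigl(\Delta_{m_{st}}(\iota(T_s),\iota(T_t))\bigr)=\Delta_{m_{st}}(\iota_1(T_s),\iota_1(T_t))\otimes 1=0
\]
by the braid relation in $k\Omega_1$. The case $s,t\in S_2$ is symmetric. Finally, if $s\in S_1$ and $t\in S_2$ (or vice versa), then by the reducibility assumption $m_{st}=2$, and the images $\iota_1(T_s)\otimes 1$ and $1\otimes\iota_2(T_t)$ commute in the tensor product, so $\Delta_2$ of them vanishes automatically. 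For the $\OmegaGJ_G$-version one argues the same way after extending scalars to $k[\Gamma]$ inside $k[\Gamma]\otimes_k(k\OmegaGJ_{1,G}\otimes_k k\OmegaGJ_{2,G})$, which is where \ref{wgraph_alg:univ_prop_OmegaGJ} is invoked.

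The claims (a), (b), (c) are then routine verifications. For (a), plug $\tau$ into the defining product
\[
E_I=\prod_{s\in I_1}e_s\prod_{s\in S_1\setminus I_1}(1-e_s)\cdot\prod_{s\in I_2}e_s\prod_{s\in S_2\setminus I_2}(1-e_s),
\]
and distribute each factor into the correct tensor slot to get $E_{I_1}\otimes E_{I_2}$. For (b), use $X_{IJ}^s=E_Ix_sE_J$ (resp.\ with $x_{s,\gamma}$), apply $\tau$ to each factor, and collect tensor slots: if $s\in S_1$, the $x_s$-factor lies in the first slot, the remaining $E$-pieces from $S_2$ collapse to $E_{I_2}E_{J_2}$ in the second slot, yielding $X_{I_1J_1}^s\otimes E_{I_2}E_{J_2}$. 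For (c), compute $\tau(\iota(T_s))$ directly from its definition $-v_s^{-1}e_s+v_s(1-e_s)+x_s$: if $s\in S_1$, all three summands are supported in the first tensor slot with $1$ in the second (using $E_\emptyset\cdot\ldots+E_S\cdot\ldots=1$-style collapses in the second slot), giving $\iota_1(T_s)\otimes 1$.

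I do not expect any serious obstacle here; the only subtle point is the mixed braid case $s\in S_1,t\in S_2$, where one has to note that $m_{st}=2$ forces $\Delta_2$ and that the tensor-product images of generators from different sides commute by construction. Everything else is bookkeeping with the idempotents $E_I$ and the path-algebra structure from \ref{wgraph_alg:E_I_and_X_IJ}.
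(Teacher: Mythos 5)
Your proposal is correct and follows essentially the same route as the paper: well-definedness on the free level $k\Xi$ (resp. $k\Xi_G$), the case split of the braid relations into $s,t\in S_1$, $s,t\in S_2$, and the mixed case with $m_{st}=2$ forcing $\Delta_2$ to be a commutator of tensor factors on different slots, plus the universal property of \ref{wgraph_alg:univ_prop_OmegaGJ} applied over $k[\Gamma]$ for the $\OmegaGJ_G$-version. The remaining identities (a)--(c) are, as you say and as the paper remarks, routine substitutions into the definitions.
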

\begin{proof}
Es ist erneut klar, dass $\tau: k\Xi\to k\Xi_1\otimes_k k\Xi_2$ bzw. $\tau: k\Xi_G\to k\Xi_{1,G}\otimes_k k\Xi_{2,G}$ wohldefiniert ist. Es ist ebenfalls aus der Definition klar, dass
\[\tau(\iota(T_s)) = \begin{cases} \iota_1(T_s)\otimes 1 & \text{falls } s\in S_1 \\ 1\otimes\iota_2(T_s) &\text{falls }s\in S_2\end{cases}\]
gilt.

Wenn nun $s,t\in S_1$ oder $s,t\in S_2$ sind, dann sind die Zopfrelationen für $\tau(\iota(T_s))$ und $\tau(\iota(T_t))$ eine Konsequenz der Zopfrelationen für $\iota(T_s)$ und $\iota(T_t)$. Wenn $s\in S_1$ und $t\in S_2$ oder umgekehrt ist, dann ist $m_{st}=2$ und $\Delta_2(\tau(\iota(T_s)),\tau(\iota(T_t))) = \Delta_2(T_s\otimes 1,1\otimes T_t)=0$. Also induziert $\tau$ einen Morphismus $k\Omega\to k\Omega_1\otimes_k k\Omega_2$, wie behauptet.

\medbreak
Erneut folgt aus dieser Überlegung angewandt auf $k[\Gamma]$ für einen beliebigen kommutativen Ring $k$ und der universellen Eigenschaft, dass $\tau$ als Abbildung $k\OmegaGJ_G\to k\OmegaGJ_{1,G}\otimes_k k\OmegaGJ_{2,G}$ wohldefiniert ist.

Alle anderen Aussagen folgen wieder durch Einsetzen in die Definitionen.
\end{proof}

\begin{remark}
Es folgt aus diesen Aussagen zwar, dass $\tau$ einen Homomorphismus
\[k\OmegaGJ \to \lim_{\substack{\longleftarrow \\ G\subseteq\Gamma\,\text{endl.}}} (k\Omega_{1,G}\otimes_k k\Omega_{2,G})\]
induziert. Da Tensorprodukte jedoch i.\,A. nicht mit Limites vertauschen, folgt hieraus noch nicht, dass $\tau$ auch einen Homomorphismus $k\OmegaGJ\to k\OmegaGJ_1\otimes k\OmegaGJ_2$ induziert.

Wenn man nur an endlichdimensionalen, stetigen Darstellungen interessiert ist, dann haben wir jedoch bereits festgestellt (Lemma \ref{wgraph_alg:continuous_reps}), dass jede solche Darstellung durch eine der Projektionen $k\OmegaGJ\to k\OmegaGJ_G$ faktorisiert. Daher ist der Homomorphismus $\tau$ trotzdem nützlich.
\end{remark}
\section{Lusztigs Homomorphismus und \texorpdfstring{$W$}{W}-Graph-Algebren}

\begin{convention}
Wir fixieren für diesen Abschnitt einen $L$-guten Ring (siehe Definition \ref{J_alg:def:L_good}) $\IZ_W\subseteq R\subseteq\IC$ und setzen $F:=\QuotFld(R)$ sowie $K:=F(\Gamma)$.
\end{convention}

\begin{remark}
Folgender Satz ist wesentlich für die Verbindung von $W$"~Graph"=Algebren und der Hecke"=Algebra. Der Beweis kombiniert die Ideen von {\citep[2.7.11]{geckjacon}} und \citep[2.9]{Gyoja}.
\end{remark}

\begin{theorem}[Faktorisierung von Lusztigs Homomorphismus durch $\OmegaGJ$]
\index{terms}{Lusztig-Isomorphismus}\index{terms}{Kazhdan-Lusztig!-$\mu$}\index{terms}{Kazhdan-Lusztig!-$W$-Graph}
Es gelte $(\spadesuit)$. Wir führen die Bezeichnung $m_{xy}^s$ für die Kantengewichte des Kazhdan-Lusztig-$W$"~Gra\-phen ein, d.\,h. wir setzen
\[m_{xy}^s := \begin{cases}
1 & \text{falls}\, y<sy=x \\
(-1)^{l(x)+l(y)+1}\mu_{xy}^s & \text{falls}\, sx<x<y<sy \\
0 & \text{sonst}
\end{cases}.\]
Weiterhin seien mit $m_{xy}^{s,\gamma}$ die Koeffizienten von $m_{xy}^s\in\IZ[\Gamma]$ bezeichnet. Mit diesen Bezeichnungen gilt:
\begin{enumerate}
	\item Lusztigs Homomorphismus $R[\Gamma]H \xrightarrow{\phi} R[\Gamma]J$ faktorisiert durch $R[\Gamma]H\xhookrightarrow{\iota}R[\Gamma]\OmegaGJ$. Genauer wird durch
	\[q(e_s) = \sum_{\substack{d\in\mathcal{D} \\ sd<d}} n_d t_d\]
	\[q(x_{s,\gamma}) = \sum_{\substack{z\in W,d\in\mathcal{D} \\ z \sim_\mathcal{LR} d \\ sz<z, d<sd}} n_d m_{zd}^{s,\gamma} \cdot t_z\]
	ein Morphismus $q: R\OmegaGJ_G\to RJ$ definiert für ein hinreichend großes, endliches $G\subseteq\Gamma$, welcher $\phi=q\circ\iota$ erfüllt.
	\item Sind $n_d$ und $\gamma_{x,y,z}\in\IZ$ (was z.B. unter Annahme von $(\clubsuit)$ der Fall ist), so ist $q$ bereits über $\IZ$ definiert.
\end{enumerate}

\begin{figure}[hbp]
\centering
	\begin{tikzpicture}
	\matrix (m) [matrix of math nodes, row sep=3em,
	column sep=2.5em, text height=1.5ex, text depth=0.25ex]
	{
		& R[\Gamma]\OmegaGJ & \\
		R[\Gamma]H & & R[\Gamma]J \\
	};
	\path[->,font=\scriptsize]
		(m-2-1) edge node[left]{$\iota$} (m-1-2)
		(m-1-2) edge node[right]{$q$} (m-2-3)
		(m-2-1) edge node[below]{$\phi$} (m-2-3);
	\end{tikzpicture}
	\caption{Faktorisierung von Lusztigs Homomorphismus $\phi$}
	\label{fig:lusztigs_hom}
\end{figure}
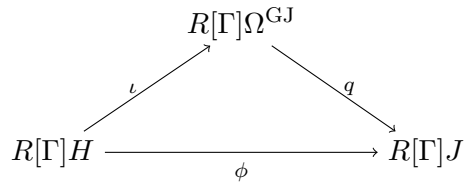
\end{theorem}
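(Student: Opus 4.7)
The plan is to define $q$ on the generators $e_s, x_{s,\gamma}$ of $R\OmegaGJ_G$ by the prescribed formulas, verify that this extends to a well-defined algebra homomorphism using the universal property of $\OmegaGJ_G$, and finally check the factorization $\phi = q\circ\iota$ on the generators of $R[\Gamma]H$. Since $W$ is finite, the Kazhdan-Lusztig edge weights $m_{zd}^s$ are Laurent polynomials with bounded support, so one can choose a finite $G\subseteq\Gamma$ containing all relevant exponents.

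First I would check that the formulas define an $R$-linear map $\tilde q: R\Xi_G \to RJ$ respecting the defining relations of $\Xi_G$. The relations $x_{s,\gamma}=x_{s,-\gamma}$ and $x_{s,\gamma}=0$ for $\gamma\notin(-L(s),L(s))$ transfer directly from the palindromicity and degree bounds of the Geck-$W$-graph weights $m_{zd}^s$. For the idempotent relations, the essential tool is that by $(\vardiamond)$ each Kazhdan-Lusztig left cell $\mathfrak{L}$ contains a unique Duflo involution $d_\mathfrak{L}$, and the elements $e_\mathfrak{L}:=n_{d_\mathfrak{L}}t_{d_\mathfrak{L}}$ are pairwise orthogonal idempotents summing to $1_J$. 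Consequently $\tilde q(e_s)=\sum_{\mathfrak{L}:\,sd_\mathfrak{L}<d_\mathfrak{L}} e_\mathfrak{L}$ is idempotent and the $\tilde q(e_s)$ commute pairwise. The relations $\tilde q(e_s)\tilde q(x_{s,\gamma})=\tilde q(x_{s,\gamma})$ and $\tilde q(x_{s,\gamma})\tilde q(e_s)=0$ are verified by tracking which left cells the basis elements $t_z$ and $t_d$ in the supports of $\tilde q(x_{s,\gamma})$ and $\tilde q(e_s)$ belong to, and invoking the left-cell compatibility of the basis of $J$.

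Next I would compute $\tilde q(\iota_G(T_s))\in R[\Gamma]J$ directly. Using $T_s=C_s+v_s$ together with the multiplication rules $C_sC_d=-(v_s+v_s^{-1})C_d$ for $sd<d$ and $C_sC_d=C_{sd}+\sum_{y<d,\,sy<y}(-1)^{l(y)+l(d)+1}\mu_{y,d}^s C_y$ for $sd>d$, the definition $\phi(C_s)=\sum_{z\sim_\mathcal{LR} d}n_d h_{s,d,z}t_z$ matches term by term with
\[\tilde q(\iota_G(T_s)) = v_s\cdot 1_J - (v_s+v_s^{-1})\tilde q(e_s) + \sum_{\gamma}v^\gamma\tilde q(x_{s,\gamma}),\]
so $\tilde q(\iota_G(T_s))=\phi(T_s)$. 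Under $(\spadesuit)$, $\phi$ is an algebra homomorphism by Geck-Jacon's theorem, hence the elements $\tilde q(\iota_G(T_s))=\phi(T_s)$ automatically satisfy the braid relations in $R[\Gamma]J$. The universal property of $\OmegaGJ_G$ (Lemma \ref{wgraph_alg:univ_prop_OmegaGJ}) then shows that $\tilde q$ descends to an algebra homomorphism $q: R\OmegaGJ_G \to RJ$. The factorization $\phi=q\circ\iota$ follows because both are $R[\Gamma]$-algebra maps agreeing on the generators $T_s$. Part b is then immediate, since under the stronger hypothesis $n_d,\gamma_{x,y,z}\in\IZ$ and the formulas visibly have integer coefficients.

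The principal technical obstacle will be the verification of the $\Xi_G$-relations, in particular the orthogonality $\tilde q(x_{s,\gamma})\tilde q(e_s)=0$. This requires careful bookkeeping with the left-cell structure: the Duflos $d$ appearing in $\tilde q(e_s)$ all satisfy $sd<d$, whereas the basis elements $t_z$ appearing in $\tilde q(x_{s,\gamma})$ satisfy $z\sim_\mathcal{LR} d'$ for some Duflo $d'$ with $d'<sd'$, forcing the products $t_zt_d$ to vanish through the orthogonality of idempotents from distinct cells.
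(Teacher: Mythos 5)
Your proposal takes a genuinely different route from the paper, and it has a real gap. The paper never verifies the $\Xi_G$-relations for $q$ directly. Instead it introduces the free module $E$ with basis $(b_w)_{w\in W}$, equips it with a left $R[\Gamma]\OmegaGJ_G$-module structure coming from the direct sum of Kazhdan--Lusztig cell modules (regarded as a Geck-$W$-graph module, so that $e_s$ acts as a projection and is tautologically idempotent) and with a right $R[\Gamma]J$-module structure via $b_w\leftrightarrow t_w$, shows that these two actions commute using $(\spadesuit)$, and then defines $q$ as the composition $R\OmegaGJ_G\to\End_{\textbf{Mod}\text{-}RJ}(E_0)\cong RJ$. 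Because $q$ arises as a composite of ring homomorphisms, the algebra relations are automatic and nothing beyond $(\spadesuit)$ is needed.

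Your argument, by contrast, tries to check the $\Xi_G$-relations by hand and for this invokes $(\vardiamond)$ to guarantee that each Kazhdan--Lusztig left cell contains exactly one element of $\mathcal{D}$, making the $n_d t_d$ pairwise orthogonal idempotents. But $(\vardiamond)$ is not among the hypotheses of the theorem, which assumes only $(\spadesuit)$. Without $(\vardiamond)$ (or additional Lusztig properties such as \textbf{P6}), a left cell $\mathfrak{L}$ may contain several elements of $\mathcal{D}$, the per-cell idempotent is $\sum_{d\in\mathcal{D}\cap\mathfrak{L}} n_d t_d$ rather than a single summand, and $\tilde q(e_s)=\sum_{d\in\mathcal{D},\,sd<d} n_d t_d$ is not visibly a sum of such idempotents unless the condition $sd<d$ is uniform across $\mathcal{D}\cap\mathfrak{L}$. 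That uniformity is what you would get if the $d$'s were involutions (so that $\sim_\mathcal{L}$ transfers to $\sim_\mathcal{R}$ and hence to equality of left descent sets), but that is \textbf{P6}, not $(\spadesuit)$. So the idempotency of $\tilde q(e_s)$ and the orthogonality you use are unproven as written. The computation showing $(\id\otimes\tilde q)(\iota_G(T_s))=\phi(T_s)$ and the appeal to the universal property of $\OmegaGJ_G$ are both fine \emph{granting} that $\tilde q$ is a homomorphism on $R\Xi_G$ — the gap is entirely in that granting step. The paper in fact highlights (in the remark following the next corollary) that one aim of its approach is precisely to dispense with $(\vardiamond)$, which still appears in the Geck--Jacon treatment; your proposal reintroduces it.
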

\begin{proof}
Sei $E$ der freie $R[\Gamma]$"~Modul mit Basis $(b_w)_{w\in W}$. Wenn wir $[\mathfrak{Z}]$ für den zur zweiseitigen Zelle $\mathfrak{Z}\subseteq W$ gehörigen Zellmodul schreiben, dann können wir mit der Identifikation $b_w \leftrightarrow C_w$ die $H$"~Linksmodulstruktur auf
\[\bigoplus_{\mathfrak{Z}\subseteq W \,\text{LR-Zelle}} [\mathfrak{Z}]\]
auf $E$ zurückziehen. Explizit gilt also
\[C_x \cdot b_w = \sum_{\substack{z\in W \\ z \sim_\mathcal{LR} w}} h_{x,w,z} b_z\]
für alle $x,w\in W$. Da dies als Modul-mit-Basis eine direkte Summe von Kazhdan-Lusztig-Linkszellmoduln ist, handelt es sich dabei um einen endlichen Geck-$W$"~Graph"=Modul \index{terms}{W-Graph@$W$-Graph!Geck-} mit Gewichten in $\IZ[\Gamma]\subseteq R[\Gamma]$, d.\,h. wir können diese Operation auf $R[\Gamma]\OmegaGJ_G$ fortsetzen für ein hinreichend großes, aber endliches $G\subseteq\Gamma$. Explizit heißt das, dass für alle LR-Zellen $\mathfrak{Z}\subseteq W$ und alle $w\in\mathfrak{Z}$ gilt:
\[e_s \cdot b_w = \begin{cases} b_w & \text{falls } s\in D_L(w) \\ 0 & \text{sonst} \end{cases}\]
\[x_{s,\gamma} \cdot b_w = \sum_{z\in\mathfrak{Z}} m_{zw}^{s,\gamma} \cdot b_z\]

Andererseits können wir $E$ via $b_w\leftrightarrow t_w$ mit $R[\Gamma]J$ identifizieren und erhalten so eine $R[\Gamma]J$"~Rechtsmodulstruktur auf $E$. Explizit ist sie durch:
\[b_w \cdot t_y = \sum_{z\in W} \gamma_{w,y,z^{-1}} b_z\]
für alle $w,y\in W$ gegeben. Da $J$ die direkte Summe der Rechtszellen ist, läuft die Summe hier in Wirklichkeit nur über die Rechtszelle von $w$. (Aufgrund von $(\spadesuit)$ müssen wir nicht zwischen $H$- und $J$"~Zellen unterscheiden, siehe \citep[2.5.9]{geckjacon}.)

\medbreak
Wir zeigen jetzt, dass diese beiden Modulstrukturen kommutieren, $E$ also ein $\OmegaGJ_G$"~$J$"~Bi\-mo\-dul ist. Es gilt zunächst für die Idempotente:
\begin{align*}
	e_s \cdot (b_w \cdot t_y) &= e_s \sum_{z\sim_\mathcal{R} w} \gamma_{w,y,z^{-1}} b_z \\
	&= \sum_{z\sim_\mathcal{R} w} \gamma_{w,y,z^{-1}} e_s \cdot b_z
\intertext{Nun ist jedoch $D_L(z)=D_L(w)$, da $z \sim_\mathcal{R} w$ (siehe \citep[2.1.16.]{geckjacon} und \citep[8.6]{lusztig2003hecke}), d.\,h. ob $e_s \cdot b_z=b_z$ oder $e_s \cdot b_z=0$ ist, hängt gar nicht von $z$ ab, sondern nur von $w$:}
	\ldots &= \begin{cases}
		\sum_{z\sim_\mathcal{R} w} \gamma_{w,y,z^{-1}} b_z & \text{falls } s\in D_L(w) \\
		0 & \text{falls } s\notin D_L(w)
	\end{cases} \\
	&= \begin{cases}
		b_w \cdot t_y & \text{falls } s\in D_L(w) \\
		0 & \text{falls } s\notin D_L(w)
	\end{cases} \\
	&= (e_s \cdot b_w)\cdot t_y
\end{align*}

Nun gilt die Bedingung $(\spadesuit)$. Dies ist exakt die Behauptung, dass die $H$"~Linksmodul- und die $J$"~Rechtsmodulstruktur auf $E$ kommutieren (siehe \citep[2.5.4.]{geckjacon}), d.\,h. $(\spadesuit)$ ist äquivalent dazu, dass für alle $h\in H, w,y\in W$
\[(h\cdot b_w)\cdot t_y = h\cdot(b_w \cdot t_y)\]
gilt. Da nun die Idempotente $e_s$ und die $T_s\in H$ zusammen bereits ganz $\Omega$ erzeugen, folgt, dass $E$ schon ein $R[\Gamma]\Omega$"~$R[\Gamma]J$"~Bimodul ist. Es gilt also auch
\[(x_s \cdot b_w)\cdot t_y = x_s\cdot (b_w\cdot t_y)\]
für alle $s\in S$ und alle $w,y\in W$. Wir wollen aber mehr als das. Wir wollen, dass nicht nur die $\Omega$-Operation mit der $J$-Operation kommutiert, sondern auch die $\OmegaGJ_G$-Operation. Dazu schreiben wir $x_s = \sum_{\alpha\in G} x_{s,\alpha} v^\alpha$.

Wenn wir nun ausnutzen, dass $E$ sich als $R$-Modul als $\bigoplus_{\alpha\in\Gamma} (\sum_z Rb_z)v^\alpha$ zerlegt, dann vergleichen wir $(x_{s,\alpha} \cdot b_w) \cdot t_y$ und $x_{s,\alpha} \cdot (b_w \cdot t_y)$ wie folgt:
\begin{align*}
	(x_s \cdot b_w) \cdot t_y &= \Big(\sum_{\alpha\in G} x_{s,\alpha} v^\alpha \cdot b_w \Big) \cdot t_y \\
	&= \sum_{\alpha\in G} \big((x_{s,\alpha} \cdot b_w)\cdot t_y\big) v^\alpha \\
	x_s \cdot (b_w \cdot t_y) &= \Big(\sum_{\alpha\in G} x_{s,\alpha} v^\alpha \Big) \cdot (b_w \cdot t_y) \\
	&=\sum_{\alpha\in G} \big(x_{s,\alpha} \cdot (b_w \cdot t_y)\big)v^\alpha
\end{align*}
Weil nun sowohl $(x_{s,\alpha} \cdot b_w) \cdot t_y$ als auch $x_{s,\alpha} \cdot (b_w \cdot t_y)$ in $E_0:=\sum_z Rb_z$ liegen nach Konstruktion der $\OmegaGJ_G$-Operation, können wir einen Koeffizientenvergleich durchführen und so auf die gewünschte Gleichheit $(x_{s,\alpha} \cdot b_w) \cdot t_y=x_{s,\alpha} \cdot (b_w \cdot t_y)$ schließen. Da die $x_{s,\alpha}$ zusammen mit den $e_s$ ganz $\OmegaGJ_G$ erzeugen, ist $E$ ein $R[\Gamma]\OmegaGJ_G$"~$R[\Gamma]J$-Bimodul. In der Tat ist schon $E_0$ ein $R\OmegaGJ_G$"~$RJ$"~Bimodul.

\medbreak
Aufgrund dieser Bimodulstruktur auf $E_0$ gibt es nun einen natürlichen Algebrahomomorphismus $R\OmegaGJ_G\to\End_{\textbf{Mod}-RJ}(E_0)$, nämlich $a\mapsto (b_w\mapsto a\cdot b_w)$. Weiter ist $E_0$ als $RJ$"~Rechtsmodul via $b_w \leftrightarrow t_w$ kanonisch isomorph zum regulären $RJ$"~Rechtsmodul und $\End_{\textbf{Mod}-RJ}(RJ)$ kanonisch isomorph zu $RJ$ selbst mittels $f\mapsto f(1_J) = f(\sum_{d\in\mathcal{D}} n_d t_d)$.

Indem wir diese Homomorphismen komponieren, erhalten wir den gewünschten Homomorphismus $q: R\OmegaGJ_G\to RJ$. Wir zeigen nun, dass $q\circ\iota=\phi$ ist. Dazu setzen wir die Kazhdan-Lusztig-Basis ein und rechnen:
\begin{align*}
	\iota(C_x)\cdot \sum_{d\in\mathcal{D}} n_d b_d &= \sum_{d\in\mathcal{D}} n_d C_x\cdot b_d \\
	&= \sum_{d\in\mathcal{D}} \sum_{z\in W,z \sim_\mathcal{LR} d} n_d h_{x,d,z} b_z \\
	\implies q(\iota(C_x)) &= \sum_{d\in\mathcal{D},z\in W,z \sim_\mathcal{LR} d} n_d h_{x,d,z} t_z \\
	&= \phi(C_x) \qedhere
\end{align*}
\end{proof}

\begin{remark}
In \cite{Gyoja} wird die analoge Aussage aus der $W\times W^\textrm{op}$-Graph-Struktur auf den zweiseitigen Kazhdan-Lusztig-Zellen, d.\,h. aus \textbf{P15} gefolgert.
\end{remark}

\begin{remark}
Der folgende Beweis ist von \cite{Gyoja} und \citep[2.7.12]{geckjacon} inspiriert und kombiniert das Beste aus beiden Welten. Wir erhalten so einen Beweis, der auf die Annahme $(\vardiamond)$ verzichtet, die in \cite{geckjacon} noch auftaucht.
\end{remark}

\begin{corollary}[Existenz von Geck-$W$-Graphen]\label{lusztig_hom:ex_of_w_graphs}
\index{terms}{W-Graph@$W$-Graph!Geck-}
Es gelten $(\spadesuit)$ und $(\clubsuit)$. Sei weiter $R\subseteq F$ ein Hauptidealring mit $F=\operatorname{Quot}(R)$.

Ist $R$ ein $L$-guter Ring, so kann jeder Isomorphietyp von einfachen $KH$"~Moduln durch einen Geck-$W$"~Graph mit Kantengewichten in $R[\Gamma]$ realisiert werden, dessen Matrixdarstellung balanciert ist.
\end{corollary}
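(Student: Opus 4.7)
The strategy is to combine the factorization of Lusztig's homomorphism through $R[\Gamma]\OmegaGJ_G$ with the existence result for balanced $J$-representations over a principal ideal domain, and then to translate the resulting $\OmegaGJ_G$-module back into a Geck-$W$-graph via the correspondence established in the previous section.

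First I would set up the coefficient rings. Under $(\clubsuit)$ the structure constants $\gamma_{x,y,z}$ and $n_w$ lie in $\IZ\subseteq R$ (see the remark following Definition \ref{J_alg:def:L_good}), while $L$-goodness of $R$ supplies $f_\lambda^{\pm 1}\in R^\times$. Consequently Lemma \ref{balanced_reps:invariant_blf3}(a) applies and furnishes, for every $\lambda\in\Lambda$, an $R$-form of the simple $FJ$-module of type $\lambda$, that is, a matrix representation $\overline{\rho}_\lambda\colon RJ\to R^{d_\lambda\times d_\lambda}$ whose extension of scalars is absolutely irreducible of type $\lambda$.

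Second, the theorem just proved (applicable because $(\spadesuit)$ holds) provides a sufficiently large finite $G\subseteq\Gamma$ and a morphism $q\colon R\OmegaGJ_G\to RJ$ with $q\circ\iota=\phi$. Composition yields $\varpi_\lambda:=\overline{\rho}_\lambda\circ q\colon R\OmegaGJ_G\to R^{d_\lambda\times d_\lambda}$, still defined over $R$. Each idempotent $E_I$ acts on $R^{d_\lambda}$ as a projector, so $E_I\cdot R^{d_\lambda}$ is a direct summand of a free $R$-module; since $R$ is a PID, each such summand is free. Concatenating bases of these summands produces an $R$-basis $\mathfrak{C}_\lambda$ of $R^{d_\lambda}$ respecting the decomposition $1=\sum_I E_I$. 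Now I invoke the $\OmegaGJ$-module-to-$W$-graph direction of the correspondence theorem (its part b.iii.) applied to $\varpi_\lambda$: this produces a finite Geck-$W$-graph $(\mathfrak{C}_\lambda, I_\lambda, m_\lambda)$ whose edge weights $m_{xy}^s=\sum_\gamma m_{xy}^{s,\gamma}v^\gamma$ lie in $R[\Gamma]$, because the matrix entries $\varpi_\lambda(x_{s,\gamma})$ lie in $R$.

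Finally, after scalar extension to $R[\Gamma]$, the $W$-graph module realizes the $H$-representation
\[\rho_\lambda\;:=\;\bigl(R[\Gamma]\otimes_R\overline{\rho}_\lambda\bigr)\circ q\circ\iota\;=\;\bigl(R[\Gamma]\otimes_R\overline{\rho}_\lambda\bigr)\circ\phi.\]
By Lemma \ref{lusztig_hom:balanced_reps} (which requires $(\clubsuit)$), any representation factoring through $\phi$ in this manner is automatically balanced, and its associated $J$-representation is $\overline{\rho}_\lambda$ itself; in particular $\rho_\lambda$ is irreducible of type $\lambda$. Thus $(\mathfrak{C}_\lambda, I_\lambda, m_\lambda)$ realizes the isomorphism type $\lambda$ by a balanced representation, which is what was claimed. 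The only delicate point is the basis extraction step: one must know that the $E_I$-eigenspaces of $R^{d_\lambda}$ are free $R$-modules so that the $\OmegaGJ_G$-module fits the hypothesis of the correspondence theorem, and this is precisely where the PID assumption on $R$ enters. All other steps are essentially bookkeeping built on top of the factorization $\phi=q\circ\iota$ and the characterization of balanced representations as those factoring through $\phi$.
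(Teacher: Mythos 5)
Your proposal is correct and follows essentially the same route as the paper's proof: realize an $FJ$-representation of type $\lambda$ over $R$ (using the PID assumption), compose with the factorization $\phi = q\circ\iota$ through $R\OmegaGJ_G$ provided by the preceding theorem, and invoke Lemma~\ref{lusztig_hom:balanced_reps} to conclude the resulting $H$-representation is balanced. You add two welcome clarifications that the paper leaves implicit: you make explicit that the $E_I$-eigenspaces of $R^{d_\lambda}$ are free over the PID $R$, which is precisely what licenses the passage from the $\OmegaGJ_G$-module to a Geck-$W$-graph via the correspondence theorem (part b.iii), and you route the lattice argument through Lemma~\ref{balanced_reps:invariant_blf3}(a) instead of citing it directly, which is a correct if slightly more elaborate way to obtain the $R$-form.
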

\begin{proof}
Sei $\lambda\in\Irr(W)$ beliebig, aber fest, und $f:FJ\to F^{d_\lambda\times d_\lambda}$ eine Matrixdarstellung, die den $FJ$-Isomorphietyp $\lambda$ hat. Weil $R$ ein Hauptidealring ist, können wir die Darstellung sogar über $R$ realisieren, d.\,h. $f(t_w)\in R^{d_\lambda\times d_\lambda}$ für alle $w\in W$. Dann ist $\rho:=f\circ\phi$ eine Matrixdarstellung $R[\Gamma]H\to R[\Gamma]^{d_\lambda\times d_\lambda}$, die den $KH$"~Isomorphietyp $\lambda$ hat. Weil $(\spadesuit)$ gilt, faktorisiert $\phi$ und somit auch $\rho$ durch $K\OmegaGJ$. Also ist $\rho$ sogar eine Geck-$W$"~Graph"=Darstellung.

Lemma \ref{lusztig_hom:balanced_reps} zeigt, dass $\rho$ sogar balanciert und $f=\overline{\rho}$ ist. Das zeigt uns die Faktorisierung $\rho=\overline{\rho}\circ\phi$.
\end{proof}

\begin{remark}
Für den Typ $I_2(m)$ mit $m\geq 7$ ist $\IZ_W$ i.\,A. kein Hauptidealring mehr. Trotzdem können alle einfachen Darstellungen als Geck-$W$"~Graphen mit Kantengewichten in $\IZ_W[\Gamma]$ realisiert werden. Das folgt aus den expliziten Konstruktionen in \ref{wgraphs:ex:1D_reps}, \ref{wgraphs:ex:refl_rep} und \ref{wgraphs:constructions}, da jede Darstellung von $W$ entweder eindimensional oder algebraisch konjugiert zur Spiegelungsdarstellung ist.
\end{remark}

\begin{theorem}
\index{terms}{Vermutung!von Gyoja}
Es gelte $(\spadesuit)$. Dann sind folgende Aussagen äquivalent:
\begin{enumerate}
	\item Sind $X,Y$ zwei halbeinfache $K\Omega$"~Moduln, dann ist jede $KH$"~lineare Abbildung $X\to Y$ bereits $K\Omega$"~linear.
	\item Je zwei halbeinfache $K\Omega$"~Moduln, deren Einschränkungen auf $KH$ als $KH$"~Moduln isomorph sind, sind auch als $K\Omega$"~Moduln isomorph.
	\item $\abs{\Irr(K\Omega)} = \abs{\Irr(KH)}$.
	\item $\rad K\Omega = \ker(K\Omega \xrightarrow{q} KJ)$.
	\item $K\Omega / \rad K\Omega \isomorphic KJ$.
	\item $\dim_K K\Omega/\rad K\Omega = \abs{W}$.
\end{enumerate}
Diese Äquivalenz gilt mutatis mutandis auch für $K\OmegaGJ$.
\end{theorem}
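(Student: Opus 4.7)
The plan is to deduce all six equivalences from a single canonical surjection. Under $(\spadesuit)$, Lusztig's theorem makes $\phi\colon KH \to KJ$ an isomorphism, and since $\phi = q \circ \iota$ this forces $q\colon K\Omega \to KJ$ to be surjective. The algebra $K\Omega$ is finite-dimensional: $\Xi$ is the path algebra of a finite quiver by Lemma \ref{wgraph_alg:E_I_and_X_IJ}, so $K\Omega$ is a quotient of the finite-dimensional $K$-algebra $K \otimes_\IZ \Xi$. Hence $\rad K\Omega$ is the smallest two-sided ideal with semisimple quotient, and in particular $\rad K\Omega \subseteq \ker q$, yielding a canonical surjection $\pi\colon K\Omega/\rad K\Omega \twoheadrightarrow KJ$ through which every comparison factors.

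The block $(4) \Leftrightarrow (5) \Leftrightarrow (6) \Leftrightarrow (3)$ is then formal. The equivalence $(4) \Leftrightarrow (5)$ is just the statement that $\pi$ is an isomorphism, and $(5) \Leftrightarrow (6)$ follows from $\dim_K KJ = |W|$ since a surjection between finite-dimensional algebras of equal dimension is an isomorphism. For $(3) \Leftrightarrow (5)$ I invoke a general fact about surjections between split semisimple finite-dimensional algebras: distinct simple factors of the source map either to zero or isomorphically onto distinct simple factors of the target (because $f(A_i)f(A_j) = f(A_i A_j) = 0$ rules out two different simple factors of $A$ surjecting onto the same simple factor of $B$). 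Applied to $\pi$, this gives $|\Irr(K\Omega)| = |\Irr(K\Omega/\rad K\Omega)| \geq |\Irr(KJ)| = |\Irr(KH)|$, with equality exactly when $\pi$ is an isomorphism.

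It remains to incorporate the categorical conditions $(1)$ and $(2)$. Here $(1) \Rightarrow (2)$ is tautological, and for $(4) \Rightarrow (1)$ I note that under $(4)$ every semisimple $K\Omega$-module factors through $K\Omega/\rad K\Omega \cong KJ$; since $\iota^* q^* = \phi^*$ and $\phi$ is an isomorphism, restriction along $\iota$ is a categorical equivalence between semisimple $K\Omega$-modules and semisimple $KH$-modules, so $KH$-linear maps between the former are automatically $K\Omega$-linear. The critical step is $(2) \Rightarrow (4)$: given a simple $K\Omega$-module $X$, decompose $X|_{KH} \cong \bigoplus_i V_i^{n_i}$ into simple $KH$-isotypes, lift each $V_i$ along the canonical injection $\Irr(KH) \xrightarrow{\cong} \Irr(KJ) \hookrightarrow \Irr(K\Omega)$ (pullback via $q$ of $(\phi^*)^{-1}(V_i)$) to a simple $K\Omega$-module $Y_i$ whose restriction to $KH$ is $V_i$, and form $Y := \bigoplus_i Y_i^{n_i}$. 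Then $Y$ is a semisimple $K\Omega$-module with $Y|_{KH} \cong X|_{KH}$, so $(2)$ forces $X \cong Y$ as $K\Omega$-modules; but $Y$ is annihilated by $\ker q$, hence so is $X$. Since every simple $K\Omega$-module is annihilated by $\ker q$, one obtains $\ker q \subseteq \rad K\Omega$ and therefore $(4)$.

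The main obstacle is conceptual rather than computational: keeping straight the three algebras $KH$, $K\Omega$, $KJ$, the compatibility $\phi = q \circ \iota$, and the role of the identification $\iota^* = \phi^* \circ q^*$ on modules that factor through $q$. For the parallel statement about $K\OmegaGJ$ one argues verbatim after noting, via Lemma \ref{wgraph_alg:continuous_reps}, that when $W$ is finite every finite-dimensional continuous $K\OmegaGJ$-module factors through $K\OmegaGJ_G$ for some finite $G \subseteq \Gamma$, reducing everything to the already-handled finite-dimensional case.
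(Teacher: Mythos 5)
Your overall architecture — establish the canonical surjection $\pi\colon K\Omega/\rad K\Omega\twoheadrightarrow KJ$ and route the equivalences through it — is sound and close in spirit to the paper's, which proves the cycle $a\Rightarrow b\Rightarrow c\Rightarrow d\Rightarrow e\Rightarrow f\Rightarrow d$ together with $d\Rightarrow a$. Your $(2)\Rightarrow(4)$ via lifting isotypic components along $q^*\circ(\phi^*)^{-1}$ is a correct and slightly more streamlined version of the paper's $b\Rightarrow c$ argument.

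However, there is a genuine gap in your setup: you assert that $K\Omega$ is finite-dimensional, with the justification that $\Xi$ is the path algebra of a finite quiver. This is false. Lemma~\ref{wgraph_alg:E_I_and_X_IJ} identifies $\Xi$ with the path algebra on the quiver with vertices $I\subseteq S$ and $\abs{I\setminus J}$ arrows $I\leftarrow J$; for $\abs{S}\ge 2$ this quiver contains oriented cycles (e.g. $\Set{1}\leftrightarrows\Set{2}$), so $\Xi$ — and hence $K\otimes_{\IZ}\Xi$ — is infinite-dimensional. The paper in fact emphasises repeatedly that finite-dimensionality of the $W$-graph algebras is \emph{not} known in general. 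Your false premise leaks into two of the steps you state: the claim that $\rad K\Omega$ is ``the smallest two-sided ideal with semisimple quotient'' is the Artinian characterization and is not available here (the inclusion $\rad K\Omega\subseteq\ker q$ is still true, but for the general-ring reason that a surjection carries the Jacobson radical into the Jacobson radical of the target, which is $0$ since $KJ$ is semisimple); and your justification of $(3)\Leftrightarrow(5)$ via a ``general fact about surjections between split semisimple finite-dimensional algebras'' applied to $\pi$ is not valid, because $K\Omega/\rad K\Omega$ is not known to be semisimple (or even Artinian) until after the theorem is proved. The correct argument for $(3)\Rightarrow(5)$ is the one the paper uses implicitly: if $\abs{\Irr(K\Omega)}=\abs{\Irr(KJ)}$ (a finite number), then since $q^*$ injects $\Irr(KJ)$ into $\Irr(K\Omega)$, every simple $K\Omega$-module factors through $q$, so $\ker q\subseteq\bigcap_{M\text{ simple}}\operatorname{Ann}(M)=\rad K\Omega$; together with the reverse inclusion this gives $(4)$, hence $(5)$. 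With these two justifications repaired, the rest of your proof is correct.
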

\begin{proof}
a.$\implies$b. ist trivial.

b.$\implies$c.
Sei $Z\in\Omega-\textbf{Mod}$ einfach und $\operatorname{Res}_H^\Omega Z=\bigoplus_{\alpha\in A} M_\alpha$ eine Zerlegung in einfache \mbox{$H$"~Moduln}. Da es zu jedem einfachen $H$"~Modul einen $W$"~Graphen gibt, können wir o.\,B.\,d.\,A. annehmen, dass die $M_\alpha$ Einschränkungen von $Z_\alpha\in\Omega-\textbf{Mod}$ sind. Aus der Voraussetzung und der Einfachheit von $Z$ folgt $\abs{A}=1$, d.\,h. $\operatorname{Res}_H^\Omega$ induziert eine Abbildung $\Irr(\Omega)\to\Irr(H)$, die aufgrund der Voraussetzung injektiv ist. Da jede einfache $H$-Darstellung als $W$"~Graph"=Darstellung realisiert werden kann, ist die Verkettung $\Irr(H)\to\Irr(\Omega)\to\Irr(H)$ die Identität, d.\,h. es handelt sich sogar um eine Bijektion.

\bigbreak
c.$\implies$d.

Gilt c., dann sind die einfachen $K\Omega$"~Moduln, die durch $K\Omega\to KJ$ faktorisieren, bereits ein vollständiges Vertretersystem der einfachen $K\Omega$"~Moduln. Also ist
\[\rad(K\Omega) = \bigcap_{M\in\Irr(K\Omega)} \operatorname{Ann}(M) = \bigcap_{M\in\Irr(KJ)} \operatorname{Ann}(M) \supseteq \ker(K\Omega\to KJ)\]
Da $KJ$ halbeinfach ist, ist $\rad(K\Omega)$ jedoch sowieso in $\ker(K\Omega\to KJ)$ enthalten, also gilt Gleichheit.

\bigbreak
d.$\implies$e.$\implies$f. ist dann offensichtlich. f.$\implies$d. ist auch klar, da $KJ$ halbeinfach ist und somit die Inklusion $\rad(K\Omega)\subseteq\ker(K\Omega\to KJ)$ immer gilt.

\bigbreak
d.$\implies$a. $\rad(K\Omega)$ liegt im Annullator jedes einfachen und daher auch jedes halbeinfachen Moduls, daher sind halbeinfache $K\Omega$"~Moduln auch automatisch $K\Omega/\rad(K\Omega)$"~Moduln. Wenn $K\Omega/\rad(K\Omega)$ nun zu der endlichdimensionalen Algebra $KJ$ isomorph ist, sind umgekehrt auch alle $K\Omega/\rad(K\Omega)$"~Moduln halbeinfach und somit auch als $K\Omega$"~Moduln halbeinfach, d.\,h. $\Set{M \in K\Omega-\textbf{Mod} | M\text{ halbeinfach}} = K\Omega/\rad(K\Omega)-\textbf{Mod}$.

\begin{figure}[ht]
\centering
	\begin{tikzpicture}
	\matrix (m) [matrix of math nodes, row sep=3em,
	column sep=2.5em, text height=1.5ex, text depth=0.25ex]
	{
		& \Set{M \in K\Omega-\textbf{Mod} | M\text{ halbeinfach}} & \\
		KH-\textbf{Mod} & & KJ-\textbf{Mod} \\
	};
	\path[<-,font=\scriptsize]
		(m-2-1) edge node[desc]{$\iota^\ast$} (m-1-2)
		(m-1-2) edge node[desc]{$q^\ast$} (m-2-3)
		(m-2-1) edge node[desc]{$\phi^\ast$} (m-2-3);
	\end{tikzpicture}
	\caption{$\phi=q\circ\iota$ und Auswirkungen auf Modul-Kategorien.}
	\label{fig:equivalences_mod_categories}
\end{figure}
Die Restriktion entlang der natürlichen Homomorphismen liefert das kommutative Diagramm \ref{fig:equivalences_mod_categories}, wobei der untere Pfeil sowieso eine Äquivalenz ist, da er vom Lusztig-Isomorphismus induziert wird. Der rechte Pfeil ist eine Äquivalenz, weil er vom Isomorphismus $\overline{q}: K\Omega / \rad K\Omega \to KJ$ induziert wird. Also muss auch der linke Pfeil $\iota^\ast$ eine Äquivalenz sein. Dies ist genau der Restriktionsfunktor. Insbesondere folgt also $\Hom_\Omega(X,Y)=\Hom_H(X,Y)$.

Alle Beweise funktionieren offenbar genauso, wenn man $K\Omega$ durch $K\OmegaGJ$ ersetzt.
\end{proof}

\begin{conjecture}[Gyojas Vermutung, siehe {\citep[2.18]{Gyoja}} im Einparameterfall]
\index{terms}{Vermutung!von Gyoja}
Die Aussagen a.) bis f.) in obigem Satz treffen für $K\OmegaGJ(W,S,L)$ zu, wenn $(W,S,L)$ eine endliche Coxeter"=Gruppe"=mit"=Gewicht ist.
\end{conjecture}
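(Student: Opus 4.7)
The plan is to attack the conjecture via equivalence (f), namely $\dim_K K\OmegaGJ/\rad(K\OmegaGJ) = \abs{W}$. The inequality $\dim_K K\OmegaGJ/\rad(K\OmegaGJ) \geq \abs{W}$ is essentially free: by the factorization theorem proved above, the surjection $q: R\OmegaGJ_G \to RJ$ exists (for $G$ large enough) and $\phi = q\circ\iota$. Since Lusztig's theorem gives $KH\xrightarrow{\phi} KJ$ an isomorphism, $q$ must be surjective after extension to $K$, and because $KJ$ is split semisimple of dimension $\abs{W}$, the kernel is automatically contained in $\rad(K\OmegaGJ)$. Thus the whole content of the conjecture is the reverse inequality $\dim_K K\OmegaGJ/\rad(K\OmegaGJ) \leq \abs{W}$, or equivalently $\ker(q) \supseteq \rad(K\OmegaGJ)$.

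The natural route is to prove equivalence (c): show that every irreducible $K\OmegaGJ$-module already factors through $q$, so that $\abs{\Irr(K\OmegaGJ)} = \abs{\Irr(KJ)} = \abs{\Irr(KH)}$. By the correspondence between $K\OmegaGJ$-modules and Geck-$W$-graphs (with the idempotents $E_I$ picking out the $I$-homogeneous components of the basis), this is exactly Geck--M\"uller's $W$-graph decomposition conjecture: every irreducible Geck-$W$-graph representation should be equivalent to a cell representation arising from Lusztig's $\phi$. So the first step is to pass from the abstract statement on $K\OmegaGJ$ to a concrete statement about $W$-graphs, using the dictionary \ref{wgraph_alg:E_I_and_X_IJ} and Lemma \ref{lusztig_hom:balanced_reps}, which already tells us that a balanced, irreducible $\rho$ factors as $\overline{\rho}\circ\phi$ iff its leading coefficients at $x_{s,\gamma}$ come from $q$.

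Second, I would exploit the path-algebra presentation of $\OmegaGJ$ (promised in the thesis's fourth chapter) to make $K\OmegaGJ$ computationally tractable. The idea is that $\Xi_G$ is by \ref{wgraph_alg:E_I_and_X_IJ} a path algebra over the quiver $\mathcal{Q}_G$, so $\OmegaGJ_G$ is $\IZ\mathcal{Q}_G$ modulo the homogeneous pieces of the braid relations $\Delta_{m_{st}}(\iota_G(T_s),\iota_G(T_t))$. For a fixed finite $G$, this is a finitely presented algebra whose dimension can, in principle, be computed (e.g.\ via a noncommutative Gr\"obner basis). The program for small cases $I_2(m)$, $A_3$, $A_4$, $B_3$ is therefore: compute the path algebra presentation explicitly, establish by hand (for $I_2(m)$) or by computer algebra (for $A_3,A_4,B_3$) a presentation of $K\OmegaGJ/\rad$, and compare its dimension to $\abs{W}$. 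For $I_2(m)$ one can hope to do this uniformly in $m$ using the explicit knowledge of the one- and two-dimensional representations listed in \ref{wgraphs:ex:1D_reps} and \ref{wgraphs:ex:refl_rep}; the tensor-product compatibility \ref{wgraph_alg:tensor_prod} and the inflation morphism \ref{wgraph_alg:inflation} might reduce reducible types and some higher-rank cases to these.

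The main obstacle I expect is precisely this dimension bound. The ``free'' direction gives only a lower bound; to get the upper bound one must exhibit enough relations in $K\OmegaGJ$ forcing all but $\abs{W}$ simples to collapse into the radical. In the Geck--M\"uller picture this means ruling out \emph{spurious} irreducible $W$-graphs not equivalent to a cell representation, which is a genuinely nonformal statement. For the special cases one can brute-force this by classifying all irreducible $W$-graphs with the given $I$-multiplicities (constrained by $\sum_{\mathfrak{s}} \dim E_{I(\mathfrak{s})}V = d_\lambda$ for each simple module) and checking each against the Kazhdan--Lusztig cell data, but a general proof presumably needs a structural insight, perhaps an invariant bilinear form on $K\OmegaGJ$ extending the one on $KH$, or a Gabber-type filtration argument showing that the $x_{s,\gamma}$ act nilpotently modulo the cell structure on each simple.
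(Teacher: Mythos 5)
The statement is a conjecture, and the thesis does not prove it in general; it establishes it only in the special cases $I_2(m)$, $A_3$, $A_4$, $B_3$ (plus the trivial cases $\abs{S}\leq 1$), and only in the one-parameter setting where $\OmegaGJ=\OmegaGy$. Your strategic framing is accurate and matches the paper's own discussion: the lower bound $\dim_K K\OmegaGJ/\rad(K\OmegaGJ)\geq\abs{W}$ is automatic from surjectivity of $q$ and semisimplicity of $KJ$, the whole content is the reverse inequality, and formulation (c) ties it to the Geck--M\"uller decomposition statement. Your suggestion to exploit the path-algebra presentation of $\OmegaGy$ is also the thesis's route for the special cases.

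Where your sketch diverges is the technical mechanism that makes the upper bound provable. You propose Gr\"obner-basis computations, a brute-force classification of irreducible $W$-graphs by $I$-multiplicities, or an invariant-bilinear-form/filtration argument; the thesis instead constructs, directly from the $(\alpha)$-, $(\beta)$-, $(\gamma)$-relations of Theorem \ref{wgraph_alg:relations}, an explicit orthogonal decomposition $1=\sum_{\lambda\in\Irr(W)} F^\lambda$ of the identity satisfying the Zerlegungsvermutung conditions (Z1)--(Z7). The key device is the idempotent-transport Lemma \ref{gyoja:idempotenttransport}, which propagates the spectral decomposition of the loop endomorphisms $X_{IJ}X_{JI}$ along transversal edges of the compatibility graph so that edges split into parallel families with no cross-connections between different $\lambda$-components; once (Z1)--(Z4) hold, $K\OmegaGy$ becomes a quotient of a generalized path algebra on an acyclic quiver with matrix-ring node algebras $K^{d_\lambda\times d_\lambda}$, from which the radical codimension bound is immediate. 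You also do not distinguish the one-parameter case, where the path-algebra presentation of Theorem \ref{wgraph_alg:relations} applies, from the multiparameter case, where no analogous presentation is derived and even the special cases remain untouched. Your acknowledgment that a genuine structural insight is needed is correct; what you did not locate is that this insight, at least for the types treated, is the idempotent decomposition $1=\sum_\lambda F^\lambda$.
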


\begin{remark}
Da wir wissen, dass $R\OmegaGJ\xrightarrow{q} RJ$ durch ein $R\OmegaGJ_G$ mit hinreichend großem $G\subseteq\Gamma$ faktorisiert, kann man problemlos in der Vermutung und im Äquivalenzbeweis $K\OmegaGJ$ durch solch ein $K\OmegaGJ_G$ ersetzen.
\end{remark}
\begin{remark}
Dass der Körper gerade $K$ war, spielte offenbar für den Beweis keine größere Rolle. Man könnte mit demselben Beweis also auch alle analogen Behauptungen für $K\subseteq K'$ beliebig als äquivalent erkennen. Insbesondere könnte man eine $\IC$-Variante der Vermutung formulieren.

Aus \citep[5.13-5.17]{lam2001firstcourse} folgt, dass in Charakteristik $0$ die Kodimension des Radikals bei Körpererweiterungen höchstens wachsen kann:
	
Ist $K\subseteq K'$ ein Körperturm in Charakteristik $0$ und $A$ eine beliebige $K$"~Algebra, so impliziert 5.14. $A\cap\rad(K'A)\subseteq\rad(A)$. Für rein transzendente Erweiterungen impliziert 5.13 $\rad(K'A)=K'(A\cap\rad(K'A))$, also $\frac{K'A}{\rad(K'A)}=\frac{K'A}{K'(A\cap\rad(K'A))}=K'\frac{A}{A\cap\rad(K'A)}$, was $K'\frac{A}{\rad(A)}$ als Quotienten hat. Also folgt in diesem Fall
\[\dim_{K'} K'A/\rad(K'A) \geq \dim_K A/\rad(A).\]
Aus \citep[5.17]{lam2001firstcourse} folgt weiterhin die Gleichheit $\rad(K'A) = K'\rad(A)$ bei algebraischen Erweiterungen $K\subseteq K'$, die wegen $\CharFld(K)=0$ hier ja automatisch separabel sind.
	
Haben wir also einen Körperturm $K\subseteq K'\subseteq K''$, dann impliziert die Richtigkeit von Gyojas Vermutung für $K''$ die Richtigkeit der Vermutung für $K'$ und die beiden Varianten sind äquivalent, wenn $K'\subseteq K''$ algebraisch ist.
\end{remark}
\begin{remark}
Da $K\OmegaGJ\xrightarrow{q} KJ$ surjektiv ist, muss bereits $F\OmegaGJ\xrightarrow{q}FJ$ surjektiv gewesen sein. Insbesondere erhalten wir dann die Abschätzung $\dim F\OmegaGJ / \rad(F\OmegaGJ) \geq \abs{W}$ und aufgrund der Monotonie der Radikal-Kodimension bzgl. Körpererweiterungen impliziert die Gültigkeit von Gyojas Vermutung für $K$ (oder irgendeinen größeren Körper) dann auch hierbei die Gleichheit.
	
Da aber $FJ$ zerfallend halbeinfach ist, wäre dann $FJ = F\OmegaGJ/\rad(F\OmegaGJ)$ und somit $F=\End_{FJ}(X)=\End_{F\OmegaGJ}(X)$ für jeden einfachen $F\OmegaGJ$"~Modul $X$. Mit anderen Worten: $F$ wäre bereits ein Zerfällungskörper für $F\OmegaGJ$.
\end{remark}
\begin{remark}
Für $\abs{S}=0$ ist Gyojas Vermutung trivialerweise wahr, weil dann $k\OmegaGJ=k$ gilt (siehe Beispiel \ref{wgraph_alg:small_ex}) und die Kodimension des Radikals von $K\OmegaGJ$ daher $1=\abs{W}$ ist.

\medbreak
Für $\abs{S}=1$ ist Gyojas Vermutung ebenfalls wahr. Es gilt, wie in \ref{wgraph_alg:small_ex} gesehen,
\[k\OmegaGJ = k E_\emptyset \times k E_S \times \prod_{0\leq\gamma<L(s)} k X_{S,\emptyset}^{s,\gamma}\]
als $k$"~Moduln. Nun ist aber $X_{S,\emptyset}^{s,\gamma_1} \cdot X_{S,\emptyset}^{s,\gamma_2} = 0$ für alle $\gamma_1,\gamma_2\in\Gamma$, d.\,h. der rechte Faktor in diesem Produkt ist ein nilpotentes Ideal von $k\OmegaGJ$. Somit ist die Kodimension des Radikals von $K\OmegaGJ$ höchstens 2 und daher gilt Gyojas Vermutung.
\end{remark}

\begin{remark}
Im Zusammenhang mit ihren Berechnungen von Zerlegungszahlen, um die James"=Vermutung zu überprüfen, stellten Meinolf Geck und Jürgen Müller außerdem die folgende Vermutung auf, die die zelluläre Struktur der Hecke"=Algebren aus Kapitel Zwei mit $W$"~Gra\-phen in Verbindung setzt:
\end{remark}

\begin{conjecture}[Geck-Müller, siehe {\citep[2.7.13]{geckjacon}} und {\citep[4.5]{GeckMueller}}]\label{conj:geck_mueller}
\index{terms}{Vermutung!von Geck-Müller}\index{terms}{Darstellung!balancierte}
Jeder irreduzible Geck-$W$"~Graph $\omega: KH\to K^{d\times d}$ ist balanciert und faktorisiert als $\omega=\overline{\omega}\circ\phi$ oder äquivalent (siehe Lemma \ref{h_vs_j:conj_class} und \ref{CellAlg:Hecke_cell_mod}): Jeder Geck-$W$"~Graph kommt als Zellmodul in Gecks Zellbasiskonstruktion vor für geeignet gewählte Inputdaten.
\end{conjecture}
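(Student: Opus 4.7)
The plan is to derive the Geck-Müller conjecture from Gyoja's conjecture (the immediately preceding statement) using essentially only the factorization $\phi = q \circ \iota$ of Lusztig's homomorphism through $K\OmegaGJ$, together with Lemma~\ref{lusztig_hom:balanced_reps}. The argument therefore splits into a short reduction step and the genuine obstacle, which is Gyoja's conjecture itself.

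For the reduction, let $\omega : KH \to K^{d \times d}$ be an arbitrary irreducible Geck $W$-graph representation. By the correspondence theorem (part a.iii.\ of the theorem on $W$-graphs and $\Omega$-modules), $\omega$ lifts to a continuous representation $\varpi : K\OmegaGJ \to K^{d\times d}$ with $\omega = \varpi \circ \iota$. Since $\omega$ is irreducible over $KH$ and $KH$ embeds in $K\OmegaGJ$, the lift $\varpi$ is itself irreducible and in particular semisimple. If Gyoja's conjecture is assumed in the form $K\OmegaGJ/\rad(K\OmegaGJ) \cong KJ$ via $\overline{q}$, then every semisimple $K\OmegaGJ$-module factors through $q$, so there exists a representation $f : KJ \to K^{d\times d}$ with $\varpi = f \circ q$. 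Composing with $\iota$ gives $\omega = f \circ q \circ \iota = f \circ \phi$. Lemma~\ref{lusztig_hom:balanced_reps} then forces $\omega$ to be balanced and identifies $f = \overline{\omega}$, so $\omega = \overline{\omega} \circ \phi$, which by Remark~\ref{CellAlg:Hecke_cell_mod} is equivalent to $\omega$ occurring as a cell module for suitably chosen input data. In fact this step proves the stronger assertion that the implication ``Gyoja $\Longrightarrow$ Geck-Müller'' holds in full generality.

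The main obstacle is therefore Gyoja's conjecture itself. Since $q$ is always surjective, the inequality $\dim_K K\OmegaGJ/\rad(K\OmegaGJ) \geq \abs{W}$ comes for free; what is hard is the reverse bound. To attack it, I would exploit the alternative presentation of $\OmegaGy$ (and more generally of $\OmegaGJ_G$) as a quotient of a finite quiver path algebra, which is the content of Lemma~\ref{wgraph_alg:E_I_and_X_IJ}: the idempotents $E_I$ span the vertex space, the arrows $X_{IJ}^{s,\gamma}$ span the arrow space, and $K\OmegaGJ_G$ is the quotient of the resulting path algebra by the $\Gamma$-homogeneous components of the braid relations $\Delta_{m_{st}}(\iota_G(T_s),\iota_G(T_t)) = 0$. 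One then tries to exhibit an explicit nilpotent ideal $N \subseteq K\OmegaGJ_G$ whose image under $q$ is zero and whose quotient has dimension $\leq \abs{W}$; combined with the lower bound this forces $N = \rad(K\OmegaGJ_G)$.

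In low rank this is essentially a finite problem in linear algebra, and I would carry it out case-by-case for $I_2(m)$, $A_3$, $A_4$, and $B_3$ by listing the braid relations explicitly, computing Gröbner-style normal forms for paths in the quiver modulo these relations, and identifying the surplus paths as nilpotent. The use of the antiautomorphism $\delta$ from Lemma~\ref{wgraph_alg:duality} and of parabolic inflations from Example~\ref{wgraph_alg:inflation} should cut the computation down substantially by reducing many paths modulo parabolic subalgebras. The general case, however, will be the genuine difficulty: it seems to require either a strong positivity input in the spirit of Elias-Williamson for the full $W$-graph algebra, or a conceptual reason why the braid relations on nilpotent generators suffice to kill everything outside $KJ$. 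I do not see a strategy that avoids one of these two routes.
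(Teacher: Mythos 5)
Your reduction step follows the same route as the paper's Theorem showing that Gyoja's conjecture implies the Geck--M\"uller conjecture, but it has a genuine gap where you invoke Lemma~\ref{lusztig_hom:balanced_reps}. That lemma requires an $F$-rational representation $f\colon J\to F^{d\times d}$ of the asymptotic algebra, not merely a $K$-linear map $f\colon KJ\to K^{d\times d}$; the calculation in its proof hinges on the factors $f(t_z)$ lying in $F^{d\times d}$ so that $v^{a_\lambda}\rho(C_w)$ is seen to lie in $F[\Gamma_{\geq 0}]$ and the leading term can be read off. You obtain $f$ as a composite built from $\overline{q}^{-1}$ and the lift $\varpi$, but you never verify that this composite restricts on $J$ to an $F$-valued map, and that verification is exactly where the paper does nontrivial work: it uses that $q$ is already defined over $\IQ_W\subseteq F$ and that the Geck-$W$-graph has edge weights in $F[\Gamma]$ (so $\varpi(x_{s,\gamma})\in F^{d\times d}$), and shows that both triangles of the relevant diagram are defined over $F$ before concluding $f(t_w)\in F^{d\times d}$. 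Without this step the argument does not force balancedness; with a $W$-graph over a genuine extension $F\subsetneq F'$ the same chain of reasoning would give an $F'$-rational $f$ and the conclusion would change accordingly, so the $F$-rationality is not cosmetic.

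The second half of your proposal --- reducing the remaining difficulty to a low-rank verification of Gyoja's conjecture using the quiver presentation of $\OmegaGy$ --- is in the right spirit but vague where the paper is concrete. The paper does not carry out a Gr\"obner-style normal form computation; instead it formulates and proves a stronger "decomposition conjecture" (properties Z1--Z7) by explicitly constructing a refinement $1=\sum_\lambda F^\lambda$ of the vertex idempotents via idempotent transport along the compatibility quiver (Lemma~\ref{gyoja:idempotenttransport}), and shows that $\OmegaGy$ becomes a quotient of a finite generalized path algebra on an acyclic quiver with matrix rings at the vertices. You would still have to supply these explicit constructions for the ranks you list; the phrase "identifying the surplus paths as nilpotent" is precisely the part that requires the idempotent machinery the paper builds.
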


\begin{remark}
Selbst die Teilaussage, dass Geck-$W$"~Graphen immer balanciert sind, ist allerdings noch unbewiesen im allgemeinen Fall.
\end{remark}

\begin{conjecture}[Geck-Jacon, siehe {\citep[1.4.14]{geckjacon}}]
\index{terms}{Vermutung!von Geck-Jacon}\index{terms}{Darstellung!balancierte}
Jeder irreduzible Geck-$W$"~Graph $\omega: KH\to K^{d\times d}$ definiert eine balancierte Matrixdarstellung.
\end{conjecture}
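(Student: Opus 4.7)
The plan is to attack the problem by constructing an invariant bilinear form for the Geck $W$-graph representation which is already ``well-normalised'' in the $W$-graph basis, and then to apply the sufficient criterion from Theorem \ref{balanced_reps:invariant_blf1} to conclude balancedness. The starting observation is that every finite Geck $W$-graph gives rise to a (discrete, continuous) representation of $K\OmegaGJ$, so we have at our disposal not only the Hecke-algebra structure but also the two extra pieces of data: the idempotents $E_I$ and the ``pure'' off-diagonal pieces $X_{IJ}^{s,\gamma}$, together with the antiautomorphism $\delta$ of Lemma \ref{wgraph_alg:duality}.

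First I would fix an irreducible Geck $W$-graph $(\mathfrak{C}, I, m)$ of isomorphism type $\lambda$, and pass to the $K\OmegaGJ$-module $V = K^{(\mathfrak{C})}$ which is irreducible also as a $KH$-module. Applying Theorem \ref{balanced_reps:invariant_blf2} to the $KH$-module $V$ produces an $H$-invariant, positive-definite symmetric form $\Omega_1$ on $V$, unique up to a scalar in $K^{\times}$ (by Schur); the task is then to normalise $\Omega_1$ so that after clearing denominators it lies in $GL_d(\mathcal{O})$, i.e.\ that its valuation is already optimal with respect to the $W$-graph basis $\mathfrak{C}$. For this I would exploit the fact that the $W$-graph basis is compatible with the idempotent decomposition $V = \bigoplus_{I \subseteq S} \omega(E_I)V$, so that $\Omega_1$ decomposes in block form indexed by pairs $(I, J)$; the palindromic condition $\overline{m_{xy}^s} = m_{xy}^s$ together with the degree bound $v_s m_{xy}^s \in A[\Gamma_{>0}]$ should force these blocks to reduce, modulo $\mathfrak{m}$, to a nondegenerate symmetric matrix over $F$. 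Combined with part (b) of Theorem \ref{balanced_reps:invariant_blf1}, this gives $a_\lambda \in \Gamma$ and balancedness of the representation.

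The bridge between the $H$-invariant form $\Omega_1$ and the $W$-graph structure is the antiautomorphism $\delta$: on the Hecke subalgebra one has $\delta(\iota(T_w)) = (-1)^{l(w)}\iota(T_w)^{-1}$, so $\delta$ and $\ast$ differ by the sign character composed with inversion. Consequently a $\delta$-invariant pairing (one with $\Omega \omega(x) = \omega(\delta(x))^{\mathrm{Tr}} \Omega$ for all $x \in K\OmegaGJ$) can be translated into an $\ast$-invariant one after a controllable twist. The irreducibility of $V$ as an $H$-module, together with Schur's lemma, forces $\Omega_1$ to be (up to scalar) identified with this twisted $\delta$-form — and the key point is that $\delta$ sends $E_I \mapsto E_{I^c}$ and $X_{IJ}^{s,\gamma} \mapsto -X_{J^c I^c}^{s,\gamma}$, which interacts in a highly restrictive way with the combinatorial decomposition of $V$, pinning down the block shape of $\Omega_1$ up to units in $\mathcal{O}$.

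The main obstacle I would expect to hit is exactly at the last step: showing that the reduction of the (rescaled) $\Omega_1$ modulo $\mathfrak{m}$ is actually invertible, rather than merely symmetric. In the multi-parameter case this is where the degree bound is too coarse a control — the Geck $W$-graph only forbids exponents at the endpoints $\pm L(s)$, but the subtle cancellations which make $\nu(\rho(T_w)) \geq -a_\lambda$ for long $w$ come from products of many off-diagonal entries and are not manifestly controlled by the single-generator degree bound. This is precisely why the conjecture has so far only been established for the small rank types $I_2(m)$, $A_3$, $A_4$, $B_3$, where the explicit presentation of $\OmegaGy$ derived in Section 4.3 allows a direct case-analysis; a uniform proof would likely require either a proof of Gyoja's conjecture (equivalently $K\OmegaGJ/\rad(K\OmegaGJ) \cong KJ$ and hence the stronger Geck--Müller conjecture \ref{conj:geck_mueller}), or a fundamentally new structural result linking the palindromic/degree-bounded nature of Geck $W$-graphs to the Schur relations of Theorem \ref{schur_relations_leading_coeff}.
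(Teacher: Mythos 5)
The statement you were asked to prove is labelled a \emph{Vermutung} (conjecture) in the paper, not a theorem: the paper explicitly remarks immediately before this conjecture that ``Selbst die Teilaussage, dass Geck-$W$-Graphen immer balanciert sind, ist allerdings noch unbewiesen im allgemeinen Fall.'' There is no proof of this statement in the paper to compare against. What the paper \emph{does} prove is a conditional result --- that $(\spadesuit)$, $(\clubsuit)$, and Gyoja's conjecture for $K\OmegaGJ(W,S,L)$ together imply the Geck--M\"uller conjecture (and hence, a fortiori, the Geck--Jacon conjecture) --- and then verifies Gyoja's conjecture for $I_2(m)$, $A_3$, $A_4$, and $B_3$ via the explicit quiver presentation of $\OmegaGy$.

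To your credit, your proposal does not claim to close the gap: you correctly identify the natural attack (construct an invariant bilinear form adapted to the $W$-graph basis, then invoke the criterion from \ref{balanced_reps:invariant_blf1}), correctly locate the obstruction (showing that the reduced form modulo $\mathfrak{m}$ is actually nondegenerate, not merely symmetric, is not controlled by the per-generator degree bound in the multi-parameter case), and correctly conclude that a uniform proof would likely require Gyoja's conjecture or a genuinely new structural input. This diagnosis is consistent with the paper's own treatment. One point worth tightening: the interplay you sketch between $\delta$ and $\ast$ on $K\OmegaGJ$ is suggestive, but the ``controllable twist'' by the sign character composed with inversion is not spelled out, and in the paper's framework $\ast$ lives on $H$ while $\delta$ lives on the much larger $W$-graph algebra, so a Schur-type uniqueness argument comparing the two needs more care (Schur only gives uniqueness up to scalar on the $H$-module, and the two forms transform under different algebras). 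As written, this is an informed discussion of the difficulty, not a proof --- which is appropriate, since none exists in the literature or in the paper you were given.
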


\begin{remark}
Es stellt sich nun heraus, dass die Richtigkeit von Gyojas Vermutung die Gültigkeit der Geck"=Müller"=Vermutung nach sich zieht:
\end{remark}

\begin{theorem}
\index{terms}{Vermutung!von Gyoja}\index{terms}{Vermutung!von Geck-Müller}
Es gelten $(\spadesuit)$ und $(\clubsuit)$. Ist Gyojas Vermutung für $K\OmegaGJ(W,S,L)$ richtig, so ist auch die Geck"=Müller"=Vermutung richtig.
\end{theorem}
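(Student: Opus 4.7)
The plan is to chase an irreducible Geck-$W$-graph $\omega\colon KH\to K^{d\times d}$ through the commutative triangle formed by $\iota$, $q$ and $\phi$. By the correspondence between (finite) Geck-$W$-graphen and finitely generated discrete $K\OmegaGJ$-Moduln established earlier, $\omega$ is the restriction along $\iota$ of a (unique) representation $\widetilde{\omega}\colon K\OmegaGJ\to K^{d\times d}$. Since every $K\OmegaGJ$-submodule of $K^d$ is in particular a $KH$-submodule via $\iota$, and $\omega$ is irreducible by assumption, $\widetilde{\omega}$ is itself irreducible as a $K\OmegaGJ$-representation.

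Now Gyoja's conjecture enters. Under the hypothesis, $q$ induces an isomorphism $\overline{q}\colon K\OmegaGJ/\rad(K\OmegaGJ)\xrightarrow{\sim} KJ$. Any irreducible representation is annihilated by the Jacobson radical, so $\widetilde{\omega}$ factors through the semisimple quotient and therefore through $q$: there exists $f\colon KJ\to K^{d\times d}$ with $\widetilde{\omega}=f\circ q$. Combining this with the factorization $\phi=q\circ\iota$ established in the previous section yields
\[
	\omega = \widetilde{\omega}\circ\iota = f\circ q\circ\iota = f\circ\phi.
\]

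At this stage Lemma \ref{lusztig_hom:balanced_reps} closes the argument: since $\omega$ is irreducible and factorizes as $f\circ\phi$, the lemma (which requires only $(\clubsuit)$, already part of our hypotheses) forces $\omega$ to be balanced and identifies $f$ with the matrix of leading coefficients $\overline{\omega}$. Thus $\omega=\overline{\omega}\circ\phi$, which by the equivalent reformulation in Lemma \ref{h_vs_j:conj_class} together with Remark \ref{CellAlg:Hecke_cell_mod} is exactly the assertion of the Geck-M\"uller conjecture.

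The entire argument uses Gyoja's conjecture at a single point, namely to descend $\widetilde{\omega}$ along $q$; all remaining steps are formal, relying on the translation between Geck-$W$-Graphen and $K\OmegaGJ$-Moduln, the factorization $\phi=q\circ\iota$, and the characterization of balanced representations factoring through $\phi$. Consequently no serious obstacle is to be expected inside this implication -- it is essentially a bookkeeping corollary of the machinery developed in the preceding sections, and its main content lies in observing that irreducibility of $\omega$ transfers to irreducibility of $\widetilde{\omega}$, so that the semisimple-quotient hypothesis of Gyoja can be applied directly.
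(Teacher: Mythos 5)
Your argument is close to the paper's but has a genuine gap at the decisive step. You correctly produce a factorization $\omega = f\circ\phi$ with $f\colon KJ\to K^{d\times d}$, and then invoke Lemma~\ref{lusztig_hom:balanced_reps} to conclude. But that lemma asks for a matrix representation $f\colon J\to F^{d\times d}$ --- the matrices $f(t_w)$ must have entries in $F$, not merely in $K=F(\Gamma)$. Inspecting the lemma's proof makes this essential: the computation reduces $v^{a_\lambda}h_{w,d,z}$ modulo $\mathfrak{m}$ while keeping the factors $n_d\in F$ and $f(t_z)\in F^{d\times d}$ fixed, and this only works because those factors are constants with respect to the valuation. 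If $f(t_z)$ is an arbitrary element of $K^{d\times d}$ the reduction step breaks down. Your appeal to "only $(\clubsuit)$" overlooks this integrality hypothesis.

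Closing the gap requires two observations the paper supplies explicitly. First, the Geck-$W$-graph has edge weights in $F[\Gamma]$, so the representation $K\OmegaGJ_G\to K^{\mathfrak{C}\times\mathfrak{C}}$ already restricts to $F\OmegaGJ_G\to F^{\mathfrak{C}\times\mathfrak{C}}$. Second, $q\colon\OmegaGJ_G\to J$ is defined over $\IQ_W\subseteq F$, and --- using the remark following Gyoja's conjecture on the monotonicity of the radical codimension under field extension --- Gyoja's conjecture over $K$ forces $\overline{q}\colon F\OmegaGJ_G/\rad(F\OmegaGJ_G)\to FJ$ to be an isomorphism over $F$ as well. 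Combining these shows that $f=(\text{representation})\circ\overline{q}^{-1}$ actually lands in $F^{d\times d}$, which is what Lemma~\ref{lusztig_hom:balanced_reps} needs. The rest of your argument (passing to $\widetilde\omega$, irreducibility via $\iota$, descent through the semisimple quotient) matches the paper.
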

\begin{proof}
Es sei $(\mathfrak{C},I,m)$ ein endlicher Geck-$W$"~Graph mit Kantengewichten in $F[\Gamma]$ und die zugehörige Matrixdarstellung $\omega: KH\to K^{\mathfrak{C}\times\mathfrak{C}}$ sei irreduzibel.

Dann faktorisiert $\omega$ als Darstellung eines endlichen Geck-$W$"~Graphen durch ${KH \xrightarrow{\iota} K\OmegaGJ_G}$ für eine hinreichend große, endliche Teilmenge $G\subseteq\Gamma$ und definiert daher auch eine einfache Darstellung von $K\OmegaGJ_G$. Wir erhalten daher das Diagramm \ref{fig:w_graphs_fact_over_phi}.
\begin{figure}[ht]
\centering
	\begin{tikzpicture}
	\matrix (m) [matrix of math nodes, row sep=3em,
	column sep=2.5em, text height=1.5ex, text depth=0.25ex]
	{
		K^{\mathfrak{C}\times\mathfrak{C}} & & K\OmegaGJ_G/\rad(K\OmegaGJ_G) \\
		 & K\OmegaGJ_G & \\
		KH & & KJ \\
	};
	\path[->,font=\scriptsize]
		(m-1-3) edge (m-1-1)
		(m-1-3) edge node[right]{$\isomorphic$} node[left]{$\overline{q}$} (m-3-3)
		(m-2-2) edge (m-1-1)
		(m-2-2) edge (m-1-3)
		(m-2-2) edge node[desc]{$q$} (m-3-3)
		(m-3-1) edge node[left]{$\omega$} (m-1-1)
		(m-3-1) edge node[desc]{$\iota$} (m-2-2)
		(m-3-1) edge node[desc]{$\phi$} node[below]{$\isomorphic$} (m-3-3);
	\end{tikzpicture}
	\caption{Faktorisierung von $\omega$ über $\phi$.}
	\label{fig:w_graphs_fact_over_phi}
\end{figure}

Das untere Dreieck kommutiert nach Konstruktion von $q$. Das rechte Dreieck kommutiert, weil $KJ$ halbeinfach ist. Das linke Dreieck kommutiert nach Konstruktion. Das obere Dreieck kommutiert, weil $\omega$ als $K\OmegaGJ_G$"~Modul einfach ist.

Weil $\smash{K\OmegaGJ_G/\rad(K\OmegaGJ_G) \xrightarrow[\isomorphic]{\overline{q}} KJ}$ ein Isomorphismus ist, faktorisiert $\omega$ also als
\[\smash[t]{KH \xrightarrow{\phi} \underbrace{KJ \xrightarrow{\overline{q}^{-1}} K\OmegaGJ_G/\rad(K\OmegaGJ_G) \to K^{\mathfrak{C}\times\mathfrak{C}}}_{=:f}.}\]

Das rechte Dreieck in Diagramm \ref{fig:w_graphs_fact_over_phi} ist schon über $F$ definiert, da $\OmegaGJ_G\xrightarrow{q} J$ schon über $\IQ_W\subseteq F$ definiert ist. Der $W$"~Graph hat Kantengewichte in $F[\Gamma]$, d.\,h. das obere Dreieck ist auch über $F$ definiert (siehe Diagramm \ref{fig:w_graphs_fact_over_phi2}). Beides zusammen liefert, dass $f(t_w)$ nicht nur in $K^{\mathfrak{C}\times\mathfrak{C}}$, sondern schon in $F^{\mathfrak{C}\times\mathfrak{C}}$ liegt für alle $w\in W$. Aus \ref{lusztig_hom:balanced_reps} folgt dann, dass $\omega$ balanciert ist und als $\overline{\omega}\circ\phi$ faktorisiert.\qedhere

\begin{figure}[ht]
\centering
	\begin{tikzpicture}
	\matrix (m) [matrix of math nodes, row sep=3em,
	column sep=2.5em, text height=1.5ex, text depth=0.25ex]
	{
		F^{\mathfrak{C}\times\mathfrak{C}} & & F\OmegaGJ_G/\rad(F\OmegaGJ_G) \\
		 & F\OmegaGJ_G & \\
		 & & FJ \\
	};
	\path[->,font=\scriptsize]
		(m-1-3) edge (m-1-1)
		(m-1-3) edge node[right]{$\isomorphic$} node[left]{$\overline{q}$} (m-3-3)
		(m-2-2) edge (m-1-1)
		(m-2-2) edge (m-1-3)
		(m-2-2) edge node[desc]{$q$} (m-3-3);
	\end{tikzpicture}
	\caption{$f$ schickt $FJ$ nach $F^{\mathfrak{C}\times\mathfrak{C}}$.}
	\label{fig:w_graphs_fact_over_phi2}
\end{figure}
\end{proof}

\begin{remark}
\index{terms}{Gröbner-Basis}\index{terms}{Darstellung!balancierte}\index{terms}{Vermutung!von Geck-Jacon}\index{terms}{Vermutung!von Geck-Müller}\index{terms}{Vermutung!$W$-Graph-Zerlegungs-}
Um Balanciertheit algorithmisch zu testen, kann man direkt die Definition verwenden. So kann die Vermutung von Geck"=Jacon für jeden konkreten $W$"~Graphen computergestützt getestet werden.

Lemma \ref{h_vs_j:conj_class} und Beobachtung \ref{CellAlg:Hecke_cell_mod} zeigen uns auch eine algorithmische Möglichkeit, die Vermutung von Geck"=Müller für jeden konkreten $W$"~Graphen zu überprüfen: Man berechne $\overline{\omega}\circ\phi$ und vergleiche mit $\omega$.

Ob auch Gyojas Vermutung algorithmisch zu überprüfen ist, ist mir indes nicht bekannt. Eine Analyse aller einfachen Moduln erscheint aussichtslos, da a priori nicht einmal klar ist, dass es nur endlich viele einfache $K\OmegaGJ$"~Moduln gibt. Wir haben bereits gesehen, dass bereits für $\abs{S}=1$ die Algebra $K\OmegaGJ$ unendlichdimensional sein kann. Daher muss schon in diesem Fall ein nichttriviales Argument her, um so eine Endlichkeitsaussage zu beweisen.
	
Im Einparameterfall, also $k\OmegaGJ=k\OmegaGy$, ist es einfacher, solche Ergebnisse zu erzielen. Im nächsten Abschnitt werden wir eine neue Präsentation durch Erzeuger und Relationen der Algebra $\OmegaGy$ herleiten. In einigen Fällen lässt sich mit deren Hilfe beispielsweise beweisen, dass $\IQ_W\OmegaGy$ endlichdimensional ist. Wenn man bereits weiß, dass es sich um eine endlichdimensionale Algebra handelt, könnten Techniken aus der Theorie nichtkommutativer Gröbner"=Basen (siehe z.\,B. \cite{mora1994introduction} und \cite{green1993introduction} für eine Einführung) es ermöglichen, die Kodimension des Radikals zu bestimmen. Dann könnte Gyojas Vermutung tatsächlich algorithmisch angreifbar werden, etwa durch die Ideen in \cite{king2012loewy}.
\end{remark}
\section{\texorpdfstring{$\OmegaGy$}{Gyojas W-Graph-Algebra} als Pfadalgebra-Quotient}

\begin{remark}
In Lemma \ref{wgraph_alg:E_I_and_X_IJ} wurde gezeigt, dass $\Xi_G$ für jedes $G\subseteq\Gamma$ eine Pfadalgebra ist. Wir können also $\Omega$ und $\OmegaGJ_G$ nicht nur als Quotienten einer freien Algebra betrachten (was ja schon die Definition durch Erzeuger und Relationen impliziert), sondern auch als Quotient einer speziellen Pfadalgebra. 

Zunächst stellt sich natürlich die Frage, ob wir explizitere Relationen für den Quotienten ${\Xi\xrightarrow{\isomorphic}\Xi_{\Set{0}}\twoheadrightarrow\OmegaGy}$ bestimmen können als bisher, wenn wir sie mit Hilfe der Pfadalgebra ausdrücken. Dieser Abschnitt hat genau dieses Ziel.
\end{remark}

\subsection{Relationen}

\begin{lemma}[vgl. {\citep[Prop.\,3.1]{Stembridge2008admissble}}]\label{wgraph_alg:braid_commutator}
\index{terms}{Zopfkommutator}\index{terms}{Zopfrelationen}
\index{symbols}{taur@$\tau_r$}
Definiere $\tau_r\in\IZ[X]$ durch folgende Rekursion:
\[\left\lbrace
\begin{aligned}
	\tau_{-1} &= 0 \\
	\tau_0 &= 1 \\
	\tau_r &= X\tau_{r-1}-\tau_{r-2}
\end{aligned}\right.\]

Mit dieser Definition gilt:
\begin{enumerate}
	\item Für $r\in\IN$ ist $\tau_r$ normiert und vom Grad $r$. Insbesondere ist $\Set{\tau_0,\ldots,\tau_r}$ eine $\IZ$"~Basis von $\Set{f\in\IZ[X] | \deg(f)\leq r}$.
	\item $\tau_r$ ist ein gerades Polynom für gerade $r$ und ein ungerades Polynom sonst, d.\,h. $\tau_r(-X) = (-1)^r \tau_r(X)$.
	\item $A$ sei ein Ring und $x, y\in A$ seien zwei Elemente, die die Gleichung $T^2 = 1+\zeta T$ erfüllen für ein festes $\zeta\in A$. Dann gilt für ihre Zopfkommutatoren
	\[\Delta_{r+1}(x,y) = (-1)^r\tau_r(x+y-\zeta)\cdot(x-y).\]
\end{enumerate}
\end{lemma}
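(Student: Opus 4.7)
The plan is to prove the three assertions essentially in order, with part (c) resting on a single recursion identity for braid commutators that reduces to the quadratic relation.

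For part (a), I would run a straightforward induction on $r$. Starting from $\tau_0 = 1$ (monic of degree $0$) and $\tau_1 = X$ (monic of degree $1$), the recursion $\tau_r = X\tau_{r-1} - \tau_{r-2}$ multiplies a monic polynomial of degree $r-1$ by $X$, producing a monic degree-$r$ polynomial, and subtracts something of degree $r-2$, which cannot affect the leading term. Triangularity with respect to the standard monomial basis then yields the basis statement immediately.

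Part (b) is another one-line induction. Assuming the parity formula for $\tau_{r-1}$ and $\tau_{r-2}$, one computes
\[
\tau_r(-X) = -X\,\tau_{r-1}(-X)-\tau_{r-2}(-X) = (-1)^r X\,\tau_{r-1}(X)+(-1)^{r+1}\tau_{r-2}(X) = (-1)^r\tau_r(X).
\]

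Part (c) is the substantive step. I would first reduce the claim to the three-term recursion
\[
\Delta_{r+1}(x,y) \;=\; -(x+y-\zeta)\,\Delta_r(x,y)\;-\;\Delta_{r-1}(x,y),
\]
since this recursion translates directly into the defining recursion of $\tau_r$ once one factors out $(x-y)$, checks the base cases $\Delta_0 = 0$ and $\Delta_1 = x-y$, and bookkeeps the signs (using $(-1)^{r-2}=(-1)^r$). The base case $r=1$ amounts to the small computation that $\Delta_2(x,y) = xy - yx$ equals $-(x+y-\zeta)(x-y)$ after the two substitutions $x^2 = 1+\zeta x$ and $y^2 = 1+\zeta y$, which I already verified and which shows why the quadratic relation is exactly the right input. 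For the inductive step, I would introduce the words $w_r(x,y) := \underbrace{xyx\cdots}_{r}$, rewrite $\Delta_r = w_r(x,y) - w_r(y,x)$, and expand
\[
(x+y)\Delta_r(x,y) \;=\; x\,w_r(x,y) - y\,w_r(y,x) \;-\;\Delta_{r+1}(x,y),
\]
using the identities $x\,w_r(y,x) = w_{r+1}(x,y)$ and $y\,w_r(x,y) = w_{r+1}(y,x)$ which hold purely by word length. The terms $x\,w_r(x,y)$ and $y\,w_r(y,x)$ then collapse via $w_r(x,y) = x\,w_{r-1}(y,x)$ together with $x^2 = 1+\zeta x$ to give $w_{r-1}(y,x) + \zeta\,w_r(x,y)$ and symmetrically, yielding $x\,w_r(x,y) - y\,w_r(y,x) = -\Delta_{r-1}(x,y) + \zeta\,\Delta_r(x,y)$. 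Substituting and rearranging delivers the desired recursion.

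The only genuine obstacle is keeping the non-commutative computation clean: since $\tau_r(x+y-\zeta)$ is a polynomial in $x+y-\zeta$ and $(x-y)$ stands on the right, one must be careful that the inductive identity as stated is in fact a left-multiplication by $-(x+y-\zeta)$, which it is, so associativity is all that is needed. Everything else is bookkeeping of signs and base cases.
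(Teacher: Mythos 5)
Your proposal is correct and follows essentially the same route as the paper: you establish the three-term recursion $\Delta_{r+1}(x,y) = -(x+y-\zeta)\Delta_r(x,y) - \Delta_{r-1}(x,y)$ by expanding $(x+y)\Delta_r(x,y)$ and applying the quadratic relations $x^2 = 1+\zeta x$, $y^2 = 1+\zeta y$, exactly as the paper does, and then induct from $\Delta_0=0$, $\Delta_1 = x-y$. (Incidentally, your sign on the $\Delta_{r-1}$ term in the recursion is the correct one; the final displayed line of the paper's own argument contains a small sign slip there.)
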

\begin{proof}
Dass $\tau_r$ normiert und vom Grad $r$ ist, sieht man der Rekursion sofort an. Ebenso folgt b. sofort aus der Rekursionsformel.

c. ist für $r=-1$ und $r=0$ klar. Damit ist der Induktionsanfang gemacht. Es gilt weiter:
\begin{align*}
	(x+y)\Delta_{r+1}(x,y) &= x^2 \underbrace{y x \ldots}_{r} - \underbrace{x y x \ldots}_{r+2} + \underbrace{y x y\ldots}_{r+2} - y^2 \underbrace{x y\ldots}_{r}  \\
	&= 1\cdot\underbrace{y x \ldots}_{r} - 1\cdot\underbrace{x y \ldots}_{r} + \zeta x \underbrace{y x \ldots}_{r} - \zeta y \underbrace{x y \ldots}_{r} \\
	&\hspace{1em} -\big(\underbrace{x y x \ldots}_{r+2} - \underbrace{y x y\ldots}_{r+2}\big) \\
	&= -\Delta_r(x,y) + \zeta\Delta_{r+1}(x,y) - \Delta_{r+2}(x,y) \\
	\implies \Delta_{r+2}(x,y) &= (-1)\big((x+y-\zeta)\Delta_{r+1}(x,y) - \Delta_r(x,y)\big)
\end{align*}
Daraus folgt die Aussage.
\end{proof}

\begin{remark}
Die $\tau_r$ sind nur leicht modifizierte Tschebyscheff-Polynome zweiter Art. Es gilt genauer: $U_n(X) = \tau_n(2X)$ (siehe \citep[Ch.\,22]{abramowitz1964handbook}).
\end{remark}

\begin{remark}
Gewappnet mit diesem Rüstzeug können wir es in Angriff nehmen, die Relationen von $\Xi\to\OmegaGy$ auszurechnen.

\medbreak
Betrachten wir dazu die $\IZ[\Gamma]$"~Algebra $V:=\IZ[\Gamma]\otimes_\IZ \Xi$ und wählen $s,t\in S$ beliebig, aber fest. Dann definieren wir
\[V_{00} := \bigoplus_{\substack{I\subseteq S \\ s\notin I,t\notin S}} E_I V,\quad
  V_{01} := \bigoplus_{\substack{I\subseteq S \\ s\in I,t\notin S}} E_I V,\quad
  V_{10} := \bigoplus_{\substack{I\subseteq S \\ s\notin I,t\in I}} E_I V,\quad
  V_{11} := \bigoplus_{\substack{I\subseteq S \\ s\in I, t\in I}} E_I V\]
und beschreiben die Wirkung von Elementen aus $\Xi$ auf $V$ entsprechend dieser Zerlegung $V=V_{00}\oplus V_{01}\oplus V_{10}\oplus V_{11}$ durch $4\times 4$-Matrizen.

\medbreak
Die Matrizen von $\iota(T_s)=-v_s^{-1}e_s + v_s(1-e_s)+x_s$ und analog auch von $\iota(T_t)$ sind durch
\[\iota(T_s) = \begin{pmatrix}
v_s & & & \\
B_1 & -v_s^{-1} & A_1 & \\
& & v_s & \\
D_1 & & C_1 & -v_s^{-1}
\end{pmatrix}\quad\text{und}\quad
\iota(T_t)=\begin{pmatrix}
v_t & & & \\
& v_t & & \\
B_2 & A_2 & -v_t^{-1} & \\
D_2 & C_2 & & -v_t^{-1}
\end{pmatrix}\]
gegeben, wobei
\[A_1 = \sum_{\substack{I,J\subseteq S \\ s\in I, s\notin J \\ t\notin I, t\in J}} X_{IJ}^s\quad\text{und}\quad
  A_2 = \sum_{\substack{I,J\subseteq S \\ s\notin I, s\in J \\ t\in I, t\notin J}} X_{IJ}^t,\]
\[B_1 = \sum_{\substack{I,J\subseteq S \\ s\in I, s\notin J \\ t\notin I, t\notin J}} X_{IJ}^s\quad\text{und}\quad
  B_2 = \sum_{\substack{I,J\subseteq S \\ s\notin I, s\notin J \\ t\in I, t\notin J}} X_{IJ}^t,\]
\[C_1 = \sum_{\substack{I,J\subseteq S \\ s\in I, s\notin J \\ t\in I, t\in J}} X_{IJ}^s\quad\text{und}\quad
  C_2 = \sum_{\substack{I,J\subseteq S \\ s\in I, s\in J \\ t\in I, t\notin J}} X_{IJ}^t\quad\text{sowie}\]
\[D_1 = \sum_{\substack{I,J\subseteq S \\ s\in I, s\notin J \\ t\in I, t\notin J}} X_{IJ}^s\quad\text{und}\quad
  D_2 = \sum_{\substack{I,J\subseteq S \\ s\in I, s\notin J \\ t\in I, t\notin J}} X_{IJ}^t\]
ist.
\end{remark}

\begin{theorem}[Relationen für $\Xi\to\OmegaGy$ im Einparameterfall]\label{wgraph_alg:relations}
\index{terms}{Zopfkommutator}\index{terms}{Zopfrelationen}\index{terms}{Pfadalgebra}\index{terms}{W-Graph@$W$-Graph!-Algebra}\index{terms}{Einparameterfall}
\index{symbols}{alphabetagamma@$(\alpha), (\beta), (\gamma)$}
Sei $(W,S,L)$ eine Coxeter"=Gruppe"=mit"=Gewicht.

Für $k\in\IN$, $s,t\in S$ und $I,J\subseteq S$ definiere dann die \udot{Pfad-Summen} als
\[P_{IJ}^k(s,t) :=  E_I \underbrace{x_s x_t x_s\ldots}_{k\,\text{Faktoren}} E_J = \begin{cases} 0 & k=0, I\neq J \\ E_I & k=0, I=J \\ \sum\limits_{I_1,\ldots,I_{k-1}\subseteq S} X_{I I_1}^s X_{I_1 I_2}^t X_{I_2 I_3}^s \ldots X_{I_{k-1} J}^s & k>0, 2\nmid k \\ \sum\limits_{I_1,\ldots,I_{k-1}\subseteq S} X_{I I_1}^s X_{I_1 I_2}^t X_{I_2 I_3}^s \ldots X_{I_{k-1} J}^t & k>0, 2\mid k \end{cases}.\]

Ist $m:=\ord(st)<\infty$ und $L(s)=L(t)$, so sind die Relationen von $\OmegaGy(W,S,L)$, die von der Zopf-Relation $\Delta_m(T_s,T_t)=0$ herkommen, äquivalent zu den folgenden:
\begin{enumerate}
	\item[$(\alpha)$] Die Relationen:
	\[0 = P_{IJ}^{m-1}(s,t) + a_{m-2} P_{IJ}^{m-2}(s,t) + \ldots + a_2 P_{IJ}^2(s,t) + a_1 P_{IJ}^1(s,t) + a_0 P_{IJ}^0(s,t)\]
	Dabei seien $a_i$ die Koeffizienten von $\tau_{m-1}\in\IZ[Y]$, d.\,h.
	\[\tau_{m-1}(Y) = Y^{m-1} + a_{m-2} Y^{m-2} + \ldots + a_2 Y^2 + a_1 Y^1 + a_0.\]
	Dabei laufen $I,J$ über alle Teilmengen mit $s\in I, t\notin I$, die außerdem $s\in J, t\notin J$ für ungerade $m$ bzw. $s\notin J, t\in J$ für gerade $m$ erfüllen.
	\item[$(\beta)$] Für alle $I,J\subseteq S$ mit $s,t\in I\setminus J$ die Relationen:
	\[X_{IJ}^s = X_{IJ}^t\]
	\item[$(\gamma)$] Für alle $I,J\subseteq S$ mit $s,t\in I$ und $s,t\notin J$ die Relationen:
	\[\forall 2\leq r\leq m: P_{IJ}^r(s,t) = P_{IJ}^r(t,s)\]
\end{enumerate}
Insbesondere wird im Einparameterfall der Kern von $\Xi\to\OmegaGy$ von diesen Relationen für alle $s,t\in S$ mit $\ord(st)<\infty$ erzeugt.
\end{theorem}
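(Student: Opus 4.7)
My plan is to invoke Lemma \ref{wgraph_alg:braid_commutator} to rewrite the braid relation $\Delta_m(\iota(T_s), \iota(T_t)) = 0$ as $\tau_{m-1}(P) \cdot Q = 0$, where
\[ P := \iota(T_s) + \iota(T_t) - (v - v^{-1}), \qquad Q := \iota(T_s) - \iota(T_t), \]
and then analyze this identity block by block using the decomposition $V = V_{00} \oplus V_{01} \oplus V_{10} \oplus V_{11}$ from the remark preceding the theorem. In the one-parameter case $v_s = v_t = v$, a direct calculation gives $P = (v+v^{-1})(1 - e_s - e_t) + (x_s + x_t)$, so that setting $u := v+v^{-1}$ its diagonal part (as a $4 \times 4$ block matrix) is $\mathrm{diag}(u, 0, 0, -u)$ while its off-diagonal part comes entirely from $x_s + x_t$. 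Similarly $Q = -u(e_s - e_t) + (x_s - x_t)$, with diagonal $\mathrm{diag}(0, -u, u, 0)$.

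The key structural observation is the following: $V_{00}$ and $V_{11}$ behave as ``absorbing'' states under $P$. From $V_{11}$ there are no outgoing $X$-arrows, hence $P$ acts there as $-u\cdot \mathrm{id}$, and on $V_{01}, V_{10}$ the diagonal part of $P$ vanishes, so any $D$-factor in a product $P^n$ is forced to sit at a position where the state lies in $V_{00}$ or $V_{11}$. Consequently the only paths contributing to the ``middle'' blocks $V_{01} \leftrightarrow V_{01}$, $V_{01} \leftrightarrow V_{10}$, etc., are purely alternating products of $x_s$ and $x_t$ (which are exactly the path sums $P^k_{IJ}(s,t)$), while paths from $V_{00}$ to $V_{11}$ split into a short alternating $X$-piece flanked by runs of $D$-factors each contributing $\pm u$.

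With this in hand, the three families of relations can be read off block by block. On the block $V_{01} \to V_{01}$ (if $m$ is odd) respectively $V_{01} \to V_{10}$ (if $m$ is even), the parities built into $\tau_{m-1}$ (Lemma~\ref{wgraph_alg:braid_commutator}.b) and the vanishing of $\tau_{m-1}(P)_{V_{01}, V_{11}}$ etc.\ imply that only the pure alternating-$X$ contributions survive, multiplied on the right by the diagonal $\mp u$ of $Q$; cancelling this non-zero-divisor leaves exactly the identity $\sum_k a_k P^k_{IJ}(s,t) = 0$, which is relation $(\alpha)$. On the block $V_{11} \to V_{00}$, one gets a Laurent polynomial in $v$ whose coefficients must all vanish. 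The top $v$-degree coefficient reduces to a multiple of $D_1 - D_2$, i.e.\ to the pointwise differences $X_{IJ}^s - X_{IJ}^t$ for $s, t \in I \setminus J$, giving $(\beta)$; the remaining $v$-homogeneous components, after using $(\beta)$ to identify $D_1 = D_2$, collapse exactly to the commutativity identities $P^r_{IJ}(s,t) = P^r_{IJ}(t,s)$ for $2 \le r \le m$, which is $(\gamma)$. For the converse direction, substituting $(\alpha), (\beta), (\gamma)$ back into $\tau_{m-1}(P) Q$ and running the same block analysis in reverse yields vanishing of every block.

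The main obstacle is the precise bookkeeping on the block $V_{11} \to V_{00}$: there the absorbed paths into $V_{11}$ carry arbitrarily many factors of $-u$, the escape paths through $V_{01}, V_{10}$ carry none, and the factor $Q$ contributes both the diagonal $(D_1 - D_2)$ piece and lateral pieces $B_1, -B_2$ going through the middle blocks. Showing that after separating $\Gamma$-homogeneous components the total contribution splits cleanly as $(\beta)$ (from the highest $v$-degree) plus $(\gamma)$-type relations (from lower degrees) requires a careful combinatorial identification of the coefficients appearing in $\tau_{m-1}$ with the recursively-generated coefficients in the expansion; the remaining blocks ($V_{00} \to V_{00}$, $V_{11} \to V_{11}$, $V_{00} \to V_{11}$, the lateral $V_{00} \to V_{01}$ type blocks, and their mirror images under $\delta$) should then reduce to formal consequences of $(\alpha)$--$(\gamma)$ together with the already-used antiautomorphism symmetry $\delta$ from Lemma~\ref{wgraph_alg:duality}.
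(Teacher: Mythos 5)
Your proposal takes exactly the paper's route: the same block decomposition $V_{00}\oplus V_{01}\oplus V_{10}\oplus V_{11}$, the same reduction via Lemma~\ref{wgraph_alg:braid_commutator} to analyzing $\tau_{m-1}(P)Q$, and the same reading-off of $(\alpha)$ from the middle block, $(\beta)$ from the top $v$-degree of the bottom-left block, and $(\gamma)$ from its remaining $\Gamma$-homogeneous components. Your structural observation that $V_{00}$ is a source and $V_{11}$ a sink for the $x$-arrows, so that middle-block paths are pure alternating $x_s,x_t$-chains, is correct and is the informal content of the paper's block formula.

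However, the step you flag as the ``main obstacle'' is precisely where the paper does its real work and where your proposal is not yet a proof. The paper establishes by induction (using the recursion $\tau_{r+1}=X\tau_r-\tau_{r-1}$) the closed $3\times 3$ block formula for $\Delta_{r+1}$, whose bottom-left entry is
\[X_r = \sum_{i=0}^{r-1}(-1)^i C\,\tau_i(z)\,\tau_{r-1-i}(A)\,JB + (-1)^r\tau_r(z)\,(D_1-D_2),\]
with $z=v+v^{-1}$. Extracting $(\beta)$ and $(\gamma)$ from this block hinges on knowing that the mixed-degree terms organize exactly as $C\tau_i(z)\tau_{r-1-i}(A)JB$ and on the $\IZ$-linear independence of $\tau_0,\ldots,\tau_{m-2}$ (so that the $CA^kJB$ for $0\le k\le m-2$ can be recovered after peeling off $D_1-D_2$). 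Your proposal asserts that this ``splits cleanly'' but does not derive the coefficient pattern; without the induction you cannot justify that no cross-terms mixing the diagonal runs and the alternating $X$-chains survive. Also, the phrase ``cancelling this non-zero-divisor'' for $u=v+v^{-1}$ is slightly misleading: since the relations defining $\OmegaGy$ are obtained by separating $\Gamma$-homogeneous components, the argument is really ``look at the coefficient of $v^{+1}$'', not division in $\IZ[v^{\pm 1}]\otimes\Xi$. The conclusion is correct, but you should phrase it as a $\Gamma$-degree argument, which is what the paper does throughout Step~2.
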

\begin{proof}
Schritt 1: Wir setzen zur Abkürzung $v:=v_s=v_t$ sowie $z:=v+v^{-1}$. Wir behaupten, dass mit den obigen Bezeichnungen für alle $r\in\IN$
\begin{equation}
\Delta_{r+1}(\iota(T_s),\iota(T_t)) = (-1)^r\begin{pmatrix}
0 & 0 & 0 \\
\tau_r(A) JB & \tau_r(A)J(A-z) & 0 \\
X_r &  -C\tau_r(A)J & 0 
\end{pmatrix}
\label{eq:wgraph_alg:braid_commutator2}\tag{$\ast$}
\end{equation}
gilt, wobei
\[A:=\begin{pmatrix} 0 & A_1 \\ A_2 & 0 \end{pmatrix},\quad B:=\begin{pmatrix} B_1 \\ B_2 \end{pmatrix},\quad C:=\begin{pmatrix} C_2 & C_1 \end{pmatrix},\quad J:=\begin{pmatrix} 1 & 0 \\ 0 & -1 \end{pmatrix} \quad\text{und}\]
\[X_r := \sum_{i=0}^{r-1} (-1)^iC\tau_i(z)\tau_{r-1-i}(A)JB+(-1)^r\tau_r(z)(D_1-D_2)\]
sei.

\medbreak
Um diese Behauptung zu beweisen, definieren wir
\begin{align*}
	E &:= \iota(T_s)+\iota(T_t)-(v-v^{-1}) = \begin{pmatrix}
		z & 0 & 0 \\
		B & A & 0 \\
		D_1+D_2 & C & -z
	\end{pmatrix} \quad\text{und} \\
	F &:= \iota(T_s)-\iota(T_t) = \begin{pmatrix}
		0 & 0 & 0 \\
		JB & J(A-z) & 0 \\
		D_1-D_2 & -CJ & 0 
	\end{pmatrix}.
\end{align*}
Aus Lemma \ref{wgraph_alg:braid_commutator} folgt $\Delta_{r+1}(\iota(T_s),\iota(T_t))=(-1)^r \tau_r(E)F$. Wir müssen also nur zeigen, dass $\tau_r(E)F$ der Matrix in \eqref{eq:wgraph_alg:braid_commutator2} entspricht. Für $r=-1$ und $r=0$ ist das offenkundig. Für den Induktionsschritt gilt
\begin{align*}
	\tau_{r+1}(E)F &= E\tau_r(E)F - \tau_{r-1}(E)F \\
	&\overset{\text{IV}}{=} \begin{pmatrix}
	z & 0 & 0 \\ B & A & 0 \\ D_1+D_2 & C & -z
	\end{pmatrix}\cdot\begin{pmatrix}
	0&0&0 \\ \tau_r(A)JB & \tau_r(A)J(A-z) & 0 \\ X_r & -C\tau_r(A)J & 0
	\end{pmatrix} \\
	&\phantom{\text{= }} - \begin{pmatrix}
	0&0&0 \\ \tau_{r-1}(A)JB & \tau_{r-1}(A)J(A-z) & 0 \\ X_{r-1} & -C\tau_{r-1}(A)J & 0
	\end{pmatrix} \\
	&=\begin{pmatrix}
	0 & 0 & 0 \\ A\tau_r(A)JB-\tau_{r-1}(A)JB & H & 0 \\
	C\tau_r(A)JB-zX_r-X_{r-1} & K & 0
	\end{pmatrix}
\end{align*}
wobei wir die Abkürzungen
\begin{align*}
	H &:= A\tau_r(A)J(A-z)-\tau_{r-1}(A)J(A-z) \quad\text{und} \\
	K &:= C\tau_r(A)J(A-z)+zC\tau_r(A)J+C\tau_{r-1}(A)J
\end{align*}
verwendet haben. An den Positionen $(2,1)$ und $(2,2)$ ist klar, dass das Gewünschte herauskommt. Für Position $(3,2)$ nutzt man $JA=-AJ$ und vereinfacht wie folgt:
\begin{align*}
	K &= C\tau_r(A)JA-C\tau_r(A)Jz+zC\tau_r(A)J+C\tau_{r-1}(A)J \\
	&= -C\tau_r(A)AJ+C\tau_{r-1}(A)J \\
	&= -C\tau_{r+1}(A)J
\end{align*}
Nun bleibt noch die Position $(3,1)$. Wir rechnen wie folgt:
\begin{align*}
	... &= C\tau_r(A)JB-z\sum_{i=0}^{r-1}(-1)^iC\tau_i(z)\tau_{r-1-i}(A)JB-(-1)^r z\tau_r(z)(D_1-D_2) \\
	&\phantom{\text{= }} -\sum_{i=0}^{r-2}(-1)^iC\tau_i(z)\tau_{r-2-i}(A)JB-(-1)^{r-1}\tau_{r-1}(z)(D_1-D_2) \\
	&= C\tau_r(A)JB \\
	&\phantom{\text{= }} +C\Big(\sum_{i=0}^{r-1}(-1)^{i+1}z\tau_i(z)\tau_{r-1-i}(A)-\sum_{i=1}^{r-1}(-1)^{i-1}\tau_{i-1}(z)\tau_{r-1-i}(A)\Big)JB \\
	&\phantom{\text{= }} +(-1)^{r+1}(z\tau_r(z)-\tau_{r-1}(z))(D_1-D_2) \\
	&= C\tau_r(A)JB + C\Big(\sum_{i=0}^{r-1}(-1)^{i+1}(z\tau_i(z)-\tau_{i-1}(z))\tau_{r-1-i}(A)\Big)JB \\
	&\phantom{\text{= }} +(-1)^{r+1}\tau_{r+1}(z)(D_1-D_2) \\
	&= (-1)^0C\tau_0(z)\tau_r(A)JB+C\Big(\sum_{i=0}^{r-1}(-1)^{i+1}\tau_{i+1}(z)\tau_{r-1-i}(A)\Big)JB \\
	&\phantom{\text{= }} +(-1)^{r+1}\tau_{r+1}(z)(D_1-D_2) \\
	&= \sum_{i=0}^r (-1)^i C\tau_i(z)\tau_{r-i}(A)JB+(-1)^{r+1}\tau_{r+1}(z)(D_1-D_2)
\end{align*}
Das zeigt Behauptung \eqref{eq:wgraph_alg:braid_commutator2}.

\bigbreak
Schritt 2: Vereinfachen.

Sei nun $\mathfrak{I}=\ker(\Xi\to\OmegaGy)$ das Ideal, für welches wir uns die ganze Zeit interessieren. Nach Definition wird es von den Koeffizienten vor den einzelnen $v$"=Potenzen in $\Delta_m(\iota(T_s),\iota(T_t))\in\IZ[v^{\pm 1}]\otimes\Xi$ aufgespannt. Wir schauen uns also die vier Terme
\begin{enumerate}
	\item $R_1:=\tau_{m-1}(A)JB$,
	\item $R_2:=\tau_{m-1}(A)J(A-z)$,
	\item $R_3:=\sum_{i=0}^{m-2} (-1)^iC\tau_i(z)\tau_{m-2-i}(A)JB+(-1)^{m-1}\tau_{m-1}(z)(D_1-D_2)$ und
	\item $R_4:=C\tau_{m-1}(A)J$
\end{enumerate}
genauer an. Der höchste Koeffizient in $R_2$ ist $-\tau_{m-1}(A)J$. Das ist in $\mathfrak{I}$ genau dann, wenn $\tau_{m-1}(A)\in\mathfrak{I}$ ist, da $J$ invertierbar ist. Daraus erhalten wir umgekehrt auch, dass $R_1$, $R_2$ und $R_4$ in $\mathfrak{I}$ liegen.

Schauen wir uns $R_3$ genauer an. $\tau_r$ hat den Grad $r$. Der höchste Koeffizient in $R_3$ ist $(-1)^{m-1}(D_1-D_2)$, also erhalten wir $D_1-D_2\in\mathfrak{I}$. $R_3$ ist also genau dann in $\mathfrak{I}$, wenn $D_1-D_2\in\mathfrak{I}$ und $R_3'=\sum_{i=0}^{m-2} (-1)^iC\tau_i(z)\tau_{r-2-i}(A)JB\in\mathfrak{I}$ ist. Indem wir immer wieder auf den höchsten Koeffizient schauen und den Ausdruck iteriert verkürzen, erhalten wir, dass $R_3'$ genau dann in $\mathfrak{I}$ ist, wenn $C\tau_0(A) JB, C\tau_1(A)JB, \ldots, C\tau_{m-2}(A)JB\in\mathfrak{I}$ sind. Da $\tau_0,\ldots,\tau_{m-2}$ eine $\IZ$"~Basis von $\Set{f\in\IZ[Y] | \deg(f)\leq m-2}$ bilden, sind diese Terme in $\mathfrak{I}$ genau dann, wenn $CA^0 JB, CA^1 JB, \ldots, CA^{m-2} JB$ in $\mathfrak{I}$ sind.

Wir erhalten also das Erzeugendensystem
\begin{enumerate}
	\item[$(\alpha)$] $R_\alpha:=\tau_{m-1}(A)$,
	\item[$(\beta)$]  $R_\beta:=D_1-D_2$ und
	\item[$(\gamma)$] $R_{\gamma,k}:=CA^k JB$ für $0\leq k \leq m-2$
\end{enumerate}
für $\mathfrak{I}$.

\bigbreak
Schritt 3: Die Relationen.

Nun zerlegen wir $V$ wieder als $\bigoplus_I E_I V$ und nehmen Relationen $R_\alpha$, $R_\beta$ und $R_{\gamma,k}$ komponentenweise auseinander. Dabei nutzen wir, dass $R$ genau dann in $\mathfrak{I}$ liegt, wenn für alle $I,J\subseteq S$ auch $E_I R E_J$ in $\mathfrak{I}$ ist.

\medbreak
Um $E_I R_\alpha E_J$ zu bestimmen, schauen wir auf $E_I A^k E_J$. Es gilt natürlich $A^0 =1$ und $1=\sum_I E_I$, sodass $E_I A^0 E_J=\delta_{IJ} E_I=P_{IJ}^0$ gilt. Für $k>0$ erhalten wir:
\[A^k = \begin{cases}
\begin{pmatrix}	(A_1 A_2)^{\frac{k}{2}} & 0 \\ 0 & (A_2 A_1)^{\frac{k}{2}} \end{pmatrix} & \text{falls}\,2\mid k \\
\begin{pmatrix}	0 & (A_1 A_2)^{\frac{k-1}{2}} A_1 \\ (A_2 A_1)^{\frac{k-1}{2}} A_2 & 0 \end{pmatrix} & \text{falls}\,2\nmid k
\end{cases}\]
Wir setzen
\[A_1 = \sum_{\substack{I,J\subseteq S \\ s\in I, s\notin J \\ t\notin I, t\in J}} X_{IJ}^s,\quad
  A_2 = \sum_{\substack{I,J\subseteq S \\ s\notin I, s\in J \\ t\in I, t\notin J}} X_{IJ}^t\]
ein und erhalten dafür Terme der Gestalt
\[\sum_{I_0,I_1,\ldots,I_k\subseteq S} X_{I_0 I_1}^s X_{I_1 I_2}^t X_{I_2 I_3}^s \ldots\]
Dabei wird für $A_1 A_2 A_1 \ldots$ nur über solche $I_i$ summiert, die $s\in I_{2i}\setminus I_{2i+1}$ und $t\in I_{2i+1}\setminus I_{2i}$ für alle $i$ erfüllen. Da aber $X_{IJ}^s = 0$ ist, wenn $s\notin I\setminus J$ gilt, sind davon nur die Bedingungen für $I=I_0$ und $I_k=J$ nichttrivial. Wir können also auch einfach über alle möglichen Pfade der Länge $k$ summieren, die zwischen diesen $I$ und $J$ verlaufen. Wir erhalten somit:
\[\underbrace{A_1 A_2 \ldots}_{k} = \begin{cases}
	\sum\limits_{\substack{I,J\subseteq S \\ s\in I, s\in J \\ t\notin I, t\notin J}} P_{IJ}^k(s,t) & \text{falls}\,2\mid k \\
	\sum\limits_{\substack{I,J\subseteq S \\ s\in I, s\notin J \\ t\notin I, t\in J}} P_{IJ}^k(s,t) & \text{falls}\,2\nmid k
	\end{cases}\]
Für das andere Produkt erhalten wir analog:
\[\underbrace{A_2 A_1 \ldots}_{k} = \begin{cases}
	\sum\limits_{\substack{I,J\subseteq S \\ s\notin I, s\notin J \\ t\in I, t\in J}} P_{IJ}^k(t,s) & \text{falls}\,2\mid k \\
	\sum\limits_{\substack{I,J\subseteq S \\ s\notin I, s\in J \\ t\in I, t\notin J}} P_{IJ}^k(t,s) & \text{falls}\,2\nmid k
	\end{cases}\]
Wenn wir dies von links mit $E_I$ und von rechts mit $E_J$ multiplizieren, erhalten wir also entweder $0$ oder $P_{IJ}^k(s,t)$ und $P_{IJ}^k(t,s)$. Das Element $E_I \tau_{m-1}(A) E_J\in\mathfrak{I}$ ist, wenn es nicht Null ist, also genau von der behaupteten Gestalt
\[P_{IJ}^{m-1}(s,t) + a_{m-2} P_{IJ}^{m-2}(s,t) + \ldots + a_2 P_{IJ}^2(s,t) + a_1 P_{IJ}^1(s,t) + a_0 P_{IJ}^0(s,t)\]
wobei $\tau_{m-1}(Y) = Y^{m-1} + a_{m-2} Y^{m-2} + \ldots + a_2 Y^2 + a_1 Y^1 + a_0$ ist, oder es ist gleich der symmetrischen Variante, in der $s$ und $t$ vertauscht sind.

\medbreak
Der zweite Typ von Relationen ist einfacher: $R_\beta$ ist gleich
\[\sum_{\substack{I,J\subseteq S \\ s\in I, s\notin J \\ t\in I, t\notin J}} X_{IJ}^s - X_{IJ}^t.\]
Für $I$ und $J$, die in dieser Summe nicht vorkommen, ist $E_I R_\beta E_J=0$. Für die restlichen Teilmengen ergeben sich die Relatoren $X_{IJ}^s - X_{IJ}^t$.

\medbreak
Kommen wir zum letzten Typ von Relationen $R_{\gamma,k} = CA^k JB$. Wir haben oben bereits die Potenzen von $A$ berechnet und erhalten daraus:
\[CA^kJB = \begin{cases}
C_2 (A_1 A_2)^{\frac{k}{2}} B_1 - C_1 (A_2 A_1)^{\frac{k}{2}} B_2 & \text{falls}\,2\mid k \\
- C_2 (A_1 A_2)^{\frac{k-1}{2}} A_1 B_2 + C_1 (A_2 A_1)^{\frac{k-1}{2}} A_2 B_1 & \text{falls}\,2\nmid k
\end{cases}\]
Wir benutzen nun die Definitionen
\[B_1 = \sum_{\substack{I,J\subseteq S \\ s\in I, s\notin J \\ t\notin I, t\notin J}} X_{IJ}^s,\quad
  B_2 = \sum_{\substack{I,J\subseteq S \\ s\notin I, s\notin J \\ t\in I, t\notin J}} X_{IJ}^t\]
\[C_1 = \sum_{\substack{I,J\subseteq S \\ s\in I, s\notin J \\ t\in I, t\in J}} X_{IJ}^s,\quad
  C_2 = \sum_{\substack{I,J\subseteq S \\ s\in I, s\in J \\ t\in I, t\notin J}} X_{IJ}^t\]
und setzen ein: Falls $s,t\in I$ oder $s,t\notin J$ verletzt ist, ist $E_I CA^k JB E_J=0$, weil $E_I C=0$ bzw. $BE_J=0$ ist. Für die anderen Fälle ist:
\begin{align*}
	E_I CA^kJB E_J &= \sum_{I_0,\ldots,I_k\subseteq S} X_{I I_0}^t X_{I_0 I_1}^s \ldots X_{I_{k-1} I_k}^t X_{I_k J}^s - X_{I, I_0}^s X_{I_0 I_1}^t \ldots X_{I_{k-1} I_k}^s X_{I_k J}^t \\
	&= P_{IJ}^{k+2}(t,s) - P_{IJ}^{k+2}(s,t)
\end{align*}
falls $2\mid k$ und
\begin{align*}
	E_I CA^kJB E_J &= \sum_{I_0,\ldots,I_k\subseteq S} - X_{I I_0}^t X_{I_0 I_1}^s \ldots X_{I_{k-1} I_k}^s X_{I_k J}^t + X_{I I_0}^s X_{I_0 I_1}^t \ldots X_{I_{k-1} I_k}^t X_{I_k J}^s \\
	&= -P_{IJ}^{k+2}(t,s) + P_{IJ}^{k+2}(s,t)
\end{align*}
falls $2\nmid k$ ist. Wir erhalten also die Relationen dritten Typs. Damit ist der Satz vollständig bewiesen.
\end{proof}

\begin{convention}
\index{terms}{Einparameterfall}
\index{symbols}{XIJ@$X_{IJ}$}
Aufgrund der Relationen vom Typ $(\beta)$ erlauben wir es uns, von jetzt an im Einparameterfall $X_{IJ}$ zu schreiben, wenn wir den gemeinsamen Wert von $X_{IJ}^s\in\OmegaGy$ für alle $s\in I\setminus J$ meinen.
\end{convention}

\subsection{Anwendungen}

\begin{corollary}[Eine $\IZ/2$-Graduierung auf $\OmegaGy$]\label{wgraph_alg:grading_OmegaGy}
\index{terms}{Graduierung}
\index{symbols}{OmegaGy@$\OmegaGy_{\pm}$}
Im Einparameterfall ist $\OmegaGy$ auf natürliche Weise $\IZ/2$-graduiert mittels
\[\OmegaGy_{+} := \text{span}_\IZ\big(\Set{E_I | I\subseteq S}\cup\Set{X_{I_0 I_1} X_{I_1 I_2} \ldots X_{I_{k-1} I_k} | I_j\subseteq S, 2\mid k}\big)\]
und
\[\OmegaGy_{-} := \text{span}_\IZ\Set{X_{I_0 I_1} X_{I_1 I_2} \ldots X_{I_{k-1} I_k} | I_j\subseteq S, 2\nmid k}.\]
\end{corollary}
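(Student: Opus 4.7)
The plan is to use the explicit presentation of $\OmegaGy$ as a quotient of the path algebra $\Xi$ developed in Lemma \ref{wgraph_alg:E_I_and_X_IJ} and Theorem \ref{wgraph_alg:relations}. The path algebra $\Xi$ carries its natural path-length $\IN$-grading, in which each idempotent $E_I$ has degree $0$ and each arrow $X_{IJ}^s$ has degree $1$. Reducing this modulo $2$ gives a $\IZ/2$-grading on $\Xi$ for which $\OmegaGy_{+}$ and $\OmegaGy_{-}$ are (preimages of) the two homogeneous components. Thus it suffices to show that every defining relation of $\OmegaGy$ is $\IZ/2$-homogeneous, so that the grading descends.

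I would then walk through the three families of relations from Theorem \ref{wgraph_alg:relations} for each pair $s,t\in S$ with $m:=\ord(st)<\infty$. The relations of type $(\beta)$ equate $X_{IJ}^s$ with $X_{IJ}^t$; both summands have path-length $1$, so the relation is $\IZ/2$-homogeneous of degree $1$. This is also what justifies the convention, introduced immediately before the corollary, of writing $X_{IJ}$ for the common image of the $X_{IJ}^s$. The relations of type $(\gamma)$ equate $P_{IJ}^r(s,t)$ with $P_{IJ}^r(t,s)$, both of path-length $r$, so they are likewise $\IZ/2$-homogeneous.

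The only case needing a small argument is $(\alpha)$, which is a $\IZ$-linear combination of path-sums $P_{IJ}^k(s,t)$ for $k=0,1,\dots,m-1$ whose coefficients are the coefficients $a_k$ of the polynomial $\tau_{m-1}$. Here the term $P_{IJ}^k(s,t)$ has path-length $k$, so a priori the degrees $k$ range over values of mixed parity. At this point I would invoke Lemma \ref{wgraph_alg:braid_commutator}(b), which states $\tau_r(-Y)=(-1)^r\tau_r(Y)$: hence $\tau_{m-1}$ contains only monomials of degree $\equiv m-1\pmod 2$, i.e.\ $a_k=0$ whenever $k\not\equiv m-1\pmod 2$. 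Consequently every nonzero term appearing in $(\alpha)$ has path-length of the same parity as $m-1$, so the relation is $\IZ/2$-homogeneous.

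Since all defining relations of $\OmegaGy$ are $\IZ/2$-homogeneous, the $\IZ/2$-grading on $\Xi$ descends to $\OmegaGy$, and inspection of the images of the spanning sets shows that the two graded pieces are exactly $\OmegaGy_{+}$ and $\OmegaGy_{-}$ as defined. There is no serious obstacle: the only non-trivial input is the parity property of $\tau_{m-1}$, which is already recorded as part of Lemma \ref{wgraph_alg:braid_commutator}.
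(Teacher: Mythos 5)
Your proof is correct and follows essentially the same route as the paper: equip the path algebra with the path-length grading, reduce mod~$2$, and verify that the relations of type $(\alpha)$, $(\beta)$, $(\gamma)$ from Satz~\ref{wgraph_alg:relations} are homogeneous, with the only nontrivial point being the parity property of $\tau_{m-1}$ from Lemma~\ref{wgraph_alg:braid_commutator}(b). The paper's proof is just a terser version of the same argument.
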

\begin{proof}
$\IZ\mathcal{Q}$ besitzt die kanonische $\IZ$"~Graduierung durch die Pfadlänge, also auch eine $\IZ/2$"~Graduierung. Da die Polynome $\tau_r$ nur gerade bzw. nur ungerade Potenzen besitzen, sind die Relationen vom Typ $(\alpha)$ homogen bzgl. dieser Graduierung. Die Relationen vom Typ $(\beta)$ und $(\gamma)$ sind sowieso homogen (schon bzgl. der $\IZ$"~Graduierung). Also ist $\OmegaGy$ ein homogener Quotient und übernimmt damit die $\IZ/2$"~Graduierung.
\end{proof}

\begin{remark}
Diese Graduierung könnte man auch realisieren durch den Automorphismus $\sigma: e_s\mapsto e_s, x_s\mapsto -x_s$ von $\Omega$ und $\OmegaGy$, indem man $\OmegaGy_{\pm}:=\operatorname{Eig}_{\pm1}(\sigma)$ setzt. Das funktioniert allerdings nur, wenn man $\operatorname{Eig}_{+1}(\sigma)\cap\operatorname{Eig}_{-1}(\sigma)=0$ zeigen kann (und wenn man einen unabhängigen Beweis der Wohldefiniertheit von $\sigma$ findet). Hinreichend dafür wäre etwa, dass $\OmegaGy$ keine $2$-Torsion hat. Die Vermutung, dass dies tatsächlich so sein muss, liegt nahe, aber bisher ist mir dafür kein Argument eingefallen. Natürlich könnte man auch einfach $\IZ[\frac{1}{2}]\OmegaGy$ betrachten und das Problem so umgehen.
\end{remark}
\begin{remark}
Es gibt eine Interpretation dieser Graduierung in der Sprache der $W$"~Gra\-phen: Ein $\OmegaGy$"~Modul $V$ ist genau dann ein graduierbarer Modul, wenn er sich (wieder bis auf Wahl einer Basis) durch einen \textit{bipartiten} $W$"~Graphen realisieren lässt. Dies ist Gegenstand des folgenden Lemmas. Die Beweisideen tauchen bereits in \cite{Gyoja} auf, werden dort aber nicht als Aussage über graduierte Moduln verstanden.
\end{remark}

\begin{lemma}[Graduierte Moduln]\label{wgraph_alg:graded_mods}
\index{terms}{W-Graph@$W$-Graph!-Modul!graduierter}\index{terms}{Graduierung}\index{terms}{Signum}\index{terms}{W-Graph@$W$-Graph!gerader}\index{terms}{W-Graph@$W$-Graph!bipartiter}
Sei $k$ ein kommutativer Ring mit $2\in k^\times$. Für einen $k\OmegaGy$"~Modul $V$ sind äquivalent:
\begin{enumerate}
	\item $V$ besitzt eine Zerlegung $V=V_+\oplus V_-$, mit der $V$ zu einem graduierten $k\OmegaGy$"~Modul wird bzgl. der obigen $\IZ/2$"~Graduierung von $k\OmegaGy$.
	\item Die Darstellung $k\OmegaGy\to\End_k(V)$ lässt sich auf $k\OmegaGy\rtimes\langle\sigma\rangle$ fortsetzen, wobei $\sigma$ der obige Automorphismus $e_s\mapsto e_s, x_s\mapsto -x_s$ ist.
\end{enumerate}
Nun habe $k$ zusätzlich die Eigenschaft, dass (endlich erzeugte) projektive $k$"~Moduln stets frei sind. Weiter sei $V$ als $k$"~Modul frei (und endlich erzeugt). Dann sind auch folgende Aussagen zu den obigen äquivalent:
\begin{enumerate}[resume]
	\item $V$ lässt sich durch einen bipartiten $W$"~Graphen mit konstanten Kantengewichten in $k\subseteq k[\Gamma]$ realisieren.
	\item $V$ lässt sich durch einen \udot{geraden} $W$"~Graphen $(\mathfrak{C},I,m)$ mit konstanten Kantengewichten in $k\subseteq k[\Gamma]$ realisieren, d.\,h. es gibt eine Funktion $\sgn:\mathfrak{C}\to\Set{\pm 1}$ mit
	\[\forall x,y\in\mathfrak{C}: \sgn(x)\sgn(y)m_{xy}^s = -m_{xy}^s\]
\end{enumerate}
\end{lemma}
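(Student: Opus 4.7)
The plan is to proceed in the order $a\Leftrightarrow b$, $c\Leftrightarrow d$, $c\Rightarrow a$, and finally $a\Rightarrow c$, where the last implication is the only non-formal step. For $a\Leftrightarrow b$ I would invoke the standard correspondence between $\IZ/2$-gradings and involutive automorphisms available whenever $2\in k^\times$: given the grading, let $\sigma\in\End_k(V)$ act as $+\id$ on $V_+$ and $-\id$ on $V_-$; by the very definition of the grading on $\OmegaGy$ in Corollary \ref{wgraph_alg:grading_OmegaGy}, conjugation by this $\sigma$ inside $\End_k(V)$ realises the automorphism $e_s\mapsto e_s$, $x_s\mapsto -x_s$ of $\OmegaGy$, so the representation extends to $\OmegaGy\rtimes\langle\sigma\rangle$. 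Conversely, given $\sigma$ with $\sigma^2=1$, the orthogonal idempotents $(1\pm\sigma)/2\in\End_k(V)$ produce the decomposition $V_\pm:=\ker(\sigma\mp 1)$, and the compatibility $E_I V_\pm\subseteq V_\pm$ as well as $x_s V_\pm\subseteq V_\mp$ follows directly from $\sigma E_I = E_I\sigma$ and $\sigma x_s = -x_s\sigma$.

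The equivalence $c\Leftrightarrow d$ is essentially tautological: since $2\in k^\times$, the condition $\sgn(x)\sgn(y)m_{xy}^s = -m_{xy}^s$ forces $m_{xy}^s=0$ whenever $\sgn(x)=\sgn(y)$, and this is precisely the bipartiteness of the underlying graph with bipartition $\mathfrak{C}=\sgn^{-1}(+1)\sqcup\sgn^{-1}(-1)$. For $c\Rightarrow a$, starting from a bipartite (equivalently even) $W$-graph $(\mathfrak{C},I,m)$ I would set $V_\pm := \text{span}_k\{x\in\mathfrak{C}\mid\sgn(x)=\pm 1\}$; the operators $\omega(E_I)$ are diagonal in the basis $\mathfrak{C}$ and hence preserve each $V_\epsilon$, while $\omega(x_s)$ has matrix entries supported on pairs with $\sgn(x)\neq\sgn(y)$ and therefore exchanges $V_+$ and $V_-$. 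Comparing degrees with Corollary \ref{wgraph_alg:grading_OmegaGy} yields the required grading.

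The substantive step is $a\Rightarrow c$. Suppose $V=V_+\oplus V_-$ is graded. Since each $E_I$ is a polynomial in the degree-$0$ elements $e_s$, it lies in $\OmegaGy_+$, so the graded decomposition refines to $E_I V = E_I V_+\oplus E_I V_-$ for every $I\subseteq S$. Under the hypothesis on $k$ (finitely generated projective modules are free) together with the freeness of $V$, each summand $E_I V_\epsilon$ is a direct summand of the free module $V$ (using, for instance, the projections $E_I(1\pm\sigma)/2$), hence projective, hence free. Choose a $k$-basis $\mathfrak{C}_{I,\epsilon}$ of each $E_I V_\epsilon$, let $\mathfrak{C}:=\bigsqcup_{I,\epsilon}\mathfrak{C}_{I,\epsilon}$, and define $I(x):=I$ and $\sgn(x):=\epsilon$ for $x\in\mathfrak{C}_{I,\epsilon}$. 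By the $\OmegaGy$-module-to-$W$-graph dictionary (part b.ii of the theorem identifying $W$-graphs with constant coefficients and $\OmegaGy$-modules), this data yields a $W$-graph whose kantengewichte $m_{xy}^s$ are the matrix entries of $\omega(x_s)$ in the chosen basis. Because $x_s\in\OmegaGy_-$ has odd degree, $\omega(x_s)$ sends $V_\epsilon$ into $V_{-\epsilon}$, so $m_{xy}^s=0$ as soon as $\sgn(x)=\sgn(y)$; the graph is therefore bipartite/even with the given sign function.

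The only real obstacle is the last step, and its resolution hinges on the two hypotheses on $k$ and $V$: without them one could refine $V=V_+\oplus V_-$ to the four-way decomposition $\bigoplus_{I,\epsilon}E_I V_\epsilon$ but could not pick an honest $k$-basis of each summand, which is precisely what the construction of a $W$-graph from an $\OmegaGy$-module requires. Everything else in the argument is a formal manipulation of the grading together with the duality dictionary already established in the previous sections.
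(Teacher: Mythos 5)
Your proposal is correct and follows essentially the same path as the paper: the equivalence $a\Leftrightarrow b$ via the general correspondence between $\IZ/2$-gradings and order-$2$ automorphisms acting diagonalisably, and the refinement of the grading along $V_\pm=\bigoplus_I E_I V_\pm$ using $E_I\in\OmegaGy_+$ and $X_{IJ}\in\OmegaGy_-$ to produce a bipartite $W$-graph. The only (minor) organisational difference is that the paper closes the equivalence by the cycle $a\Rightarrow c\Rightarrow d\Rightarrow b$, whereas you prove $c\Leftrightarrow d$ directly and $c\Rightarrow a$; you also make the freeness of the pieces $E_I V_\epsilon$ (via the projections $E_I(1\pm\sigma)/2$ and the hypothesis that f.g.\ projective $k$-modules are free) explicit, which the paper leaves implicit in the phrase ``w\"ahlt man eine angepasste Basis''.
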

\begin{proof}
a.$\iff$b. ist allgemeingültig: Für jede $k$"~Algebra $A$, jeden $kA$"~Modul $V$ und jede abelsche Gruppe $G\leq\operatorname{Aut}(A)$ mit der Eigenschaft, dass alle $g\in G$ diagonalisierbar auf $A$ wirken, wird $A$ eine $\Hom(G,k^\times)$"~graduierte Algebra durch
\[A_\chi := \Set{x\in A | \forall g\in G: gx = \chi(g)x}\]
und ein $A$"~Modul $V$ ist genau dann graduierbar, wenn $A\to\End(V)$ so auf $A\rtimes G$ fortsetzbar ist, dass alle $g\in G$ diagonalisierbar auf $V$ wirken. Die homogenen Komponenten der Graduierung $V=\bigoplus_{\chi} V_\chi$ sind in diesem Fall gegeben durch die Eigenräume, also ${V_\chi := \Set{v\in V | \forall g\in G: gv = \chi(g)v}}$.

Da $2\in k^\times$ ist und $\sigma$ Ordnung $2$ hat, operiert $\sigma$ immer diagonalisierbar auf $\OmegaGy$ und $V$. Das zeigt die Behauptung.
\medbreak

a.$\implies$c. folgt, weil $E_I\in\OmegaGy_+$ ist, denn deshalb gilt $E_I V_{\pm} \subseteq V_{\pm}$ und man kann die Zerlegung $V=\bigoplus E_I V$ somit weiter verfeinern zu $V_{\pm} = \bigoplus E_I V_{\pm}$. Wählt man eine Basis $\mathfrak{C}$, die an diese Zerlegung angepasst ist, dann definieren $V_+$ und $V_-$ eine Partition der Eckenmenge des $W$"~Graphen. Da $X_{IJ}\in\OmegaGy_-$ ist, gilt $X_{IJ} V_{J,\pm} \subseteq V_{I,\mp}$, d.\,h. die Kantengewichte $m_{xy}$ sind höchstens dann ungleich Null, wenn $x\in V_+$ und $y\in V_-$ ist oder umgekehrt. Das heißt nichts anderes, als dass der Graph bipartit ist.

\medbreak
c.$\implies$d. ist klar, indem man eine Bipartition des $W$"~Graphen wählt und $\sgn$ auf der einen Teilmenge konstant gleich $+1$ und auf der anderen konstant gleich $-1$ wählt.

\medbreak
d.$\implies$b. $\sigma$ operiert auf dem $W$"~Graph"=Modul von $(\mathfrak{C},I,m,\sgn)$ durch $\sigma x := \sgn(x)x$. Das definiert die Fortsetzung.
\end{proof}

\begin{remark}
\index{terms}{W-Graph@$W$-Graph!gerader}\index{terms}{Hecke-Algebra!$\IZ[q]$-Form}
Die geraden $W$"~Graphen sind im Einparameterfall genau diejenigen Moduln, die sich bereits über der $\IZ[v^2]$-Form von $H$ realisieren lassen: Durch
\[\dot{x} := \begin{cases} x & \sgn(x)=+1 \\ vx & \sgn(x)=-1 \end{cases}\]
für alle $x\in\mathfrak{C}$ wird eine Basis von $V$ definiert, die $\dot{T}_s \dot{x} \in \operatorname{span}_{\IZ[v^2]} \mathfrak{C}$ für alle $s\in S$ erfüllt.
\end{remark}

\begin{example}[Kazhdan-Lusztig-Zellen]
\index{terms}{Kazhdan-Lusztig!-$W$-Graph}\index{terms}{Kazhdan-Lusztig!-$\mu$}
Wir haben in \ref{KL:lemma:symmetry_KL_poly} gesehen, dass für alle Homomorphismen $\Gamma\to\Set{\pm1}$ (mit den dortigen Bezeichnungen) $\theta(\mu_{xy}^s) = (-1)^{L(x)+L(y)+L(s)}\mu_{xy}^s$ gilt. 

Im Einparameterfall ist $\mu_{xy}^s$ ganzzahlig und daher unter allen Automorphismen $\sigma$ invariant. Es ergibt sich hieraus u.\,a. $\mu_{xy}^s \neq 0 \implies l(x)+l(y)+1 \equiv 0\mod 2$ für alle $x,y\in W$ und alle $s\in S$, d.\,h. $\sgn(x):=(-1)^{l(x)}$ definiert eine Signumsfunktion, die die Kazhdan-Lusztig-Zellen zu geraden $W$"~Graphen macht. Die Bipartition des $W$"~Graphen ist durch die Einteilung in Elemente $x\in W$ gerader und ungerader Länge gegegeben.
\end{example}

\subsection{Ein einfacherer Köcher}

\begin{remark}
Wir destillieren eine sehr einfache Folgerung aus den Relationen in Satz \ref{wgraph_alg:relations}:
\end{remark}

\begin{lemmadef}[Kompatibilitätsgraph, siehe \cite{Stembridge2008admissble}]
\index{terms}{Kompatibilitätsgraph}\index{terms}{Inklusionskante}\index{terms}{Transversale Kante}
\index{symbols}{QW@$\mathcal{Q}_{W}$}
Es sei $(W,S)$ eine Coxeter"=Gruppe. Wir betrachten den Einparameterfall.

Es gilt: Falls $X_{IJ} \neq 0$ in $\OmegaGy(W,S)$ ist, muss jedes $s\in I\setminus J$ mit jedem $t\in J\setminus I$ durch eine Kante im Dynkin"=Diagramm von $(W,S)$ verbunden sein.

\bigbreak
Der \udot{Kompatibilitätsgraph} $\mathcal{Q}_W$ sei wie folgt definiert: Seine Ecken seien die Teilmengen $I\subseteq S$. Eine gerichtete Kante $I\leftarrow J$ existiere genau dann, wenn die beiden folgenden Bedingungen erfüllt sind:
\begin{enumerate}
	\item $I\setminus J\neq\emptyset$.
	\item Alle $s\in I\setminus J$ sind mit allen $t\in J\setminus I$ im Dynkin"=Diagramm von $(W,S)$ durch je eine Kante verbunden.
\end{enumerate}

Als \udot{Inklusionskanten} bezeichnen wir alle Kanten $I\leftarrow J$, für die $I\supseteq J$ ist. Im Gegensatz bezeichnen wir Kanten mit $I \not\supseteq J$ als \udot{transversale} Kanten. Diese treten immer paarweise auf: Ist $I\to J$ transversal, dann existiert auch die transversale Kante $J\to I$.
\end{lemmadef}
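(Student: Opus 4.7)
The plan is to deduce the vanishing claim directly from the relations worked out in Theorem \ref{wgraph_alg:relations}, applied to the special case $m = m_{st} = 2$. Suppose there exist $s \in I \setminus J$ and $t \in J \setminus I$ that are not joined by an edge in the Dynkin diagram, i.e.\ $m_{st} = 2$. We must show $X_{IJ} = 0$.

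First I would invoke the $(\beta)$-relations: since $X_{IJ}^s$ depends only on $I,J$ (not on the choice of $s \in I\setminus J$), the common value is the element $X_{IJ} \in \OmegaGy$ by the convention introduced right after Theorem \ref{wgraph_alg:relations}. Then I would turn to the $(\alpha)$-relations coming from the pair $(s,t)$ with $m = 2$. Since $\tau_{m-1}(Y) = \tau_1(Y) = Y$, the $(\alpha)$-relation reduces for all admissible $(I,J)$ to
\[
P_{IJ}^{1}(s,t) \;=\; X_{IJ}^s \;=\; 0,
\]
where the admissibility conditions for the even case $m=2$ are precisely $s \in I$, $t \notin I$, $s \notin J$, $t \in J$. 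But these are exactly the hypotheses $s \in I \setminus J$ and $t \in J \setminus I$. Therefore $X_{IJ}^s = 0$, and combining with the identification above gives $X_{IJ} = 0$, which is the contrapositive of the claim.

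For the definition of $\mathcal{Q}_W$, there is nothing to prove; the only remaining assertion is that transversal edges appear in pairs. This would follow by a short symmetry argument: if $I \leftarrow J$ is transversal, then $I \not\supseteq J$, hence $J \setminus I \neq \emptyset$, which is the first edge condition for $J \leftarrow I$. The second condition, requiring each element of $J \setminus I$ to be joined in the Dynkin diagram to each element of $I \setminus J$, is symmetric under interchange of $I$ and $J$ and therefore automatically inherited.

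The argument is essentially a direct consequence of the previous theorem, so I do not expect a genuine obstacle; the only subtle point is matching indices carefully. In particular one has to verify that the admissibility condition in Theorem \ref{wgraph_alg:relations}$(\alpha)$ for even $m$ gives precisely the configuration $s \in I \setminus J$, $t \in J \setminus I$, which is what makes the reduction $\tau_1(Y) = Y$ yield the desired vanishing without residual terms.
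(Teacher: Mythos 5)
Your argument is correct and matches the paper's own one-line proof: both apply the $(\alpha)$-relation from Satz \ref{wgraph_alg:relations} for the pair $(s,t)$ with $m_{st}=2$, using $\tau_1(Y)=Y$ to obtain $X_{IJ}^s = 0$, and both verify that the admissibility condition ($s\in I\setminus J$, $t\in J\setminus I$) is precisely the hypothesis at hand. You additionally spell out the role of the $(\beta)$-relations/convention $X_{IJ}^s = X_{IJ}$ and the symmetry argument for pairing of transversal edges, which the paper leaves implicit.
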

\begin{proof}
Wenn $m=\ord(st)=2$ ist, dann ist die Relation vom Typ $(\alpha)$ die Gleichung $X_{IJ}^s = 0$ für alle $I,J\subseteq S$ mit $s\in I\setminus J$ und $t\in J\setminus I$, da $\tau_{2-1}(X)=X$ gilt.
\end{proof}

\begin{remark}
Die Bedingung im Lemma ist natürlich trivialerweise wahr, wenn $J\setminus I=\emptyset$ ist. Sie liefert also nur ein Kriterium für die Kanten $J\to I$, die nicht von einer Inklusion herkommen.
\end{remark}
\begin{remark}
Es folgt insbesondere, dass für transversale Kanten $I \leftrightarrows J$ von der Teilmenge ${I \Delta J\subseteq S}$ des Dynkin"=Diagramms ein vollständiger bipartiter Graphen induziert wird mit $I\setminus J$ und $J\setminus I$ als den beiden Teilen der Bipartition. Unter den endlichen Coxeter"=Gruppen sind nur $I_2(m)$, $A_3$, $B_3$, $H_3$ und $D_4$ von dieser Gestalt. Insbesondere folgt $\abs{I \Delta J}\leq 4$. Das liefert eine wesentliche Einschränkung an die Gestalt des Kompatibilitätsgraphen.
\end{remark}
\begin{remark}
Die Abbildung $\Xi=\IZ\mathcal{Q}\to\OmegaGy$ faktorisiert als $\IZ\mathcal{Q}\to\IZ\mathcal{Q}_W\to\OmegaGy$, wobei der erste Pfeil durch $\widetilde{E}_I\mapsto E_I$ und $\widetilde{X}_{IJ}^s \mapsto X_{IJ}$ für alle $I,J\subseteq S$, $s\in I\setminus J$ definiert ist. Wir können und werden $\OmegaGy$ daher als Quotienten von $\IZ\mathcal{Q}_W$ auffassen.
\end{remark}

\begin{example}
In Abbildung \ref{fig:wgraph_alg:comp_graphs} sind die Kompatibilitätsgraphen der endlichen, irreduziblen Coxeter"=Gruppen vom Rang $\leq 4$ dargestellt.

Der Übersichtlichkeit halber sind Inklusionskanten nur für Rang 2 und Rang 3 und dort auch nur die Inklusionskanten zwischen Mengen mit $\abs{I\setminus J}=1$ dargestellt. Außerdem sind Paare transversaler Kanten zusammengefasst. Statt zwei gerichteter Kanten $I\to J$ und $J\to I$ ist nur eine (fette) ungerichtete Kante eingezeichnet.

\begin{figure}[htp]
	\centering
	\begin{tabular}{c|c}
		\begin{minipage}[c][ 75pt]{ 75pt}
		\begin{tikzpicture}

\node[Vertex] (E_empty) at (0,0) {$\emptyset$};

\node[Vertex] (E_1) at (-1,1) {$1$};
\node[Vertex] (E_2) at (+1,1) {$2$};

\node[Vertex] (E_12) at (0,2) {$12$};

\path[EdgeI]
	(E_empty) edge (E_1)
	(E_empty) edge (E_2)
	(E_1) edge (E_12)
	(E_2) edge (E_12);

\path[EdgeT]
	(E_1) edge (E_2);
\end{tikzpicture}
		\end{minipage}	& 
		\begin{minipage}[c][100pt]{100pt}
		\begin{tikzpicture}

\node[Vertex] (E_empty) at (0,0) {$\emptyset$};

\node[Vertex] (E_1) at (-1.5,1) {$1$};
\node[Vertex] (E_2) at ( 0.0,1) {$2$};
\node[Vertex] (E_3) at (+1.5,1) {$3$};

\node[Vertex] (E_23) at (-1.5,2) {$23$};
\node[Vertex] (E_13) at ( 0.0,2) {$13$};
\node[Vertex] (E_12) at (+1.5,2) {$12$};

\node[Vertex] (E_123) at (0,3) {$123$};

\path[EdgeI]
	(E_empty) edge (E_1)
	(E_empty) edge (E_2)
	(E_empty) edge (E_3)
	(E_1) edge (E_12)
	(E_1) edge (E_13)
	(E_2) edge (E_12)
	(E_2) edge (E_23)
	(E_3) edge (E_13)
	(E_3) edge (E_23)
	(E_12) edge (E_123)
	(E_13) edge (E_123)
	(E_23) edge (E_123);

\path[EdgeT]
	(E_1) edge (E_2)
	(E_2) edge (E_3)
	      edge (E_13)
	(E_23) edge (E_13)
	(E_13) edge (E_12);

\end{tikzpicture}
		\end{minipage} \\
		\hline
		\begin{minipage}[c][320pt]{200pt}
		\begin{tikzpicture}

\node[Vertex] (E_empty) at (0,0) {$\emptyset$};

\node[Vertex] (E_1) at (-3.0,1) {$1$};
\node[Vertex] (E_2) at (-1.0,1) {$2$};
\node[Vertex] (E_3) at (+1.0,1) {$3$};
\node[Vertex] (E_4) at (+3.0,1) {$4$};

\node[Vertex] (E_12) at (-3.0,2.5) {$12$};
\node[Vertex] (E_13) at (-1.0,2.5) {$13$};
\node[Vertex] (E_23) at ( 0.0,2.0) {$23$};
\node[Vertex] (E_14) at ( 0.0,3.0) {$14$};
\node[Vertex] (E_24) at (+1.0,2.5) {$24$};
\node[Vertex] (E_34) at (+3.0,2.5) {$34$};

\node[Vertex] (E_123) at (-3.0,4) {$123$};
\node[Vertex] (E_124) at (-1.0,4) {$124$};
\node[Vertex] (E_134) at (+1.0,4) {$134$};
\node[Vertex] (E_234) at (+3.0,4) {$234$};

\node[Vertex] (E_1234) at (0,5) {$1234$};

%
%
%
%
%
%
%
%
%
%
%
%

\path[EdgeT]
	(E_1) edge (E_2)
	(E_2) edge (E_3) edge (E_13)
	(E_3) edge (E_4) edge (E_24)
	
	(E_13) edge (E_23) edge (E_14) edge (E_12)
	(E_24) edge (E_23) edge (E_14) edge (E_34)
	
	(E_123) edge (E_124)
	(E_124) edge (E_134) edge (E_13)
	(E_134) edge (E_234) edge (E_24);

\end{tikzpicture}
		\end{minipage} &
		\begin{minipage}[c][320pt]{150pt}
		\begin{tikzpicture}[
	Vertex/.style = {inner sep = 1pt,outer sep = 2pt,minimum size=15pt,
	                 circle,draw=black!70,thick,
	                 font=\scriptsize},
	EdgeT/.style = {ultra thick,black},
	EdgeI/.style = {black!80,->}
]

\node[Vertex] (E_empty) at (0,0) {$\emptyset$};

\node[Vertex] (E_0) at (-2,1) {$0$};
\node[Vertex] (E_1) at ( 0,1) {$1$};
\node[Vertex] (E_3) at (+2,1) {$3$};

\node[Vertex] (E_2) at ( 0,2) {$2$};

\node[Vertex] (E_13) at (-2,4) {$13$};
\node[Vertex] (E_03) at ( 0,4) {$03$};
\node[Vertex] (E_01) at (+2,4) {$01$};

\node[Vertex] (E_02) at (-2,6) {$02$};
\node[Vertex] (E_12) at ( 0,6) {$12$};
\node[Vertex] (E_23) at (+2,6) {$23$};

\node[Vertex] (E_013) at (0,8) {$013$};

\node[Vertex] (E_123) at (-2,9) {$123$};
\node[Vertex] (E_023) at ( 0,9) {$023$};
\node[Vertex] (E_012) at (+2,9) {$012$};

\node[Vertex] (E_1234) at (0,10) {$0123$};

%
%
%
%
%
%
%
%
%
%
%

\path[EdgeT]
	(E_2) edge (E_0)
	      edge (E_1)
	      edge (E_3)
	      
	      edge (E_13)
	      edge (E_03)
	      edge (E_01)
	(E_13) edge (E_02)
	       edge (E_12)
	(E_03) edge (E_02)
	       edge (E_23)
	(E_01) edge (E_12)
	       edge (E_23)

	(E_013) edge (E_02)
	        edge (E_12)
	        edge (E_23)
	        
	        edge (E_123)
	        edge (E_023)
	        edge (E_012)

	(E_2) edge[bend right=30] (E_013);

\end{tikzpicture} 
		\end{minipage}
	\end{tabular}
	
	\setcapwidth[c]{0.80\textwidth}
	\caption{Kompatibilitätsgraphen im Einparameterfall; oben links für $I_2(m)$, oben rechts für $A_3$, $B_3$ und $H_3$, unten links für $A_4$, $B_4$ und $F_4$ sowie unten rechts für $D_4$.}
	\label{fig:wgraph_alg:comp_graphs}
\end{figure}
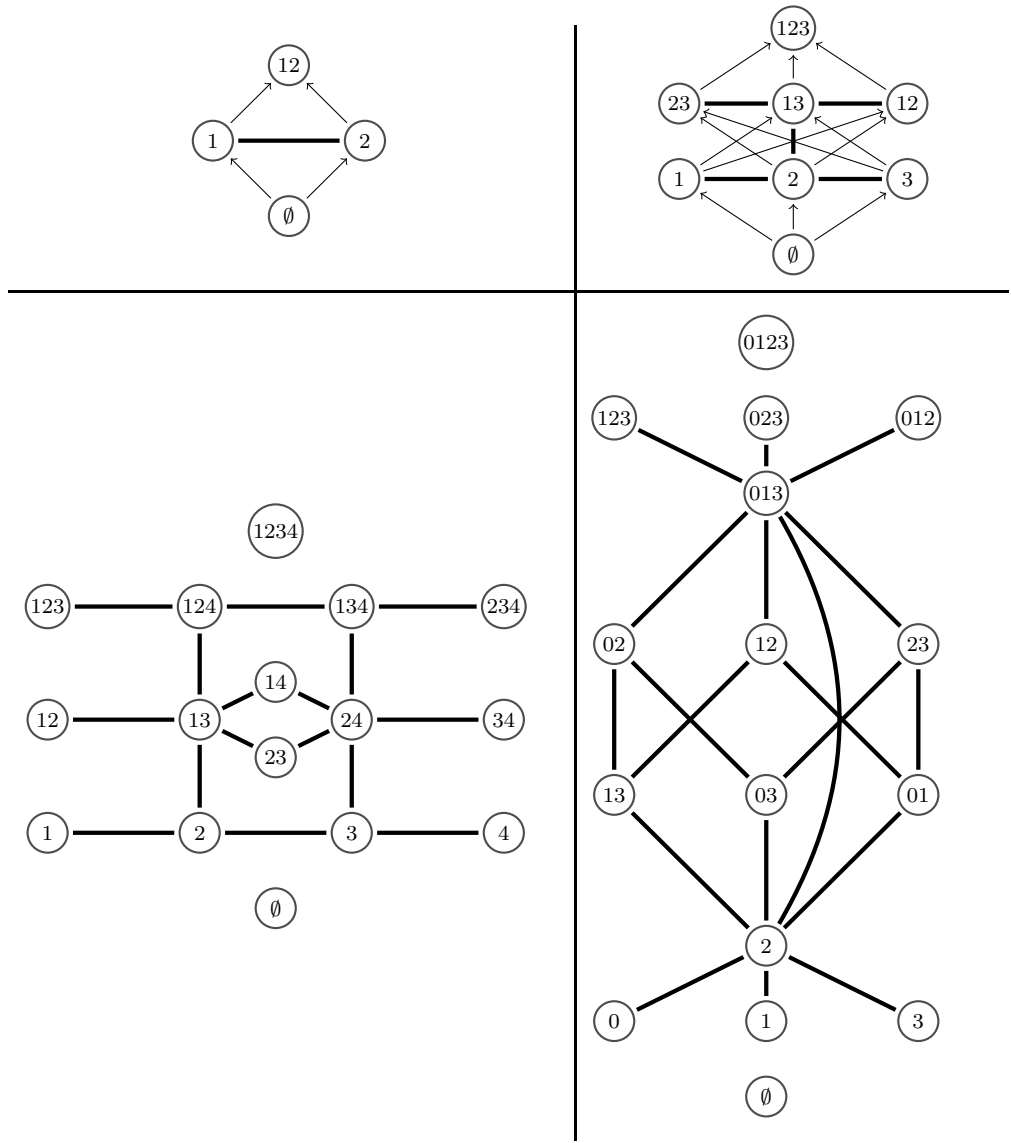
\end{example}
\section{Gyojas Vermutung in Spezialfällen}

\subsection{Vorbereitungen}

\begin{remark}
Gyojas Vermutung stellt eine Verbindung zwischen $H$- und $\Omega$"~Moduln her. Besonders eine der äquivalenten Formulierungen ist interessant: Sind halbeinfache $\Omega$"~Moduln, die als Fortsetzungen von isomorphen $H$"~Moduln entstehen, bereits als $\Omega$"~Moduln isomorph? Oder anders formuliert ist dies eine Frage nach der Rigidität von $W$"~Graphen: Wie viel der Struktur eines $W$"=Graphen wird alleine durch den von ihm induzierten $H$"~Modul bestimmt? Folgendes Lemma gibt eine nützliche Teilantwort auf diese Frage.
\end{remark}

\begin{lemma}\label{wgraphs:unique_I_inv}
Sei $k$ eine kommutative $\IZ[\Gamma]$"~Algebra, in der $(v_s+v_s^{-1})\cdot 1_k$ ein Nichtnullteiler ist für alle $s\in S$. Weiter sei $V$ ein $k\Omega$"~Modul, der als $k$"~Modul frei ist. Dann gilt:
\[e_s V = \Set{x\in V | T_s x=-v_s^{-1} x} = \operatorname{Eig}_{-v_s^{-1}}(T_s;V)\]
Ist speziell $V$ der $W$"~Graph"=Modul eines $W$"~Graphen $(\mathfrak{C},I,m)$, so sind die Vielfachheiten der Eckenlabel, d.\,h. die Dimensionen $\dim_k E_I V$, eindeutig durch den $kH$"~Isomorphietyp von $V$ bestimmt.
\end{lemma}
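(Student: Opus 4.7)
The plan is to establish the eigenspace identity $e_s V = \ker(T_s + v_s^{-1}\cdot\mathrm{id}_V)$ by a direct calculation inside $\Omega$, and then deduce the multiplicity statement from the fact that the idempotents $e_s$ pairwise commute, together with Möbius inversion on the Boolean lattice $2^S$.

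For the inclusion $e_s V \subseteq \ker(T_s + v_s^{-1})$, I will simply plug $x = e_s y$ into $\iota(T_s) = -v_s^{-1} e_s + v_s(1-e_s) + x_s$: the relations $e_s^2 = e_s$ and $x_s e_s = 0$ immediately give $e_s x = x$, $(1-e_s) x = 0$ and $x_s x = 0$, so $T_s x = -v_s^{-1} x$. The reverse inclusion is the interesting one. Given $x$ with $T_s x = -v_s^{-1} x$, I will decompose $x = a + b$ with $a := e_s x \in e_s V$ and $b := (1-e_s) x \in (1-e_s)V$, and use the relations $x_s e_s = 0$ and $e_s x_s = x_s$ to compute $T_s x = -v_s^{-1} a + v_s b + x_s b$. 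Comparing with $-v_s^{-1} x = -v_s^{-1} a - v_s^{-1} b$ yields
\[(v_s + v_s^{-1}) b + x_s b = 0.\]
Since $(v_s + v_s^{-1}) b \in (1-e_s) V$ while $x_s b \in e_s V$, and the sum $V = e_s V \oplus (1-e_s) V$ is direct, both summands must vanish separately. Freeness of $V$ together with the nonzerodivisor hypothesis on $(v_s + v_s^{-1})\cdot 1_k$ then forces $b = 0$, so $x = a \in e_s V$.

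For the second claim, I will exploit that the $e_s$ pairwise commute in $\Xi$ (hence on $V$), so they admit a simultaneous eigenspace decomposition $V = \bigoplus_{I\subseteq S} E_I V$, and for every $J \subseteq S$ the partial product $\prod_{s\in J} e_s$ is an idempotent whose image is
\[\bigcap_{s\in J} e_s V \;=\; \bigoplus_{I \supseteq J} E_I V.\]
By the first part of the lemma this intersection coincides with $\bigcap_{s\in J} \ker(T_s + v_s^{-1})$, an object defined purely in terms of the $kH$-action and hence invariant under $kH$-module isomorphism. Applying Möbius inversion on the Boolean lattice $2^S$ yields the explicit formula
\[\dim_k E_I V \;=\; \sum_{J \supseteq I} (-1)^{|J|-|I|} \dim_k \bigcap_{s\in J}\ker(T_s + v_s^{-1}),\]
which expresses each multiplicity entirely through $kH$-module data.

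The main obstacle is the reverse inclusion in the first part: one must be careful to split the eigenvalue equation along the direct-sum decomposition induced by the single idempotent $e_s$, and then invoke the nonzerodivisor hypothesis to rule out any contribution from $(1-e_s) V$. Once this is done, part two follows cleanly from the commutativity of the $e_s$ and inclusion-exclusion, with no further input from the $\Omega$-structure.
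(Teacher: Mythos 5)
Your proof is correct and follows essentially the same path as the paper: the eigenspace identity is obtained by the same direct computation inside $\Omega$ (decomposing $x = e_s x + (1-e_s)x$ and using $x_s e_s = 0$, $e_s x_s = x_s$ to isolate $(v_s+v_s^{-1})(1-e_s)x = 0$), and the multiplicity statement is derived from the simultaneous eigenspace decomposition $V = \bigoplus_I E_I V$ and the identity $\bigcap_{s\in J} e_s V = \bigoplus_{I\supseteq J} E_I V$. The only cosmetic difference is in the final counting step: you extract $\dim_k E_I V$ by M\"obius inversion on $2^S$, whereas the paper writes it as the dimension of a quotient module; both are valid ways to phrase the same inclusion--exclusion.
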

\begin{proof}
Ist $u\in e_s V$, dann ist:
\begin{align*}
	T_s u &= (-v_s^{-1} e_s+v_s (1-e_s)+x_s)(e_s u) \\
	&= (-v_s^{-1} \underbrace{e_s^2}_{=e_s}+v_s \underbrace{(1-e_s)e_s}_{=0}+\underbrace{x_s e_s}_{=0}) u \\
	&= -v_s^{-1} e_s u \\
	&= -v_s^{-1} u
\end{align*}
und daher $u\in\operatorname{Eig}_{-v_s^{-1}}(T_s;V)$. Wenn umgekehrt $T_s u = -v_s^{-1} u$ gilt, folgt:
\begin{align*}
	-v_s^{-1} u &= -v_s^{-1} e_s u + v_s(1-e_s)u + x_s u \\
	\implies 0 &= (v_s+v_s^{-1}) (1-e_s) u + x_s u \\
	\implies 0 &= (v_s+v_s^{-1}) (1-e_s)^2 u + \smash{\underbrace{(1-e_s) x_s}_{=0}} u \\
	&= (v_s+v_s^{-1}) (1-e_s) u \\
	\implies 0 &= (1-e_s) u \\
	\implies u &\in e_s V
\end{align*}
Aus $e_s = \sum_{s\in I} E_I$ folgt, dass sich $\bigcap_{s\in I} e_s V = \bigoplus_{J\supseteq I} E_J V$ eindeutig durch die Operation der $T_s$ charakterisieren lässt. Wir erhalten insbesondere, dass die Vielfachheit
\[\abs{\Set{x\in\mathfrak{C} | I(x)=I}} = \dim E_I V = \dim \frac{\bigcap_{s\in I} \operatorname{Eig}_{-v_s^{-1}}(T_s; V) }{ \sum_{s\notin I} \bigcap_{t\in I\cup\Set{s}} \operatorname{Eig}_{-v_t^{-1}}(T_t;V) }\]
eindeutig durch die $H$-Operation auf $V$ bestimmt ist für alle $I\subseteq S$.
\end{proof}

\begin{remark}
\index{terms}{Signum}\index{terms}{Charakter}
Im endlichdimensionalen Fall können wir die Vielfachheiten in der Tat alleine aus dem Charakter $\chi_V$ von $V$ bestimmen: Wenn wir $V=\bigoplus_{\chi\in\Irr(H)} V_\chi$ für die Zerlegung in homogene Komponenten schreiben, ist die Signumskomponente von $V$ gleich ${V_{\textrm{sgn}}=\bigcap_{s\in S} \operatorname{Eig}_{-v_s^{-1}}(T_s;V)}$. Allgemeiner ist $\bigcap_{s\in I} \operatorname{Eig}_{-v_s^{-1}}(T_s;V)$ genau die Signumskomponente von $\text{Res}_{H_I}^H (V)$. Die Dimensionen sind also genau gleich den Vielfachheiten des Signumscharakters in $\chi_V$.

In der Tat können wir diese Vielfachheiten schon aus dem Charakter des $F[W]$"~Mo\-duls bestimmen, welcher entsteht, wenn wir $V$ mit $v^\gamma\mapsto 1$ spezialisieren. Es kann also ausschließlich mit der Coxeter"=Gruppe gerechnet werden, es ist keine Rechnung in $H$ nötig.
\end{remark}

\begin{remark}
Wir werden auch das folgende einfache Lemma benutzen, um Aussagen über einfache Moduln von Pfadalgebren oder verallgemeinerten Pfadalgebren zu treffen. Wir definieren dafür zunächst einmal verallgemeinerte Pfadalgebren.
\end{remark}

\begin{definition}[Verallgemeinerte Pfadalgebren, {siehe \cite{zhang2007structures}}]
\index{terms}{Pfadalgebra!verallgemeinerte}
Sei $\mathcal{K}$ ein Köcher mit endlicher Eckenmenge $V$. Sei weiter $k$ ein kommutativer Ring und für jedes $v\in V$ sei $A_v$ eine $k$"~Algebra.

Dann ist die \udot{verallgemeinerte Pfadalgebra auf $\mathcal{K}$ mit Knotenalgebren $A_v$} wie folgt definiert: Für jeden Pfad $P : v_0 \xleftarrow{e_1} v_1 \xleftarrow{e_2} \ldots \xleftarrow{e_{n-1}} v_{n-1} \xleftarrow{e_n} v_n$ in $\mathcal{K}$ definiere
\[ A_P := A_{v_0} \otimes_k A_{v_1} \otimes_k \ldots \otimes_k A_{v_{n-1}} \otimes_k A_{v_n}.\]
Insbesondere ist $A_P = A_{v_0}$, wenn $n=0$ ist. Der zugrundeliegende $k$"~Modul der Algebra ist dann definiert als
\[ A := \bigoplus_{P \text{ Pfad in }\mathcal{K}} A_P.\]
Die Multiplikation ist darauf wie folgt definiert: Ist $P$ wie oben und
\[Q=v_0' \xleftarrow{e_1'} v_1' \xleftarrow{e_2'} \ldots \xleftarrow{e_{m-1}'} v_{m-1}' \xleftarrow{e_m'} v_m'\]
ein weiterer Pfad in $\mathcal{K}$, dann definiere zum einen $A_P \cdot A_Q := 0$, falls $v_n \neq v_0'$ ist, und zum anderen definiere die Multiplikation $A_P \otimes_k A_Q \to A_{PQ}$ durch
\[(a_0 \otimes \ldots \otimes a_n)\cdot(a_0' \otimes \ldots \otimes a_m') := a_0 \otimes \ldots \otimes (a_n a_0') \otimes \ldots \otimes a_m',\]
falls $v_n=v_0'$ gilt.
\end{definition}

\begin{remark}
Das Einselement dieser Algebra ist $\sum_{v\in V} 1_{A_v}$. Daher war die Forderung einer endlichen Eckenmenge wichtig.
\end{remark}
\begin{remark}
Zu jeder Kante $v \xleftarrow{e} w$ in $\mathcal{K}$ existiert ein \udot{Kantenelement} $x_e:=1_{A_v} \otimes 1_{A_w}\in A_{e}\subseteq A$. Die Kantenelemente erzeugen zusammen mit den Knotenalgebren $A_v$ die gesamte Algebra $A$: Die Elemente $a_1\otimes\ldots\otimes a_n$ von $A_P$ kann man jetzt nämlich eindeutig als $a_0 x_{e_1} a_1 \ldots a_{n-1} x_{e_n} a_n$ mit $a_i\in A_{v_i}$ schreiben.
\end{remark}
\begin{example}
Die gewöhnliche Pfadalgebra $k\mathcal{K}$ ergibt sich aus dieser Definition, indem man $A_v:=k$ für alle $v\in V$ einsetzt.
\end{example}

\begin{example}\label{path_algs:ex:generic_examples}
Es sei $A$ eine $k$-Algebra, $1=\sum_{i=1}^n f_i$ eine Zerlegung der Eins in paarweise orthogonale Idempotente, wobei wir o.\,B.\,d.\,A. $f_i\neq 0$ annehmen, und $X\subseteq A$ eine Teilmenge derart, dass $X\cup\bigcup_{i=1}^n f_i A f_i$ ein Erzeugendensystem von $A$ ist.

Dann ist $A$ auf kanonische Weise Quotient einer verallgemeinerte Pfadalgebra $\widetilde{A}$: Der Köcher hat die Ecken $1,\ldots,n$, die Menge der Kanten $i\leftarrow j$ ist durch $\Set{x\in X | f_i x f_j \neq 0}$ gegeben und die Algebra an der Ecke $i$ ist $f_i A f_i$.

Der Quotient $\widetilde{A}\to A$ ist an den Ecken durch die Inklusionen $f_i A f_i \to A$ und auf den Kantenelementen durch $(i \xleftarrow{x} j) \mapsto f_i x f_j$ für alle $x\in X$ gegeben. Da $x=\sum_{i,j} f_i x f_j$ im Bild liegt für alle $x\in X$, ist dies wirklich ein surjektiver Homomorphismus.
\end{example}

\begin{lemmadef}\label{wgraph:def:Q_V}
\index{terms}{stark zusammenhängend}\index{terms}{Zusammenhangskomponente!starke}\index{terms}{Pfadalgebra!verallgemeinerte}
\index{symbols}{KM@$\mathcal{K}_M$}
Sei $\mathcal{K}=(V,E)$ irgendein Köcher mit endlich vielen Knoten, $k$ ein kommutativer Ring, $A$ eine verallgemeinerte Pfadalgebra auf diesem Köcher und $M$ ein $A$"~Modul. Dann definiere einen Teilgraphen $\mathcal{K}_M=(V_M,E_M)$ von $\mathcal{K}$ wie folgt: $V_M$ sei $\Set{v | 1_{A_v} M\neq 0}$ und die Kante $v \xleftarrow{e} u$ sei in $E_M$ genau dann, wenn $x_e V_M\neq 0$.

Mit diesen Bezeichnungen gilt:
\begin{enumerate}
	\item Ist $M$ unzerlegbar, so ist $\mathcal{K}_M$ zusammenhängend.
	\item Ist $M$ einfach, so ist $\mathcal{K}_M$ stark zusammenhängend.
	\end{enumerate}
\end{lemmadef}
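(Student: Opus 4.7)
The plan is to exploit the direct-sum decomposition $M = \bigoplus_{v \in V} 1_{A_v} M$ induced by the orthogonal idempotent decomposition $1 = \sum_{v \in V} 1_{A_v}$, and to translate indecomposability, respectively simplicity, of $M$ into statements about the shape of $\mathcal{K}_M$. Writing $M_v := 1_{A_v} M$, one has $V_M = \Set{v \in V | M_v \neq 0}$ and $M = \bigoplus_{v \in V_M} M_v$. The key structural observation I will rely on is that every Kantenelement $x_e$ for an edge $v \xleftarrow{e} u$ lies in $1_{A_v} A \cdot 1_{A_u}$ and hence satisfies $x_e M \subseteq M_v$ together with $x_e M_w = 0$ for $w \neq u$; moreover, $A$ is generated as a $k$-Algebra by the Knotenalgebren $A_v$ together with the Kantenelemente $x_e$, so that the $A$-submodules of $M$ can be tested against these generators alone.

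For part~a.\ I argue by contrapositive. Assuming $\mathcal{K}_M$ is not weakly connected, I partition $V_M = S_1 \sqcup S_2$ into nonempty unions of weakly connected components of $\mathcal{K}_M$ and set $N_i := \bigoplus_{v \in S_i} M_v$. Since no edge of $E_M$ joins $S_1$ to $S_2$ in either direction, every $x_e$ with source in $S_i$ either annihilates $M$ (so $e \notin E_M$) or belongs to $E_M$, in which case its target lies in the same $S_i$; thus $x_e$ maps $N_i$ into itself. Each $A_v$ with $v \in S_i$ preserves $N_i$ trivially. Hence $M = N_1 \oplus N_2$ is a non-trivial decomposition as $A$-modules, contradicting indecomposability.

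For part~b.\ I argue directly. Fix $u, w \in V_M$ and a non-zero $m \in M_u$. Simplicity forces $Am = M$, so in particular $M_w = 1_{A_w} A m \neq 0$, and there exists some $a \in 1_{A_w} A \cdot 1_{A_u}$ with $am \neq 0$. Decomposing $a$ along $1_{A_w} A \cdot 1_{A_u} = \bigoplus_{P \colon u \to w} A_P$, at least one path $P = (v_0 \xleftarrow{e_1} v_1 \xleftarrow{} \cdots \xleftarrow{e_n} v_n)$ with $v_0 = w$ and $v_n = u$ must contribute a summand $a_0 \otimes \cdots \otimes a_n \in A_P$ with $(a_0 \otimes \cdots \otimes a_n) m \neq 0$. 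Unpacking the action as the iterated composition $a_0 x_{e_1} a_1 x_{e_2} \cdots x_{e_n} a_n m$ and reading partial products from right to left, every intermediate term must be non-zero in the corresponding $M_{v_i}$, which simultaneously shows that each $v_i$ lies in $V_M$ and that each $x_{e_i}$ satisfies $x_{e_i} M_{v_i} \neq 0$; hence every $e_i$ lies in $E_M$. The path $P$ is thus a directed path from $u$ to $w$ inside $\mathcal{K}_M$, and since $u, w$ were arbitrary, $\mathcal{K}_M$ is strongly connected.

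The bulk of the argument is routine bookkeeping with orthogonal idempotents and with the block decomposition $1_{A_v} A \cdot 1_{A_u} = \bigoplus_{P \colon u \to v} A_P$ of the generalized Pfadalgebra. The only step requiring real attention is the final unpacking: correctly identifying a generic element of $A_P$ with an iterated composition of Knotenalgebra-actions interleaved with Kantenelement-translations, and extracting from non-vanishing of the full composition that every single link of the chain is non-vanishing --- both on the vertex and on the edge level.
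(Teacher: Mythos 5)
Your proof of part~a.\ is identical in spirit to the paper's: both argue by contrapositive, split $V_M$ into two nonempty pieces with no edges between them, use the orthogonal idempotents $1_U := \sum_{v \in U} 1_{A_v}$ to decompose $M$, and check stability of the summands under all generators.

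For part~b., however, your argument is genuinely different from the paper's. The paper reuses the machinery from part~a.\ in a sharpened form: it observes that the condensation of $\mathcal{K}_M$ into strongly connected components is a DAG, so there is always a sink component $U$ (one with no outgoing edges in $\mathcal{K}_M$), and the associated $1_U M$ is then a nonzero $A$-submodule; simplicity forces $1_U M = M$, hence $U = V_M$, hence $\mathcal{K}_M$ is strongly connected. Your argument is direct and path-constructive instead: fix $u, w \in V_M$, use $Am = M$ to find an element of $1_{A_w} A 1_{A_u}$ acting nonzero on $M_u$, decompose it along the path grading $1_{A_w} A 1_{A_u} = \bigoplus_{P: u \to w} A_P$, pick a contributing summand, and read off from the non-vanishing of the telescoping partial products $x_{e_n} a_n m,\ a_{n-1} x_{e_n} a_n m,\ \ldots$ that every intermediate vertex lies in $V_M$ and every edge lies in $E_M$. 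Both arguments are correct; the paper's is shorter and keeps part~b.\ structurally parallel to part~a., while yours buys an explicit directed path between any two given vertices and avoids invoking (even implicitly) the existence of a sink SCC. One small point worth making explicit in your write-up: you should remark that since $1_{A_v} x_e 1_{A_u} = x_e$ for an edge $v \xleftarrow{e} u$, non-vanishing of the partial product lying in $M_{v_i}$ is exactly what certifies $v_i \in V_M$, and non-vanishing of the application of $x_{e_i}$ is exactly what certifies $e_i \in E_M$ --- you state this, and it is correct, but it is the crux of the unpacking and deserves the emphasis you give it at the end.
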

\begin{proof}
a. Ist $V_M=U\dot{\cup} W$ eine Zerlegung in nichtleere Teilmengen, sodass keine Kanten zwischen $U$ und $W$ in $\mathcal{K}_M$ verlaufen, dann betrachte die Idempotente $1_U:=\sum_{v\in U} 1_{A_v}$ und $1_W:=\sum_{v\in W} 1_{A_v}$. Da $U$ und $W$ disjunkt sind, sind es orthogonale Idempotente. Da alle $v\in V\setminus V_M$ aufgrund der Definition schon $1_{A_v} M = 0$ erfüllen, operiert $1_U+1_W$ als Identität auf $M$. Dann ist also $M=1_U M \oplus 1_W M$ als $k$"~Modul.

Da alle von $U$ ausgehenden Kanten als Null auf $M$ operieren, folgt $x_e (1_U M) \subseteq 1_U M$ für alle Kanten $e\in E$. Analog gilt das für $W$, also sind $1_U M$ und $1_W M$ nichttriviale $A$"~Untermoduln von $M$.

\medbreak
b. Es gibt stets eine starke Zusammenhangskomponente $U\subseteq V_M$, die nur eingehende, aber keine ausgehenden Kanten in $\mathcal{K}_M$ hat. Definiere dann $1_U$ wie zuvor.

Da alle von $U$ ausgehenden Kanten als Null auf $M$ operieren, folgt erneut, dass $1_U M$ ein nichttrivialer $A$"~Untermodul von $M$ ist. Da $M$ einfach ist, muss $1_U M=M$ sein, d.\,h. $U=V_M$, d.\,h. $\mathcal{K}_M$ muss stark zusammenhängend sein.
\end{proof}

\begin{remark}
Wir benötigen das folgende Lemma, um Kantengewichte von Kazhdan-Lusztig-$W$"~Graphen zu kontrollieren:
\end{remark}

\begin{lemma}[Kantengewichte für ADE-Gruppen, siehe {\citep[3.3(c)]{Stembridge2008admissble}}]\label{wgraphs:weights_simply_laced}
\index{terms}{W-Graph@$W$-Graph!mit natürlichen Gewichten}\index{terms}{Matrix!Permutations-}
Sei $(W,S)$ eine Coxeter"=Gruppe, deren Coxetermatrix $m_{st}=\ord(st)\in\Set{1,2,3}$ für alle $s,t\in S$ erfüllt (d.\,h. im endlichen Fall, dass $W$ nur Komponenten vom Typ $A$, $D$ bzw. $E$ hat).

Ist dann $(\mathfrak{C},I,m)$ ein $W$"~Graph mit $m_{xy}^s\in\IN$ für alle $x,y\in\mathfrak{C}$ und $s\in S$, dann gilt $m_{xy}^s=m_{yx}^s \in\Set{0,1}$ für alle $x,y\in\mathfrak{C}$, für die $I(x)$ und $I(y)$ im Kompatibilitätsgraphen durch eine transversale Kante verbunden sind.
\end{lemma}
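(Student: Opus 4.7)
The plan is to reduce to a rank-two parabolic computation via Lemma~\ref{wgraphs:constructions}(d), and then exploit the $(\alpha)$-relation of Theorem~\ref{wgraph_alg:relations} for $m_{st}=3$. Fix any $s \in I(x)\setminus I(y)$ and $t \in I(y)\setminus I(x)$; the transversal-edge hypothesis together with the simply-laced assumption forces $m_{st}=3$. Parabolic restriction of the $W$-graph along $\{s,t\} \subseteq S$ produces a $W(A_2)$-graph on the same underlying set $\mathfrak{C}$ with the same edge weights but labels $I_{\{s,t\}}(c) = I(c)\cap\{s,t\}$, so that $x$ acquires label $\{s\}$ and $y$ acquires label $\{t\}$. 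It therefore suffices to treat the case $W=W(A_2)$, $S=\{s,t\}$.

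Inside the rank-two case partition $\mathfrak{C}$ as $X_\emptyset \sqcup X_s \sqcup X_t \sqcup X_{st}$ according to the restricted label, so that $x \in X_s$ and $y \in X_t$. Since $\tau_2(Y) = Y^2 - 1$, relation $(\alpha)$ applied to $I=J=\{s\}$ reads $P^{2}_{\{s\}\{s\}}(s,t) = E_{\{s\}}$. The only intermediate label $K$ contributing to this path sum must satisfy $s\notin K$ and $t\in K$, which in rank two leaves only $K=\{t\}$; hence the relation collapses to
\[
  X_{\{s\}\{t\}}^{s}\, X_{\{t\}\{s\}}^{t} \;=\; E_{\{s\}}
\]
in $\OmegaGy(W(A_2))$. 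Evaluating in the representation $\omega$ on an arbitrary $x \in X_s$ gives, for each such $x$,
\[
  \sum_{x' \in X_s}\,\sum_{y' \in X_t} m_{x'y'}^{s}\, m_{y'x}^{t}\, x' \;=\; x,
\]
i.e.\ $AB = I$ where $A=(m_{x'y'}^{s})_{x'\in X_s,\,y'\in X_t}$ and $B=(m_{y'x'}^{t})_{y'\in X_t,\,x'\in X_s}$. The analogous $(\alpha)$-relation with $(s,t)$ interchanged yields $BA=I$.

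Thus $A$ and $B$ are mutually inverse nonnegative-integer matrices. An elementary counting argument then forces each of them to be a permutation matrix: from $1=(AB)_{xx}=\sum_{y'} A_{xy'}B_{y'x}$ with all summands in $\mathbb{N}$, exactly one summand equals $1$ and the rest vanish. Consequently every entry of $A$ and of $B$ lies in $\{0,1\}$, and there is a bijection $\pi : X_s \to X_t$ with $m_{xy}^{s} = \delta_{y,\pi(x)} = m_{yx}^{t}$ for all $x\in X_s, y\in X_t$. Pulling this back through the parabolic restriction gives the claim.

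The one technical point worth verifying is the collapse of the intermediate-vertex sum in $P^{2}_{\{s\}\{s\}}(s,t)$ to the single term $K=\{t\}$; this is precisely what makes the rank-two reduction so effective, and is the reason the argument cleanly yields the stronger conclusion that nonzero weights between the blocks $X_s$ and $X_t$ are organised by a bijection, rather than only the qualitative bound $m_{xy}^{s}\in\{0,1\}$. In particular, before the parabolic reduction, the analogous identity $P^{2}_{II}(s,t)=E_I$ in higher rank would pick up additional nonnegative-integer contributions from intermediate labels $K\supsetneq\{t\}$, which would obstruct the direct matrix inversion argument.
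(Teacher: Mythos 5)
Your proposal follows essentially the same three-step strategy as the paper: reduce to $W_{\{s,t\}}\cong W(A_2)$ via parabolic restriction, identify the blocks $A=(m_{x'y'}^s)$ and $B=(m_{y'x'}^t)$ as mutually inverse matrices over $\IN$ using the $(\alpha)$-relation $X_{\{s\}\{t\}}^s X_{\{t\}\{s\}}^t = E_{\{s\}}$ and its mirror image, and conclude that mutually inverse $\IN$-matrices are permutation matrices; the paper just presents the matrix lemma as its Schritt~1 and the reduction as its Schritt~3, while you reorder.

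One step is stated too quickly, however. From $1=(AB)_{xx}=\sum_{y'}A_{xy'}B_{y'x}$ you correctly infer that exactly one summand equals $1$ and the rest vanish, so there is a unique $y'=f(x)$ with $A_{x,f(x)}=B_{f(x),x}=1$. But for $y''\neq f(x)$ this only forces $A_{xy''}B_{y''x}=0$, i.e.\ at least one of the two factors vanishes — it does \emph{not} yet bound the other factor. The conclusion ``consequently every entry of $A$ and of $B$ lies in $\{0,1\}$'' therefore does not follow from the diagonal equations alone. To finish you also need the off-diagonal relations $(AB)_{xk}=0$ for $k\neq x$: fixing $x$ and taking $j=f(x)$, the term $A_{xj}B_{jk}$ appears in $(AB)_{xk}=0$ with $A_{xj}=1$, so $B_{jk}=0$ for all $k\neq x$ — hence the row $B_{f(x),\bullet}$ has a single nonzero entry, equal to $1$, and by the symmetric argument (using $BA=I$ and the corresponding map $g$ with $g\circ f=\id$) the same holds for $A$. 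This is exactly the content of the paper's Schritt~1, which you should either reproduce or cite explicitly rather than compressing into a single ``consequently''. Once this is fixed the argument is complete and matches the paper's.
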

\begin{proof}
Schritt 1: Wir zeigen, dass die einzigen Matrizen $A,B\in\mathbb{N}^{d\times d}$ mit $AB=1=BA$ zueinander inverse Permutationsmatrizen sind.

Aus $1 = \sum_{j=1}^d A_{ij} B_{ji}$ folgt nämlich, dass es für alle $i$ genau ein $j=f(i)$ mit $A_{ij}=B_{ji}=1$ gibt und $A_{ik}=0$ oder $B_{ki}=0$ für $k\neq j$. Aus Symmetriegründen gibt es auch für alle $j$ genau ein $i=g(j)$ mit $A_{ji}=B_{ij}=1$ und $A_{jk}=0$ oder $B_{kj}=0$ für $k\neq i$. Weil nun $A_{i,f(i)}=B_{f(i),i}=1$ ist, muss $g(f(i))=i$ sein und symmetrisch $f(g(j))=j$, d.\,h. $f$ und $g$ sind zueinander inverse Permutationen von $\Set{1,\ldots,d}$.

Fixiere nun $i$. Dann gilt $0 = \sum_{j=1}^d A_{ij} B_{jk}$ für alle $k\neq i$. Da für $j=f(i)$ ja $A_{ij}=1$ ist, muss $B_{jk}=0$ sein, d.\,h. es gilt $B_{f(i),k}=0$ für alle $k\neq i$. Mit anderen Worten gibt es in $B$ in jeder Zeile ($f$ ist surjektiv) genau einen von Null verschiedenen Wert und dieser ist $1$. Aus Symmetriegründen gilt das auch für $A$, d.\,h. $A$ und $B$ sind zueinander inverse Permutationsmatrizen, wie behauptet.

\medbreak
Schritt 2: Wir zeigen die Behauptung für $W=A_2$.

Wenn der $W$"~Graph natürliche Kantengewichte hat, dann hat er insbesondere konstante Kantengewichte und wir können ihn als Matrixdarstellung $\omega: \OmegaGy\to\End_\IZ(\IZ^{(\mathfrak{C})})$ auffassen. Die einzige transversale Kante im Kompatibilitätsgraphen von $A_2$ ist $\Set{1}\leftrightarrows\Set{2}$ und es gelten daher in $\OmegaGy$ die $(\alpha)$-Relationen
\[E_1 = X_{1,2} X_{2,1}\quad\text{und}\quad E_2 = X_{2,1} X_{1,2}.\]
Ordnen wir $\mathfrak{C}$ derart, dass die Ecken mit Label $\emptyset$, $\Set{1}$, $\Set{2}$ bzw. $\Set{1,2}$ in dieser Reihenfolge auftauchen, dann sind also
\[\omega(X_{1,2})=\begin{pmatrix}
0 &   &   &   \\
0 & 0 & A &   \\
0 & 0 & 0 &   \\
0 & 0 & 0 & 0 
\end{pmatrix}
\quad\text{und}\quad
\omega(X_{2,1})=\begin{pmatrix}
0 &   &   &   \\
0 & 0 & 0 &   \\
0 & B & 0 &   \\
0 & 0 & 0 & 0 
\end{pmatrix}\]
Matrizen über $\IN$ mit $AB=1$ und $BA=1$. Aus den Überlegungen in Schritt $1$ folgt also, dass $A$ und $B$ 0-1-Matrizen sind und $B=A^T$ gilt. Die Kantengewichte von Kanten $\Set{1} \leftarrow \Set{2}$ stehen in $A$, die von Kanten $\Set{2}\leftarrow\Set{1}$ in $B$, also gilt die Behauptung.

\medbreak
Schritt 3: Der allgemeine Fall.

Wenn eine transversale Kante $I(x) \leftrightarrows I(y)$ im Kompatibilitätsgraphen existiert, dann gibt es $s,t\in S$ mit ${s\in I(x)\setminus I(y)}$, $t\in I(y)\setminus I(x)$ und $m_{st}=3$. Dann ist die parabolische Untergruppe $W_{\Set{s,t}}$ vom Typ $A_2$. Wir können dann die parabolische Restriktion $(\mathfrak{C},I',m')$ des $W$"~Graphen zu einem $W_{\Set{s,t}}$-Graphen betrachten (siehe \ref{wgraphs:constructions}). Dies ist ein $A_2$-Graph mit nichtnegativen Kantengewichten und $I'(x)=\Set{s}$, $I'(y)=\Set{t}$. Aus dem vorherigen Schritt folgt $m_{xy}=m_{yx}\in\Set{0,1}$.
\end{proof}

\begin{remark}
Man beachte, dass Kazhdan-Lusztig-$W$"~Graphen von irreduziblen Coxeter"=Gruppen im Einparameterfall (welcher für ADE-Gruppen automatisch gegeben ist für alle irreduziblen Komponenten von $(W,S)$) natürliche Kantengewichte haben. Dies ist gerade die Aussage von Lusztigs Positivitätsvermutung (Vermutung \ref{KL:conj:positivity}).
\end{remark}

\begin{definition}
\index{symbols}{$\leq$}
Ist $A$ eine Algebra, so definieren wir die partielle Ordnung $\leq$ auf der Menge der Idempotente von $A$ durch $e\leq f :\iff e=ef=fe$.
\end{definition}

\begin{remark}
Wenn $e,f$ Idempotente mit $e\leq f$ sind, dann ist auch $e':=f-e$ ein Idempotent, $e'\leq f$ mit $e'e=ee'=0$ und $f=e+e'$. Jedes Idempotent $\leq f$ liefert also automatisch eine orthogonale Zerlegung von $f$.
\end{remark}

\begin{convention}
\index{terms}{Einparameterfall|(}
Wir nehmen ab jetzt für den Rest des Kapitels an, dass wir uns im Einparameterfall befinden, dass also $\Gamma=\IZ$ und $L(s)=1$ für alle $s\in S$ ist.
\end{convention}

\begin{remark}
Der folgende Beweis ist Vorbild für alle restlichen Beweise dieses Kapitels. Er zeigt die Grundidee, aus den in Satz \ref{wgraph_alg:relations} hergeleiteten Relationen von $\OmegaGy$ Strukturinformationen über $W$"~Graphen zu destillieren, um so Gyojas Vermutung zu beweisen.
\end{remark}

\begin{theorem}
\index{terms}{Vermutung!von Gyoja}\index{terms}{Vermutung!$W$-Graph-Zerlegungs-}
Sei $(W,S)$ vom Typ $A_3$ und $V$ ein einfacher $\OmegaGy(W,S)$"~Modul.
\begin{enumerate}
	\item Der von $V$ im Sinne von Definition \ref{wgraph:def:Q_V} induzierte Teilgraph des Kompatibilitätsgraphen $\mathcal{Q}_{A_3}$ (siehe Abbildung \ref{fig:wgraph_alg:comp_graphs}) ist einer der folgenden:
	\begin{enumerate}
		\item \begin{tikzpicture}
			\node[circle,inner sep=0,minimum size=15pt,draw=black!70,thick] (E_empty) at (1,0) {};
			\node[font=\scriptsize] at (E_empty) {$\emptyset$};
		\end{tikzpicture}
		\item \begin{tikzpicture}
			\node[circle,inner sep=0,minimum size=15pt,draw=black!70,thick] (E_1) at (0,0) {};
			\node[circle,inner sep=0,minimum size=15pt,draw=black!70,thick] (E_2) at (1,0) {};
			\node[circle,inner sep=0,minimum size=15pt,draw=black!70,thick] (E_3) at (2,0) {};
			\node[font=\scriptsize] at (E_1) {$1$};
			\node[font=\scriptsize] at (E_2) {$2$};
			\node[font=\scriptsize] at (E_3) {$3$};
			\path[ultra thick,black] (E_2) edge (E_1) edge (E_3);
		\end{tikzpicture}
		\item \begin{tikzpicture}
			\node[circle,inner sep=0,minimum size=15pt,draw=black!70,thick] (E_2) at (0,0) {};
			\node[circle,inner sep=0,minimum size=15pt,draw=black!70,thick] (E_13) at (1,0) {};
			\node[font=\scriptsize] at (E_2) {$2$};
			\node[font=\scriptsize] at (E_13) {$13$};
			\path[ultra thick,black] (E_2) edge (E_13);
		\end{tikzpicture}
		\item \begin{tikzpicture}
			\node[circle,inner sep=0,minimum size=15pt,draw=black!70,thick] (E_23) at (0,0) {};
			\node[circle,inner sep=0,minimum size=15pt,draw=black!70,thick] (E_13) at (1,0) {};
			\node[circle,inner sep=0,minimum size=15pt,draw=black!70,thick] (E_12) at (2,0) {};
			\node[font=\scriptsize] at (E_23) {$23$};
			\node[font=\scriptsize] at (E_13) {$13$};
			\node[font=\scriptsize] at (E_12) {$12$};
			\path[ultra thick,black] (E_13) edge (E_23) edge (E_12);
		\end{tikzpicture}
		\item \begin{tikzpicture}
			\node[circle,inner sep=0,minimum size=15pt,draw=black!70,thick] (E_123) at (0,0) {};
			\node[font=\scriptsize] at (E_123) {$123$};
		\end{tikzpicture}
	\end{enumerate}
	\item Gyojas Vermutung ist für Gruppen vom Typ $A_3$ und über allen Körpern $\IQ(v)\subseteq K$ richtig.
\end{enumerate}
\end{theorem}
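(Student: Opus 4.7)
The strategy has three stages: extract the relations of $\OmegaGy(A_3)$ explicitly from Theorem \ref{wgraph_alg:relations}, classify the possible supports $V_M$ for simple modules $V$ (part (a)), and count simple modules to obtain Gyoja's conjecture via the previous theorem (part (b)).

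For part (a), I would first combine the information available. Lemma \ref{wgraph:def:Q_V} shows that for simple $V$ the subgraph $\mathcal{Q}_V$ is strongly connected inside the compatibility graph drawn in Figure \ref{fig:wgraph_alg:comp_graphs}. From the $(\alpha)$-relations for the two pairs $(s,t)$ with $\ord(st)=3$ I would derive the "matching" identities at the low level: for example, $P^2_{\{1\},\{1\}}(1,2)=X_{12}X_{21}=E_1$ (the would-be contribution through $K=\{2,3\}$ being killed by $(1,3)$-incompatibility), while $P^2_{\{2\},\{2\}}(2,1)=X_{21}X_{12}+X_{2,13}X_{13,2}=E_2$, and similarly for the $(2,3)$-pair. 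Dually, $P^2_{\{1,2\},\{1,2\}}(2,3)=X_{12,13}X_{13,12}=E_{12}$ and analogues give the matching at the high level. These identities immediately force $\{1\},\{2\},\{3\}$ to appear jointly in $V_M$, and likewise $\{12\},\{13\},\{23\}$. Next I would exploit the "cross" $(\alpha)$-relations at $I\neq J$: $P^2_{\{1\},\{1,3\}}(1,2)=X_{12}X_{2,13}=0$ (again the other $K$-terms die by $(1,3)$-incompatibility), and symmetric analogues. Since $X_{12}$ has a right inverse $X_{21}$ (so is injective on $E_2V$), the vanishing $X_{12}X_{2,13}=0$ forces $X_{2,13}=0$ on $E_{13}V$ whenever $\{1\}\in V_M$. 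Feeding this into $X_{13,2}X_{2,13}+X_{13,23}X_{23,13}=E_{13}$ then forces $\{23\}\in V_M$, and iterating one is pushed into the high-level configuration. Carrying out the parallel chain at the top level (via the involution $\delta$ from Lemma \ref{wgraph_alg:duality}) shows that the supports $\{1,2,3\}$ and $\{12,13,23\}$ cannot coexist either, leaving exactly the five configurations in the statement.

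For part (b), I would verify that each configuration carries a unique simple module. For $\{\emptyset\}$ and $\{123\}$ the corner algebra $E_I\OmegaGy E_I$ collapses to $K\cdot E_I$, giving a unique one-dimensional simple. For $\{2,13\}$, the surviving terms of the above relations yield $X_{2,13}X_{13,2}=E_2$ and $X_{13,2}X_{2,13}=E_{13}$ on $V$, so $E_2V\cong E_{13}V$ via mutually inverse isomorphisms; picking $0\neq u\in E_2V$ one checks that $Ku+KX_{13,2}u$ is $\OmegaGy$-invariant, forcing $\dim V=2$ and uniqueness. For $\{1,2,3\}$, the absence of $\{13\}$ collapses the matching to $X_{21}X_{12}=E_2$, $X_{32}X_{23}=E_3$, whence $\dim E_1V=\dim E_2V=\dim E_3V=:d$; then for $0\neq u\in E_1V$ the space $Ku+KX_{21}u+KX_{32}X_{21}u$ is $\OmegaGy$-invariant (all remaining generators act as zero on it), forcing $d=1$ and a unique three-dimensional simple; the case $\{12,13,23\}$ is dual under $\delta$. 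This yields $|\Irr(K\OmegaGy)|\leq 5$. On the other hand the factorisation $\phi=q\circ\iota$ of Satz \ref{lusztig_hom:ex_of_w_graphs} (applicable since $A_3$ lies in the single-parameter Weyl-group setting where $(\clubsuit)$ and $(\spadesuit)$ are known) yields a surjection $K\OmegaGy\twoheadrightarrow KJ$, and $KJ\cong KH$ is split semisimple of class number $|\Irr(S_4)|=5$, giving $|\Irr(K\OmegaGy)|\geq 5$. Equality is condition (c) of the equivalence theorem, proving Gyoja's conjecture for $A_3$.

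The main obstacle is the exclusion of the "mixed" configurations in part (a), notably $\{1,2,3,13\}$ and the full middle level $\{1,2,3,12,13,23\}$. Ruling these out requires carefully chaining the cross-relations $X_{12}X_{2,13}=0$ with the coupling identities $X_{13,2}X_{2,13}+X_{13,23}X_{23,13}=E_{13}$ and exploiting the simplicity of $V$ to turn "$X$ is nilpotent on $E_{13}V$" into actual vanishing; the bookkeeping of which terms survive in each $P^2_{IJ}$-sum (depending sensitively on which $(1,3)$-compatibility obstructions intervene) is what makes the argument delicate rather than merely formal.
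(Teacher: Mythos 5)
There is a genuine gap in your part (a), and it is precisely the step you yourself flag as "delicate." The claim that $X_{12}X_{21}=E_1$ makes $X_{12}$ \emph{injective} on $E_2V$ is false: a right inverse makes $X_{12}$ surjective onto $E_1V$, and it is $X_{21}$ (having the left inverse $X_{12}$) that is injective. The kernel of $X_{12}|_{E_2V}$ is exactly $(E_2-X_{21}X_{12})V$, which is in general nonzero. So from $X_{12}X_{2,13}=0$ you may only conclude that $\operatorname{im}(X_{2,13})\subseteq\ker(X_{12})$, not that $X_{2,13}$ vanishes; the chain of implications you feed into $X_{13,2}X_{2,13}+X_{13,23}X_{23,13}=E_{13}$ then does not close, and the mixed configurations such as $\{1,2,3,13\}$ are not ruled out. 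Since your bound $|\Irr(K\OmegaGy)|\leq 5$ in part (b) rests on this classification, the gap propagates there as well.

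The paper repairs exactly this by introducing the complementary idempotent. From the two $(\alpha)$-relations $E_2=X_{2,1}X_{1,2}+X_{2,13}X_{13,2}=X_{2,3}X_{3,2}+X_{2,13}X_{13,2}$ one obtains an \emph{orthogonal decomposition} $E_2=F_2'+F_2''$ with $F_2'=X_{2,1}X_{1,2}=X_{2,3}X_{3,2}$ and $F_2''=X_{2,13}X_{13,2}$; the extra $(\alpha)$-relation $X_{13,2}X_{2,1}=0$ (with $X_{23,1}=0$) then shows $X_{13,2}F_2'=0$, so $X_{13,2}$ is supported on the $F_2''$-block only, and dually $X_{2,13}$ lands in $F_2''V$. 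Applying $\delta$ gives the analogous splitting $E_{13}=F_{13}'+F_{13}''$. Feeding these splittings into the path-algebra picture yields the refined quiver $\mathcal{Q}'$ (Figure \ref{fig:gyoja:better_compatibility_graph_A3}), whose strongly connected components are precisely the five subgraphs of the statement, and Lemma \ref{wgraph:def:Q_V} then gives part (a) immediately. Your argument for part (b) is a different but legitimate route (counting simple modules, i.e., item (c) of the equivalence theorem, instead of bounding the radical codimension, item (f)); once the idempotent splitting is in place it could be completed, though the paper's route via explicit surjections onto matrix algebras and nilpotency of $\ker(\alpha)$ is more uniform and prepares the ground for the $A_4$ and $B_3$ cases that follow.
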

\begin{proof}
Die Relationen aus \ref{wgraph_alg:relations} induzieren nichttriviale Einschränkungen an die Gestalt eines $W$"~Graphen. Für den Typ $A_3$ lautet z.B. die Relation des Typs $(\alpha)$ mit $I:=\Set{1}$ und $J:=\Set{2}$:
\[X_{1,2} X_{2,1} - E_1 = 0\]
Das heißt $E_1 V \xrightarrow{X_{2,1}} E_2 V \xrightarrow{X_{1,2}} E_1 V = \id_{E_1 V}$, d.\,h. es muss in jedem $A_3$"~Graphen mindestens so viele Ecken mit Label $\Set{2}$ geben wie es Ecken mit Label $\Set{1}$ gibt und $F_2':=X_{2,1} X_{1,2}$ ist ein Idempotent $\leq E_2$ derart, dass $\dim F_2' V$ die Anzahl der Ecken mit Label $\Set{1}$ angibt.

Indem wir den nichttrivialen Graphautomorphismus von $A_3$ anwenden, erhalten wir, dass dasselbe gelten muss, wenn wir $1$ durch $3$ ersetzen.

Zwei weitere Relationen vom Typ $(\alpha)$ lauten
\[X_{2,1} X_{1,2} + X_{2,13} X_{13,2} - E_2 = 0 \quad\text{und}\quad X_{2,3} X_{3,2} + X_{2,13} X_{13,2} - E_2 = 0,\]
woraus wir durch Vergleich der beiden Relationen $F_2'=X_{2,1} X_{1,2}=X_{2,3} X_{3,2}$ erhalten. Insbesondere sind also die Dimensionen von $E_1 V$ und $E_3 V$ gleich, d.\,h. in jedem $A_3$"~Graphen muss es gleich viele Ecken mit Label $\Set{1}$ und $\Set{3}$ geben.

Weiter muss $F_2'':=X_{2,13} X_{13,2}=E_2-F_2'$ ein weiteres Idempotent sein, das zu $F_2'$ orthogonal ist. Aus der Zerlegung $E_2 = F_2' + F_2''$ erkennen wir, dass die Ecken mit Label $\Set{2}$, ggf. nach einem Basiswechsel, in zwei disjunkte, eventuell leere Klassen zerfallen: Diejenigen, die eingehende Kanten von und zu Ecken haben, die mit $\Set{1}$ bzw. $\Set{3}$ gelabelt sind, und diejenigen, die eingehende Kanten von mit $\Set{1,3}$ gelabelten Ecken haben.

Aus der $(\alpha)$-Relation
\[X_{13,2} X_{2,1} + X_{13,23} \underbrace{X_{23,1}}_{=0} = 0\]
folgt zusätzlich $X_{13,2} F_2' =0$, d.\,h. dass auch keine Kanten von Ecken in $F_2' V$ zu Ecken in $E_{13} V$ gehen.

Indem wir den Antiautomorphismus $\delta$ anwenden, erhalten wir analoge Aussagen für die zweielementigen Mengen: $E_{13}$ zerfällt in zwei orthogonale Idempotente, nämlich ${F_{13}' := X_{13,12} X_{12,13} = X_{13,23} X_{23,13}}$ und $F_{13}'':=X_{13,2} X_{2,13}$. Die Ecken mit Label $\Set{1,3}$ zerfallen ggf. nach einem Basiswechsel ebenfalls in zwei disjunkte Klassen: Diejenigen, die Kanten von und nach mit $\Set{2,3}$ und $\Set{1,2}$ gelabelten Ecken haben, sowie diejenigen, die Kanten von und nach mit $\Set{2}$ gelabelten Ecken haben.

\medbreak
Indem wir die Zerlegung $E_2=F_2'+F_2''$ und $E_{13} = F_{13}'+F_{13}''$ benutzen, können wir die Pfadalgebra-Struktur von $\OmegaGy$, die im Kompatibilitätsgraphen kodiert ist, verbessern und genauer aufschlüsseln. Wir erhalten den verbesserten Kompatibilitätsgraphen $\mathcal{Q}'$ in Graphik \ref{fig:gyoja:better_compatibility_graph_A3} (wobei aus Gründen der Übersichtlichkeit nur die Inklusionskanten zwischen ein- und zweielementigen Teilmengen dargestellt sind).

\begin{figure}[htp]
	\index{terms}{Kompatibilitätsgraph!verbesserter}
	\centering
	\begin{tikzpicture}

\node[Vertex] (E_empty) at (0,0) {$\emptyset$};

\node[Vertex] (E_1) at (-1.5,1) {$1$};
\node[Vertex] (F_2a)at ( 0.0,1) {$2$};
\node[Vertex] (E_3) at ( 1.5,1) {$3$};

\node[Vertex] (F_2b) at (0,2) {$2$};
\node[Vertex] (F_13a) at (0,5) {$13$};

\node[Vertex] (E_23) at (-1.5,6) {$23$};
\node[Vertex] (F_13b)at ( 0.0,6) {$13$};
\node[Vertex] (E_12) at ( 1.5,6) {$12$};

\node[Vertex] (E_123) at (0,7) {$123$};

\path[EdgeI]
	(E_1) edge (E_12)
	(E_1) edge (F_13a)
	(E_1) edge (F_13b)
	(F_2a) edge (E_12)
	(F_2a) edge (E_23)
	(F_2b) edge (E_12)
	(F_2b) edge (E_23)
	(E_3) edge (F_13a)
	(E_3) edge (F_13b)
	(E_3) edge (E_23);

\path[EdgeT]
	(E_1) edge (F_2a)
	(F_2a) edge (E_3)
	(F_2b) edge (F_13a)
	(E_23) edge (F_13b)
	(F_13b) edge (E_12);
	
\node (lambda_4)   at (4,0) {$(4)$};
\node (lambda_31)  at (4,1) {$(3,1)$};
\node (lambda_22)  at (4,3.5) {$(2,2)$};
\node (lambda_211) at (4,6) {$(2,1^2)$};
\node (lambda_1111)at (4,7) {$(1^4)$};

\end{tikzpicture}
	\caption{Verbesserte Zerlegung von $\OmegaGy(A_3)$ als Pfadalgebra}
	\label{fig:gyoja:better_compatibility_graph_A3}
\end{figure}

Aus Lemma \ref{wgraph:def:Q_V} folgt nun Behauptung a., denn ein einfacher Modul induziert einen stark zusammenhängenden Teilgraphen dieses verbesserten Kompatibilitätsgraphen. Dafür kommen jetzt nur noch die fünf in der Behauptung genannten Teilgraphen in Frage. In der Tat kommen alle diese Möglichkeiten wirklich bei den einfachen $A_3$"~Graphen vor. Davon kann man sich überzeugen, indem man für die fünf irreduziblen Charaktere von $A_3$ mittels Lemma \ref{wgraphs:unique_I_inv} die Vielfachheiten der Indexmengen in den dazugehörigen $A_3$"~Graphen mit diesen Charakteren ausrechnet.

\bigbreak
Gyojas Vermutung für $A_3$-Graphen können wir mit dieser Erkenntnis beweisen. Wir wissen bereits, dass $\OmegaGy$ ein Quotient von $\IZ\mathcal{Q}'$ ist. Nun beweisen wir, dass die (nicht unitären) Unterringe von $\OmegaGy$, die von den stark zusammenhängenden Teilgraphen induziert werden, zu Matrixalgebren isomorph sind. In der einen Richtung sind die fünf $W$"~Graph"=Darstellungen der Kazhdan-Lusztig-Linkszellen zusammengenommen ein Algebrahomomorphismus $\OmegaGy \to \IZ \times \IZ^{3\times 3} \times \IZ^{2\times 2} \times \IZ^{3\times 3} \times \IZ$.

Für die andere Richtung benutzen wir, dass sich $\IZ^{d\times d}$ durch Erzeuger und Relationen schreiben lässt:
\[\IZ^{d\times d} \isomorphic \frac{\IZ\langle e_{ij} | 1\leq i,j\leq d\rangle}{(e_{ij} e_{kl} = \delta_{jk} e_{il}, 1={\textstyle\sum_i} e_{ii})}\]

Durch die Abbildungsvorschriften
\[e_{11} \mapsto E_{123}\]
\[\begin{pmatrix}
e_{11} & e_{12} & e_{13} \\
e_{21} & e_{22} & e_{23} \\
e_{31} & e_{32} & e_{33}
\end{pmatrix} \mapsto \begin{pmatrix}
E_{23} & X_{23,13} & X_{23,13} X_{13,12} \\
X_{13,23} & F_{13}' & X_{13,12} \\
X_{12,13} X_{13,23} & X_{12,13} & E_{12}
\end{pmatrix}\]
\[\begin{pmatrix}
e_{11} & e_{12} \\
e_{21} & e_{22} \\
\end{pmatrix} \mapsto \begin{pmatrix}
F_2'' & X_{2,13} \\
X_{13,2} & F_{13}''
\end{pmatrix}\]
\[\begin{pmatrix}
e_{11} & e_{12} & e_{13} \\
e_{21} & e_{22} & e_{23} \\
e_{31} & e_{32} & e_{33}
\end{pmatrix} \mapsto \begin{pmatrix}
E_1 & X_{1,2} & X_{1,2} X_{2,3} \\
X_{2,1} & F_2' & X_{2,3} \\
X_{3,2} X_{2,1} & X_{3,2} & E_3
\end{pmatrix}\]
\[e_{11} \mapsto E_\emptyset\]
wird ein Algebrahomomorphismus $\IZ \times \IZ^{3\times 3} \times \IZ^{2\times 2} \times \IZ^{3\times 3} \times \IZ \to \OmegaGy$ gegeben, der linksinvers zum obigen ist. Dabei haben wir Lemma \ref{wgraphs:weights_simply_laced} benutzt. Daher induzieren die in Abbildung \ref{fig:gyoja:better_compatibility_graph_A3} mit $\lambda \vdash 4$ gekennzeichneten starken Zusammenhangskomponenten Matrixalgebren $\IZ^{d_\lambda\times d_\lambda}$ in $\OmegaGy$.

Indem wir nun mit $K$ tensorieren, ergibt sich, dass die fünf gewählten $W$"~Graph-Dar\-stel\-lung\-en eine Surjektion $\alpha: K\OmegaGy\twoheadrightarrow K\times K^{3\times 3}\times K^{2\times 2}\times K^{3\times 3}\times K$ induzieren.

Außerdem erhalten wir, dass $\ker(\alpha)$ als Ideal von den Kanten in $\mathcal{Q}'$ erzeugt wird, die zwischen verschiedenen starken Zusammenhangskomponenten verlaufen. Die Komponenten sind in Abbildung \ref{fig:gyoja:better_compatibility_graph_A3} mit den Partitionen $\lambda \vdash 4$ gekennzeichnet worden. Da nur Kanten von $\lambda$ zu $\mu$ verlaufen, wenn $\lambda \unrhd \mu$ ist, ist $\ker(\alpha)^m$ von Elementen der Form $X_{I_0 I_1} a_1 X_{I_1 I_2} a_2 \ldots a_{m-1} X_{I_{m-1} I_m}$ erzeugt, wobei stets $I_j$ in Komponente $\lambda_j$ liegt und $\lambda_0 \triangleleft \lambda_1 \triangleleft \ldots \triangleleft \lambda_m$ gilt. Daher muss $\ker(\alpha)^5=0$ sein, weil es keine solche Ketten der Länge fünf gibt. Insbesondere ist daher $\ker(\alpha)$ im Jacobson"=Radikal von $K\OmegaGy$ enthalten. Das Radikal hat also maximal die Kodimension $1+3^2+2^3+3^2+1=\abs{W}$ und somit gilt Gyojas Vermutung.
\end{proof}

\subsection{Die Zerlegungsvermutung}
\index{terms}{Vermutung!$W$-Graph-Zerlegungs-|(}

\begin{remark}
Die Elemente
\begin{align*}
	F^{(1^4)}   &:= E_{123} \\
	F^{(2,1^2)} &:= E_{23}+F_{13}'+E_{12} \\
	F^{(2^2)}   &:= F_{13}''+F_2'' \\
	F^{(3,1)}   &:= E_1+F_2'+E_3 \\
	F^{(4)}     &:= E_\emptyset
\end{align*}
aus dem vorangegangenen Beweis sind paarweise orthogonale Idempotente von $\OmegaGy(A_3)$, die zu $1$ summieren und ${F^\lambda \OmegaGy F^\lambda \isomorphic \IZ^{d_\lambda\times d_\lambda}}$ erfüllen.

Mindestens für Coxeter"=Gruppen vom Typ $A$ könnte es möglich sein, mit Hilfe geschickter Anwendung der Relationen in Satz \ref{wgraph_alg:relations} eine Zerlegung $1=\sum_{\lambda\in\Lambda} F^\lambda$ in orthogonale Idempotente zu konstruieren, für die analog $F^\lambda \OmegaGy F^\mu\neq 0 \implies \lambda \unlhd \mu$ und $F^\lambda \OmegaGy F^\lambda \isomorphic \IZ^{d_\lambda\times d_\lambda}$ gelten sollte. In Kürze werden wir das etwa für $A_2$ und $A_4$ auch tun.

\medbreak
Eine völlig analoge Aussage für alle (endlichen) Coxeter"=Gruppen zu vermuten, ist wohl hingegen zu viel verlangt, da der Beweis nur deshalb funktioniert, weil für $A_3$ die Isomorphietypen $\lambda\in\Irr(W)$ bereits eindeutig durch die Mengen $\Set{I | E_I V_\lambda \neq 0}$ bestimmt sind. Bei den exzeptionellen Typen stimmt das nicht mehr. Die beiden vierdimensionalen, irreduziblen Darstellungen von $H_3$ haben beispielsweise die gleichen Eckenlabel (siehe \cite{geckpfeiffer}). Das lässt eine explizite Konstruktion der entsprechenden Idempotente $F^{4_r}$ und $F^{4_r'}$ aus den Pfadalgebra"=Erzeugern von $\OmegaGy(H_3)$ allein mit dem obigen Ansatz unmöglich erscheinen.

Dazu kommt, dass selbst im Typ $A$ Indexmengen auch mit höheren Vielfachheiten als Eins auftreten können. Damit das mit dem Isomorphismus $F^\lambda \OmegaGy F^\lambda \isomorphic \IZ^{d_\lambda\times d_\lambda}$ verträglich sein kann, sollte $(E_I F^\lambda) \OmegaGy (E_I F^\lambda)$ selbst schon eine Matrixalgebra $\IZ^{k\times k}$ sein, wobei $k$ die Vielfachheit von $I$ in $W$"~Graphen mit Isomorphietyp $\lambda$ ist. Wie das zustande kommen soll, ist auch a priori unklar. Da jedoch in keinem $W$"~Graphen Kanten zwischen Ecken mit demselben Eckenlabel existieren, bleibt die Hoffnung, verschiedene Ecken mit gleichen Labeln durch die Label benachbarter Ecken (und, sofern nötig, deren Nachbarn und deren Nachbarn ...) zu unterscheiden.
\end{remark}

\begin{conjecture}[$W$-Graph-Zerlegungsvermutung]
\index{symbols}{Flambda@$F^\lambda$}\index{symbols}{Z1-Z7}
Trotz der erwähnten Probleme kann man natürlich hoffen, dass $\OmegaGy(W,S)$ für eine möglichst große Klasse der endlichen Coxeter"=Gruppen $(W,S)$ die folgenden (oder zumindest ähnliche) Eigenschaften erfüllt.

Wenn $k\subseteq\IC$ ein guter Ring im Sinne von \ref{J_alg:def:L_good} ist, sollte es Elemente $F^\lambda\in k\OmegaGy$ für $\lambda\in\Lambda$ geben, welche zumindest die ersten vier der folgenden Eigenschaften haben. Die zusätzlichen Eigenschaften könnten sich als nützlich bei der Konstruktion dieser Elemente erweisen:
\begin{enumerate}[label=(Z\arabic*),leftmargin=35pt]
	\item Die $F^\lambda$ sind eine Zerlegung der $1$ in orthogonale Idempotente:
	\[\smash{1=\sum_\lambda F^\lambda} \quad\text{und}\quad F^\lambda F^\mu = \delta_{\lambda,\mu} F^\lambda\]
	\item Es gibt eine gemeinsame Verfeinerung dieser Idempotentzerlegung und der Zerlegung $1=\sum_I E_I$:
	\[F^\lambda E_I = E_I F^\lambda =: F_I^\lambda\]
	\item Zwischen den $F^\lambda$ existieren nur Kanten "`nach unten"': $F^\lambda x_s F^\mu \neq 0 \implies \lambda \preceq \mu$ für alle $s\in S$ (siehe \ref{J_alg:def:order_Lambda} für die Definition der Ordnung auf $\Irr(W)$).
	\item Es gibt surjektive Homomorphismen $\psi_\lambda: k^{d_\lambda\times d_\lambda}\twoheadrightarrow F^\lambda k\OmegaGy F^\lambda$ von $k$"~Algebren.
	\item Für alle $I,J$ existieren $\sigma_\lambda\in k^\times\cup\Set{0}$ mit $X_{IJ} X_{JI} = \sum_{\lambda\in\Lambda} \sigma_\lambda F_I^\lambda$.
	\item $\delta(F^\lambda) = F^{\lambda^\dagger}$.
	\item $\alpha(F^\lambda) = F^{\lambda^\alpha}$ für alle Graphautomorphismen $\alpha$, wobei $\lambda^\alpha$ der Isomorphietyp der mit $\alpha$ getwisteten Darstellung sei.
\end{enumerate}
\end{conjecture}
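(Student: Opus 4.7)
Der Ansatz orientiert sich unmittelbar am gerade bewiesenen $A_3$-Beispiel: Die Relationen vom Typ $(\alpha)$ aus Satz \ref{wgraph_alg:relations} erzwingen, dass f�r geeignete Paare $I,J\subseteq S$ die Produkte $X_{IJ}X_{JI}$ nichttriviale Idempotente $\leq E_I$ liefern, und zwar genau so viele linear unabh�ngige, wie es $(\alpha)$-Relationen gibt, in denen $X_{IJ}X_{JI}$ als h�chster Term auftritt. Durch systematisches Ausnutzen aller $(\alpha)$-Relationen wird jedes $E_I$ in eine Summe paarweise orthogonaler Idempotente zerlegt, deren Summanden durch diejenigen $\lambda\in\Lambda$ parametrisiert sind, f�r die $E_I V_\lambda\neq 0$ ist. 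Die gesuchten $F^\lambda = \sum_{I\subseteq S} F_I^\lambda$ ergeben sich dann aus den entsprechenden Stratifizierungen durch Aufsummierung.

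Konkret schlage ich vor, die Konstruktion induktiv entlang einer topologischen Sortierung der partiellen Ordnung auf $\Lambda$ durchzuf�hren, und zwar vom gr��ten zum kleinsten Element: F�r das gr��te Element (die Signumsdarstellung mit Eckenlabel $S$) ist $F^\text{sgn}=E_S$ unmittelbar klar. Ist $\lambda\in\Lambda$ gegeben und sind alle $F^\mu$ mit $\lambda\prec\mu$ bereits konstruiert, so setzt man $\widetilde{F}_I^\lambda := E_I - \sum_{\mu\succ\lambda} F_I^\mu$ und zerlegt dieses Idempotent ggf. weiter, wenn zu $I$ noch weitere, $\lambda$-inkomparable Typen geh�ren. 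F�r Typen wie $A_n$, in denen jedes $\lambda$ bereits durch die Multimenge $\Set{I \mid F_I^\lambda \neq 0}$ eindeutig bestimmt wird, sollte dies ausreichen; Eigenschaft (Z5) ergibt sich dann automatisch aus der Form der $(\alpha)$-Relationen, da die Koeffizienten $\sigma_\lambda$ gerade die Einsen und Nullen im charakteristischen Polynom $\tau_{m-1}$ sind, eingeschr�nkt auf den zum Isomorphietyp $\lambda$ geh�rigen Unterraum.

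Zum Nachweis von (Z4) gehe ich vor wie im $A_3$-Beweis: Man gibt eine vollst�ndige Familie von $d_\lambda^2$ Matrixeinheiten in $F^\lambda k\OmegaGy F^\lambda$ an, indem Wege im verbesserten Kompatibilit�tsgraphen (mit $E_I$ ersetzt durch die $F_I^\mu$) innerhalb der zu $\lambda$ geh�rigen starken Zusammenhangskomponente in Produkte $X_{I_0 I_1} X_{I_1 I_2} \ldots X_{I_{k-1} I_k}$ �bersetzt werden. F�r ADE-F�lle stellt Lemma \ref{wgraphs:weights_simply_laced} sicher, dass die transversalen Kanten die Relationen einer Matrixalgebra erf�llen; im nichtkristallographischen Fall $I_2(m)$ reduzieren sich alle wesentlichen Rechnungen auf die zwei nichttrivialen Indexmengen $\Set{1}$ und $\Set{2}$ bzw. deren Komplemente, was die Behauptung elementar macht. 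Die Eigenschaften (Z6) und (Z7) folgen dann aus der Nat�rlichkeit der Konstruktion: Da $\delta$ bzw. jeder Graphautomorphismus $\alpha$ die $(\alpha)$-Relationen auf die entsprechenden Relationen des komplement�ren bzw. permutierten Systems abbildet, m�ssen die kanonisch konstruierten $F^\lambda$ entsprechend permutiert werden.

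Die Hauptschwierigkeit erwarte ich in zwei Bereichen. Erstens bei Isomorphietypen, die identische $I$-Label-Multimengen besitzen, wie die Darstellungen $4_r, 4_r'\in\Irr(H_3)$. Hier sind die Idempotente $F_I^\lambda$ und $F_I^{\lambda'}$ alleine mit Erzeugern der Pfadalgebra $k\mathcal{Q}_W$ nicht trennbar, und die Konjektur w�rde in dieser Form scheitern --- vermutlich l�sst sich aber ein gemeinsames $F^{\Set{\lambda,\lambda'}}$ konstruieren, f�r das immerhin $F^{\Set{\lambda,\lambda'}} k\OmegaGy F^{\Set{\lambda,\lambda'}} \isomorphic k^{d_\lambda\times d_\lambda} \times k^{d_{\lambda'}\times d_{\lambda'}}$ gilt, was f�r Gyojas Vermutung noch ausreicht. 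Zweitens bei Indexmengen $I$ mit Vielfachheit $>1$ in einem Zellmodul: Hier muss $E_I F^\lambda k\OmegaGy F^\lambda E_I$ selbst eine $k\times k$-Matrixalgebra sein, und es ist unklar, wie die relative Ordnung der Ecken mit gleichem Label rein aus algebraischen Eigenschaften von $k\OmegaGy$ rekonstruierbar sein soll. Aus diesen Gr�nden erwarte ich, dass der hier skizzierte Ansatz in der genannten direkten Form f�r $I_2(m)$, $A_3$, $A_4$ und $B_3$ durchf�hrbar ist (wie im Einleitungsabsatz des Kapitels angek�ndigt), f�r exzeptionelle Typen ab $H_3$ jedoch wesentliche Modifikationen erfordert.
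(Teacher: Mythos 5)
Die Aussage ist als Vermutung formuliert; die Arbeit beweist Z1--Z7 nicht allgemein, sondern nur in den Spezialf\"allen $I_2(m)$, $A_3$, $A_4$ und $B_3$. Dass Sie genau dieselben Typen als gangbar benennen und die Hindernisse (identische Label-Multimengen wie bei $4_r,4_r'\in\Irr(H_3)$ sowie h\"ohere Vielfachheiten von Indexmengen) korrekt identifizieren, entspricht der Diskussion in der Bemerkung unmittelbar vor der Vermutung. Ihr strategischer Rahmen -- Zerlegung der $E_I$ mittels der $(\alpha)$-Relationen aus Satz~\ref{wgraph_alg:relations} und Verfeinerung des Kompatibilit\"atsgraphen -- trifft also den Kern der Methode.

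Dennoch gibt es einige konkrete Abweichungen und Fehler. Erstens ist die Signumsdarstellung nicht das $\preceq$-gr\"o\ss te, sondern das $\preceq$-kleinste Element von $\Lambda$: Sie entspricht der minimalen zweiseitigen Zelle $\Set{w_0}$, w\"ahrend die triviale Darstellung zur maximalen Zelle $\Set{1}$ geh\"ort (vgl.\ die Kennzeichnung $(4)$ bei $E_\emptyset$ und $(1^4)$ bei $E_{123}$ in der $A_3$-Abbildung). Zweitens ist die von Ihnen vorgeschlagene einseitige Induktion mit $\widetilde{F}_I^\lambda := E_I - \sum_{\mu\succ\lambda} F_I^\mu$ nicht die Methode der Arbeit: Dort wird bidirektional von $E_\emptyset$ und $E_S$ ausgehend gearbeitet, und das zentrale Werkzeug ist der \emph{Idempotenttransport} (Lemma~\ref{gyoja:idempotenttransport}), der garantiert, dass transversale Kanten in parallele Kantenpaare $e_\alpha\leftrightarrows\widetilde{e}_\alpha$ aufgespalten werden und keine wilden Querverbindungen entstehen. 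Eine blo\ss e Subtraktion w\"urde diese Orthogonalit\"ats- und Kommutativit\"atsaussagen nicht automatisch liefern, insbesondere Z3 nicht. Drittens ist Ihre Begr\"undung zu (Z5) unzutreffend: Die $\sigma_\lambda$ sind weder Koeffizienten von $\tau_{m-1}$ noch ``Einsen und Nullen''; f\"ur $I_2(m)$ etwa sind $\sigma_k=4\cos^2(k\pi/m)$ die Nullstellen des Polynoms $\widetilde{\tau}_{m-1}$, und die Idempotente entstehen dort \"uber Lagrange-Interpolation (Spektralzerlegung von $X_{1,2}X_{2,1}$), nicht \"uber Ihre vorgeschlagene iterative Subtraktion. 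Nur in den ADE-F\"allen, wo Lemma~\ref{wgraphs:weights_simply_laced} Kantengewichte $0$ und $1$ erzwingt, sind die $\sigma_\lambda$ tats\"achlich auf $\Set{0,1}$ eingeschr\"ankt.
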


\begin{remark}
Z5 ist eine sehr starke Aussage, da es in gewisser Weise die Spektralzerlegung der Endomorphismen $X_{IJ} X_{JI}$ auf allen $W$"~Graph"=Moduln simultan beschreibt. Vor allem ist darin die Aussage enthalten, dass alle diese Endomorphismen miteinander kommutieren und auf $\OmegaGy$ selbst diagonalisierbar (durch Linksmultiplikation) operieren.
	
In den wenigen Beispielen, für die ich bisher einen Satz von solchen Idempotenten $F^\lambda$ explizit kenne, trifft dies zu, daher taucht diese Vermutung in der obigen Liste auf. Ich habe jedoch bei diesem Punkt der Liste die größten Zweifel, ob er wirklich allgemeingültig sein kann.

Sollte dies aber tatsächlich gelten, könnte es sein, dass die $F_I^\lambda$ auf diese Weise eindeutig bestimmt werden. Das hieße im Umkehrschluss, dass irreduzible $W$"~Graphen und ihre Isomorphietypen durch die Spektralzerlegung dieser "`Schleifenendomorphismen"' $X_{IJ} X_{JI}$ mehr oder weniger eindeutig erkannt werden können.
\end{remark}
\begin{remark}
Für spätere Überlegungen halten wir fest, dass die Algebren $F^\lambda \OmegaGy F^\lambda$ von den Elementen $F_I^\lambda$ und $F^\lambda X_{IJ}F^\lambda$ erzeugt werden, wenn Z3 gilt. Das folgt daraus, dass nur Kanten $F^\lambda \leftarrow F^\mu$ existierten, wenn $\lambda\preceq\mu$ ist. Wenn wir also $F^\lambda X_{I_0 I_1} \ldots X_{I_{k-1} I_k} F^\lambda$ als 
\[\sum_{\lambda_i} F^\lambda X_{I_0 I_1} F^{\lambda_1} X_{I_1 I_2} \ldots X_{I_{k-2} I_{k-1}} F^{\lambda_{k-1}} X_{I_{k-1} I_k} F^\lambda\]
schreiben, gilt in allen von Null verschiedenen Summanden $\lambda \preceq \lambda_1 \preceq \ldots \preceq \lambda_{k-1} \preceq \lambda$. Daraus folgt, dass $F^\lambda \OmegaGy F^\lambda$ wirklich von den $F^\lambda X_{IJ} F^\lambda$ erzeugt wird. Dies wird auch zum Nachweis von Z4 nützlich sein, wenn wir die Surjektivität prüfen wollen.
\end{remark}

\begin{remark}
Wir halten zunächst einige Folgerungen aus den Vermutungen Z1 bis Z7 fest, bevor wir zu den Beweisen der Zerlegungsvermutung in den Fällen $I_2(m)$, $B_3$ und $A_4$ übergehen.
\end{remark}

\begin{lemma}
\index{terms}{Vermutung!von Gyoja}\index{terms}{Pfadalgebra!verallgemeinerte}
Es sei $(W,S)$ eine Coxeter"=Gruppe, $k\subseteq\IQ_W$ sei ein guter Ring und es gelten Z1, Z2 und Z3. Dann gilt auch:
\begin{enumerate}
	\item $k\OmegaGy$ ist ein Quotient einer verallgemeinerten Pfadalgebra auf dem azyklischen Köcher $\mathcal{L}$, dessen Knotenmenge $\Lambda=\Irr(W)$ ist und der $\abs{S}$ Kanten $\lambda\leftarrow\mu$ besitzt, falls $\lambda\prec\mu$ ist. Die Algebren an den Knoten sind $F^\lambda k\OmegaGy F^\lambda$.
	\item Jeder $W$"~Graph $(\mathfrak{C},I,m)$ mit konstanten Koeffizienten in $k$, der einen einfachen $k\OmegaGy$"~Modul $V$ definiert, erfüllt $\mathcal{Q}_V\subseteq\Set{I\subseteq S | F_I^\lambda\neq 0}$ für ein $\lambda\in\Lambda$. (siehe \ref{wgraph:def:Q_V} für die Definition von $\mathcal{Q}_V$)
\end{enumerate}
Gilt zusätzlich Z4, so gelten weiterhin:
\begin{enumerate}[resume]
	\item Die in Z4 auftauchenden Surjektionen $\psi_\lambda$ sind Isomorphismen.
	\item $\rad(K\OmegaGy)$ ist nilpotent und Gyojas Vermutung ist für $K\OmegaGy(W,S)$ wahr, wobei $K$ eine beliebige Körpererweiterung von $\IQ_W(v)$ ist.
	\item $k\OmegaGy$ als $k$"~Modul endlich erzeugt.
\end{enumerate}
\end{lemma}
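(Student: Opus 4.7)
Der Plan ist, a.) bis e.) sukzessive zu beweisen, wobei die tragenden Beobachtungen die Azyklizit�t des K�chers und die Standardargumente f�r Algebrenquotienten sind. F�r a.) wende ich die Konstruktion aus Beispiel \ref{path_algs:ex:generic_examples} auf die in Z1 gegebene Zerlegung $1=\sum_\lambda F^\lambda$ an und w�hle $X:=\Set{x_s | s\in S}$ als Erzeugendensystem. Wegen $e_s=\sum_{s\in I} E_I = \sum_{s\in I, \lambda\in\Lambda} F_I^\lambda\in \bigoplus_\lambda F^\lambda k\OmegaGy F^\lambda$ (benutzt Z2) erzeugen $X$ und die Knotenalgebren $F^\lambda k\OmegaGy F^\lambda$ tats�chlich ganz $k\OmegaGy$. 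Nach Z3 gilt $F^\lambda x_s F^\mu\neq 0 \implies \lambda\preceq\mu$. Liegt Gleichheit vor, so wird der Term in der Knotenalgebra absorbiert; andernfalls liefert jedes $s\in S$ h�chstens eine Kante $\lambda\leftarrow\mu$, was insgesamt auf einen Quotienten der verallgemeinerten Pfadalgebra auf $\mathcal{L}$ f�hrt. Die Azyklizit�t folgt aus der Striktheit von $\prec$.

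F�r b.) nutze ich Lemma \ref{wgraph:def:Q_V}.b: Ist $V$ ein einfacher $k\OmegaGy$"~Modul, so ist der induzierte Teilk�cher $\mathcal{L}_V$ stark zusammenh�ngend. Da $\mathcal{L}$ aber azyklisch ist, besteht jede starke Zusammenhangskomponente aus einem einzigen Knoten $\lambda\in\Lambda$. Folglich ist $V=F^\lambda V$ und $E_I V = F^\lambda E_I V = F_I^\lambda V$, was nur f�r $F_I^\lambda\neq 0$ von Null verschieden sein kann, d.\,h. $\mathcal{Q}_V \subseteq \Set{I\subseteq S | F_I^\lambda\neq 0}$.

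F�r c.) ist $\psi_\lambda$ nichttrivial, da $\psi_\lambda(1)=F^\lambda\neq 0$ ist (wenn $\lambda$ �berhaupt in der Zerlegung aus Z1 vorkommt). Weil $k^{d_\lambda\times d_\lambda}$ einfach ist, liefert das Lemma von Schur sofort die Injektivit�t, und zusammen mit der in Z4 geforderten Surjektivit�t ergibt sich die Isomorphie. F�r d.) verkette ich die Projektion aus a.) mit den Isomorphismen aus c.) und erhalte eine Surjektion $K\OmegaGy\twoheadrightarrow\prod_\lambda K^{d_\lambda\times d_\lambda}$. Deren Kern ist das von den Kanten des azyklischen, endlichen K�chers $\mathcal{L}$ erzeugte Ideal, welches per Azyklizit�t nilpotent ist (Pfade der L�nge $>\abs{\Lambda}$ verschwinden). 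Da $\prod_\lambda K^{d_\lambda\times d_\lambda}$ halbeinfach ist, f�llt dieser Kern mit $\rad(K\OmegaGy)$ zusammen; die Dimensionsgleichung $\sum_\lambda d_\lambda^2=\abs{W}$ liefert Gyojas Vermutung. F�r e.) schlie�lich ist die verallgemeinerte Pfadalgebra auf $\mathcal{L}$ als $k$"~Modul die endliche direkte Summe der Tensorprodukte $A_{v_0}\otimes\ldots\otimes A_{v_n}$ entlang der endlich vielen Pfade von $\mathcal{L}$, wobei jede Knotenalgebra $k^{d_\lambda\times d_\lambda}$ endlich erzeugt frei ist. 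Also ist auch der Quotient $k\OmegaGy$ endlich erzeugt.

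Die Hauptarbeit liegt in a.), genauer darin, sauber zu argumentieren, dass die in Beispiel \ref{path_algs:ex:generic_examples} gelieferte Surjektion tats�chlich durch das spezielle $\mathcal{L}$ mit $|S|$ Kanten pro Paar $\lambda\prec\mu$ faktorisiert (anstatt nur durch den minimalen K�cher, der nur die wirklich von Null verschiedenen $F^\lambda x_s F^\mu$ ber�cksichtigt); die �brigen Teile ergeben sich formal. Eine kleine Feinheit in c.) ist, dass $F^\lambda\neq 0$ vorausgesetzt werden muss, damit $\psi_\lambda$ nichttrivial ist -- dies ist aber gerechtfertigt, da die leere Zerlegung sonst in Z1 nicht alle einfachen Moduln erfassen k�nnte.
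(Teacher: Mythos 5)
Teile a., b. und e. deines Beweises stimmen im Wesentlichen mit dem Vorgehen des Papers �berein. In c. und d. weicht deine logische Reihenfolge ab, und hier liegt auch eine echte L�cke.

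Dein Argument f�r c.\ beruht darauf, dass $k^{d_\lambda\times d_\lambda}$ einfach sei und Schurs Lemma daher Injektivit�t liefere. Das ist falsch: $k$ ist hier ein guter \emph{Ring} mit $\IZ_W\subseteq k\subseteq\IQ_W$, kein K�rper, und $k^{d\times d}$ ist �ber einem Nichtk�rper keineswegs einfach (z.\,B.\ ist $p\cdot\IZ^{d\times d}$ ein echtes Ideal). Dar�ber hinaus setzt du stillschweigend $F^\lambda\neq 0$ voraus, begr�ndest dies aber nur heuristisch (``die leere Zerlegung k�nnte sonst nicht alle einfachen Moduln erfassen'') --- Z1 fordert nur $1=\sum_\lambda F^\lambda$ und macht keine Aussage dar�ber, ob einzelne $F^\lambda$ verschwinden; auch Z4 garantiert nur Surjektivit�t, keine Nichttrivialit�t.

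Das Paper vermeidet beide Probleme, indem es c.\ und d.\ gemeinsam beweist: Es tensoriert zun�chst mit $K$, stellt fest, dass $F^\lambda K\OmegaGy F^\lambda$ als homomorphes Bild der einfachen $K$-Algebra $K^{d_\lambda\times d_\lambda}$ entweder Null oder dazu isomorph ist, und sch�tzt die Kodimension des Radikals nach oben durch $\sum_{F^\lambda\neq 0} d_\lambda^2 \leq \abs{W}$ ab. Kombiniert mit der bereits bekannten unteren Schranke $\geq\abs{W}$ (da $K\OmegaGy\twoheadrightarrow KJ$) folgt Gleichheit, was gleichzeitig $F^\lambda\neq 0$ f�r alle $\lambda$, die Isomorphie der $\psi_\lambda$ �ber $K$ und damit durch Einschr�nken die Injektivit�t von $\psi_\lambda$ schon �ber $k$ erzwingt. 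Deine Darstellung von d.\ selbst ist inhaltlich richtig; dreh aber die Abh�ngigkeit um und gewinne c.\ aus dem Dimensionsvergleich, nicht umgekehrt.
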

\begin{proof}
a. Es sei $\widetilde{\Omega}$ die verallgemeinerte Pfadalgebra auf dem Köcher $\mathcal{L}$ mit den angegebenen Knotenalgebren. Wegen Z1 sind die $F^\lambda$ eine orthogonale Zerlegung der 1 in $\OmegaGy$. Wegen Z2 gilt $E_I=\sum_\lambda F^\lambda E_I F^\lambda\in\sum_\lambda F^\lambda \OmegaGy F^\lambda$. Da die $E_I$ zusammen mit den $x_s$ ganz $\OmegaGy$ erzeugen, folgt aus den Überlegungen in \ref{path_algs:ex:generic_examples} die Behauptung.

\medbreak
b. ergibt sich aus a., denn jeder $\OmegaGy$"~Modul $V$ ist nun auch ein $\widetilde{\Omega}$"~Modul. Weil $\widetilde{\Omega}$ eine verallgemeinerte Pfadalgebra ist, muss $V$ einen stark zusammenhängenden Teilgraphen $\mathcal{L}_V\subseteq\mathcal{L}$ induzieren (siehe \ref{wgraph:def:Q_V}). $\mathcal{L}$ ist aber azyklisch, d.\,h. die einzigen stark zusammenhängenden Teilgraphen sind einelementig. Mit anderen Worten: Es gibt ein $\lambda\in\Lambda$ derart, dass $F^\lambda V\neq 0$ und $F^\mu V=0$ für alle $\mu\neq\lambda$ ist. Für alle $I\subseteq S$ mit $E_I V\neq 0$ muss also $E_I F^\lambda \neq 0$ sein.

\bigbreak
%
%
c. und d. beweisen wir gemeinsam. Wenn wir Surjektionen $\psi_\lambda: k^{d_\lambda\times d_\lambda} \twoheadrightarrow F^\lambda k\OmegaGy F^\lambda$ haben, dann erhalten wir durch Tensorieren mit $K$ Surjektionen $K^{d_\lambda\times d_\lambda} \twoheadrightarrow F^\lambda K\OmegaGy F^\lambda$. Da Matrizenringe über Körpern einfach sind, ist somit $F^\lambda K\OmegaGy F^\lambda$ entweder gleich $0$ oder zu $K^{d_\lambda\times d_\lambda}$ isomorph für alle $\lambda$.

Da wir wissen, dass $K\OmegaGy$ ein Quotient von $K\widetilde{\Omega}$ ist, können wir die Kodimension des Radikals abschätzen:
\[\dim_K K\OmegaGy / \rad(K\OmegaGy) \leq \dim_K K\widetilde{\Omega} / \rad(K\widetilde{\Omega}) \]
$K\widetilde{\Omega}$ ist nun eine verallgemeinerte Pfadalgebra auf einem azyklischen Köcher, d.\,h. das von den Kanten erzeugte Ideal ist nilpotent und somit im Radikal enthalten. Es ist in der Tat gleich dem Radikal, weil der Quotient isomorph zu $\prod_{\lambda} F^\lambda K\OmegaGy F^\lambda$ und daher halbeinfach ist. Wir erhalten:
\begin{align*}
	\dim_K K\OmegaGy / \rad(K\OmegaGy) &\leq \sum_{\substack{\lambda\in\Lambda \\ F^\lambda\neq 0}} d_\lambda^2 \\
	&\leq \sum_{\lambda} d_\lambda^2 \\
	&= \abs{W}
\end{align*}
Nun wissen wir aber, dass das Radikal von $K\OmegaGy$ mindestens die Kodimension $\abs{W}$ haben muss. Es gelten also überall die Gleichheiten.

Das zeigt c., denn nun muss für alle $\lambda$ gelten, dass $K^{d_\lambda\times d_\lambda} \xrightarrow{\psi_\lambda} F^\lambda K\OmegaGy F^\lambda$ ein Isomorphismus ist. Insbesondere muss $\psi_\lambda: k^{d_\lambda\times d_\lambda} \to F^\lambda k\OmegaGy F^\lambda$ schon injektiv gewesen sein, ist also auch ein Isomorphismus.

Wir erhalten aber auch d., denn wir haben gezeigt, dass das Radikal von $K\widetilde{\Omega}$ nilpotent ist und durch die Quotientenabbildung $K\widetilde{\Omega}\to K\OmegaGy$ genau auf das Radikal von $K\OmegaGy$ abgebildet wird. Also ist auch $\rad(K\OmegaGy)$ nilpotent. Wir haben außerdem gezeigt, dass die Kodimension des Radikals gleich $\abs{W}$ ist, also Gyojas Vermutung für $K\OmegaGy$ wahr ist.

\medbreak
e. Nun ist $F^\lambda k\OmegaGy F^\lambda\isomorphic k^{d_\lambda\times d_\lambda}$, also als $k$"~Modul endlich erzeugt. Da es nur endlich viele Pfade im Köcher $\mathcal{L}$ gibt, ist auch $k\widetilde{\Omega}$ als $k$"~Modul endlich erzeugt. Als Quotient ist daher $k\OmegaGy$ ebenfalls als $k$"~Modul endlich erzeugt.
\end{proof}

\begin{remark}
Wir werden verschiedene Techniken benutzen, um die Idempotente $F_I^\lambda$ zu konstruieren. Zum einen werden wir einige Idempotente ad hoc konstruieren. Das folgende Lemma hilft dann dabei, einmal konstruierte Idempotente weiter zu propagieren.
\end{remark}

\begin{lemma}[Idempotenttransport]\label{gyoja:idempotenttransport}
\index{terms}{Idempotent!-transport}\index{terms}{Idempotent!Rest-}
Es seien $I,J\subseteq S$ beliebig, aber fest. Es seien $A$ eine endliche Indexmenge und $(e_\alpha)_{\alpha\in A}$ paarweise orthogonale Idempotente $\leq E_I$ mit $X_{IJ} X_{JI} = \sum_{\alpha\in A} \sigma_\alpha e_\alpha$ für $\sigma_\alpha\in k^\times$. Bezeichne das \udot{Restidempotent} $E_I-\sum_\alpha e_\alpha$ weiterhin mit $e_0$. Dann gilt:
\begin{enumerate}
	\item $\widetilde{e}_\alpha := \sigma_\alpha^{-1} X_{JI} e_\alpha X_{IJ}$ und das Restidempotent $\widetilde{e}_0:=E_J-\sum_{\alpha\in A} \widetilde{e}_\alpha$ sind paarweise orthogonale Idempotente $\leq E_J$.
	\item Es gilt $X_{IJ} \widetilde{e}_\alpha = e_\alpha X_{IJ}$ und $X_{JI} e_\alpha = \widetilde{e}_\alpha X_{JI}$ für $\alpha\in A\cup\Set{0}$.
	\item $r:=X_{JI} e_0 X_{IJ}$ erfüllt $r^2=0$, $r = \widetilde{e}_0 r \widetilde{e}_0$ sowie $X_{JI} X_{IJ} = \sum_{\alpha\in A} \sigma_\alpha\widetilde{e}_\alpha+r$. Insbesondere ist $r=0$, falls $X_{JI} X_{IJ}$ selbst ein Idempotent ist.
	\item $X_{IJ} \widetilde{e}_\alpha X_{JI} = \sigma_\alpha e_\alpha$ für alle $\alpha\in A$, d.\,h. erneuter Transport der Idempotente liefert die ursprünglichen Idempotente zurück.
\end{enumerate}
\end{lemma}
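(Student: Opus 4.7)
The plan is to prove (b) first, as the intertwining relations $X_{IJ}\widetilde{e}_\alpha = e_\alpha X_{IJ}$ and $X_{JI}e_\alpha = \widetilde{e}_\alpha X_{JI}$ are the engine driving every other assertion. The routine preliminaries I would record first are that $e_\alpha \leq E_I$ gives $E_I e_\alpha = e_\alpha E_I = e_\alpha$, and that $X_{IJ} = E_I X_{IJ} E_J$ and $X_{JI} = E_J X_{JI} E_I$ hold by definition of $X_{IJ}^s$ in $\Xi_\Gamma$, so in particular $\widetilde{e}_\alpha = E_J \widetilde{e}_\alpha E_J$, i.e.\ $\widetilde{e}_\alpha \leq E_J$ as soon as it is idempotent. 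I will also record that $X_{IJ}X_{JI} = \sum_{\beta \in A} \sigma_\beta e_\beta$ by hypothesis, while $E_I = e_0 + \sum_{\beta \in A} e_\beta$ is an orthogonal decomposition by construction of $e_0$.

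For (b), I would compute directly: for $\alpha \in A$,
\[
X_{IJ}\widetilde{e}_\alpha = \sigma_\alpha^{-1}(X_{IJ}X_{JI})e_\alpha X_{IJ} = \sigma_\alpha^{-1}\Bigl(\sum_{\beta\in A}\sigma_\beta e_\beta\Bigr)e_\alpha X_{IJ} = e_\alpha X_{IJ},
\]
using pairwise orthogonality of the $e_\beta$, and symmetrically $\widetilde{e}_\alpha X_{JI} = X_{JI} e_\alpha$. For the residual case $\alpha = 0$, I would sum:
\[
X_{IJ}\widetilde{e}_0 = X_{IJ}E_J - \sum_{\alpha\in A} X_{IJ}\widetilde{e}_\alpha = X_{IJ} - \sum_{\alpha\in A} e_\alpha X_{IJ} = \Bigl(E_I - \sum_{\alpha\in A} e_\alpha\Bigr) X_{IJ} = e_0 X_{IJ},
\]
and analogously on the other side.

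For (a), I would use (b) to collapse $\widetilde{e}_\alpha \widetilde{e}_\beta = \sigma_\alpha^{-1} X_{JI} e_\alpha (X_{IJ}\widetilde{e}_\beta) = \sigma_\alpha^{-1} X_{JI} e_\alpha e_\beta X_{IJ}$ for $\alpha,\beta\in A$, which yields $\delta_{\alpha\beta}\widetilde{e}_\alpha$. Orthogonality of $\widetilde{e}_0$ with the other $\widetilde{e}_\alpha$ and idempotency of $\widetilde{e}_0$ then follow mechanically from $\widetilde{e}_0 = E_J - \sum_\alpha \widetilde{e}_\alpha$ combined with $\widetilde{e}_\alpha \leq E_J$.

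For (c) and (d), these fall out immediately once (b) is in hand: decomposing $X_{JI}X_{IJ} = X_{JI}E_I X_{IJ} = X_{JI}e_0 X_{IJ} + \sum_{\alpha\in A}X_{JI}e_\alpha X_{IJ} = r + \sum_{\alpha\in A}\sigma_\alpha \widetilde{e}_\alpha$ gives the sum formula, while $r^2 = X_{JI}e_0(X_{IJ}X_{JI})e_0 X_{IJ} = X_{JI}e_0\bigl(\sum_\alpha \sigma_\alpha e_\alpha\bigr)e_0 X_{IJ} = 0$ uses $e_0 e_\alpha = 0$, and $\widetilde{e}_0 r = X_{JI}e_0^2 X_{IJ} = r$ (and symmetrically on the right) via (b). Finally (d) is just $X_{IJ}\widetilde{e}_\alpha X_{JI} = e_\alpha X_{IJ}X_{JI} = e_\alpha\sum_\beta \sigma_\beta e_\beta = \sigma_\alpha e_\alpha$. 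There is no genuine obstacle here; the only point requiring care is keeping track of the residual idempotent $e_0$ consistently, which the identity $E_I X_{IJ} = X_{IJ}$ resolves whenever it intrudes.
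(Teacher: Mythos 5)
Your proof is correct and follows essentially the same computations as the paper's: both rely on substituting $X_{IJ}X_{JI}=\sum_\beta\sigma_\beta e_\beta$ and the orthogonality of the $e_\beta$ into the defining formulas, then handling the residual idempotents by subtraction via $E_I=e_0+\sum_\alpha e_\alpha$. The only structural difference is the order: you establish the intertwining relations (b) first (including the $\alpha=0$ case, which the paper's displayed computation only handles implicitly for $\alpha\in A$) and then derive (a), (c) and (d) from them, whereas the paper proves (a) by a direct expansion of $\widetilde{e}_\alpha\widetilde{e}_\beta$ and only then does (b); both reorganizations involve the same identities, so this is a presentation choice with no mathematical consequence.
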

\begin{proof}
Für $\alpha,\beta\in A$ gilt:
\begin{align*}
	\widetilde{e}_\alpha \widetilde{e}_\beta &= (\sigma_\alpha^{-1} X_{JI} e_\alpha X_{IJ})(\sigma_\beta^{-1} X_{JI} e_\beta X_{IJ}) \\
	&= \sigma_\alpha^{-1} \sigma_\beta^{-1} X_{JI} e_\alpha \Big(\smash{\sum_{\gamma\in A} \sigma_\gamma e_\gamma}\Big) e_\beta X_{IJ} \\
	&= \sum_\gamma \tfrac{\sigma_\gamma}{\sigma_\alpha \sigma_\beta} X_{JI} e_\alpha e_\gamma e_\beta X_{IJ} \\
	&= \delta_{\alpha\beta} X_{JI} \sigma_\alpha^{-1} e_\alpha X_{IJ} \\
	&= \delta_{\alpha\beta} \widetilde{e}_\alpha
\end{align*}
Das zeigt a.

\medbreak
b. ergibt sich aus der Rechnung
\begin{align*}
	X_{IJ} \widetilde{e}_\alpha &= \sigma_\alpha^{-1} X_{IJ} X_{JI} e_\alpha X_{IJ} \\
	&= \sum_{\beta\in A} \sigma_\alpha^{-1} \sigma_\beta \smash{\underbrace{e_\beta e_\alpha}_{\delta_{\alpha\beta}}} X_{IJ} \\
	&= e_\alpha X_{IJ}
\end{align*}
und analog ergibt sich auch $\widetilde{e}_\alpha X_{JI} = X_{JI} e_\alpha$.

\medbreak
c. folgt aus einer ähnlichen Rechnung wie in a., denn für alle $\alpha\in A\cup\Set{0}$ folgt
\begin{align*}
	r \cdot X_{JI} e_\alpha X_{IJ} &= X_{JI} e_0 X_{IJ} X_{JI} e_\alpha X_{IJ} \\
	&= \sum_{\beta\in A} X_{JI} \sigma_\beta \smash{\underbrace{e_0 e_\beta}_{=0}} e_\alpha X_{IJ} \\
	&= 0
\end{align*}
sowie analog auch die umgekehrte Gleichung $X_{JI} e_\alpha X_{IJ} r = 0$. Speziell für $\alpha=0$ ergibt sich $r^2=0$ und für $\alpha\in A$ ergibt sich $\widetilde{e}_\alpha r = r \widetilde{e}_\alpha = 0$.

Aus der Definition von $\widetilde{e}_\alpha$ ergibt sich außerdem unmittelbar
\begin{align*}
	X_{JI} X_{IJ} &= X_{JI} E_I X_{IJ} \\
	&= \sum_{\alpha\in A\cup\Set{0}} X_{JI} e_\alpha X_{IJ} \\
	&= \sum_{\alpha\in A} \sigma_\alpha \widetilde{e}_\alpha + r.
\end{align*}

\medbreak
d. folgt ebenso direkt aus der Definition der $\widetilde{e}_\alpha$ und der Orthogonalität der $e_\alpha$.
\end{proof}

\begin{remark}
Wir werden in den folgenden Beweisen das Ziel verfolgen, die Kompatibilitätsgraphen verschiedener Coxeter"=Gruppen zu verfeinern und die Idempotente $E_I$ weiter aufzuspalten, wie wir es für $A_3$ bereits getan haben. In dieser Sichtweise sagt die Teilaussage b. des eben bewiesenen Lemmas etwa aus, dass bei solch einer Aufspaltung von $E_I$ in die Idempotente $e_\alpha$, $\alpha\in A\cup\Set{0}$ und beim Transport dieser Aufspaltung nach $E_J$ aus der transversalen Kante $E_I \leftrightarrows E_J$ keine wilden Kanten $e_\alpha \leftrightarrows \widetilde{e}_\beta$ mit $\alpha\neq\beta$ entstehen können, sondern die Kante stattdessen in $\leq\abs{A}+1$ parallele Kantenpaare $e_\alpha\leftrightarrows \widetilde{e}_\alpha$ ($\alpha\in A\cup\Set{0}$) gespalten wird.

\medbreak
d. zeigt dann insbesondere, dass die $\abs{A}$ Kanten $e_\alpha \leftrightarrows \widetilde{e}_\alpha$ ($\alpha\in A$) wirklich existieren, wenn mit $e_\alpha\neq 0$ begonnen wurde (was natürlich o.\,B.\,d.\,A. angenommen werden darf).

\medbreak
Ob von den zwei möglichen Kanten $e_0 \leftrightarrows \widetilde{e}_0$ keine, eine oder zwei existieren, ist a priori nicht klar. Dass keine oder eine Kante existiert, kommt in den folgenden Beispielen vor. Ein Beispiel, in dem beide Kanten existieren, habe ich bisher nicht.
\end{remark}

\subsection{Die Zerlegungsvermutung für \texorpdfstring{$I_2(m)$}{Coxeter-Gruppen vom Rang Zwei}}

\begin{lemma}
Sei $m\in\IN_{\geq 1}$ sowie $R:=\IZ[2\cos(\frac{2\pi}{m}),\frac{1}{m}]$. Dann gilt:
\begin{enumerate}
	\item $2\cos(k\frac{2\pi}{m}), 4\cos(k\frac{\pi}{m})^2\in R$ für alle $k\in\IZ$.
	\item $4\cos(k\frac{\pi}{m})^2\in R^\times$ für alle $k\in\IZ\setminus \tfrac{m}{2}\IZ$.
	\item $4\cos(k\frac{\pi}{m})^2-4\cos(l\frac{\pi}{m})^2\in R^\times$ für alle $1\leq k<l\leq\floor{\frac{m}{2}}$.
\end{enumerate}
\end{lemma}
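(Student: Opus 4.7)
Alle drei Aussagen lassen sich aus zwei Bausteinen ableiten: der Produktformel $\prod_{j=1}^{m-1}(1-\zeta_m^j) = m$ (mit $\zeta_m := e^{2\pi i/m}$), die aus $X^m-1 = (X-1)\prod_{j=1}^{m-1}(X-\zeta_m^j)$ durch Auswerten bei $X=1$ folgt, und der Tschebyscheff-Rekursion $2\cos(nx) = U_n(2\cos x)$ mit $U_n \in \IZ[X]$ (definiert durch $U_0=2$, $U_1=X$, $U_n = XU_{n-1}-U_{n-2}$).

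F�r Teil (a) zeigt die Tschebyscheff-Rekursion zun�chst $2\cos(2k\pi/m) = U_k(2\cos(2\pi/m)) \in \IZ[2\cos(2\pi/m)]\subseteq R$, woraus mit $4\cos^2(k\pi/m) = 2+2\cos(2k\pi/m)$ die Behauptung folgt. Analog liegt $4\sin^2(j\pi/m) = 2-2\cos(2j\pi/m) = (1-\zeta_m^j)(1-\zeta_m^{-j})$ in $R$ f�r alle $j$. Aus der Produktformel ergibt sich nun
\[\prod_{j=1}^{m-1} 4\sin^2(j\pi/m) = \Big(\prod_{j=1}^{m-1}(1-\zeta_m^j)\Big)^2 = m^2 \in R^\times,\]
weshalb jeder einzelne Faktor $4\sin^2(j\pi/m)$ f�r $1\leq j\leq m-1$ eine Einheit in $R$ ist (Inverses = Produkt der �brigen Faktoren dividiert durch $m^2$). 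Dies ist die zentrale Hilfsbeobachtung.

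Teil (b) folgt nun aus der Identit�t $4\cos^2(\theta) = 4\sin^2(2\theta)/4\sin^2(\theta)$ (Konsequenz der Verdopplungsformel $\sin(2\theta) = 2\sin\theta\cos\theta$). F�r $k\notin (m/2)\IZ$ gelten $k\notin m\IZ$ und $2k\notin m\IZ$ (denn $2k\in m\IZ$ ist �quivalent zu $k\in (m/2)\IZ\cap\IZ$, und $m\IZ \subseteq (m/2)\IZ$), sodass die Reduktionen $k\bmod m$ und $2k\bmod m$ beide in $\{1,\ldots,m-1\}$ liegen. Damit sind $4\sin^2(k\pi/m)$ und $4\sin^2(2k\pi/m)$ nach der Hilfsbeobachtung Einheiten in $R$, also auch der Quotient $4\cos^2(k\pi/m)$.

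Teil (c) ergibt sich aus der Summe-zu-Produkt-Formel
\[4\cos^2(k\pi/m) - 4\cos^2(l\pi/m) = 2\cos(2k\pi/m) - 2\cos(2l\pi/m) = 4\sin\Big(\tfrac{(k+l)\pi}{m}\Big)\sin\Big(\tfrac{(l-k)\pi}{m}\Big).\]
F�r $1\leq k<l\leq\lfloor m/2\rfloor$ gilt $1\leq l-k<l\leq m/2$ sowie $1\leq k+l\leq 2l-1\leq m-1$, sodass $k+l$ und $l-k$ beide in $\{1,\ldots,m-1\}$ liegen. Das Quadrat des Differenzausdrucks ist demnach $4\sin^2((k+l)\pi/m)\cdot 4\sin^2((l-k)\pi/m) \in R^\times$ nach der Hilfsbeobachtung. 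Da die Differenz $x$ selbst nach Teil (a) in $R$ liegt und aus $x^2\in R^\times$ in jedem kommutativen Ring $x\in R^\times$ folgt (via $x^{-1} = (x^2)^{-1}x$), ist der Beweis vollst�ndig. Der einzige nichttriviale Schritt der Argumentation ist die zyklotomische Produktformel aus der Hilfsbeobachtung; alles Weitere sind elementare trigonometrische Umformungen.
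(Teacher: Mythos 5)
Ihr Beweis ist korrekt, nimmt aber einen etwas anderen Weg als der der Arbeit. Die Arbeit arbeitet in den gr\"o\ss{}eren Kreisteilungsringen $\IZ[\zeta_{ml},\tfrac{1}{m}]$, zeigt dort, dass die Elemente $2\sin(a\tfrac{\pi}{m})$ bzw. $2\cos(a\tfrac{\pi}{m})$ Einheiten sind, und nutzt dann die nichttriviale Tatsache, dass f\"ur ganze Ringerweiterungen $R\subseteq S$ stets $R\cap S^\times = R^\times$ gilt, um die Invertierbarkeit nach $R$ hinunterzutransportieren. Sie umgehen diesen Abstieg vollst\"andig: Statt mit $2\sin(a\tfrac{\pi}{m})\notin R$ arbeiten Sie mit $4\sin^2(a\tfrac{\pi}{m})\in R$, zeigen mittels $\prod_{j=1}^{m-1}4\sin^2(j\tfrac{\pi}{m})=m^2\in R^\times$, dass jeder Faktor eine Einheit in $R$ selbst ist, und leiten daraus sowohl b.\ (via $4\cos^2\theta = 4\sin^2(2\theta)/4\sin^2\theta$) als auch c.\ (via Quadrieren der Differenz und dem elementaren Schluss $x\in R,\ x^2\in R^\times \Rightarrow x\in R^\times$) ab. Ihr Ansatz bleibt damit vollst\"andig in $R$ und kommt ohne den algebrazahlentheoretischen Hilfssatz \"uber ganze Erweiterungen aus; daf\"ur ben\"otigt er den kleinen Quadrat-Trick, den die Arbeit nicht braucht. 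Beide Herangehensweisen sind korrekt; Ihre ist etwas elementarer und fasst zudem b.\ und c.\ unter derselben Hilfsbeobachtung zusammen, w\"ahrend die Arbeit f\"ur b.\ eine separate Produktberechnung (Schritt 2) durchf\"uhrt.
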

\begin{proof}
Wir setzen $\zeta_n := \exp(\frac{2\pi i}{n})$ für alle $n\in\IN_{>0}$. Es gilt dann $2\cos(k\tfrac{2\pi}{n}) = \zeta_n^k+\zeta_n^{-k}$. Da nun jedoch $X^k+X^{-k}\in\IZ[X+X^{-1}]$ ist, folgern wir $2\cos(k\tfrac{2\pi}{n})\in R$.

Da $2\cos(\tfrac{\theta}{2})^2 = \cos(\theta)+1$ gilt, folgern wir auch $4\cos(k\frac{\pi}{m})^2\in R$. Damit haben wir a. gezeigt.

\medbreak
Wir benutzen für b. und c., dass
\[\IZ[2\cos(\tfrac{2\pi}{n})] = \IZ[\zeta_n+\zeta_n^{-1}] \subseteq \IZ[\zeta_n] \subseteq \IZ[\zeta_{nl}]\]
ganze Ringerweiterungen sind für alle $l\in\IN_{\geq 1}$. Für ganze Erweiterungen $R\subseteq S$ gilt $R\cap S^\times = R^\times$, daher genügt es zu zeigen, dass unsere Elemente Einheiten in $\IZ[\zeta_{ml}, \frac{1}{m}]$ sind für irgendein $l\in\IN_{\geq 1}$.

\medbreak
Schritt 1: $2\sin(a\frac{\pi}{m})$ ist invertierbar für $a\in\IZ\setminus m\IZ$.

Dies folgt aus
\begin{align*}
	\prod_{a=1}^{m-1} 2\sin(a\tfrac{\pi}{m}) &= \prod_{a=1}^{m-1} \tfrac{1}{i}(\zeta_{2m}^a-\zeta_{2m}^{-a}) \\
	&= (-i)^{m-1} \prod_{a=1}^{m-1} \zeta_{2m}^a \cdot\prod_{a=1}^{m-1} (1-\zeta_{2m}^{-2a}) \\
	&= (-i)^{m-1}\zeta_{2m}^{\tfrac{1}{2}m(m-1)} \prod_{a=1}^{m-1} (1-\zeta_m^a) \\
	&= \left.\frac{X^m - 1}{X-1}\right|_{X=1} \\
	&= m.
\end{align*}
Wir erhalten, dass alle $2\sin(k\frac{\pi}{m})$ Einheiten sind für $k\in\IZ\setminus m\IZ$. Nun ist c. eine einfache Folgerung, da $4\cos(k\tfrac{\pi}{m})^2 - 4\cos(l\tfrac{\pi}{m})^2 = 2\sin((k+l)\tfrac{\pi}{m})\cdot 2\sin((k-l)\tfrac{\pi}{m})$ gilt.

\medbreak
Schritt 2: $4\cos(a\tfrac{\pi}{m})^2$ ist invertierbar für $a\in\IZ \setminus \tfrac{m}{2} \IZ$.

Die folgt aus
\begin{align*}
	\prod_{1\leq a<\tfrac{m}{2}} (2\cos(a\tfrac{\pi}{m}))^2 &= \prod_{1\leq a<\tfrac{m}{2}} (\zeta_{2m}^a + \zeta_{2m}^{-a})^2 \\
	&= \prod_{1\leq a <\tfrac{m}{2}} (\zeta_{2m}^{2a} + 1)\zeta_{2m}^{-a}\cdot\zeta_{2m}^a (1 + \zeta_{2m}^{-2a}) \\
	&= \prod_{\substack{-\tfrac{m}{2} < a < \tfrac{m}{2} \\ a\neq 0}} (1+\zeta_m^a) \\
	&= \begin{cases}
	(-1)^{m-1} \prod_{\zeta^m=1, \zeta\neq 1} (-1-\zeta) & \text{falls}\,2\nmid m \\
	(-1)^{m-2} \prod_{\zeta^m=1, \zeta\neq\pm1} (-1-\zeta) & \text{falls}\,2\mid m
	\end{cases} \\
	&= \begin{cases}
	\left.\frac{X^m-1}{X-1}\right|_{X=-1} & \text{falls}\,2\nmid m \\
	\left.\frac{X^m-1}{X^2-1}\right|_{X=-1} & \text{falls}\,2\mid m
	\end{cases} \\
	&= \begin{cases}
	1 & \text{falls}\,2\nmid m \\
	\tfrac{m}{2} & \text{falls}\,2\mid m
	\end{cases}.
\end{align*}
Daher ist auch $4\cos(k\tfrac{\pi}{m})^2$ eine Einheit.
\end{proof}

\begin{theorem}
Es sei $m\in\IN_{\geq 3}$. Die Zerlegungsvermutung gilt für Coxeter"=Gruppen vom Typ $I_2(m)$.
\end{theorem}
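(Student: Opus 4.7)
The plan is to explicitly construct the idempotents $F^\lambda$ by spectrally decomposing the "loop" element $Y := X_{\{s\}\{t\}} X_{\{t\}\{s\}}$ that arises from the unique transversal edge in the compatibility graph of $I_2(m)$. I will work over $R := \IZ_W[\tfrac{1}{m}]$, which by the preceding lemma contains the inverses of the non-zero values $\sigma_k := 4\cos^2(k\pi/m)$ and of all pairwise differences $\sigma_k - \sigma_l$ ($k \neq l$).

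First, I specialize Theorem~\ref{wgraph_alg:relations} to $W = I_2(m)$, $S = \{s,t\}$. The only transversal edge of the compatibility graph is $\{s\}\leftrightarrow\{t\}$, so the only non-trivial $(\alpha)$-relation is $\tau_{m-1}(A)=0$ with $A = \begin{pmatrix} 0 & X^s_{\{s\}\{t\}} \\ X^t_{\{t\}\{s\}} & 0 \end{pmatrix}$. Separating by parity: for $m$ odd, $\tau_{m-1}$ is an even polynomial, yielding $p(Y) = 0 = p(Y')$ with $\tau_{m-1}(x) = p(x^2)$, and $p$ has simple roots $\sigma_k$ for $k = 1, \ldots, (m-1)/2$; for $m$ even, $\tau_{m-1}$ is odd, yielding $Y\cdot q(Y) = 0 = Y'\cdot q(Y')$ with $\tau_{m-1}(x) = x\,q(x^2)$, and $q$ has simple roots $\sigma_k$ for $k = 1, \ldots, m/2 - 1$. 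The $(\gamma)$-relations between $E_{\{s,t\}}$ and $E_\emptyset$ will be needed only at the end, to control the residual idempotent in the even case.

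Lagrange interpolation (using the invertibility of the $\sigma_k$ and their differences in $R$) now produces pairwise orthogonal idempotents $e_k \leq E_{\{s\}}$ with $Y e_k = \sigma_k e_k$ and $e_0 + \sum_{k\geq 1} e_k = E_{\{s\}}$, where $e_0$ is the residual idempotent corresponding to the zero eigenvalue (absent for $m$ odd). Lemma~\ref{gyoja:idempotenttransport} transports these to companion idempotents $\widetilde{e}_k, \widetilde{e}_0 \leq E_{\{t\}}$. I then define $F^{\text{triv}} := E_\emptyset$, $F^{\text{sgn}} := E_{\{s,t\}}$, $F^{\lambda_k} := e_k + \widetilde{e}_k$ for each two-dimensional irreducible (indexed by $k$ in the admissible range), and, for $m$ even, $F^{\epsilon_s} := e_0$ and $F^{\epsilon_t} := \widetilde{e}_0$ for the two additional one-dimensional irreducibles. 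Z1 and Z2 are immediate from the construction.

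Z3 follows from the transport identities $X_{\{s\}\{t\}}\widetilde{e}_l = e_l X_{\{s\}\{t\}}$ (and their transpose), which force $e_k\,x_s\,\widetilde{e}_l = e_k e_l\,X_{\{s\}\{t\}} = 0$ for $k \neq l$, together with the observation that the inclusion edges of the compatibility graph respect the KL-order on $\Irr(I_2(m))$. For Z4, the one-dimensional pieces are handled by $\psi_\lambda = \mathrm{id}_R$. For each $F^{\lambda_k}\OmegaGy F^{\lambda_k}$, the four generators $e_k$, $\widetilde{e}_k$, $X_{\{s\}\{t\}}\widetilde{e}_k$, $X_{\{t\}\{s\}}e_k$ satisfy $(X_{\{s\}\{t\}}\widetilde{e}_k)(X_{\{t\}\{s\}}e_k) = Y e_k = \sigma_k e_k$ with $\sigma_k \in R^\times$, yielding after rescaling the defining relations of $R^{2\times 2}$ and hence the desired surjection $\psi_{\lambda_k} : R^{2\times 2} \twoheadrightarrow F^{\lambda_k}\OmegaGy F^{\lambda_k}$. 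The main obstacle lies in the even case: one must verify that $e_0$ and $\widetilde{e}_0$ each represent a single one-dimensional irreducible, ruling out further splitting. This amounts to showing that the residual path $r := X_{\{t\}\{s\}} e_0 X_{\{s\}\{t\}}$ (which squares to zero by Lemma~\ref{gyoja:idempotenttransport}) actually vanishes, which should follow by combining the $(\gamma)$-relations with the identity $Y e_0 = 0$, so that $F^{\epsilon_s}\OmegaGy F^{\epsilon_s} = R e_0 \isomorphic R$ and similarly for $\epsilon_t$.
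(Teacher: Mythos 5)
Your overall strategy---spectral decomposition of the loop element via Lagrange interpolation, transporting idempotents across the single transversal edge, and verifying Z1--Z7---is essentially the paper's. But there is a genuine gap in the even case, and the way you describe it misidentifies both what must be shown and how to show it. You state that "the main obstacle" is to show $r := X_{\{t\}\{s\}} e_0 X_{\{s\}\{t\}} = 0$, and that this "should follow by combining the $(\gamma)$-relations with the identity $Y e_0 = 0$." Neither claim holds up. For Z3 you must show that there is no edge at all between $F^{\epsilon_s}=e_0$ and $F^{\epsilon_t}=\widetilde{e}_0$ (these are incomparable in $\preceq$ since $\mathcal{F}_{\epsilon_s}=\mathcal{F}_{\epsilon_t}$), i.e.\ that $e_0 X_{\{s\}\{t\}}\widetilde{e}_0 = e_0 X_{\{s\}\{t\}} = 0$. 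This is strictly stronger than $r=0$: the transport identities give $e_0 X_{\{s\}\{t\}} = X_{\{s\}\{t\}}\widetilde{e}_0$ but do not force this element to vanish, and $Y e_0 = 0$ only says $(e_0 X_{\{s\}\{t\}})(X_{\{t\}\{s\}}e_0)=0$, which does not imply that either factor is zero.

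The correct derivation is a direct consequence of the $(\alpha)$-relation alone, and the $(\gamma)$-relations are not needed. Writing $\tau_{m-1}(X)/X = \prod_{1\le l < m/2}(X^2-\sigma_l)$ for $m$ even, substituting $X_{\{t\}\{s\}}X_{\{s\}\{t\}}$ for $X^2$, and multiplying on the left by $X_{\{s\}\{t\}}$ reproduces the alternating sum $\sum_j a_j \underbrace{X_{\{s\}\{t\}}X_{\{t\}\{s\}}\cdots}_{j}$ which the $(\alpha)$-relation forces to be zero; clearing the Lagrange denominators then gives $X_{\{s\}\{t\}}\widetilde{e}_0 = 0$, hence also $e_0 X_{\{s\}\{t\}}=0$ by transport, and $r=0$ a fortiori. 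Until you supply this (or an equivalent) argument, your verification of Z3 is incomplete and Z4 for the $\epsilon$-pieces (namely $F^{\epsilon_s}\OmegaGy F^{\epsilon_s}\cong R$) rests on an unproven step; the rest of the argument is sound and matches the paper's.
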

\begin{proof}
Die Idee des Beweises ist, genau die in Z5 behauptete Eigenraumzerlegung zu benutzen und so eine Zerlegung des Kompatibilitätsgraphen wie in Abbildung \ref{fig:gyoja:better_compatibility_graph_I2} zu konstruieren.

\begin{figure}[ht]
	\centering
	\begin{tabular}{c|c}
		\begin{tikzpicture}[
		crossing line/.style = {preaction={draw=white,-,line width=6pt}}
	]

\node[Vertex] (E_empty) at (0,0) {$\emptyset$};

\node[Vertex] (F_1a) at (-1,1) {$1$};
\node[Vertex] (F_2a) at ( 1,1) {$2$};

\node[Vertex] (F_1b) at (-1,2) {$1$};
\node[Vertex] (F_2b) at ( 1,2) {$2$};

\node[Vertex] (F_1c) at (-1,4) {$1$};
\node[Vertex] (F_2c) at ( 1,4) {$2$};

\node[Vertex] (E_12) at (0,5) {$12$};

\node at (-1,3){$\vdots$};
\node at ( 1,3){$\vdots$};

\path[EdgeI]
	(E_empty)
		edge (F_1a)
		edge (F_2a)
		edge (F_1b)
		edge (F_2b)
		edge (F_1c)
		edge (F_2c)
		edge (E_12)
	(F_1a) edge[bend right=10] (E_12)
	(F_2a) edge[bend left=10] (E_12)
	(F_1b) edge (E_12)
	(F_2b) edge (E_12)
	(F_1c) edge (E_12)
	(F_2c) edge (E_12);
	
\path[EdgeT]
	(F_1a) edge[crossing line] (F_2a)
	(F_1b) edge[crossing line] (F_2b)
	(F_1c) edge[crossing line] (F_2c);

\node at (3,5) {$\sgn$};
\node at (3,4) {$\lambda_1$};
\node at (3,2) {$\lambda_{\tfrac{m-3}{2}}$};
\node at (3,1) {$\lambda_{\tfrac{m-1}{2}}$};
\node at (3,0) {$1$};

\end{tikzpicture} & \begin{tikzpicture}[
		crossing line/.style = {preaction={draw=white,-,line width=6pt}}
	]

\node[Vertex] (E_empty) at (0,0) {$\emptyset$};

\node[Vertex] (F_1a) at (-1,1) {$1$};
\node[Vertex] (F_2a) at ( 1,1) {$2$};

\node[Vertex] (F_1b) at (-1,2) {$1$};
\node[Vertex] (F_2b) at ( 1,2) {$2$};

\node[Vertex] (F_1c) at (-1,4) {$1$};
\node[Vertex] (F_2c) at ( 1,4) {$2$};

\node[Vertex] (E_12) at (0,5) {$12$};

\node at (-1,3){$\vdots$};
\node at ( 1,3){$\vdots$};

\path[EdgeI]
	(E_empty)
		edge (F_1a)
		edge (F_2a)
		edge (F_1b)
		edge (F_2b)
		edge (F_1c)
		edge (F_2c)
		edge (E_12)
	(F_1a) edge[bend right=10] (E_12)
	(F_2a) edge[bend left=10] (E_12)
	(F_1b) edge (E_12)
	(F_2b) edge (E_12)
	(F_1c) edge (E_12)
	(F_2c) edge (E_12);
	
\path[EdgeT]
	(F_1b) edge[crossing line] (F_2b)
	(F_1c) edge[crossing line] (F_2c);

\node at (3,5) {$\sgn$};
\node at (3,4) {$\lambda_1$};
\node at (3,2) {$\lambda_{\tfrac{m-2}{2}}$};
\node at (2.5,1) {$\epsilon_1$};
\node at (3.5,1) {$\epsilon_2$};
\node at (3,0) {$1$};

\end{tikzpicture}
	\end{tabular}
	\setcapwidth[c]{0.80\textwidth}
	\caption{Verbesserte Zerlegung von $\OmegaGy(I_2(m))$ als Pfadalgebra; links für ungerade $m$, rechts für gerade $m$.}
	\label{fig:gyoja:better_compatibility_graph_I2}
\end{figure}
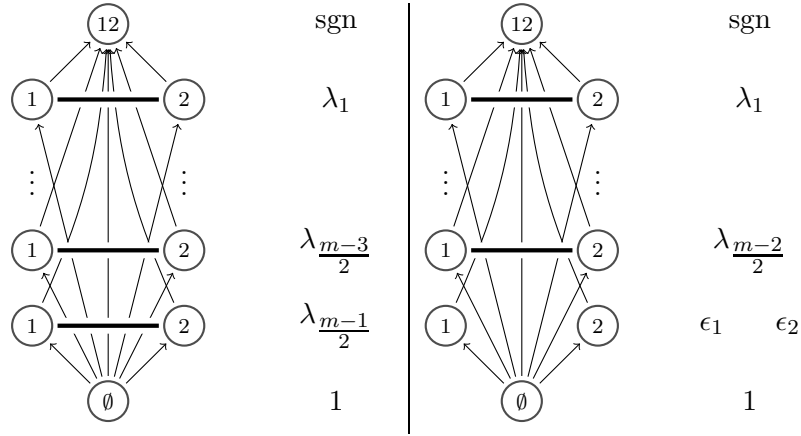

Zunächst halten wir fest, dass gute Ringe für $I_2(m)$ stets $\IZ_W=\IZ[2\cos(\tfrac{2\pi}{m}),\tfrac{1}{m}]$ enthalten. Wir können also das obige Lemma über Elemente dieses Rings anwenden.

Eine weitere wichtige Beobachtung ist, dass es nur sehr wenig transversale Kanten im Kompatibilitätsgraphen gibt, wenn der Rang Zwei ist (siehe Abbildung \ref{fig:wgraph_alg:comp_graphs}), nämlich nur $X_{1,2}$ und $X_{2,1}$.

Die einzigen $(\alpha)$-Relationen sind also
\[0 = \sum_{j=0}^{m-1} a_j \underbrace{X_{1,2} X_{2,1} \ldots}_{j}\quad\text{und}\quad 0 = \sum_{j=0}^{m-1} a_j \underbrace{X_{2,1} X_{1,2} \ldots}_{j},\]
wobei $a_j$ die Koeffizienten von $\tau_{m-1}$ seien, also $\tau_{m-1}(X) = \sum_j a_j X^j$.

\medbreak
Schritt 1: Vorbereitungen.

Wir definieren $\widetilde{\tau}_n\in\IZ[X]$ durch
\[\widetilde{\tau}_n := \begin{cases} \tau_n(\sqrt{X}) & \text{falls } 2\mid n \\ \tau_n(\sqrt{X})\sqrt{X} &\text{falls } 2\nmid n\end{cases}.\]
Man beachte, dass $\tau_n$ ein gerades Polynom ist, wenn $n$ gerade ist, und ein ungerades, wenn $n$ ungerade ist. Daher ist $\widetilde{\tau}_n$ tatsächlich wieder ein Polynom in $X$. Es hat den Grad $\ceil{\frac{n}{2}}$ und ist normiert. Da die $n$ Nullstellen von $\tau_n$ durch $2\cos(\frac{k}{n+1}\pi)$ ($k=1,\ldots,n$) gegeben sind, sind die Nullstellen von $\widetilde{\tau}_n$ durch $4\cos(\tfrac{k}{n+1}\pi)^2$ ($k=1,\ldots,\ceil{\frac{n}{2}})$ gegeben (siehe \citep[22.16]{abramowitz1964handbook}). Insbesondere sind die Nullstellen von $\widetilde{\tau}_{m-1}$ genau $\sigma_k:=4\cos(k\tfrac{\pi}{m})^2$ für $k=1,\ldots,\floor{\frac{m}{2}}$.

\medbreak
Schritt 2: Konstruktion der Idempotente.

Wenn $m$ ungerade ist, dann sind die $\alpha$-Relationen bereits in der Form $\widetilde{\tau}_{m-1}(X_{1,2}X_{2,1})=0$ bzw. $\widetilde{\tau}_{m-1}(X_{2,1}X_{1,2})=0$. Wenn $m$ gerade ist, dann können wir die Relation mit $X_{1,2}$ bzw. $X_{2,1}$ multiplizieren und erhalten ebenfalls diese Gleichungen.

Indem wir jetzt für $k=1,\ldots,\floor{\frac{m}{2}}$
\begin{equation}
	F_{1,k} := \prod_{\substack{l=1,\ldots,\floor{\frac{m}{2}} \\ l\neq k}} \frac{X_{1,2} X_{2,1} - \sigma_l}{\sigma_k - \sigma_l} \qquad\text{und}\qquad
	F_{2,k} := \prod_{\substack{l=1,\ldots,\floor{\frac{m}{2}} \\ l\neq k}} \frac{X_{2,1} X_{1,2} - \sigma_l}{\sigma_k - \sigma_l}
	\label{eq:gyoja_I2:1}\tag{1}
\end{equation}
setzen, haben wir Idempotente $F_{1,k}, F_{2,k}\in R\OmegaGy$ mit
\[E_1 = \sum_{k=1}^{\floor{\frac{m}{2}}} F_{1,k} \qquad\text{und}\qquad X_{1,2} X_{2,1} = \sum_{k=1}^{\floor{\frac{m}{2}}} \sigma_k F_{1,k}\]
bzw.
\[E_2 = \sum_{k=1}^{\floor{\frac{m}{2}}} F_{2,k} \qquad\text{und}\qquad X_{2,1} X_{1,2} = \sum_{k=1}^{\floor{\frac{m}{2}}} \sigma_k F_{2,k}\]
gefunden. Wir bezeichnen die zweidimensionalen Darstellungen von $I_2(m)$ mit $\lambda_k$ für $k=1,\ldots,\tfrac{m-1}{2}$, falls $m$ ungerade ist, bzw. $k=1,\ldots,\frac{m-2}{2}$, falls $m$ gerade ist, und mit $\epsilon_1$ und $\epsilon_2$ die beiden zusätzlichen eindimensionalen Darstellungen, wenn $m$ gerade ist, und definieren die Idempotente $(F^\lambda)_{\lambda\in\Irr(W)}$ als
\begin{align*}
	F^{1} &= E_\emptyset, \\
	F^{\lambda_k} &= F_{1,k}+F_{2,k}, \\
	F^{\sgn} &= E_{\Set{1,2}} \\
\intertext{sowie, falls $m$ gerade ist, außerdem noch}
	F^{\epsilon_1} &= F_{1,\tfrac{m}{2}}\quad\text{und} \\
	F^{\epsilon_2} &= F_{2,\tfrac{m}{2}}.
\end{align*}
Nach Konstruktion gelten dann Z1, Z2, Z5, Z6 und Z7. Es bleiben Z3 und Z4 zu prüfen.

\medbreak
Schritt 3: Wir zeigen Z3.
Das folgt aus Lemma \ref{gyoja:idempotenttransport}, denn die $F_{1,k}$ ergeben sich durch Idempotenttransport aus den $F_{2,k}$ und umgekehrt: Man beachte, dass $\sigma_k$ für $1\leq k<\frac{m}{2}$ invertierbar ist. Für $k=\frac{m}{2}$ ist $\sigma_k=0$, d.\,h. $F_{1,m/2}$ und $F_{2,m/2}$ sind die Restidempotente. Daher ist das Lemma für den Idempotenttransport anwendbar und es gilt
\[\sum_{k=0}^{\floor{\tfrac{m}{2}}} \sigma_k X_{1,2} F_{2,k} X_{2,1} = X_{1,2} \Big(\underbrace{\sum_k \sigma_k F_{2,k}}_{=E_2}\Big) X_{2,1} = X_{1,2} X_{2,1} = \sum_{k=0}^{\floor{\tfrac{m}{2}}} \sigma_k F_{1,k}.\]
Da $X_{1,2} F_{2,k} X_{2,1}$ ein Idempotent ist für $1\leq k<\frac{m}{2}$, beschreiben also beide Seiten der Gleichung $\sum_k \sigma_k X_{1,2} F_{2,k} X_{2,1} = \sum_k \sigma_k F_{1,k}$ die Spektralzerlegung von $X_{1,2} X_{2,1}$. Da die $\sigma_k$ paarweise verschieden sind, gilt somit insbesondere $F_{1,k} = X_{1,2} F_{2,k} X_{2,1}$ sowie aus Symmetriegründen auch $X_{2,1} F_{1,k} X_{1,2} = F_{2,k}$ für alle $1\leq k<\frac{m}{2}$. Aufgrund des Lemmas \ref{gyoja:idempotenttransport} kann es daher nur Kanten $F_{1,k} \leftrightarrows F_{2,k}$ geben für $1\leq k\leq\floor{\tfrac{m}{2}}$, aber keine Kanten $F_{1,k} \leftrightarrows F_{2,l}$ für $k\neq l$.

Wir zeigen außerdem, dass für gerade $m$ außerdem keine Kanten $F_{1,m/2} \leftrightarrows F_{2,m/2}$ existieren. Nach Konstruktion ist
\[\prod_{1\leq l<\frac{m}{2}} (X^2-\sigma_l) = \frac{\widetilde{\tau}_{m-1}(X^2)}{X^2} = \frac{\tau_{m-1}(X)}{X} = \sum_{j=1}^{m-1} a_j X^{j-1}.\]
Indem wir $X_{2,1}X_{1,2}$ für $X^2$ einsetzen und mit $X_{1,2}$ multiplizieren, ergibt sich daraus
\[X_{1,2} \prod_{1\leq l<\frac{m}{2}} (X_{2,1}X_{1,2}-\sigma_l) = \sum_{j=0}^{m-1} a_j \underbrace{X_{1,2} X_{2,1}\ldots}_{j\,\text{Faktoren}} \overset{(\alpha)}{=} 0.\]
Durch Multiplikation mit dem Nenner von \eqref{eq:gyoja_I2:1} folgt $X_{1,2} F_{2,m/2} = 0$. Es gibt also keine Kante von $F_{2,m/2}$ zu irgendeiner mit $\Set{1}$ gelabelten Ecke und aus Symmetriegründen auch keine Kante von $F_{1,m/2}$ zu einer mit $\Set{2}$ gelabelten Ecke.

%
%
%

\medbreak
Schritt 4: Wir zeigen Z4.
%
		%
%

Wir definieren $\psi_k: \IZ_W^{2\times 2}\to F^\lambda \IZ_W\OmegaGy F^\lambda$ durch
\[\begin{pmatrix} e_{11} & e_{12} \\ e_{21} & e_{22} \end{pmatrix} \mapsto \begin{pmatrix} F_1^{\lambda_k} & F^{\lambda_k} X_{1,2} F^{\lambda_k} \\ \sigma_k^{-1} F^{\lambda_k} X_{2,1} F^{\lambda_k} & F_2^{\lambda_k} \end{pmatrix}.\]
Das ist ein wohldefinierter Algebrahomomorphismus nach Konstruktion der $F^{\lambda_k}$. Er ist surjektiv, da $F^{\lambda_k} \IZ_W \OmegaGy F^{\lambda_k}$ von den Elementen $F_I^{\lambda_k}$ und $F^{\lambda_k} X_{IJ} F^{\lambda_k}$ erzeugt wird, die alle nach Konstruktion im Bild von $\psi_k$ liegen.
\end{proof}

\subsection{Die Zerlegungsvermutung für \texorpdfstring{$A_4$}{Typ A und Rang Vier}}

\begin{lemma}[Relationen für Matrizenringe]\label{gyoja:matrixrings_pathrelations}
Sei $k$ ein kommutativer Ring und $n\in\IN$. Es sei $\mathcal{M}$ der Köcher
\[v_1 \leftrightarrows v_2 \leftrightarrows \cdots \leftrightarrows v_{n-1} \leftrightarrows v_n.\]
Der Matrizenring $k^{n\times n}$ ist isomorph zum Quotienten von $k\mathcal{M}$ nach den Relationen 
\[(v_i\leftarrow v_{i+1} \leftarrow v_i) = e_{v_i} \text{ für }1\leq i<n\quad\text{und}\]
\[(v_i\leftarrow v_{i-1} \leftarrow v_i) = e_{v_i} \text{ für }1<i\leq n,\]
wobei $e_v$ das Idempotent der Ecke $v$ bezeichne.
\end{lemma}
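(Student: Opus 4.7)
The plan is to exhibit an explicit $k$-algebra homomorphism $\phi\colon k\mathcal{M}/R \to k^{n\times n}$, where $R$ denotes the two-sided ideal generated by the stated relations, and then to prove that $\phi$ is bijective by a standard path-straightening argument.

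First I would define $\phi$ on the generators of the path algebra by $e_{v_i} \mapsto E_{ii}$, $(v_i \leftarrow v_{i+1}) \mapsto E_{i,i+1}$ and $(v_{i+1} \leftarrow v_i) \mapsto E_{i+1,i}$, where $E_{ij}$ denote the standard matrix units. The two families of relations translate under $\phi$ into $E_{i,i+1}E_{i+1,i}=E_{ii}$ and $E_{i+1,i}E_{i,i+1}=E_{i+1,i+1}$, both of which hold in $k^{n\times n}$, so $\phi$ descends to a well-defined $k$-algebra homomorphism out of the quotient. Surjectivity is immediate: the image contains all $E_{ii}$ and the first off-diagonals, and every $E_{ij}$ is obtained as a product of consecutive off-diagonal units, namely as the image of the monotone path from $v_j$ to $v_i$.

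The main point is injectivity, for which I would produce a spanning set of $k\mathcal{M}/R$ of size at most $n^2$. For each pair $(i,j)$ let $p_{ij}$ be the unique monotone path from $v_j$ to $v_i$ in $\mathcal{M}$ (with $p_{ii}:=e_{v_i}$), and note that $\phi(p_{ij})=E_{ij}$. Any path in $\mathcal{M}$ that is not monotone must contain somewhere a length-two subpath of the form $v_k \leftarrow v_{k+1} \leftarrow v_k$ or $v_k \leftarrow v_{k-1} \leftarrow v_k$; applying the corresponding relation replaces it by $e_{v_k}$ and thereby shortens the underlying path by two edges without changing its endpoints. A straightforward induction on path length then shows that modulo $R$ every basis path of $k\mathcal{M}$ reduces to some $p_{ij}$. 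Hence the $n^2$ elements $p_{ij}$ span the quotient; since their images $E_{ij}$ are $k$-linearly independent in $k^{n\times n}$, they must already be linearly independent in $k\mathcal{M}/R$, so $\phi$ is an isomorphism.

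I do not expect any genuine obstacle. The only step that requires a little care is the reduction procedure: one must observe that removing a "backtrack" of length two really does yield another well-formed oriented path (which it does, trivially, since the removed detour starts and ends at the same vertex), and that the reduction terminates (which it does because each step strictly decreases the length). This is essentially the same straightening argument used for type-$A$ braids or for walks on a line graph, and it carries through verbatim over an arbitrary commutative coefficient ring $k$.
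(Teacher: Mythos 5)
Your proof is correct, and it takes a noticeably different route than the paper. You define the same homomorphism $\phi=\alpha$ out of the path algebra and then prove it bijective by a spanning-plus-independence argument: you show the $n^2$ monotone paths $p_{ij}$ span the quotient via the path-straightening reduction, and since their images $E_{ij}$ are $k$-linearly independent, $\phi$ must be injective. The paper instead constructs a candidate inverse $\beta\colon k^{n\times n}\to k\mathcal{M}/R$ sending $e_{ij}$ to the monotone path $p_{ij}$ and verifies directly that $\beta$ respects all matrix-unit relations $e_{ij}e_{kl}=\delta_{jk}e_{il}$, which requires the same path-straightening argument but packaged as a case analysis on the indices $i,j,k,l$. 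Your version is arguably cleaner in that it avoids the relation checking on the $k^{n\times n}$ side and needs no explicit two-sided inverse; the paper's version is more hands-on and produces the inverse map as a byproduct. Both hinge on the same combinatorial fact that a non-monotone walk on the line quiver contains a backtrack which the relations eliminate, so the underlying content is shared even though the logical scaffolding differs.
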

\begin{proof}
Wir werden die Standardbasis des Matrizenringes mit $e_{ij}$ bezeichnen und den Pfadalgebraquotienten mit $X$. Wir definieren zunächst den Homomorphismus $\alpha: k\mathcal{M}\to k^{n\times n}$ durch $\alpha(e_{v_i}) := e_{ii}$ und $\alpha(v_i \leftarrow v_{i\pm 1}):=e_{i,i\pm 1}$. Dies ist wohldefiniert, da $e_{ii}^2=e_{ii}$, $1=\sum_{i=1}^n e_{ii}$ und $e_{i,i\pm 1} = e_{ii} e_{i,\pm 1} e_{i\pm 1,i\pm 1}$ gilt, also die Relationen der Pfadalgebra erfüllt sind. 
In der Tat gilt auch $e_{i,i\pm 1}e_{i\pm 1,i} = e_{ii}$, sodass $\alpha$ einen Homomorphismus $X\to k^{n\times n}$ induziert.

\medbreak
Umgekehrt definieren wir $\beta: k^{n\times n}\to X$ indem wir $\beta(e_{ii}):=e_{v_i}$ sowie
\[\beta(e_{ij}) := (v_i \leftarrow v_{i+1} \leftarrow \cdots \leftarrow v_j)\]
für $i<j$ und
\[\beta(e_{ij}) := (v_i \leftarrow v_{i-1} \leftarrow \cdots \leftarrow v_j)\]
für $i>j$ definieren. Wir müssen die Relationen $e_{ij} e_{kl} = \delta_{jk} e_{il}$ des Matrizenrings nachprüfen. Nun gilt $\beta(e_{ij})\beta(e_{kl})=0$, falls $j\neq k$ ist, da $\beta(e_{ij})$ und $\beta(e_{kl})$ in diesem Fall nicht verkettbare Wege im Köcher sind. Für $i\leq j=k\leq l$ gilt $\beta(e_{ij})\beta(e_{kl}) = \beta(e_{il})$ nach Konstruktion. Ebenso für $i\geq j=k\geq l$. Falls jedoch $i<j$, $j>l$ gilt, dann erhalten wir
\begin{align*}
	\beta(e_{ij})\beta(e_{jl}) &= (v_i \leftarrow \cdots \leftarrow \underbrace{v_{j-1} \leftarrow v_j)(v_j \leftarrow v_{j-1}}_{\equiv e_{v-1}} \leftarrow \cdots \leftarrow v_l) \\
	&= (v_i \leftarrow \cdots \leftarrow v_{j-1})(v_{j-1} \leftarrow \cdots \leftarrow v_l) \\
	&=\beta(e_{i,j-1})\beta(e_{j-1},l)
\end{align*}
woraus auch in diesem Fall induktiv die Behauptung $\beta(e_{ij})\beta(e_{jl})=\beta(e_{il})$ folgt. Der letzte verbleibende Fall $i>j$, $j<l$ folgt analog.
\end{proof}

\begin{theorem}
Die Zerlegungsvermutung gilt für Coxeter"=Gruppen vom Typ $A_4$.
\end{theorem}
\begin{proof}
Wir benutzen eine analoge Strategie wie zuvor für $A_3$ und nutzen die Relationen vom Typ $(\alpha)$ aus. Wir schreiben $(\alpha^{st})$, wenn wir die Relation vom Typ $(\alpha)$ meinen, die zur Kante $s - t$ des Dynkin"=Diagramms gehört.

Der Kompatibilitätsgraph soll letzten Endes wie in Abbildung \ref{fig:gyoja:better_compatibility_graph_A4} zerlegt werden (Inklusionskanten sind erneut weggelassen worden).

\begin{figure}[ht]
	\index{terms}{Kompatibilitätsgraph!verbesserter}
	\centering
	\begin{tikzpicture}

\node[Vertex] (E_empty) at (0,0) {$\emptyset$};

\node[Vertex] (E_1a) at (-3.0,1) {$1$};
\node[Vertex] (E_2a) at (-1.0,1) {$2$};
\node[Vertex] (E_3a) at (+1.0,1) {$3$};
\node[Vertex] (E_4a) at (+3.0,1) {$4$};

\node[Vertex] (E_2b)  at (-1.0,2.0) {$2$};
\node[Vertex] (E_13b) at (-1.0,3.5) {$13$};
\node[Vertex] (E_14b) at ( 0.0,3.0) {$14$};
\node[Vertex] (E_24b) at (+1.0,3.5) {$24$};
\node[Vertex] (E_3b)  at (+1.0,2.0) {$3$};

\node[Vertex] (E_12c) at (-3.0,4.5) {$12$};
\node[Vertex] (E_13c) at (-1.0,4.5) {$13$};
\node[Vertex] (E_14c) at ( 0.0,4.0) {$14$};
\node[Vertex] (E_23c) at ( 0.0,5.0) {$23$};
\node[Vertex] (E_24c) at (+1.0,4.5) {$24$};
\node[Vertex] (E_34c) at (+3.0,4.5) {$34$};

\node[Vertex] (E_124d) at (-1.0,7.0) {$124$};
\node[Vertex] (E_13d)  at (-1.0,5.5) {$13$};
\node[Vertex] (E_23d)  at ( 0.0,6.0) {$23$};
\node[Vertex] (E_24d)  at (+1.0,5.5) {$24$};
\node[Vertex] (E_134d) at (+1.0,7.0) {$134$};

\node[Vertex] (E_123e) at (-3.0,8.0) {$123$};
\node[Vertex] (E_124e) at (-1.0,8.0) {$124$};
\node[Vertex] (E_134e) at (+1.0,8.0) {$134$};
\node[Vertex] (E_234e) at (+3.0,8.0) {$234$};

\node[Vertex] (E_1234) at (0,9.0) {$1234$};

%
%
%
%
%
%
%
%
%
%
%
%

\path[EdgeT]
	(E_1a) edge (E_2a)
	(E_2a) edge (E_3a)
	(E_3a) edge (E_4a)
	
	(E_2b) edge (E_13b)
	(E_13b) edge (E_14b)
	(E_14b) edge (E_24b)
	(E_24b) edge (E_3b)
	
	(E_13c) edge (E_23c) edge (E_14c) edge (E_12c)
	(E_24c) edge (E_23c) edge (E_14c) edge (E_34c)
	
	(E_124d) edge (E_13d)
	(E_13d) edge (E_23d)
	(E_23d) edge (E_24d)
	(E_24d) edge (E_134d)
	
	(E_123e) edge (E_124e)
	(E_124e) edge (E_134e)
	(E_134e) edge (E_234e);

\node (lambda_5)    at (5,0.0) {$(5)$};
\node (lambda_41)   at (5,1.0) {$(4,1)$};
\node (lambda_32)   at (5,3.0) {$(3,2)$};
\node (lambda_311)  at (5,4.5) {$(3,1^2)$};
\node (lambda_221)  at (5,6.0) {$(2^2,1)$};
\node (lambda_2111) at (5,8.0) {$(2,1^3)$};
\node (lambda_11111)at (5,9.0) {$(1^5)$};

\end{tikzpicture}
	\caption{Verbesserte Zerlegung von $\OmegaGy(A_4)$ als Pfadalgebra}
	\label{fig:gyoja:better_compatibility_graph_A4}
\end{figure}

\medbreak
Schritt 1: Konstruktion der $F_I^\lambda$, Verifizieren von Z1 und Z2. Unsere Konstruktion wird auch gleich Z5, Z6 und Z7 zeigen.

Wir beginnen mit $F^{(5)}:=E_\emptyset$ und $F^{(1^5)}:=E_{1234}$.

\medbreak
Wir setzen $F_1^{(4,1)}:=E_1$ und $F_4^{(4,1)}:=E_4$.

Aus der $(\alpha^{12})$-Relation
\begin{align*}
E_1 &= X_{1,2} X_{2,1}
\end{align*}
folgt, dass $F_2^{(4,1)} := X_{2,1} X_{1,2}$ ein Idempotent $\leq E_2$ ist. Aus der folgenden $(\alpha^{12})$-Relation bzw. $(\alpha^{23})$-Relation
\begin{align*}
E_2 &= X_{2,1} X_{1,2} + X_{2,13} X_{13,2} \\
E_2 &= X_{2,3} X_{3,2} + X_{2,13} X_{13,2}
\end{align*}
folgt, dass $F_2^{(4,1)} = X_{2,3} X_{3,2}$ ist. Durch Anwenden des nichttrivialen Graphautomorphismus erhalten wir symmetrisch das Idempotent $F_3^{(4,1)}:=X_{3,4} X_{4,3} = X_{3,2} X_{2,3} \leq E_3$.

\medbreak
Durch Anwenden von $\delta$ erhalten wir die entsprechenden Idempotente
\begin{align*}
	F_{234}^{(2,1^3)} &:= E_{234}, \\
	F_{134}^{(2,1^3)} &:= X_{134,234} X_{234,134}, \\
	F_{124}^{(2,1^3)} &:= X_{124,123} X_{123,124}, \\
	F_{123}^{(2,1^3)} &:= E_{123}.
\end{align*}

\medbreak
Man beachte, dass $F_2^{(4,1)}$ hier durch Idempotenttransport entlang von $\Set{1}\to\Set{2}$ entsteht und $F_2^{(3,2)} := X_{2,13} X_{13,2}$ das Restidempotent $\leq E_2$ für diesen Transport ist. Indem wir $F_2^{(3,2)}$ entlang von $\Set{2}\to\Set{13}$ transportieren, erhalten wir somit das Idempotent
\begin{align*}
	F_{13}^{(3,2)} &:= X_{13,2} F_2^{(3,2)} X_{2,13} \\
	&= X_{13,2} E_2 X_{2,13} - X_{13,2}\smash{\underbrace{F_2^{(4,1)} X_{2,13}}_{=0}} \\
	&= X_{13,2} X_{2,13}.
\end{align*}

Durch Anwenden von $\delta$ erhalten wir das Idempotent $F_{13}^{(2^2,1)} := X_{13,124} X_{124,13} \leq E_{13}$. Aus der $(\alpha^{23})$-Relation
\[E_{12} = X_{12,13} X_{13,12}\]
folgt, dass $F_{13}^{(3,1^2)} := X_{13,12} X_{12,13} = X_{13,12} E_{12} X_{12,13}$ ein Idempotent $\leq E_{13}$ ist.

Wenn wir uns nun die $(\alpha^{32})$-Relation
\[E_{13} = X_{13,2} X_{2,13} + X_{13,12} X_{12,13} + X_{13,124} X_{124,13} = F_{13}^{(3,2)} + F_{13}^{(3,1^2)} + F_{13}^{(2^2,1)}\]
anschauen, dann erkennen wir, dass die Summe von je zwei dieser Idempotente das Restidempotent $\leq E_{13}$ des Idempotenttransports ist, mit welchem wir das jeweils dritte definiert haben. Insbesondere sind diese drei Idempotente paarweise orthogonal.

\medbreak
Indem wir den Graphautomorphismus anwenden, erhalten wir die Idempotente $F_{24}^{(3,1^2)}$, $F_{24}^{(3,2)}$ und $F_{24}^{(2^2,1)}$.

\medbreak
Wir brauchen noch die Idempotente an den Ecken $\Set{14}$ und $\Set{23}$. Das tun wir ebenfalls durch Idempotenttransport. Es gilt die $(\alpha^{34})$-Relation
\[E_{13} = X_{13,14} X_{14,13} + X_{13,124} X_{124,13},\]
d.\,h. $X_{13,14}X_{14,13} = F_{13}^{(3,2)}+ F_{13}^{(3,1^2)}$. Daher erhalten wir durch Idempotenttransport entlang von $\Set{13}\to\Set{14}$ die beiden Idempotente
\[F_{14}^{(3,2)} := X_{14,13} F_{13}^{(3,2)} X_{13,14}\quad\text{und}\]
\[F_{14}^{(3,1^2)} := X_{14,13} F_{13}^{(3,1^2)} X_{13,14}\]
sowie aus Symmetriegründen auch
\[F_{23}^{(3,1^2)} := X_{23,13} F_{13}^{(3,1^2)} X_{13,23}\quad\text{und}\]
\[F_{23}^{(2^2,1)} := X_{23,13} F_{13}^{(2^2,1)} X_{13,23}.\]

Aus der $(\alpha^{43})$-Relation
\[E_{14} = X_{14,13} X_{13,14}\]
und der $(\alpha^{21})$-Relation
\[E_{23} = X_{23,13} X_{13,23}\]
folgt, dass die beiden Restidempotente für diese beiden Idempotenttransporte gleich Null sind, d.\,h. wir haben orthogonale Zerlegungen
\[E_{14} = F_{14}^{(3,2)} + F_{14}^{(3,1^2)}\quad\text{und}\]
\[E_{23} = F_{23}^{(3,1^2)} + F_{23}^{(2^2,1)}.\]

\medbreak
Damit haben wir alle Idempotente bereits gefunden. Die noch nicht definierten $F_I^\lambda$ definieren wir als Null.

\bigbreak
Schritt 2: Verifizieren von Z3.

Wir beweisen dazu, dass nur Inklusionskanten zwischen den verschiedenen in Abbildung \ref{fig:gyoja:better_compatibility_graph_A4} dargestellten Komponenten existieren. Da wir alle Idempotente durch Idempotenttransport definiert haben, werden transversale Kanten in parallele Kanten aufgespalten. Das eliminiert bereits fast alle potentiellen transversalen Kanten zwischen den Komponenten.

\medbreak
Einzig entlang der Kanten $\Set{14} \leftrightarrows \Set{24}$ und $\Set{23} \leftrightarrows \Set{24}$ ist dies noch nicht klar, da wir entlang dieser Kante keinen Idempotenttransport durchgeführt haben, sondern entlang der Kanten $\Set{13} \leftrightarrows \Set{14}$ und $\Set{13} \leftrightarrows \Set{23}$ gearbeitet haben.

\medbreak
Jedoch liefert die symmetrische Variante
\[\widetilde{F_{14}^\lambda} := X_{14,24} F_{24}^\lambda X_{24,14} = \alpha(F_{14}^\lambda)\]
\[\widetilde{F_{23}^\lambda} := X_{23,24} F_{24}^\lambda X_{24,23} = \alpha(F_{23}^\lambda)\]
für alle $\lambda$ die gleichen Idempotente, wobei $\alpha$ den nichttrivialen Graphautomorphismus bezeichne. Sei $\Set{\lambda,\mu}=\Set{(3,2),(3,1^2)}$. Dann gilt
\begin{align*}
	F_{14}^\lambda \widetilde{F_{14}^\mu} &= (X_{14,13} F_{13}^\lambda X_{13,14})\cdot(X_{14,24} F_{24}^\mu X_{24,14}) \\
	&= X_{14,13} F_{13}^\lambda (X_{13,14} X_{14,24}) F_{24}^\mu X_{24,14} \\
	&\overset{\mathclap{(\gamma^{13})}}{=} X_{14,13} F_{13}^\lambda (X_{13,23} X_{23,24} - X_{13,124}X_{124,24} + X_{13,3} X_{3,24}) F_{24}^\mu X_{24,14} \\
	&= X_{14,13} F_{13}^\lambda (X_{13,23} X_{23,24}) F_{24}^\mu X_{24,14} \\
	&\quad- X_{14,13} F_{13}^\lambda (X_{13,124}X_{124,24}) F_{24}^\mu X_{24,14} \\
	&\quad+ X_{14,13} F_{13}^\lambda (X_{13,3} X_{3,24}) F_{24}^\mu X_{24,14}.
\end{align*}
Da nun $F_{13}^{(3,2)} X_{13,23}=0$ und $X_{23,24} F_{24}^{(3,2)}=0$ ist und stets entweder $\lambda$ oder $\mu$ gleich $(3,2)$ ist, verschwindet der erste Summand. Der zweite Summand verschwindet, da für $\lambda\in\Set{(3,2),(3,1^2)}$ in beiden Fällen $F_{13}^\lambda X_{13,124} = 0$ gilt.

Auch der dritte Summand verschwindet. Für $\mu=(3,1^2)$ folgt dies aus $0=X_{3,24} F_{24}^{(3,1^2)}$. Falls $\lambda=(3,1^2)$ und $\mu=(3,2)$ ist, wenden wir die $(\gamma^{12})$-Relation
\[X_{12,2}^1 X_{2,3}^2 = X_{12,13}^2 X_{13,3}^1\]
an. Daraus ergibt sich nämlich
\begin{alignat*}{3}
	F_{13}^{(3,1^2)} X_{13,3} X_{3,24} &= (X_{13,12} X_{12,13}) X_{13,3} X_{3,24} \quad&\text{nach Konstruktion}  \\
	&= X_{13,12} (X_{12,13} X_{13,3}) X_{3,24} \\
	&= X_{13,12} (X_{12,2} X_{2,3}) X_{3,24} &\text{aufgrund von }(\gamma^{12}) \\
	&= X_{13,12} X_{12,2} (X_{2,3} X_{3,24}) \\
	&= 0 &\text{aufgrund von }(\alpha^{23}).
\end{alignat*}

Da somit alle drei Summanden verschwinden, folgt $F_{14}^\lambda \widetilde{F_{14}^\mu}=0$ und durch Anwenden des Graphautomorphismus auch $\widetilde{F_{14}^\lambda} F_{14}^\mu=0$. Durch Vertauschen von $\lambda$ und $\mu$ gilt auch $F_{14}^\mu \widetilde{F_{14}^\lambda}=0$. Also ist $\widetilde{F_{14}^\lambda} \leq E_{14}-F_{14}^\mu = F_{14}^\lambda$. Durch erneutes Anwenden des Graphautomorphismus folgt $F_{14}^\lambda\leq\widetilde{F_{14}^\lambda}$ und somit die Gleichheit. Durch Anwenden von $\delta$ folgt dann auch $\widetilde{F_{23}^\lambda}=F_{23}^\lambda$.

\medbreak
Daher greift auch hier das Argument, dass Idempotenttransport nur parallele Kanten erzeugt. Es bleiben daher, wie behauptet, nur die Inklusionskanten zwischen verschiedenen Komponenten in Abbildung \ref{fig:gyoja:better_compatibility_graph_A4} übrig.

\bigbreak
Schritt 3: Verifizieren von Z4.

%
Wir wollen also surjektive Homomorphismen $\psi_\lambda: \IZ^{d_\lambda\times d_\lambda} \to F^\lambda \OmegaGy F^\lambda$ konstruieren. Dazu verwenden wir die Präsentation von $\IZ^{d_\lambda\times d_\lambda}$ aus Lemma \ref{gyoja:matrixrings_pathrelations}.

Wir setzen zur Abkürzung $X_{IJ}^\lambda := F^\lambda X_{IJ} F^\lambda$. Nach Konstruktion gilt $X_{IJ}^\lambda X_{JI}^\lambda = F_I^\lambda$ für alle transversalen Kanten $I\leftrightarrows J$, wenn $F_I^\lambda$ und $F_J^\lambda$ nicht verschwinden, sowie $X_{IJ}^\lambda = X_{JI}^\lambda = 0$ andernfalls.

\medbreak
Wenn wir die Knoten der jeweiligen starken Zusammenhangskomponente in \ref{fig:gyoja:better_compatibility_graph_A4} mit der zugehörigen Indexmenge bezeichnen (was konfliktfrei möglich ist, da keine Indexmenge mit höherer Vielfachheit als $1$ vorkommt), definiert
\[\psi_\lambda:\IZ^{d_\lambda\times d_\lambda}\to F^\lambda \OmegaGy F^\lambda, e_{II} \mapsto F_{I}^\lambda, e_{IJ} \mapsto X_{IJ}^\lambda\]
einen Morphismus $\psi_\lambda: \IZ^{d_\lambda\times d_\lambda}\to F^\lambda \OmegaGy F^\lambda$ für diejenigen Komponenten, die ohne Inklusionskanten eine Gerade bilden (also in alle außer die mit $\lambda=(3,1^2)$ bezeichnete Komponente).
%

\medbreak
Es ist die Surjektivität von $\psi_\lambda$ zu beweisen, d.\,h. dass alle $X_{IJ}^\lambda$ im Bild von $\psi_\lambda$ sind. Für die transversalen Kanten ist das durch die Konstruktion klar. Für $\lambda=(5),(4,1),(2,1^3)$ und $(1^5)$ ist daher nichts weiter zu tun.

Für $\lambda=(3,2)$ müssen wir die Inklusionskanten $X_{13,3}$ und $X_{24,2}$ betrachten. Dazu benutzen wir die Relation $(\gamma)$:
\begin{align*}
	X_{24,2}^{(3,2)} &= F_{24}^{(3,2)} X_{24,2} F_2^{(3,2)} \\
	&= (X_{24,3} X_{3,24}) X_{24,2} (X_{2,13} X_{13,2}) \\
	&= X_{24,3} X_{3,24} (X_{24,2} X_{2,13}) X_{13,2} \\
	&\overset{\mathclap{(\gamma^{42})}}{=} X_{24,3} X_{3,24} (X_{24,14} X_{14,13} + X_{24,134} X_{134,13} -X_{24,23} \underbrace{X_{23,13}) X_{13,2}}_{=0} \\
	&= X_{24,3} X_{3,24} X_{24,14} X_{14,13} X_{13,2} + X_{24,3} \underbrace{X_{3,24} X_{24,134}}_{=0} X_{134,13} X_{13,2} & \\
	&= X_{24,3}^{(3,2)} X_{3,24}^{(3,2)} X_{24,14}^{(3,2)} X_{14,13}^{(3,2)} X_{13,2}^{(3,2)} \quad\text{da es Element von } F^{(3,2)}\OmegaGy F^{(3,2)} \text{ ist.}\\
	&\in\im(\psi_{(3,2)})
\end{align*}
Durch Anwenden des Graphautomorphimus folgt auch $X_{13,3}^{(3,2)}\in\im(\psi_{(3,2)})$, durch Anwenden von $\delta$ folgt $X_{124,24}^{(2^2,1)},X_{134,13}^{(2^2,1)}\in\im(\psi_{(2^2,1)})$. Also liegen alle $X_{IJ}^\lambda$ im Bild von $\psi_\lambda$ für $\lambda=(3,2), (2^2,1)$ und wir haben die Surjektivität auch in diesen beiden Fällen bewiesen.

\medbreak
Es bleibt der Fall $\lambda=(3,1^2)$ übrig, d.\,h. das äußere Quadrat der Spiegelungsdarstellung. Wir sortieren die sechs zweielementigen Teilmengen in der Reihenfolge $\Set{1,2}$, $\Set{1,3}$, $\Set{1,4}$, $\Set{2,3}$, $\Set{2,4}$, $\Set{3,4}$. Wir behaupten, dass der folgende Homomorphismus ${\psi_{(3,1^2)}: \IZ^{6\times 6}\to F^{(3,1^2)}\OmegaGy F^{(3,1^2)}}$ wohldefiniert ist:
\[\begin{pmatrix}
	e_{11} & e_{12} & & & & \\
	e_{21} & e_{22} & e_{23} & & & \\
	& e_{32} & e_{33} & e_{34} & & \\
	& & e_{43} & e_{44} & e_{45} & \\
	& & & e_{54} & e_{55} & e_{56} \\
	& & & & e_{65} & e_{66}
\end{pmatrix} \mapsto \left(\begin{smallmatrix}
	F_{12}^\lambda & X_{12,13}^\lambda  & & & & \\
	X_{13,12}^\lambda & F_{13}^\lambda & X_{13,14}^\lambda & & & \\
	& X_{14,13}^\lambda & F_{14}^\lambda & X_{14,13}^\lambda X_{13,23}^\lambda & & \\
	& & X_{23,13}^\lambda X_{13,14}^\lambda &  F_{23}^\lambda & X_{23,24}^\lambda & \\
	& & & X_{24,23}^\lambda & F_{24}^\lambda & X_{24,34}^\lambda \\
	& & & & X_{34,24}^\lambda & F_{34}^\lambda
\end{smallmatrix}\right)\]
Fast alle Relationen aus \ref{gyoja:matrixrings_pathrelations} sind dabei durch die Konstruktion der Idempotente bereits erfüllt. Wir müssen nur noch
\[X_{14,13}^\lambda X_{13,23}^\lambda \cdot X_{23,13}^\lambda X_{13,14}^\lambda = F_{14}^\lambda \quad\text{und}\]
\[X_{23,13}^\lambda X_{13,14}^\lambda \cdot X_{14,13}^\lambda X_{13,23}^\lambda = F_{23}^\lambda\]
nachprüfen. Das folgt aus \ref{gyoja:idempotenttransport}, denn in $X_{13,23}^\lambda \cdot X_{23,13}^\lambda = F^\lambda(X_{13,23} F_{23}^\lambda X_{23,13}^\lambda)F^\lambda$ tritt der Transport des Idempotents $F_{23}^\lambda$ entlang der Kante $\Set{23}\to\Set{13}$ auf, welcher genau der Rücktransport zu demjenigen Idempotenttransport ist, den wir zur Definition von $F_{23}^\lambda$ benutzt haben. Also liefert er genau das ursprüngliche Idempotent $F_{13}^\lambda$ zurück. Daraus folgt $X_{14,13}^\lambda X_{13,23}^\lambda \cdot X_{23,13}^\lambda X_{13,14}^\lambda = F^\lambda( X_{14,13} F_{13}^\lambda X_{13,14})F^\lambda = F^\lambda F_{14}^\lambda F^\lambda=F_{14}^\lambda$ aufgrund der Definition von $F_{14}^\lambda$. Durch Anwenden von $\delta$ ergibt sich auch die Gültigkeit der zweiten Gleichung.

Wir überzeugen uns erneut von der Surjektivität von $\psi_\lambda$. Nach Definition sind bereits $X_{13,12}^\lambda$, $X_{14,13}^\lambda$, $X_{24,23}^\lambda$ und $X_{34,24}^\lambda$ im Bild. Es gilt außerdem
\begin{align*}
	X_{23,13}^\lambda &= X_{23,13}^\lambda F_{13}^\lambda \\
	&= X_{23,13}^\lambda (X_{13,14}^\lambda X_{14,13}^\lambda) \\
	&= (X_{23,13}^\lambda X_{13,14}^\lambda) X_{14,13}^\lambda \\
	&\in \im(\psi_\lambda)
\end{align*}
sowie
\begin{align*}
	X_{24,14}^\lambda &= X_{24,14}^\lambda F_{14}^\lambda \\
	&= X_{24,14}^\lambda (X_{14,13}^\lambda X_{13,14}^\lambda) \\
	&= (X_{24,14}^\lambda X_{14,13}^\lambda) X_{13,14}^\lambda \\
	&= (X_{24,23}^\lambda X_{23,13}^\lambda+X_{24,2}^\lambda X_{2,13}^\lambda - X_{24,134}^\lambda X_{134,13}^\lambda) X_{13,14}^\lambda \quad\text{aufgrund von }(\gamma^{24}) \\
	&= (X_{24,23}^\lambda X_{23,13}^\lambda) X_{13,14}^\lambda + \big(X_{24,2}^\lambda \smash{\underbrace{F_2^\lambda}_{=0}} X_{2,13}^\lambda - X_{24,134}^\lambda \smash{\underbrace{F_{134}^\lambda}_{=0}} X_{134,13}^\lambda\big) X_{13,14}^\lambda \\
	&= (X_{24,23}^\lambda X_{23,13}^\lambda) X_{13,14}^\lambda \\
	&\in\im(\psi_\lambda).
\end{align*}
Durch Anwenden des Graphautomorphismus erhalten wir, dass auch $X_{12,13}^\lambda$, $X_{13,14}^\lambda$, $X_{13,23}^\lambda$, $X_{14,24}^\lambda$, $X_{23,24}^\lambda$ und $X_{24,34}^\lambda$ im Bild liegen, womit $\psi_\lambda$ also surjektiv ist.
\end{proof}

\subsection{Die Zerlegungsvermutung für \texorpdfstring{$B_3$}{Typ B und Rang Drei}}

\begin{theorem}
Die Zerlegungsvermutung gilt für Coxeter"=Gruppen vom Typ $B_3$.
\end{theorem}
\begin{proof}
Wir wollen erneut die Relationen vom Typ $(\alpha)$ ausnutzen, um $\OmegaGy$ als Quotienten einer verbesserten Pfadalgebra darzustellen. Als zugrundeliegenden Köcher wollen wir dafür den in Abbildung \ref{fig:gyoja:better_compatibility_graph_B3} abgebildeten bekommen (wobei erneut Inklusionskanten weggelassen wurden).

\begin{figure}[htp]
	\index{terms}{Kompatibilitätsgraph!verbesserter}
	\centering
	\begin{tikzpicture}

\node[Vertex] (E_empty) at (0,0) {$\emptyset$};

\node[Vertex] (E_0a) at (-1.5,1) {$0$};

\node[Vertex] (E_1a) at ( 0.0,1) {$1$};
\node[Vertex] (E_2a) at (+1.5,1) {$2$};

\node[Vertex] (E_0b) at (-1.5,2) {$0$};
\node[Vertex] (E_1b) at ( 0.0,2) {$1$};
\node[Vertex] (E_2b) at (+1.5,2) {$2$};

\node[Vertex] (E_0c) at (-1.5,3) {$0$};
\node[Vertex] (E_1c) at (-0.5,3) {$1$};
\node[Vertex] (E_02c)at (-0.5,5) {$02$};

\node[Vertex] (E_1d) at ( 0.5,3) {$1$};
\node[Vertex] (E_02d)at ( 0.5,5) {$02$};
\node[Vertex] (E_12d)at ( 1.5,5) {$12$};

\node[Vertex] (E_01b) at (-1.5,6) {$01$};
\node[Vertex] (E_02b) at ( 0.0,6) {$02$};
\node[Vertex] (E_12b) at ( 1.5,6) {$12$};

\node[Vertex] (E_01a) at (-1.5,7) {$01$};
\node[Vertex] (E_02a) at ( 0.0,7) {$02$};

\node[Vertex] (E_12a) at ( 1.5,7) {$12$};

\node[Vertex] (E_123) at (0,8) {$012$};

\path[EdgeT]
	(E_1a) edge (E_2a)
	
	(E_0b) edge (E_1b)
	(E_1b) edge (E_2b)
	
	(E_0c) edge (E_1c)
	(E_1c) edge (E_02c)
	
	(E_1d) edge (E_02d)
	(E_02d) edge (E_12d)
	
	(E_01b) edge (E_02b)
	(E_02b) edge (E_12b)
	
	(E_01a) edge (E_02a);

\node (lambda_3_empty)   at (6,0) {$(3),\emptyset$};
\node (lambda_empty_3)   at (5,1) {$\emptyset,(3)$};
\node (lambda_21_empty)  at (7,1) {$(2,1),\emptyset$};
\node (lambda_2_1)       at (6,2) {$(2),(1)$};
\node (lambda_1_2)       at (5,4) {$(1),(2)$};
\node (lambda_11_1)      at (7,4) {$(1^2),(1)$};
\node (lambda_1_11)      at (6,6) {$(1),(1^2)$};
\node (lambda_empty_21)  at (5,7) {$\emptyset,(2,1)$};
\node (lambda_111_empty) at (7,7) {$(1^3),\emptyset$};
\node (lambda_empty_111) at (6,8) {$\emptyset,(1^3)$};

\end{tikzpicture}
	\caption{Verbesserte Zerlegung von $\OmegaGy(B_3)$ als Pfadalgebra}
	\label{fig:gyoja:better_compatibility_graph_B3}
\end{figure}

Fangen wir zunächst an, uns die Idempotente zu beschaffen, d.\,h. Z1 und Z2 nachzuweisen. Unsere Konstruktion wird zugleich auch Z6 erfüllen (und Z7 trivialerweise, da es keine nichttrivialen Graphautomorphismen von $B_3$ gibt). Wir halten fest, dass jeder gute Ring $\IZ[\tfrac{1}{2}]$ enthält, wir dürfen also durch $2$ dividieren.

\bigbreak
Schritt 0:

Erneut setzen wir $F^{(3),\emptyset}:=E_\emptyset$ und $F^{\emptyset,(1^3)}:=E_{012}$.

\medbreak
Schritt 1:

Aus $(\alpha^{21})$ folgt
\[E_2 = X_{2,1} X_{1,2}\]
und daher ist $F_1' := X_{1,2} X_{2,1}$ ein Idempotent $\leq E_1$. Aus $(\alpha^{12})$ folgt weiterhin
\[E_1 = X_{1,2} X_{2,1} + X_{1,02} X_{02,1}\]
sodass also $F_1'':= X_{1,02} X_{02,1}$ ein zu $F_1'$ orthogonales Idempotent ist. Diese beiden Idempotente wollen wir weiter zerlegen.

\medbreak
Schritt 2:

Wir erinnern daran, dass das in den Relationen vom Typ $(\alpha)$ vorkommende Polynom $\tau_{m-1}$ für $m=4$ die Form $\tau_{4-1}(T) = T^3-2T$ hat. Aus $(\alpha^{01})$ und $(\alpha^{10})$ folgt daher:
\begin{align}
	0 &= X_{0,1} X_{1,0} X_{0,1} + X_{0,1} X_{1,02} X_{02,1} - 2X_{0,1} \label{eq:gyoja_B3:1}\tag{1}\\
	0 &= X_{1,0} X_{0,1} X_{1,0} + X_{1,02} X_{02,1} X_{1,0} - 2X_{1,0} \label{eq:gyoja_B3:2}\tag{2}
\end{align}
Indem wir $f:=X_{1,0} X_{0,1}$ setzen, die obere Gleichung von links mit $X_{1,0}$ und die untere von rechts mit $X_{0,1}$ multiplizieren, erhalten wir:
\begin{align}
	0 &= f^2 + f F_1'' - 2f \label{eq:gyoja_B3:3}\tag{3}\\
	0 &= f^2 + F_1'' f - 2f \label{eq:gyoja_B3:4}\tag{4}
\end{align}
Daher gilt
\[f F_1'' = F_1'' f =: f'' \]
und somit auch
\[f F_1' = F_1' f =: f' \]
da $E_1 = F_1' + F_1''$. Wir multiplizieren \eqref{eq:gyoja_B3:3} mit $F_1''$ und \eqref{eq:gyoja_B3:4} mit $F_1'$ und erhalten:
\begin{align}
	0 &= f''^2 - f'' \label{eq:gyoja_B3:5}\tag{5}\\
	0 &= f'^2 - 2f'  \label{eq:gyoja_B3:6}\tag{6}
\end{align}

Das liefert uns die folgende Zerlegung in paarweise orthogonale Idempotente:
\begin{align}
	E_1 = F_1' + F_1'' = \big(\underbrace{\tfrac{1}{2}f'}_{=:F_1^{(2),(1)}}\big) + \big(\underbrace{F_1' - \tfrac{1}{2}f'}_{=:F_1^{(2,1),\emptyset}}\big) + \big(\underbrace{f''}_{=:F_1^{(1),(2)}}\big) + \big(\underbrace{F_1'' - f''}_{=:F_1^{(1^2),(1)}}\big)
	\label{eq:gyoja_B3:idempotentsF1}\tag{7}
\end{align}
Mit diesen Bezeichnungen gilt $X_{1,0}X_{0,1} = 2F_1^{(2),(1)} + F_1^{(1),(2)}$. Wir definieren die restlichen Idempotente durch Idempotenttransport nach Lemma \ref{gyoja:idempotenttransport} bzw. Anwenden der Dualität, d.\,h. wir setzen
\begin{align*}
	F_0^{(1),(2)} &:= X_{0,1} F_1^{(1),(2)} X_{1,0},\\
	F_0^{(2),(1)} &:= \tfrac{1}{2} X_{0,1} F_1^{(2),(1)} X_{1,0}, \\
	F_0^{\emptyset,(3)} &:= E_0 - F_0^{(1),(2)} - F_0^{(2),(1)}, \\
	F_2^{(2),(1)} &:= X_{2,1} F_1^{(2),(1)} X_{1,2}, \\
	F_2^{(2,1),\emptyset} &:= X_{2,1} F_1^{(2,1),\emptyset} X_{1,2},\\ 
	F_{01}^{\lambda,\mu} &:= \delta(F_2^{\mu^\prime,\lambda^\prime}), \\
	F_{02}^{\lambda,\mu} &:= \delta(F_1^{\mu^\prime,\lambda^\prime}), \\
	F_{12}^{\lambda,\mu} &:= \delta(F_0^{\mu^\prime,\lambda^\prime})
\end{align*}
sowie alle anderen $F_I^{\lambda,\mu}:=0$.

\bigbreak
Schritt 3: Verifizieren von Z3.

Dazu müssen wir uns davon überzeugen, dass $F_I^{\lambda,\mu} X_{IJ} F_J^{\lambda',\mu'}=0$ gilt, wann immer $(\lambda,\mu)\not\preceq(\lambda',\mu')$ ist (siehe \ref{J_alg:def:order_Lambda} für die Definition der partiellen Ordnung $\preceq$ auf $\Irr(B_3)=\Set{(\lambda,\mu) | \lambda \vdash a, \mu \vdash b, a+b=3}$). Wir zeigen sogar, dass die einzigen nicht in Abbildung \ref{fig:gyoja:better_compatibility_graph_B3} dargestellten Kanten die Inklusionskanten sind.

Es gilt:
\begin{align*}
	X_{0,1} F_1^{(2,1),\emptyset} &= X_{0,1} (F_1' - \tfrac{1}{2} f') \\
	&= X_{0,1} (E_1 - \tfrac{1}{2} X_{1,0}X_{0,1}) F_1' \\
	&= \tfrac{1}{2} (2 X_{0,1} - X_{0,1} X_{1,0} X_{0,1}) F_1' \\
	&\overset{\text{\eqref{eq:gyoja_B3:1}}}{=} \tfrac{1}{2} (X_{0,1} F_1'') F_1' \\
	&= 0 \\
	X_{0,1} F_1^{(1^2),(1)} &= X_{0,1} (F_1'' - f'') \\
	&= X_{0,1} (E_1 - X_{1,0}X_{0,1}) F_1'' \\
	&= (X_{0,1} - X_{0,1} X_{1,0} X_{0,1}) F_1'' \\
	&\overset{\text{\eqref{eq:gyoja_B3:1}}}{=} (X_{0,1} F_1'' - X_{0,1}) F_1'' \\
	&= 0  
\end{align*}
Das heißt, dass es keine transversale Kanten von $F_1^{(2,1),\emptyset}$ oder $F_1^{(1^2),(1)}$ zu Ecken gibt, die mit $\Set{0}$ gelabelt sind. Da die Idempotente, die mit $\Set{0}$ gelabelt sind, durch Idempotenttransport definiert wurden, folgt aus \ref{gyoja:idempotenttransport} auch $F_0^{\emptyset,(3)}X_{0,1} = 0$, d.\,h. dass auch keine Kanten von mit $\Set{1}$ gelabelten Ecken zu $F_0^{\emptyset,(3)}$ existieren.

Ganz analog gilt auch:
\begin{align*}
	F_1^{(2,1),\emptyset} X_{1,0} &= (F_1' - \tfrac{1}{2} f') X_{1,0}\\
	&= F_1' (E_1 - \tfrac{1}{2} X_{1,0}X_{0,1}) X_{1,0} \\
	&= \tfrac{1}{2} F_1' (2 X_{1,0} - X_{1,0} X_{0,1} X_{1,0}) \\
	&\overset{\text{\eqref{eq:gyoja_B3:2}}}{=} \tfrac{1}{2} F_1' (F_1'' X_{1,0}) \\
	&= 0 \\
	F_1^{(1^2),(1)} X_{1,0} &= (F_1'' - f'') X_{1,0} \\
	&= F_1'' (E_1 - X_{1,0}X_{0,1}) X_{1,0} \\
	&= F_1'' (X_{1,0} - X_{1,0} X_{0,1} X_{1,0}) \\
	&\overset{\text{\eqref{eq:gyoja_B3:2}}}{=} F_1'' (F_1'' X_{1,0} - X_{1,0})\\
	&= 0
\end{align*}
Das heißt, dass auch in der umgekehrten Richtung keine Kanten von Ecken, die mit $\Set{0}$ gelabelt sind, zu $F_1^{(2,1),\emptyset}$ oder $F_1^{(1^2),(1)}$ existieren. Wieder folgt aus \ref{gyoja:idempotenttransport} auch $X_{1,0}F_0^{\emptyset,(3)} = 0$, d.\,h. dass auch keine Kanten von $F_0^{\emptyset,(3)}$ zu mit $\Set{1}$ gelabelten Ecken existieren.

Da die Idempotente $F_0^{(1),(2)}$ und $F_0^{(2),(1)}$ durch Idempotenttransport definiert wurden, existieren auch keine Kanten $F_0^{(1),(2)} \leftrightarrows F_1^{(2),(1)}$ oder $F_0^{(2),(1)} \leftrightarrows F_1^{(1),(2)}$. Ebenso existieren keine Kanten $F_2^{(2,1),\emptyset} \leftrightarrows F_1^{(2),(1)}$ oder $F_2^{(2),(1)} \leftrightarrows F_1^{(2,1),\emptyset}$, da auch die Idempotente $F_2^{(2,1),\emptyset}$ und $F_2^{(2),(1)}$ durch Idempotenttransport definiert wurden.

Da $\delta$ ein Antiautomorphismus ist, ergibt sich die symmetrische Situation auch für die Ecken mit Labeln $\Set{01},\Set{02},\Set{12}$.

Zu prüfen bleibt, ob es Kanten ${F_1^{(1),(2)} \leftrightarrows F_{02}^{(1^2),(1)}}$ oder ${F_1^{(1^2),(1)} \leftrightarrows F_{02}^{(1),(2)}}$ gibt. Dazu beweisen wir, dass auch die beiden Idempotente $F_{02}^{(1^2),(1)}$ und $F_{02}^{(1),(2)}$ durch Idempotenttransport gegeben sind. Dies folgt erneut aus einem Blick auf die $(\alpha)$-Relationen.
\begin{align*}
	0 &= X_{02,1} X_{1,0} X_{0,1} + X_{02,1} X_{1,02} X_{02,1} + X_{02,12} X_{12,02} X_{02,1} - 2X_{02,1}  \label{eq:gyoja_B3:8}\tag{8}
\end{align*}
Indem wir von rechts mit $X_{1,02}$ multiplizieren und $X_{1,0}X_{0,1} = F_1^{(1),(2)} + 2 F_1^{(2),(1)}$ sowie $X_{02,12} X_{12,02} =F_{02}^{(1^2),(1)} + 2F_{02}^{(1),(1^2)}$ und $F_{02}'':=X_{02,1}X_{1,02}=\delta(F_1'')$ benutzen, erhalten wir:
\begin{align*}
	0 &= X_{02,1} ( F_1^{(1),(2)} + 2F_1^{(2),(1)} ) X_{1,02} + F_{02}'' F_{02}'' + (F_{02}^{(1^2),(1)} + 2F_{02}^{(1),(1^2)})F_{02}'' - 2F_{02}'' \\
	&= X_{02,1} ( F_1^{(1),(2)} + 2F_1^{(2),(1)} ) X_{1,02} + F_{02}^{(1^2),(1)} F_{02}''+ 2\smash{\underbrace{F_{02}^{(1),(1^2)}F_{02}''}_{=0}} - F_{02}'' \\
	&= X_{02,1} ( F_1^{(1),(2)} + 2F_1^{(2),(1)} ) X_{1,02} + F_{02}^{(1^2),(1)} - F_{02}'' \\
	&= X_{02,1} ( F_1^{(1),(2)} + 2F_1^{(2),(1)} ) X_{1,02} + (-F_{02}^{(1),(2)})
\end{align*}
Wir erhalten also
\[	F_{02}^{(1),(2)} = X_{02,1} F_1^{(1),(2)} X_{02,1} + 2\cdot X_{02,1} F_1^{(2),(1)} X_{1,02}.\]
Aus der $(\alpha^{2,1})$-Relation
\[ 0 = \sum_{K} X_{02,K}^2 X_{K,2}^1 = X_{02,1} X_{1,2}\]
folgt $X_{02,1} F_1' = 0$ und, da $F_1^{(2),(1)}\leq F_1'$ ist, somit auch $X_{02,1} F_1^{(2),(1)} = 0$, womit wir schlussendlich
\[ F_{02}^{(1),(2)} =  X_{02,1} F_1^{(1),(2)} X_{1,02} \]
erhalten, d.\,h. $F_{02}^{(1),(2)}$ ist ein entlang $\Set{02}\leftrightarrows\Set{1}$ transportiertes Idempotent. Wegen $F_1'' = F_1^{(1),(2)} + F_1^{(1^2),(1)}$ gilt ebenfalls
\[ F_{02}^{(1^2),(1)} =  X_{02,1} F_1^{(1^2),(1)} X_{1,02}\]
womit sich dann aus Lemma \ref{gyoja:idempotenttransport}, wie gewünscht, ergibt, dass es keine außer den in Abbildung \ref{fig:gyoja:better_compatibility_graph_B3} eingezeichneten Kanten zwischen den mit $\Set{1}$ und $\Set{02}$ gelabelten Idempotenten gibt. Das zeigt, dass Z3 gilt.

%

\bigbreak
Schritt 4: Verifizieren von Z4.

Für eindimensionale Darstellungen ist dabei nicht viel zu tun. Wir definieren in diesen Fällen $\psi_{\lambda,\mu}: \IZ[\tfrac{1}{2}]^{1\times 1}\to F^{\lambda,\mu} \IZ[\tfrac{1}{2}]\OmegaGy F^{\lambda,\mu}$ auf die einzig mögliche Weise durch $\psi_{\lambda,\mu}(e_{11}):=F^{\lambda,\mu}$. Die Surjektivität ist hier automatisch gegeben, da innerhalb der vier Komponenten in Abbildung \ref{fig:gyoja:better_compatibility_graph_B3}, die zu den eindimensionalen Charakteren gehören, keine Kanten vorkommen und daher $F^{\lambda,\mu} \IZ[\tfrac{1}{2}]\OmegaGy F^{\lambda,\mu} = \IZ[\tfrac{1}{2}] F^{\lambda,\mu}$ gilt.

\medbreak
\begin{table}[htp]
\centering
\begin{tabular}{lccl}
Charakter $\chi_{\lambda,\mu}$ & Abbildung $\psi_{\lambda,\mu}$ \\
\hline
$\chi_{(2,1),\emptyset} $ & 
	$\begin{pmatrix}
	e_{11} & e_{12} \\ e_{21} & e_{22}	
	\end{pmatrix}$&$\mapsto$&$\begin{pmatrix}
		E_1 & X_{1,2} \\ X_{2,1} & E_2
	\end{pmatrix}$ \\
$\chi_{\emptyset,(2,1)}$ &
	$\begin{pmatrix}
	e_{11} & e_{12} \\ e_{21} & e_{22}	
	\end{pmatrix}$&$\mapsto$&$\begin{pmatrix}
		E_{02} & -X_{02,01} \\ -X_{01,02} & E_{01}
	\end{pmatrix}$ \\
$\chi_{(2),(1)}$ &
	$\begin{pmatrix}
	e_{11} & e_{12} & \\ e_{21} & e_{22} & e_{23} \\ & e_{32} & e_{33}
	\end{pmatrix}$&$\mapsto$&$\begin{pmatrix}
		E_0 & X_{0,1} & \\ \tfrac{1}{2}X_{1,0} & E_1 & X_{1,2} \\ & X_{2,1} & E_2
	\end{pmatrix}$ \\
$\chi_{(1),(1^2)}$ &
	$\begin{pmatrix}
	e_{11} & e_{12} & \\ e_{21} & e_{22} & e_{23} \\ & e_{32} & e_{33}
	\end{pmatrix}$&$\mapsto$&$\begin{pmatrix}
		E_{12} & -X_{12,02} & \\ -\tfrac{1}{2}X_{02,12} & E_{02} & -X_{02,01} \\ & -X_{01,02} & E_{01}
	\end{pmatrix}$ \\
$\chi_{(1),(2)}$ &
	$\begin{pmatrix}
	e_{11} & e_{12} & \\ e_{21} & e_{22} & e_{23} \\ & e_{32} & e_{33}
	\end{pmatrix}$&$\mapsto$&$\begin{pmatrix}
		E_0 & X_{0,1} & \\ -X_{1,0} & E_1 & X_{1,02} \\ & X_{02,1} & E_{02}
	\end{pmatrix}$ \\
$\chi_{(1^2),(1)}$ &
	$\begin{pmatrix}
	e_{11} & e_{12} & \\ e_{21} & e_{22} & e_{23} \\ & e_{32} & e_{33}
	\end{pmatrix}$&$\mapsto$&$\begin{pmatrix}
		E_{12} & X_{12,02} & \\ X_{02,12} & E_{02} & -X_{02,1} \\ & -X_{1,02} & E_1
	\end{pmatrix}$ \\
\end{tabular}
\caption{Morphismen $\psi_{\lambda,\mu} : \IZ[\tfrac{1}{2}]^{d_{\lambda,\mu}\times d_{\lambda,\mu}} \twoheadrightarrow F^{\lambda,\mu} \IZ[\tfrac{1}{2}]\OmegaGy F^{\lambda,\mu}$}
\label{tab:inverses_B3}
\end{table}

%

In Tabelle \ref{tab:inverses_B3} sind die Morphismen $\psi_{\lambda,\mu}: \IZ[\tfrac{1}{2}]^{d_{\lambda,\mu}\times d_{\lambda,\mu}} \to F^{\lambda,\mu} \IZ[\tfrac{1}{2}]\OmegaGy F^{\lambda,\mu}$ für die zwei- und dreidimensionalen Charaktere angegeben. Die Tabelle ist dabei so zu lesen, dass die Angabe $e_{ij}\mapsto a$ in der Zeile für den Charakter $\chi_{\lambda,\mu}$ für die Abbildungsvorschrift $\psi_{\lambda,\mu}(e_{ij}):=F^{\lambda,\mu} a F^{\lambda,\mu}$ steht.

Dabei benutzen wir erneut, dass sich $\IZ[\tfrac{1}{2}]^{d\times d}$ als $\IZ[\tfrac{1}{2}]$"~Algebra durch die Präsentation in Lemma \ref{gyoja:matrixrings_pathrelations} beschreiben lässt. Diese Relationen sind tatsächlich erfüllt, wie aus der Konstruktion der Idempotente $F^{\lambda,\mu}$ folgt.

\medbreak
Die Konstruktion der Morphismen $\psi_{\lambda,\mu}$ sichert zu, dass jedes Idempotent $F_I^{\lambda,\mu}$ für alle $I$ und jedes Element $F^{\lambda,\mu} X_{IJ} F^{\lambda,\mu}$ für transversale Kanten $I\leftrightarrows J$ im Bild von $\psi_{\lambda,\mu}$ liegt. Für $(\lambda,\mu) \in\Set{ (\emptyset,(2,1)),((2,1),\emptyset),((2),(1)),((1),(1^2))}$ ist das bereits ausreichend, da alle in der zu $F^{\lambda,\mu}$ gehörigen starken Komponente vorkommenden Kanten transversale Kanten sind.

\medbreak
Für $\chi_{(1),(2)}$ könnte jedoch eine Inklusionskante $\Set{0}\to\Set{0,2}$ und für $\chi_{(1^2),(1)}$ könnte eine Inklusionskante $\Set{1}\to\Set{1,2}$ vorkommen. Wir zeigen nun noch, dass dies nicht der Fall ist, und vollenden damit den Beweis. Wir benutzen dabei die $(\gamma)$-Relationen:
\begin{align*}
	X_{02,0}^{(1),(2)} &= F^{(1),(2)} X_{02,0} F_0^{(1),(2)} \\
	&= F^{(1),(2)} X_{02,0} (X_{0,1} F_1^{(1),(2)} X_{1,0}) \\
	&= F^{(1),(2)} (X_{02,0} X_{0,1}) F_1^{(1),(2)} X_{1,0} \\
	&\overset{\mathclap{(\gamma^{20})}}{=} F^{(1),(2)} (X_{02,2} X_{2,1} + X_{02,12}X_{12,1} - X_{02,01} X_{01,1}) F_1^{(1),(2)} X_{1,0} \\
	&= (X_{02,2}^{(1),(2)} X_{2,1}^{(1),(2)} + X_{02,12}^{(1),(2)} X_{12,1}^{(1),(2)} - X_{02,01}^{(1),(2)} X_{01,1}^{(1),(2)}) F_1^{(1),(2)} X_{1,0}
\end{align*}
Hier verschwinden alle Summanden in der Klammer, da in der $(1),(2)$ bezeichneten Komponente in Abbildung \ref{fig:gyoja:better_compatibility_graph_B3} keine Knoten mit Label $\Set{2}$, $\Set{12}$ oder $\Set{01}$ liegen. Durch Anwenden von $\delta$ folgt ebenso $F_{12}^{(1^2),(1)} X_{12,1} = 0$.
\end{proof}

\index{terms}{Einparameterfall|)}
\index{terms}{Vermutung!$W$-Graph-Zerlegungs-|)}

\appendix
\chapter{Algorithmen}
\section{Coxeter-Gruppen und Hecke-Algebren}

\subsection{Bruhat-Ordnung}
\index{terms}{Bruhat!-Ordnung|(}

\begin{convention}
Wir vereinbaren, dass für alle Funktionen dieses Abschnitts und der folgenden Abschnitte die Coxeter"=Gruppe $(W,S)$ implizit bekannt ist. In praktischen Implementierungen heißt das möglicherweise, dass entweder zusätzliche Parameter übergeben werden müssen oder die Datenstukturen anderweitig mit diesem Wissen versehen werden müssen (z.\,B. durch geeignete Klassen in einem objektorierentierten Paradigma).

Da wir nicht in die Details einer solchen Implementierung eintauchen wollen, legen wir außerdem fest, dass eine gewisse Grundfunktionalität für Coxeter"=Gruppen und deren Elemente bereits zur Verfügung steht:
\begin{itemize}
	\item Der Stern \lstinline!*! soll so implementiert sein (z.\,B. durch Operator-Überladung oder GAPs Method-Selection-Mechanismus), dass er für Elemente der Coxeter"=Gruppe das Produkt $W\times W\to W$ berechnet.
	\item Eine Funktion \lstinline!Length! mit einem Parameter, die die Längenfunktion $l:W\to\IZ$ zur Verfügung stellt.
	\item Eine Funktion \lstinline!LeftDescentSet! mit einem Parameter, die die Abstiegsmenge
	\[D_L(w):=\Set{s\in S | sw<w}=\Set{s\in S | l(sw)<l(w)}\]
	berechnet.
	\item Eine Funktion \lstinline!CanonicalLeftAscentSet! wird benötigt, um $W$ effizient zu durchlaufen. Diese Funktion soll für eine anfangs fest gewählte Nummerierung $s_1, \ldots, s_n$ von $S$ jedem $w\in W$ die Menge
	\[A_L(w) := \Set{s_i\in S | s_i w>w \ \text{und für alle}\ j<i: s_j(s_i w)> s_i w}\]
	zuordnen. Dadurch wird eine Baumstruktur auf $W$ definiert: Die Wurzel ist $1$ und der Vorgänger eines $w\neq 1$ ist $s_i w$, wobei $i$ der kleinste Index mit $s_i w < w$ ist. Die Funktion \lstinline!CanonicalLeftAscentSet! berechnet also alle Nachfolger von $w$ in diesem Baum. (Siehe \cite{stembridge2001computational} für Details und Anwendungen dieses Konzepts.)
\end{itemize}

Zusätzlich dazu werden wir eine gewisse Standardfunktionalität voraussetzen, die es in den meisten Programmierumgebungen gibt oder die trivial zu implementieren ist. Insbesondere werden wir mit Mengen, Listen und Dictionaries als Datenstrukturen arbeiten. Wir werden dafür stets selbsterklärende Namen wählen. Im ersten Beispiel ist dies etwa die Funktion \lstinline!SomeElementOf!, die einfach irgendein Element der Menge auswählen soll.
\end{convention}

\begin{remark}
Es gibt verschiedene Möglichkeiten, diese Operationen für Elemente von Coxeter"=Gruppen tatsächlich zu implementieren. Am wichtigsten ist dabei eine effiziente Lösung des Wortproblems und eine effiziente Implementierung der Längenfunktion, da sich alle anderen notwendigen Operationen darauf zurückführen lassen. Denkbar wären etwa die folgenden Optionen:
\begin{itemize}
	\item Man kann direkt mit reduzierten Worten arbeiten. Dies ist sicherlich der allgemeinste Ansatz, der für beliebige Coxeter"=Gruppen funktioniert. Dabei ist eine effiziente Lösung des Wortproblems das größte Hindernis, d.\,h. der Test, wann zwei reduzierte Worte gleich sind. Der Satz von Tits zeigt, dass das Wortproblem für Coxeter"=Gruppen grundsätzlich algorithmisch lösbar ist. Das Finden einer effizienten Lösung ist jedoch nichttrivial und bedarf detaillierterer Überlegungen.
	\item Kristallographische Gruppen kann man als Matrizengruppen über den ganzen Zahlen realisieren, z.B. indem man eine geeignete Version der Spiegelungsdarstellung betrachtet. Damit kann man alle notwendigen Operationen effizient implementieren (siehe etwa \cite{humphreys1992coxeter} zu den hier nicht besprochenen Eigenschaften der Spiegelungsdarstellung). Die Spiegelungsdarstellung für beliebige Coxeter"=Gruppen zu verwenden, ist theoretisch auch möglich, erfordert aber für unendliche Coxeter"=Gruppen die Fähigkeit, in gewissen Zahlkörpern $\IQ\subseteq K\subseteq\IQ^\text{ab}\cap\IR$ exakt zu rechnen (das kann GAP beispielsweise) und exakt zu testen, ob ein Element $x\in K$ positiv oder negativ ist (das kann GAP momentan nicht).
	\item Endliche Coxeter"=Gruppen kann man zusätzlich dazu auch als Permutationsgruppen auf ihrem Wurzelsystem auffassen. Dies ist z.\,B. in CHEVIE implementiert.
\end{itemize}
Zu Implementierungsdetails siehe z.\,B. CHEVIE (\cite{GapChevie}), GAP generell (\cite{GAP3} und \cite{GAP4}) oder Meinolf Gecks PyCox-Implementierung (\cite{geck2012pycox}).
\end{remark}

\begin{remark}
\index{terms}{Liftungseigenschaft}
Beginnen wir mit einem einfachen Algorithmus, der mit Hilfe der Liftungseigenschaft der Bruhat"=Ordnung testet, ob für zwei Elemente $y,w\in W$ bezüglich der Bruhat"=Ordnung $y\leq w$ gilt oder nicht.
\end{remark}

\begin{algorithm}[Bruhat-Ordnung]\label{algo:bruhat_simple}
Funktion \textbf{Bruhat}:
\begin{itemize}
	\item Input-Parameter:
	\begin{itemize}
		\item y: \lstinline!ElementOfW!
		\item w: \lstinline!ElementOfW!
	\end{itemize}
	\item Rückgabe-Wert: \lstinline!Boolean!
	
	\lstinline!true! falls $y\leq w$ in der Bruhat"=Ordnung gilt, \lstinline!false! andernfalls.
\end{itemize}
\end{algorithm}
\begin{lstlisting}
function Bruhat(ElementOfW y, ElementOfW w)
	returns Boolean
{
	ElementOfW s;
	Integer Leny := Length(y);
	Integer Lenw := Length(w);
	
	while(0 < Leny and Leny < Lenw){
		s := SomeElementOf(LeftDescentSet(w));
		w := s*w;
		Lenw := Lenw-1;

		if(s in LeftDescentSet(y)){
			y := s*y;
			Leny := Leny-1;
		}
	}
	if(0 = Leny){
		return true;
	}
	if(Leny >= Lenw){
		if(y = w){
			return true;
		}else{
			return false;
		}
	}
}
\end{lstlisting}

\begin{remark}
Folgende Abwandlung des Algorithmus gibt zusätzlich aus, wie $y$ als Teilwort einer reduzierten Darstellung von $w$ realisiert werden kann.
\end{remark}

\begin{algorithm}[Bruhat-Ordnung 2]\label{algo:bruhat_subwords}
Funktion \textbf{BruhatSubWord}:
\begin{itemize}
	\item Input-Parameter:
	\begin{itemize}
		\item y: \lstinline!ElementOfW!
		\item w: \lstinline!ElementOfW!
	\end{itemize}
	\item Output-Parameter:
	\begin{itemize}
		\item RedWordy: \lstinline!ListOfBooleans!
		
		Eine Liste $(b_1,\ldots,b_n)\in\Set{0,1}^n$ derart, dass die Indizes \mbox{$1\leq i_1<\ldots<i_k\leq n$} mit $b_i=1$ ein Teilwort $s_{i_1} \cdots s_{i_k}$ von \lstinline!RedWordw! definieren, das ein reduzierter Ausdruck von $y$ ist.
		\item RedWordw: \lstinline!ListOfElementsOfW!
		
		Eine Liste $(s_1,\ldots,s_n)$ mit $s_i\in S$ derart, dass $s_1 \cdots s_n$ ein reduzierter Ausdruck von $w$ ist.
	\end{itemize}
	\item Rückgabe-Wert: \lstinline!Boolean!
	
	\lstinline!true! falls $y\leq w$ in der Bruhat"=Ordnung gilt, \lstinline!false! andernfalls.
\end{itemize}
\end{algorithm}
\begin{lstlisting}
function BruhatSubword(
	Input: ElementOfW y, ElementOfW w,
	Output: ListOfBooleans RedWordy, ListOfElementOfWs RedWordw
	)
	returns Boolean
{
	ElementOfW s;
	Integer Leny := Length(y);
	Integer Lenw := Length(w);
	
	while(0 < Leny and Leny < Lenw){
		s := SomeElementOf(LeftDescentSet(w));
		w := s*w;
		Lenw := Lenw-1;
		RedWordw.Append(s);
		
		if(s in LeftDescentSet(y)){
			y := s*y;
			Leny := Leny-1;
			RedWordy.Append(true);
		}else{
			RedWordy.Append(false);
		}
	}
	if(0 = Leny){
		while(0 < Lenw){
			s := SomeElementOf(LeftDescentSet(w));
			w := s*w;
			Lenw := Lenw-1;
			RedWordy.Append(false);
			RedWordw.Append(s);
		}
		return true;
	}
	if(Leny >= Lenw){
		if(y = w){
			while(0 < Lenw){
				s := SomeElementOf(LeftDescentSet(w));
				w := s*w;
				Lenw := Lenw-1;
				RedWordy.Append(true);
				RedWordw.Append(s);
			}
			return true;
		}else{
			return false;
		}
	}
}
\end{lstlisting}

\begin{remark}
Wenn man bereits ein reduziertes Wort von $w$ kennt, kann man natürlich die Zeile \lstinline!s := SomeElementOf(LeftDescentSet(w))! dadurch ersetzen, dass man jeweils den nächsten Buchstaben in diesem Wort auswählt. Auf diese Weise wird $y$ als Teilwort dieses gegebenen Wortes dargestellt.
\end{remark}

\begin{remark}
Die Schleifenbedingung der ersten \lstinline!while!-Schleife ist optimiert, indem beide Randfälle $l(y)=0$ und ${l(y)=l(w)}$ als Abbruchbedingung dienen. Da dann jedoch abgebrochen werden kann, bevor \lstinline!RedWordw! und \lstinline!RedWordy! vollständig erstellt wurden, sind in beiden Fällen noch "`Restschleifen"' implementiert, deren Zweck nicht mehr der Test auf $y\leq w$ ist, sondern die Vervollständigung von \lstinline!RedWordy! bzw. \lstinline!RedWordw!.
\end{remark}

\begin{remark}
Der Ablauf des Algorithmus kann aus dem Wort \lstinline!RedWordw! und der Information in \lstinline!RedWordy! rekonstruiert werden. Das kann man nutzen, um weitere Dinge zu berechnen. Der folgende Algorithmus kann beispielsweise aus zwei solchen Worten, wie sie \lstinline!BruhatSubword! ausgibt, das Bruhat"=Intervall $[y,w]$ rekonstruieren.

Man beachte, dass der eben besprochene Algorithmus zwei Folgen $y_k, w_k$ definiert. Wenn $w=s_1 \ldots s_m$ das reduzierte Wort ist, das \lstinline!BruhatSubword! findet, dann sind diese Folgen durch die Rückwärtsrekursion
\begin{alignat*}{3}
w_m &:= w,\quad &w_{k-1} &:= s_{m-k+1} w_k \\
y_m &:= y, &y_{k-1} &:= \begin{cases} s_{m-k+1} y_k & \text{falls}\,s_{m-k+1} y_k < y_k \\ y_k & \text{falls}\,s_{m-k+1} y_k > y_k\end{cases}
\end{alignat*}
gegeben. Es folgt in beiden Fällen $y_{k-1} < s_{m-k+1} y_{k-1}$ für alle $k=1...m$, d.\,h. die Voraussetzungen von \ref{bruhat:intervals} sind in jedem Schritt erfüllt und wir können $[y_k,w_k]$ rekursiv aus $[y_{k-1},w_{k-1}]$ berechnen.
\end{remark}

\begin{algorithm}[Bruhat-Intervalle]\label{algo:bruhat_interval}
\index{terms}{Bruhat!-Intervall}
Funktion \textbf{BruhatInterval}:
\begin{itemize}
	\item Input-Parameter:
	\begin{itemize}
		\item y: \lstinline!ElementOfW!
		\item w: \lstinline!ElementOfW!
	\end{itemize}
	\item Rückgabe-Wert: \lstinline!SetOfElementsOfW!
	
	Das Bruhat"=Intervall $[y,w]:=\Set{z\in W | y\leq z\leq w}$.
\end{itemize}
\end{algorithm}

\begin{lstlisting}[mathescape=true]
function BruhatInterval(ElementOfW y, ElementOfW w)
	returns SetOfElementOfW
{
	ListOfBooleans RedWordy;
	ListOfElementsOfW RedWordw;
	if(not BruhatSubwords(y,w,out RedWordy,out RedWordw)){
		return $\emptyset$;
	}
	
	SetOfElementsOfW X := {1};
	ElementOfW s;
	ElementOfW u := 1;

	for(i := Length(RedWordw)-1; i >= 0; i := i-1){
		s := RedWordw[i];
		if(RedWordy[i]){
			// $[u,w] \rightsquigarrow [su,sw]$
			X := $\Set{x\in X | su\leq x}\cup\Set{sx | x\in X, l(sx)>l(x)}$;
			u := s*u;
		}else{
			// $[u,w] \rightsquigarrow [u,sw]$
			X := $X\cup\Set{sx | x\in X, l(sx)>l(x)}$;
		}
	}
	
	return X;
}
\end{lstlisting}

\begin{remark}
Die Indizierung ist hier $0$-basiert. Das muss man natürlich anpassen, wenn man mit einer Programmierumgebung arbeitet, die bei $1$ zu zählen anfängt anstatt bei $0$, wie es eigentlich sein müsste.
\end{remark}
\begin{remark}
Man kann, indem man von Links- zu Rechtsmultiplikation übergeht, die Schleife auch so strukturieren, dass \lstinline!RedWordw! von vorne nach hinten durchlaufen wird.

Die hier angegebene Variante hat jedoch den Vorteil, dass man die letzten beiden Algorithmen auch kombinieren könnte, wenn man nur an Bruhat"=Intervallen, aber nicht an reduzierten Wörtern interessiert ist. Dann entfiele die Notwendigkeit für eine der beiden "`Restschleifen"'. Man kann sie streichen und beginnt die Intervallberechnung dafür direkt mit \lstinline!X:={y}!. Das hat den Vorteil, dass man dann nur noch jeweils ${l(w)-l(y)}$ Iterationen für die Ermittlung der Teilwortinformation und des Intervalls benötigt, statt jeweils $l(w)$ Iterationen, wie es jetzt der Fall ist.
\end{remark}

\index{terms}{Bruhat!-Ordnung|)}

\subsection{Kazhdan-Lusztig-Algorithmen}
\index{terms}{Kazhdan-Lusztig!-Polynom|(}\index{terms}{Kazhdan-Lusztig!-$\mu$|(}

\begin{convention}
Wir vereinbaren nun zusätzlich, dass auf der Coxeter"=Gruppe $(W,S)$ eine Gewichtsfunktion $L:W\to\Gamma$ gegeben ist und die folgenden Funktionen implizit wissen, dass sie Berechnungen für diese Coxeter"=Gruppe"=mit"=Gewicht durchführen, so wie die obigen Funktionen wussten, dass sie sich auf $(W,S)$ beziehen.

Zusätzlich wollen wir vereinbaren, dass die folgenden Funktionalitäten bereits vorhanden sind:
\begin{itemize}
	\item Die Operatoren \lstinline!+! und \lstinline!-! sollen für Elemente von $\Gamma$ die Gruppenverknüpfungen realisieren. Entsprechend sollen \lstinline!+!, \lstinline!-! und \lstinline!*! auch für Elemente von $\IZ[\Gamma]$ implementiert sein. Des Weiteren sei \lstinline!vPow! eine Funktion, die $\Gamma\cup\Set{\infty}\to\IZ[\Gamma],\gamma\mapsto v^\gamma$ implementiert, und \lstinline!BarInvolution! eine Funktion, die $F[\Gamma]\to F[\Gamma], f\mapsto\overline{f}$ implementiert.
	\item Wir brauchen außerdem Funktionen \lstinline!PositivePart! und \lstinline!NegativePart!, die aus einem ${\alpha\in\IZ[\Gamma]}$ den positiven bzw. negativen Anteil extrahieren, d.\,h. diejenigen Elemente $\alpha_+\in\IZ[\Gamma_{>0}], \alpha_-\in\IZ[\Gamma_{<0}]$, für die $\alpha=\alpha_+ + \alpha_0 + \alpha_-$ mit $\alpha_0\in\IZ$ gilt.
	\item Eine Funktion \lstinline!Weight! mit einem Parameter, die die Gewichtsfunktion $L:W\to\Gamma$ berechnet.
\end{itemize}
\end{convention}

\begin{remark}
Wir beginnen mit einer Verbesserung des Algorithmus aus \citep[2.1.9]{geckjacon} zur Berechung von $P_{y,w}^\ast$ und $\mu_{y,w}^s$. Da die Berechnung immer noch stark rekursiv ist, bietet sich eine Implementierung durch dynamische Programmierung an, d.\,h. wir nehmen an, dass zwei Arrays/Dictionaries/Hash-Tables/... \lstinline!TableP! und \lstinline!TableMu! verwaltet werden, mit denen die bereits berechneten Werte von $P_{y,w}^\ast$ beziehungsweise die von Null verschiedenen Werten von $\mu_{y,w}^s$ abrufbar sind.

\index{terms}{Kritisches Paar}
Aus Speicherplatzgründen sollte in \lstinline!TableP! nur für kritische Paare $(y,w)$ etwas gespeichert werden. Zur Erinnerung: \udot{Kritische Paare} sind diejenigen $(y,w)\in W\times W$, für die $y\leq w$ und
\[\forall t\in S: (tw<w \implies L(t)>0 \wedge ty<y) \wedge (wt<w \implies L(t)>0 \wedge yt<y)\]
gilt. Aus den Eigenschaften in \ref{KL:def:KL_poly} folgt, dass für nichtkritische Paare $(y,w)$ stets $P_{y,w}^\ast = 0$ ist oder ein kritisches Paar $(y',w')$ und ein $\gamma\in\Gamma_{\geq 0}$ existieren derart, dass $P_{y,w}^\ast = v^{-\gamma} P_{y',w'}^\ast$ gilt.

Der Zweck folgender Hilfsfunktion ist es, genau diese Berechnung von $(y',w')$ und $\gamma$ aus $(y,w)$ zu bewerkstelligen.
\end{remark}

\begin{algorithm}[Kritische Paare]\label{algo:KL_crit_pair}
Funktion \textbf{KLCriticalPair}:
\begin{itemize}
	\item Input-Parameter:
	\begin{itemize}
		\item y: \lstinline!ElementOfW!
		\item w: \lstinline!ElementOfW!
	\end{itemize}
	\item Output-Parameter:
	\begin{itemize}
		\item u: \lstinline!ElementOfW!
		\item v: \lstinline!ElementOfW!
		
		$(u,v)$ ist ein kritisches Paar, falls $\gamma<\infty$ zurückgegeben wird. Ansonsten ist $P_{y,w}^\ast=0$ und es werden diejenigen Elemente ausgegeben, an denen dieser Fakt erkannt wurde.
	\end{itemize}
	\item Rückgabe-Wert: \lstinline!ElementOfGamma!

	Ein $\gamma\in\Gamma_{\geq 0}\cup\Set{\infty}$ derart, dass entweder $\gamma=\infty$ und $P_{y,w}^\ast=0$ oder $\gamma\in\Gamma_{\geq 0}$, $(u,v)$ ein kritisches Paar ist und $P_{y,w}^\ast = v^{-\gamma} P_{u,v}^\ast$ gilt.
\end{itemize}
\end{algorithm}
\begin{lstlisting}[mathescape=true]
function KLCriticalPair(
	Input: ElementOfW y, ElementOfW w,
	Output: ElementOfW u, ElementOfW v)
	returns ElementOfGamma
{
	ElementOfGamma gamma := 0;
	
	while(true){
		if(not Bruhat(y,w)){
			u := y;
			v := w;
			return $\infty$;
		}
		
		if($\exists t\in S : tw<w \wedge L(t)=0$){
			w := t*w;
			y := t*y;
			continue;
		}
		if($\exists t\in S : wt<w \wedge L(t)=0$){
			w := w*t;
			y := y*t;
			continue;
		}
		
		if($\exists t\in S : tw<w \wedge L(t)>0 \wedge ty>y$){
			y := t*y;
			gamma := gamma+Weight(t);
			continue;
		}
		if($\exists t\in S : wt<w \wedge L(t)>0 \wedge yt>y$){
			y := y*t;
			gamma := gamma+Weight(t);
			continue;
		}
		
		break;
	}

	u := y;
	v := w;
	return gamma;
}
\end{lstlisting}

\begin{remark}
Haben wir dies, so können wir die Rekursion für die Polynome $P_{y,w}^\ast$ und $\mu_{y,w}^s$ wie folgt implementieren. Diese Algorithmen sind Verfeinerungen von \citep[2.1.9]{geckjacon}.
\end{remark}

\begin{algorithm}[Kazhdan-Lusztig-Polynome $P_{y,w}^\ast$, $\mu_{y,w}^s$ und $P_{y,w}$]\label{algo:KL_poly_mu}
Funktion \textbf{KLPoly}:
\begin{itemize}
	\item Input-Parameter:
	\begin{itemize}
		\item y: \lstinline!ElementOfW!
		\item w: \lstinline!ElementOfW!
	\end{itemize}
	\item Rückgabe-Wert: \lstinline!LaurentPolynomial!

	Das Polynom $P_{y,w}^\ast$.
\end{itemize}

Funktion \textbf{KLMu}:
\begin{itemize}
	\item Input-Parameter:
	\begin{itemize}
		\item y: \lstinline!ElementOfW!
		\item w: \lstinline!ElementOfW!
		\item s: \lstinline!ElementOfS!
	\end{itemize}
	\item Rückgabe-Wert: \lstinline!LaurentPolynomial!

	Das Polynom $\mu_{y,w}^s$, falls $s\in S$, $L(s)>0$ und $sy<y<z<sz$ ist, andernfalls $0$.
\end{itemize}

Funktion \textbf{KazhdanLusztigPolynomial}:
\begin{itemize}
	\item Input-Parameter:
	\begin{itemize}
		\item y: \lstinline!ElementOfW!
		\item w: \lstinline!ElementOfW!
	\end{itemize}
	\item Rückgabe-Wert: \lstinline!LaurentPolynomial!

	Das Polynom $P_{y,w}:=v^{L(w)-L(y)}P_{y,w}^\ast$.
\end{itemize}

\end{algorithm}
\begin{lstlisting}[mathescape=true]
function KLPoly(ElementOfW y, ElementOfW w)
	returns LaurentPolynomial
{
	LaurentPolynomial KLP;
	ElementOfW t;
	
	ElementOfW u;
	ElementOfW v;
	ElementOfW tu;
	ElementOfW tv;

	BruhatInterval BInterval;

	
	ElementOfGamma gamma := KLCriticalPair(y,w,out u,out v);	

	if(gamma == $\infty$){
		KLP := 0;
	}elif(u == v){
		KLP := vPow(-gamma);
	}else{
		if([u,v] in TableP){
			KLP := TableP[u,v];
		}else{
			t := SomeElementOf(LeftDescentSet(v));
		
			tu := t*u;
			tv := t*v;
		
			KLP := vPow(Weight(t))*KLPoly(u,tv) + KLPoly(tu,tv);

			if(Length(v) > 2){
				BInterval := $\Set{z\in [u,tv] | tz<z}$;
			
				foreach(z in BInterval){
					KLP := KLP - KLPoly(u,z)*KLMu(z,tv,tt);
				}
			}

			TableP[u,v]:=KLP;
		}

		KLP := vPow(-gamma)*KLP;
	}

	return KLP;
}

function KLMu(ElementOfW y,ElementOfW w,ElementOfW s)
	returns LaurentPolynomial
{
	SetOfElementsOfW J;
	
	ElementOfGamma gamma;
	LaurentPolynomial KLP;
	LaurentPolynomial mu;
	
	LaurentPolynomial alpha;
	
	BruhatInterval BInterval;


	// Schritt 1: Check 
	J := SetDifference(LeftDescentSet(y), LeftDescentSet(w));
	if($s\notin J$ or 0==Weight(s) or $y \not\leq w$){
		return 0;
	}
	

	if([y,w,s] in TableMu){
		return TableMu[y,w,s];
	}

	KLP := KLPoly(y,w);
	
	// Einparameterfall
	if($\forall s_i\in S: L(s_1) = L(s_i)$){
		mu := vPow(Weight(s)) * KLP;
		mu := mu - NegativePart(mu);
	}
	// Multiparameterfall
	else{
		alpha := vPow(Weight(s))*KLP;
		alpha := alpha - NegativePart(alpha);
	
		BInterval := $\Set{z\in [y,w] | z\neq y, sz<z}$;

		foreach(z in BInterval){
			alpha := alpha - KLPoly(y,z)*KLMu(z,w,ss);
			alpha := alpha - NegativePart(alpha);
		}

		mu := alpha+BarInvolution(PositivePart(alpha));
	fi;

	TableMu[y,w,s] := mu;
	return mu;
}

function KazhdanLusztigPolynomial(ElementOfW y, ElementOfW w)
	returns LaurentPolynomial
{
	return vPow(Weight(w)-Weight(y))*KLPoly(y,w);
}
\end{lstlisting}

\begin{remark}
\index{terms}{Einparameterfall}\index{terms}{Coxeter!-Gruppe!reduzible}\index{terms}{Induktion}
Eine einfache Optimierung ergibt sich, wenn man den Einparameterfall nicht für ganz $S$ testet, sondern nur testet, ob $L$ auf derjenigen Teilmenge $J\subseteq S$ konstant ist, welche minimal mit der Eigenschaft $w\in W_J$ ist, d.h. $J=\Set{s_{i_1},\ldots,s_{i_l}}$ für ein reduziertes Wort $w=s_{i_1}\cdots s_{i_l}$.

Eine weitere Optimierung ergibt sich, wenn man die parabolische Untergruppe $W_J$ weiter in ihre irreduziblen Komponenten zerlegt und ausnutzt, dass für eine Zerlegung ${W_J=W_1\times W_2}$ die Gleichungen $P_{y_1y_2, w_1w_2}^\ast = P_{y_1, w_1}^\ast \cdot P_{y_2, w_2}^\ast$ sowie
\[\mu_{y_1 y_2, w_1 w_2}^s =\begin{cases} \mu_{y_1,w_1}^s & \text{falls }s\in S_1\text{ und }y_2=w_2 \\ \mu_{y_2,w_2}^s & \text{falls } s\in S_2\text{ und }y_1=w_1 \\ 0 & \text{sonst}\end{cases}\]
für alle $y_i,w_i\in W_i$ und alle $s\in S$ gelten.

Noch weiter optimieren kann man, wenn man beliebige parabolische Untergruppen von $W_J$ betrachtet und die Induktionseigenschaften von Kazhdan-Lusztig"=Polynomen und "~Zellen benutzt. Je nachdem, an welcher Information man genau interessiert ist, kann dies eine enorme Zeit- und Speicherplatzeinsparung bedeuten. Für theoretische Details siehe \cite{howlett2003inducingI}, \cite{howlett2004inducingII}, \cite{geck2003induction} sowie \cite{geck2012pycox} für Implementationsdetails.
\end{remark}
\begin{remark}
\index{terms}{Kazhdan-Lusztig!-Zellen}\index{terms}{Zellmodul}\index{terms}{Zusammenhangskomponente!starke}\index{terms}{Tiefensuche}
Hat man erst einmal die $\mu_{y,w}^s$"~Werte kann man auch die Links-, Rechts- und zweiseitigen Kazhdan-Lusztig"=Zellen berechnen, indem man in dem in \ref{KL:def:cells} definierten Graphen nach den starken Zusammenhangskomponenten sucht. Dafür gibt es Linearzeitalgorithmen mittels Tiefensuche, beispielsweise Tarjans Algorithmus (siehe \cite[Ch.\,4]{tarjan1972depth}).

Nicht nur die Zellen selbst, auch die Matrixdarstellungen der zugehörigen Zellmoduln können in dieser Situation einfach bestimmt werden. Indem man direkt die dazugehörigen $W$"~Graphen abspeichert, spart man Speicherplatz.
\end{remark}

\index{terms}{Kazhdan-Lusztig!-Polynom|)}\index{terms}{Kazhdan-Lusztig!-$\mu$|)}
\section{Balancieren von Matrixdarstellungen}

\index{terms}{Darstellung!balancierte|(}

\begin{remark}
Wir haben in \ref{balanced_reps:invariant_blf2} bewiesen, dass zu jedem Isomorphietyp $\lambda$ auch eine balancierte Darstellung $\rho:H\to K^{d\times d}$ vom Isomorphietyp $\lambda$ existiert. Der Beweis war sogar konstruktiv genug, um ihn als Algorithmus zu implementieren. An entscheidender Stelle wurde jedoch eine Orthogonalbasis für die von $\Omega\in K^{d\times d}$ auf $K^d$ induzierte Bilinearform benutzt. In dem Fall, für den wir uns am meisten interessieren, nämlich dass $H$ eine Hecke"=Algebra, $F\subseteq\IR$ und $K:=\QuotFld(F[\Gamma])$ ist, führt solch ein Vorgehen jedoch zu einer Gradexplosion in den Matrixeinträgen. Besonders bei großen Dimensionen erfordert dies unnötig viel Speicherplatz.

In diesem Abschnitt wollen wir ein alternatives Vorgehen beschreiben, mit dem keine Grad\-ex\-plo\-sion stattfindet. Dies basiert im Wesentlichen auf Satz \ref{balanced_reps:invariant_blf1}. Wir werden zeigen, dass zu jeder irreduziblen Matrixdarstellung: $\rho: H\to K^{d\times d}$ mit invarianter Bilinearform $\Omega=\Omega^\text{Tr}\in GL_d(K)$ ein Basiswechsel $Q\in GL_d(K)$ existiert, der ${Q^T \Omega Q\in GL_d(\mathcal{O})}$ und $\Omega \equiv \diag(\ldots) \mod\mathfrak{m}$ und gleichzeitig $\deg((Q^T \Omega Q)_{ij})\leq\deg(\Omega_{ij})$ für alle Indizes $i,j$ erfüllt. Nach Satz \ref{balanced_reps:invariant_blf1} bedeutet dies, dass $Q^{-1}\rho Q$ eine balancierte Darstellung ist.
\end{remark}

\subsection{Theorie}

\begin{remark}
Die Idee dafür ist, eine modifizierte Cholesky-Zerlegung von $\Omega$ durchzuführen.\index{terms}{Cholesky-Zerlegung}
\end{remark}

\begin{theorem}[Balanciertheit ohne Gradexplosion]\label{balanced_reps:algo_degree}
\index{terms}{$\ast$-symmetrisch}\index{terms}{Symmetrische Algebra}\index{terms}{Bewertung}\index{terms}{Bewertung!-sring}
\index{symbols}{nuinfty@$\nu_\infty$}
Sei $F$ ein formal reeller Körper, $K:=\QuotFld(F[\Gamma])$, $\nu: K\to\Gamma\cup\Set{\infty}$ die kanonische Bewertung auf $K$ und $\mathcal{O}\subseteq K$ der Bewertungsring von $\nu$.

Weiter sei $H$ eine endlichdimensionale, zerfallend halbeinfache $K$"~Algebra, $\ast$ ein involutiver Antiautomorphismus, $\tau$ eine Spurform und $B$ eine $\ast$"~symmetrische Basis von $H$.
Außerdem sei $\rho: H\to K^{d\times d}$ eine irreduzible Matrixdarstellung und $\Omega\in GL_d(K)$ eine beliebige invariante Bilinearform für $\rho$ mit $\nu(\Omega)=0$ (man beachte, dass dies durch Multiplikation mit einem geeigneten $v^\gamma$ immer erreicht werden kann).

\medbreak
Dann existiert eine Folge $Q^{(1)}, Q^{(2)}, \ldots, Q^{(d)}\in GL_d(F[\Gamma])$ so, dass für die durch $\Omega^{(0)}:=\Omega$ und $\Omega^{(i)} := {Q^{(i)}}^\text{Tr} \cdot \Omega^{(i-1)} \cdot Q^{(i)}$ ($i=1,\ldots,d$) definierte Folge gilt:
\begin{enumerate}
	\item $\Omega^{(i)} = f\cdot \sum_{x\in B} \big(A_x^{(i)}\big)^{\text{Tr}} \big(A_x^{(i)}\big)$ für geeignete $A_x^{(i)} \in\mathcal{O}^{d\times d}$ und eine von $i$ unabhängige Konstante $f\in\mathcal{O}^\times$.
	\item Die Reduktion von $\Omega^{(i)}$ ist bereits teilweise diagonal:
	\[\Omega^{(i)} \equiv
	\left(\begin{array}{cccc}
	D^{(i)} & 0    & \rdelim){2}{1em} & \rdelim\}{1}{3em}[$i$]   \\
	0 & \ast &                  & \rdelim\}{1}{3em}[$d-i$]
\end{array}\right.\mod\mathfrak{m}\]
	Dabei ist $D^{(i)}\in GL_i(F)$ diagonal.
	\item Die Grade sind monoton fallend:
	\[\nu_\infty\big(\Omega^{(i)}\big) \leq \nu_\infty\big(\Omega^{(i-1)}\big)\]
	Dabei sei $\nu_\infty: K\to\Gamma\cup\Set{-\infty}$ die durch $\nu_\infty\big(\sum_\gamma a_\gamma v^\gamma\big) := \sup\Set{\gamma | a_\gamma\neq 0}$ definierte und via $\nu_\infty\big(\frac{f}{g}\big):=\nu_\infty(f)-\nu_\infty(g)$ auf $K$ fortgesetzte Gradfunktion.
\end{enumerate}
Insbesondere erfüllt $\Omega^{(d)}$ die Voraussetzungen von \ref{balanced_reps:invariant_blf1} und mit $Q:=Q^{(1)} Q^{(2)} \cdots Q^{(d)}$ ist $Q^{-1} \rho Q$ eine balancierte Darstellung.
\end{theorem}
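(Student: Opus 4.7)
The plan is to imitate a classical LDLT / Cholesky decomposition, doing all transformations with entries in $F$ (degree~$0$), so that conjugation by $Q^{(i)}$ can neither raise $\nu_\infty$ nor leave $\mathcal{O}^{d\times d}$. First I would fix a sum-of-squares representation of $\Omega$. By Theorem \ref{balanced_reps:invariant_blf2} a. (the Gasch\"utz-Ikeda construction), $\Omega_1 := \sum_{x\in B} \rho(x)^{\mathrm{Tr}}\rho(x)$ is a positive-definite invariant bilinear form, so by absolute irreducibility $\Omega = c\,\Omega_1$ for some $c\in K^\times$. Using $\nu(\Omega)=0$ and Lemma \ref{formally_real:symmetric_matrices} to compute $\nu(\Omega_1)$, I can pull out the appropriate power of $v$ and write
\[
\Omega = f\sum_{x\in B} A_x^{\mathrm{Tr}} A_x, \qquad f\in\mathcal{O}^\times,\ A_x\in\mathcal{O}^{d\times d},
\]
with at least one $A_x\notin\mathfrak{m}^{d\times d}$. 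This gives property (a) at $i=0$, and (b), (c) hold vacuously for $\Omega^{(0)}=\Omega$.

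For the inductive step, assume $\Omega^{(i-1)}$ satisfies (a)--(c). The crucial lemma is pivot existence: some diagonal entry $\Omega^{(i-1)}_{jj}$ with $j\geq i$ must lie in $\mathcal{O}^\times$. Granting this, let $P^{(i)}\in GL_d(F)$ be the transposition $(i\ j)$, and set $\tilde\Omega:=P^{(i)\mathrm{Tr}}\Omega^{(i-1)}P^{(i)}$, so $\tilde\Omega_{ii}\in\mathcal{O}^\times$. Define constants $\alpha_k := \overline{-\tilde\Omega_{ik}/\tilde\Omega_{ii}}\in F$ for $k>i$, and put
\[
Q^{(i)} := P^{(i)}\cdot\Bigl(I + \sum_{k>i}\alpha_k E_{ik}\Bigr),\qquad \Omega^{(i)} := (Q^{(i)})^{\mathrm{Tr}}\,\Omega^{(i-1)}\,Q^{(i)}.
\]
The elementary shear is tailored to annihilate the $(i,k)$ and $(k,i)$ entries of $\Omega^{(i)}$ modulo $\mathfrak{m}$ for every $k>i$, while leaving the first $i-1$ rows/columns untouched (since $Q^{(i)}$ acts as the identity there). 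So the block-diagonal reduction pattern in (b) propagates from $i-1$ to $i$. Property (a) is preserved by setting $A_x^{(i)} := A_x^{(i-1)}Q^{(i)}\in\mathcal{O}^{d\times d}$, since $Q^{(i)}$ has entries in $F\subseteq\mathcal{O}$. Property (c) holds because every entry of $Q^{(i)}$ has $\nu_\infty\leq 0$, and multiplication by matrices of constant entries cannot increase the top-degree of any entry.

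The main obstacle will be the pivot-existence argument. Translating via Lemma \ref{formally_real:symmetric_matrices} c.~ii., one has $\nu(\Omega^{(i-1)}_{jj})=2\min_x\nu(A_x^{(i-1)}e_j)$, so the claim reduces to showing that not every column $j\geq i$ of every $A_x^{(i-1)}$ lies in $\mathfrak{m}$. If it did, then every $A_x^{(i-1)}\bmod\mathfrak{m}$ would factor through the coordinate projection onto the first $i-1$ basis vectors, which together with the invariance of the form $\Omega^{(i-1)}$ (and hence the residue of the pairing) forces a proper $H$-subrepresentation on the reduction, contradicting the irreducibility of $\rho$. Once the induction reaches $i=d$, the reduction $\overline{\Omega^{(d)}}$ is a diagonal matrix in $GL_d(F)$, so Theorem \ref{balanced_reps:invariant_blf1} a. applies to $Q^{-1}\rho Q$ with the form $\Omega^{(d)}$, yielding that $Q^{-1}\rho Q$ is balanced. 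The degree control from (c) is what makes this preferable to the orthogonalisation used in \ref{balanced_reps:invariant_blf2}: no denominators are introduced and the exponents appearing in the matrix entries stay bounded by those of $\Omega$.
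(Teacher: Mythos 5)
The pivot-existence claim at the heart of your inductive step is false in general, and your justification for it is not valid. You argue that if every diagonal entry $\Omega^{(i-1)}_{jj}$ with $j\geq i$ lay in $\mathfrak{m}$, then the reductions $\overline{A_x^{(i-1)}}$ would all kill $\operatorname{span}(e_i,\ldots,e_d)$, forcing ``a proper $H$-subrepresentation on the reduction'' and contradicting irreducibility. But the matrices $A_x^{(i-1)}=v^{-\alpha}\rho(x)Q^{(1)}\cdots Q^{(i-1)}$ are not conjugated representation matrices --- the $Q^{(j)}$ multiply only on one side --- so their reductions do not form a representation of $H$ at all, and irreducibility of $\rho$ over $K$ gives no control over them. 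Concretely, the pivot really can fail to exist: when $\rho$ is \emph{not} balanced, the normalized form $\Omega^{(0)}\bmod\mathfrak{m}$ is a nonzero positive semidefinite matrix of rank $r<d$, and elementary shears and permutations over $F$ cannot raise that rank; so after clearing $r$ pivots the bottom-right $(d-r)\times(d-r)$ block reduces to the zero matrix modulo $\mathfrak{m}$ and there is no unit on the diagonal for the next step. (For $i=1$ pivot existence does hold, but for a different reason than you give: by Lemma~\ref{formally_real:symmetric_matrices} all off-diagonal entries inherit the valuation of the diagonal, so if every diagonal entry were in $\mathfrak{m}$ one would have $\nu(\Omega)>0$.)

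This is precisely the problem the paper's argument is built to avoid. Instead of permuting to find a pivot over $F$, the paper takes $Q^{(i)}=E^{(i)}R^{(i)}$ with a \emph{diagonal rescaling} $E^{(i)}=\operatorname{diag}(1,\ldots,1,v^{-\gamma_i},1,\ldots,1)$, where $\gamma_i:=\tfrac12\nu(\Omega^{(i-1)}_{ii})$. By Lemma~\ref{formally_real:symmetric_matrices}.c.ii applied to the sum-of-squares presentation, $\gamma_i$ equals $\min_x\nu(A_x^{(i-1)}e_i)\in\Gamma$, so after rescaling the $i$-th column the matrices $A_x$ remain integral, the new $(i,i)$-entry becomes a unit by construction, and one then clears row/column $i$ modulo $\mathfrak{m}$ by a shear $R^{(i)}\in GL_d(F)$ exactly as you propose. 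The diagonal rescaling is what raises the rank of the reduction at each step; restricting to $Q^{(i)}\in GL_d(F)$ makes that impossible. Your degree-control observation (constant matrices cannot increase $\nu_\infty$) is still useful, but one must additionally note that multiplying a row/column by $v^{-\gamma_i}$ with $\gamma_i\geq 0$ also only lowers $\nu_\infty$; the paper makes exactly this observation.
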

\begin{proof}
Dazu sei an Lemma \ref{formally_real:symmetric_matrices} erinnert. Wir werden dieses Lemma nun immerzu benutzen.

\medbreak
Fangen wir zunächst an, a. für $i=0$ nachzuprüfen. b. ist trivialerweise wahr für $i=0$.

Wir wissen bereits, dass
\[\Omega':=\sum_{x\in B} \rho(x)^\text{Tr} \rho(x)\]
eine invariante Bilinearform für $\rho$ ist (siehe \ref{balanced_reps:invariant_blf2}). Für $\alpha:=\min\Set{\nu(\rho(x)) | x\in B}$ gilt $\nu(\Omega)=2\alpha$ aufgrund von \ref{formally_real:symmetric_matrices}. Wenn wir also $\Omega'$ durch $\Omega'':=v^{-2\alpha}\Omega'$ ersetzen, erhalten wir eine invariante Bilinearform mit $\nu(\Omega'')=0$.

Da $\rho$ irreduzibel ist, unterscheiden sich $\Omega$ und $\Omega''$ also nur um Multiplikation mit einer Konstanten $f\in K$. Da $\nu(\Omega)=\nu(\Omega'')=0$ ist, muss auch $\nu(f)=0$ sein, d.\,h. $f\in\mathcal{O}^\times$. Daraus folgt, dass $\Omega$ die Form
\[\Omega = f\sum_{x\in B} A_x^\text{Tr} A_x\]
mit $f\in\mathcal{O}^\times$ und $A_x\in\mathcal{O}^{d\times d}$ hat. Wichtig ist nun, dass die Existenz so einer Summendarstellung invariant unter Basiswechseln ist: Wenn $\Omega$ diese Form hat, so ist auch $Q^\text{Tr} \Omega Q$ von dieser Gestalt für alle $Q\in K^{d\times d}$.

\medbreak
Wir gehen induktiv vor: Haben wir $\Omega^{(i-1)}$ bereits vorliegen, dann konstruieren wir zwei Matrizen $R^{(i)}, E^{(i)}\in GL_d(K)$ und setzen $Q^{(i)} := E^{(i)} R^{(i)}$.

Wir setzen dafür $\gamma_i := \frac{1}{2}\nu\big(\Omega_{ii}^{(i-1)}\big)$, $E^{(i)} := \diag(1,\ldots,1,v^{-\gamma_i},1,\ldots,1)$ und führen einen Basiswechsel mit $E^{(i)}$ durch, d.\,h. wir ersetzen $\Omega^{(i-1)}$ durch
\[\Omega^{(i-\frac{1}{2})} := {E^{(i)}}^\text{Tr} \Omega^{(i-1)} E^{(i)} = f\sum_{x\in B} \big(A_x^{(i-1)} E^{(i)}\big)^\text{Tr} \big(A_x^{(i-1)} E^{(i)}\big).\]
Aufgrund von Lemma \ref{formally_real:symmetric_matrices} ist $\gamma_i = \min\Set{\nu(A_x^{(i-1)} e_i) | x\in B}$, d.\,h. wir erhalten ${v^{-\gamma_i} A_x^{(i-1)} e_i \in\mathcal{O}^d}$ für alle $x\in B$. Daher ist $A_x^{(i-\frac{1}{2})} := A_x^{(i-1)} E^{(i)} \in\mathcal{O}^{d\times d}$ und somit auch $\Omega^{(i-\frac{1}{2})}\in\mathcal{O}^{d\times d}$. Außerdem sind die Grade in $\Omega^{(i-\frac{1}{2})}$ nicht angewachsen, sondern in der $i$"~ten Zeile und Spalte um $\gamma_i$ verringert worden. Es gilt also, wie gewünscht, ebenfalls $\nu_\infty\big(\Omega^{(i-\frac{1}{2})}\big) \leq \nu_\infty\big(\Omega^{(i-1)}\big)$.

\medbreak
Jetzt ist nach Konstruktion $\Omega_{ii}^{(i-\frac{1}{2})}=v^{-2\gamma_i} \Omega_{ii}^{(i-1)}\in\mathcal{O}^\times$. Wir führen nun einen weiteren Basiswechsel mit $R^{(i)}\in F^{d\times d}$ durch, wobei
\[R_{xy}^{(i)} := \begin{cases} 1 & x=y \\
0 & x\neq i \,\text{oder}\, x>y \\
-\frac{\Omega_{iy}^{(i-\frac{1}{2})}}{\Omega_{ii}^{(i-\frac{1}{2})}}\mod\mathfrak{m} & x=i \,\text{und}\, x<y
\end{cases}.\]
Also ist
\[R^{(i)} = \begin{pmatrix}
	1 &        &   &   & & & \\
	  & \ddots &   &   & & & \\
	  &        & 1 &   & & & \\
	  &        &   & 1 & -\frac{\Omega_{i,i+1}^{(i-\frac{1}{2})}}{\Omega_{ii}^{(i-\frac{1}{2})}} & \cdots & -\frac{\Omega_{i,d}^{(i-\frac{1}{2})}}{\Omega_{ii}^{(i-\frac{1}{2})}} \\
	  &        &   &   & 1 &        &   \\
	  &        &   &   &   & \ddots &   \\
	  &        &   &   &   &        & 1 \\
\end{pmatrix}.\]
Jetzt führen wir den Basiswechsel mit $R^{(i)}$ durch, d.\,h.
\[\Omega^{(i)} := {R^{(i)}}^\text{Tr} \Omega^{(i-\frac{1}{2})} R^{(i)} \equiv
  \begin{array}{cccccc}
  \ldelim({3}{1em}
	& D^{(i-1)} & 0 & 0 & \rdelim){3}{1em} & \rdelim\}{2}{3em}[$=:D^{(i)}$]\\
	& 0 & \Omega_{ii}^{(i-\frac{1}{2})} & 0 & & \\
	& 0 & 0 & \ast & & 
	\end{array}
	\mod\mathfrak{m}.\]
Weiter ist auch
\[\Omega^{(i)} = f\sum_{x\in B} \big(A_x^{(i-\frac{1}{2})} R^{(i)}\big)^\text{Tr} \big(A_x^{(i-\frac{1}{2})} R^{(i)}\big).\]
Wegen $R^{(i)}\in GL_d(F)$ ist $A_x^{(i)} := A_x^{(i-\frac{1}{2})} R^{(i)}\in \mathcal{O}^{d\times d}$ und die Grade haben sich in diesem Schritt gar nicht geändert, d.\,h. $\nu_\infty(\Omega^{(i)}) = \nu_\infty(\Omega^{(i-\frac{1}{2})}) \leq \nu_\infty(\Omega^{(i-1)})$.

Damit sind alle Behauptungen des Induktionsschritts gezeigt.
\end{proof}

\begin{remark}
War $\rho$ bereits balanciert, dann werden bei diesem Vorgehen alle $Q^{(i)}\in GL_d(F)$ gewählt, es findet also wirklich nur eine Cholesky-Zerlegung von $\Omega\mod\mathfrak{m}$ statt und mehr nicht.
\end{remark}

\subsection{Implementierung}

\begin{convention}
Wir vereinbaren, dass für alle Funktionen dieses Abschnitts die Coxeter"=Gruppe"=mit"=Gewicht $(W,S,L)$ und deren Hecke"=Algebra $H$ implizit bekannt sind.

Zusätzlich zu den vorherigen Funktionen wollen wir vereinbaren, dass die folgenden Funktionalitäten bereits vorhanden sind:
\begin{itemize}
	\item Eine Funktion \lstinline!Valuation! mit einem Parameter, welche die Bewertungsfunktion \mbox{$\nu:F(\Gamma)\to\Gamma\cup\Set{\infty}$} berechnet, sowie eine Funktion \lstinline!LowestTerm! mit einem Parameter, die jedem Element $x=v^\gamma \cdot f$ von $K^\times$ mit $f\in\mathcal{O}^\times$ das Element $f\mod\mathfrak{m}\in F^\times$ zuordnet.
	\item Ein Typ/eine Klasse/... \lstinline!Representation!, der/die endlichdimensionale Matrixdarstellungen $\rho:H\to K^{d\times d}$ speichert. Es soll durch \lstinline!Rep.Dimension! die Dimension $d$ und durch \lstinline!Rep.Generators! die Liste der Matrizen $\rho(T_s)$ zugänglich sein.
	\item Eine Funktion \lstinline!InvariantBilinearForm!, die eine solche Matrixdarstellung nimmt und eine invariante Bilinearform $B\in GL_d(K)$ berechnet, z.\,B. indem das lineare Gleichungssystem $B\rho(T_s) = \rho(T_s)^\text{Tr} B$ gelöst wird.
\end{itemize}
\end{convention}

\begin{algorithm}[Balancieren von Matrixdarstellungen]\label{algo:balance}
Funktion \textbf{BalanceRepresentation}:
\begin{itemize}
	\item Input-Parameter:
	\begin{itemize}
		\item rho: \lstinline!Representation!
		
		Eine Matrixdarstellung $\rho: H\to K^{d\times d}$.
	\end{itemize}
	\item Output-Parameter:
	\begin{itemize}
		\item Q: \lstinline!Matrix!
		
		Eine Matrix $Q\in GL_d(K)$ derart, dass $\rho'(h):=Q^{-1}\rho Q$ eine balancierte Matrixdarstellung ist.
		\item Omega: \lstinline!Matrix!
		
		Eine symmetrische Matrix $\Omega\in\mathcal{O}^{d\times d}$ mit $\Omega\rho'(h)=\rho'(h)^\text{Tr}\Omega$.
		\item D: \lstinline!DiagonalMatrix!
		
		Die Diagonalmatrix $\Omega\mod\mathfrak{m} \in GL_d(F)$.
	\end{itemize}
	\item Rückgabe-Wert: Keiner.
\end{itemize}
\end{algorithm}
\begin{lstlisting}[mathescape=true]
function BalanceRepresentation(
	Input: Representation rho,
	Output: Matrix Q,
	        Matrix Omega,
	        DiagonalMatrix D)
	returns nothing
{
	Omega := InvariantBilinearForm(rho);
	Omega := vPow(-Valuation(Omega))*Omega;


	Q := IdentityMatrix(dim x dim);

	for(i := 0; i < Rep.Dimension; i := i+1){
		gamma := 1/2*Valuation($\Omega_{ii}$);
		$D_{ii}$ :=	LowestTerm($\Omega_{ii}$);

		// Berechnung von $\Omega^{(i-\frac{1}{2})}$
		for(j := 0; j < Rep.Dimension; j := j+1){
			$\Omega_{ij}$ := vPow(-gamma)*$\Omega_{ij}$;
			$\Omega_{ji}$ := vPow(-gamma)*$\Omega_{ji}$;
		}

		R := IdentityMatrix(dim x dim);
		for(j := i+1; j < Rep.Dimension; j := j+1){
			gamma := Valuation($\Omega_{ij}$);

			if(0 == gamma){
				$R_{ij}$ := - LowestTerm($\Omega_{ij}$) / $D_{ii}$;
			}
		}
		Omega := TransposedMatrix(R)*Omega*R;
		
		for(j := i; j < Rep.Dimension; j := j+1){
			$R_{ij}$ := vPow(-gamma)*$R_{ij}$;
		}
		Q := Q*R;
	}
}	
\end{lstlisting}

\begin{remark}
Erneut ist dies eine Implementierung für $0$-basierte Indizierung.
\end{remark}
\begin{remark}
Dieser Algorithmus führt im Wesentlichen exakt das aus, was auch im Beweis von Satz \ref{balanced_reps:algo_degree} geschieht. Man kann noch optimieren, indem man beispielsweise die Matrixmultiplikationen \lstinline!Omega := TransposedMatrix(R)*Omega*R! manuell implementiert, da \lstinline!R! ja sehr dünn besetzt ist.

Außerdem kann man auf \lstinline!Q := Q*R! ganz verzichten, wenn man statt $Q$ die Matrix $Q^{-1}$ berechnet. Diese kann man nämlich direkt angeben und zeilenweise aufbauen. Mit den Bezeichnungen aus dem Beweis von \ref{balanced_reps:algo_degree} gilt:
\[(Q^{-1})_{ij} = \begin{cases} v^{\gamma_i}\cdot\left( \frac{\Omega_{ij}^{(i-\frac{1}{2})}}{\Omega_{ii}^{(i-\frac{1}{2})}}\mod\mathfrak{m}\right) & \text{falls}\,i\leq j \\ 0 & \text{sonst} \end{cases}\]
Dies entspricht dann auch mehr dem Vorgehen bei einer richtigen Cholesky-Zerlegung.\index{terms}{Cholesky-Zerlegung}
\end{remark}

\begin{remark}
\index{terms}{Tiefensuche}
Um Balanciertheit nur zu testen, reicht obiger Algorithmus. Will man aber zusätzlich Führende"=Koeffizienten"=Matrizen berechnen, dann benötigt man die Werte $a_\lambda$. Der folgende Algorithmus berechnet sie mit Hilfe der Formel (siehe Korollar \ref{balanced_reps:characters})
\[a_\lambda = -\min\Set{\nu(\chi_\lambda(T_w)) | w\in W}.\]

Um Speicherplatz zu sparen, wird dabei eine rekursive Tiefensuche durch $W$ durchgeführt. Die Tiefensuche ist so strukturiert, dass man mit einer einzigen $d\times d$ Matrix $M=v^{a_\lambda} \rho(T_w)$ mit Einträgen aus $K$ auskommt, welche man durch Multiplikation mit $\rho(T_s)^{\pm 1}$ für ein $s\in S$ in jedem Schritt anpasst. Die Rekursionstiefe ist durch die Länge des längsten Elements $l(w_0)$ beschränkt.
\end{remark}

\begin{algorithm}[Berechnung von $a_\lambda$]\label{algo:aValue}
Funktion \textbf{aValue}:
\begin{itemize}
	\item Input-Parameter:
	\begin{itemize}
		\item rho: \lstinline!Representation!
		
		Eine Matrixdarstellung $\rho: H\to K^{d\times d}$.
	\end{itemize}
	\item Rückgabe-Wert: \lstinline!ElementOfGamma!
	
	Es wird $\min\Set{\nu(\chi(T_w)) | w\in W}$ ausgegeben, wobei $\chi$ der Charakter von $\rho$ ist. Dies ist genau $\min\Set{a_\lambda | \lambda\in\Lambda, \rho\text{ hat einen irreduziblen Konstituenten }\rho_\lambda}$.
\end{itemize}
\end{algorithm}
\begin{lstlisting}[mathescape=true]
function aValue(Representation rho)
	returns ElementOfGamma
{
	Matrix M := IdentityMatrix(dim x dim);

	ElementOfGamma alpha = 0;

	function dfs(ElementOfW w)
		returns nothing
	{
		foreach(s in CanonicalLeftAscentSet(w)){
			M := rho.Generators[s]*M;
			dfs(s*w);
			M := rho.Generators[s]^(-1)*M;
		}
		
		alpha := Minimum(alpha,Valuation(Trace(M)));
	}
	dfs(1);
	
	return -alpha;
}
\end{lstlisting}

\begin{remark}
\index{terms}{Tiefensuche}
Der folgende Algorithmus bestimmt die Führenden"=Koeffizienten"=Matrizen einer Matrix"=Darstellung $\rho: H\to K^{d\times d}$. Es wird dabei die gleiche Tiefensuche benutzt. $\rho$ ist genau dann balanciert, wenn alle dabei berechneten Matrizen $M$ in $\mathcal{O}^{d\times d}$ sind und die führenden Koeffizienten sind gegebenenfalls durch $M\mod\mathfrak{m}^{d\times d}$ bestimmt.

Da viele dieser Matrizen verschwinden, bietet es sich wieder an, mit einer dünn besetzen Speicherstruktur zu arbeiten wie einem Dictionary, einer Hash"=Table oder etwas Ähnlichem.
\end{remark}

\begin{algorithm}[Extrahieren von Führenden-Koeffizienten-Matrizen]\label{algo:lead_coeff}
\index{terms}{Führende Koeffizienten}
Funktion \textbf{ComputeLeadingCoefficients}:
\begin{itemize}
	\item Input-Parameter:
	\begin{itemize}
		\item rho: \lstinline!Representation!
		
		Eine Matrixdarstellung $\rho: H\to K^{d\times d}$. Ist $\rho$ nicht balanciert, wird ein Fehler ausgegeben. Ist $\rho$ balanciert, so wird ein Dictionary \lstinline!rho.LeadCoeff! angelegt, das die von Null verschiedenen Führenden-Koeffizienten-Matrizen $c(w) = v^{a_\lambda} \rho(T_w)\mod\mathfrak{m}\in F^{d\times d}$ verwaltet.
	\end{itemize}
	\item Rückgabe-Wert: Keiner.
\end{itemize}
\end{algorithm}
\begin{lstlisting}[mathescape=true]
function ComputeLeadingCoefficients(Representation rho)
	returns nothing
{
	rho.LeadCoeff := new Dictionary;

	Matrix M := aValue(rho) * IdentityMatrix(dim x dim);
	ElementOfGamma gamma;


	function dfs(ElementOfW w)
		returns nothing
	{
		foreach(s in CanonicalLeftAscentSet(w)){
			M := rho.Generators[s]*M;
			dfs(s*w);
			M := rho.Generators[s]^(-1)*M;
		}

		gamma := Valuation(M);
		
		if(0 == gamma){
			Rep.LeadCoeff[w] := LowestTerm(M);
		}elif(gamma < 0){
			Error("Representation not balanced!");
		}
	}

	dfs(1);
}
\end{lstlisting}

\begin{remark}
\index{terms}{Matrix!dünn besetzte}
Die Führenden"=Koeffizienten"=Matrizen selbst sind ebenfalls dünn besetzt, hier ist also weiteres Einsparpotential vorhanden.
\end{remark}
\begin{remark}
Man kann beide Algorithmen natürlich auch kombinieren, indem man im Dictionary die Matrizen $v^\alpha \rho(T_w)$ für die bisher besuchten $w\in W$ und den bisherigen Minimalwert $\alpha$ abspeichert und jedes Mal, wenn eine echt kleinere Bewertung $\alpha_\text{neu}$ gefunden wird, das Dictionary zurückgesetzt wird.

Das erfordert allerdings eine vorsichtigere Behandlung des Fehlerfalls, dass $\rho$ nicht balanciert ist, da erst am Ende der Berechnung feststeht, ob der Fehler wirklich eingetreten ist oder nicht.
\end{remark}
\begin{remark}
\index{terms}{Darstellung!balancierte!strikt}
Wenn man nicht nur eine balancierte Darstellung übergeben hat, sondern eine, die stark balanciert im Sinne von \ref{balanced_reps:strictly_balanced} ist, kann man den Speicheraufwand aufgrund der Relation $d_\mathfrak{s} c(x)_\mathfrak{st} = d_\mathfrak{t} c(x^{-1})_\mathfrak{ts}$ für alle $\mathfrak{s},\mathfrak{t}$ und alle $x\in W$ noch einmal um die Hälfte drücken.
\end{remark}

\index{terms}{Darstellung!balancierte|)}
\section{Zellmoduln}
\index{terms}{Zellmodul|(}

\subsection{Theorie}

\begin{remark}
Wir wissen aus Lemma \ref{CellAlg:ex:CellModules_Hecke} im Prinzip, wie die Zellmoduln von Hecke"=Algebren aussehen. Ich möchte trotzdem einige Vereinfachungen besprechen.
\end{remark}

\begin{lemma}
\index{terms}{Lusztig-Isomorphismus}\index{terms}{Kazhdan-Lusztig!-Zellen}
Es gelten $(\spadesuit)$, $(\clubsuit)$ und $(\vardiamond)$. Sei $(W,S,L)$ eine Coxeter"=Gruppe"=mit"=Gewicht, $H$ ihre Hecke"=Algebra, $F:=\IQ_W$, $K:=F(\Gamma)$ und $\rho:KH\to K^{d_\lambda\times d_\lambda}$ eine irreduzible, balancierte Matrixdarstellung von $H$ vom Isomorphietyp $\lambda$.

Sei $\psi:=\overline{\rho}\circ\phi$ die zugeordnete Zelldarstellung von $H$. Es gilt:
\[\psi(h) = \sum_{\substack{d\in\mathcal{D} \\ \overline{\rho}(t_d)\neq 0}} \sum_{z\in\mathfrak{C}_d} n_d \sigma_{\mathfrak{C}_d}(h)_{zd} \cdot \overline{\rho}(t_z)\]
Dabei meint $\mathfrak{C}_d$ die eindeutige Linkszelle, die $d$ enthält.
\end{lemma}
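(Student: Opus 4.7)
By linearity it suffices to verify the formula on the Kazhdan–Lusztig basis, so fix $w \in W$ and compute $\psi(C_w) = \overline{\rho}(\phi(C_w))$. By the definition of Lusztig's homomorphism,
\[
\psi(C_w) \;=\; \sum_{d \in \mathcal{D}}\;\sum_{\substack{z \in W \\ z\sim_{\mathcal{LR}} d}} n_d\, h_{w,d,z}\, \overline{\rho}(t_z).
\]
The plan is to successively simplify both summation indices using the hypotheses $(\spadesuit)$, $(\clubsuit)$, $(\vardiamond)$ until they match the claimed form.

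First I would restrict the support of $z$. Since $\overline{\rho}$ is irreducible of type $\lambda \in \Lambda$, it factors through the Wedderburn block of $FJ$ corresponding to $\mathcal{F}_\lambda$, so $\overline{\rho}(t_z) = 0$ unless $z$ lies in the two-sided cell $\mathfrak{Z} := \mathcal{F}_\lambda$. Consequently the sum reduces to indices with $z \in \mathfrak{Z}$, which via the condition $z\sim_{\mathcal{LR}} d$ in turn forces $d \in \mathfrak{Z}$. Using $(\vardiamond)$, the Duflo involutions in $\mathfrak{Z}$ correspond bijectively to the Kazhdan–Lusztig left cells contained in $\mathfrak{Z}$, so summing over such $d$ is the same as summing over those left cells $\mathfrak{C}_d$.

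The core step is to replace the condition $z\sim_{\mathcal{LR}} d$ by $z \sim_{\mathcal{L}} d$, i.e. $z \in \mathfrak{C}_d$. This rests on the observation that $h_{w,d,z}\neq 0$ in $C_w C_d = \sum_z h_{w,d,z}C_z$ forces $z \preceq_{\mathcal{L}} d$, since left multiplication by an element of $H$ cannot strictly increase the left Kazhdan–Lusztig pre-order. Combined with $z\sim_{\mathcal{LR}} d$ and the $\mathbf{a}$-monotonicity supplied by $(\clubsuit)$ (which plays the role of \textbf{P4} and \textbf{P9} within one two-sided cell), one concludes $z \sim_{\mathcal{L}} d$. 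Once $z \in \mathfrak{C}_d$, the coefficient $h_{w,d,z}$ equals, by definition, the matrix entry $\sigma_{\mathfrak{C}_d}(C_w)_{zd}$ of the action of $C_w$ on the cell module of $\mathfrak{C}_d$ (the lower-cell remainder contributes nothing to indices inside the cell).

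Finally, I would eliminate redundant left cells from the outer sum. If $\overline{\rho}(t_d) = 0$ then, arguing via $(\vardiamond)$ and $(\spadesuit)$, the entire cell module of $\mathfrak{C}_d$ maps to zero under $\overline{\rho}$ (its image would otherwise carry a copy of $\lambda$ detectable already on $t_d$, given that $d$ acts as a nonzero idempotent on the $\lambda$-component by \textbf{P5}-type identities guaranteed by $(\vardiamond)$). Hence the restriction $\overline{\rho}(t_d) \neq 0$ in the outer sum is harmless and gives the asserted formula. The main obstacle I anticipate is precisely the implication $z \sim_{\mathcal{LR}} d \Rightarrow z \sim_{\mathcal{L}} d$ under the present weaker hypotheses; carrying this out without invoking the full package \textbf{P1}–\textbf{P15} requires a careful chase through the Geck–Jacon versions of \textbf{P4} and \textbf{P9} encoded in $(\spadesuit)$ and $(\clubsuit)$.
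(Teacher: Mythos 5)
Your proposal follows essentially the same route as the paper: evaluate on the Kazhdan--Lusztig basis, use $h_{w,d,z}\neq 0\Rightarrow z\preceq_\mathcal{L}d$ together with $z\sim_\mathcal{LR}d$ to tighten the inner index to $z\sim_\mathcal{L}d$, discard cells with $\overline{\rho}(t_d)=0$, and identify $h_{w,d,z}$ with the $(z,d)$-entry of the left-cell module representation. The intermediate observation that $\overline{\rho}$ kills everything outside the two-sided cell $\mathcal{F}_\lambda$ is correct but not needed for the argument.

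One attribution should be corrected: the implication $z\preceq_\mathcal{L}d$, $z\sim_\mathcal{LR}d\Rightarrow z\sim_\mathcal{L}d$ in the Geck--Jacon framework is delivered by $(\spadesuit)$ together with \citep[2.5.9]{geckjacon} (which identifies $H$-cells and $J$-cells), not by $(\clubsuit)$; the latter concerns the $a_\lambda$-values of characters, not the $\textbf{a}$-function on $W$. Likewise, for discarding cells with $\overline{\rho}(t_d)=0$ the paper uses $(\vardiamond)$ with \citep[1.6.19]{geckjacon} to show that $n_d t_d$ acts as a one-sided identity on the $t_z$ in its cell, so $\overline{\rho}(t_d)=0$ forces $\overline{\rho}(t_z)=0$ there directly, without needing a \textbf{P5}-type idempotent argument.
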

\begin{proof}
Es gilt zunächst einmal für alle $w\in W$:
\[\psi(C_w) = \overline{\rho}(\phi(C_w)) = \sum_{\substack{z\in W, d\in\mathcal{D} \\ z \sim_\mathcal{LR} d}} n_d h_{w,d,z} \overline{\rho}(t_z)\]
Ist nun in einem der Summanden $h_{w,d,z}\neq 0$, dann ist $z \preceq_\mathcal{L} d$. Da jedoch auch $z \sim_\mathcal{LR} d$ gilt, folgt $z \sim_\mathcal{L} d$ (aufgrund von \citep[2.5.9]{geckjacon} und $(\spadesuit)$). 

Aus $(\vardiamond)$ und \citep[1.6.19]{geckjacon} folgt $n_d t_d \cdot t_z = t_z$ für alle $z\sim_\mathcal{R} d$, d.\,h. $\overline{\rho}(t_d)\neq 0$ genau dann, wenn $\overline{\rho}(t_z)\neq 0$ für alle $z \sim_\mathcal{R} d$ gilt.

Damit reduziert sich unsere Summe zu
\[\psi(C_w) = \sum_{\substack{d\in\mathcal{D} \\ \overline{\rho}(t_d)\neq 0}} \sum_{z\in\mathfrak{C}_d} n_d h_{w,d,z} \overline{\rho}(t_z).\]

Nun ist aber per definitionem
\[C_w C_d = \sum_z h_{w,d,z} C_z\]
d.\,h. $h_{w,d,z}$ ist der Eintrag an der Position $(z,d)$ der Darstellungsmatrix von $C_w$ auf dem regulären Linksmodul $H$. Wenn $z$ und $d$ aus der Linkszelle $\mathfrak{C}_d$ sind, stimmt dies mit dem Eintrag an Position $(z,d)$ des Linkszellmoduls von $\mathfrak{C}_d$ überein, d.\,h. mit $\sigma(C_w)_{zd}$. Wir erhalten:
\[\psi(C_w) = \sum_{\substack{d\in\mathcal{D} \\ \overline{\rho}(t_d)\neq 0}} \sum_{z\in\mathfrak{C}_d} n_d \sigma_{\mathfrak{C}_d}(C_w)_{zd} \overline{\rho}(t_z)\]
Da die $C_w$ eine Basis von $H$ sind, folgt die Behauptung.
\end{proof}

\begin{remark}
\index{terms}{Vermutung!von Geck-Müller}
Wollen wir also $\psi$ und $\rho$ vergleichen (etwa um die Vermutung von Geck und Müller über Zellularität von $W$"~Graph-Darstellungen zu überprüfen), dann müssen wir $\rho(T_s)$ mit $\psi(T_s)$ vergleichen. Dazu brauchen wir alle $\sigma_{\mathfrak{C}_d}(T_s)$, d.\,h. wir müssen die $\mu_{y,w}^s$ für alle $y,w\in\mathfrak{C}_d$ berechnen. Das geht mit Algorithmus \ref{algo:KL_poly_mu}.
\end{remark}
\begin{remark}
Wenn wir nur an einer Darstellung interessiert sind, ist es nicht effizient, ganz $\mathcal{D}$ zu berechnen und daraus diejenigen $d$ mit $\overline{\rho}(t_d)\neq 0$ auszusondern, denn für die Berechnung von $\mathcal{D}$ müssen wir entweder alle Isomorphietypen von Darstellungen durchlaufen und eine Summe für jedes $w\in W$ berechnen (siehe \ref{J_alg:def:J_algebra}) oder die Kazhdan-Lusztig-Polynome $P_{1,z}^\ast$ für alle $z\in W$ berechnen (siehe \ref{KL:def:Lusztig_a}).

Besser ist es, eine Vorauswahl zu treffen und erst dann die Kazhdan-Lusztig-Polynome heranzuziehen. Dafür dient das folgende Lemma.
\end{remark}

\begin{lemma}
\index{terms}{Kazhdan-Lusztig!-Zellen}
Es gelten die Lusztig"=Vermutungen. Sei $(W,S,L)$ eine Coxeter"=Gruppe"=mit"=Gewicht, $H=H(W,S,L)$ ihre Hecke"=Algebra, $F:=\IQ_W$, $K:=F(\Gamma)$ und $\rho:KH\to K^{d_\lambda\times d_\lambda}$ eine irreduzible, balancierte Matrixdarstellung von $H$ vom Isomorphietyp $\lambda$.

Sei $d\in\mathcal{D}$ mit $\overline{\rho}(t_d)\neq 0$. Dann gilt:
\begin{enumerate}
	\item $L(d) - a_\lambda \in 2\Gamma$.
	\item $d^2=1$.
	\item $n_d t_d$ ist ein Idempotent mit projektiven Moduln ${J t_d = \operatorname{span}\Set{t_w | w \sim_\mathcal{L} d}}$ bzw. ${t_d J = \operatorname{span}\Set{t_w | w \sim_\mathcal{R} d}}$.
	\item Ist $w\in W$ ein Element derart, dass $w^2=1$ gilt sowie $\overline{\rho}(t_w)$ ungleich Null und $\eta_w:=\pm\overline{\rho}(t_w)$ für eine Wahl des Vorzeichens ein Idempotent ist, dann gilt mit diesen Bezeichnungen
	\begin{enumerate}
		\item $\eta_w \eta_d = \begin{cases} 0 & \text{falls}\,w\not\sim_\mathcal{L} d \\ \eta_w & \text{falls}\,w\sim_\mathcal{L}d \end{cases}$ und
		\item $\eta_d \eta_w = \eta_w \eta_d$.
	\end{enumerate}
\end{enumerate}
\end{lemma}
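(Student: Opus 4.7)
The plan is to treat the four parts in order, using the full strength of the Lusztig conjectures throughout.

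For part a, I would start from Corollary \ref{KL:lemma:symmetry_KL_poly}, which guarantees $P_{1,d} = v^{L(d)} P_{1,d}^\ast \in \IZ[2\Gamma]$. Hence every monomial appearing in $P_{1,d}^\ast$ has exponent in $-L(d) + 2\Gamma$. By definition $\Delta(d)$ is the negative of the maximal such exponent, so $\Delta(d) \in L(d) + 2\Gamma$. Since $d \in \mathcal{D}$ we have $\textbf{a}(d) = \Delta(d)$, and the assumption $\overline{\rho}(t_d) \neq 0$ forces $d$ to lie in the two-sided cell $\mathcal{F}_\lambda$ (use the two-sided cell decomposition of $J$ together with the fact that $\overline{\rho}_\lambda$ is supported on $\mathcal{F}_\lambda$). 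Then \textbf{P4} and the link between $\textbf{a}$ and the Schur-element $a$-values give $\textbf{a}(d) = a_\lambda$, and hence $L(d) - a_\lambda \in 2\Gamma$. Part b is a direct restatement of Lusztig's conjecture \textbf{P6}.

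For part c, I would combine the decomposition $1_J = \sum_{d' \in \mathcal{D}} n_{d'} t_{d'}$ from the structure theorem for $J$ with the Corollary that $\sum_{d' \in \mathcal{D} \cap \mathfrak{L}} n_{d'} t_{d'}$ is the idempotent identity of each left cell $\mathfrak{L}$, and then invoke \textbf{P13}: each left cell contains exactly one Duflo involution. Thus this sum collapses to the single term $n_d t_d$, so $(n_d t_d)^2 = n_d t_d$. The identification $J t_d = \operatorname{span}\{t_w \mid w \sim_\mathcal{L} d\}$ follows because the left cell module decomposition gives $J \cdot (n_d t_d) = \bigoplus_{w \sim_\mathcal{L} d} F t_w$; the symmetric statement for $t_d J$ uses the right-cell version. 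Projectivity is immediate since these are $Je$ and $eJ$ for an idempotent.

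For part d, first observe that $n_d \overline{\rho}(t_d)$ is an idempotent (apply $\overline{\rho}$ to part c). If moreover $\epsilon_d \overline{\rho}(t_d)$ is an idempotent for some $\epsilon_d \in \{\pm 1\}$, then comparing $(n_d \overline{\rho}(t_d))^2 = n_d \overline{\rho}(t_d)$ with $(\epsilon_d \overline{\rho}(t_d))^2 = \epsilon_d \overline{\rho}(t_d)$ on the nonzero matrix $\overline{\rho}(t_d)$ forces $n_d = \epsilon_d \in \{\pm 1\}$, so $\eta_d = n_d \overline{\rho}(t_d)$. Now compute $\eta_w \eta_d = \epsilon_w n_d \overline{\rho}(t_w t_d)$, where $t_w t_d = \sum_z \gamma_{w,d,z} t_{z^\ast}$. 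By \textbf{P8}, $\gamma_{w,d,z} \neq 0$ requires $w \sim_\mathcal{L} d^{-1} = d$, giving $t_w t_d = 0$ when $w \not\sim_\mathcal{L} d$, hence $\eta_w \eta_d = 0$. When $w \sim_\mathcal{L} d$, part c yields $t_w \in J t_d$, so $t_w = a t_d$ for some $a \in J$; then
\[
t_w t_d \;=\; a t_d^2 \;=\; a \cdot \tfrac{1}{n_d} t_d \;=\; \tfrac{1}{n_d} t_w,
\]
so $\eta_w \eta_d = \epsilon_w n_d \cdot \tfrac{1}{n_d}\overline{\rho}(t_w) = \eta_w$. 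Finally, for commutativity, use that $w^2 = d^2 = 1$ gives $t_w^\ast = t_w$, $t_d^\ast = t_d$; applying the antiautomorphism $\ast$ to the identity $t_w t_d \in F t_w$ yields $t_d t_w = (t_w t_d)^\ast = t_w t_d$, and applying $\overline{\rho}$ gives $\eta_d \eta_w = \eta_w \eta_d$ in all cases.

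The main obstacle is really just bookkeeping: ensuring that the sign $\epsilon_d$ coincides with $n_d$ (so that the two natural idempotents in part c and in part d match), and tracking that each step uses the correct Lusztig conjecture -- \textbf{P1}/\textbf{P4} for part a, \textbf{P6} for part b, \textbf{P13} for part c, and \textbf{P8} together with the $\ast$-symmetry of $t_w, t_d$ for part d.
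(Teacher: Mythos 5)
Your proposal is correct and, apart from some minor routing in part~d, follows essentially the same path as the paper's own proof. Parts~a and~b coincide with the paper (Corollary~\ref{KL:lemma:symmetry_KL_poly} together with the identification $\textbf{a}(d)=a_\lambda$ via the two-sided-cell decomposition of $J$, resp.\ \textbf{P6}). For part~c the paper simply cites \citep[1.6.19]{geckjacon}; you unwind that citation by combining $1_J=\sum_d n_d t_d$, the cell-idempotent corollary and \textbf{P13}, which is exactly what is behind the reference. For part~d the paper argues by showing directly that both $t_w\cdot n_dt_d$ and $n_dt_d\cdot t_w$ lie in $\{0,t_w\}$, with the two outcomes coinciding because for involutions $w,d$ one has $w\sim_\mathcal{L}d\iff w\sim_\mathcal{R}d$; you instead compute only $t_wt_d$ once, identify the vanishing case (here your invocation of \textbf{P8} is actually stronger than needed --- the unconditional statement $\gamma_{x,y,z}\neq 0 \Rightarrow t_x\sim_\mathcal{L}t_{y^\ast}\Rightarrow x\sim_\mathcal{L}y^{-1}$ from the section on cells in $J$ already suffices), and then deduce commutativity in one stroke by applying the $F$-linear antiautomorphism $\ast$ to $t_wt_d\in Ft_w$. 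That $\ast$-trick is a clean alternative to the paper's parallel computation. You also fill in a sign step the paper leaves implicit: showing that the idempotency assumption on $\eta_d=\epsilon_d\overline{\rho}(t_d)$ forces $\epsilon_d=n_d$ by comparing with the idempotent $n_d\overline{\rho}(t_d)$ from~c on the nonzero matrix $\overline{\rho}(t_d)$. So your write-up is, if anything, slightly more explicit than the original at the point where the signs are matched.
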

\begin{proof}
a. Ist $d\in\mathcal{D}$ und $\overline{\rho}(t_d)\neq 0$, so gilt laut Definition $\textbf{a}(d)=\Delta(d)$ und $P_{1,d}^\ast\neq 0$. Aus \citep[2.3.14]{geckjacon} folgt $\textbf{a}(d)=a_\lambda$, aus der Definition $\Delta(d)=\nu(\overline{P_{1,d}^\ast})$. Nun sagt aber Lemma \ref{KL:lemma:symmetry_KL_poly}, dass alle nichtverschwindenden Koeffizienten in $P_{1,d}^\ast$ zu Potenzen gehören müssen, die dieselbe Parität wie $L(d)+L(1)=L(d)$ haben. Der Koeffizient vor $v^{\Delta(d)}$ ist ungleich Null, also gilt $a_\lambda = \Delta(d) \equiv L(d) \mod 2\Gamma$.

\medbreak
b. ist \textbf{P6}.

\medbreak
c. folgt aus \citep[1.6.19]{geckjacon}.

\medbreak
d. Wenn $w$ ebenfalls eine Involution ist, gilt
\begin{align*}
	t_w \cdot n_d t_d = t_w &\iff w \sim_\mathcal{L} d \\
	&\iff w=w^{-1} \sim_\mathcal{R} d^{-1}=d \\
	&\iff t_w = n_d t_d \cdot t_w \\
\intertext{sowie analog}
	t_w \cdot n_d t_d = 0 &\iff w \not\sim_\mathcal{L} d \\
	&\iff w \not\sim_\mathcal{R} d \\
	&\iff 0 = n_d t_d \cdot t_w. \qedhere
\end{align*}
\end{proof}

\subsection{Implementierungen}

\begin{convention}
Wir treffen erneut ein paar Vereinbarungen an unsere Programmierumgebung:
\begin{itemize}
	\item Das Hecke"=Algebra"=Objekt soll via \lstinline!H.DistinguishedInvolutions! eine Teilmenge von $W$ verwalten, welche die bereits ermittelten $d\in\mathcal{D}$ speichert.
\end{itemize}
\end{convention}

\begin{algorithm}[Duflo-Involutionen]\label{algo:dist_inv}
Funktion \textbf{GetDistinguishedInvolutions}:
\begin{itemize}
	\item Input-Parameter:
	\begin{itemize}
		\item rho: \lstinline!Representation!
		
		Eine Matrixdarstellung $\rho: H\to K^{d\times d}$.
	\end{itemize}
	\item Rückgabe-Wert: \lstinline!SetOfElementsOfW!
	
	$\Set{d\in\mathcal{D} | \overline{\rho}(t_d)\neq 0}$.
\end{itemize}
\end{algorithm}
\begin{lstlisting}[mathescape=true]
function GetDistinguishedInvolutions(Representation rho)
	returns SetOfElementsOfW
{
	ComputeLeadingCoefficients(rho);

	D := $\Set{z | \overline{\rho}(t_z)\neq 0}$;
	
	if(H.DistinguishedInvolutions$\cap D \neq \emptyset$){
		return H.DistinguishedInvolutions$\cap D$;
	}

	ListOfMatrices IdemMat;
	ListOfSetsOfElementsOfW FibMat;

	Matrix E;
	Matrix F;
	
	Integer n;
	
	foreach(z in D){
		if(z*z != 1 or 1/2*(Weight(z)-rho.aValue) $\notin\Gamma$){
			continue;
		}

		E := GetLeadingCoefficient(rho,z);
		F := E*E;
		
		if(E == F){
			// Nichts tun
		}elif(E == -F){
			E := -E;
		}else{
			continue;
		}

		if(E in IdemMat){
			i := Position of E in IdemMat;
			AddToSet(FibMat[i],z);
		}else{
			AddToList(IdemMat,E);
			AddToList(FibMat,$\Set{z}$);
		}
	}


	for(i := 0; i < Length(IdemMat); i := i+1){
		// In diesem Fall wurde $\eta_z$ bereits ausgeschlossen.
		if(IsEmpty(IdemMat[i])){
			continue;
		}

		E := IdemMat[i];
			
		for(j := i+1; j<Length(IdemMat); j := j+1){
			// In diesem Fall wurde $\eta_w$ bereits ausgeschlossen.
			if(IsEmpty(IdemMat[j])){
				continue;
			}

			F := IdemMat[j];

			A := E*F;
			if(A != F*E){
				Empty(IdemMat[i]);
				Empty(FibMat[i]);
				
				Empty(IdemMat[j]);
				Empty(FibMat[j]);
				
				break;
			}

			if(A != 0){
				if(A != F){
					Empty(IdemMat[i]);
					Empty(FibMat[i]);
				
					break;
				}

				if(A != E){
					Empty(IdemMat[j]);
					Empty(FibMat[j]);
				}
			
			}
		}
	}


	D := [];
	
	for(i := 0; i<Length(IdemMat); i := i+1){
		foreach(z in FibMat[ii]){
			if( rho.aValue = Valuation(BarInvolution(KLPoly(1,z))) ){
				AddToSet(D,z);

				// Lusztigs Definition von $n_z$
				n := LowestTerm(BarInvolution(KLPoly(1,z)));
				// Gecks Definition von $\tilde{n}_z$
				H.nValue[z] := (-1)^Length(z)*n;
			}
		}
	}

	H.DistinguishedInvolutions := H.DistinguishedInvolutions$\cup D$;

	return D;
}
\end{lstlisting}

\begin{algorithm}[Zellmoduln nach Geck]\label{algo:cell_rep}
Funktion \textbf{GetCellRepresentation}:
\begin{itemize}
	\item Input-Parameter:
	\begin{itemize}
		\item rho: \lstinline!Representation!
		
		Eine balancierte Matrixdarstellung $\rho: H\to K^{d\times d}$. Falls $\rho$ nicht balanciert ist, wird ein Fehler ausgegeben.
	\end{itemize}
	\item Rückgabe-Wert: \lstinline!Representation!
	
	Die Matrixdarstellung $\psi:=\overline{\rho}\circ\phi$ des Zellmoduls, der von $\rho$ definiert wird.
\end{itemize}
\end{algorithm}
\begin{lstlisting}[mathescape=true]
function GetCellRepresentation(Representation rho)
	returns Representation
{
	Representation psi := Copy(rho);
	Representation sigma;

	SetOfElementsOfW DcapF := GetDistinguishedInvolutions(psi);
	SetOfElementsOfW C;

	Matrix A;

	// Neue Matrizen anlegen.
	foreach(s in CoxeterGenerators(W)){
		psi.Generators[s] := 0;
	}

	foreach(d in DcapF){
		C := LeftCell(d);
		sigma := LeftCellRepresentation(C);

		foreach(z in $\Set{z\in C | \overline{\rho}(t_z)\neq 0}$){
			// GetLeadingCoefficient erzeugt einen Fehler,
			// wenn $\rho$ nicht balanciert ist.
			A := H.nValue[d]*GetLeadingCoefficient(rho,z);

			foreach(s in CoxeterGenerators(W)){
				psi.Generators[s] := psi.Generators[s]
				                     + sigma.Generators[s][z,d] * A;
			}
		}
	}

	return psi;
}
\end{lstlisting}

\index{terms}{Zellmodul|)}


\indexprologue{Die Seitenzahlen verweisen auf Orte, an denen das jeweilige Symbol (erneut) definiert wurde.}
\printindex[symbols]
\printindex[terms]

\bibliography{diss}

\backmatter
\pagenumbering{gobble} 
\addchap{Ehrenwörtliche Erklärung}

Hiermit erkläre ich,
\begin{itemize}
	\item dass mir die Promotionsordnung der Fakultät bekannt ist,
	\item dass ich die Dissertation selbst angefertigt habe, keine Textabschnitte oder Ergebnisse eines Dritten oder eigenen Prüfungsarbeiten ohne Kennzeichnung übernommen und alle von mir benutzten Hilfsmittel, persönlichen Mitteilungen und Quellen in meiner Arbeit angegeben habe,
	\item dass ich die Hilfe eines Promotionsberaters nicht in Anspruch genommen habe und dass Dritte weder unmittelbar noch mittelbar geldwerte Leistungen von mir für Arbeiten erhalten haben, die im Zusammenhang mit dem Inhalt der vorgelegten Dissertation stehen und
	\item dass ich die Dissertation noch nicht als Prüfungsarbeit für eine staatliche oder andere wissenschaftliche Prüfung eingereicht habe.
\end{itemize}

Bei der Auswahl und Auswertung des Materials sowie bei der Herstellung des Manuskripts haben mich folgende Personen unterstützt: Prof. Dr. Burkhard Külshammer, PD Dr. Jürgen Müller.

Ich habe weder die gleiche, noch eine in wesentlichen Teilen ähnliche, noch eine andere Abhandlung bereits bei einer anderen Hochschule als Dissertation eingereicht.

\vspace{2cm}
Jena, den 

\end{document}